\documentclass[reqno]{amsart}
\usepackage{amssymb,amsmath,amsthm,mathtools}
\usepackage[T1]{fontenc}
\usepackage{mathrsfs}
\usepackage{lmodern}
\usepackage{anyfontsize}
\usepackage{amstext}
\usepackage{amscd}
\usepackage[mathscr]{euscript}
\usepackage{graphicx}
\usepackage{bbm}
\usepackage{bm}
\usepackage{bbold}
\usepackage{microtype}

\usepackage{hyperref}
\usepackage{dsfont}
\usepackage{tikz}
\usepackage{tikz-cd}
\usepackage{quiver}

\usepackage{xspace}
\usepackage{enumitem}

\newtheorem{thm}{Theorem}[section]
\newtheorem{cor}[thm]{Corollary}
\newtheorem{lem}[thm]{Lemma}
\newtheorem{prop}[thm]{Proposition}
\newtheorem{def-lem}[thm]{Definition-Lemma}

\theoremstyle{definition}
\newtheorem{defin}[thm]{Definition}

\newtheorem{warning}[thm]{Warning}
\newtheorem{rem}[thm]{Remark}

\newtheorem{example}[thm]{Example}

\newtheorem{notation}[thm]{Notation}
\newtheorem{recollection}[thm]{Recollection}

\newtheorem{assu}[thm]{Assumption}
\newtheorem{convention}[thm]{Convention}
\newtheorem{construction}[thm]{Construction}
\newtheorem*{acknowledgments}{Acknowledgments}

\numberwithin{equation}{section}
\newcommand{\category}{{$\infty$-cat\-e\-go\-ry}\xspace}
\newcommand{\categories}{$\infty$-cat\-e\-gories\xspace}

\newcommand{\operads}{{$\infty$-op\-er\-ads}\xspace}

\newcommand{\bbE}{\mathbb{E}}

\newcommand{\bbN}{\mathbb{N}}
\newcommand{\bbR}{\mathbb{R}}

\newcommand{\bbS}{\mathbb{S}}

\newcommand{\bbX}{\mathbb{X}}

\newcommand{\bbZ}{\mathbb{Z}}

\newcommand{\calE}{\mathcal{E}}

\newcommand{\calL}{\mathcal{L}}

\newcommand{\calP}{\mathcal{P}}

\newcommand{\calT}{\mathcal{T}}
\newcommand{\calU}{\mathcal{U}}
\newcommand{\calV}{\mathcal{V}}

\newcommand{\frakB}{\mathfrak{B}}

\newcommand{\frakP}{\mathfrak{P}}

\DeclareMathOperator{\Sing}{Sing}
\newcommand{\colim}{\operatorname*{colim}}
\newcommand{\Deltasm}{\Delta_{\rm sm}}
\newcommand{\PShv}{\mathrm{PShv}}
\newcommand{\Shv}{\mathrm{Shv}}
\newcommand{\PrL}{\mathrm{Pr^{L}}}

\newcommand{\PrSt}{\mathrm{Pr^{St}}}

\newcommand{\sh}{\mathrm{sh}}

\newcommand{\TrCor}{\mathrm{TrCor}}

\newcommand{\Tr}{\mathrm{Tr}}

\newcommand{\Cat}{\mathrm{Cat}}

\newcommand{\MU}{\mathrm{MU}}

\newcommand{\KO}{\mathrm{KO}}

\newcommand{\BO}{\mathrm{BO}}
\newcommand{\BOP}{\mathbb{Z}\times\mathrm{BO}}

\newcommand{\Mod}{\mathrm{Mod}}
\newcommand{\LMod}{\mathrm{LMod}}
\newcommand{\RMod}{\mathrm{RMod}}
\newcommand{\Alg}{\mathrm{Alg}}

\newcommand{\CAlg}{\mathrm{CAlg}}
\newcommand{\Fun}{\mathrm{Fun}}

\newcommand{\Map}{\mathrm{Map}}
\newcommand{\Spaces}{\mathscr{S}}

\newcommand{\rlz}[1]{\lvert #1 \rvert}
\newcommand{\Top}{\mathrm{Top}}
\newcommand{\Open}{\mathrm{Open}}
\newcommand{\Vect}{\mathrm{Vect}}
\newcommand{\Perf}{\mathrm{Perf}}

\newcommand{\cperf}{\mathrm{CPerf}}

\newcommand{\Ring}{\mathrm{Ring}}

\newcommand{\TKO}{\widetilde{\mathrm{KO}}}
\newcommand{\K}{\mathrm{K}}
\newcommand{\oko}{\Omega^{\infty}\mathrm{ko}}

\newcommand{\ko}{\mathrm{ko}}
\newcommand{\ku}{\mathrm{ku}}
\newcommand{\rmH}{\mathrm{H}}
\newcommand{\rmB}{\mathrm{B}}

\newcommand{\rmO}{\mathrm{O}}
\newcommand{\rmM}{\mathrm{M}}
\newcommand{\vir}{\mathrm{vir}}
\newcommand{\Free}{\mathrm{Free}}
\newcommand{\Gys}{\mathrm{Gys}}
\newcommand{\GysP}{\mathrm{GysP}}
\newcommand{\Cor}{\mathrm{Cor}}
\newcommand{\bCor}{\mathbf{Cor}}

\newcommand{\bbCat}{\widehat{\mathbf{Cat}}_{\infty}}

\newcommand{\Fin}{\mathrm{Fin}}
\newcommand{\Cov}{\mathrm{Cov}}

\newcommand{\sphere}{\mathbb{S}}
\newcommand{\Pic}{\mathrm{Pic}}
\newcommand{\GL}{\mathrm{GL}}
\newcommand{\BGL}{\mathrm{BGL}}
\newcommand{\LCH}{\mathrm{LCH}}
\newcommand{\PH}{\mathrm{PH}}
\newcommand{\CH}{\mathrm{CH}}
\newcommand{\Un}{\mathrm{Un}}
\newcommand{\CMon}{\mathrm{CMon}}
\newcommand{\CMongp}{\mathrm{CMon}^{\mathrm{gp}}}

\newcommand{\bigcat}{\widehat{\mathrm{Cat}}_{\infty}}
\newcommand{\const}{\mathrm{const}}
\newcommand{\id}{\mathrm{id}}
\newcommand{\ee}[1]{\mathbb{E}_{#1}}
\newcommand{\einf}{\mathbb{E}_{\infty}}
\newcommand{\ex}{\mathrm{ex}}
\newcommand{\hi}{\mathrm{hi}}

\newcommand{\quot}{/\!\!/}

\newcommand{\pr}{\mathrm{pr}}
\newcommand{\ev}{\mathrm{ev}}

\newcommand{\flow}{\mathrm{Flow}}
\newcommand{\fr}{\mathrm{fr}}

\newcommand{\Sp}{\mathrm{Sp}}

\newcommand{\Stab}{\mathrm{Stab}}
\newcommand{\Set}{\mathrm{Set}}

\newcommand{\Th}{\mathrm{Th}}

\newcommand{\unit}{\mathds{1}}
\newcommand{\m}[1]{\mathscr{#1}}

\newcommand{\op}{\mathrm{op}}

\newcommand{\opemb}{\mathrm{opemb}}

\newcommand{\Sm}{\mathrm{Sm}}
\newcommand{\DSm}{\mathrm{DSm}}

\newcommand{\cocone}{\vartriangleright}

\newcommand{\rk}{\operatorname{rk}}

\newcommand{\Hom}{\operatorname{Hom}}

\tikzset{
  trim node/.default=1cm,
  trim node/.style={
    overlay,
    append after command={
      ([xshift={+#1}]\tikzlastnode.north west)
      ([xshift={+-#1}]\tikzlastnode.south east)}},
  down and trim/.default=1cm,
  down and trim/.style={
    yshift=-(\pgfmatrixcurrentcolumn-1)*1.5\baselineskip,
    trim node={#1}},
  downup and trim/.default=1cm,
  downup and trim/.style={
    yshift=iseven(\pgfmatrixcurrentcolumn) ? -1.5\baselineskip : 0pt,
    trim node={#1}},
  -|/.style={to path={-|(\tikztotarget)\tikztonodes}},
  |-/.style={to path={|-(\tikztotarget)\tikztonodes}},
  -| sl/.style={-|, xslant=-1},
  |- sl/.style={|-, xslant= 1},
  center picture/.style={
    trim left=(current bounding box.center),
    trim right=(current bounding box.center)}} 

\title[A homotopy coherent Pontryagin-Thom isomorphism]{A homotopy coherent \\ Pontryagin-Thom isomorphism}
\author{Kenneth Blakey}
\address{Department of Mathematics, MIT, 182 Memorial Drive, Cambridge, MA 02139, U.S.A.} 
\email{kblakey@mit.edu}

\author{Liam Keenan}
\address{Department of Mathematics, Brown University, 151 Thayer St., Providence, RI 02912, U.S.A.} 
\email{liam{\_}keenan@brown.edu}

\usepackage[textwidth=25mm, textsize=tiny]{todonotes}

\begin{document}

\begin{abstract}
Classically, the Pontryagin-Thom isomorphism asserts that the multiplicative cohomology theory given by (structured) geometric cobordism is isomorphic to the cohomology theory determined by an associated Thom spectrum. 
We construct a presentably symmetric monoidal stable \category of homotopy invariant sheaves with transfers on smooth manifolds whose unit is precisely (structured) geometric cobordism.
We show the endomorphism ring of the unit can be canonically identified with the associated $\einf$-Thom ring spectrum, i.e., we provide an $\einf$-lift of the Pontryagin-Thom isomorphism. 
\end{abstract}

\maketitle
\tableofcontents

\section{Introduction} 
Ren\'{e} Thom's insight that the cobordism groups of smooth manifolds can be described in terms of the homotopy groups of what are now called Thom spectra created a bridge between geometry and homotopy theory which has profoundly shaped the past and present of algebraic topology \cite{Tho52,thomQuelquesProprietesGlobales1954}. 
For example, the pioneering work of Novikov \cite{novikov-Thom,novikov-cobordism} and Quillen \cite{quillenFormalGroupLaws1969} clarified the central role played by complex cobordism and the theory of formal groups in the study of stable homotopy theory. 
The ingenious vision of Morava, heavily inspired by both Novikov and Quillen, led to the development of modern chromatic homotopy theory.

Many of the Thom spectra encountered in practice from cobordisms of smooth manifolds (e.g., $\mathrm{MO}$, $\mathrm{MSO}$, $\MU$, etc.) can be regarded as $\einf$-ring spectra in a canonical fashion. 
As Thom spectra arise by forming Thom spaces of a sequence of bundles, these $\einf$-structures, in the view of the authors, are most simply produced by endowing the relevant sequence of bundles with a multiplicative structure.
Even though this construction is quite \textit{homotopical} in nature, the Pontryagin-Thom isomorphism permits a \textit{geometric} interpretation of the graded homotopy ring associated to such an $\einf$-Thom spectrum; the additive structure is given by the disjoint union of smooth manifolds and the multiplicative structure is given by the Cartesian product of smooth manifolds -- both up to the relation of structured cobordism.

It is a long-standing piece of folklore that the disjoint union and Cartesian product of smooth manifolds can directly give rise to the canonical $\einf$-ring structures mentioned above.
The starting point is a description of the infinite loop space associated to a Thom spectrum, following Quinn's thesis \cite{quinn-thesis}.
Roughly, these infinite loop spaces are weakly homotopy equivalent to the geometric realization of a simplicial set whose $n$-simplices are corner transverse maps $M \rightarrow \Delta^{n}$, where $M$ is an $n$-dimensional smooth manifold with corners equipped with a relevant tangential structure, and whose face and degeneracy maps are induced by pullback along $\Delta^{m} \rightarrow \Delta^{n}$.
This construction, in the setting of topological manifolds, was carefully developed by Quinn using the notion of an ``ad theory'' \cite{quinn}. 
Later, using Quinn's framework, Laures and McClure showed that the Quinn model for oriented topological cobordism admits the structure of an $\einf$-ring spectrum which is equivalent, as an $\einf$-ring, to the $\einf$-Thom spectrum $\mathrm{MSTop}$ \cite{laures-mcclure-mult,laures-mcclure-comm}. 
It is the understanding of the authors, based on discussions with Laures and McClure, that similar results have not yet been established for general tangential structures or the case of smooth manifolds.

Following Quillen's universal characterization of the cohomology theory $\MU^{\ast}$ \cite{quillenElementaryProofsResults1971}, we produce, for an arbitrary stable tangential structure, an $\einf$-ring spectrum in the spirit of Quinn above which is canonically equivalent to the associated $\einf$-Thom spectrum. We accomplish this by categorifying the problem, as we now describe.

\subsection{Main results}
Let $\Sp$ denote the presentable stable $\infty$-category of spectra endowed with its usual symmetric monoidal. Fix a map of grouplike $\bbE_\infty$-monoids $\varphi:B\to\BO$. We denote by $\rmM\varphi$ the associated Thom spectrum,
    \begin{equation}
    \rmM\varphi\equiv \colim \big(B \rightarrow \BO \rightarrow {\rm BGL}_1(\bbS) \rightarrow \Sp\big),
    \end{equation}
endowed with its canonical $\einf$-ring structure \cite{antolin-camarenaSimpleUniversalProperty2019}. As is now standard knowledge, there are two \emph{a priori} different multiplicative cohomology theories we may associate to $\varphi$. Let $X$ be a smooth manifold.
\begin{enumerate}
\item\emph{Geometric cobordism:} The cohomology theory, denoted $\Omega^*_\varphi(X)$, consisting of closed smooth manifolds $M$ equipped with proper $\varphi$-structured maps $M\to X$ modulo $\varphi$-structured cobordism. The additive structure is given by disjoint union and the multiplicative structure is given by Cartesian product.
\item\emph{Homotopical cobordism:} The cohomology theory 
    \begin{equation}
    \Omega^{*,{\rm ho}}_\varphi(X)\equiv[\Sigma^\infty_+X,\rmM\varphi]_{-\ast}
    \end{equation}
with the obvious additive and multiplicative structure induced from $\rmM\varphi$.
\end{enumerate}
The celebrated Pontryagin-Thom isomorphism constructs an equivalence of multiplicative cohomology theories, 
    \begin{equation}
    \Omega^*_\varphi(X)\xrightarrow{\sim}\Omega^{*,{\rm ho}}_\varphi(X),
    \end{equation}
via the Pontryagin-Thom construction.
However, these multiplicative cohomology theories possess a great deal more structure.
It is a deep insight of Quillen that $\varphi$-structured homotopical cobordism is initial among lax symmetric monoidal, homotopy invariant functors from smooth manifolds to graded abelian groups which additionally admit transfers (i.e. Gysin/Umkehr maps) along proper $\varphi$-structured maps and satisfy a Beck-Chevalley condition for transverse pullback squares \cite{quillenElementaryProofsResults1971} -- we refer to these as ``$\varphi$-structured Quillen cohomology theories''.
To organize this structure in a homotopy coherent fashion, we produce a symmetric monoidal \category of correspondences, $\TrCor_{\varphi}^{\ex}(\Sm)$, encoding the desired contravariant and covariant functoriality considered by Quillen; due to the absence of pullbacks in the category of smooth manifolds, our construction utilizes the theory of derived smooth manifolds, following Pardon \cite{pardon-DSm}. 

\begin{thm}\label{thm:premain}
    There exists a presentably symmetric monoidal stable \category
    \begin{equation}
    \Gys_{\varphi}^{\bbR}(\Sm;\Sp),
    \end{equation}
    consisting of homotopy invariant sheaves on $\TrCor_{\varphi}^{\ex}(\Sm)$, whose unit object we denote by $\unit^{\Sp}_{\varphi}$. 
    For any smooth manifold, $X$, there is an essentially tautological isomorphism of graded commutative rings 
    \begin{equation}
        \pi_{\ast}\unit^{\Sp}_{\varphi}(X) \cong \Omega^{-\ast}_{\varphi}(X)
    \end{equation}
    which extends to an isomorphism of $\varphi$-structured Quillen cohomology theories. 
\end{thm}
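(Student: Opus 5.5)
The plan is to build $\Gys_{\varphi}^{\bbR}(\Sm;\Sp)$ in three steps, and then compute its unit by a Quinn-type argument.

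\emph{Construction.} First, I would construct $\TrCor_{\varphi}^{\ex}(\Sm)$ as a symmetric monoidal \category of spans $X \leftarrow M \rightarrow Y$. Here $M\to Y$ is a proper $\varphi$-structured map, and composition is formed by derived fiber products in Pardon's \category $\DSm$. The monoidal structure is the Cartesian product. Second, I would take presheaves of spectra on $\TrCor_{\varphi}^{\ex}(\Sm)$ with Day convolution. I would then localize at the open-cover sheaf condition (on the underlying manifold) and at the $\bbR$-invariance maps $X\times\bbR\to X$. Both localizations are generated by a set of maps closed under tensoring with representables, because products of covers are covers and products of projections are projections. Hence the localization is symmetric monoidal, presentable and stable.

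\emph{The unit.} The unit $\unit^{\Sp}_{\varphi}$ is the localization $L\Sigma^\infty_+ y(\mathrm{pt})$. To identify $\pi_*$ of its value on $X$, I would use the standard singular construction $\Sing^{\bbR}$, which sends $F$ to $|F(-\times\Delta^\bullet_{\mathrm{ext}})|$. Here $\Delta^n_{\mathrm{ext}}$ is the extended affine simplex, a manifold diffeomorphic to $\bbR^n$. I would show that $\Sing^{\bbR}$ is a model for $\bbR$-localization that preserves sheaves, following the Morel--Voevodsky pattern; this uses that $\Delta^\bullet_{\mathrm{ext}}$ is contractible and that the sheaf condition is compatible with geometric realization. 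It then remains to understand the representable $\Map(-,\mathrm{pt})$ in $\TrCor$. Its value on $X\times\Delta^n_{\mathrm{ext}}$ is the groupoid of proper $\varphi$-structured maps $M\to X\times\Delta^n_{\mathrm{ext}}$. I would then group-complete, i.e.\ pass from a sheaf of $\einf$-spaces to spectra. After geometric realization over $n$, this gives a Quinn-type spectrum whose $\pi_k$ consists of proper $\varphi$-maps over $X\times\bbR^k$ modulo concordance. Using transversality to the simplices and to the origin of $\bbR^k$, this identifies with $\Omega^{-k}_\varphi(X)$ in the sense of bordism of proper structured maps. The group completion is harmless because $\pi_0$ of the realization is already a group: a cobordism from $M\sqcup\overline{M}$ to $\emptyset$ exists.

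\emph{Structure.} Multiplicativity follows because the unit is an $\einf$-algebra. Its product is induced by the product of spans, which on representatives is the Cartesian product of manifolds. Pullbacks come from the contravariant functoriality of the sheaf. Gysin maps come from the covariant legs of the spans, acting by post-composition. Beck--Chevalley for transverse squares holds because the derived fiber product is the classical one in that case. Together these exhibit an isomorphism of $\varphi$-structured Quillen theories.

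\emph{Main obstacle.} I expect the hardest step to be showing that the $\Sing^{\bbR}$ formula is already a sheaf, so that no further sheafification disturbs the computation. This requires a descent statement for $\varphi$-structured manifolds over $X\times\Delta^n_{\mathrm{ext}}$. I would attempt it via a partition-of-unity and cutting argument. Concretely, given a cover $\{U_i\}$, any family over $X$ that is determined on each $U_i$ up to concordance should glue up to concordance. I would deform along a smooth bump function and use transversality to cut along a level set. This is the same technique that proves Mayer--Vietoris for geometric bordism.
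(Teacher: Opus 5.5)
\textbf{The gap: your $\TrCor$ has no degree bookkeeping.} If the right legs $M\to Y$ are arbitrary proper $\varphi$-structured maps, nothing fixes the relative dimension, and the unit mixes all degrees. Already $\pi_0$ of the concordance realization at a point is $\bigoplus_{m\geq 0}\Omega^{-m}_\varphi(*)$, not $\Omega^{0}_\varphi(*)$.

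If you instead impose relative dimension $0$, your computation of $\pi_k$ for $k\geq 0$ can be made to work, but two parts of the statement are lost.
\begin{enumerate}
\item Proper maps $g$ with $\rk g\neq 0$ are no longer morphisms. So post-composition cannot produce the Gysin maps $g_*\colon h^{*+\rk g}(W)\to h^*(Y)$.
\item Nothing produces the negative homotopy groups $\pi_{-n}\unit^{\Sp}_\varphi(X)\cong\Omega^{n}_\varphi(X)$. Group completion only sees the connective part, and ``proper maps over $X\times\bbR^k$'' has no meaning for $k<0$. Dugger's theorem gives $\unit^{\Sp}_\varphi(X)\simeq F\big(\Sigma^\infty_+\Sing(X),\unit^{\Sp}_\varphi(*)\big)$, so these groups exist. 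But identifying them with geometric cobordism would require $\unit^{\Sp}_\varphi(*)\simeq\rmM\varphi$, which is Pontryagin--Thom rather than a tautology.
\end{enumerate}

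\textbf{How the paper supplies the grading.} The missing idea is a grading, which the paper implements by twists. Objects of $\TrCor^{\ex}_\varphi(\Sm)$ are pairs $(X,d_X)$, standing for $\rmM\varphi_X\{d_X\}$. A morphism $(X,d_X)\xleftarrow{f}W\xrightarrow{g}(Y,d_Y)$ must satisfy $\rk g=f^*d_X-g^*d_Y$ and carries an extended transfer class. Hence $\Tr_\varphi(X,d)$ consists of proper maps of rank $-d$. The Thom--Gysin transfer along the zero section of $X\times\bbR$ (Corollary~\ref{cor: shifting = suspension}) gives $\m{F}(-,d+1)\simeq\Sigma\m{F}(-,d)$ for every homotopy invariant spectral Gysin sheaf. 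This yields $\pi_{-n}\unit^{\Sp}_\varphi(X,d)\cong\pi_0\unit^{\Spaces}_\varphi(X,d+n)$. The Gysin map of $g$ comes from the span $(W,\rk g)\xleftarrow{\id}W\xrightarrow{g}(Y,0)$ combined with this shift.

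\textbf{Smaller points.}
\begin{enumerate}
\item For a map $M\to X\times\Deltasm^k$ that is empty near the boundary, compare with $\Omega^{-k}_\varphi(X)$ by composing with the projection to $X$. Transverse restriction to an interior point gives zero.
\item Your middle objects are derived, so you also need Lemma~\ref{lem:kuranishitosmooth} to replace them, and their cobordisms, by smooth ones.
\item The paper settles your main obstacle with the general theorem of Berwick-Evans, Boavida de Brito and Pavlov: $|\m{F}(-\times\Deltasm^\bullet)|$ is a sheaf on $\Sm$ whenever $\m{F}$ is. It is applied to each $i_d^*$, so no cutting argument is needed.
\end{enumerate}
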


The $\einf$-ring $\rmM\varphi$ determines a contravariant functor from the category of smooth manifolds to $\Sp$ which is canonically lax symmetric monoidal:
\begin{equation}
    F\big(\Sigma_{+}^{\infty}(-),\rmM\varphi\big) : \Sm^{\op} \rightarrow \Sp.
\end{equation}
In light of Quillen's work, it is natural to wonder whether homotopical cobordism and the Pontryagin-Thom isomorphism admit $\einf$-refinements to $\Gys_{\varphi}^{\bbR}(\Sm;\Sp)$, i.e., whether there is a \emph{homotopy coherent Pontryagin-Thom isomorphism}. Our next result shows this is indeed the case.

\begin{thm}\label{thm:main}
    $\varphi$-structured homotopical cobordism refines to an $\einf$-algebra, $\Gamma_{\varphi}$ in $\Gys_{\varphi}^{\bbR}(\Sm;\Sp)$ and there is an essentially unique equivalence of $\einf$-algebras 
    \begin{equation}
        \mathrm{PT}_{\varphi} : \unit^{\Sp}_{\varphi} \xrightarrow{\sim} \Gamma_{\varphi}
    \end{equation}
    inducing an equivalence of $\einf$-ring spectra 
    \begin{equation}
        \mathrm{End}(\unit^{\Sp}_{\varphi}) \xrightarrow{\sim} \rmM\varphi
    \end{equation}
    which agrees with the Pontryagin-Thom isomorphism on homotopy groups.     
\end{thm}

Finally, we have the following recognition result which should be thought of as a cobordism-theoretic version of Dugger's result that a homotopy invariant differential cohomology theory is uniquely determined by a spectrum \cite{dugger}.

\begin{thm}\label{thm:main2}
The functor $\Hom(\unit_{\varphi}^{\Sp},-)$ induces an equivalence of presentably symmetric monoidal stable \categories 
    \begin{equation}
    \Gys^\bbR_\varphi(\Sm;\Sp)\simeq\Mod_{\operatorname{End}(\unit^\Sp_\varphi)}.
    \end{equation}
\end{thm}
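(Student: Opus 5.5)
Recall the standard Schwede--Shipley type recognition criterion. A presentable stable \category $\mathcal{C}$ in which a single object $G$ is a compact generator satisfies $\mathcal{C}\simeq\RMod_{\mathrm{End}(G)}$ via $\Hom(G,-)$. If $\mathcal{C}$ is moreover presentably symmetric monoidal and $G=\unit$, then $\mathrm{End}(\unit)$ is an $\einf$-ring and the equivalence is symmetric monoidal (Lurie, Higher Algebra 7.1.2.7). Theorem \ref{thm:main2} therefore reduces to two claims in $\Gys^{\bbR}_{\varphi}(\Sm;\Sp)$: the unit $\unit^{\Sp}_{\varphi}$ is compact, and it generates, meaning $\Hom(\unit^{\Sp}_{\varphi},F)\simeq 0$ forces $F\simeq 0$.

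\textbf{Generation.} The plan is to use homotopy invariance. Homotopy invariant sheaves are generated under colimits by the $\bbR$-localized representables $L(h_X)$, with $X$ ranging over smooth manifolds viewed as objects of $\TrCor^{\ex}_{\varphi}(\Sm)$. Pick a good open cover of $X$, so all finite intersections are contractible, i.e.\ diffeomorphic to some $\bbR^n$. The sheaf condition expresses $L(h_X)$ as a colimit of $L(h_{\bbR^n})$. Homotopy invariance gives $L(h_{\bbR^n})\simeq L(h_{\mathrm{pt}})=\unit^{\Sp}_{\varphi}$. Hence every representable, and so every object, lies in the localizing subcategory generated by the unit.

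One point needs checking: representables in a category of correspondences are not the same objects as in $\Sm$. I expect this to be harmless, since a sheaf is by definition a functor on $\TrCor^{\ex}_{\varphi}(\Sm)$ whose underlying restriction to $\Sm^{\op}$ satisfies descent, and so evaluation at $X$ is computed by the same \v{C}ech limit. The cleaner formulation is dual: by Yoneda, $\Hom(L h_X,F)\simeq F(X)$, which the \v{C}ech descent expresses as a limit of values $F(\bbR^n)\simeq F(\mathrm{pt})\simeq\Hom(\unit,F)$. If the latter vanishes, then $F(X)=0$ for every $X$, so $F\simeq 0$.

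\textbf{Compactness.} Compactness of $\unit^{\Sp}_{\varphi}$ is the main obstacle. It amounts to showing that $F\mapsto F(\mathrm{pt})$ commutes with filtered colimits, or equivalently with arbitrary direct sums, since the category is stable. Colimits in $\Gys^{\bbR}_{\varphi}$ are computed by applying the sheafification and homotopy-invariance localization to the presheaf colimit, so the question is whether that localization changes the value at a point. I would first show that the localization preserves filtered colimits and that its value at $\mathrm{pt}$ is computed by a formula commuting with colimits. A natural candidate is a Morel--Voevodsky/Dugger style singular construction $\Sing_{\bbR}F(X)=\colim_{\Delta^{\op}}F(X\times\Delta^{\bullet}_{\mathrm{ext}})$, with descent along good covers. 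Sheafification for manifolds can be reduced to finite \v{C}ech descent on a basis by finite-dimensionality: every manifold has covering dimension bounded by its dimension, so a good cover can be refined to finitely many colors. Finite limits commute with filtered colimits in $\Sp$, and these formulas then preserve filtered colimits.

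The transfer structure should not interfere with any of this. The forgetful functor to homotopy invariant sheaves on $\Sm$ is conservative and preserves colimits, since colimits are computed objectwise on the correspondence category. Compactness therefore follows from the corresponding statement for ordinary $\bbR$-invariant sheaves of spectra on $\Sm$. That category is just $\Sp$ by Dugger's theorem, and there the point evaluation is evidently colimit-preserving.

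Combining the two claims with the recognition theorem gives $\Gys^{\bbR}_{\varphi}(\Sm;\Sp)\simeq\Mod_{\mathrm{End}(\unit^{\Sp}_{\varphi})}$, symmetric monoidally.
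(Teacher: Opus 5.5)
Your reduction, via Lurie's criterion, to compactness of $\unit^{\Sp}_{\varphi}$ plus conservativity of $\Hom(\unit^{\Sp}_{\varphi},-)$, which is $\m{F}\mapsto\m{F}(\bbR^0,0)$, is the paper's reduction. Your final compactness argument (restrict to $\Sm^{\op}$, then apply Dugger's theorem) is also the paper's. One correction there: colimits in $\Gys^{\bbR}_{\varphi}(\Sm;\Sp)$ are not computed objectwise. Restriction preserves them because $i_d^*$ commutes with $L_{\sh}$ (Lemma~\ref{lem: sheafification i_d}) and with the singular construction $\rmH$.

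The genuine gap is in the generation step. Objects of $\TrCor^{\ex}_{\varphi}(\Sm)$ are pairs $(X,d_X)$ with $d_X:\Sing(X)\to\bbZ$, and your argument never touches the degree. Run carefully, your \v{C}ech/homotopy-invariance argument shows only that $\m{F}(\bbR^0,d)\simeq 0$ forces $\m{F}(X,d)\simeq 0$ for each \emph{fixed} $d$. Since $\Hom(\unit^{\Sp}_{\varphi},\m{F})$ records only $\m{F}(\bbR^0,0)$, you obtain $\m{F}(-,0)\simeq 0$ and nothing about $\m{F}(-,d)$ for $d\neq 0$. In generator language, homotopy invariance identifies $L h_{(\bbR^n,d)}$ with $L h_{(\bbR^0,d)}$, not with $\unit^{\Sp}_{\varphi}=L h_{(\bbR^0,0)}$. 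For the same reason, your claim that the forgetful functor to $\Shv_{\bbR}(\Sm;\Sp)$ is conservative is unjustified. A priori only the whole family $\{i_d^*\}_{d\in\bbZ}$ is jointly conservative (Corollary~\ref{cor: i_d conservative family}).

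The missing ingredient is Corollary~\ref{cor: shifting = suspension}: for $\m{F}\in\Gys^{\bbR}_{\varphi}(\Sm;\Sp)$ there is a natural equivalence $\Sigma\m{F}(-,0)\simeq\m{F}(-,1)$ of sheaves on $\Sm$. It comes from the Thom--Gysin transfer along the $\varphi$-structured zero section $X\hookrightarrow D(\underline{\bbR})$ of the trivial line bundle. That is, take the extended correspondence $(X,0)\leftarrow X\rightarrow(D(\underline{\bbR}),1)$ with its classical transfer. Combine it with homotopy invariance and the fiber sequence for the pair $(D(\underline{\bbR}),S(\underline{\bbR}))$, where $S(\underline{\bbR})=X\sqcup X$. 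Using this and descent over path components, one gets $\m{F}(X,d_X)\simeq\prod_i\Sigma^{d_i}\m{F}(X_i,0)$. Only then does $\m{F}(\bbR^0,0)\simeq 0$, together with Dugger's theorem or your good-cover argument, force $\m{F}\simeq 0$. In generator terms, $L h_{(\bbR^0,d)}$ is a shift of the unit.

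So, contrary to your remark that the transfer structure should not interfere, it is exactly what you need. Morphisms in $\TrCor^{\ex}_{\varphi}(\Sm)$ can change degree only through right legs of nonzero rank, i.e.\ through genuine transfers. These degree-changing transfers are what make $\Hom(\unit^{\Sp}_{\varphi},-)$ conservative. Without them, nothing would prevent a homotopy invariant sheaf from vanishing in degree $0$ but not in degree $1$.
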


Combined with Theorem \ref{thm:main}, this immediately yields the following corollary.

\begin{cor}\label{cor:main}
We have the following equivalence of presentably symmetric monoidal stable $\infty$-categories:
    \begin{equation}
    \Gys^\bbR_\varphi(\Sm;\Sp)\simeq\Mod_{\rmM\varphi}.
    \end{equation}
\end{cor}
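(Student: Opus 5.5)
The corollary follows by composing the equivalence of Theorem \ref{thm:main2} with the equivalence of module categories induced by the $\einf$-ring equivalence of Theorem \ref{thm:main}. First, Theorem \ref{thm:main2} provides an equivalence of presentably symmetric monoidal stable \categories
\begin{equation*}
\Hom(\unit^{\Sp}_{\varphi},-)\colon \Gys^\bbR_\varphi(\Sm;\Sp)\xrightarrow{\sim}\Mod_{\operatorname{End}(\unit^\Sp_\varphi)}.
\end{equation*}
Here $\operatorname{End}(\unit^\Sp_\varphi)$ carries the $\einf$-structure it inherits as the endomorphism object of the unit in a presentably symmetric monoidal stable \category. Its module category is then symmetric monoidal via the relative tensor product.

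Second, Theorem \ref{thm:main} supplies an equivalence of $\einf$-ring spectra $\operatorname{End}(\unit^{\Sp}_{\varphi})\xrightarrow{\sim}\rmM\varphi$. Any equivalence of $\einf$-rings $f\colon R\xrightarrow{\sim} S$ induces an equivalence of presentably symmetric monoidal \categories
\begin{equation*}
S\otimes_R(-)\colon\Mod_R\xrightarrow{\sim}\Mod_S.
\end{equation*}
To see this, note that $\Mod_{(-)}\colon\CAlg(\Sp)\to\CAlg(\PrL)$ is a functor, and functors send equivalences to equivalences. Concretely, the inverse is restriction along $f$, and the unit and counit are equivalences because $S\otimes_R R\simeq S$ and $f$ is invertible.

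Composing the two equivalences yields $\Gys^\bbR_\varphi(\Sm;\Sp)\simeq\Mod_{\rmM\varphi}$, as symmetric monoidal functors compose.

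The only point needing care is consistency of the $\einf$-structure on $\operatorname{End}(\unit^\Sp_\varphi)$. The structure used in Theorem \ref{thm:main2}, namely the one for which $\Hom(\unit,-)$ is symmetric monoidal into modules, must be the same as the one for which Theorem \ref{thm:main} produces an $\einf$-equivalence. Both arise from the canonical lax symmetric monoidal structure of the right adjoint $\Hom(\unit,-)$ to the unit map $\Sp\to\Gys^\bbR_\varphi(\Sm;\Sp)$, so they agree and nothing further is needed.
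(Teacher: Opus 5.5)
Your proposal is correct and follows the paper's route: the paper deduces the corollary by composing the equivalence $\Gys^\bbR_\varphi(\Sm;\Sp)\simeq\Mod_{\operatorname{End}(\unit^\Sp_\varphi)}$ of Theorem \ref{thm:main2} with the module-category equivalence induced by the $\einf$-equivalence $\operatorname{End}(\unit^\Sp_\varphi)\simeq\rmM\varphi$ from Theorem \ref{thm: PT is an equivalence}. Your remark that both theorems use the same $\einf$-structure on $\operatorname{End}(\unit^\Sp_\varphi)$, since $\Hom(\unit^\Sp_\varphi,-)$ is evaluation at $(\bbR^0,0)$, addresses a point the paper leaves implicit.
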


\subsection{Other flavors of cobordism}
All of our main results are stated for $\varphi$-structured cobordism; however, there are slight variations one could take on the definition to obtain slight variations of the aforementioned results. The first one is a straightforward extension, and the last one would require considerable work.

\begin{rem}
Given $\varphi:B\to{\rm BO}$ as before, we may construct a new geometric cobordism theory, namely,  ``spherical'' $\varphi$-structured geometric cobordism, by considering $\rmM\varphi$-oriented maps $M\to X$ modulo $\rmM\varphi$-oriented cobordism. E.g., an example of an $\bbS$-oriented closed smooth manifold which is not stably frameable is a K3 surface. The initial draft of the present article contained two misconceptions regarding spherical $\varphi$-structured geometric cobordism, and we would like to thank Noah Porcelli for dispelling them. 
\begin{enumerate}
\item There is indeed a purely homotopical construction of spherical $\varphi$-structured geometric cobordism (more precisely, it is a special case of the geometric cobordism theories already considered in the present article, hence no distinction is necessary): it arises via
    \begin{equation}
    \rmM s\varphi\equiv\colim \Big(\operatorname{fib}\big(\BO\to{\rm BGL}_1(\rmM\varphi)\big)\to\BO\to\BGL_1(\sphere)\to\Sp\Big).
    \end{equation}
\item There is indeed a retraction of $\bbE_\infty$-rings,
    \begin{equation}
    \rmM s\varphi\to\rmM\varphi,
    \end{equation}
via \cite[Corollary 3.17]{antolin-camarenaSimpleUniversalProperty2019};
\end{enumerate}
\end{rem}

\subsubsection{Periodic}
In Section \ref{sec:perfectcomplexandvirtual}, we associate to any perfect complex on a derived smooth manifold a \emph{virtual} bundle; we then furthermore go to a \emph{stable} bundle. Meanwhile, since we consider a map of grouplike $\bbE_\infty$-monoids $\varphi:B\to\BO$ and ask for $\varphi$-structured maps $M\to X$, we are asking for the stable relative tangent bundle to lift to $B$. The point we are trying to make is that we could instead ask for the virtual relative tangent bundle to lift to $B$, where we now consider a map of grouplike $\bbE_\infty$-monoids $B\to\Omega^\infty{\rm ko}$, since our assignment naturally builds a virtual bundle first.\footnote{Here, ${\rm ko}$ denotes the real topological $K$-theory spectrum; in particular, $\Omega^\infty{\rm ko}\simeq\bbZ\times\BO$ is only an $\bbE_1$-splitting, cf. Recollection \ref{recollection:ko}.} In particular, the multiplicative cohomology theory $\Omega^*_\varphi$, $\varphi:B\to\Omega^\infty{\rm ko}$, could possibly be a ``periodic'' geometric cobordism theory.

\begin{example}
If we consider the natural map $\varphi: \Omega^{\infty}\ku \to\Omega^\infty{\rm ko}$, then $\Omega^*_\varphi$ is periodic complex cobordism. Moreover, 
    \begin{equation}
    {\rm MUP}\equiv\colim \big(\Omega^\infty\ku \rightarrow \Omega^\infty{\rm ko} \rightarrow \Pic(\bbS) \rightarrow \Sp\big)
    \end{equation}
is the periodic complex cobordism spectrum.
\end{example}

The upshot is that, essentially by repeating the present article using virtual bundles instead of stable bundles, one can obtain a version of Theorem \ref{thm:main} relating periodic geometric cobordism to periodic homotopical cobordism as $\bbE_\infty$-rings.

\subsubsection{(Derived) orbifold}
Periodic geometric cobordism is more or less a straightforward extension of our work here. However, a major technical undertaking would be to extend the ideas of the present article to accommodate replacing (derived) smooth manifolds with (derived) orbifolds, i.e., relating $\varphi$-structured geometric (derived) orbifold cobordism to $\varphi$-structured homotopical (derived) orbifold cobordism. 

\subsection{Relationship to Floer homotopy theory}
As already mentioned, although the roots of cobordism theory are inherently geometric, the homotopical counterpart has historically seen the most development. However, the recent resurgence of Floer homotopy theory, a field at the intersection of symplectic geometry and stable homotopy theory, has fostered interest in (1) developing the geometric story and (2) relating the geometric to the homotopical. Briefly, Floer homotopy theory is an idea, originally due to Cohen-Jones-Segal \cite{CJS95}, which lifts various Floer-type (co)homology theories to stable homotopy theory. The idea is to input a geometric setup in symplectic geometry (e.g., a symplectic manifold, Lagrangian submanifolds, etc.), together with a homotopical assumption (e.g., a stable $\bbR$-polarization, Maslov data, etc.), and output a spectrum,\footnote{In fact, one can refine the Fukaya category, an $A_\infty$-category linear over a discrete ring, to the spectral Fukaya category, an $A_\infty$-category linear over a ring spectrum. (In general, the $A_\infty$-structure is curved, but we digress.)} referred to as a ``Floer homotopy type'', which refines the original (co)homological invariant.\footnote{A Floer homotopy type always canonically exists over some version of ${\rm MU}$ (e.g., ${\rm MU}$ itself, or ${\rm MUP}$, or its derived orbifold variant). Trying to build a Floer homotopy type as a module over a different ring spectra is what requires the homotopical assumption. Moreover, we should note that the canonical ${\rm MU}$ Floer homotopy type may not be related to, say, an $\bbS$ Floer homotopy type, even when both exist, cf. \cite{BP26}.} 

Of course, one has to actually construct an apparatus which turns geometric data into algebraic data, as follows.

\subsubsection{Abouzaid-Blumberg framework}
The original apparatus is due to Cohen-Jones-Segal \cite{CJS95}, however, we will first explain the recent Abouzaid-Blumberg \cite{AB24} framework and the relationship of the present article to \emph{loc. cit.} 

The main gadget is a flow category $\bbX$, and flow categories may be $\varphi$-structured. \cite[Theorem 1.6]{AB24} shows $\varphi$-structured flow categories are the objects of a stable $\infty$-category, denoted $\flow^\varphi$, which is equivalent to the stable $\infty$-category of modules over endomorphisms of the unit $\varphi$-structured flow category $\unit^\varphi$. We will denote by $\flow^\varphi_\Sm$ the stable $\infty$-category of $\varphi$-structured flow categories whose $n$-simplices are enriched in smooth manifolds with corners. Analogously, we will denote by $\flow^\varphi_{\rm dOrb}$ the stable $\infty$-category of $\varphi$-structured flow categories whose $n$-simplces are enriched in (orbifold) global Kuranishi charts with corners. We drop the subscript when we refer to either.

So far, the following is in the literature. As already mentioned, \cite[Theorem 1.6]{AB24} shows $\operatorname{End}(\unit^\varphi)$ is an $\bbE_1$-ring spectrum and $\flow^\varphi$ is $\Mod_{\operatorname{End}(\unit^\varphi)}$. \cite[Proposition 1.10]{AB24} shows ${\rm End}(\unit^\fr_\Sm)$, where the superscript refers to framed flow categories, is equivalent to $\bbS$ as $\bbE_1$-rings. Moreover, recent work of Hedenlund-Oldervoll \cite[Theorem 0.0.6]{HO26}, building on Abouzaid-Blumberg's idea, shows that ${\rm End}(\unit^\varphi_\Sm)$ is equivalent to $\rmM\varphi$ as $\bbE_1$-rings.

Now, here is what is expected. $\flow^\varphi$ should be a presentably symmetric monoidal stable $\infty$-category; in particular, $\operatorname{End}(\unit^\varphi)$ has a natural $\bbE_\infty$-ring structure. Moreover, $\operatorname{End}(\unit^\varphi)$ should be equivalent to the appropriate Thom spectrum as $\bbE_\infty$-rings.

Finally, here are two ideas on how to bridge the known to the expected. It is essentially known how to endow $\flow^\varphi$ with the structure of a presentably symmetric monoidal stable $\infty$-category; so we will focus on the latter expectation with the former as a jumping off point. We will stick to the case of $\Sm$ since the extension of the present article to the (derived) orbifold case is conjectural; however, points (1) and (2) below should also adapt to the case of ${\rm dOrb}$.

\begin{enumerate}
\item First, one should be able to directly identify ${\rm End}(\unit^\varphi_\Sm)$ with ${\rm End}(\unit^\Sp_\varphi)$ as $\bbE_\infty$-rings for the following reason. As already mentioned, there is an explicit point-set model, originally due to Quinn \cite{quinn}, for an infinite loop space related to geometric cobordism obtained via taking the geometric realization of an explicit simplicial set. Now, $\Omega^\infty{\rm End}(\unit^\varphi_\Sm)$ is more or less tautologically the Quinn model. Meanwhile, $\Omega^\infty{\rm End}(\unit^\Sp_\varphi)$ is a slight perturbation of the Quinn model -- it is obtained via taking the geometric realization of a simplicial \emph{space}. In particular, the simplicial set whose geometric realization is $\Omega^\infty{\rm End}(\unit^\varphi_\Sm)$, when endowed with the levelwise discrete topology, is \emph{not} the simplicial space whose geometric realization is $\Omega^\infty{\rm End}(\unit^\Sp_\varphi)$; the latter simplicial space, levelwise, has a slightly more involved topology due to the fact that the category of $\varphi$-structured correspondences we consider is an $\infty$-category, hence, already spatially-enriched. Nevertheless, the simplicial spaces are similar enough, and the $\bbE_\infty$-ring structures both geometric enough, that a direct comparison seems feasible. 
\item Second, there should be an explicit equivalence of presentably symmetric monoidal stable $\infty$-categories
    \begin{equation}
    F:\flow^\varphi_\Sm\xrightarrow{\sim}\Gys^\bbR_\varphi(\Sm;\Sp).
    \end{equation}
Of course, this would imply the first point; however, constructing a symmetric monoidal functor of stable $\infty$-categories would allow one to avoid \emph{directly} comparing the two units. Instead, a symmetric monoidal functor would have to send the unit to the unit, and it would remain to check it is an equivalence. The functor should essentially be obtained by recognizing that any morphism space in a $\varphi$-structured flow $n$-simplex $\bbX$ is simply a correspondence between two Euclidean spaces where the right leg can be taken to be $\varphi$-structured. This would yield a diagram of $n$-simplices in $\Gys^\bbR_\varphi(\Sm;\Sp)$, and the colimit should be $F(\bbX)$.
\end{enumerate}

\subsubsection{Cohen-Jones-Segal framework}
As already mentioned, Cohen-Jones-Segal \cite{CJS95} constructed the original apparatus which yielded a spectrum from a flow category. However, the construction was a bit too clunky to warrant further development since it involved building a CW-spectrum cell-by-cell via repeated applications of the Pontryagin-Thom construction. I.e., the Cohen-Jones-Segal construction is ``married'' to the Pontraygin-Thom construction. However, one could reasonably interpret $\Gys^\bbR_\varphi(\Sm;\Sp)$ as the natural presentably symmetric monoidal stable $\infty$-category where a $\varphi$-structured flow category should live via interpreting a $\varphi$-structured flow category as the homotopy colimit of a diagram of homotopy invariant Yoneda sheaves. In particular, one should view the image of a $\varphi$-structured flow category under the equivalence of Corollary \ref{cor:main} as the Cohen-Jones-Segal construction.

\subsection{Relationship to other work}
The more geometrically-minded reader may wonder why we have chosen to prove a homotopy coherent Pontraygin-Thom isomorphism in this way. As already mentioned, it is long-standing folklore that disjoint union and Cartesian product can directly yield a canonical $\bbE_\infty$-ring lift of geometric cobordism. Therefore, one expects to be able to directly build an $\bbE_\infty$-ring structure using these operations by hand; moreover, one expects to be able to directly show the Pontryagin-Thom construction respects the coherence by hand. One can view this as a ``bottom up'' approach. In the present article, we take a ``top down'' approach by building a significant amount of structure, essentially categorifying \cite{quillenElementaryProofsResults1971}, in order to get both the $\bbE_\infty$-ring lift of geometric cobordism and the Pontraygin-Thom construction ``for free'', i.e., via universal properties. The present authors take the latter approach as they believed the former is too difficult to carry out -- meanwhile, the former approach is taken in upcoming work of Abouzaid--Bai. (In fact, \emph{loc. cit.} will additionally include a proof of the derived orbifold case; the present authors plan to investigate generalizing the techniques of the present article to obtain the derived orbifold case as well.) Of course, both approaches will yield the same $\bbE_\infty$-lifts by Pontryagin-Thom itself, but one can also show this directly by observing Abouzaid-Bai's lift is an $\bbE_\infty$-algebra in $\Gys^\bbR_{\varphi}(\Sm;\Sp)$. The present authors also believe the present article's approach, although requiring a bit more background to set up, has additional offerings as it provides more structure (through e.g. Theorems \ref{thm:premain} and \ref{thm:main2}) and a new perspective on Floer homotopy types (in particular, the Cohen-Jones-Segal construction).

Also, the cobordism categories featured in the Baez--Dolan cobordism hypothesis \cite{lurie2009classificationtopologicalfieldtheories} and the work of Galatius--Madsen--Tillmann--Weiss \cite{GMTW} seem closely related to the Quinn model for geometric cobordism. More specifically, if $\mathrm{Cob}_{\varphi}$ denotes an $(\infty,\infty)$-category of stable $\varphi$-structured cobordisms, there is an expected equivalence $\rmB\mathrm{Cob}_{\varphi} \simeq \Omega^{\infty}\rmM\varphi$. To produce such an equivalence, it seems simplest to directly compare $\rmB\mathrm{Cob}_{\varphi}$ with $\Omega^{\infty}\mathrm{End}(\unit^{\Sp}_{\varphi})$ (as they have seemingly identical ``formulae'') and then appeal to Theorem~\ref{thm:main}. The authors are grateful to Calle and Chan for discussions around this point.

\subsection{Outline}
As the present article is quite lengthy, we briefly outline its contents.

In Section \ref{sec:dsm}, we review the theory of derived smooth manifolds following Pardon's treatment \cite{pardon-DSm}. The notion of a topological site is recalled, as is the notion of a derived site -- both of which are key for defining the \category of derived smooth manifolds. 
Afterwards, the amplitude of a map between derived smooth manifolds is defined, and we introduce the notion of a standard presentation of a map of derived smooth manifolds. The section concludes by establishing some elementary properties of standard presentations.

In Section \ref{sec:perfectcomplexandvirtual}, we begin by introducing the notion of a perfect complex on a derived smooth manifold $X$, the chief example of which is the tangent complex; such perfect complexes can be organized into a stable \category $\Perf_{X}$. The remainder of the section is devoted to producing a natural map from algebraic K-theory of perfect complexes on $X$ to the real topological K-theory of the underlying space of $X$, for $X$ paracompact Hausdorff. To accomplish this, we introduce the notion of a continuous perfect complex on a topological space; these can also be organized into a stable \category $\cperf_X$. We use the algebraic K-theory of continuous perfect complexes to interpolate between $\K(\Perf_{X})$ and $\KO(X)$. 

We begin Section \ref{sec:structurefun} by reviewing the basics of correspondence \categories and abstract 6-functor formalisms. Following Dold's treatment of geometric cobordism, we introduce the notion of a structure functor $F$ on an \category $\m{C}$; this allows us to ``twist'' the morphisms in $\m{C}$ by the functor $F$. In the case where $F$ is related to an abstract 6-functor formalism $D$, we show that $D$ can also be twisted by $F$ to produce a new 6-functor formalism $D_{F}$. 

Section \ref{sec:correspondences} begins by reviewing the basics of Thom spectra and orientation theory following \cite{ABGHR}. 
The results of Section 4 are applied to construct an \category of tangentially structured correspondences between derived smooth manifolds, and an associated 6-functor formalism.
Using this 6-functor formalism, we construct the already mentioned \category $\TrCor^{\ex}_{\varphi}(\Sm)$ and establish its basic properties. 

Section \ref{sec:gysin} is concerned with $\Gys_{\varphi}^{\bbR}(\Sm;\Sp)$, the \category of homotopy invariant spectral sheaves on $\TrCor^{\ex}_{\varphi}(\Sm)$. We prove that $\Gys_{\varphi}^{\bbR}(\Sm;\Sp)$ is stable, presentably symmetric monoidal, and equivalent to modules over the endomorphism $\einf$-ring spectrum of $\unit^{\Sp}_{\varphi}$. We then prove that the unit has levelwise homotopy groups tautologically isomorphic to $\varphi$-structured geometric cobordism. 

In Section \ref{sec: coherent PT}, we prove that $\rmM\varphi$ admits a canonical lift to an $\einf$-algebra $\Gamma_{\varphi}$ in $\Gys_{\varphi}^{\bbR}(\Sm;\Sp)$, thereby receiving an essentially unique $\einf$-map from $\unit^{\Sp}_{\varphi}$. Using a straightforward generalization of \cite[Proposition 1.10]{quillenElementaryProofsResults1971} (which relies on the classical Pontryagin-Thom isomorphism), we show this essentially unique map is an equivalence of $\varphi$-Gysin sheaves and agrees with the Pontryagin-Thom isomorphism on homotopy groups. 

\subsection{Notation and terminology}
We freely use the language of \categories and higher algebra throughout this article, following \cite{lurieHigherToposTheory2009}, \cite{lurieHigherAlgebra2017}, and \cite{kerodon}. 
We consider \categories of varying sizes throughout this article, following the set-theoretic conventions in \cite{lurieHigherToposTheory2009} and \cite{kerodon}. 
Throughout the present article, we make repeated use the following notation. 
\begin{itemize}
    \item $\Cat_{\infty}$ denotes the large \category of small \categories. 
    \item $\Spaces$ denotes the large \category of small spaces/$\infty$-groupoids. 
    \item $\widehat{\Cat}_{\infty}$ denotes the very large \category of large \categories. 
    \item $\CAlg(\m{C})$ denotes the \category of $\einf$-algebras in a symmetric monoidal \category $\m{C}$.
    \item $\m{C}^{\simeq}$ denotes the groupoid core of an \category $\m{C}$. 
    \item $\Pic(\m{C}) \subset \m{C}^{\simeq}$ denotes the full subgroupoid spanned by invertible objects in a symmetric monoidal \category $\m{C}$.  
    \item $\PrL$ denotes the very large \category of presentable \categories and left adjoint functors; this is a closed symmetric monoidal $\infty$-category with respect to the Lurie tensor product. 
    \item $\PrSt$ denotes the full subcategory of $\PrL$ spanned by the presentable stable \categories; this is a closed symmetric monoidal subcategory of $\PrL$.
    \item $\Sp$ denotes the \category of spectra, and $F(X,Y)$ denotes the internal hom.
    \item $\Mod_{R}$ denotes the \category of $R$-module spectra for $R \in \CAlg(\Sp)$. 
    \item $\CMon$ denotes the \category of $\einf$-monoids, i.e., $\einf$-algebras in the \category of $\Sp$ with respect to the Cartesian symmetric monoidal structure. 
    \item $\CMongp \subset \CMon$ denotes the full subcategory spanned by those $\einf$-monoids which are group-complete (alias grouplike).
    \item $\int^{c}_{S^{\op}} F$ and $\int^{cc}_{S}F$ denote the Cartesian and coCartesian unstraightening of a $F : S \rightarrow \Cat_{\infty}$, respectively. 
    \item $\m{P}_{\Sigma}(\m{C})$ denotes the nonabelian derived \category of $\m{C}$, given by product-preserving presheaves on $\m{C}$. 
\end{itemize} 

\begin{acknowledgments}
The authors are especially grateful to John Pardon for patiently explaining his approach to derived smooth manifolds and for his interest in our work. The authors would like to thank Maxine Calle, Tyler Lawson, and John Pardon for feedback on an early draft. Finally, the present work also benefited from correspondence with Thomas Blom, David Chan, Tom Goodwillie, Rune Haugseng, Gerd Laures, Jim McClure, Haynes Miller, Noah Porcelli, and Marco Volpe. The first author was partially supported by an NSF Graduate Research Fellowship award during this work.
\end{acknowledgments}

\section{Derived smooth manifolds, after Pardon}\label{sec:dsm}
\subsection{(Derived) topological sites}
We begin by briefly introducing one of the main players: derived smooth manifolds. We denote by $\Top$ the $1$-category of topological spaces and continuous functions.

\begin{defin}
Let $\m{C}$ be an $\infty$-category equipped with a functor 
    \begin{equation}
    \rlz{-}:\m{C}\to\Top.
    \end{equation}
\begin{itemize}
\item An \emph{open embedding} in $\m{C}$ is a morphism $U\to X\in\m{C}$ which is Cartesian over an open embedding $\rlz{U}\to\rlz{X}\in\Top$. 
\item An \emph{open covering} of $X\in\m{C}$ is a collection of open embeddings $\{U_i\to X\}_{i\in I}$ such that $\big\{\rlz{U_i}\to\rlz{X}\big\}_{i\in I}$ is an open covering of $\rlz{X}$.
\end{itemize}
\end{defin}

\begin{defin}
A \emph{topological site} is a pair $\big(\m{C},\rlz{-}\big)$, where $\m{C}$ is an $\infty$-category and 
    \begin{equation}
    \rlz{-}:\m{C}\to\Top
    \end{equation}
is a functor, called the \emph{realization}, which has all open embeddings, i.e., for every $X\in\m{C}$, every open subset $\rlz{U}\subset\rlz{X}$ is realized by an open embedding $U\to X$.
\end{defin}

\begin{example}
Let $\Sm$ be the $1$-category whose objects are smooth manifolds (assumed second countable and Hausdorff) and whose morphisms are smooth maps; the forgetful functor $\Sm \rightarrow \Top$ determines a topological site.
\end{example}

\begin{rem}
It is straightforward to see that open embeddings endow $\m{C}$ with a Grothendieck topology in which a sieve $S \subset \m{C}_{/X}$ is covering if and only if it is generated by an open covering $\{U_{i} \rightarrow X\}_{i \in I}$.
We will implicitly use this Grothendieck topology to consider sheaves on a topological site. 
\end{rem}

So far, morphisms in topological sites are not of a ``local nature''. The following definition, which most topological sites of interest satisfy, remedies this.

\begin{defin} 
A topological site $\m{C}$ is \emph{subcanonical} if, for every $X \in \m{C}$, the presheaf $\Map_{\m{C}}(-,X) : \m{C}^{\op} \rightarrow \Spaces$ is a sheaf. 
\end{defin}

Let $\m{C}^\opemb\subset\m{C}$ be the wide subcategory where we only allow morphisms which are open embeddings; we analogously define $\Top^\opemb\subset\Top$.

\begin{defin}
Let $\m{C}$ be a subcanonical topological site. We say $\m{C}$ is \emph{perfect} if every lifting problem of the form
    \begin{equation}
    \begin{tikzcd}
    S\arrow[r]\arrow[d] & \m{C}^\opemb\arrow[d] \\
    S^{\cocone}\arrow[r]\arrow[ur,dashed] & \Top^\opemb
    \end{tikzcd}
    \end{equation}
has a solution whenever $S^{\cocone}\to\Top^\opemb$ is a covering sieve.
\end{defin}

In fact, given any topological site, it admits a ``perfection'' satisfying a suitable universal property. To precisely formulate this, we need to discuss functors between topological sites.  

\begin{defin}
Let $\m{C}$ and $\m{D}$ be topological sites. A \emph{topological functor} from $\m{C}$ to $\m{D}$ is a pair $(F,\pi)$, where 
    \begin{equation}
    F:\m{C}\to\m{D}
    \end{equation}
is a functor which preserves open embeddings and 
    \begin{equation}
    \pi:\rlz{F(-)}_{\m{D}}\to\rlz{-}_{\m{C}}
    \end{equation}
is a natural transformation sending open embeddings to pullbacks. We say $(F,\pi)$ is \emph{strict} if $\pi$ is a natural isomorphism.
\end{defin}

\begin{defin}
Let $(F,\pi):\m{C}\to\m{D}$ be a strict topological functor. Given any $X\in\m{C}$, we may consider the induced map of presheaves 
    \begin{equation}
    \Map_{\m{C}}(-,X)\to\Map_{\m{D}}\big(F(-),F(X)\big).
    \end{equation}
We say $(F,\pi)$ is \emph{topologically fully faithful} if, for every $X\in\m{C}$, the aforementioned induced map of presheaves is an isomorphism after sheafification.
\end{defin}

\begin{defin}
Let $(F,\pi):\m{C}\to\m{D}$ be a topological functor. We say $(F,\pi)$ is \emph{topologically essentially surjective} if every object in $\m{D}$ admits an open cover by objects in the image of $F$.
\end{defin}

\begin{prop}[Theorem 2.8.60 in \cite{pardon-DSm}]
Let $\m{C}$ be a topological site. There exists a topologically fully faithful and topologically essentially surjective strict topological functor 
    \begin{equation}
    (F,\pi):\m{C}\to\m{C}^\frakP,
    \end{equation}
where $\m{C}^\frakP$ is perfect, and satisfies the following universal property: for any perfect topological site $\m{D}$, precomposition by $(F,\pi)$ induces an equivalence of $\infty$-categories between topological functors from $\m{C}^\frakP$ to $\m{D}$ and topological functors from $\m{C}$ to $\m{D}$.
\end{prop}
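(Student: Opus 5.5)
We construct the perfection $\m{C}^\frakP$ as a full subcategory of sheaves on $\m{C}$ equipped with an extended realization. Each such object is obtained by gluing objects of $\m{C}$ along open embeddings.

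\emph{Step 1: sheafify.} Replace $\m{C}$ by its image $\m{C}'$ in $\Shv(\m{C})$ under the sheafified Yoneda functor $X\mapsto L\Map_{\m{C}}(-,X)$. We must check that $\rlz{-}$ factors through $\m{C}'$. A morphism of sheafified representables is locally given by morphisms in $\m{C}$, and realizations of maps glue in $\Top$. So for a map $h_X\to h_Y$ we choose an open cover of $\rlz{X}$ on which it lifts to $\m{C}$, and glue the resulting continuous maps. This gives a realization on $\m{C}'$ and a strict, topologically fully faithful functor $\m{C}\to\m{C}'$. Open embeddings in $\m{C}$ still realize all open subsets in $\m{C}'$, since their images are subobjects and are Cartesian over their realizations.

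\emph{Step 2: close under gluing.} Define $\m{C}^\frakP\subset\Shv(\m{C})_{/\,?}$ to be the $\infty$-category of pairs $(\m{F},\rlz{\m{F}})$. Here $\m{F}$ is a sheaf that is a colimit, in $\Shv(\m{C})$, of a diagram of objects of $\m{C}'$ and open embeddings, indexed by a covering sieve of some space $T$. The space $\rlz{\m{F}}=T$ is presented compatibly with that diagram. Equivalently, $\m{C}^\frakP$ is the full subcategory of sheaves $\m{F}$ admitting an effective epimorphism $\coprod h_{U_i}\to\m{F}$ by open embeddings whose overlaps are again open, with realization glued from the $\rlz{U_i}$. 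Colimits of sieve diagrams of monomorphic open embeddings in an $\infty$-topos are computed as unions of subobjects. This makes the realization well defined and makes every open subset realizable.

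\emph{Step 3: perfection and the universal property.} Perfection is essentially by construction. A lifting problem $S\to\m{C}^{\frakP,\opemb}$ over a covering sieve $S^\cocone\to\Top^\opemb$ is solved by taking the colimit in sheaves. The obstacle is showing that the maps into the cone point are open embeddings Cartesian over the realization, i.e.\ descent for open embeddings. This follows because colimits in an $\infty$-topos are universal and the diagram consists of monomorphisms.

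For the universal property, a topological functor $\m{C}\to\m{D}$ with $\m{D}$ perfect is extended to $\m{C}^\frakP$ by sending a glued object to the lift that perfection of $\m{D}$ provides for the image diagram. Subcanonicity of $\m{D}$ makes this extension essentially unique, and makes it functorial on morphisms, which are locally given by $\m{C}$-morphisms.

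We expect the main obstacle to be the coherence of this extension as a functor of $\infty$-categories, rather than merely an object-by-object assignment. I would handle it by realizing it as a left Kan extension along $\m{C}\to\m{C}^\frakP$, computed by the gluing diagrams. Full faithfulness of the resulting restriction map on topological functors would then follow because every object of $\m{C}^\frakP$ is, canonically, such a colimit of representables.
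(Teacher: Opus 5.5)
The paper gives no argument for this statement; it is imported from Pardon, so your sketch has to stand on its own. Its overall shape is reasonable: realize $\m{C}^{\frakP}$ as locally representable objects of $\Shv(\m{C})$, and get the universal property from $F_!\colon\Shv(\m{C})\to\Shv(\m{D})$ plus subcanonicity and perfection of $\m{D}$. However, the step you yourself flag as the crux rests on an argument that fails. Universality of colimits, together with the fact that all transition maps are monomorphisms, does \emph{not} imply that the legs $U_V\to\m{F}=\colim_{V\in S}U_V$ are monomorphisms, nor that $U_V\times_{\m{F}}U_W\simeq U_{V\cap W}$.

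For a counterexample, index a diagram in $\Spaces$ by the nonempty subsets of $\{1,2,3\}$ under reverse inclusion (the shape of a \v{C}ech diagram of three opens). Give it value $\varnothing$ on $\{1,2,3\}$ and $*$ elsewhere. Every map is a monomorphism, yet the colimit is the realization of a hexagon, $\simeq S^1$, and $*\to S^1$ is not a monomorphism ($*\times_{S^1}*\simeq\bbZ$). Three parts of your sketch depend on this point: your Step 2 claim that such colimits are ``unions of subobjects''; the subcanonicity of $\m{C}^{\frakP}$, which is part of the definition of perfect and which you never check; and the representability of $F_!$ of a glued object.

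What you actually need is that the sieve diagram is \emph{Cartesian}: for $V,V'\subset W$ in $S$, one has $U_V\times_{U_W}U_{V'}\simeq U_{V\cap V'}$. This holds because open embeddings are Cartesian over $\Top$ and intersections are pullbacks there. In sheaves it reads $Lh_U\simeq Lh_X\times_{y(\rlz{X})}y(\rlz{U})$, where $y(T)\equiv\Hom_{\Top}(\rlz{-},T)$ is a sheaf of sets on $\m{C}$. Now fix $W\in S$. The transformation $(V\mapsto U_{V\cap W})\Rightarrow(V\mapsto U_V)$ is Cartesian. Its source has colimit $U_W$, because $V\mapsto V\cap W$ is right adjoint to $S_{/W}\hookrightarrow S$ and hence final. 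Descent \cite[Theorem 6.1.3.9]{lurieHigherToposTheory2009} then gives $U_V\times_{\m{F}}U_W\simeq U_{V\cap W}$, and taking $V=W$ yields the monomorphism.

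Two smaller repairs are also needed.
\begin{enumerate}
\item Speaking of an ``open embedding $h_{U_i}\to\m{F}$'' is circular before $\m{F}$ has a realization. Instead, define $\m{C}^{\frakP}$ in the comma $\infty$-category of maps $\m{F}\to y(T)$ that are locally of the form $\m{F}\times_{y(T)}y(T_i)\simeq Lh_{U_i}$ with $\rlz{U_i}\cong T_i$. The realization is then simply the projection to $T$. The forgetful functor to $\Shv(\m{C})$ is fully faithful, since $\Map(\m{F},y(T'))\simeq\Hom_{\Top}(T,T')$ by gluing. The factorization $h_Y\to Lh_Y\to y(\rlz{Y})$ also settles your Step 1 without choosing covers.
\item For a non-strict $(F,\pi)$, the image diagram lies over $V\mapsto\rlz{F(U_V)}=T'\times_T V$. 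This diagram is indexed by $S$ rather than by a covering sieve of $T'$. So perfection of $\m{D}$ applies only after you glue $T'$ and re-index over the sieve of $T'$ that it generates, using uniqueness of Cartesian lifts of open embeddings.
\end{enumerate}
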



We will need the following notion shortly.

\begin{defin}
Let $K$ be a simplicial set. We say $K$ is \emph{cosifted} if it is non-empty and its diagonal is final. A \emph{cosifted limit} in $\m{C}$ is a limit over a cosifted diagram.
\end{defin}

\begin{rem}
    Unfortunately, a ``final functor'' in the language of \cite{lurieHigherToposTheory2009} is a ``cofinal functor'' in the languge of \cite{pardon-DSm}; we adopt the former convention as it is more standard in the literature.
\end{rem}

Now, given a perfect topological site which admits finite products $\m{C}$, we may associate a derived (topological) site ${\rm D}\m{C}$ by ``formally adjoining'' (1) finite cosifted limits and (2) finite limits modulo finite products -- the derived site satisfies a suitable universal property, cf. \cite[Theorem 2.8.88]{pardon-DSm}, which we now recall in the case of $\Sm$. 

\begin{defin}[Definition 2.9.3 in \cite{pardon-DSm}]
The topological site $\DSm$ of \textit{derived smooth manifolds} together with the functor $\Sm \rightarrow \DSm$ is defined by the following properties.
\begin{itemize}
\item $\Sm\to\DSm$ is a strict topological functor between perfect topological sites.
\item $\Sm\to\DSm$ is fully faithful and preserves finite products.
\item $\DSm$ admits all finite limits; moreover, every $X\in\DSm$ is locally isomorphic to a finite limit of smooth manifolds.
\item $\rlz{-}:\DSm\to\Top$ preserves finite limits.
\item For any $Y\in\Sm$, the Yoneda sheaf $\Map_{\DSm}(-,Y)\in\Shv(\DSm)$ topologically preserves finite cosifted limits (cf. \cite[Definition 2.9.2]{pardon-DSm}).
\item For any complete perfect topological site $\m{D}$, precomposition with $\Sm\to\DSm$ induces an equivalence of $\infty$-categories between topological functors $\DSm$ to $\m{D}$ preserving finite cosifted limits and topological functors $\Sm$ to $\m{D}$.
\end{itemize}
\end{defin}

\begin{rem}
In fact, the functor $\Sm\to\DSm$ preserves all finite transverse limits (in particular, transverse pullbacks), cf. \cite[Definition 2.9.8 \& Corollary 2.9.22]{pardon-DSm}.
\end{rem}

\begin{rem}\label{rem: point-set topology of DSm}
In fact, every $X\in\DSm$ is locally isomorphic to the limit of a truncated cosimplical smooth manifold, cf. \cite[Lemma 2.9.11]{pardon-DSm}.
As a consequence, $\rlz{X}$ is locally homeomorphic to a subspace of a smooth manifold \cite[2.9.12]{pardon-DSm}. 
Consequently, $\rlz{X}$ is locally metrizable.
\end{rem}

\subsection{Amplitude}\label{subsec:amplitude}
The following is a reformulation of Pardon's notion of amplitude at most $1$ maps \cite[Definition 2.9.23]{pardon-DSm}; this is possible by Pardon's minimal amplitude factorization result \cite[Proposition 2.9.32]{pardon-DSm}.

\begin{defin}
Let $f:X\to Y\in\DSm$.
\begin{itemize}
\item $f$ is said to have \emph{amplitude $0$} (or is a \emph{submersion}) if, for every $x\in X$, there exists an open neighborhood $U\ni x$ such that $f\vert_U$ is of the form
    \begin{equation}\label{eqn:submersion}
    \begin{tikzcd}
    U=\bbR^k\times Y\arrow[r,"f\vert_U"]\arrow[d]\arrow[dr, phantom, "\lrcorner", very near start] & Y\arrow[d] \\
    \bbR^k\arrow[r] & *.
    \end{tikzcd}
    \end{equation}

\item $f$ is said to have \emph{amplitude $1$} (or is an \emph{immersion}) if, for every $x\in X$, there exists an open neighborhood $U\ni x$ such that $f\vert_U$ is of the form
    \begin{equation}\label{eqn:immersion}
    \begin{tikzcd}
    U=*\times_{\bbR^k}Y\arrow[r,"f\vert_U"]\arrow[d]\arrow[dr, phantom, "\lrcorner", very near start] & Y\arrow[d] \\
    *\arrow[r] & \bbR^k.
    \end{tikzcd}
    \end{equation}
\item $f$ is said to have \emph{amplitude at most $1$} if, for every $x\in X$, there exists an open neighborhood $U\ni x$ such that $f\vert_{U}$ is equivalent to an immersion followed by a submersion.
\end{itemize}
\end{defin}

\begin{lem}
Being an immersion, submersion, or amplitude at most 1 is preserved under composition and pullback.
\end{lem}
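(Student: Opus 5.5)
Everything here reduces to the local models \eqref{eqn:submersion} and \eqref{eqn:immersion}, via pasting of pullback squares in $\DSm$. First note that all three properties are local on the source. They are also compatible with restriction to opens: if $V\subset Y$ is open and $f$ has one of the properties, then so does $f^{-1}(V)\to V$. For a local model this holds because pulling the defining square back along the open embedding $V\to Y$ gives a square of the same form, since open embeddings are Cartesian over $\Top$ and $\rlz{-}$ preserves finite limits.

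\emph{Pullback.} Let $f:X\to Y$ and $g:Y'\to Y$, and put $X'=X\times_Y Y'$. Points of $X'$ have neighborhoods of the form $U\times_Y Y'$ with $U\subset X$ a local-model neighborhood. Suppose $U=\bbR^k\times Y$. Pasting pullback squares gives $U\times_Y Y'\simeq \bbR^k\times Y'$ over $Y'$. Suppose instead $U=*\times_{\bbR^k}Y$. Then $U\times_YY'\simeq *\times_{\bbR^k}Y'$, where $Y'\to\bbR^k$ is the composite $Y'\to Y\to\bbR^k$. For amplitude at most $1$, write $f|_U\simeq s\circ i$ locally. The pullback of this composite is the composite of the pullbacks, so we conclude by the two previous cases.

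\emph{Composition of submersions and of immersions.} Take $f:X\to Y$ and $g:Y\to Z$, both submersions, and $x\in X$. Choose $V\ni f(x)$ with $V\simeq\bbR^l\times Z$ over $Z$. Using the restriction remark, choose $U\subset f^{-1}(V)$ with $U\simeq\bbR^k\times V$ over $V$. Then $U\simeq\bbR^{k+l}\times Z$ over $Z$. For immersions we use $V=*\times_{\bbR^l}Z$ and $U=*\times_{\bbR^k}V$. The map $V\to\bbR^k$ need not extend to $Z$, so first shrink $V$ so that it does. Locally near a point, $V$ is a closed derived sub-object cut out in $Z$, and maps into the smooth manifold $\bbR^k$ are determined by the sheaf $\Map_{\DSm}(-,\bbR^k)$ of functions. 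I would invoke Pardon's result that functions on $*\times_{\bbR^l}Z$ locally lift to $Z$, namely the Yoneda sheaf of $\bbR^k$ topologically preserving finite cosifted limits, or equivalently smooth functions on a finite limit being locally restricted from the ambient. With an extension $\tilde h:Z\to\bbR^k$ in hand, $U\simeq *\times_{\bbR^{k+l}}Z$ via $(\tilde h,\,Z\to\bbR^l)$, again by pasting.

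\emph{Amplitude at most $1$.} Locally, $g\circ f\simeq s_2 i_2 s_1 i_1$. It suffices to rewrite locally an immersion followed by a submersion in the opposite order, i.e.\ to factor $i_2\circ s_1$ as a submersion followed by an immersion. Locally, $s_1$ is $\bbR^k\times W\to W$ and $i_2$ is $W=*\times_{\bbR^m}Z\to Z$. Then $\bbR^k\times W\simeq *\times_{\bbR^m}(\bbR^k\times Z)$, so $i_2 s_1$ equals the immersion $\bbR^k\times W\to\bbR^k\times Z$ followed by the submersion $\bbR^k\times Z\to Z$. This factorization is again obtained from pullback pasting and commuting products with fiber products. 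After this swap, the previous step combines the two adjacent immersions and the two adjacent submersions, so the composite is locally an immersion followed by a submersion. Alternatively, Pardon's minimal amplitude factorization \cite[Proposition 2.9.32]{pardon-DSm} identifies the notion with Pardon's \cite[Definition 2.9.23]{pardon-DSm}, where closure under composition is established.

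The main obstacle is the function-extension step in the composition of immersions. It is the one place where we need a genuinely local input from Pardon's theory and not just formal pasting, and I would cite the corresponding local-extension statement in \cite{pardon-DSm} rather than reprove it.
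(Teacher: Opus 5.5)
Your proof is correct, but it takes a different route from the paper. The paper's proof is only a citation of \cite[Exercise 2.9.24]{pardon-DSm}: it defers to Pardon's general amplitude calculus, and for amplitude at most $1$ it relies implicitly on the minimal amplitude factorization \cite[Proposition 2.9.32]{pardon-DSm} to match its definition with Pardon's. You instead work directly with the local models \eqref{eqn:submersion} and \eqref{eqn:immersion}. Pullbacks, and composites of submersions, are pure pasting of pullback squares. Composites of immersions need exactly one derived-geometric input: germs of maps $*\times_{\bbR^l}Z\to\bbR^k$ lift, up to homotopy, to germs of maps $Z\to\bbR^k$. Amplitude at most $1$ then follows from the swap $i_2\circ s_1\simeq\pr\circ(\id_{\bbR^k}\times i_2)$ rather than from minimal amplitude factorization. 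Your route is self-contained relative to the paper's own definitions and shows exactly where derived geometry enters; the citation buys brevity and Pardon's full generality.

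Three small points to tidy up:
\begin{itemize}
\item The sentence introducing the swap states the order backwards. You are rewriting the composite $i_2\circ s_1$, a submersion followed by an immersion, as an immersion followed by a submersion. The computation itself is right.
\item Shrinking $V$ in the immersion step only makes sense if the local models are read as local on the target as well. This is clearly the intended reading: taken literally, an open embedding onto a proper open subset would not even be a submersion. With that reading, the submersion step also needs a harmless shrink to product opens $B\times Z''\subseteq\bbR^l\times Z'$, with $B\cong\bbR^l$ a ball.
\item $*\times_{\bbR^l}Z$ is a fiber product, so make sure the Pardon input you cite applies to it. One way is its presentation as the limit of the truncated cosimplicial manifold $Z\rightrightarrows\bbR^l\times Z$, cf.\ \cite[Lemma 2.9.11]{pardon-DSm}. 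All you use is that $\pi_0$ of germs of maps from this limit to $\bbR^k$ is a quotient of $\pi_0$ of germs of maps out of $Z$.
\end{itemize}
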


\begin{proof}
Cf. \cite[Exercise 2.9.24]{pardon-DSm}.
\end{proof}\begin{rem}
We say $X\in\DSm$ is \emph{amplitude} 0, \emph{amplitude} 1, or \emph{amplitude at most $1$} if $X\to*$ is, respectively. Observe, the subcategory of amplitude 0 derived smooth manifolds with amplitude at most 1 maps between them is precisely $\Sm$. An amplitude at most 1 derived smooth manifold is also known in the literature as a (non-orbifold) \emph{global Kuranishi chart}.
\end{rem}

\begin{defin}
Let $f:X\to Y\in\DSm$. 
We say \textit{$f$ is of standard presentation} if there exists a diagram in $\DSm$ of the form
    \begin{equation}
    \begin{tikzcd}
    X=*\times_{\bbR^k}\big(Y\times\bbR^N\big)\arrow[r]\arrow[d]\arrow[rr,bend left,"f"]\arrow[dr, phantom, "\lrcorner", very near start] & Y\times\bbR^N\arrow[r]\arrow[d] & Y \\
    * \arrow[r] & \bbR^k, &
    \end{tikzcd}
    \end{equation}
where the second morphism in the first row is the obvious projection.
We call such a diagram a \textit{standard presentation of $f$}.
We say \textit{$f$ is locally of standard presentation} if, locally on the source, $f$ is of standard presentation.
By a \textit{local standard presentation of $f$}, we mean an open subset $\rlz{U}\subset\rlz{X}$ and a standard presentation of $f|_{U}$.
\end{defin}

\begin{lem}
If $f:X\to Y\in\DSm$ is amplitude at most 1, then $f$ is locally of standard presentation.
\end{lem}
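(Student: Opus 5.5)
Fix $x\in X$. By the definition of amplitude at most $1$, there is an open neighborhood $U\ni x$ on which $f|_U$ factors as $U\xrightarrow{i}V\xrightarrow{p}Y$, with $i$ an immersion and $p$ a submersion. The plan is to shrink $U$ twice, once to put $p$ in product form and once to put $i$ in zero-locus form, and then to fuse the two pullback squares into a single standard presentation.

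\emph{Step 1 (the submersion).} Since $p$ is a submersion, there is an open $W\ni i(x)$ in $V$ with $W\simeq\bbR^N\times Y$ over $Y$, exhibited by the square \eqref{eqn:submersion}. Replacing $U$ by the open $i^{-1}(W)$, which realizes $\rlz{i}^{-1}\rlz{W}$ and exists because $\DSm$ has all open embeddings, we may assume $V=Y\times\bbR^N$ and that $p$ is the projection.

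\emph{Step 2 (the immersion).} Since $i$ is an immersion, after shrinking $U$ further around $x$ there is a pullback square \eqref{eqn:immersion}:
\begin{equation*}
U\simeq *\times_{\bbR^k}V',
\end{equation*}
where $V'$ is the target appearing in that square. Here lies the one subtlety. In the local form of an immersion, the target is allowed to be replaced by an open neighborhood $V'\subset V=Y\times\bbR^N$ of $i(x)$, so the map $g:V'\to\bbR^k$ is only defined on $V'$ and not on all of $Y\times\bbR^N$. If the definition already provides the square with target all of $Y\times\bbR^N$, we are done. Otherwise, shrink $V'$ to a product $Y'\times B$, where $B\cong\bbR^N$ is a ball about the $\bbR^N$-coordinate of $i(x)$ and $Y'\subset Y$ is open. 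Then $Y'\times B\cong Y'\times\bbR^N$, and we replace $Y$ by $Y'$.

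Two points need checking for this replacement. First, a presentation of $f|_U$ with target $Y'$ should count as a local standard presentation of $f$ over $Y$. If the definition insists on the target being $Y$ itself, handle this instead by reparametrizing the $\bbR^N$-factor rather than shrinking $Y$. Second, the pullback of \eqref{eqn:immersion} along the open embedding $Y'\times B\hookrightarrow V$ still has source an open of $U$ containing $x$. This holds because open embeddings are stable under pullback and $\rlz{-}$ preserves finite limits.

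\emph{Step 3 (assembling the presentation).} The composite $U\to Y\times\bbR^N\to Y$ is then $f|_U$, and the square is exactly the standard presentation diagram. This gives the required standard presentation of $f$ locally on the source.

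\emph{Main obstacle.} The real difficulty is the domain issue in Step 2: extending, or shrinking to a product in order to extend, the defining function $g$ of the immersion. If the $Y'$-shrinking is not permitted, I would instead try to extend $g$ to all of $Y\times\bbR^N$. A smooth bump cutoff is available because $g$ maps to the smooth manifold $\bbR^k$ and $\Map(-,\bbR^k)$ is a sheaf of $C^\infty$-type rings, which admits partitions of unity. However, extending $g$ this way adds spurious zeros away from $i(x)$, and these must then be discarded by restricting to an open subset. The product-shrinking route avoids this, so it is the first approach I would try.
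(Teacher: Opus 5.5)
Your argument is correct and is essentially the paper's proof. The paper puts the immersion in zero-locus form first, then shrinks the middle object to a product chart $\bbR^N\times V$ and shrinks the source so its image lands there; you do the submersion first and then pull the immersion's zero-locus presentation back along the open chart, which amounts to the same thing.

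Note that in the paper's definitions the local models \eqref{eqn:submersion} and \eqref{eqn:immersion} already have the entire target, so the Step 2 subtlety never arises. Your fallback of shrinking $Y$ to $Y'$ would not by itself give a standard presentation of $f|_U$, since that requires the target to be $Y$.
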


\begin{proof}
By definition, $f$ is locally of the form
    \begin{equation}
    U\hookrightarrow\widetilde{U}\to V,
    \end{equation}
where the first morphism is an immersion and the second morphism is a submersion. Since the first morphism is an immersion, by shrinking $U$, we may assume the first morphism is of the form \eqref{eqn:immersion}. Since the second morphism is a submersion, by shrinking $\widetilde{U}$, we may assume the second morphism is of the form \eqref{eqn:submersion}. Finally, by further shrinking $U$, we may assume the image of the first morphism is contained in $\bbR^N\times V$, whence the claim.
\end{proof}

\begin{rem}
\cite[Definition 2.9.23]{pardon-DSm} defines a notion of amplitude at most $I\subset\bbZ_{\geq0}$; moreover, any morphism in $\DSm$ admits a so called ``minimal amplitude factorization'' by \cite[Proposition 2.9.32]{pardon-DSm}. Our definitions of immersion and submersion are special cases of Pardon's amplitude definition; however, our definition of amplitude at most 1 is equivalent to Pardon's by the aforementioned minimal amplitude factorization result.
\end{rem}

The following lemmata amount to the ``calculus'' of standard presentations; they will be required in the sequel.

\begin{lem}\label{lem:standardpresentation1}
Consider $f:X\to Y,g:Y\to Z\in\DSm$ which are amplitude at most 1. A local standard presentation of $f$ and $g$ induces a local standard presentation of $g\circ f$.
\end{lem}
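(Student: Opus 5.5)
Fix a point $x\in\rlz{X}$ and set $y=f(x)$. By hypothesis there are local standard presentations on open neighborhoods $\rlz{V}\ni y$ and $\rlz{U}\ni x$; after shrinking $U$ we may assume $f(\rlz{U})\subset\rlz{V}$. Restricting to open subsets preserves standard presentations, because open embeddings are Cartesian over $\Top$ and pullbacks in $\DSm$ commute with restriction to opens. This gives
\begin{equation}
U=*\times_{\bbR^k}\big(V\times\bbR^N\big),\qquad V=*\times_{\bbR^\ell}\big(Z\times\bbR^M\big),
\end{equation}
with structure maps $a:V\times\bbR^N\to\bbR^k$ and $b:Z\times\bbR^M\to\bbR^\ell$. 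The restriction of $g\circ f$ to $U$ is the projection $V\times\bbR^N\to V$ restricted to $U$, followed by the projection $Z\times\bbR^M\to Z$ restricted to $V$.

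The goal is a standard presentation
\begin{equation}
U\simeq *\times_{\bbR^{k+\ell}}\big(Z\times\bbR^{M+N}\big).
\end{equation}
The main obstacle is that $a$ is defined only on $V\times\bbR^N$, which is derived and is not an open subset of $Z\times\bbR^{M+N}$. So $a$ must be extended. The key step is an extension lemma: after shrinking around $(x,\cdot)$, the map $a$ extends to a map $\tilde a:W\to\bbR^k$ on an open neighborhood $W$ of the image of $V\times\bbR^N$ inside $Z\times\bbR^M\times\bbR^N$.

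To prove the extension lemma, I would use that $\bbR^k$ is a smooth manifold. Maps into it are then governed by the universal property of $\DSm$: the Yoneda sheaf $\Map_{\DSm}(-,\bbR^k)$ topologically preserves finite cosifted limits. The limit $V\times\bbR^N=*\times_{\bbR^\ell}(Z\times\bbR^M\times\bbR^N)$ is of this kind, since a pullback along a point can be written as a truncated cosimplicial limit. It follows that, locally near the point, any map out of it into $\bbR^k$ is the restriction of a map from a neighborhood in $Z\times\bbR^{M+N}$. Concretely, a germ of a function on the derived zero locus extends to the ambient space, by the analogue of Hadamard's lemma. If the most natural form of this is stated only for pulling back along $0\in\bbR^\ell$, I would invoke it in that form, which is exactly our situation.

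Given $\tilde a$, I form
\begin{equation}
(b\circ\pr,\tilde a):W\to\bbR^{\ell}\times\bbR^k .
\end{equation}
Pasting pullback squares then finishes the argument:
\begin{equation}
*\times_{\bbR^{\ell+k}}W\simeq *\times_{\bbR^k}\big(*\times_{\bbR^\ell}W\big)\simeq *\times_{\bbR^k}\big(V\times\bbR^N\big)\simeq U,
\end{equation}
where the middle equivalence uses $*\times_{\bbR^\ell}W\simeq (V\times\bbR^N)|_{W}$ together with the fact that products with $\bbR^N$ commute with the pullback.

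Finally, $W$ is open in $Z\times\bbR^{M+N}$ but not all of it. I would absorb this by a further shrinking, or by noting that the definition of a local standard presentation allows restricting to opens. If necessary, I would replace $W$ by a product-shaped open set and reparametrize by a diffeomorphism onto $\bbR^{M+N}$, which is possible locally. Composing with the projection $Z\times\bbR^{M+N}\to Z$ recovers $g\circ f$ on $U$, which gives the required local standard presentation.
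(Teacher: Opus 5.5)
Your construction is the paper's: you view $V\times\bbR^N$ as the derived zero locus of $b\circ\pr$ in $Z\times\bbR^{M+N}$, extend $a$ to the ambient, and take $(b\circ\pr,\tilde a)$. Your local extension lemma is fine. The gap is in your last paragraph.

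A local standard presentation of $g\circ f$ may shrink only the source. You need $U'\simeq *\times_{\bbR^{k'}}(Z\times\bbR^{N'})$, with the whole target $Z$ appearing in the ambient. None of your proposed repairs achieves this.
\begin{itemize}
\item Shrinking $U$ leaves the ambient equal to the open set $W$.
\item Replacing $W$ by $Z_0\times B\cong Z_0\times\bbR^{M+N}$ gives a standard presentation of $U'\to Z_0$ for an open $Z_0\subsetneq Z$, not of $U'\to Z$.
\item Cutting off does not work either. If $\chi$ is a bump supported in $W$, then $\chi\tilde a$ vanishes on $(\rlz{V}\times\bbR^N)\setminus W$, so the zero locus of $(b\circ\pr,\chi\tilde a)$ is far larger than $U$.
\end{itemize}
Relatedly, your blanket claim that restricting to opens preserves standard presentations is false: restricting the source turns the ambient into an open subset of $Y\times\bbR^N$. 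What you actually use is restricting the source to $f^{-1}(V)$ and the target to $V$ at the same time, and that is correct.

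The missing step is exactly what the paper's bump-function sentence supplies. After your first paragraph you already have honest presentations of $U\to V$ and $V\to Z$. So it suffices to extend $a$ to all of $Z\times\bbR^{M+N}$ with $\tilde a|_{V\times\bbR^N}\simeq a$.

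Your local extensions give this once you glue them. Take a smooth partition of unity $\{\lambda_i\}$ on the paracompact $Z\times\bbR^{M+N}$, subordinate to the domains of the local extensions together with the complement of the closed set $\rlz{V}\times\bbR^N$ (where you use the function $0$). Then $\sum_i\lambda_i\tilde a_i$ restricts to $\sum_i\lambda_i a\simeq a$, and the homotopies combine using the linear structure of $\bbR^k$. This yields $*\times_{\bbR^{k+\ell}}(Z\times\bbR^{M+N})\simeq *\times_{\bbR^k}(V\times\bbR^N)\simeq U$ with no further shrinking.

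Alternatively, you can keep your local $\tilde a$ and add one coordinate $t$ with the extra equation $t\rho-1=0$. Here $\rho$ is a bump at your point and $\chi$ is a cutoff supported in $W$ with $\chi=1$ on $\mathrm{supp}\,\rho$. Then $U\cap\{\rho\neq0\}$ is the derived zero locus of $(b\circ\pr,\chi\tilde a,t\rho-1)$ on $Z\times\bbR^{M+N+1}$, which is a local standard presentation over the full $Z$.
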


\begin{proof}
As the claim is local, it is enough to prove it when $f$ and $g$ are (globally) of standard presentation. By considering a standard presentation of $f$ and $g$, we have that $g\circ f$ is of the following form:
    \begin{equation}
    \begin{tikzcd}
    X\arrow[r]\arrow[d]\arrow[dr, phantom, "\lrcorner", very near start] & Y\times\bbR^N\arrow[d,"h"]\arrow[r] & Y\arrow[d]\arrow[r]\arrow[dr, phantom, "\lrcorner", very near start] & Z\times\bbR^M\arrow[d,"h'"]\arrow[r] & Z \\
    * \arrow[r] & \bbR^k & *\arrow[r] & \bbR^\ell. &
    \end{tikzcd}
    \end{equation}
Observe, we may define an immersion 
    \begin{equation}
    Y\times\bbR^N\hookrightarrow (Z\times\bbR^M)\times\bbR^N=Z\times\bbR^{N+M}
    \end{equation}
which exhibits $Y\times\bbR^N$ as a closed subset. By using a smooth bump function, we may extend $h$ to a function 
    \begin{equation}
    h:Z\times\bbR^{N+M}\to\bbR^k
    \end{equation}
satisfying $h^{-1}(0)=X$. Meanwhile, we may extend $h'$ to a function 
    \begin{equation}
    h':Z\times\bbR^{N+M}\to\bbR^\ell
    \end{equation}
satisfying $(h')^{-1}(0)=Y\times\bbR^N$. Therefore, the diagram
    \begin{equation}
    \begin{tikzcd}
    X\arrow[r]\arrow[d]\arrow[dr, phantom, "\lrcorner", very near start] & Z\times\bbR^{N+M}\arrow[r]\arrow[d,"{(h,h')}"] & Z \\
    * \arrow[r] & \bbR^{k+\ell}, &
    \end{tikzcd}
    \end{equation}
is a standard presentation of $g\circ f$, whence the claim. 
\end{proof}

The proofs of the remaining lemmata are straightforward.

\begin{lem}
Consider the following pullback in $\DSm$:
    \begin{equation}
    \begin{tikzcd}
    \overline{X}\arrow[r,"\overline{f}"]\arrow[d]\arrow[dr, phantom, "\lrcorner", very near start] & \overline{Y}\arrow[d] \\
    X\arrow[r,"f",swap] & Y,
    \end{tikzcd}
    \end{equation}
where $f$ is amplitude at most 1. A local standard presentation of $f$ induces a local standard presentation of $\overline{f}$.
\end{lem}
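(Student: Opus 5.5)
The statement is local on $\overline{X}$, so we reduce to the global case. The local standard presentations of $f$ consist of opens $\rlz{U}\subset\rlz{X}$ covering $\rlz{X}$ together with standard presentations of $f|_U$. Since $\rlz{-}:\DSm\to\Top$ preserves finite limits, we have $\rlz{\overline{X}}=\rlz{X}\times_{\rlz{Y}}\rlz{\overline{Y}}$. Hence the preimages $\rlz{\overline{U}}$ of the $\rlz{U}$ cover $\rlz{\overline{X}}$, and each is realized by an open embedding $\overline{U}\to\overline{X}$.

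Open embeddings are Cartesian over open embeddings in $\Top$. Using this and the pasting law, $\overline{U}\simeq U\times_Y\overline{Y}$ in $\DSm$, and the restriction $\overline{f}|_{\overline{U}}$ is the base change of $f|_U$ along $\overline{Y}\to Y$. So it suffices to prove the following: if $f$ is globally of standard presentation, then $\overline{f}$ inherits a standard presentation.

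Suppose then that $X=*\times_{\bbR^k}(Y\times\bbR^N)$ via $h:Y\times\bbR^N\to\bbR^k$, with $f$ the composite $X\to Y\times\bbR^N\to Y$. Let $g:\overline{Y}\to Y$ denote the base change map. Define $\overline{h}=h\circ(g\times\id_{\bbR^N}):\overline{Y}\times\bbR^N\to\bbR^k$. We claim the diagram
\begin{equation}
\begin{tikzcd}
*\times_{\bbR^k}\big(\overline{Y}\times\bbR^N\big)\arrow[r]\arrow[d]\arrow[dr, phantom, "\lrcorner", very near start] & \overline{Y}\times\bbR^N\arrow[r]\arrow[d,"\overline{h}"] & \overline{Y} \\
*\arrow[r] & \bbR^k &
\end{tikzcd}
\end{equation}
is a standard presentation of $\overline{f}$. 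To check this, we identify the top-left corner with $\overline{X}$ using pullback pasting in $\DSm$, which has all finite limits. The square
\begin{equation}
\begin{tikzcd}
\overline{Y}\times\bbR^N\arrow[r]\arrow[d] & Y\times\bbR^N\arrow[d] \\
\overline{Y}\arrow[r,"g",swap] & Y
\end{tikzcd}
\end{equation}
is a pullback, since products commute with limits. Pasting it with the defining pullback of $X$ gives
\begin{equation}
\overline{X}=X\times_Y\overline{Y}\simeq\big(*\times_{\bbR^k}(Y\times\bbR^N)\big)\times_{Y\times\bbR^N}\big(\overline{Y}\times\bbR^N\big)\simeq *\times_{\bbR^k}\big(\overline{Y}\times\bbR^N\big),
\end{equation}
where the last step is the pasting law applied along $\overline{h}$. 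These identifications are compatible with the projections: the composite $\overline{X}\to\overline{Y}\times\bbR^N\to\overline{Y}$ is the projection to $\overline{Y}$, which is $\overline{f}$. The identification is also natural in the choice of presentation, so "induces" is canonical.

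The main point requiring care is bookkeeping rather than a genuine obstacle. Each local chart $U$ need not map into a specified open subset of $Y$, but this causes no trouble: we never shrink $\overline{Y}$, and the presentation of $f|_U$ has target all of $Y$. Thus everything is a formal manipulation of finite limits in $\DSm$. The only non-formal input is that open embeddings are stable under base change and that $\rlz{-}$ preserves pullbacks.
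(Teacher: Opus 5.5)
Your proof is correct and is the argument the paper has in mind; the paper calls this lemma straightforward and omits the proof. You restrict to the preimage chart $\overline{U}\simeq U\times_Y\overline{Y}$, precompose $h$ with $g\times\id_{\bbR^N}$, and identify $\overline{X}$ with $*\times_{\bbR^k}(\overline{Y}\times\bbR^N)$ by pullback pasting, correctly isolating the only non-formal inputs: open embeddings are stable under base change, and $\rlz{-}$ preserves pullbacks.
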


\begin{lem}
Consider $f:X\to Y,f':X'\to Y'\in\DSm$ which are amplitude at most 1. We have that $f\times f'$ is amplitude at most 1; moreover, a local standard presentation of $f$ and $f'$ induces a local standard presentation of $f\times f'$.
\end{lem}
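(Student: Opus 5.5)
The plan is to handle the product of two standard presentations directly, and then deduce the amplitude statement from the existence of local standard presentations. The claim is local on the source $X\times X'$. Products of open subsets $\rlz{U}\times\rlz{U'}$, with $\rlz{U}\subset\rlz{X}$ and $\rlz{U'}\subset\rlz{X'}$, form a basis of $\rlz{X\times X'}=\rlz{X}\times\rlz{X'}$, because $\rlz{-}$ preserves finite limits. So it suffices to treat the case where $f$ and $f'$ are globally of standard presentation, with data $h\colon Y\times\bbR^N\to\bbR^k$ and $h'\colon Y'\times\bbR^{N'}\to\bbR^{k'}$.

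The candidate presentation of $f\times f'$ is
\begin{equation}
\begin{tikzcd}
X\times X'\arrow[r]\arrow[d] & (Y\times Y')\times\bbR^{N+N'}\arrow[r]\arrow[d,"{(h\circ \pr,\,h'\circ\pr')}"] & Y\times Y' \\
*\arrow[r] & \bbR^{k+k'}. &
\end{tikzcd}
\end{equation}
Here $\pr$ and $\pr'$ are the projections onto $Y\times\bbR^N$ and $Y'\times\bbR^{N'}$, after the evident shuffle isomorphism $(Y\times Y')\times\bbR^{N+N'}\cong (Y\times\bbR^N)\times(Y'\times\bbR^{N'})$. The top right map is the obvious projection.

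The main point to check is that the left square is a pullback in $\DSm$. I would argue formally. The product of two pullback squares is a pullback square, since limits commute with limits. Therefore
\begin{equation}
X\times X'\simeq (*\times *)\times_{\bbR^k\times\bbR^{k'}}\big((Y\times\bbR^N)\times(Y'\times\bbR^{N'})\big).
\end{equation}
Identifying $\bbR^k\times\bbR^{k'}=\bbR^{k+k'}$ (this is legitimate because $\Sm\to\DSm$ preserves finite products) and applying the shuffle isomorphism gives exactly the displayed square. The composite of the top row is $f\times f'$, because projections commute with products.

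The amplitude statement then follows. By the earlier lemma, $f$ and $f'$ are locally of standard presentation. The construction above produces, locally on $X\times X'$, a factorization of $f\times f'$ as an immersion of the form \eqref{eqn:immersion} followed by a projection $Y\times Y'\times\bbR^{N+N'}\to Y\times Y'$, which is a submersion of the form \eqref{eqn:submersion}. Hence $f\times f'$ has amplitude at most $1$. Alternatively, one can write $f\times f'=(f\times\id)\circ(\id\times f')$ and use stability under pullback and composition.

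I do not expect a genuine obstacle. The only care needed is bookkeeping: the shuffle identifications must be compatible with the maps to $\bbR^{k+k'}$ and to $Y\times Y'$, so that the induced presentation is canonical.
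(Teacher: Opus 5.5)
Your proposal is correct and matches the intended argument: take the product of the two standard presentations, using that products of pullback squares are pullbacks and that $\Sm\to\DSm$ and $\rlz{-}$ preserve finite products (the paper omits this proof as straightforward). The amplitude claim then follows as you say, either from the induced factorization into an immersion followed by a submersion, or from writing $f\times f'=(f\times\id)\circ(\id\times f')$ and using stability under pullback and composition.
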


\begin{lem}
Consider $f:X\to Y,f':X'\to Y'\in\DSm$ which are amplitude at most 1. We have that $f\sqcup f'$ is amplitude at most 1; moreover, a local standard presentation of $f$ and $f'$ induces a local standard presentation of $f\sqcup f'$.
\end{lem}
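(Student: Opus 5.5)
The plan is to argue locally, in the same spirit as the proof of Lemma~\ref{lem:standardpresentation1}, and to reduce to the case where $f$ and $f'$ are globally of standard presentation. First I will check that the coproduct $X\sqcup X'$ in $\DSm$ has realization $\rlz{X}\sqcup\rlz{X'}$. Then $\rlz{X}$ and $\rlz{X'}$ are complementary open subsets of $\rlz{X\sqcup X'}$, and they are realized by the open embeddings $X\to X\sqcup X'$ and $X'\to X\sqcup X'$. This should follow from subcanonicity: the coproduct is glued along the open cover $\{\rlz{X},\rlz{X'}\}$. Granting this, the restriction of $f\sqcup f'$ to the open piece $X$ is the composite $X\to Y\to Y\sqcup Y'$, and similarly for $X'$.

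The open embedding $Y\to Y\sqcup Y'$ is amplitude $0$ and in fact of standard presentation, with $N=k=0$. So Lemma~\ref{lem:standardpresentation1} shows that the composite $X\to Y\sqcup Y'$ is amplitude at most $1$. It also produces a standard presentation from the given ones: on each open subset $\rlz{U}\subset\rlz{X}$ where $f$ is presented as $*\times_{\bbR^k}(Y\times\bbR^N)$, the composite is presented by
\begin{equation}
*\times_{\bbR^k}\big((Y\sqcup Y')\times\bbR^N\big).
\end{equation}
Here the function $h$ is extended from $Y\times\bbR^N$ to $(Y\sqcup Y')\times\bbR^N$ by any nonvanishing function on the $Y'$ summand, for instance the constant function $1$. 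The same construction applies to $X'$.

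The opens coming from $X$ and from $X'$ together cover $\rlz{X\sqcup X'}$. The resulting family is therefore a local standard presentation of $f\sqcup f'$, and amplitude at most $1$ follows because that property is local on the source. The only point needing care is the first step, that realization and open embeddings are compatible with coproducts in $\DSm$. I expect this to follow from perfection of the site, which glues objects along open covers, together with the fact that $\Sm\to\DSm$ preserves finite coproducts (disjoint unions being transverse colimits). If that fails, I would instead argue through the standard presentations directly, taking $N$ and $k$ to be the maxima of the two and padding with zeros.
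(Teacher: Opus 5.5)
Your strategy is the natural one (the paper gives no argument beyond ``straightforward''). You work separately on the clopen pieces $X$ and $X'$ of $X\sqcup X'$, and you handle the enlarged target $Y\sqcup Y'$ by killing the summand $Y'$ with a nonvanishing function. The step that does this fails as written. The inclusion $Y\hookrightarrow Y\sqcup Y'$ is \emph{not} of standard presentation with $N=k=0$. Such a presentation is just the identity of the target, so it would force $Y\simeq Y\sqcup Y'$. Hence the appeal to Lemma~\ref{lem:standardpresentation1} is unjustified. Your explicit formula also breaks exactly when the given local presentation of $f$ has $k=0$, which is the case for every submersion presented as $Y\times\bbR^N\to Y$. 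There is then no nonzero value in $\bbR^0$ to extend $h$ by, and $*\times_{\bbR^0}\big((Y\sqcup Y')\times\bbR^N\big)\simeq U\sqcup(Y'\times\bbR^N)$ is not an open subobject of $X\sqcup X'$ over $Y\sqcup Y'$. For example, take $f\colon Y\times\bbR\to Y$ and $f'\colon *\to\bbR=Y'$: your recipe puts $\bbR\times\bbR$ over $Y'$, whereas $(f\sqcup f')^{-1}(Y')=*$.

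The repair is short. First stabilize, replacing $(N,k,h)$ by $(N+1,k+1,h\times\id_{\bbR})$. This still presents $f|_U$, because $*\times_{\bbR^{k+1}}(Y\times\bbR^{N+1})\simeq U\times(*\times_{\bbR}\bbR)\simeq U$. Then extend by the constant $(0,\dots,0,1)$ on $Y'\times\bbR^{N+1}$. Equivalently, $Y\hookrightarrow Y\sqcup Y'$ does have a standard presentation with $N=k=1$, namely $h(y,t)=t$ on $Y\times\bbR$ and $h\equiv 1$ on $Y'\times\bbR$; with it, Lemma~\ref{lem:standardpresentation1} genuinely applies.

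In either form you should state the routine facts being used:
\begin{itemize}
\item Coproducts in $\DSm$ are disjoint and stable under pullback: summand inclusions are open embeddings, and these pull back to open embeddings since $\rlz{-}$ preserves finite limits. So products, and the fiber product along $*\to\bbR^{k+1}$, distribute over $\sqcup$.
\item An object with empty realization is initial, since it is covered by the empty family and $\DSm$ is subcanonical.
\end{itemize}
Your first step needs only perfection and subcanonicity; the remark about $\Sm\to\DSm$ is beside the point.

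Finally, your fallback is not sound as stated. Appending zero components to $h$ replaces $X$ by its product with the derived point $*\times_{\bbR}*$, so any padding must be by new coordinate functions as above. Moreover, a single presentation of $f\sqcup f'$ on $U\sqcup U'$ can exist only when $N-k=N'-k'$, since a standard presentation has constant rank $N-k$.
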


\section{Perfect complexes and virtual bundles}\label{sec:perfectcomplexandvirtual}
In this section, we introduce the notion, due to John Pardon, of a perfect complex on a derived smooth manifold $X$.
We show that any such perfect complex canonically determines a virtual bundle on $\rlz{X}$, whenever $\rlz{X}$ is paracompact Hausdorff. 
This will permit us to make sense of stable (and virtual) tangential structures on derived smooth manifolds.

\subsection{Perfect complexes on derived smooth manifolds}\label{subsection: perf and tangent}

\begin{defin}
    Let $\m{E}$ be an \category which admits finite products. A \textit{commutative ring object in $\m{E}$} is a product preserving functor
    \begin{equation}
    R : \mathrm{Poly}^{{\rm fin},\op} \rightarrow \m{E},
    \end{equation}
    where $\mathrm{Poly}^{\rm fin}$ is the full subcategory of commutative rings spanned by objects of the form $\bbZ[x_{1},\dots,x_{n}]$, $n \geq 0$. 
    We denote by 
    \begin{equation}
    \Ring(\m{E})\equiv \Fun^{\times}(\mathrm{Poly}^{{\rm fin},\op},\m{E})
    \end{equation}
    the \category of such objects. 
\end{defin}

\begin{rem}
    Note that for $\m{E} = \Set$, we recover the $1$-category of commutative rings: $\Ring \simeq \Ring(\Set)$. Moreover, there is a fully faithful functor $\Ring \rightarrow \Ring(\Spaces)$ whose essential image consists of the commtuative ring objects in $\Spaces$ with $R \simeq \pi_{0}R$. 
\end{rem}

\begin{example}
    The real line $\bbR$ is a commutative ring object in $\Sm$ (and therefore, also in $\DSm$ and $\Top$).
\end{example}

\begin{rem}
    The objects of the \category $\Ring(\Spaces)$ are often called ``animated rings'' in the literature. 
    We prefer the the phrase \textit{commutative ring object} as it makes sense for other \categories of coefficients such as $\Sm$ or $\DSm$. 
\end{rem}

\begin{defin}
    Given $X \in \DSm$, we define the \textit{ring of smooth functions} on $X$ to be the commutative ring object
    \begin{equation}
    C^{\infty}(X)\equiv \Map_{\DSm}(X,\bbR) \in \Ring(\Spaces).
    \end{equation}
    In fact, there is a functor 
    \begin{equation}
    C^{\infty}(-)\equiv\Map_{\DSm}(-,\bbR) : \DSm^{\op} \rightarrow \Ring(\Spaces).
    \end{equation}
\end{defin}

\begin{defin}
    Given $X \in \Top$, we define the \textit{ring of continuous functions} on $X$ to be the commutative ring
    \begin{equation}
    C(X) \equiv \Map_{\Top}(X,\bbR) \in \Ring. 
    \end{equation}
In fact, there is a functor 
    \begin{equation}
    C(-)\equiv\Map_{\Top}(-,\bbR) : \Top^{\op} \rightarrow \Ring.
    \end{equation}
\end{defin}

\begin{rem}
Observe, given $X\in\DSm$, the functor $\rlz{-}: \DSm \rightarrow \Top$ determines a canonical map
    \begin{equation}
    C^\infty(X)\to C(\rlz{X})
    \end{equation}
which is natural in $X$.
Furthermore, because $C(\rlz{X})$ is discrete, this morphism must factor uniquely through $\pi_{0}C^{\infty}(X)$. 
In the case where $X \in \Sm$, we already have $\pi_{0}C^{\infty}(X) = C^{\infty}(X)$, and the map above is given by viewing a smooth function as a continuous one. 
\end{rem}

Any $R \in \Ring(\Spaces)$ canonically determines a connective $\einf$-ring spectrum which we also denote by $R$, cf. \cite[Construction 25.1.2.1]{lurieSAG}.

\begin{notation}
For $R \in \Ring(\Spaces)$, let $\Mod_{R}\equiv\Mod_{R}(\Sp)$ denote the stable presentably symmetric monoidal \category of $R$-module spectra. Furthermore, let:
\begin{itemize}
    \item $\Perf_{R}\subset\Mod_R$ denote the idempotent complete stably symmetric monoidal subcategory of compact $R$-modules;
    \item $\Stab_{R}\subset\Mod_R$ denote the smallest stable subcategory containing $R$ (note, stably symmetric monoidal); and
    \item $\Free^{\omega}_{R}$ denote the full subcategory of finitely generated free\footnote{Here, by ``free'' we mean a direct sum of copies of $R$. Notably, we do not allow suspension.} $R$ modules (note, this is additive and symmetric monoidal).    
\end{itemize}
We have the following relations: 
    \begin{equation}
    \Free^{\omega}_{R}\subset\Stab_{R}\subset\Perf_R.
    \end{equation}
\end{notation}

\begin{rem}
If $R$ an ordinary commutative ring, we can identify $\Stab_{R}$ with $\m{D}^{b}(\Free^{\omega}_{R})$, where the latter is the bounded derived \category of finitely generated free $R$-modules.
For instance, we have 
    \begin{equation}
    \Stab_{C(X)} \simeq \m{D}^{b}(\Free_{C(X)}^{\omega}),\;\;X\in\Top.
    \end{equation}
   
\end{rem}

\begin{rem}
Observe, the inclusion $\Free_{R}^{\omega} \rightarrow \Stab_{R}$ is natural in $R$; therefore, we can view $R \mapsto \Free_{R}^{\omega}$ as a subfunctor of $R\mapsto \Stab_{R}$, where we view both of these as functors from $\Ring(\Spaces)$ to $\Cat_\infty$.
\end{rem}

Let $\Cat^{\rm ex}_{\infty}$ denote the \category of stable \categories and exact functors between them which, by \cite[Theorem 4.22]{blumbergUniversalCharacterizationHigher2013}, is a compactly generated \category. 
This is equipped with a forgetful functor $\Cat_{\infty}^{\ex} \rightarrow \Cat_{\infty}$ which preserves small limits and filtered colimits, cf. Lemma~\ref{lem: forgetful lex to Cat} below.

Before continuing, we recall that, for any functor $\m{F} : \Top^{\op} \rightarrow \m{E}$, where $\m{E}$ is a presentable $\infty$-category, we may define its sheafification $\m{F}^{\rm sh}$ via a transfinite iteration of the dagger construction:
    \begin{equation}
    \m{F} \mapsto \m{F}^{\dagger},
    \end{equation}
cf. \cite[Remark 6.2.2.12]{lurieHigherToposTheory2009} and the proof of \cite[Proposition 6.2.2.7]{lurieHigherToposTheory2009}. (Of course, we can more generally define sheafification for any $\m{E}$-valued presheaf on a Grothendieck site; again, cf. \emph{loc. cit.})

\begin{defin}[Pardon]\label{definition: Perf of DSm}
Let
    \begin{equation}
    \Perf : \DSm^{\op} \rightarrow \Cat_{\infty}^{\ex}
    \end{equation}
denote the sheafification of the functor 
\begin{equation}
\Stab_{C^{\infty}(-)} : \DSm^{\op} \rightarrow \Cat^{\ex}_{\infty}.
\end{equation}
Given $X\in\DSm$, the \emph{$\infty$-category of perfect complexes} on $X$ is 
    \begin{equation}
    \Perf_{X} \equiv \Gamma(X;\Perf).
    \end{equation}
Analogously, let
    \begin{equation}
    \Vect^{\rm sm}_{(-)}:\DSm^{\op} \rightarrow \Cat_{\infty}
    \end{equation}
denote the sheafification of the functor 
\begin{equation}
\Free^{\omega}_{C^{\infty}(-)} : \DSm^{\op} \rightarrow \Cat_{\infty}. 
\end{equation}
The \emph{$\infty$-category of smooth vector bundles} on $X$ is 
    \begin{equation}
    \Vect^{\rm sm}_{X} \equiv \Gamma(X;\Vect^{\rm sm}).
    \end{equation}
\end{defin}

\begin{rem}
For $X\in\Sm$, it is not hard to see that $\Vect_{X}^{\rm sm}$ is equivalent to the nerve of the $1$-category of smooth vector bundles on $X$.
\end{rem}

Now, we will explain how to construct the tangent complex of a derived smooth manifold and the relative tangent/normal complex of a map of derived smooth manifolds; these are our main examples of perfect complexes on derived smooth manifolds.
Consider the following commutative diagram:
    \begin{equation}
    \begin{tikzcd}
    \int^{\rm c}_{X\in\Sm}\Vect^{\rm sm}_X\arrow[d]\arrow[r] & \int^{\rm c}_{X\in\DSm}\Perf_X\arrow[d] \\
    \Sm\arrow[r] & \DSm.
    \end{tikzcd}
    \end{equation}
Assigning to a smooth manifold its tangent bundle determines a section
    \begin{equation}
    T:\Sm\to\int^{\rm c}_{X\in\Sm}\Vect^{\rm sm}_X;
    \end{equation}
by composing, we obtain a functor 
    \begin{equation}\label{eqn:tangentfunctorcompose}
    T:\Sm\to\int^{\rm c}_{X\in\DSm}\Perf_X.
    \end{equation}

\begin{defin}
The \emph{tangent complex} of $X\in\DSm$ is $T_X\in\Perf_X$, where $T$ is the relative right Kan extension of \eqref{eqn:tangentfunctorcompose} determined by the universal property of $\DSm$:
    \begin{equation}
    T:\DSm\to\int^{\rm c}_{X\in\DSm}\Perf_X.
    \end{equation}
\end{defin}

\begin{rem}
Given $f:X\to Y$ in $\DSm$, we obtain a morphism
    \begin{equation}
    (X,T_{X}) \rightarrow (Y,T_{Y});
    \end{equation}
this amounts to the specification of a map 
    \begin{equation}
    Df:T_X\to f^*T_Y\in\Perf_X.
    \end{equation}
\end{rem}

\begin{defin}
Let $f:X\to Y\in\DSm$.
\begin{enumerate}
\item The \emph{relative tangent complex} of $f$ is the homotopy fiber $T_f\in\Perf_X$ of $Df$.
\item The \emph{normal complex} of $f$ is the homotopy cofiber $N_f\in\Perf_X$ of $Df$. 
\end{enumerate}
\end{defin}

The proof of the following two results is straightforward.

\begin{lem}
If
    \begin{equation}
    \begin{tikzcd}
    \overline{X}\equiv X\times_Y\overline{Y}\arrow[d,"\overline{g}",swap]\arrow[r,"\overline{f}"]\arrow[dr, phantom, "\lrcorner", very near start] & \overline{Y}\arrow[d,"g"] \\
    X\arrow[r,"f",swap] & Y
    \end{tikzcd}
    \end{equation}
is a pullback diagram in $\DSm$, then there is an induced pullback diagram in $\Perf_{\overline{X}}$:
    \begin{equation}
    \begin{tikzcd}
    T_{\overline{X}}\arrow[d,"D\overline{g}",swap]\arrow[r,"D\overline{f}"]\arrow[dr, phantom, "\lrcorner", very near start] & \overline{f}^*T_{\overline{Y}}\arrow[d,"\overline{f}^*Dg"] \\
    \overline{g}^*T_X\arrow[r,"\overline{g}^*Df",swap] & \overline{g}^*f^*T_Y=\overline{f}^*g^*T_Y.
    \end{tikzcd}
    \end{equation}
In particular, we have the following two equivalences:
    \begin{equation}
    \overline{g}^*T_f\simeq T_{\overline{f}}\;\;{\rm and}\;\;\overline{g}^*N_f\simeq N_{\overline{f}}.
    \end{equation}
\end{lem}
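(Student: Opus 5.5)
The plan is to show that the tangent complex functor $T:\DSm\to\int^{\rm c}_{X\in\DSm}\Perf_X$ sends finite limits in $\DSm$ to limit diagrams in the appropriate sense. Concretely, for the pullback square $\overline{X}=X\times_Y\overline{Y}$ the canonical map
\[
T_{\overline{X}}\to \overline{g}^*T_X\times_{\overline{g}^*f^*T_Y}\overline{f}^*T_{\overline{Y}}
\]
should be an equivalence in $\Perf_{\overline{X}}$. The commutativity of the square itself comes for free: $T$ is a functor to the Cartesian unstraightening, so the square in $\DSm$ lifts to a commutative square there. Pulling the four vertices back to $\overline{X}$ along the Cartesian morphisms gives the displayed square in $\Perf_{\overline{X}}$, with the identification $\overline{g}^*f^*\simeq\overline{f}^*g^*$ supplied by the coherence of the unstraightening.

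To check that the square is Cartesian, I will work locally. Perfect complexes form a sheaf by construction, so $\Perf$ is the sheafification of $\Stab_{C^\infty(-)}$, and equivalences in $\Perf_{\overline{X}}$ can be detected on an open cover. Next I reduce to a standard form. Since every derived smooth manifold is locally a finite limit of smooth manifolds, and finite limits are generated by pullbacks and the terminal object, the formal part follows from two facts. First, $T$ is the relative \emph{right} Kan extension from $\Sm$. Second, by the universal property of $\DSm$, objects are built from $\Sm$ by finite cosifted limits, so right Kan extension computes $T$ on such a limit as the limit of the pulled-back tangent bundles. Applying this to a presentation of $X$, $Y$, $\overline{Y}$ as finite limits of manifolds, and using that limits commute with limits, shows that $T$ carries the pullback to a pullback. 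In the most concrete model, I take local presentations $X=*\times_{\bbR^k}(\bbR^a)$, and similarly for the others, via Remark \ref{rem: point-set topology of DSm}. There $T_X$ is the two-term complex $T\bbR^a\to\bbR^k$, and the claim becomes a direct check that fibers of two-term complexes over a common base give the two-term complex of the fiber product.

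The main obstacle is the second of the two facts above: justifying that the relative right Kan extension really computes $T$ on finite (not just cosifted) limits, since the universal property is phrased only for cosifted ones. To get around this, I would write a pullback $X\times_Y\overline{Y}$ as the limit of the cosimplicial (Čech-type) diagram $X\times Y^{\bullet}\times\overline{Y}$, which is a cosifted limit of products. Products of manifolds are handled by the additivity $T_{M\times N}=T_M\boxplus T_N$, and this reduces the general case to the cosifted case.

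The two equivalences then follow by taking horizontal fibers. In a Cartesian square in a stable $\infty$-category, the induced map on fibers of the horizontal maps is an equivalence, which gives $T_{\overline{f}}\simeq\overline{g}^*T_f$ because $\overline{g}^*$ is exact. Since the square is also coCartesian, taking cofibers gives $N_{\overline{f}}\simeq\overline{g}^*N_f$ in the same way.
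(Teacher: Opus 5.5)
The overall structure of your argument is fine. The square commutes because $T$ is a functor into the Cartesian unstraightening. Cartesianness may be checked locally because $\Perf$ is a sheaf. Your last paragraph correctly extracts $\overline{g}^*T_f\simeq T_{\overline{f}}$ and $\overline{g}^*N_f\simeq N_{\overline{f}}$ from a Cartesian (hence coCartesian) square, using exactness of $\overline{g}^*$.

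The gap is in the one substantive step: showing that $T$ carries the pullback in $\DSm$ to a pullback.

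\begin{itemize}
\item The universal property you invoke only guarantees that the extension of $T|_{\Sm}$ preserves \emph{finite cosifted} limits. Moreover, it matches such functors with \emph{arbitrary} topological functors on $\Sm$, so pullback preservation cannot be formal.
\item Your repair writes $X\times_Y\overline{Y}$ as the totalization of $X\times Y^\bullet\times\overline{Y}$. That diagram is cosifted but indexed by all of $\Delta$, hence not finite. Being a right Kan extension does not make $T$ preserve arbitrary limits that happen to exist in $\DSm$.
\item Truncating to $\Delta_{\le 1}$ does not rescue this. The truncated diagram still computes the pullback and has finite size, but $\Delta_{\le 1}$ is not cosifted in the $\infty$-categorical sense. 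The slice of the diagonal over $([1],[1])$ is the category of simplices of dimension at most $1$ of $\Delta^1\times\Delta^1$. Its nerve has the homotopy type of the $1$-skeleton of $\Delta^1\times\Delta^1$, a graph with two independent loops.
\item The other two routes in your second paragraph are circular. Presenting $X$, $Y$, $\overline{Y}$ as finite limits of manifolds and commuting limits with limits requires $T$ to preserve the combined, non-cosifted diagram computing $\overline{X}$, which is the claim itself. The ``concrete model'' check assumes $T_X\simeq\mathrm{fib}(T\bbR^a\to\bbR^k)$ for $X=*\times_{\bbR^k}\bbR^a$, which is exactly the lemma for the pullback of $*\to\bbR^k\leftarrow\bbR^a$.
\end{itemize}

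What is missing is an input that turns a geometric property of $T$ on $\Sm$ into preservation of \emph{all} pullbacks on $\DSm$. You do bring in the right kind of geometric fact: $T_{M\times N}\simeq T_M\boxplus T_N$, and more generally, transverse pullbacks of manifolds go to pullbacks of tangent bundles. But this fact has to be combined with some statement controlling all finite limits, not just cosifted ones. For example, you would want: the extension to $\DSm$ of a topological functor that preserves finite products and transverse pullbacks preserves finite limits. This is presumably the role of the ``finite limits modulo finite products'' part of Pardon's construction. Alternatively, you must compute the relative right Kan extension directly on local models, which is a genuine computation rather than a formality.

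Once $T$ is known to preserve the pullback, the rest goes through. Exactness of the transition functors of $\Perf$ identifies the corresponding limit in $\int^{\rm c}_{X\in\DSm}\Perf_X$ with the displayed Cartesian square in $\Perf_{\overline{X}}$, and your final paragraph completes the proof.
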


\begin{lem}
If $f:X\to Y,g:Y\to Z\in\DSm$, then we have the following two cofiber sequences: 
    \begin{equation}
    T_{f} \rightarrow T_{g\circ f} \rightarrow f^{\ast}T_{g}\;\;{\rm and}\;\;N_{f} \rightarrow N_{g\circ f} \rightarrow f^{\ast}N_{g}. 
    \end{equation}
\end{lem}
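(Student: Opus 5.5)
The plan is to reduce both cofiber sequences to the composition law for the derivative. The key input is a homotopy-coherent chain rule: there is an equivalence
\begin{equation}
D(g\circ f)\simeq f^{*}Dg\circ Df : T_X\to f^{*}g^{*}T_Z=(g\circ f)^*T_Z
\end{equation}
in $\Perf_X$. I expect this to follow directly from the construction of $T$ as a functor $\DSm\to\int^{\rm c}_{X\in\DSm}\Perf_X$. A morphism $(X,T_X)\to(Z,T_Z)$ in the Cartesian unstraightening is exactly the datum of $D(g\circ f)$. Functoriality of $T$ identifies the image of the composite $g\circ f$ with the composite of the images. In the Cartesian unstraightening, composition of $(X,T_X)\to(Y,T_Y)$ with $(Y,T_Y)\to(Z,T_Z)$ is computed as $f^*Dg\circ Df$. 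So the chain rule is essentially tautological once we use that $T$ is a section.

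Next I apply the octahedral axiom in the stable \category $\Perf_X$ to the composable pair
\begin{equation}
T_X\xrightarrow{Df} f^*T_Y\xrightarrow{f^*Dg} f^*g^*T_Z.
\end{equation}
Recall that $f^*:\Perf_Y\to\Perf_X$ is exact, because $\Perf$ takes values in $\Cat^{\ex}_\infty$. Hence $\operatorname{fib}(f^*Dg)\simeq f^*T_g$ and $\operatorname{cofib}(f^*Dg)\simeq f^*N_g$.

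The octahedral axiom, in its fiber form, gives a fiber sequence
\begin{equation}
\operatorname{fib}(Df)\to\operatorname{fib}(f^*Dg\circ Df)\to\operatorname{fib}(f^*Dg),
\end{equation}
which is $T_f\to T_{g\circ f}\to f^*T_g$. In its cofiber form it gives a cofiber sequence
\begin{equation}
\operatorname{cofib}(Df)\to\operatorname{cofib}(f^*Dg\circ Df)\to\operatorname{cofib}(f^*Dg),
\end{equation}
which is $N_f\to N_{g\circ f}\to f^*N_g$. Since fiber and cofiber sequences coincide in a stable \category, this yields both claimed sequences.

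There are two small points to settle. First, the pullback functors in $\int^{\rm c}$ must compose coherently, so that $f^*g^*\simeq(g\circ f)^*$; this is part of the structure of the Cartesian fibration. Second, the maps in the sequences are the canonical ones induced by the commutative triangle. I expect no serious obstacle. The only mildly delicate step is the chain rule, and it comes for free from $T$ being a functor into the unstraightening rather than just an objectwise assignment.
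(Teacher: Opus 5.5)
Your argument is correct and is the routine verification the paper leaves implicit; it only remarks that the proof is straightforward. Functoriality of $T$ as a section over $\DSm$ gives the chain rule $D(g\circ f)\simeq f^{*}Dg\circ Df$, and the octahedral axiom in the stable $\infty$-category $\Perf_X$, together with exactness of $f^{*}$, then yields both sequences.
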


To make sense of tangential structures on derived smooth manifolds, we need a means to convert (relative) tangent complexes into virtual bundles.

\begin{notation}
    Let $\Sing : \Top \rightarrow \Spaces$ denote the singular complex functor. 
    If $X \in \DSm$, we write $\Sing(X) \equiv \Sing(\rlz{X})$. 
\end{notation}

\begin{defin}
    For $X \in \Top$, the \textit{topological K-theory of $X$} is defined to be 
    \begin{equation}
    \KO(X) \equiv \Map_{\Spaces}\big(\Sing(X),\oko\big),
    \end{equation}
    where $\ko$ denotes the connective real topological K-theory spectrum. 
    We refer to the points of $\KO(X)$ as \textit{virtual bundles on $X$}.
    As $\oko$ is a grouplike $\einf$-monoid, the above formula determines a functor 
    \begin{equation}
    \KO : \Top^{\op} \rightarrow \CMongp.
    \end{equation}
\end{defin}

\begin{warning}
    By real Bott periodicity, $\Omega^{\infty}\ko = \BOP$ as $\einf$-monoids. 
    However, the space $\BOP$ can be equipped with a \textit{different} $\einf$-monoid structure by forming the \textit{product} of the $\einf$-monoids $\bbZ$ and $\BO$; this will be discussed later in Recollection~\ref{recollection:ko}. 
    As is hopefully now apparent, the notation $\BOP$ can suggest that the ``Bott periodic'' $\einf$-monoid structure is split. 
    To avoid any potential confusion, we have opted for the notation $\Omega^{\infty}\ko$ in place of $\BOP$.  
\end{warning}

\begin{lem}
    $\KO : \Top^{\op} \rightarrow \CMongp$ is a sheaf; so is its restriction to $\DSm$.
\end{lem}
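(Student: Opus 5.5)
We have to show that $\KO(X)=\Map_{\Spaces}(\Sing(X),\oko)$ is a sheaf on $\Top$ for the open-cover topology, and then deduce the same for its restriction to $\DSm$. Limits in $\CMongp$ are computed in $\Spaces$, since the forgetful functor is a right adjoint and conservative. So it suffices to prove that the underlying $\Spaces$-valued presheaf is a sheaf. Moreover $\Map_{\Spaces}(-,\oko)$ turns colimits in $\Spaces$ into limits. The statement therefore reduces to a descent property of $\Sing$: for every open cover $\{U_i\}_{i\in I}$ of a space $X$, the canonical map
\begin{equation}
\colim_{\Delta^{\op}}\Big(\textstyle\coprod_{i_0,\dots,i_n}\Sing(U_{i_0}\cap\cdots\cap U_{i_n})\Big)\longrightarrow \Sing(X)
\end{equation}
should be an equivalence in $\Spaces$. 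Equivalently, the Čech nerve of $\coprod_i U_i\to X$, after applying $\Sing$, has geometric realization $\Sing(X)$.

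This is the classical fact that singular chains/simplices satisfy descent for open covers. I would invoke Dugger--Isaksen ("Topological hypercovers and $\mathbb{A}^1$-realizations", Theorem 1.3), which states that for any open cover of an arbitrary topological space the homotopy colimit of the Čech diagram of singular complexes is weakly equivalent to $\Sing(X)$. If a self-contained argument is preferred, the standard route runs as follows. Let $\Sing^{\mathcal U}(X)$ be the subcomplex of $\mathcal U$-small simplices. By barycentric subdivision (the small simplices theorem, in its homotopical form), the inclusion $\Sing^{\mathcal U}(X)\subset\Sing(X)$ is a weak equivalence. Then $\Sing^{\mathcal U}(X)$ is the union (a colimit) of the $\Sing(U_i)$. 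That union is computed homotopically by the Čech nerve, since the intersections $\Sing(U_{i_0})\cap\cdots\cap\Sing(U_{i_n})=\Sing(U_{i_0}\cap\cdots\cap U_{i_n})$ and monomorphisms of simplicial sets make the relevant diagram Reedy cofibrant. Since every covering sieve is generated by an open cover, this gives the sheaf condition on $\Top$.

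For $\DSm$, the Grothendieck topology is by definition generated by open coverings $\{U_i\to X\}$, and these realize to open coverings $\{\rlz{U_i}\to\rlz{X}\}$ of topological spaces. The fiber products $U_{i_0}\times_X\cdots\times_X U_{i_n}$ of open embeddings are open embeddings realizing to $\rlz{U_{i_0}}\cap\cdots\cap\rlz{U_{i_n}}$. This holds because $\rlz{-}$ preserves finite limits, or alternatively because open embeddings are Cartesian over open embeddings. Hence the Čech diagram for $X$ maps under $\rlz{-}$ to the Čech diagram for $\rlz{X}$, and the sheaf condition for $\KO\circ\rlz{-}$ follows from the one on $\Top$.

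The only genuinely nontrivial input is the descent of $\Sing$ for arbitrary, possibly infinite, open covers of arbitrary spaces, that is, the small-simplices argument. I expect to cite it rather than reprove it. A minor point also needs care: sheaves here are meant without hypercompleteness, so only Čech descent is needed, which is exactly what the cited result provides.
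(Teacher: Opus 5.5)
The paper states this lemma without proof, so there is no argument to compare yours against. Your proof is correct and supplies the standard input the paper leaves implicit.

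The reduction is sound. $\CMongp\to\Spaces$ preserves limits and is conservative. Intersections exist, so descent for the sieve generated by $\{U_i\}$ is equivalent to \v{C}ech descent. The statement therefore becomes the assertion that $\Sing$ sends \v{C}ech nerves of open covers to colimit diagrams in $\Spaces$, which is what the Dugger--Isaksen theorem you cite gives. The passage to $\DSm$ is also fine: $\KO|_{\DSm}=\KO\circ\rlz{-}$, and $\rlz{-}$ sends the \v{C}ech nerve of an open cover in $\DSm$ to the \v{C}ech nerve of the realized cover.

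One point in your self-contained sketch should be tightened. Reedy cofibrancy only tells you that the diagonal of the bisimplicial set
\[
(n,k)\longmapsto\textstyle\coprod_{i_0,\dots,i_n}\Sing(U_{i_0}\cap\cdots\cap U_{i_n})_k
\]
computes the colimit in $\Spaces$. It does not by itself show that this diagonal is equivalent to the union $\Sing^{\mathcal U}(X)$. The reason for that is as follows. Fix $k$. The simplicial set in the \v{C}ech direction is $\coprod_{\sigma}E(I_\sigma)$, where:
\begin{itemize}
\item $\sigma$ runs over the $\mathcal U$-small $k$-simplices;
\item $I_\sigma=\{i:\sigma(\Delta^k)\subset U_i\}$, which is nonempty;
\item $E(I_\sigma)$ is the nerve of the indiscrete groupoid on $I_\sigma$, which is contractible.
\end{itemize}
So each row is weakly equivalent to the discrete set $\Sing^{\mathcal U}(X)_k$. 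The realization lemma then identifies the diagonal with $\Sing^{\mathcal U}(X)$.
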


Our desired association 
    \begin{equation}
    T_X\rightsquigarrow T^\vir_X\in\KO(X)
    \end{equation}
should also split cofiber sequences; this will yield the familiar formula
\begin{equation}
T^{\vir}_{f} \simeq T^{\vir}_{X} - f^{\ast}T_{Y}^{\vir}. 
\end{equation}
We are therefore lead to consider the algebraic K-theory space of a derived smooth manifold $X$:
    \begin{equation}
    \K(X)\equiv \K(\Perf_X).
    \end{equation}
By the universal property of algebraic K-theory, there is a natural map of $\bbE_\infty$-monoids,
    \begin{equation}
    \Perf_X^\simeq\to\K(X),
    \end{equation}
which splits cofiber sequences in $\Perf_{X}$. The remainder of the present section is devoted to constructing a natural map
    \begin{equation}\label{equation: K to KO}
    \K(X)\to\KO(X)
    \end{equation}
whenever $X \in \DSm$ is paracompact and Hausdorff. 

\subsection{Continuous perfect complexes and amplitude}
In order to construct the map in \eqref{equation: K to KO}, we must pass through the stable $\infty$-category of ``continuous perfect complexes'' on a topological space; we investigate this stable $\infty$-category and some of its natural subcategories in this subsection. Note, much of the work in this subsection can be adapted to study the $\infty$-category of perfect complexes on a derived smooth manifold.  

\begin{defin}
Let
\begin{equation}
\cperf : \Top^{\op} \rightarrow \Cat^{\ex}_{\infty}
\end{equation}
denote the sheafification of the functor 
\begin{equation}
\Stab_{C(-)} : \Top^{\op} \rightarrow \Cat^{\ex}_{\infty}.
\end{equation}
The \emph{stable $\infty$-category of continuous perfect complexes} on $X$ is
\begin{equation}
\cperf_{X} \equiv \Gamma(X;\cperf).
\end{equation}
\end{defin}

To define subcategories of $\cperf_{X}$, we proceed by sheafifying subfunctors of $R \mapsto \Stab_{R}$. 
Therefore, we need to recall some basic properties of modules over ring spectra. 
Throughout, we will work with $R \in \Ring(\Spaces)$, though all the definitions can be phrased for connective $\ee{1}$-rings, cf. \cite[Subsection 7.2]{lurieHigherAlgebra2017}.

\begin{notation}
For any $k \in \bbZ$, let $(\Stab_{R})_{\geq k}$ resp. $(\Stab_{R})_{\leq k}$ denote the full subcategories of $k$-connective resp. $k$-coconnective objects in $\Stab_{R}$. The former is closed under finite colimits; the latter is closed under finite limits; both are closed under extensions. Moreover, let $\Stab_{R}^{\rm cn}\subset\Stab_{R}$ denote the full subcategory of connective objects. 
\end{notation}

\begin{defin}
    We have the following two similar definitions.
    \begin{enumerate}
    \item We say $M \in \Mod_{R}$ has \emph{Tor-amplitude $\leq b$} provided that, for every discrete $R$-module $N$, 
    \begin{equation}
    \pi_{i}(M\otimes_{R}N) = 0,\;\;i>b.
    \end{equation}
    We denote by $\Stab_{R}^{\leq b}\subset\Stab_{R}$ the full subcategory of such objects.
    \item Analogously, we say $M \in \Mod_{R}$ has \emph{Tor-amplitude in $[a,b]$} provided that, for every discrete $R$-module $N$,
    \begin{equation}
    \pi_{i}(M\otimes_{R}N) = 0,\;\;i\notin[a,b].
    \end{equation}
    We denote by $\Stab_{R}^{[a,b]} \subset \Stab_{R}$ the full subcategory of such objects.
    \end{enumerate}
\end{defin}

\begin{rem}
    Since any $M \in \Stab_{R}$ is $a$-connective for some $a \in \bbZ$ and has Tor-amplitude $\leq b$ for some $a \leq b$, we see $M \in \Stab_{R}^{[a,b]}$, i.e.,
    \begin{equation}
    \colim_{a \leq b} \Stab_{R}^{[a,b]} \simeq \Stab_{R}.
    \end{equation}
\end{rem}

We require the following lemma in the sequel; its proof is straighforward.

\begin{lem}
    Let $R$ be an ordinary commutative ring and $M \in \Stab_{R}^{[a,b]}$. 
    There exists a morphism $\Sigma^{a} F \rightarrow M$, where $F$ is a finitely generated free $R$-module, whose cofiber has Tor-amplitude in $[a+1,b]$.
\end{lem}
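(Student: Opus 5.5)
The plan is to use the fact that, over an ordinary commutative ring $R$, every $M\in\Stab_R$ is represented by a bounded complex of finitely generated free $R$-modules. This holds because $\Stab_R\simeq\m{D}^{b}(\Free^{\omega}_R)$, by the remark following the notation for $\Stab_R$. We then read off the desired map from the bottom of a suitably minimal representative. First I would normalize. Since $M$ has Tor-amplitude in $[a,b]$, taking $N=R$ shows $\pi_i M=0$ for $i<a$. Tensoring with the residue field $k(\mathfrak p)$ for each prime $\mathfrak p$ gives the same vanishing for $M\otimes_R k(\mathfrak p)$.

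Next I would construct the map. Choose generators $m_1,\dots,m_n$ of the $R$-module $\pi_a M$; this module is finitely generated, since $M$ is a bounded complex of finitely generated frees whose homology below degree $a$ vanishes, so $\pi_a M$ is a quotient of the degree-$a$ term after truncating. Let $F=R^n$, and take the induced map $\Sigma^a F\to M$, which is surjective on $\pi_a$. Call its cofiber $C$. Then $C$ lies in $\Stab_R$. The long exact sequence gives $\pi_i C=0$ for $i<a$, and $\pi_a C=0$ because $\pi_a(\Sigma^a F)\to\pi_a M$ is onto. So $C$ is $(a+1)$-connective.

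It remains to bound the Tor-amplitude of $C$ above by $b$. For discrete $N$, consider the cofiber sequence $\Sigma^a F\otimes N\to M\otimes N\to C\otimes N$. The outer term is concentrated in degree $a$, and $M\otimes N$ vanishes above $b$. If $b\geq a+1$, we get $\pi_i(C\otimes N)=0$ for $i>b$ immediately. The edge case $b=a$ needs $\pi_{a+1}(C\otimes N)=\ker(F\otimes N\to\pi_a(M\otimes N))$ to vanish, which is not automatic. In that case I would instead use that $M$ is then flat and finitely presented in degree $a$, hence $\Sigma^a$ of a finitely generated projective module. Then I would either allow that choice or observe that the statement only needs Tor-amplitude $\leq b$ together with $(a+1)$-connectivity, and treat degenerate ranges by convention.

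The main obstacle is the lower bound: deducing that $C$ has Tor-amplitude $\geq a+1$, i.e. $\pi_i(C\otimes N)=0$ for $i\leq a$ and all discrete $N$. This is not just connectivity of $C$. Connectivity alone handles $i<a+1$ only in the sense that a connective module tensored with a discrete one is connective. Fortunately, for connective objects the tensor product of an $(a+1)$-connective module with a discrete module is $(a+1)$-connective, since $-\otimes_R N$ preserves connectivity. So the lower bound follows from the connectivity established above, and the real content reduces to the surjectivity on $\pi_a$ and the upper-bound check.
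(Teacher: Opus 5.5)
For $b\ge a+1$ your argument is correct. It is presumably the argument the paper means when it calls the proof straightforward, since no proof is given there. Choosing generators of $\pi_a M$ gives a map $\Sigma^a F\to M$ whose cofiber $C$ is $(a+1)$-connective. The lower bound on Tor-amplitude then follows at once, because $-\otimes_R N$ preserves connectivity. In the long exact sequence for $\Sigma^aF\otimes_R N\to M\otimes_R N\to C\otimes_R N$, the term $\Sigma^aF\otimes_R N$ can only affect $\pi_i(C\otimes_R N)$ in degree $i=a+1$, and that degree lies in the allowed range when $a+1\le b$.

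Your \emph{main obstacle} paragraph is therefore misplaced. The lower bound is automatic, and the only delicate degree is $a+1$ in the upper bound. The finite generation of $\pi_aM$ also needs one more sentence. Below degree $a$, a representing complex of finitely generated frees is a bounded exact complex of projectives, hence split. So the degree-$a$ cycles form a finitely generated projective summand, and this summand surjects onto $\pi_aM$.

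The genuine gap is the case $b=a$, and it cannot be closed, because the statement is false there. Tor-amplitude in the empty range $[a+1,a]$ forces $C\simeq 0$ (take $N=R$), so one would need $\Sigma^aF\simeq M$, i.e.\ $\Sigma^{-a}M$ free. Here is a counterexample:
\begin{itemize}
\item Take $R=\mathbb{R}[x,y,z]/(x^2+y^2+z^2-1)$.
\item Let $P$ be the kernel of the surjection $R^3\to R$, $(f,g,h)\mapsto xf+yg+zh$, which is split by $1\mapsto (x,y,z)$.
\item Then $P\simeq\mathrm{fib}(R^3\to R)$ lies in $\Stab_R^{[0,0]}$.
\item But $P$ is not free: it realizes the tangent bundle of $S^2$, which is nontrivial by the hairy ball theorem.
\end{itemize}

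Neither of your patches helps. Taking $F$ projective changes the statement. Replacing the conclusion by ``Tor-amplitude $\le b$ plus $(a+1)$-connectivity'' is no weaker, since for $b=a$ these two conditions again force $C\simeq 0$.

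What you should state explicitly is this. The lemma is true for $a<b$. For $a=b$, your construction yields the cofiber $\Sigma^{a+1}K$, where $K=\ker(F\to\pi_aM)$ is projective, and this cofiber has Tor-amplitude in $[a+1,a+1]$.
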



With these subcategories prescribed, we turn to the  sheafification construction; this requires some preliminary work. 
Let $\Cat^{\rm rex}_{\infty}$ denote the \category of (small) \categories which admit finite colimits and functors which preserve finite colimits. 
Similarly, let $\Cat^{\rm lex}_{\infty}$ denote the \category of (small) \categories which admit finite limits and functors which preserve finite limits. 
The equivalence of \categories $(-)^{\op} : \Cat_{\infty} \simeq \Cat_{\infty}$ exhibits these subcategories as ``dual'' in the sense that the essential image of $\Cat^{\rm rex}_{\infty}$ is given by $\Cat^{\rm lex}_{\infty}$, and vice versa. The following lemma is standard; the proof is virtually identical to \cite[Proposition 5.5.7.11]{lurieHigherToposTheory2009}. 

\begin{lem}\label{lem: forgetful lex to Cat}
    The forgetful functors
    \begin{equation}
    \Cat^{\rm rex}_{\infty} \rightarrow \Cat_{\infty}, \ \  \Cat^{\rm lex}_{\infty} \rightarrow \Cat_{\infty}, \ \ \text{and} \ \ \Cat^{\rm ex}_{\infty} \rightarrow \Cat_{\infty}
    \end{equation}
    preserve small limits and filtered colimits. 
\end{lem}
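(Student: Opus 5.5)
The plan is to mimic the proof of \cite[Proposition 5.5.7.11]{lurieHigherToposTheory2009}, treating $\Cat^{\rm rex}_{\infty}$ first. The other two cases follow from it. For $\Cat^{\rm lex}_{\infty}$ we conjugate by the involution $(-)^{\op}$, which is an equivalence of $\Cat_{\infty}$ preserving all limits and colimits and exchanging the two subcategories. For $\Cat^{\rm ex}_{\infty}$ we use that it is the full subcategory of $\Cat^{\rm rex}_{\infty}$ on the stable \categories, and that exact functors between stable \categories are exactly the finite-colimit-preserving ones. Once limits and filtered colimits of stable \categories computed in $\Cat^{\rm rex}_{\infty}$ are shown to remain stable, the $\Cat^{\rm ex}_{\infty}$ case reduces to the $\Cat^{\rm rex}_{\infty}$ case.

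\emph{Limits.} Let $p:K\to\Cat^{\rm rex}_{\infty}$ be a small diagram and let $\m{C}=\lim_K \m{C}_k$ be its limit in $\Cat_{\infty}$. By \cite[Proposition 5.4.7.10 / Corollary 5.3.?]{lurieHigherToposTheory2009}-type arguments (limits of \categories and colimits in them), a finite diagram in $\m{C}$ has a colimit computed pointwise whenever each $\m{C}_k$ admits the colimit and each transition functor preserves it; more precisely, one applies \cite[Proposition 5.4.7.11]{lurieHigherToposTheory2009} with the class of finite simplicial sets. This shows that $\m{C}$ admits finite colimits, that the projections $\m{C}\to\m{C}_k$ preserve them, and that a functor $\m{D}\to\m{C}$ preserves finite colimits if and only if each composite does. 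Hence $\m{C}$ is the limit in $\Cat^{\rm rex}_{\infty}$. For stability, note that the limit of pointed \categories is pointed, and a square in $\m{C}$ is a pushout (resp.\ pullback) iff its images are. So pushout squares are pullback squares, and $\m{C}$ is stable.

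\emph{Filtered colimits.} Let $K$ be filtered and $\m{C}=\colim_K\m{C}_k$ in $\Cat_{\infty}$. Any finite diagram $J\to\m{C}$ factors, up to equivalence, through some $\m{C}_k$, since $J$ is a compact object and $K$ is filtered. Likewise, mapping spaces in $\m{C}$ are filtered colimits of mapping spaces in the $\m{C}_k$ (\cite[Proposition 5.5.7.11]{lurieHigherToposTheory2009}'s argument). Take a colimit cone in $\m{C}_k$. Its image in $\m{C}$ is a colimit cone, because being a colimit is tested by maps out into objects. Any test object and its mapping data also factor through some $\m{C}_{k'}$, and filtered colimits of equivalences of spaces are equivalences. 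The same argument gives the universal property in $\Cat^{\rm rex}_{\infty}$, and stability is inherited since zero objects and bicartesian squares are detected at a finite stage.

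The main obstacle is the filtered colimit step, namely showing that $\colim$ in $\Cat_{\infty}$ has the correct mapping spaces and that finite diagrams lift. I would first cite compactness of finite simplicial sets in $\Cat_{\infty}$ and \cite[Proposition 5.5.7.11]{lurieHigherToposTheory2009} essentially verbatim, with ``$\kappa$-small colimits'' replaced by ``finite colimits''.
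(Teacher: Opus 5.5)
Your proposal is correct and takes essentially the same route as the paper, whose entire proof is the remark that the argument is virtually identical to \cite[Proposition 5.5.7.11]{lurieHigherToposTheory2009}; your $(-)^{\op}$ reduction for $\Cat^{\rm lex}_{\infty}$ and your closure-of-stability argument for $\Cat^{\rm ex}_{\infty}$ spell out what the paper leaves implicit. In the filtered-colimit step, say explicitly that $\lim_J$ commutes with the filtered colimit of mapping spaces because $J$ is finite (equivalently, $\Fun(J,-)$ preserves filtered colimits), since "filtered colimits of equivalences are equivalences" does not cover this, and replace the placeholder citation ``Corollary 5.3.?''.
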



In the sequel, we typically denote by 
    \begin{equation}
    \Phi: \Stab^{S} \rightarrow \Stab\in\Fun\big(\Ring(\Spaces),\Cat_\infty\big)
    \end{equation}
an arbitrary subfunctor of $\Stab$.

\begin{lem}
    If
    \begin{equation}
    \Phi: \Stab^{S} \rightarrow \Stab\in\Fun\big(\Ring(\Spaces),\Cat_\infty\big)
    \end{equation}
    is levelwise fully faithful, then
    \begin{equation}
    \Phi^{\sh}: \cperf^{S}_{C(-)} \rightarrow \cperf_{C(-)}\in\Shv(\Top;\Cat_\infty),
    \end{equation}
    obtained by sheafification and restriction, is levelwise fully faithful. 
\end{lem}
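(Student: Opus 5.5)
We will show that a levelwise fully faithful natural transformation of $\Cat_\infty$-valued presheaves on $\Top$ stays levelwise fully faithful after sheafification. Here we first restrict $\Phi$ along $C(-):\Top^{\op}\to\Ring\subset\Ring(\Spaces)$. We will not work with $\Cat_\infty$-valued objects directly. Instead we use the fact that fully faithfulness is detected by mapping spaces. Concretely, a functor $F:\m{A}\to\m{B}$ is fully faithful if and only if the square
\begin{equation}
\begin{tikzcd}
\Fun(\Delta^1,\m{A})^{\simeq}\arrow[r]\arrow[d] & \Fun(\Delta^1,\m{B})^{\simeq}\arrow[d] \\
(\m{A}^{\simeq})^{\times 2}\arrow[r] & (\m{B}^{\simeq})^{\times 2}
\end{tikzcd}
\end{equation}
is a pullback of spaces. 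Equivalently, the map of Segal spaces $\Map_{\Cat_\infty}(\Delta^\bullet,\m{A})\to\Map_{\Cat_\infty}(\Delta^\bullet,\m{B})$ is Cartesian over level $0$ in the relevant sense.

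\textbf{Step 1.} We reduce sheafification in $\Cat_\infty$ to sheafification in $\Spaces$. The functors $\Map_{\Cat_\infty}(\Delta^n,-)=\Fun(\Delta^n,-)^{\simeq}$ preserve small limits, since $\Delta^n$ is corepresentable. They also preserve filtered colimits, since $\Delta^n$ is compact in $\Cat_\infty$. Sheafification is a transfinite iteration of the dagger construction $\m{F}\mapsto\m{F}^\dagger$, and $\m{F}^\dagger(X)=\colim_{\text{covers}}\lim_{\text{Čech}}\m{F}$ is a filtered colimit of limits. Hence $\Fun(\Delta^n,-)^{\simeq}$ commutes with each dagger step and with the transfinite composite, which is a filtered colimit. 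It follows that $\Fun(\Delta^n,\m{F}^{\sh}(-))^{\simeq}\simeq\big(\Fun(\Delta^n,\m{F}(-))^{\simeq}\big)^{\sh}$, where the right-hand side is space-valued sheafification. This step is the place to be careful: one must check that the colimit over covering sieves in the dagger construction is filtered, which holds because sieves form a cofiltered poset under refinement.

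\textbf{Step 2.} We conclude by left exactness. Levelwise, fully faithfulness of $\Phi$ says the square above, with $\m{A}=\Stab^S_{C(X)}$ and $\m{B}=\Stab_{C(X)}$, is a pullback of space-valued presheaves. Sheafification of $\Spaces$-valued presheaves is left exact, so it preserves finite limits and in particular pullbacks. By Step 1, applying it to this square produces exactly the corresponding square for $\Phi^{\sh}$, evaluated at each $X$. Limits of sheaves are computed levelwise, so that square is a pullback for every $X$, and $\Phi^{\sh}_X$ is fully faithful.

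\textbf{Step 3.} We address the phrase ``obtained by sheafification and restriction.'' Fully faithfulness is a property of each component, so restricting along $\DSm\to\Top$ or along $C(-)$ preserves it.

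The main obstacle is Step 1. If compactness of $\Delta^1$ in $\Cat_\infty$ gives any trouble, we would instead argue with Lemma~\ref{lem: forgetful lex to Cat}, which already provides commutation with limits and filtered colimits.
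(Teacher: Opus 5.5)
Your proof is correct and is essentially the paper's argument. Both rest on the fact that mapping-space data commutes with the limits and filtered colimits that make up the transfinite $\dagger$-construction. In your case that data is $\Fun(\Delta^n,-)^{\simeq}$, which is corepresentable with $\Delta^n$ compact, so the fallback via Lemma~\ref{lem: forgetful lex to Cat} is never needed.

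The only difference is packaging. You transport everything to $\Spaces$-valued presheaves and apply left exactness of $\Spaces$-valued sheafification to the pullback-square criterion for full faithfulness. The paper instead observes directly that each $\dagger$-step, and each transfinite colimit, preserves levelwise full faithfulness.
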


\begin{proof}
After restricting along $C:\Top^{\op} \rightarrow \Ring(\Spaces)$,  we see the induced natural transformation of functors from $\Top^{\op}$ to $\Cat_{\infty}$,
    \begin{equation}
    \Stab^{S}_{C(-)} \rightarrow \Stab_{C(-)},
    \end{equation}
is levelwise fully faithful. 
    Now, since the formation of mapping spaces commutes with both limits and filtered colimits of $\infty$-categories, we see that the induced transformation $(\Stab^{S}_{C(-)})^{\dagger} \rightarrow \Stab_{C(-)}^{\dagger}$ is also levelwise fully faithful. 
    Finally, as sheafification is a transfinite iteration of $(-)^{\dagger}$, we find that, for any $X \in \Top$, the induced functor 
    \begin{equation}
    \cperf^{S}_{X} \rightarrow \cperf_{X}
    \end{equation}
    is fully faithful.
\end{proof}

Given any $X\in\Top$ together with a point $x\in X$, we denote by 
    \begin{equation}
    \cperf^{S}_{X,x}\;\;{\rm resp.}\;\;\cperf_{X,x}
    \end{equation}
the stalk at $x$. The next result concerns ``gluing'' continuous perfect complexes.

\begin{lem}\label{lem: gluing complexes}
Let $\Phi : \Stab^{S} \rightarrow \Stab\in\Fun\big(\Ring(\Spaces),\Cat_\infty\big)$ and $X\in\Top$ together with a point $x\in X$. We abuse notation and denote by $x$ the inclusion $\{x\}\to X$.
    \begin{enumerate}
        \item For any $V \in \cperf_{X}$, if $x^{\ast}V$ belongs to the essential image of $\cperf^{S}_{X,x}$, then there exists an open neighborhood $U_{x}$ of $x$ such that $V|_{U_x} \in \cperf^{S}_{U_x}$. 
        \item For any $V \in \cperf_{X}$, if $\{U_{i}\}_{i \in I}$ is an open cover of $X$ such that $V|_{U_i} \in \cperf^{S}_{U_i}$ for all $i \in I$, then $V \in \cperf^{S}_{X}$. 
        \item The functor 
        \begin{equation}
        (x^{\ast})_{x \in X} : \cperf_{X} \rightarrow \prod_{x \in X} \cperf_{X,x}
        \end{equation}
        is conservative. 
    \end{enumerate}
\end{lem}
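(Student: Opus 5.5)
The plan is to reduce all three statements to standard facts about sheafification on the site $\Top_{/X}$ of open subsets of $X$. The two inputs are: the stalk of a sheafified presheaf agrees with the stalk of the presheaf, and $\Cat_\infty$-valued filtered colimits commute with the relevant finite data. Throughout we regard $\Phi^{\sh}:\cperf^S\to\cperf$ as a levelwise fully faithful map of sheaves, by the preceding lemma. Since stalks are filtered colimits over open neighborhoods, and the dagger construction does not change stalks, we have
\[
\cperf_{X,x}\simeq\colim_{U\ni x}\Stab_{C(U)},\qquad \cperf^S_{X,x}\simeq\colim_{U\ni x}\Stab^S_{C(U)}.
\]
The map between these stalks is fully faithful, because filtered colimits of fully faithful functors are fully faithful (mapping spaces commute with filtered colimits, cf.\ Lemma~\ref{lem: forgetful lex to Cat}).

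For (1), suppose $x^*V$ lies in the image of $\cperf^S_{X,x}$. Choose $W\in\Stab^S_{C(U)}$ for some neighborhood $U$ of $x$, together with an equivalence between its image and $x^*V$ in the stalk. An object, and an equivalence between two objects, in a filtered colimit of $\infty$-categories are each represented at some finite stage. Hence, after shrinking to a neighborhood $U_x$, we get an equivalence $V|_{U_x}\simeq \Phi^{\sh}(W^{\sh}|_{U_x})$ in $\cperf_{U_x}$. To make this work, first reduce to the case where $V$ itself is locally represented by sections of the presheaf $\Stab_{C(-)}$, which holds for any section of a sheafification. Since $\cperf^S_{U_x}\to\cperf_{U_x}$ is fully faithful, being in its essential image is a property, so $V|_{U_x}\in\cperf^S_{U_x}$.

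For (2), use that both $\cperf^S$ and $\cperf$ are sheaves. We are given local objects $V|_{U_i}\in\cperf^S_{U_i}$ on a cover. Their gluing data, consisting of equivalences on overlaps and higher coherences, lie in $\cperf$. By full faithfulness on each $U_{i_0\cdots i_n}$, this data lifts uniquely to $\cperf^S$. Precisely, the square
\[
\cperf^S_X\to\lim\cperf^S_{U_{i_0\cdots i_n}},\qquad \cperf_X\to\lim\cperf_{U_{i_0\cdots i_n}}
\]
has horizontal equivalences. Moreover, a limit of fully faithful functors has essential image exactly the objects whose components lie in the essential images. So $V$ lies in the image, as required.

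For (3), note that a morphism $f$ in $\cperf_X$ is an equivalence iff its cofiber vanishes, since $\cperf_X$ is stable and $x^*$ is exact. It therefore suffices to show that if $x^*V\simeq 0$ for all $x$, then $V\simeq0$. Apply (1) and (2) with $\Stab^S$ the zero subfunctor, which is levelwise fully faithful. Then (1) gives $V|_{U_x}\simeq0$ near each point, and (2) gives $V\simeq0$. The main obstacle is (1): it requires care with the claim that stalks of sheafification agree with colimits of the presheaf stalks for $\Cat_\infty$-valued presheaves on $\Top$. It also requires that objects and equivalences in the stalk descend to finite stages. Both follow from the dagger construction commuting with filtered colimits over neighborhoods, which is where Lemma~\ref{lem: forgetful lex to Cat} is used.
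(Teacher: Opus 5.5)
Your proposal is correct and follows the paper's route. For (1) you use that stalks of the sheafification agree with the presheaf stalks and that data in a filtered colimit of $\infty$-categories descends to a finite stage; for (2) and (3) you use the sheaf condition. Your version is more careful than the paper's in two places: you descend the equivalence between $x^*V$ and the image of $W$ to a finite stage (the paper only mentions lifting the object), and you reduce (3) via stability to (1) and (2) for the zero subfunctor.
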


\begin{proof}
Consider (1). Given any $\m{F} \in \PShv(X;\Cat_{\infty})$, since $\Cat_{\infty}$ is compactly generated,
    \begin{equation}
    \m{F}^{\sh}_{x} \simeq \colim_{U \in \Open_{x}(X)^{\op}} \m{F}^{\sh}(U) \simeq \colim_{U \in \Open_{x}(X)^{\op}} \m{F}(U) \simeq \m{F}_{x}, 
    \end{equation}
    where $\Open_x(X)$ denotes the subcategory of open sets containing $x$.
    Hence, by the compactness of $[0] \in \Cat_{\infty}$, if $x^{\ast}V \in \cperf^{S}_{X,x}$, then we can lift $x^{\ast}V$ to $\cperf^{S}_{U_{x}}$ for some open neighborhood $U_{x}$. Meanwhile, both (2) and (3) are simple applications of the sheaf condition.     
\end{proof}

The following result shows that certain properties of $\Phi$ are inherited by $\Phi^{\sh}$. 
\begin{prop}\label{prop: sheafified subcats have (co)limits}
    Let $\Phi: \Stab^{S} \rightarrow \Stab\in\Fun\big(\Ring(\Spaces),\Cat_\infty\big)$ be levelwise fully faithful.
    \begin{enumerate}
        \item If $\Phi$ factors through $\Cat^{\rm rex}_{\infty}$ levelwise, then, for any $X\in\Top$, the \category $\cperf^{S}_{X}$ admits finite colimits and the functor $\Phi_{X}^{\sh}$ preserves them.   
        \item If $\Phi$ factors through $\Cat^{\rm lex}_{\infty}$ levelwise, then, for any $X\in\Top$, the \category $\cperf^{S}_{X}$ admits finite limits and the functor $\Phi_{X}^{\sh}$ preserves them. 
        \item If, for any $X\in\Top$, the essential image of $\Phi_{X}$ is closed under extensions, then $\cperf^{S}_{X}$ is closed under extensions in $\cperf_{X}$ for all $X \in \Top$.  
    \end{enumerate}
\end{prop}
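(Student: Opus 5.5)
The plan is to follow the proof of the previous lemma and push the relevant structure through the dagger construction $\m{F}\mapsto\m{F}^{\dagger}$, whose transfinite iteration is sheafification. We first restrict along $C:\Top^{\op}\to\Ring(\Spaces)$. For (1), the hypothesis is then that $\Stab^{S}_{C(-)}$ is a presheaf valued in $\Cat^{\rm rex}_{\infty}$ whose transition functors preserve finite colimits, and that $\Stab^{S}_{C(-)}\to\Stab_{C(-)}$ is a natural transformation of $\Cat^{\rm rex}_\infty$-valued presheaves with fully faithful, finite colimit preserving components. By Lemma~\ref{lem: forgetful lex to Cat}, the forgetful functor $\Cat^{\rm rex}_{\infty}\to\Cat_{\infty}$ preserves small limits and filtered colimits. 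The dagger construction involves only limits over covering sieves and filtered colimits over refinements of covers. Hence, computing $(\Stab^{S}_{C(-)})^{\dagger}$ in $\Cat^{\rm rex}_{\infty}$ gives the same underlying $\infty$-category as computing it in $\Cat_{\infty}$. Transfinite iteration is again a filtered colimit, so the sheafification $\cperf^{S}$ computed in $\Cat_\infty$ lifts to a $\Cat^{\rm rex}_\infty$-valued sheaf. The same reasoning applies to $\cperf$ and to the map $\Phi^{\sh}$. Consequently each $\cperf^{S}_X$ admits finite colimits and $\Phi^{\sh}_X$ preserves them. Statement (2) is identical with $\Cat^{\rm lex}_{\infty}$ in place of $\Cat^{\rm rex}_\infty$, or it follows from (1) by passing to opposites.

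The main point needing care is that sheafification in $\Cat_{\infty}$ agrees with sheafification in $\Cat^{\rm rex}_{\infty}$ after forgetting. Since $\Cat^{\rm rex}_\infty$ is presentable, one could instead argue that the forgetful functor commutes with the $(-)^{\dagger}$ functor, because it commutes with the limits and filtered colimits appearing in \cite[Remark 6.2.2.12]{lurieHigherToposTheory2009}. We will do exactly that, checking at each transfinite stage, including limit ordinals where the stage is a filtered colimit, that the forgetful functor commutes with the construction. This step is where I expect the main (if mild) obstacle, namely bookkeeping of the transfinite induction.

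For (3), we take a cofiber sequence $V'\to V\to V''$ in $\cperf_X$ with $V',V''\in\cperf^{S}_X$ (identified with their essential images, using the fully faithfulness from the previous lemma), and we aim to show $V\in\cperf^S_X$. Pulling back to a point $x$, the stalk $x^{\ast}$ is exact because it is a filtered colimit of exact functors. Hence $x^{\ast}V$ is an extension of objects of $\cperf^S_{X,x}$. The stalk $\cperf_{X,x}$ is a filtered colimit of the categories $\Stab_{C(U)}$, and $\cperf^S_{X,x}$ is the corresponding colimit of essential images of $\Phi_U$. An extension in the colimit is represented at some finite stage, because $\Delta^1\times\Delta^1$ is compact in $\Cat_\infty$. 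By closure under extensions at that stage, $x^\ast V$ lies in the essential image of $\cperf^S_{X,x}$. Lemma~\ref{lem: gluing complexes}(1) then gives an open neighborhood $U_x$ with $V|_{U_x}\in\cperf^S_{U_x}$, and Lemma~\ref{lem: gluing complexes}(2) gives $V\in\cperf^S_X$.
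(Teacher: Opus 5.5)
Your proposal is correct and follows essentially the same route as the paper. Parts (1)--(2) come from Lemma~\ref{lem: forgetful lex to Cat}: the forgetful functors commute with the limits and filtered colimits in the (transfinite) dagger construction. Part (3) is proved by checking extension-closure on stalks and then gluing with Lemma~\ref{lem: gluing complexes}.

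The one step to tighten is in (3). Compactness of $\Delta^1\times\Delta^1$ only lifts the square to some stage; you must shrink further so the lift is a genuine cofiber sequence with ends in the essential image of $\Phi_U$. Alternatively, and more simply, lift only the connecting map $x^{\ast}V''\to\Sigma x^{\ast}V'$ and take its fiber at that stage.
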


\begin{proof}
For (1), it is clear from Lemma~\ref{lem: forgetful lex to Cat} that $\cperf^{S}_{X}$ admits finite colimits and that $\cperf^{S}_{X} \rightarrow \cperf_{X}$ preserves these; the argument for (2) is similar. For (3), assume we are given a cofiber sequence $V_{0} \rightarrow V_{1} \rightarrow V_{2}\in\cperf_{X}$, where $V_{0},V_{2} \in \cperf^{S}_{X}$.
This induces, for all $x \in X$, a cofiber sequence in $\cperf_{X,x}$; moreover, our assumption on $\Phi_{X}$ implies the essential image of $\Phi_{X,x}$ is closed under extensions. Hence, we may lift $x^{\ast}V_{1}$ to $\cperf^{S}_{U_{x}}$ for some open neighborhood $U_{x}$; we conclude via the previous lemma.
\end{proof}

\begin{defin}
For any $k\in\bbZ$, let
    \begin{equation}
    (\cperf)_{\geq k} : \Top^{\op} \rightarrow \Cat^{\rm rex}_{\infty}
    \end{equation}
    denote the sheafification of the functor 
    \begin{equation}
    (\Stab_{C(-)})_{\geq k} : \Top^{\op} \rightarrow \Cat^{\rm rex}_{\infty}. 
    \end{equation}
    The \textit{\category of $k$-connective perfect complexes} on $X$ is
        \begin{equation}
        (\cperf_{X})_{\geq k} \equiv \Gamma\big(X;(\cperf)_{\geq k}\big);
        \end{equation}
    when $k = 0$, we write $\cperf^{\rm cn}_{X} \equiv (\cperf_{X})_{\geq 0}$.
\end{defin}

\begin{rem}
For any $X\in\DSm$, we can analogously define the \emph{\category of connective perfect complexes} on $X$, which we denote by $\Perf_{X}^{\rm cn}$; this subcategory of $\Perf_X$ is closed under finite colimits and extensions. Observe, for any $X \in \DSm$, we have $T_{X} \in \Perf_{X}^{\rm cn}$ and, for any $f: X \rightarrow Y\in\DSm$, we have $N_{f} \in \Perf_{X}^{\rm cn}$.
\end{rem}

\begin{defin}
Let
    \begin{equation}
    \cperf^{\leq b} : \Top^{\op} \rightarrow \Cat_{\infty}^{\rm lex}\;\;{\rm resp.}\;\;\cperf^{[a,b]} : \Top^{\op} \rightarrow \Cat_\infty
    \end{equation}
denote the sheafification of the functor
    \begin{equation}
    \Stab_{C(-)}^{\leq b} : \Top^{\op} \rightarrow \Cat_{\infty}^{\rm lex}\;\;{\rm resp.}\;\;\Stab_{C(-)}^{[a,b]} : \Top^{\op} \rightarrow \Cat_\infty.
    \end{equation}
    The \textit{\category of perfect complexes of Tor-amplitude $\leq b$} on $X$ resp. \emph{\category of perfect complexes of Tor-amplitude in $[a,b]$} is 
        \begin{equation}
        \cperf_{X}^{\leq b}\equiv\Gamma(X;\cperf^{\leq b})\;\;{\rm resp.}\;\;\cperf_{X}^{[a,b]} \equiv \Gamma(X;\cperf^{[a,b]}).
        \end{equation}
\end{defin}

\begin{rem}
    By Proposition~\ref{prop: sheafified subcats have (co)limits}, each subcategory of $\cperf_{X}$ in the previous definition is closed under extensions.
\end{rem}

\begin{lem}\label{lem: properties of Tor-amplitude}
Let $X\in\Top$ and $a\leq b$.
    \begin{enumerate}
        \item If $V \in \cperf_{X}^{\leq b}$ resp. $V\in\cperf_{X}^{[a,b]}$, then, for any $k \in \bbZ$, we have $\Sigma^{k}V \in \cperf_{X}^{\leq b+k}$ resp. $\Sigma^{k}V \in \cperf^{[a+k,b+k]}_{X}$. 
        \item If $V_{0} \rightarrow V_{1} \rightarrow V_{2}\in\cperf_X$ is a cofiber sequence such that $V_{0}$ has Tor-amplitude $\leq b_{0}$ and $V_{1}$ has Tor-amplitude $\leq b_{1}$, then $V_{2}$ has Tor-amplitude $\leq \max\{b_{0},b_{1}\}+1$. 
        \item If $V \in \cperf_{X}^{\leq b}$ is $a$-connective, then $V \in \cperf_{X}^{[a,b]}$.
    \end{enumerate}
\end{lem}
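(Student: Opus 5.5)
The strategy is to reduce every statement to a local one and then invoke the corresponding facts for modules over the ordinary commutative rings $C(U)$. By construction, $\cperf^{\leq b}_X$ and $\cperf^{[a,b]}_X$ are full subcategories of $\cperf_X$. The earlier lemma on levelwise fully faithful subfunctors gives this fullness, and Lemma~\ref{lem: gluing complexes}(1)--(2) shows that membership is local on $X$ and is detected at stalks. So for each claim it suffices to fix $x\in X$ and work in the stalk $\cperf_{X,x}$. Since $\Cat_\infty$ is compactly generated, this stalk is the filtered colimit $\colim_{U\ni x}\Stab_{C(U)}$, and similarly for the Tor-amplitude subcategories. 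Any finite diagram in the stalk, such as a single object or a cofiber sequence, therefore lifts to $\Stab_{C(U)}$ for some open neighborhood $U$ of $x$. We may then check the claim for $C(U)$-modules, and transport it back through stalks using Lemma~\ref{lem: gluing complexes}.

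For (1), $\Sigma^k$ on $\cperf_X$ is induced by the levelwise suspension on $\Stab_{C(-)}$, because sheafification is compatible with the exact structure (Lemma~\ref{lem: forgetful lex to Cat}). On $\Stab_R$ it is immediate that $\pi_i(\Sigma^kM\otimes_RN)=\pi_{i-k}(M\otimes_RN)$. Hence $\Sigma^k$ carries $\Stab^{\leq b}_R$ into $\Stab^{\leq b+k}_R$ and $\Stab^{[a,b]}_R$ into $\Stab^{[a+k,b+k]}_R$. This yields a natural transformation of subfunctors, which sheafifies to the claim.

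For (2), lift the cofiber sequence near $x$ to $\Stab_{C(U)}$, shrinking $U$ so that $V_0|_U$ and $V_1|_U$ lie in the relevant Tor-amplitude subcategories. Tensoring with a discrete module $N$ and taking the long exact sequence
\begin{equation}
\pi_i(V_1\otimes N)\to\pi_i(V_2\otimes N)\to\pi_{i-1}(V_0\otimes N)
\end{equation}
shows that $\pi_i(V_2\otimes N)=0$ for $i>\max\{b_0,b_1\}+1$. The same bound in fact holds with $\max\{b_1,b_0+1\}$. Membership of $V_2|_U$ then glues via Lemma~\ref{lem: gluing complexes}(2).

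For (3), the key point, and the main obstacle, is that connectivity of a continuous perfect complex must also be checkable locally by lifting. The subfunctors $(\Stab_{C(-)})_{\geq a}$ and $\Stab^{\leq b}_{C(-)}$ are both full. Their intersection is $\Stab^{[a,b]}_{C(-)}$, because for a connective ring $R$ an $a$-connective $M$ has $M\otimes_RN$ that is $a$-connective for discrete $N$, while conversely Tor-amplitude $\geq a$ forces $a$-connectivity by testing against $N=\pi_0R$ and inducting. Sheafification commutes with this finite intersection (a pullback of full subcategories), since sheafification is left exact: it is built from limits and filtered colimits in $\Cat_\infty$, both of which commute with finite limits. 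So $(\cperf_X)_{\geq a}\cap\cperf^{\leq b}_X=\cperf^{[a,b]}_X$ inside $\cperf_X$, which is exactly (3). Should the left-exactness argument be delicate, the fallback is to argue stalkwise. Lift $V$ near $x$ simultaneously to $(\Stab_{C(U)})_{\geq a}$ and to $\Stab^{\leq b}_{C(U)}$; by fullness these lifts agree after shrinking $U$. Then apply the module-level fact and glue.
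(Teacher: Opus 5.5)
Your proposal is correct and follows the paper's proof: use Lemma~\ref{lem: gluing complexes} to reduce to stalks, i.e.\ to local lifts in $\Stab_{C(U)}$. Then apply the module-level Tor-amplitude facts for (1)--(2), and the connectivity of $M\otimes_R N$ for connective $M,N$ for (3); your stalkwise fallback for (3) is exactly the paper's argument. Your primary left-exactness argument for (3) is a valid variant, and it gives the full equality $\cperf^{[a,b]}_X=(\cperf_X)_{\geq a}\cap\cperf^{\leq b}_X$, which the paper later quotes from this lemma in the heart-structure proposition.
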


\begin{proof}
The properties of Tor-amplitude for module spectra (cf. \cite[Proposition 2.13]{antieau-gepner-brauer}) combined with Lemma~\ref{lem: gluing complexes} yield (1) and (2).
For (3), by shifting, we may assume that $a = 0 \leq b$.
By Lemma~\ref{lem: gluing complexes}, it is enough to check on stalks, and the result follows as $M\otimes_{R}N$ is connective whenever $R$, $M$, and $N$ are connective. 
\end{proof}


\begin{rem}
    For any $X \in \Top$, it is not difficult to see that 
    \begin{equation}
    \cperf^{[0,0]}_{X} \simeq \Vect_{X},
    \end{equation}
    where $\Vect_{X}$ denotes the nerve of the $1$-category of real vector bundles on $X$.
    Consequently, for any $a \in \bbZ$, we see
    \begin{equation}
    \Sigma^{-a}: \cperf^{[a,a]}_{X} \xrightarrow{\sim} \cperf^{[0,0]}_{X}\simeq \Vect_{X}.
    \end{equation}
\end{rem}



\begin{prop}\label{prop: locally bounded tor amp}
    Let $X\in\Top$ and $V \in \cperf_{X}$. 
    For each $x \in X$, there exists an open neighborhood $U_{x}\ni x$ such that $V|_{U_x}$ has bounded Tor-amplitude. 
    Consequently, if $X$ is compact, we have an equivalence of \categories:
    \begin{equation}
    \colim_{a\leq b} \cperf_{X}^{[a,b]} \simeq  \cperf_{X}.
    \end{equation}  
\end{prop}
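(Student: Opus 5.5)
The plan is to reduce to the stalk at $x$ and then use compactness to spread the bound to a neighborhood, in the spirit of part (1) of Lemma~\ref{lem: gluing complexes}. For the first assertion, fix $x\in X$. As in the proof of Lemma~\ref{lem: gluing complexes}, the stalk of a sheafification agrees with the stalk of the original presheaf, because $\Cat_\infty$ is compactly generated. Applying this to $\Stab_{C(-)}$ gives
\begin{equation}
\cperf_{X,x}\simeq\colim_{U\in\Open_x(X)^{\op}}\Stab_{C(U)} .
\end{equation}
Since $[0]$ is compact in $\Cat_\infty$, the object $x^*V$ lifts, after shrinking to some open $U_0\ni x$, to an object $M\in\Stab_{C(U_0)}$. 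Lifting the identification $M\simeq x^*V$ also needs compactness of $\Delta^1$, so after shrinking further we may assume $V|_{U_0}$ is the image of $M$ under $\Stab_{C(U_0)}\to\cperf_{U_0}$.

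By the Remark preceding the definition of Tor-amplitude, $M\in\Stab^{[a,b]}_{C(U_0)}$ for some $a\le b$. We also know that $\Stab^{[a,b]}_{C(-)}$ is a subfunctor of $\Stab_{C(-)}$: base change along a ring map $R\to R'$ preserves $a$-connectivity and Tor-amplitude $\le b$, since $(M\otimes_R R')\otimes_{R'}N\simeq M\otimes_R N$ for discrete $R'$-modules $N$. Hence the image of $M$ lies in the sheafified subcategory $\cperf^{[a,b]}_{U_0}$. This works because the unit of sheafification $\Stab^{[a,b]}_{C(-)}\to\cperf^{[a,b]}$ is compatible with the fully faithful inclusion into $\cperf$ provided by the lemma on levelwise fully faithful subfunctors. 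Thus $V|_{U_0}$ has bounded Tor-amplitude, and we take $U_x=U_0$.

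For the consequence, let $X$ be compact. Cover $X$ by finitely many $U_{x_1},\dots,U_{x_n}$ with $V|_{U_{x_i}}\in\cperf^{[a_i,b_i]}_{U_{x_i}}$, and set $a=\min a_i$ and $b=\max b_i$. The subcategories $\cperf^{[a,b]}$ increase as $[a,b]$ grows: this holds at the presheaf level, and sheafification preserves the fully faithful inclusions. Part (2) of Lemma~\ref{lem: gluing complexes} then gives $V\in\cperf^{[a,b]}_X$. This proves that the canonical functor
\begin{equation}
\colim_{a\le b}\cperf^{[a,b]}_X\to\cperf_X
\end{equation}
is essentially surjective.

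For full faithfulness, note that each functor in the filtered diagram and each map to $\cperf_X$ is fully faithful. A filtered colimit of fully faithful inclusions in $\Cat_\infty$ has mapping spaces computed as filtered colimits of mapping spaces, and these colimits are constant here. So the comparison functor is fully faithful, hence an equivalence.

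The main obstacle is the bookkeeping in the first step: checking that the local lift $M$ really represents $V|_{U_0}$ compatibly with the sheafified subcategory $\cperf^{[a,b]}$. This includes the subfunctor (base-change) check for Tor-amplitude over $\Ring(\Spaces)$, which I expect to follow from \cite[Proposition 2.13]{antieau-gepner-brauer}.
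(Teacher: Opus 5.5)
Your proposal is correct and follows essentially the paper's argument. You identify the stalk of $\cperf$ at $x$ with $\colim_{U\ni x}\Stab_{C(U)}$, use that every object of $\Stab_R$ has bounded Tor-amplitude together with compactness to get a bound on a neighborhood, and then pass to a finite subcover with a uniform interval $[a,b]$. The paper phrases the first step as rewriting the stalk as $\colim_{a\le b}$ of the stalks of $\cperf^{[a,b]}$, implicitly invoking Lemma~\ref{lem: gluing complexes}(1), while you lift first and bound afterwards. Your explicit checks supply details the paper leaves implicit: base-change stability of Tor-amplitude, lifting the equivalence $M\simeq x^*V$, gluing via Lemma~\ref{lem: gluing complexes}(2), and full faithfulness of the comparison functor. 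One small correction: lifting that equivalence needs more than compactness of $\Delta^1$. You must also lift an inverse and the two homotopies, which works because mapping spaces in a filtered colimit are filtered colimits.
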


\begin{proof}
    Let $\cperf|_{X} \in \Shv(X;\Cat_\infty)$ denote the restriction of $\cperf$ to a sheaf on $X$ and note that 
    \begin{equation}
    \Gamma(U;\cperf|_{X}) \simeq \cperf_{U}
    \end{equation}
    for any $U \subset X$ open. 
    Because $\Cat_\infty$ is compactly generated, there is an equivalence 
    \begin{equation}
    (\cperf|_{X})_{x} \simeq \colim_{U \in \Open_{x}(X)^{\op}} \cperf_{U}.
    \end{equation}
    However, we see that
    \begin{equation}
    (\cperf|_{X})_{x} \simeq (\Stab_{C(-)}|_{X})_{x} \simeq \colim_{U \in \Open_{x}(X)^{\op}} \Stab_{C(U)};
    \end{equation}
    this implies 
    \begin{equation}
    (\cperf|_{X})_{x} \simeq \colim_{a\leq b} (\Stab^{[a,b]}_{C(-)}|_{X})_{x} \simeq \colim_{a\leq b} (\cperf^{[a,b]}|_{X})_{x}.
    \end{equation}
    In particular, we can choose some open neighborhood $U_{x}\ni x$ such that $V|_{U_{x}} \in \cperf^{[a,b]}_{U_x}$, whence the first claim. Now, suppose in addition $X$ is compact; it will be enough to show every perfect complex has bounded Tor-amplitude.
    Let $\{U_{i}\}_{i \in I}$ be a covering such that $V|_{U_{i}}$ has bounded Tor-amplitude. 
    By compactness, we may choose a finite subcover and therefore integers $a \leq b$ such that $V$ has Tor-amplitude $[a,b]$ on each open, whence the second claim.
\end{proof}


\subsection{Finiteness properties}
For $X\in\Top$, we consider the sheaf of continuous functions which determines a commutative ring object:
    \begin{equation}
    C_X\in\Ring\big(\Shv(X;\mathrm{Ab})\big).
    \end{equation}
By forming Eilenberg-MacLane spectra, we may view $C_{X}$ as a commutative ring object in  $\Shv(X;\Sp)$, which in turn determines an $\einf$-algebra. 
We define
    \begin{equation}
    \Mod_X\equiv\Mod_{C_X}\big(\Shv(X;\Sp)\big),
    \end{equation}
and, by \cite[Proposition 2.1.1.1 \& Remark 2.1.2.1]{lurieSAG}, $\Mod_{X}$ admits a t-structure where $\m{F} \in \Mod_{X}$ is:
\begin{itemize}
\item $n$-connective if, for any $k<n$, the homotopy sheaf $\pi_{k}\m{F}$ vanishes;
\item or $n$-truncated if, for any open $U\subset X$, the spectrum $\m{F}(U)$ is $n$-truncated.
\end{itemize}
Furthermore, we have 
    \begin{equation}
    \Mod_{X}^{\heartsuit} \simeq \Mod_{C_X}\big(\Shv(X;\mathrm{Ab})\big).
    \end{equation}
There is a fully-faithful additive functor $\Vect_{X} \rightarrow \Mod_{X}^{\heartsuit}$ which sends a vector bundle to its sheaf of continuous sections; this identifies $\Vect_X$ with the full subcategory of $\Mod_X$ consisting of locally free $C_X$-modules of finite rank. Furthermore, we can identify $\Free^{\omega}_{C(X)}$ with the smallest additive subcategory of $\Mod_{X}$ containing $C_X$, and, if $X$ is additionally compact Hausdorff, $\Vect_{X}$ is equivalent to the smallest idempotent complete additive subcategory containing $C_X$.

\begin{prop}
    The presheaf $\Mod : \Top^{\op} \rightarrow \bigcat$, given by $X \mapsto \Mod_{X}$, is a sheaf; consequently, we have subfunctors 
    \begin{equation}
    \Stab_{C(-)} \rightarrow \cperf \rightarrow \Mod. 
    \end{equation}
\end{prop}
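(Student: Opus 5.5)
The plan is to treat the two claims separately. For the sheaf property, I would first identify $\Mod$ with the composite of two functors: the functor $X\mapsto \Shv(X;\Sp)$, and the functor taking a presentable symmetric monoidal \category with a chosen commutative algebra to its module \category. Concretely, for an open cover $\{U_i\to X\}$ with Čech nerve $U_\bullet$, the sheaf condition requires that the canonical functor
\begin{equation}
\Mod_X \rightarrow \lim_{[n]\in\Delta} \Mod_{U_n}
\end{equation}
be an equivalence, where $U_n$ denotes the disjoint union of the $(n+1)$-fold intersections. Here $\Mod_{U}$ means modules over $C_X|_U\simeq C_U$ in $\Shv(U;\Sp)$.

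The first step is that $X\mapsto \Shv(X;\Sp)\simeq \Shv(X)\otimes\Sp$ is a sheaf of presentable \categories under restriction. This is standard descent for sheaves on a topological space: $\Shv(X)$ is the limit of the $\Shv(U_n)$ along the restriction functors, because the slice topoi $\Shv(X)_{/U}$ satisfy descent. Tensoring with $\Sp$ in $\PrL$ preserves limits along left-and-right adjoint diagrams, so descent passes to spectral sheaves.

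The second step passes to modules. Since restriction is symmetric monoidal and sends $C_X$ to $C_U$, the \category $\Mod_{C_X}$ is the limit of the $\Mod_{C_X|_{U_n}}$. To see this, write $\Mod_A(\m{C})$ as a limit-compatible construction: by \cite[Theorem 4.8.4.6]{lurieHigherAlgebra2017} it is $\m{C}\otimes_{\Sp}\dots$. Alternatively, and more concretely, note that a $C_X$-module is an object of $\Shv(X;\Sp)$ together with an action. Both the underlying object and the action are local, since limits of \categories of algebras over operads are computed on underlying objects and $\Mod$ is functorial in the pair $(\m{C},A)$ via $\mathrm{LMod}$ over the limit.

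The cleanest route is to use that $\mathrm{LMod}(-)$ commutes with limits of monoidal \categories, so that $\mathrm{LMod}(\lim \m{C}_n)\simeq\lim\mathrm{LMod}(\m{C}_n)$. I would then take fibers over the compatible algebra $(C_{U_n})$, since $\CAlg$ likewise commutes with limits. I expect this bookkeeping, namely checking that the fiber of a limit is the limit of the fibers, to be the main technical point, though it is formal.

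For the subfunctors, the map $\Stab_{C(X)}\to\Mod_X$ sends $C(X)$ to $C_X$. It is induced by the exact functor $\Mod_{C(X)}\to\Mod_X$, $M\mapsto M\otimes_{C(X)}C_X$, which is left adjoint to global sections. This map is natural in $X$, so it gives a map of presheaves $\Stab_{C(-)}\to\Mod$. Since $\Mod$ is a sheaf, the universal property of sheafification factors this map through $\cperf$, yielding the chain $\Stab_{C(-)}\rightarrow\cperf\rightarrow\Mod$. One should also note that $\Mod$ takes values in large \categories, so the sheafification is performed after the evident size enlargement.
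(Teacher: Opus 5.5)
Your argument for the sheaf property follows the paper's proof. The paper gets descent for $\Shv(-;\bbS)$ from universal descent in $\infty$-topoi, identifies $C_X$ with the compatible family $(C_U)$ using that $\CAlg$ commutes with limits, and then quotes \cite[Theorem 5.10]{linskensGlobalHomotopyTheory2025} for modules over a limit. Your $\LMod$/$\CAlg$-and-fibers argument is an unwinding of that theorem. Your descent for $\Shv(-;\Sp)$ via $-\otimes\Sp$ is also fine, because restriction along an open embedding has adjoints on both sides.

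The gap is in the second half. The statement asserts \emph{subfunctors}, i.e.\ levelwise fully faithful inclusions $\Stab_{C(X)}\subset\cperf_X\subset\Mod_X$. The later arguments rely on this, e.g.\ when they compute homotopy sheaves of objects of $\cperf_X$ inside $\Mod_X$. You only produce natural transformations and never address full faithfulness. The paper's route has three steps:
\begin{enumerate}
\item $\Stab_{C(-)}\to\Mod$ is levelwise fully faithful.
\item Sheafification preserves levelwise full faithfulness, since mapping spaces commute with the limits and filtered colimits in the iterated $(-)^\dagger$ construction. Because $\Mod$ is already a sheaf, $\cperf\to\Mod$ is therefore levelwise fully faithful.
\item Two-out-of-three then makes $\Stab_{C(-)}\to\cperf$ fully faithful.
\end{enumerate}

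Step (1) is not formal, and you need an actual input there. Your functor $M\mapsto M\otimes_{C(X)}C_X$ is exact, and $\Stab_{C(X)}$ is generated by $C(X)$ under shifts and finite (co)limits. So full faithfulness is equivalent to the unit $C(X)\to\Gamma(X;C_X)$ being an equivalence of spectra, i.e.\ to $H^i(X;C_X)=0$ for $i>0$. This holds when $X$ is paracompact Hausdorff, because $C_X$ is then soft. It fails for general $X\in\Top$. For the line with two origins, Mayer--Vietoris identifies $H^1(X;C_X)$ with the cokernel of $C(\bbR)\oplus C(\bbR)\to C(\bbR\setminus\{0\})$. The class of $\sin(1/x)$ is nonzero there, because a difference of two continuous functions on $\bbR$ extends continuously over $0$.

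So you must supply this acyclicity of $C_X$, and restrict to spaces (or to a basis of opens) where it holds, in order to get the ``sub'' part of the conclusion.
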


\begin{proof}
    By universal descent for $\infty$-topoi \cite[Lemma 6.1.3.7]{lurieHigherToposTheory2009} it follows that  
    \begin{equation}
        \Shv(-;\sphere) : \Top^{\op} \rightarrow \bigcat
    \end{equation}
    is a sheaf of symmetric monoidal \categories. 
    Because $C_{X} \in \CAlg\big(\Shv(X;\sphere)\big)$, under the equivalence of \categories 
    \begin{equation}
        \CAlg\Big(\lim_{C(\calU)^{\op}} \Shv(U;\sphere)\Big)\simeq \lim_{C(\calU)^{\op}} \CAlg(\Shv(U;\sphere)),
    \end{equation}
    we may identify $C_{X}$ with the family of $\einf$-algebras $(C_{U})_{U\in C(\calU)^{\op}}$ since $C_{X}|_{U} = C_{U}$. 
    Therefore, by \cite[Theorem 5.10]{linskensGlobalHomotopyTheory2025}, we have an equivalence 
    \begin{equation}
        \Mod_{X} \xrightarrow{\sim} \lim_{U\in C(\calU)^{\op}} \Mod_{U},
    \end{equation}
    so that $X\mapsto \Mod_{X}$ is a sheaf as claimed.    
    The fully faithful functor 
    \begin{equation}
    \Free^{\omega}_{C(X)} \rightarrow \Mod_{X}
    \end{equation}
    extends to a morphism of presheaves
    \begin{equation}
    \Stab_{C(-)} \rightarrow \Mod
    \end{equation}
    which is levelwise fully faithful. 
    Sheafifying the source yields the claim.     
\end{proof}


We now prove a smattering of lemmata.

\begin{lem}
For any $X\in\Top$, we denote by $\Vect^{\rm triv}_X$ the nerve of the 1-category of trivial vector bundles on $X$. Any $\m{F} \in \Mod_{X}$ is in the essential image of $\cperf_{X}$ if and only if, for each $x\in X$, there exists an open neighborhood $U_x\ni x$ such that $\m{F}|_{U_x} \in \Stab(\Vect^{\rm triv}_{U_x})$. 
\end{lem}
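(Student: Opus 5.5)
We prove the two implications separately. Throughout, $\Stab(\Vect^{\rm triv}_{U})$ means the smallest stable subcategory of $\Mod_U$ containing the sheaves of sections of trivial bundles, i.e.\ containing $C_U$ and closed under finite sums, shifts, fibers and cofibers. Via the fully faithful functor $\Free^{\omega}_{C(U)}\to\Mod_U$, whose image is exactly the trivial bundles, this subcategory is the essential image of $\Stab_{C(U)}\to\Mod_U$. The key input is the preceding proposition: $\Stab_{C(-)}\to\cperf\to\Mod$ are subfunctors, and $\cperf\to\Mod$ is obtained by sheafifying the levelwise fully faithful map $\Stab_{C(-)}\to\Mod$. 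The argument used for the earlier lemma on sheafified subfunctors (mapping spaces commute with limits and filtered colimits, iterated $(-)^\dagger$) applies verbatim with target the sheaf $\Mod$, so $\cperf_X\to\Mod_X$ is fully faithful for every $X$. We use this freely.

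\textbf{($\Rightarrow$).} Let $V\in\cperf_X$ have image $\m{F}$, and fix $x\in X$. As in the proof of Lemma~\ref{lem: gluing complexes}(1), compactness of $[0]\in\Cat_\infty$ and the identification of stalks
\begin{equation}
\cperf_{X,x}\simeq\colim_{U\in\Open_x(X)^{\op}}\Stab_{C(U)}
\end{equation}
yield an open $U_x\ni x$ and $M\in\Stab_{C(U_x)}$ whose image in $\cperf_{U_x}$ agrees with $V|_{U_x}$. Since mapping spaces in a filtered colimit are colimits of mapping spaces, shrinking $U_x$ if necessary we may take this agreement to be an equivalence in $\cperf_{U_x}$, not merely after passing to the stalk. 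Applying the map to $\Mod_{U_x}$ shows that $\m{F}|_{U_x}$ lies in the essential image of $\Stab_{C(U_x)}$, which is $\Stab(\Vect^{\rm triv}_{U_x})$.

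\textbf{($\Leftarrow$).} Choose an open cover $\{U_x\}$ with objects $M_x\in\Stab_{C(U_x)}$ and equivalences $\m{F}|_{U_x}\simeq\iota(M_x)$ in $\Mod_{U_x}$. Each $M_x$ defines an object $V_x\in\cperf_{U_x}$ with image $\m{F}|_{U_x}$. On overlaps, the gluing data for $\m{F}$ live in $\Mod$; by full faithfulness of $\cperf_U\to\Mod_U$ for all opens $U$, they lift uniquely, with all higher coherences, to equivalences in $\cperf$. More formally, the full subcategory of $\Mod_X$ spanned by objects locally in the image of $\cperf$ is the limit over the Čech nerve of these essential images, because both $\cperf$ and $\Mod$ are sheaves and $\cperf\to\Mod$ is levelwise fully faithful. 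Hence the $V_x$ glue to some $V\in\cperf_X$ with image $\m{F}$. This is the same argument as Lemma~\ref{lem: gluing complexes}(2), now run relative to $\Mod$.

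\textbf{Main obstacle.} The step needing care is the full faithfulness of $\cperf_X\to\Mod_X$ and the resulting descent of essential images. Sheafification alone only gives a map; full faithfulness comes from rerunning the $(-)^\dagger$ argument for a levelwise fully faithful map into a presheaf that is already a sheaf. Granting that, the identification of $\Stab(\Vect^{\rm triv})$ with the image of $\Stab_{C(U)}$ is immediate, because both are generated by $C_U$ under finite sums, shifts and cofibers.
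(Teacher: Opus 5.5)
Your proof is correct and follows the same route as the paper: for ($\Rightarrow$) you show that every continuous perfect complex locally comes from $\Stab_{C(U)}$ via the stalk/compactness argument of Lemma~\ref{lem: gluing complexes}(1), and for ($\Leftarrow$) you glue the local lifts by descent for the sheaves $\cperf$ and $\Mod$. The paper's two-line proof uses implicitly, through the preceding proposition, the levelwise full faithfulness of $\cperf\to\Mod$ and the resulting identification of $\Stab(\Vect^{\rm triv}_U)$ with the essential image of $\Stab_{C(U)}$; your version spells these out.
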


\begin{proof}
    Locally, every perfect complex has this form. 
    On the other hand, if we are given such an $\m{F}$, then it locally belongs to $\cperf_{U_x}$; now, by descent, this implies it belongs to $\cperf_{X}$. 
\end{proof}

\begin{lem}
   Any $E \in \Stab(\Vect^{\rm triv}_{X})$ which is connective as a $C_{X}$-module is equivalent in $\Mod_{X}$ to a geometric realization of finitely generated free $C_X$-modules. 
\end{lem}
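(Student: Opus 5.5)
The plan is to reduce to chain complexes of finitely generated free $C(X)$-modules, push the complex into nonnegative degrees using the connectivity hypothesis, and then apply Dold--Kan. First, the smallest stable subcategory of $\Mod_X$ containing $C_X$ is the essential image of $\Stab_{C(X)}\to\Mod_X$. By the remark identifying $\Stab_{C(X)}$ with $\m{D}^{b}(\Free^{\omega}_{C(X)})$, we may therefore represent $E$ by a bounded chain complex
\begin{equation}
F_b\xrightarrow{d}F_{b-1}\to\cdots\to F_{a+1}\xrightarrow{d}F_a
\end{equation}
of finitely generated free $C(X)$-modules. Each $F_i$ is regarded as a trivial bundle, i.e.\ a sum of copies of $C_X$ in $\Mod_X$. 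Our goal is to modify this complex, without changing its class in $\Mod_X$, until it is concentrated in degrees $\geq 0$.

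\textbf{Inductive step.} Suppose $a<0$. Connectivity of $E$ says $\pi_aE=0$ as a sheaf, so $d:F_{a+1}\to F_a$ is surjective on every stalk. Since the stalk $C_{X,x}$ is a local ring with residue field $\bbR$ (evaluation at $x$), Nakayama implies that the matrix $A$ of continuous functions representing $d$ is surjective at every point $x$. Hence $AA^{T}$ has nowhere-vanishing determinant, and
\begin{equation}
s\equiv A^{T}(AA^{T})^{-1}
\end{equation}
is a global continuous splitting of $d$. This step needs no paracompactness of $X$.

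Consequently $F_{a+1}\simeq P\oplus F_a$, where $P=\ker d$ is the image of the idempotent $1-sA$; in particular $P$ is a finitely generated projective $C(X)$-module. The original complex is then quasi-isomorphic to
\begin{equation}
\cdots\to F_{a+2}\to P\to 0.
\end{equation}
Now take the direct sum with the contractible complex $F_a\xrightarrow{\id}F_a$ placed in degrees $a+2,a+1$. This produces
\begin{equation}
\cdots\to F_{a+3}\to F_{a+2}\oplus F_a\to P\oplus F_a\simeq F_{a+1},
\end{equation}
a bounded complex of finitely generated free modules with lowest degree $a+1$ and the same image in $\Mod_X$. Iterating this step, we reach a complex concentrated in degrees $[0,b]$.

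\textbf{Dold--Kan.} A nonnegatively graded complex of finitely generated free modules corresponds to a simplicial object whose $n$-th term is a finite direct sum of the $F_i$, hence again finitely generated free. By the $\infty$-categorical Dold--Kan correspondence in the stable $\infty$-category $\Mod_X$ (Lurie, Higher Algebra 1.2.4), the geometric realization of this simplicial object is the object represented by the complex. Therefore $E$ is a geometric realization of finitely generated free $C_X$-modules.

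\textbf{Expected obstacle.} The main point needing care is the compatibility of the two models: the chain-complex model $\m{D}^b(\Free^\omega_{C(X)})$ and the $\infty$-categorical realization inside $\Mod_X$ must agree along the functor $\Stab_{C(X)}\to\Mod_X$. I expect this to follow because that functor is exact and sends $C(X)$ to $C_X$, so it commutes with the cell filtrations by cones on both sides.
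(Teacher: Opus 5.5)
Your argument is correct, but it takes a different route from the paper's. The paper identifies $\Stab(\Vect^{\rm triv}_X)$ with the Spanier--Whitehead category of $\m{P}^{\rm fin}_\Sigma(\Vect^{\rm triv}_X)$, so every object is a desuspension of a finite colimit of trivial bundles. It then simply asserts that a connective $E$ needs no desuspension, and turns a finite colimit of trivial bundles into a geometric realization via a result of Pardon and the Bousfield--Kan formula.

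Your inductive step is in effect a proof of the assertion the paper leaves unargued. It is also exactly where the geometry of $C_X$ enters: connectivity in $\Mod_X$ is a stalkwise condition, and you turn it into a statement about $C(X)$-modules. You do this by observing that stalkwise surjectivity forces $A(x)$ to have full rank at every point, so that $A^{T}(AA^{T})^{-1}$ is a global continuous splitting, with no paracompactness required. (Surjectivity of $A(x)$ is just right exactness of reduction modulo the maximal ideal of $C_{X,x}$; Nakayama is not needed.) The stabilization by $F_a\xrightarrow{\id}F_a$ is not a formality, since the kernel can be a non-free projective. For example, $(x\;y\;z)\colon C(S^2)^3\to C(S^2)$ has kernel the sections of $TS^2$.

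Conversely, your Dold--Kan step replaces the paper's appeal to Pardon and Bousfield--Kan. The paper's formal route has the advantage that it would apply verbatim to other additive categories once the connectivity claim is granted.

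The compatibility you flag as the main obstacle is most cleanly settled not by comparing filtrations but by extension. Extend $\Stab_{C(X)}\to\Mod_X$ to the colimit-preserving left adjoint $\Mod_{C(X)}\to\Mod_X$ of global sections, which sends $C(X)$ to $C_X$. It then suffices to know that in $\Mod_{C(X)}$ the realization of a simplicial discrete module is its normalized complex.

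Finally, your opening step identifies the smallest stable subcategory of $\Mod_X$ containing $C_X$ with the essential image of $\Stab_{C(X)}$. That identification uses the full faithfulness of $\Stab_{C(X)}\to\Mod_X$ asserted in the paper. It is automatic if one reads $\Stab(\Vect^{\rm triv}_X)$ as the stable envelope, as the paper's proof does.
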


\begin{proof}
    Let $\m{P}^{\rm fin}_{\Sigma}(\Vect_{X}^{\rm triv})$ denote the smallest subcategory of $\m{P}_{\Sigma}(\Vect_{X}^{\rm triv})$ which contains the representable objects and is closed under finite colimits.     
    By definition, there is a fully faithful functor 
    \begin{equation}
        \m{P}^{\rm fin}_{\Sigma}(\Vect_{X}^{\rm triv}) \rightarrow \Mod_{X}
    \end{equation}
    which preserves finite sifted colimits and an equivalence 
    \begin{equation}
        \mathrm{SW}\big(\m{P}^{\rm fin}_{\Sigma}(\Vect_{X}^{\rm triv})\big) \simeq \Stab(\Vect_{X}^{\rm triv}),
    \end{equation}
    where $\mathrm{SW}$ denotes the Spainer-Whitehead category \cite[C.1.1.1]{lurieSAG}. 
    Therefore, up to desuspension, any object $E \in \Stab(\Vect_{X}^{\rm triv})$ can be written as a finite sifted colimit of trivial vector bundles; in particular, if $E$ is connective no desuspensions are necessary. 
    Finally, by \cite[1.4.239]{pardon-DSm} and the Bousfield-Kan formula, we see that any such connective $E$ is equivalent to the geometric realization of a simplicial diagram whose terms are trivial vector bundles, proving the claim. 
\end{proof}

\begin{lem}
    For any $E \in \cperf_{X}^{\rm cn}$, $\pi_{0}E \in \Mod_{X}^{\heartsuit}$ is locally of finite presentation.  
\end{lem}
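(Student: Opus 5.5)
The statement is local on $X$, so we may work on a neighborhood of a point. Being locally of finite presentation for a sheaf of $C_X$-modules means: every point $x\in X$ has an open neighborhood $U_x$ on which $\pi_0E|_{U_x}$ is the cokernel of a map $C_{U_x}^{m}\to C_{U_x}^{n}$. We therefore fix $x\in X$ and reduce to a local normal form for $E$, then read off $\pi_0$.

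\textbf{Step 1: local trivial model.} By the first of the preceding lemmata, there is an open $U_x\ni x$ with $E|_{U_x}\in\Stab(\Vect^{\rm triv}_{U_x})$. Connectivity of $E$ as a $C_X$-module restricts to $U_x$, since homotopy sheaves are local. The second lemma then presents $E|_{U_x}$, in $\Mod_{U_x}$, as the geometric realization $|F_\bullet|$ of a simplicial object $F_\bullet$ whose terms are finitely generated free $C_{U_x}$-modules (trivial bundles).

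\textbf{Step 2: compute $\pi_0$.} The t-structure on $\Mod_{U_x}$ has connective part closed under colimits, and $\pi_0=\tau_{\le 0}$ on connective objects is a left adjoint. Hence $\pi_0$ commutes with the geometric realization of connective objects. This gives
\begin{equation}
\pi_0|F_\bullet|\simeq\operatorname{coker}\big(d_0-d_1:\pi_0F_1\to\pi_0F_0\big).
\end{equation}
This is the usual computation: $\pi_0$ of a realization is the coequalizer of the first two terms, since the $1$-skeleton already determines $\pi_0$. Each $F_i$ is discrete and free of finite rank, so $\pi_0F_i=F_i\cong C_{U_x}^{n_i}$. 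Thus $\pi_0E|_{U_x}$ is the cokernel of a map $C_{U_x}^{n_1}\to C_{U_x}^{n_0}$, i.e.\ finitely presented, which proves the claim.

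\textbf{Main obstacle.} The delicate point is Step 2. One must check that forming $\pi_0$ in $\Mod_{U_x}$ is compatible with the colimit computed there, and not merely objectwise on sections. This holds because $\pi_0$ in the sheaf t-structure is sheafified $\pi_0$, and sheafification preserves colimits and cokernels in $\Mod^{\heartsuit}$. One must also check that the coequalizer description persists in the heart, where the cokernel is the sheaf cokernel; finite presentation is a statement about sheaf cokernels, so this is the right notion.

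If the second lemma only gives a finite sifted colimit, not literally a simplicial object, we would instead use the Bousfield--Kan formula it cites to obtain a simplicial diagram. Alternatively, we could argue by induction on cell structure: a cofiber of a map between connective modules whose $\pi_0$ is finitely presented again has finitely presented $\pi_0$ (by right exactness of $\pi_0$ on connective objects). This holds provided the connective filtration is preserved, which the lemma's ``no desuspensions'' remark guarantees.
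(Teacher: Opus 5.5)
Your proposal is correct and follows the paper's proof essentially verbatim. You restrict to an open set on which $E$ lies in $\Stab(\Vect^{\rm triv}_{U})$, present it there as a geometric realization of trivial bundles via the preceding lemma, and use that $\pi_0=\tau_{\le 0}$ is a left adjoint to identify $\pi_0(E|_U)$ with the coequalizer of $\pi_0F_1\rightrightarrows\pi_0F_0$; the extra checks you flag (connectivity surviving restriction, the colimit being taken in the heart) are points the paper leaves implicit.
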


\begin{proof}
    The functor $\pi_{0} : \Mod_{X}^{\rm cn} \rightarrow \Mod_{X}^{\heartsuit}$ is given by $\tau_{\leq 0}$ which is a localization and hence a left adjoint. Choose a covering $\mathcal U$ of $X$ such that $E|_{U} \in \Stab(\Vect_{U}^{\rm triv})$ for all $U \in \mathcal U$. 
    By the previous lemma, there exists a simplicial object $E^{U}_{\bullet} \in \Fun(\Delta^{\op},\Vect_{U}^{\rm triv})$ such that $E|_{U} \simeq \colim_{\Delta^{\op}}(E^{U}_{\bullet})$. 
    Thus,
    \begin{equation}
    \pi_{0}(E|_{U}) \cong \pi_0\Big(\colim_{\Delta^{\op}}(E^{U}_{\bullet})\Big)\cong\colim\big(\pi_0E^{U}_{1} \rightrightarrows \pi_0E^{U}_{0}\big)
    \end{equation}
    shows $\pi_{0}(E|_{U})$ is of finite presentation, and the claim follows from the isomorphism $\pi_{0}(E|_{U})\cong \pi_{0}(E)|_{U}$.
\end{proof}

\begin{lem}\label{lem: lifting maps from vector bundles}
If $X\in\Top$ is compact Hausdorff, then
then the $C_{X}$-module associated to $E \in \Vect_{X}$ is a projective object in $\Mod_{X}^{\rm cn}$. 
     Consequently, for any vector bundle $V$ and $\m{F} \in \Mod_{X}^{\rm cn}$, there is a surjection 
     \begin{equation}
     \pi_{0}\Map_{\cperf_{X}}(V,\m{F}) \rightarrow \pi_{0}\Map_{\Mod_{X}}\big(V,\pi_{0}(\m{F})\big).
     \end{equation}
\end{lem}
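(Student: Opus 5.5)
Let $E\in\Vect_X$ and let $\m{E}$ denote its sheaf of sections, viewed as a discrete object of $\Mod_X$. We must show that $\pi_0\Map_{\Mod_X}(\m{E},-)$ carries surjections in $\Mod_X^{\rm cn}$ (maps surjective on $\pi_0$) to surjections. The plan is to reduce to the trivial bundle, where the statement becomes a statement about global sections and cohomology vanishing. Since $X$ is compact Hausdorff, the earlier remark identifies $\Vect_X$ with the idempotent completion of the additive category generated by $C_X$, so $\m{E}$ is a retract of $C_X^{\oplus n}$. Projective objects are closed under finite sums and retracts. Hence it suffices to prove that $C_X$ itself is projective in $\Mod_X^{\rm cn}$.

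For $C_X$, the free/forget adjunction identifies $\Map_{\Mod_X}(C_X,\m{F})$ with $\Gamma(X;\m{F})$. For $\m{F}$ connective this gives
\begin{equation}
\pi_0\Map_{\Mod_X}(C_X,\m{F})\cong \pi_0\Gamma(X;\m{F}).
\end{equation}
Projectivity of $C_X$ in the prestable $\Mod_X^{\rm cn}$ is then equivalent to two things: $\Gamma(X;-)$ is right t-exact on connective objects, and $\pi_0\Gamma(X;\m{F})\to \Gamma(X;\pi_0\m{F})$ is surjective. The key input is that $C_X$-modules on a paracompact Hausdorff (here compact) space are soft. The sheaf $C_X$ admits partitions of unity, so every discrete $C_X$-module is a module over a soft sheaf of rings, hence soft, hence $\Gamma$-acyclic. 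Consequently, for $\m{F}$ connective, the descent (hypercohomology) spectral sequence
\begin{equation}
H^p\big(X;\pi_q\m{F}\big)\Rightarrow \pi_{q-p}\Gamma(X;\m{F})
\end{equation}
collapses to $p=0$. This gives $\pi_k\Gamma(X;\m{F})\cong\Gamma(X;\pi_k\m{F})$, and in particular $\Gamma(X;\m{F})$ is connective. The same argument applied to $\Map(\m{E},\m{F})\simeq\Gamma(X;\m{E}^\vee\otimes\m{F})$ gives the general bundle case directly, as an alternative to the retract argument.

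The expected obstacle is justifying convergence of this spectral sequence. One must check that $\Shv(X;\Sp)$ is hypercomplete, or otherwise that the Postnikov tower of $\m{F}$ converges on global sections. For compact Hausdorff $X$ of possibly infinite covering dimension this is not automatic. I would first try to avoid hypercompleteness by a direct argument. A surjection on $\pi_0$ of connective modules yields, locally on a finite cover by compactness, lifts of sections. These local lifts can then be patched using a partition of unity, which is available because everything is a $C_X$-module. This argument only needs the $\pi_0$ (discrete) statement, which is exactly softness; that is all the stated surjection requires.

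Finally, the displayed consequence is the special case of projectivity applied to the truncation map $\m{F}\to\pi_0\m{F}=\tau_{\le 0}\m{F}$, which is surjective on $\pi_0$. Here $V$ is viewed in $\cperf_X$ via the fully faithful inclusion $\cperf_X\subset\Mod_X$.
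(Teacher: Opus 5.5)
Your reduction is the same as the paper's. The paper's whole proof is: a vector bundle on a compact Hausdorff space is a summand of a free $C_X$-module, projectives are closed under retracts, and the displayed surjection then follows from \cite[Proposition 7.2.2.6]{lurieHigherAlgebra2017}. The difference is that the paper takes for granted that $C_X$ itself is projective in $\Mod_X^{\rm cn}$. You correctly see that this is the one non-formal input. Since $\Map_{\Mod_X}(C_X,-)\simeq\Omega^\infty\Gamma(X;-)$, projectivity of $C_X$ amounts to $\Gamma(X;-)$ carrying connective $C_X$-modules to connective spectra. That fails for general sheaves (e.g.\ $\pi_{-1}\Gamma(S^1;\mathbb{Z})\neq 0$), so softness and compactness have to enter exactly here. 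Your proposal therefore supplies a justification the paper omits; the paper's version buys only brevity.

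Of your two arguments for this step, drop the descent spectral sequence. Its convergence is a real issue for infinite covering dimension (the Hilbert cube has a non-hypercomplete sheaf topos), and the direct argument makes it unnecessary. The partition-of-unity argument is the right one, but two points need care.

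First, it is not ``exactly softness''. Softness of the discrete sheaf $\pi_0\m{F}$ controls $\Gamma(X;\pi_0\m{F})$, but you must lift elements of $\pi_0\Gamma(X;\m{G})$, i.e.\ work with the presheaf $U\mapsto\pi_0\Gamma(U;\m{G})$, which is not a sheaf.

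Second, extending $\rho_i\tilde t_i$ by zero via Mayer--Vietoris on $\pi_0$ is only defined up to the image of $\pi_1$ of the overlap, so the patched element need not map to $t$.

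Both points are fixed by using the module action at the spectral level. For $j:U\hookrightarrow X$ open and $\mathrm{supp}\,\rho\subset U$, the function $\rho$ is a global section of the discrete module $j_{U!}C_U$, hence a map $C_X\to j_{U!}C_U$. Its composite with the counit $j_{U!}C_U\to C_X$ is multiplication by $\rho$. Precomposition with it gives a map
$m_\rho:\Map_{\Mod_X}(j_{U!}C_U,\m{F})\simeq\Omega^\infty\Gamma(U;\m{F})\to\Omega^\infty\Gamma(X;\m{F})$.
This map is natural in $\m{F}$ and satisfies $m_\rho(t|_U)=\rho t$.

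Now let $f:\m{F}\to\m{G}$ be surjective on $\pi_0$ and $t\in\pi_0\Gamma(X;\m{G})$. Choose a finite cover $\{U_i\}$ with lifts $\tilde t_i$ satisfying $f(\tilde t_i)=t|_{U_i}$ in $\pi_0\Gamma(U_i;\m{G})$. This is possible after shrinking, because $\pi_0\m{F}\to\pi_0\m{G}$ is stalkwise surjective and these stalks are $\colim_{U\ni x}\pi_0\Gamma(U;-)$. Take a subordinate partition of unity $\{\rho_i\}$; then $t=\sum_i m_{\rho_i}(t|_{U_i})=f\big(\sum_i m_{\rho_i}\tilde t_i\big)$.

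Applying this to $0\to\Sigma Q$ for connective $Q$ gives $\pi_{-1}\Gamma(X;Q)=0$. Replacing $Q$ by its suspensions, this is exactly the right t-exactness you identified as the content of projectivity. So this argument proves the first claim, not just the displayed surjection. The same trick also gives $\pi_k\Gamma(X;\m{G})\cong\Gamma(X;\pi_k\m{G})$ for every $\m{G}\in\Mod_X$, so the conclusion you wanted from the spectral sequence holds with no convergence input.
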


\begin{proof}
    Vector bundles on compact Hausdorff spaces are summands of free $C_X$-modules; moreover, projective objects are closed under retracts, whence the first claim.
    The second claim is immediate from \cite[Proposition 7.2.2.6]{lurieHigherAlgebra2017}.
\end{proof}

Recall, a sheaf on a topological space is \emph{soft} if, for any closed subset, every section over it can be extended to the entirety of the space. Morever, recall that, for any paracompact Hausdorff space $X$, any $C_X$-module is soft, cf. \cite[9.4 Example]{Bredon-sheaf-theory}.
Roughly speaking, this means the ringed space $(X,C_X)$ satisfies Serre's cohomological criterion for affineness.

\begin{lem}
    Let $X\in\Top$ be compact Hausdorff and $\m{F}\in\Mod_{X}^{\heartsuit}$ locally of finite presentation.
    There exists a surjective map of $C_X$-modules,  
    \begin{equation}
    V \rightarrow \m{F}\in\Mod_X,
    \end{equation}
    where $V\in\Mod_X$ is locally free of finite rank. 
\end{lem}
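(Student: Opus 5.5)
The plan is the standard compactness argument: local surjections from trivial bundles, which are then glued using partitions of unity.

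\emph{Local step.} Since $\m{F}$ is locally of finite presentation, for each $x\in X$ there is an open neighborhood $U_x\ni x$ and an exact sequence
\begin{equation}
C_{U_x}^{m_x}\to C_{U_x}^{n_x}\to\m{F}|_{U_x}\to 0.
\end{equation}
In particular we obtain sections $s^x_1,\dots,s^x_{n_x}\in\m{F}(U_x)$ which generate $\m{F}|_{U_x}$ as a $C_{U_x}$-module. By compactness of $X$ we fix finitely many points $x_1,\dots,x_r$ with $X=\bigcup_j U_{x_j}$. Since $X$ is compact Hausdorff, hence normal, we then shrink to an open cover $\{W_j\}$ with $\overline{W_j}\subset U_{x_j}$. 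We also choose continuous functions $\rho_j:X\to[0,1]$ with $\rho_j\equiv 1$ on $\overline{W_j}$ and $\operatorname{supp}\rho_j\subset U_{x_j}$.

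\emph{Globalizing sections.} For each $j$ and each $i$, consider the section $\rho_j s^{x_j}_i$ on $U_{x_j}$. It vanishes on $U_{x_j}\setminus\operatorname{supp}\rho_j$, so it glues with the zero section on $X\setminus\operatorname{supp}\rho_j$ to give a global section $t^j_i\in\m{F}(X)$. This uses only the sheaf property of $\m{F}$; softness of $C_X$-modules on paracompact Hausdorff spaces, recalled above, gives the same extension. These global sections define a map of $C_X$-modules
\begin{equation}
V\equiv C_X^{N}\to\m{F},\qquad N=\textstyle\sum_j n_{x_j},
\end{equation}
and $V$ is free of finite rank, in particular locally free.

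\emph{Surjectivity.} Surjectivity is checked on stalks, or equivalently locally. On $W_j$ we have $t^j_i|_{W_j}=s^{x_j}_i|_{W_j}$, and these sections already generate $\m{F}|_{W_j}$. So the image sheaf contains $\m{F}|_{W_j}$ for every $j$. Since the $W_j$ cover $X$, the map is surjective.

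The only point requiring care is this gluing step: one must make sure the product $\rho_j s^{x_j}_i$ really extends by zero. That is exactly why we shrink so that $\operatorname{supp}\rho_j\subset U_{x_j}$ is closed in $X$. Otherwise the argument is routine. In fact the result produces a trivial bundle, which is stronger than the statement requires.
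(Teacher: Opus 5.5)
Your proof is correct and follows the same route as the paper: use local finite generation and compactness to get finitely many local generators, extend them to global sections, and map a finite free $C_X$-module onto $\m{F}$. The only difference is that the paper simply invokes softness of $C_X$-modules, whereas you carry out the extension explicitly with Urysohn bump functions and extension by zero; you also skip the paper's uniform bound on the number of stalkwise generators, which is not needed.
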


\begin{proof}
    Choose a covering $\mathcal U$ of $X$ on which $\m{F}$ is locally the cokernel of a map between free finite rank $C_X$.
    By compactness, there is a natural number $m\in\bbN$ such that, for any $x\in X$, $\m{F}_{x}$ is generated as a $C_{X,x}$-module by at most $m$ elements. By the softness of $\m{F}$ and the compactness of $X$, there is a finite set of global sections $T\subset \Gamma(X;\m{F})$ which restrict to the stalkwise generators at every point $x \in X$, i.e., we have a surjective map
    \begin{equation}
    V\equiv(C_X)^{\oplus\rlz{T}} \rightarrow \m{F},
    \end{equation}
    as desired.
\end{proof}

We have now arrived at the upshot of the present subsection.

\begin{prop}\label{prop: map from VB}
Let $X\in\Top$ be compact Hausdorff and $E \in \cperf_{X}$ be $k$-connective. There exists a cofiber sequence
    \begin{equation}
    V[k] \rightarrow E \rightarrow E'\in\cperf_{X},
    \end{equation}
    where $V$ a vector bundle and $E'$ is $(k+1)$-connective.
\end{prop}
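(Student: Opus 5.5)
By shifting (Lemma~\ref{lem: properties of Tor-amplitude}(1) and the fact that $\Sigma$ is an equivalence of $\cperf_X$), we reduce to the case $k=0$, so $E\in\cperf^{\rm cn}_X$. The plan is to produce a vector bundle $V$ together with a map $V\to E$ that is surjective on $\pi_0$. Then we show that the cofiber $E'$ is $1$-connective and still lies in $\cperf_X$; the latter is automatic, since $\cperf_X$ is stable and $V\in\cperf^{[0,0]}_X\simeq\Vect_X\subset\cperf_X$.

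\emph{Step 1: a surjection onto $\pi_0$.} The lemma above on connective continuous perfect complexes shows that $\pi_0E\in\Mod_X^\heartsuit$ is locally of finite presentation. Since $X$ is compact Hausdorff, the preceding lemma gives a locally free $C_X$-module of finite rank $V$ (indeed a free one, $C_X^{\oplus\rlz{T}}$) and a surjection $V\to\pi_0E$ in $\Mod_X^\heartsuit$. Via the fully faithful functor $\Vect_X\to\Mod_X^\heartsuit$, we view $V$ as a vector bundle, hence as an object of $\cperf_X$.

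\emph{Step 2: lifting.} Lemma~\ref{lem: lifting maps from vector bundles} says $V$ is projective in $\Mod_X^{\rm cn}$. It therefore provides a surjection
\[
\pi_0\Map_{\cperf_X}(V,E)\to\pi_0\Map_{\Mod_X}(V,\pi_0E),
\]
and we choose a lift $V\to E$ of the map from Step 1. This uses that $\cperf_X\to\Mod_X$ is fully faithful (by the sheaf property of $\Mod$ and the levelwise full faithfulness of $\Stab_{C(-)}\to\Mod$) and that $E$ is connective in $\Mod_X$. The latter holds because connectivity is checked on homotopy sheaves, hence locally, and locally $E$ comes from $(\Stab_{C(U)})_{\geq0}$.

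\emph{Step 3: connectivity of the cofiber.} Let $E'=\mathrm{cofib}(V\to E)$, computed in $\cperf_X$. Because the inclusion into $\Mod_X$ is exact, this is also the cofiber in $\Mod_X$. The long exact sequence of homotopy sheaves gives
\[
\pi_0V\to\pi_0E\to\pi_0E'\to\pi_{-1}V=0,
\]
and $\pi_iE'=0$ for $i<0$ since $E$ and $V$ are connective. Surjectivity of the first map then forces $\pi_0E'=0$, so $E'$ is $1$-connective in $\Mod_X$.

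The main obstacle is reconciling the two notions of connectivity. The statement asks for $E'$ to be $1$-connective \emph{as an object of} $\cperf_X$, i.e.\ in $(\cperf_X)_{\geq1}$, which is defined by sheafifying $(\Stab_{C(-)})_{\geq1}$. So we must show that an object of $\cperf_X$ whose image in $\Mod_X$ is $1$-connective lies in $(\cperf_X)_{\geq1}$. By Lemma~\ref{lem: gluing complexes}(1),(2), this can be checked stalkwise. At a stalk, the colimit $\colim_U\Stab_{C(U)}$ is $\Stab_{C_{X,x}}$, because $\Stab$ commutes with filtered colimits of rings. For modules over the ordinary ring $C_{X,x}$, being $1$-connective is detected by vanishing of $\pi_i$ for $i\le0$, and $\pi_i$ of the stalk agrees with the stalk of the homotopy sheaf $\pi_i$, since filtered colimits are exact. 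I expect this stalkwise identification to be the only delicate point; the rest is formal.
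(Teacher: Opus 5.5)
Your Steps 1--3 are exactly the paper's proof. You reduce to $k=0$, get a surjection $V\to\pi_0E$ from the finite-presentation lemma and compactness, lift it by projectivity (Lemma~\ref{lem: lifting maps from vector bundles}), and read off $\pi_{\le 0}E'=0$ from the long exact sequence of homotopy sheaves. The paper stops there, tacitly treating ``$1$-connective in $\Mod_X$'' and membership in $(\cperf_X)_{\geq 1}$ as the same thing. You are right that, with the sheafified definition, this needs an argument. However, the one you sketch omits the key point.

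To invoke Lemma~\ref{lem: gluing complexes}(1) for $\Phi=(\Stab_{C(-)})_{\geq 1}$, you need $x^*E'$ to lie in the essential image of the stalk of the \emph{sheafified subfunctor}, namely $\colim_{U\ni x}(\Stab_{C(U)})_{\geq 1}$. What you actually show is weaker: $x^*E'$ is $1$-connective as an object of $\colim_U\Stab_{C(U)}\simeq\Stab_{C_{X,x}}$. These are not formally the same. An object of $(\Stab_{C_{X,x}})_{\geq 1}$ descends to some $M_U\in\Stab_{C(U)}$, but $\pi_0\big(C(U')\otimes_{C(U)}M_U\big)$ is only known to vanish after passing to the colimit over $U'\ni x$, and a filtered colimit of nonzero modules can be zero. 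Exactness of filtered colimits identifies the homotopy of the stalk; it does not push the vanishing down to a finite stage. What is missing is that connectivity descends together with the object.

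Since $C_{X,x}$ is local with residue field $\bbR$, this follows from minimal complexes:
\begin{itemize}
\item By Gaussian elimination (splitting off contractible summands), any bounded complex of finite free $C_{X,x}$-modules representing $M$ is homotopy equivalent to one with $d\equiv 0 \bmod \mathfrak{m}_x$.
\item For such a complex, $F_i\otimes\bbR\cong\pi_i(M\otimes_{C_{X,x}}\bbR)$. If $M$ is $1$-connective, so is $M\otimes_{C_{X,x}}\bbR$, hence $F_i=0$ for $i\le 0$.
\item The finitely many matrix entries and the identities $d^2=0$ are realized on some neighborhood $U$. This gives a complex of finite free $C(U)$-modules in degrees $\geq 1$, i.e.\ an object of $(\Stab_{C(U)})_{\geq 1}$ whose stalk is $M$.
\end{itemize}
With this inserted, Lemma~\ref{lem: gluing complexes}(1),(2) finish the argument as you intended.
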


\begin{proof}
    By shifting, it suffices to consider $k=0$. 
    By Lemma~\ref{lem: lifting maps from vector bundles}, we can choose a surjection of sheaves $V \rightarrow \pi_{0}(E)$ which lifts to a map $V[0] \rightarrow E$. 
    If $E'$ denotes the cofiber of this lift, then the long exact sequence of homotopy sheaves shows that $E'$ is $1$-connective, whence the claim. 
\end{proof}


\subsection{Topological K-theory}
Given a topological space $X$, there at least four ``topological K-theory spaces'' one can consider.

\begin{enumerate}[itemsep=5pt,topsep=5pt]
    \item $\K(\cperf_{X})$, the algebraic K-theory of the stable \category of perfect complexes on $X$;
    \item $\K(\Vect_{X})$, the algebraic K-theory of the exact 1-category of real vector bundles on $X$; 
    \item $\K^{\rm add}(\Vect_{X}) = \left(\Vect_{X}^{\simeq}\right)^{\rm gp}$, the algebraic K-theory of the additive 1-category of real vector bundles on $X$; and 
    \item $\KO(X) = \Map_{\Spaces}\big(\Sing(X),\Omega^{\infty}\ko\big)$, the space represented by the grouplike $\einf$-monoid $\Omega^{\infty}\ko$. 
\end{enumerate}
In general, these will yield different grouplike $\einf$-monoids in $\Spaces$.
When $X$ is compact Hausdorff, the first three are equivalent and moreover 
\begin{equation}
\K^{\rm add}_{0}(\Vect_{X}) \cong \pi_{0}\KO(X) = [X,\Omega^{\infty}\ko].
\end{equation}
The goal of this subsection is to carefully explain the relationships between these four different topological K-theory spaces. 

The following lemma is a well-known consequence of Quillen's group-completion theorem; see for instance \cite[Chapter IV, Theorem 7.1]{weibel-k-book}. 

\begin{lem}
    For $X$ paracompact Hausdorff, the exact structure on $\Vect_{X}$ is split, thereby inducing an equivalence
    \begin{equation}
    (\Vect_{X}^{\simeq})^{\rm gp} \xrightarrow{\sim} \K(\Vect_{X})
    \end{equation}
    which is natural in $X$.
\end{lem}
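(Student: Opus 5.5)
The plan is to verify the two hypotheses of the usual comparison between the additive and exact $\K$-theory of a split exact category, and then apply the group-completion argument. Concretely, the claim splits into two parts. First, every short exact sequence $0\to V'\to V\to V''\to 0$ in $\Vect_X$ splits when $X$ is paracompact Hausdorff. Second, for a split exact category $\m{E}$, the canonical map $(\m{E}^{\simeq})^{\rm gp}\to\K(\m{E})$ is an equivalence, naturally in exact functors. Since pullback along a continuous map $X\to Y$ is an exact functor $\Vect_Y\to\Vect_X$, naturality in $X$ follows from naturality of the second part.

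For the first part, I would put a Euclidean metric on $V$. Metrics exist locally on trivializing opens, and a partition of unity subordinate to a trivializing cover glues them; this is the one place paracompactness enters. The orthogonal complement $(V')^{\perp}\subset V$ is then a subbundle. It is locally trivial because orthogonal projection varies continuously and has constant rank. The composite $(V')^{\perp}\to V\to V''$ is a fiberwise isomorphism, hence an isomorphism of bundles, which splits the sequence. An alternative is to use the softness of $C_X$-modules recalled above: $\mathrm{Ext}^1$ between locally free modules of finite rank is computed by sheaf cohomology of a soft sheaf, so it vanishes. I prefer the metric argument because it is elementary.

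For the second part, I would use the standard route in \cite[Chapter IV]{weibel-k-book}. Quillen's $Q=+$ theorem identifies $\Omega BQ\m{E}$ with the group completion of the symmetric monoidal groupoid of admissible monics. When every exact sequence splits, this reduces to the group completion of $(\m{E}^{\simeq},\oplus)$, i.e.\ $\Omega BQ\m{E}\simeq (\m{E}^{\simeq})^{\rm gp}$. Equivalently, in $\infty$-categorical language, I would compare the $S_\bullet$-construction with the bar construction on $\m{E}^{\simeq}$. Because the exact structure is split, each $S_n\m{E}^{\simeq}$ is equivalent to $(\m{E}^{\simeq})^{n}$, and these equivalences are compatible with the simplicial structure maps. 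Realizing then gives $\rmB(\m{E}^{\simeq})\simeq |S_\bullet\m{E}^{\simeq}|$, and looping yields the equivalence.

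The main obstacle is this last identification. The face maps $d_i$ on $S_n$ must correspond under the splittings to the bar-construction maps that add adjacent coordinates, and this must hold coherently, not just objectwise. Objectwise splittings are non-canonical, so I would instead argue by the additivity theorem. The map $S_n\m{E}\to\m{E}^{n}$ taking subquotients induces an equivalence on $\K$-theory for any exact category, by additivity. In the split case, it also induces an equivalence on groupoid cores after group completion, since every object is isomorphic to a direct sum of its subquotients. I would cite \cite[Chapter IV, Theorem 7.1]{weibel-k-book} for this step rather than redoing it.
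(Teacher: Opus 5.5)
Your proposal is correct and takes the paper's route. The paper offers no argument beyond citing \cite[Chapter IV, Theorem 7.1]{weibel-k-book} (Quillen's group-completion result for split exact categories), and it takes the splitting of exact sequences of bundles over a paracompact Hausdorff space as known; your metric and orthogonal-complement argument supplies that splitting.

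One caution about your heuristic: $S_n\m{E}^{\simeq}$ is not equivalent to $(\m{E}^{\simeq})^n$, because the automorphisms of a split extension $A'\rightarrowtail A'\oplus A''\twoheadrightarrow A''$ include the shears $\mathrm{Hom}(A'',A')$. Likewise, ``every object is isomorphic to a direct sum of its subquotients'' only gives a bijection on $\pi_0$, so the comparison after group completion really rests on the cited theorem rather than on that observation.
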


The next proposition uses Saunier's notion of a \textit{heart structure} \cite[Definition 2.3]{saunier-weight-heart}, a weaker version of a weight structure, which permits a comparison between the algebraic K-theory of stable \categories and exact \categories. 
As we will not use heart structures outside of the following proposition, we direct the reader to \cite{saunier-weight-heart} for details.

\begin{prop}\label{prop: heart structure cperf}
    Let $X\in\Top$ be compact Hausdorff.
    The pair $(\cperf_{X}^{\leq 0},\cperf^{\rm cn}_{X})$ determines a bounded heart structure on $\cperf_{X}$ with 
    \begin{equation}
    \Vect_{X} \simeq \cperf_{X}^{\heartsuit}.
    \end{equation}
    Consequently, the induced map 
    \begin{equation}
    \K(\Vect_{X}) \rightarrow \K(\cperf_{X})
    \end{equation}
    is an equivalence.
\end{prop}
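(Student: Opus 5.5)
We verify the axioms of a bounded heart structure in the sense of \cite[Definition 2.3]{saunier-weight-heart}, identify its heart with $\Vect_{X}$, and then invoke Saunier's theorem that for a bounded heart structure the inclusion of the heart (viewed as an exact $\infty$-category) induces an equivalence on algebraic K-theory.

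The axioms are as follows.
\begin{itemize}
\item $\cperf_{X}^{\rm cn}$ is closed under $\Sigma$ and extensions, and $\cperf_{X}^{\leq 0}$ is closed under $\Sigma^{-1}$ and extensions. This follows from Proposition~\ref{prop: sheafified subcats have (co)limits} together with Lemma~\ref{lem: properties of Tor-amplitude}(1).
\item Orthogonality: $\pi_0\Map(V,W)=0$ when $V\in\cperf^{\leq 0}_X$ and $W$ is $1$-connective. This requires a short argument, given below.
\item Boundedness: $\cperf_{X}=\bigcup_{a\leq b}\cperf_{X}^{[a,b]}$, which is exactly Proposition~\ref{prop: locally bounded tor amp}, since $X$ is compact.
\item Decomposition: every $E\in\cperf_X^{\rm cn}$ admits a cofiber sequence $H\to E\to E'$ with $H$ in the heart and $E'$ $1$-connective. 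Proposition~\ref{prop: map from VB} gives such a sequence with $H=V[0]$ a vector bundle.
\end{itemize}

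For the orthogonality step, I would check it on stalks: after restricting to a small open set both objects lie in $\Stab_{C(U)}$, where the vanishing is the standard fact for Tor-amplitude $\leq 0$ against $1$-connective modules. This gives vanishing of the mapping sheaf's $\pi_0$ locally. To globalize I would use softness of $C_X$-modules on paracompact spaces (so higher sheaf cohomology vanishes), or more concretely reduce via the decomposition to $V$ a shifted vector bundle, which is projective by Lemma~\ref{lem: lifting maps from vector bundles}.

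For the heart, I would show $\cperf_X^{[0,0]}=\cperf_X^{\rm cn}\cap\cperf_X^{\leq 0}$ using Lemma~\ref{lem: properties of Tor-amplitude}(3). Then the remark identifying $\cperf_X^{[0,0]}\simeq\Vect_X$ gives the heart, and one checks that the induced exact structure is the usual one. Cofiber sequences in the heart are short exact sequences of bundles, because being split is detected stalkwise and bundles are projective.

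The main obstacle I expect is the global orthogonality and the precise form of Saunier's axioms (e.g.\ whether the decomposition must hold for all objects, not just connective ones). The boundedness input combined with shifting should handle the general case: for $E\in\cperf_X^{[a,b]}$, iterate Proposition~\ref{prop: map from VB} $b-a$ times to peel off vector bundles degree by degree. At each step the Tor-amplitude bound is preserved by Lemma~\ref{lem: properties of Tor-amplitude}(2),(3), and termination is guaranteed since connectivity exceeding the Tor-amplitude upper bound forces the complex to vanish stalkwise, hence globally by Lemma~\ref{lem: gluing complexes}(3).
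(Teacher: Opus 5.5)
Your proposal is correct and is essentially the paper's proof. The substantive step is producing, for every $E\in\cperf_X$, a cofiber sequence $A\to E\to\Sigma B$ with $A\in\cperf^{\leq 0}_X$ and $B\in\cperf^{\rm cn}_X$ by iterating Proposition~\ref{prop: map from VB}; the paper attaches vector-bundle cells to $A$, whereas you peel them off the bottom of $E$ and take $A$ to be the fiber of $E$ onto the first $1$-connective remainder. The rest is shared: boundedness from Proposition~\ref{prop: locally bounded tor amp}, the heart $\cperf^{[0,0]}_X\simeq\Vect_X$, and Saunier's theorem. One small correction: this decomposition needs only connectivity, not Tor-amplitude control of the remainders or termination. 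That matters because Lemma~\ref{lem: properties of Tor-amplitude}(2) as stated gives the bound $b+1$ rather than $b$ at each peeling step; preserving the bound would require the sharper estimate $\max\{b_1,b_0+1\}$.
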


\begin{proof}
    To see the pair $(\cperf_{X}^{\leq 0},\cperf^{\rm cn}_{X})$ determines a heart structure, it remains to verify that, for any $E \in \cperf_{X}$, there is a cofiber sequence 
    \begin{equation}
    A \rightarrow E \rightarrow \Sigma B
    \end{equation}
    with $A \in \cperf^{\leq 0}_{X}$ and $B\in \cperf_{X}^{\rm cn}$.
    We prove that for each $E$ there is a $d$-connective map $\varphi_{d} :A \rightarrow E$ where $A$ has Tor-amplitude $\leq d$, and we argue by induction on $d$. 
    By the compactness of $X$, we may assume that $E$ is $k$-connective for some $k \in \bbZ$, so we may take $A = 0$ when $k<d$. 
    Now assume we are given $\varphi_{d} : A_{\leq d} \rightarrow E$ $d$-connective, where $A$ has Tor-amplitude $\leq d$. 
    By Proposition~\ref{prop: map from VB}, we may choose a $d$-connective map $\psi: V[d] \rightarrow \mathrm{fib}(\varphi_{d})$, where $\mathrm{cofib}(\psi)$ is $d$-connective. 
    Let $A_{\leq d+1}$ denote the cofiber of the composite $V[d] \rightarrow \mathrm{fib}(\varphi_{d}) \rightarrow A_{\leq d}$ and note that $\varphi_{d}$ factors as 
    \begin{equation}
    A_{\leq d} \rightarrow A_{\leq d+1} \xrightarrow{\varphi_{d+1}} E. 
    \end{equation}
    Using the cofiber sequence associated to a composite, it follows $\varphi_{d+1}$ has $(d+1)$-connective cofiber. 
    Furthermore, the closure of Tor-amplitude under extensions shows $A_{\leq d+1}$ has Tor-amplitude $\leq (d+1)$.   
    
    By Lemma~\ref{lem: properties of Tor-amplitude} we have
    \begin{equation}
    \cperf_{X}^{[a,b]} = \cperf_{X}^{\leq b} \cap (\cperf_{X})_{\geq a}
    \end{equation}
    which, combined with Proposition~\ref{prop: locally bounded tor amp}, shows the heart structure is bounded. 
    Finally, we have 
    \begin{equation}
    \cperf_{X}^{\heartsuit} = \cperf_{X}^{\leq 0} \cap \cperf^{\rm cn}_{X} = \cperf^{[0,0]}_{X} \simeq \Vect_{X},
    \end{equation}
    and the equivalence on K-theory spaces follows from \cite[Theorem 0.6]{saunier-weight-heart}.
\end{proof}

\begin{rem}
    Observe, $\Vect_{\bbR^{0}}$ is equivalent to the $1$-category of finite-dimensional real vector spaces. 
    Consequently, the groupoid core is given by 
    \begin{equation}
    \Vect_{\bbR^{0}}^{\simeq} \simeq \bigsqcup_{n} \BGL_{n}(\bbR), 
    \end{equation}
    where $\BGL_{n}(\bbR)$ denotes the category with one object $\ast$ and space of endomorphisms $\GL_{n}(\bbR)$ with the \textit{discrete} topology. 
    As $\Vect_{\bbR^{0}}$ is an additive $1$-category, $\Vect_{\bbR^{0}}^{\simeq}$ is a $\einf$-monoid in $\Spaces$. 
    On the other hand, the compact-open topology provides an enrichment of $\Vect_{\bbR^{0}}$ whose groupoid core is an $\einf$-monoid equivalent to
    \begin{equation}
    \bigsqcup_{n} \BO(n). 
    \end{equation}
    Group-completing this $\einf$-monoid, we have an equivalence 
    \begin{equation}
    \left(\bigsqcup_{n} \BO(n)\right)^{\rm gp} \simeq \Omega^{\infty}\ko,
    \end{equation}
    and therefore a map of grouplike $\einf$-monoids
    \begin{equation}
    (\Vect^{\simeq}_{\bbR^{0}})^{\rm gp} \rightarrow \Omega^{\infty}\ko. 
    \end{equation}    
\end{rem}

\begin{prop}
    For $X\in\Top$, there is a natural map
    \begin{equation}
    (\Vect^{\simeq}_{X})^{\rm gp} \rightarrow \KO(X). 
    \end{equation}
    If $X$ is additionally compact Hausdorff, this map is an isomorphism on $\pi_{0}$.
\end{prop}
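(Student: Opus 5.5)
To construct the natural map, I would pass from vector bundles to classifying maps. A real vector bundle $E$ on $X$ has a frame bundle, a principal $\GL_n(\bbR)$-bundle, and hence a classifying map $\Sing(X)\to \BGL_n(\bbR)^{\rm top}\simeq \BO(n)$. Here $\BGL_n(\bbR)^{\rm top}$ means the classifying space of the topological group. Isomorphisms of bundles induce homotopies between classifying maps, naturally.

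To make this homotopy coherent, I would use the topological enrichment from the preceding remark. The functor $X\mapsto\Vect_X^{\simeq}$ is a sheaf of groupoids on $\Top$ (a bundle is glued from trivial pieces via cocycles valued in continuous $\GL_n(\bbR)$-valued functions). The functor $X\mapsto \Map_{\Spaces}(\Sing(X),\bigsqcup_n\BO(n))$ is a presheaf receiving a map from it. Concretely, I would first define the map on the presheaf $\Free^{\omega}_{C(-)}$: a free module $C(X)^{n}$ goes to the constant map at the basepoint of $\BO(n)$. An automorphism is a continuous map $X\to\GL_n(\bbR)$, which gives a map $\Sing(X)\to \Omega \BO(n)\simeq \GL_n(\bbR)^{\rm top}$, i.e.\ a self-homotopy of the constant map. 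Equivalently, it is a functor $\BGL_n(C(X))\to \Map(\Sing X,\BO(n))$, induced by the evaluation $\Sing(X)\times \GL_n(C(X))\to \GL_n(\bbR)^{\rm top}$ and natural in $X$.

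The key step is then to extend to sheafifications. Since $\KO$, or more precisely $X\mapsto\Map(\Sing X,\bigsqcup\BO(n))$, satisfies descent for open covers (the sheaf lemma for $\KO$ stated earlier; the unstable version holds since $\Sing$ turns open covers into colimits), the map extends uniquely over the sheafification $\Vect^{\simeq}_X$. This step is where the care lies: one must check that the Segal-type evaluation map is a map of $\einf$-monoids, with direct sum corresponding to the block sum $\BO(n)\times\BO(m)\to\BO(n+m)$. I would handle this by noting that both sides come from the additive structure on $\Free^\omega$ and the compact-open enriched $\Vect_{\bbR^0}$, and that evaluation is symmetric monoidal. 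Composing with $\bigsqcup_n\BO(n)\to\Omega^\infty\ko$ and using the universal property of group completion, since $\KO(X)$ is grouplike, gives the natural map $(\Vect_X^{\simeq})^{\rm gp}\to\KO(X)$.

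For $\pi_0$ when $X$ is compact Hausdorff, I would compare with the classical theory. $\pi_0$ of the group completion of $\Vect_X^{\simeq}$ is the Grothendieck group of isomorphism classes (the monoid is discrete-ish at $\pi_0$ level, and $\pi_0$ commutes with group completion). Meanwhile $\pi_0\KO(X)=[X,\Omega^\infty\ko]$, using that $\Sing$ maps detect homotopy classes for CW targets, or a CW-approximation argument. The map sends a bundle to its classifying map. Surjectivity and injectivity reduce to the classical theorem that, for compact Hausdorff $X$, $[X,\bbZ\times\BO]\cong KO^0(X)$: every map lands in a finite $\BO(n)$-stage by compactness and arises from a bundle, and stably isomorphic bundles differ by trivial summands, which is the Grothendieck group relation. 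The compactness input is that every bundle is a summand of a trivial one (already used in Lemma~\ref{lem: lifting maps from vector bundles}).
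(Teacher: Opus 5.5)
Your construction of the map is the paper's. You define it on $X\mapsto(\Free^{\omega}_{C(X)})^{\simeq}\simeq\bigsqcup_n\rmB\GL_n(C(X))$ using $\GL_n(C(X))=\Map_{\Top}(X,\GL_n(\bbR))$ and $\Sing\GL_n(\bbR)\simeq\rmO(n)$, and make it $\einf$ via block sum. You then extend over the sheafification $\Vect^{\simeq}$ because the target satisfies descent, and group-complete after passing to $\oko$. Whether you sheafify before or after composing with $\bigsqcup_n\BO(n)\to\oko$ is immaterial, and that part is fine.

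The gap is in the $\pi_0$ step, at the identification $\pi_0\KO(X)=[X,\oko]$.
\begin{itemize}
\item By definition $\pi_0\KO(X)=\pi_0\Map_{\Spaces}(\Sing X,\oko)\cong[\rlz{\Sing X},\bbZ\times\BO]$.
\item Precomposing with the weak equivalence $\rlz{\Sing X}\to X$ identifies this with homotopy classes of maps out of $X$ only when that map is a homotopy equivalence, e.g.\ when $X$ has the homotopy type of a CW complex.
\item A CW hypothesis on the \emph{target} does not help. CW approximation replaces the source, so it only yields a comparison map $[X,Z]\to[\rlz{\Sing X},Z]$.
\item The classical theorem $KO^0(X)\cong[X,\bbZ\times\BO]$ for compact Hausdorff $X$, and your compactness argument that maps factor through a finite stage, concern maps out of $X$ itself. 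Neither reaches $\pi_0\KO(X)$.
\end{itemize}

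In fact the $\pi_0$ claim is false as stated. Take the Warsaw circle $W$, which is compact metric and weakly contractible. Then $\Sing W\simeq *$, so $\pi_0\KO(W)\cong\bbZ$ records only rank. On the other hand $\check{H}^1(W;\bbZ/2)\cong\bbZ/2$, so restricting the M\"obius bundle from an annular neighbourhood gives a real line bundle $L$ with $w_1(L)\neq 0$. Hence $[L]\neq[\underline{\bbR}]$ in the Grothendieck group, although both map to $1$.

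The paper's one-word justification (``classical'') has exactly the same lacuna. Your argument does prove the statement for compact Hausdorff $X$ of the homotopy type of a CW complex (finite complexes, compact manifolds). For that you must also check, e.g.\ via \v{C}ech cocycles, that the constructed map sends a bundle to its classifying map. The error is harmless for the rest of the paper: the later map $\K(\cperf)\to\KO$ only uses the left-pointing equivalences in the zig-zag, never this $\pi_0$ claim.
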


\begin{proof}
     Recall that $\Vect \in \Shv(\Top;\Cat_\infty)$ is obtained by sheafifying 
     \begin{equation}
     X \mapsto \Free^{\omega}_{C(X)}. 
     \end{equation}
     It follows that $\Vect^{\simeq} \in \Shv(\Top)$ is obtained by sheafifying 
     \begin{equation}
     X \mapsto (\Free^{\omega}_{C(X)})^{\simeq} \simeq \bigsqcup_{n} \BGL_{n}\big(C(X)\big), 
     \end{equation}
     where $\BGL_{n}\big(C(X)\big)$ is the classifying space of the (discrete) group $\GL_{n}\big(C(X)\big) \simeq \Map_{\Top}\big(X,\GL_{n}(\bbR)\big)$.
     The collection of groups $\{\GL_{n}(\bbR)\}_{n\geq 0}$ with block sum determines a $\bbZ_{\geq 0}$-graded commutative monoid in the category of topological groups with the Cartesian symmetric monoidal structure.      
     By colimit interchange, we obtain a nautral transformation of functors      
     \begin{equation}
     \bigsqcup_{n} \rmB\Big(\Map_{\Top}\big(-,\GL_{n}(\bbR)\big)\Big) \rightarrow \Map_{\Spaces}\bigg(\Sing(-),\bigsqcup_{n} \BO(n)\bigg);
     \end{equation}
     here, we have used the identification $\Sing\big(\GL_{n}(\bbR)\big)\simeq \rmO(n)$ to describe the target above. 
     Furthermore, because $\Sing : \Top \rightarrow \Spaces$ is symmetric monoidal, a routine verification shows this natural transformation is one of $\einf$-space valued presheaves.   
     Post-composing with the map to $\Omega^{\infty}\ko$, we obtain a natural transformation of $\einf$-space valued presheaves 
     \begin{equation}
     (\Free^{\omega}_{C(-)})^{\simeq} \rightarrow \KO.
     \end{equation}
     As the target is already a sheaf, this induces a natural transformation of sheaves valued in $\einf$-spaces:
     \begin{equation}
     \Vect^{\simeq} \rightarrow \KO;
     \end{equation}
     but the target is levelwise group-complete, so we obtain the desired natural transformation: 
     \begin{equation}
     (\Vect^{\simeq})^{\rm gp} \rightarrow \KO. 
     \end{equation}
     If $X$ is compact Hausdorff, the isomorphism on $\pi_{0}$ is classical.  
\end{proof}

In summary, we have produced a diagram of presheaves on $\Top$ valued in grouplike $\einf$-spaces:
\begin{equation}\label{equation: zig zag of K-theory}
\begin{tikzcd}
	{\K(\cperf)} & {\K(\Vect)} & {\K^{\rm add}(\Vect)} & \KO.
	\arrow[from=1-2, to=1-1]
	\arrow[from=1-3, to=1-2]
	\arrow[from=1-3, to=1-4]
\end{tikzcd}
\end{equation}
Furthermore, we have also seen that, for any $X\in\Top$ which is compact Hausdorff, the left-facing arrows are equivalences; hence, we have a map
\begin{equation}
\K(\cperf_{X}) \rightarrow \KO(X). 
\end{equation}
Let $\mathrm{CH} \subset \mathrm{kTop} \subset \Top$ denote the full subcategories of compact Hausdorff spaces and of k-spaces, respectively, and let $j : \mathrm{CH} \rightarrow \mathrm{kTop}$ denote the inclusion. 
Recall, essentially by definition, any k-space $X$ can be written as a small colimit of compact Hausdorff spaces in $\Top$.

\begin{prop}
    There is a natural transformation of functors from $\mathrm{kTop}^{\op}$ to $\mathrm{CMon^{gp}}(\Spaces)$:
    \begin{equation}
    \K(\cperf) \rightarrow \KO. 
    \end{equation}
    Consequently, we have a natural transformation of $\mathrm{CMon}(\Spaces)$-valued sheaves on $\mathrm{kTop}$,
    \begin{equation}
    [-]:\cperf^{\simeq} \rightarrow \KO,
    \end{equation}
    which splits cofiber sequences. 
\end{prop}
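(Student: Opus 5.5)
The plan is to build the transformation on compact Hausdorff spaces, where the zig-zag \eqref{equation: zig zag of K-theory} can be inverted, and then right Kan extend along $j : \mathrm{CH} \rightarrow \mathrm{kTop}$.

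\textbf{The compact Hausdorff case.} Restricted to $\mathrm{CH}^{\op}$, both left-facing arrows in \eqref{equation: zig zag of K-theory} are equivalences:
\begin{itemize}
\item $\K(\Vect_X) \rightarrow \K(\cperf_X)$ is an equivalence by Proposition~\ref{prop: heart structure cperf};
\item $\K^{\rm add}(\Vect_X) \rightarrow \K(\Vect_X)$ is an equivalence by the split exact structure lemma.
\end{itemize}
Since the arrows are natural transformations of presheaves and are levelwise equivalences, they are invertible in $\Fun(\mathrm{CH}^{\op},\mathrm{CMon^{gp}}(\Spaces))$. Composing the inverses with $\K^{\rm add}(\Vect)\rightarrow\KO$ gives a natural transformation $j^{\ast}\K(\cperf) \rightarrow j^{\ast}\KO$.

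\textbf{Extending to k-spaces.} I use the adjunction $j^{\ast} \dashv j_{\ast}$, where $j_{\ast}$ is right Kan extension along $j^{\op}$. The transformation above is adjoint to $\K(\cperf) \rightarrow j_{\ast}j^{\ast}\K(\cperf) \rightarrow j_{\ast}j^{\ast}\KO$. It then suffices to show that the unit $\KO \rightarrow j_{\ast}j^{\ast}\KO$ is an equivalence on $\mathrm{kTop}$. Evaluated at $X$, this amounts to
\[
\Map\big(\Sing X,\oko\big)\simeq \lim_{(K\rightarrow X)\in \mathrm{CH}_{/X}} \Map\big(\Sing K,\oko\big),
\]
which follows from $\Sing X \simeq \colim_{K\in\mathrm{CH}_{/X}} \Sing K$ in $\Spaces$.

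\textbf{Main obstacle.} This last colimit formula is where I expect the difficulty to lie. I would argue it by noting that every singular simplex $\Delta^n \rightarrow X$ factors through the compact Hausdorff space $\Delta^n$ itself. Hence the slice $\mathrm{CH}_{/X}$ contains the simplex category of $X$ as a final-enough subdiagram, and the colimit of $\Sing K$ over it computes $\Sing X$ by the usual Bousfield--Kan argument. If this finality check is awkward, I would instead define the transformation directly by right Kan extension of $j^{\ast}\KO$ and only compare to $\KO$ through this simplex-level argument.

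\textbf{The splitting statement.} Precompose with the canonical map $\cperf^{\simeq}_X \rightarrow \K(\cperf_X)$. This map exists naturally in $X$ since $\K$ is a functor on $\Cat^{\ex}_\infty$. It is a map of $\mathrm{CMon}$-valued presheaves that splits cofiber sequences by the universal property of algebraic K-theory, and composition preserves this splitting.

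For the sheaf assertion, the source $\cperf^{\simeq}$ is a sheaf because $\cperf$ is a sheaf of $\Cat_\infty$ and the core functor $(-)^{\simeq}$ preserves limits. The target $\KO$ is a sheaf by the earlier lemma. A natural transformation between sheaves is simply a map in the full subcategory of sheaves, so nothing further needs checking.
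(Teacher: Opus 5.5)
Your proposal is correct and follows the paper's proof: invert the zig-zag on $\CH$, right Kan extend along $j$, use that the unit $\KO\rightarrow j_{\ast}j^{\ast}\KO$ is an equivalence, and precompose with $\cperf^{\simeq}\rightarrow\K(\cperf)$. The only divergence is in justifying $\Sing X\simeq\colim_{\CH_{/X}}\Sing K$. The paper appeals to k-spaces being colimits of compact spaces and to $\Sing$ preserving colimits; your simplex-level argument is the more careful justification, and it needs no k-space hypothesis. It should be phrased not as finality, since $\CH_{/X}$ has the initial object $\varnothing\rightarrow X$ and so has contractible realization. Rather, the point is that $K\mapsto\Sing K$ on $\CH_{/X}$ is the left Kan extension of the point along the simplex category of $X$, because the comma category over $K\rightarrow X$ is exactly the simplex category of $K$.
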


\begin{proof}
    By restricting and right Kan extending the zig-zag of natural transformations in \eqref{equation: zig zag of K-theory} along $j$, we obtain the following diagram of presheaves on $\mathrm{kTop}$ valued in grouplike $\einf$-spaces:
    \begin{equation}\begin{tikzcd}
	{\K(\cperf)} & {\K(\Vect)} & {\K^{\rm add}(\Vect)} & \KO \\
	{j_{\ast}j^{\ast}\K(\cperf)} & {j_{\ast}j^{\ast}\K(\Vect)} & {j_{\ast}j^{\ast}\K^{\rm add}(\Vect)} & {j_{\ast}j^{\ast}\KO}.
	\arrow[from=1-1, to=2-1]
	\arrow[from=1-2, to=1-1]
	\arrow[from=1-2, to=2-2]
	\arrow[from=1-3, to=1-2]
	\arrow[from=1-3, to=1-4]
	\arrow[from=1-3, to=2-3]
	\arrow[from=1-4, to=2-4]
	\arrow[from=2-2, to=2-1]
	\arrow[from=2-3, to=2-2]
	\arrow[from=2-3, to=2-4]
    \end{tikzcd}\end{equation}
    By our work above, all the left-facing arrows in the bottom rows are equivalences. 
    Furthermore, because any k-space is a colimit of compact spaces and $\Sing$ preserves small colimits, the rightmost vertical arrow is also an equivalence.
    Therefore, we obtain the claimed natural transformations:
    \begin{equation}
    \cperf^{\simeq} \rightarrow \K(\cperf) \rightarrow \KO. 
    \end{equation}
\end{proof}

\begin{rem}
    The transformation $[-]$ admits a convenient informal description. 
    Given $V\in\cperf_X$ for $X$ a k-space, this determines a point in the space 
    \begin{equation}
    \Gamma\big(X;j_{\ast}j^{\ast}\K(\cperf)\big) \simeq \lim_{f \in (\CH_{/
    X})^{\op}} \K(\cperf_{K}),
    \end{equation}
    where $f: K \rightarrow X$ ranges over all maps from compact Hausdorff spaces into $X$.
    Informally, this point is given by the system $([f^{\ast}V])_{f \in (\CH_{/X})^{\op}}$ which can be explicitly encoded as a section of a fibration. 
    Identifying each algebraic K-theory class $[f^{\ast}V]$ with a point in $\K^{\rm add}(\Vect_{K})$, we can produce a system 
    \begin{equation}
    \big([f^{\ast}V] : \Sing(K) \rightarrow \oko \big)_{f \in (\CH_{/X})^{\op}} \in \lim_{f\in (\CH_{/X})^{\op}} \KO(K)
    \end{equation}
    which can be glued to give a point in $\KO(X)$. 
\end{rem}

\subsection{Virtual and stable tangent bundles}\label{subsection: virtual tangent bundles}
Having better understood the relationship between perfect complexes and topological K-theory, we can now establish the main results of the present subsection. 

\begin{assu}\label{assu:paracompacthausdorff}
Throughout the rest of the present article, we assume all derived smooth manifolds $X$ have underlying topological space $\rlz{X}$ which is paracompact Hausdorff. 
Consequently, by Remark~\ref{rem: point-set topology of DSm}, we learn $\rlz{X}$ is also metrizable. 
By abuse of notation, we write $\DSm$ for the full subcategory of derived smooth manifolds which are paracompact Hausdorff. 
It is not hard to see this \category remains a perfect topological site closed under finite limits.
\end{assu}

The following proposition summarizes our previous technical work.

\begin{prop}
There is a natural transformation of functors from $\DSm^{\op}$ to $\CMongp$:
    \begin{equation}
    (-)^{\vir} : \K(\Perf) \rightarrow \KO. 
    \end{equation}
\end{prop}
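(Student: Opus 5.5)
The plan is to factor the desired transformation through continuous perfect complexes on the underlying topological space: first construct a natural exact functor $\Perf_X \to \cperf_{\rlz{X}}$, then apply algebraic K-theory and compose with the transformation $\K(\cperf)\to\KO$ on k-spaces built in the previous subsection.

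\emph{Step 1: from smooth to continuous perfect complexes.} The natural map $C^\infty(X)\to C(\rlz{X})$ factors through $\pi_0C^\infty(X)$. Base change along it gives an exact functor
\begin{equation}
\Stab_{C^\infty(X)}\to\Stab_{C(\rlz{X})},
\end{equation}
natural in $X\in\DSm$; it preserves $\Stab$ because base change sends $R$ to the new unit and is exact. We obtain a natural transformation of presheaves $\DSm^{\op}\to\Cat^{\ex}_\infty$,
\begin{equation}
\Stab_{C^\infty(-)}\to \Stab_{C(\rlz{-})}\to \cperf_{\rlz{-}}.
\end{equation}
The target $\cperf\circ\rlz{-}$ is a sheaf on $\DSm$: open coverings in $\DSm$ are by definition those whose realizations are open coverings, and $\rlz{-}$ sends open embeddings to open embeddings. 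The universal property of sheafification then produces a natural exact functor $\Perf_X\to\cperf_{\rlz{X}}$. Applying $\K$ yields a natural map of grouplike $\einf$-monoids $\K(\Perf_X)\to\K(\cperf_{\rlz{X}})$.

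\emph{Step 2: landing in k-spaces.} By Assumption~\ref{assu:paracompacthausdorff} and Remark~\ref{rem: point-set topology of DSm}, $\rlz{X}$ is metrizable. Metrizable spaces are first countable, hence k-spaces, so $\rlz{-}$ factors through $\mathrm{kTop}$. Precomposing the previous proposition's transformation $\K(\cperf)\to\KO$ on $\mathrm{kTop}^{\op}$ with $\rlz{-}$ gives $\K(\cperf_{\rlz{X}})\to\KO(\rlz{X})=\KO(X)$, natural in $X$. Composing with Step 1 gives $(-)^{\vir}$. Naturality holds throughout because each piece is a transformation of functors on the relevant category and $\rlz{-}$ is a functor.

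\emph{Main obstacle.} The subtle point is the sheaf property of $\cperf\circ\rlz{-}$ on $\DSm$ together with the coherence of base change as a functor of $\infty$-categories in $X$. I would handle this by writing base change as the composite functor $\Ring(\Spaces)\to\Cat^{\ex}_\infty$, $R\mapsto\Stab_R$, applied to the natural transformation $C^\infty\to C\circ\rlz{-}$ of $\Ring(\Spaces)$-valued presheaves. Descent I would check directly from the definition of open coverings in a topological site. Everything after that is formal.
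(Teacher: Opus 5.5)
Your proposal is correct and follows exactly the route the paper intends. The paper gives no separate argument beyond saying the proposition summarizes the preceding work, and your two steps make explicit the links it leaves tacit: the comparison functor $\Perf_X\to\cperf_{\rlz{X}}$, obtained from base change along $C^\infty(X)\to C(\rlz{X})$ and sheafification (using that $\cperf\circ\rlz{-}$ is a sheaf on $\DSm$); and the fact that $\rlz{X}$ is metrizable, hence a k-space, so the transformation $\K(\cperf)\to\KO$ on $\mathrm{kTop}$ can be precomposed with $\rlz{-}$.
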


In particular, given any perfect complex $E\in\Perf_X$, there is a canonical virtual bundle $E^{\vir} \in\KO(X)$. 
Given $f: X \rightarrow Y$ in $\DSm$, we say that $T^{\vir}_{X}$ is the \textit{virtual tangent bundle of $X$} and that $T^{\vir}_{f}$ is the \textit{virtual relative tangent bundle of $f$}. 
As algebraic K-theory splits cofiber sequences, we obtain the following formal properties of the virtual relative tangent bundle. 

\begin{lem}\label{lem: virtual tangent bundle properties}
\begin{enumerate}
\item Given $f : X \rightarrow Y$ in $\DSm$, there are canonical equivalences
    \begin{equation}
    T^\vir_{f} \simeq T^\vir_{X}-f^*T^\vir_Y\;\;\textrm{and}\;\;N^\vir_{f} \simeq -T^\vir_{f}.
    \end{equation}
\item Given a composite $X \xrightarrow{f} Y \xrightarrow{g}Z\in\DSm$, there is a canonical equivalence
    \begin{equation}
    T^\vir_{g\circ f}\simeq T^\vir_{f}+f^{\ast}T^\vir_{g}.
    \end{equation}
\item Given a pullback sqaure 
    \begin{equation}
    \begin{tikzcd}
    \overline{X}\equiv X\times_Y\overline{Y}\arrow[d,"\overline{g}",swap]\arrow[r,"\overline{f}"]\arrow[dr, phantom, "\lrcorner", very near start] & \overline{Y}\arrow[d,"g"] \\
    X\arrow[r,"f",swap] & Y
    \end{tikzcd}
    \end{equation}
in $\DSm$, there is a canonical equivalence 
    \begin{equation}
    T^\vir_{\overline{f}}\simeq \overline{g}^* T^\vir_f. 
    \end{equation}
\end{enumerate}
\end{lem}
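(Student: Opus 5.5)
The plan is to push the cofiber-sequence identities of perfect complexes through the composite natural transformation $\Perf^{\simeq}\to \K(\Perf)\xrightarrow{(-)^{\vir}}\KO$. The first map splits cofiber sequences, by the universal property of algebraic K-theory. The second is a natural transformation of presheaves on $\DSm$ valued in $\CMongp$. Hence every cofiber sequence $A\to B\to C$ in $\Perf_X$ yields a canonical equivalence $B^{\vir}\simeq A^{\vir}+C^{\vir}$ in $\KO(X)$. Moreover, naturality in $X$ turns pullback of perfect complexes $f^{*}$ into pullback of virtual bundles along $\Sing(\rlz{f})$.

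For (1), I use the defining fiber sequence $T_f\to T_X\xrightarrow{Df} f^{*}T_Y$ in $\Perf_X$. It gives $T_X^{\vir}\simeq T_f^{\vir}+(f^{*}T_Y)^{\vir}$, and naturality identifies $(f^{*}T_Y)^{\vir}\simeq f^{*}T_Y^{\vir}$. Since $\KO(X)$ is grouplike, subtracting yields $T_f^{\vir}\simeq T_X^{\vir}-f^{*}T_Y^{\vir}$. Similarly, the cofiber sequence $T_X\to f^{*}T_Y\to N_f$ gives $(f^{*}T_Y)^{\vir}\simeq T_X^{\vir}+N_f^{\vir}$, so $N_f^{\vir}\simeq f^{*}T_Y^{\vir}-T_X^{\vir}\simeq -T_f^{\vir}$. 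Alternatively, one can use the rotated sequence $f^{*}T_Y\to N_f\to \Sigma T_f$ together with $[\Sigma E]=-[E]$ in K-theory. The two routes agree canonically, because both come from the same octahedral data in $\Perf_X$.

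For (2), I apply the lemma giving the cofiber sequence $T_f\to T_{g\circ f}\to f^{*}T_g$ and then use naturality as before. This gives $T_{g\circ f}^{\vir}\simeq T_f^{\vir}+f^{*}T_g^{\vir}$.

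For (3), the pullback lemma gives $\overline{g}^{*}T_f\simeq T_{\overline f}$ in $\Perf_{\overline X}$. Applying $(-)^{\vir}$ and naturality along $\overline g$ produces $T_{\overline f}^{\vir}\simeq \overline g^{*}T_f^{\vir}$.

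The only point needing care is what ``canonical'' means here. I would record that each equivalence is the image under $(-)^{\vir}$ of the canonical path in $\K(\Perf_X)$ attached to a specified cofiber sequence. This requires the splitting in K-theory to be functorial in the cofiber sequence, meaning the additivity map $\K(\mathrm{Fun}(\Delta^1,\Perf_X))\to \K(\Perf_X)\times\K(\Perf_X)$ is natural in $X$. This holds because the whole construction is natural in exact functors, so no real obstacle is expected: the lemma is formal once the preceding proposition is in hand.
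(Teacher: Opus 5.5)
Your proposal is correct and is the argument the paper intends: the paper gives no separate proof, and simply records the lemma as a formal consequence of two facts. The first is that $(-)^{\vir}\colon \K(\Perf)\to\KO$ is natural and that algebraic K-theory splits cofiber sequences; the second is the three inputs you apply these to, namely the defining fiber sequence $T_f\to T_X\to f^*T_Y$, the composition cofiber sequence, and the base-change equivalence $\overline{g}^*T_f\simeq T_{\overline{f}}$. One slip in your optional aside: the cofiber of $Df$ is already $N_f\simeq\Sigma T_f$, so $[N_f]=[\Sigma T_f]=-[T_f]$ directly, whereas the rotation of $T_X\to f^*T_Y\to N_f$ is $f^*T_Y\to N_f\to\Sigma T_X$, not a sequence ending in $\Sigma T_f$.
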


To study various flavors of cobordism, we must also contend with \textit{stable vector bundles}.

\begin{recollection}\label{recollection:ko}
The Postnikov truncation $\mathrm{ko} \rightarrow \tau_{\leq 0}\mathrm{ko}\simeq \mathrm{H}\bbZ$ induces a pullback square of grouplike $\einf$-monoids:
\begin{equation}
\begin{tikzcd}
\BO \arrow[r] \arrow[d]\arrow[dr, phantom, "\lrcorner", very near start] & \Omega^{\infty}\mathrm{ko} \arrow[d,"\pi"] \\
\ast \arrow[r,"0"'] & \mathbb{Z}.
\end{tikzcd}
\end{equation}
By looping the composite ${\rm U}(1)\to{\rm U}\to{\rm U}/{\rm O}$ and applying real Bott periodicity, we obtain an $\ee{1}$-section of $\pi$
    \begin{equation}
    s: \mathbb{Z} \rightarrow \Omega^{\infty}\mathrm{ko};
    \end{equation}
observe, $s$ is not $\einf$, otherwise the first $k$-invariant of $\mathrm{ko}$ would be trivial. 
Writing $\rk\equiv s \circ \pi$, there is an induced map  
\begin{equation}
\mathrm{id} - \rk : \Omega^{\infty}\mathrm{ko} \rightarrow \BO
\end{equation}
furnishing an equivalence of $\ee{1}$-monoids:
\begin{equation}
(\pi, \id - \rk ) : \Omega^{\infty}\mathrm{ko} \xrightarrow{\sim} \bbZ \times \BO, 
\end{equation}
where the latter $\einf$-monoid now carries the split $\einf$-monoid structure rather than the Bott periodic one.
\end{recollection}

\begin{defin}
    Given $X \in \DSm$ and $\xi \in \KO(X)$, the \textit{virtual rank function of $\xi$} is defined to be 
    \begin{equation}
    \rk\xi \equiv \rk \circ \; \xi : \Sing(X) \rightarrow \oko, 
    \end{equation}
    which we may view as a function $\rlz{X} \rightarrow \bbZ$.
    We say that $\xi \in \KO(X)$ \textit{has virtual rank $n \in \bbZ$} provided $\rk\xi$ is constant with value $n$; equivalently, this means $\xi$ factors through the subspace $\{n\}\times\BO \rightarrow \oko$.
\end{defin}

\begin{rem}
    Given two $\xi,\zeta \in \KO(X)$, we have $\rk(\xi + \zeta) = \rk\xi + \rk\zeta$.
\end{rem}

\begin{notation}
    Given $E \in \Perf_{X}$, we define $\rk E \equiv \rk E^{\vir}$, and in the special cases where $E = T_{X}$ or $E = T_{f}$, we simply write $\rk X \equiv \rk T^{\vir}_{X}$ and $\rk f \equiv \rk T_{f}^{\vir}$, respectively. 
\end{notation}

\begin{example}
    When $f:X\to Y \in \Sm$ it is not difficult to see
    \begin{equation}
    \rk X(x) = \dim(T_{X,x})\;\;\text{and}\;\; \rk f(x) = \dim(T_{X,x}) - \dim(T_{Y,f(x)}),
    \end{equation}
    where $T_{X,x}$ denotes the tangent space at $x \in X$.
\end{example}

\begin{defin}
We denote by $\TKO$ the functor
    \begin{equation}
    \Map_{\Spaces}\big(\Sing(-),\BO\big):\DSm^{\op} \rightarrow\CMongp.
    \end{equation}
\end{defin}

By Recollection~\ref{recollection:ko}, we have a splitting of functors from $\DSm^{\op}$ to $\rm{Mon}^{\rm gp}$:
\begin{equation}
\KO \simeq \Map_{\Spaces}\big(\Sing(-),\bbZ\big) \times \TKO. 
\end{equation}
In particular, we can view $\xi \in \KO(X)$ as a pair 
\begin{equation}
(\rk\xi,\xi-\rk\xi) \in \Map_{\Spaces}\big(\Sing(X),\bbZ\big) \times \TKO(X). 
\end{equation}

\begin{defin}
Let $f:X\to Y \in \DSm$.
The \textit{stable tangent bundle of $X$} is
\begin{equation}
\tau_{X} \equiv T_{X}^{\vir} - \rk X \in \TKO(X).
\end{equation}
The \textit{stable relative tangent bundle of $f$} is 
\begin{equation}
\tau_{f} \equiv T_{f}^{\vir} - \rk f \in \TKO(X). 
\end{equation}
\end{defin}

\begin{rem}
    The properties established in Lemma~\ref{lem: virtual tangent bundle properties} also hold for the stable relative tangent bundle. 
\end{rem}

\section{Structure functors and 6-functor formalisms}\label{sec:structurefun}
\subsection{Correspondences and 6-functor formalisms}
\begin{defin}
    An \textit{adequate triple} is a triple $(\m{C};\m{C}_{L},\m{C}_{R})$ with $\m{C}$ an \category and $\m{C}_{L},\m{C}_{R} \subset \m{C}$ wide subcategories, such that, for every $X \rightarrow Y\in\m{C}_{R}$ and $Y' \rightarrow Y\in\m{C}_{L}$, there exists a pullback square in $\m{C}$    \begin{equation}
    \begin{tikzcd}
    X'\arrow[r]\arrow[d]\arrow[dr, phantom, "\lrcorner", very near start] & Y'\arrow[d] \\
    X \arrow[r] & Y
    \end{tikzcd}
    \end{equation}
    and, for any such pullback, $X' \rightarrow Y'\in\m{C}_{R}$ and $X' \rightarrow X\in\m{C}_{L}$. Such pullback squares are called \textit{ambigressive} in \cite{barwick,haugsengTwovariableFibrationsFactorisation2023}.
    A \textit{morphism of adequate triples},
    \begin{equation}
        F: (\m{C};\m{C}_{L},\m{C}_{R}) \rightarrow (\m{D};\m{D}_{L},\m{D}_{R}),
    \end{equation}
    is a functor $F : \m{C} \rightarrow \m{D}$ which preserves ambigressive pullbacks. 
    There is an \category of adequate triples, denoted by $\mathrm{AdTrip}$, which is Cartesian symmetric monoidal, cf. \cite[Lemma 2.4]{haugsengTwovariableFibrationsFactorisation2023}.
\end{defin}

Every adequate triple determines an \category of correspondences (or spans) $\Cor(\m{C};\m{C}_{L},\m{C}_{R})$. 
Informally, this \category has the same objects as $\m{C}$, its morphisms are diagrams of the form 
\begin{equation}
    X \xleftarrow{f} W \xrightarrow{g} Y,
\end{equation}
with $f\in\m{C}_{L}$ and $g\in\m{C}_{R}$, and composition is given by forming pullbacks of correspondences.
The assignment of an adequate triple to its \category of correspondences determines a functor 
\begin{equation}
    \Cor: \mathrm{AdTrip} \rightarrow \Cat_\infty,
\end{equation}
and there are canonical inclusions of subcategories  
\begin{equation}
    \m{C}_{L}^{\op} \rightarrow \Cor(\m{C};\m{C}_{L},\m{C}_{R}) \leftarrow \m{C}_{R}
\end{equation}
which refine to natural transformations of functors from $\mathrm{AdTrip}$ to $\Cat_\infty$.

When $\m{C}$ has a symmetric monoidal structure compatible with the adequate triple structure, the \category of correspondences inherits a symmetric monoidal structure. 

\begin{lem}\label{lem: adtrip sm gives cor sm}
    Let $\m{C}^{\otimes}$ be a symmetric monoidal \category and $(\m{C};\m{C}_{L},\m{C}_{R}) \in \mathrm{AdTrip}$. 
    If $\m{C}_{L},\m{C}_{R} \subset \m{C}$ are symmetric monoidal subcategories and $\otimes : \m{C} \times \m{C} \rightarrow \m{C}$ preserves ambigressive pullback squares, then there exists a canonical symmetric monoidal \category $\Cor(\m{C};\m{C}_{L},\m{C}_{R})^{\otimes}$ and symmetric monoidal functors
    \begin{equation}
        (\m{C}^{\op}_{L})^{\otimes} \rightarrow \Cor(\m{C};\m{C}_{L},\m{C}_{R})^{\otimes} \leftarrow \m{C}_{R}^{\otimes}.
    \end{equation}
\end{lem}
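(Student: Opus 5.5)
The plan is to realize $\Cor(\m{C};\m{C}_{L},\m{C}_{R})^{\otimes}$ as the image, under a product-preserving functor, of a commutative monoid in $\mathrm{AdTrip}$. The key input is that $\Cor:\mathrm{AdTrip}\rightarrow\Cat_\infty$ preserves finite products, where both sides carry their Cartesian symmetric monoidal structures. First, I would check that the hypotheses upgrade $(\m{C};\m{C}_{L},\m{C}_{R})$ to a commutative algebra in $\mathrm{AdTrip}^{\times}$. Concretely, a symmetric monoidal $\infty$-category is a commutative monoid $\m{C}_\bullet:\Fin_*\rightarrow\Cat_\infty$ with $\m{C}_{\langle n\rangle}\simeq\m{C}^{n}$. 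For each $n$ I would equip $\m{C}^{n}$ with the product triple $(\m{C}^n;\m{C}_L^n,\m{C}_R^n)$, which is again adequate because pullbacks in $\m{C}^n$ are computed componentwise.

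The structure maps of $\m{C}_\bullet$ are built from projections, diagonals/permutations, and iterated tensor products. They must be morphisms of adequate triples. For the maps induced by inert morphisms and permutations this is clear. For active maps it reduces to the statement that $\otimes:\m{C}\times\m{C}\rightarrow\m{C}$ and the unit $\ast\rightarrow\m{C}$ preserve ambigressive squares. The former is a hypothesis. For the latter, the square with all four corners the unit is ambigressive because $\unit$ has identity maps, which lie in both wide subcategories. The requirement that $\m{C}_L,\m{C}_R$ be symmetric monoidal subcategories ensures that the left/right classes are stable under $\otimes$, so the triple maps are well-defined. Since $\mathrm{AdTrip}\rightarrow\Cat_\infty$ (forgetting to $\m{C}$) is faithful on the relevant structure, the lift of $\m{C}_\bullet$ to $\Fun(\Fin_*,\mathrm{AdTrip})$ is essentially a property. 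The Segal condition then holds because products in $\mathrm{AdTrip}$ are computed underlying, cf. \cite[Lemma 2.4]{haugsengTwovariableFibrationsFactorisation2023}.

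Second, I would show that $\Cor$ preserves finite products. The natural comparison
\begin{equation}
\Cor(\m{C}\times\m{D})\rightarrow\Cor(\m{C})\times\Cor(\m{D})
\end{equation}
is an equivalence because spans and their compositions are formed componentwise. This is standard from the construction of $\Cor$ via twisted arrow/complete Segal space models (Barwick, Haugseng). I expect this step to be the main technical point, but I would cite it from \cite{barwick,haugsengTwovariableFibrationsFactorisation2023} rather than reprove it. Composing $\Cor$ with the commutative monoid $\m{C}_\bullet$ then yields a commutative monoid in $\Cat_\infty$, i.e. the desired symmetric monoidal $\infty$-category $\Cor(\m{C};\m{C}_{L},\m{C}_{R})^{\otimes}$.

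Finally, the functors $(\m{C}^{\op}_{L})^{\otimes}\rightarrow\Cor^{\otimes}\leftarrow\m{C}_{R}^{\otimes}$ come from the naturality of the inclusions $\m{C}_L^{\op}\rightarrow\Cor\leftarrow\m{C}_R$ as transformations of functors $\mathrm{AdTrip}\rightarrow\Cat_\infty$. The functors $(\m{C};\m{C}_L,\m{C}_R)\mapsto\m{C}_L^{\op}$ and $\mapsto\m{C}_R$ also preserve products. Applying the same whiskering to $\m{C}_\bullet$ therefore produces maps of commutative monoids, which are symmetric monoidal functors. On sources, these recover $(\m{C}_L^{\op})^{\otimes}$ (the opposite of the restricted symmetric monoidal structure) and $\m{C}_R^{\otimes}$.
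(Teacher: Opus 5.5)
Your proposal is correct and follows the paper's proof. Both arguments exhibit $(\m{C};\m{C}_L,\m{C}_R)$ as a commutative monoid in $\mathrm{AdTrip}$, use that $\Cor$ preserves finite products (the paper cites \cite[Theorem 2.18]{haugsengTwovariableFibrationsFactorisation2023}), and use the naturality of the inclusions $\m{C}_L^{\op}\rightarrow\Cor\leftarrow\m{C}_R$. The only cosmetic difference is that the paper lifts the monoid structure through the symmetric monoidal forgetful functor $\mathrm{AdTrip}\rightarrow\Fun(\Lambda^2_2,\Cat_\infty)$, where the cospan $\m{C}_L^{\otimes}\rightarrow\m{C}^{\otimes}\leftarrow\m{C}_R^{\otimes}$ supplies all the data and everything else is a property; this makes your ``essentially a property'' remark (phrased via $\mathrm{AdTrip}\rightarrow\Cat_\infty$) precise more cleanly.
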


\begin{proof} 
    The hypotheses above easily imply $(\m{C};\m{C}_{L},\m{C}_{R})$ is a commutative monoid in $\mathrm{AdTrip}$; the forgetful functor $\mathrm{AdTrip} \rightarrow \Fun(\Lambda^{2}_{2},\Cat_{\infty})$ is symmetric monoidal. 
    By \cite[Theorem 2.18]{haugsengTwovariableFibrationsFactorisation2023}, the functor $\Cor$ preserves $\einf$-monoids. 
    Furthermore, the hypotheses easily imply $\m{C}^{\op}_{L}$ and $\m{C}_{R}$ are symmetric monoidal subcategories. 
\end{proof}

\begin{rem}
    See \cite[2.10, 2.14]{barwick-glasman-shahii}, \cite[Section 5]{cnossenUniversalitySpan2categories2026}, and \cite[2.3.3]{6FF} for results of a similar flavor.
\end{rem}

One special kind of adequate triple, called a geometric setup, plays a key role in the study of abstract 6-functor formalisms \cite{6FF,scholze6FF}.

\begin{defin}
A \emph{geometric setup} consists of a pair $(\m{C};E)$, where $\m{C}$ is an \category which admits finite limits and $E$ is a homotopy class of edges in $\m{C}$ which:
\begin{enumerate}
\item contains all equivalences,
\item is closed under composition and pullbacks along edges in $\m{C}$, 
\item and, for every $f:X\to Y\in E$, the (relative) diagonal,
    \begin{equation}
    \Delta_f:X\to X\times_Y X,
    \end{equation}
also lies in $E$.
\end{enumerate}
\end{defin}

\begin{rem}
If $(\m{C},E)$ is a geometric setup, it is routine to verify that $(\m{C};\m{C},E)$ is an adequate triple. 
In this case, we write $\Cor(\m{C};E) \equiv \Cor(\m{C};\m{C},E)$. 
\end{rem}

The following definition is extremely useful for constructing 6-functor formalisms \cite{6FF,scholze6FF,cnossenUniversalitySpan2categories2026}.

\begin{defin}\label{defin: suitable decomposition}
    Let $\m{C}$ be an \category with two wide subcategories $I,P \subseteq \m{C}$, and denote by $E$ the collection of maps $e$ in $\m{C}$ which factor as $e \simeq p\circ i$ for $i \in I$ and $p\in P$. 
    The pair $(I,P)$ is a \textit{suitable decomposition of $(\m{C},E)$} provided the following conditions are satisfied:
    \begin{enumerate}
        \item both $I$ and $P$ are left cancellative and closed under base change,
        \item $E$ is closed under composition,
        \item and every map in $I \cap P$ is $n$-truncated for some $n \geq -2$.\footnote{Recall, a morphism $f: X \rightarrow Y$ is $n$-truncated if composition with $f$ induces an $n$-truncated map of spaces $\Map_{\m{C}}(Z,X) \rightarrow \Map_{\m{C}}(Z,Y)$ for all $Z \in \m{C}$.}
    \end{enumerate}
\end{defin}

\begin{rem}
    If $E$ is a wide subcategory that is left cancellative and closed under base change, the pair $(\m{C}^{\simeq},E)$ forms a suitable decomposition of $(\m{C},E)$. 
    It is also easy to see that $(\m{C},E)$ is a geometric setup. 
\end{rem}

\begin{rem}
    Given a geometric setup $(\m{C},E)$, we may endow $\m{C}$ with the Cartesian symmetric monoidal structure $\m{C}^{\times}$. 
    It is not difficult to verify that $(\m{C};\m{C},E)$ determines a commutative monoid in $\mathrm{AdTrip}$.
    Therefore, $\Cor(\m{C};E)$ admits a canonical symmetric monoidal structure $\Cor(\m{C};E)^{\otimes}$ by Lemma~\ref{lem: adtrip sm gives cor sm}.
\end{rem}

\begin{defin}
A \emph{3-functor formalism} is a lax symmetric monoidal functor
    \begin{equation}
    D:\Cor(\m{C};E)\to\widehat{\Cat}_\infty
    \end{equation} 
which, in particular, encodes the following functors.
\begin{enumerate}
\item Let $f:X\to Y\in E$ and consider the correspondence 
    \begin{equation}
    X\xleftarrow{\id}X\xrightarrow{f}Y.
    \end{equation}
We refer to the induced map 
    \begin{equation}
    f_!:D(X)\to D(Y)
    \end{equation}
as an \emph{exceptional pushforward}.
\item Let $f:X\to Y\in\m{C}$ and consider the correspondence 
    \begin{equation}
    Y\xleftarrow{f}X=X.
    \end{equation}
We refer to the induced map 
    \begin{equation}
    f^*:D(Y)\to D(X)
    \end{equation}
as a \emph{pullback}.
\item Given $\Delta:X\to X\times X\in\m{C}$, we refer to the induced map 
    \begin{equation}
    D(X)\otimes D(X)\to D(X\times X)\xrightarrow{\Delta^*}D(X)
    \end{equation}
as an \emph{internal tensor product}. Thus, for every $A\in D(X)$, we have a functor 
    \begin{equation}
    (-)\otimes A:D(X)\to D(X).
    \end{equation}
\end{enumerate}
We say that $D$ is a \emph{6-functor formalism} provided that the functors $f_!$, $f^*$, and $-\otimes A$ admit right adjoints. 
We denote the right adjoints of $f_!$ and $f^*$ by $f^!$ and $f_*$, which we refer to as an \emph{exceptional pullback} and \emph{pushforward}, respectively.
\end{defin}

\begin{rem}
In the sequel, we will consider lax symmetric monoidal functors $D: \Cor(\m{C};\m{C}_{L},\m{C}_{R}) \rightarrow \bigcat$, where $(\m{C};\m{C}_{L},\m{C}_{R})$ is an adequate triple. 
Furthermore, these functors will behave similarly to abstract 6-functor formalisms, but do not quite fit the definition above. 
\end{rem}

In the present article, we will be chiefly concerned with the 6-functor formalism on locally compact Hausdorff spaces constructed by Volpe \cite{volpe} (which was previously studied in great depth by Kashiwara-Schapira \cite{Kashiwara-Schapira}).
We denote by $\LCH\subset\Top$ the full subcategory of locally compact Hausdorff spaces. Recall, we say $f:X\to Y\in\LCH$ is \emph{proper} if, for any compact subset $K\subset Y$, $f^{-1}(K)\subset X$ is compact; this notion is left cancelative and stable under composition and base change, and we denote by $\mathcal{P}$ the collection of proper maps in $\LCH$. 
Therefore, we obtain an \category of correspondences $\Cor(\LCH;\calP)$ which forms a wide subcategory of the symmetric monoidal \category $\Cor(\LCH)$.
    
\begin{thm}[Volpe]\label{theorem: Volpe 6FF}
    Let $R \in \CAlg(\Sp)$. There is a 6-functor formalism
    \begin{equation}
    \Shv(-;R) \equiv \Shv(-;\Mod_{R}) :\Cor(\LCH)\to\widehat{\Cat}_{\infty}
    \end{equation}
    with the following properties.
    \begin{enumerate}
        \item For any $X \in \LCH$, the $\infty$-category $\Shv(X;R)$ is stable presentably symmetric monoidal.
        \item For any $f:W\to X\in\LCH$, the induced functor
        \begin{equation}
        f^{\ast} : \Shv(X;R)\to\Shv(W;R)
        \end{equation}
        is given by pullback of sheaves. In fact, $f^*$ is colimit-preserving and symmetric monoidal, thereby admitting a right adjoint $f_*$.
        \item For any $g:W\to Y$, the induced functor 
        \begin{equation}
        g_!:\Shv(W;R) \rightarrow \Shv(Y;R)
        \end{equation}
        is given by direct image with compact support, which admits a right adjoint $g^{!}$ given by exceptional inverse image. 
        Furthermore, if $g \in \calP$, there is a natural equivalence $g_! \simeq g_*$
    \end{enumerate}
\end{thm}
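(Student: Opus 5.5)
This is Volpe's theorem, cited from \cite{volpe}. Rather than reprove it in full, I would sketch how the formalism is built from the construction principle for 6-functor formalisms out of a suitable decomposition (Definition~\ref{defin: suitable decomposition}), in the form of \cite{6FF,scholze6FF,cnossenUniversalitySpan2categories2026}.

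\textbf{Step 1: the decomposition.} On $\LCH$ I take $I$ to be the open embeddings and $P=\calP$ the proper maps. Both classes are left cancellative and stable under base change. Every map of locally compact Hausdorff spaces factors as an open embedding followed by a proper map, for instance through a relative compactification (graph into $X\times Y^{+}$ with $Y^{+}$ the one-point compactification, then project). So $E$ is all maps, and closure under composition is automatic. A map in $I\cap P$ is an open and closed embedding, hence a monomorphism, i.e.\ $(-1)$-truncated.

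\textbf{Step 2: the functors.} Start with the symmetric monoidal functor $X\mapsto \Shv(X;R)$ on $\LCH^{\op}$, with pullback $f^{*}$. This is symmetric monoidal and colimit preserving, and presentability and stability of $\Shv(X;R)$ are standard, which gives (1) and (2). Next define the exceptional pushforward on each class.
\begin{enumerate}
\item For $j\in I$, set $j_!$ to be the left adjoint of $j^{*}$, namely extension by zero.
\item For $p\in P$, set $p_!\equiv p_*$.
\end{enumerate}
The hypotheses of the construction principle then have to be checked. For open embeddings, base change and the projection formula are formal. For proper maps, I would need proper base change and the projection formula for $p_*$. Finally, for a square that is simultaneously in $I$ and $P$, the two definitions of $!$ must agree, which holds via the canonical map $j_!\to j_*$ for clopen $j$. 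The construction then yields a lax symmetric monoidal functor on $\Cor(\LCH)$ with $g_!$ equal to compactly supported direct image and $p_!\simeq p_*$ for proper $p$, which is (3).

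\textbf{Step 3: right adjoints.} Each $g_!$ preserves colimits because $j_!$ and $p_*$ do, and tensoring in $\Shv(X;R)$ preserves colimits. The adjoint functor theorem then supplies $g^{!}$, $f_*$, and internal homs.

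\textbf{Main obstacle.} The key step is proper base change together with colimit preservation of $p_*$ for sheaves valued in $\Mod_R$, which can be non-hypercomplete. Here I would follow Lurie's treatment in \cite[\S7.3]{lurieHigherToposTheory2009}. That argument reduces via compactness to $\Shv(X)\simeq \mathcal{K}$-sheaves and then uses that proper maps of LCH spaces have finite homotopy dimension behaviour compatible with the $\mathcal{K}$-sheaf description. Tensoring with $\Mod_R$ in $\PrSt$ then transports the statements from $\Sp$ to $\Mod_R$.
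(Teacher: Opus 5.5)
The paper gives no argument beyond the citation to \cite{volpe}, and your outline is the standard compactification argument of the kind carried out in that reference. Its ingredients are open embeddings and proper maps as the decomposition, $j_!$ as extension by zero, $p_!=p_*$, and proper base change for non-hypercomplete sheaves as the essential input. Two steps are not right as written, though.

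\emph{The factorization.} As you state it, you compactify the target, and a graph is not open in a product in general, so your first map is not an open embedding. Compactify the source instead: $X\cong\Gamma_f\subset\overline{\Gamma_f}\to Y$, where $\overline{\Gamma_f}$ is the closure of the graph in $X^{+}\times Y$. Since $\Gamma_f$ is closed in the open subset $X\times Y$, it is open in $\overline{\Gamma_f}$. The map $\overline{\Gamma_f}\to Y$ is proper, being the restriction of the proper projection $X^{+}\times Y\to Y$ to a closed subspace.

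\emph{The compatibility hypothesis.} This is the more important point: the hypothesis in the construction principle is not that the two definitions of $(-)_!$ agree on $I\cap P$. It is mixed base change. Take any cartesian square with $j\colon Y'\hookrightarrow Y$ open, $p\colon X\to Y$ proper, $X'=p^{-1}(Y')$, $j'\colon X'\hookrightarrow X$ and $p'\colon X'\to Y'$. Then the canonical map $j_!p'_*\to p_*j'_!$ (adjoint to $p'_*\simeq j^*p_*j'_!$) must be an equivalence. Your clopen condition $j_!\simeq j_*$ is only the special case $p=j$, so checking it does not verify the hypothesis.

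The general case follows from the recollement for $Y'$ and its closed complement $i\colon Z\hookrightarrow Y$. Let $i'\colon p^{-1}(Z)\hookrightarrow X$ and $p_Z\colon p^{-1}(Z)\to Z$ be the base changes.
\begin{itemize}
\item After applying $j^*$, both sides become $p'_*$.
\item After applying $i^*$, the source vanishes. The target is $p_{Z*}(i')^{*}j'_!=0$, since $p^{-1}(Z)$ is the complement of $X'$.
\end{itemize}
The identification $i^*p_*\simeq p_{Z*}(i')^{*}$ used in the second case is itself proper base change. So this step also rests on the input you flagged, and cannot be obtained from clopen maps alone.

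\emph{Where finite homotopy dimension enters.} It plays no role in Lurie's proper base change. It concerns hypercompleteness, which is exactly what you cannot assume here. The relevant input is the $\mathcal{K}$-sheaf description, in which sections over compacta commute with filtered colimits. Since $(p_*F)(K)=F(p^{-1}K)$ for compact $K$, this is also what gives that $p_*$ preserves colimits with stable coefficients.
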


\begin{rem}
We would like to emphasize that the results from Volpe's thesis \cite{volpe} are considerably more general than the theorem stated above. 
\end{rem}

\subsection{Structure functors and correspondences}
The following definition is heavily inspired by Dold \cite{doldGeometricCobordismFixed1978}.

\begin{defin}
    Let $\m{C}$ be an \category which admits finite limits and finite coproducts.  
    By a \textit{structure functor on $\m{C}$} we mean a product-preserving functor 
    \begin{equation}
    F : \m{C}^{\op} \rightarrow \CMongp. 
    \end{equation}
    A \textit{morphism of structure functors on $\m{C}$} is a natural transformation $\eta : F \rightarrow G$.
\end{defin}

Given a structure functor $F : \m{C}^\op \rightarrow \CMongp$, we can deloop it to produce another structure functor
\begin{equation}
    \rmB F : \m{C}^{\op} \rightarrow \CMongp
\end{equation}
and, as $* \in \CMongp$ is initial, there is an essentially unique morphism of structure functors
\begin{equation}
\const(*) \rightarrow \rmB F.
\end{equation}
Post-composing with the forgetful functor to $\Spaces$, we obtain
\begin{equation}
    \rmB F : \m{C}^{\op} \rightarrow \Spaces. 
\end{equation}

\begin{defin}
We define the \textit{$F$-structuring of $\m{C}$} to be
    \begin{equation}
    \pi_{F} : \m{C}_F\equiv\int^{\mathrm{c}}_{\m{C}} \rmB F \rightarrow \m{C}.
    \end{equation}
Note that $\const(*) \rightarrow \rmB F$ induces a section $\sigma_{F}: \m{C} \rightarrow \m{C}_{F}$ of $\pi_{F}$. 
\end{defin}

\begin{rem}
We can describe $\m{C}_{F}$ informally as follows. 
\begin{enumerate}
\item The objects of $\m{C}_{F}$ are pairs 
    \begin{equation}
    (X,*_X),
    \end{equation}
where $*_X$ denotes the (essentially) unique object in $\rmB F(X)$; in particular, we can simply view objects of $\m{C}_F$ as objects in $\m{C}$.
\item A morphism in $\m{C}_{F}$,
    \begin{equation}
    (X,\ast_{X}) \rightarrow (Y,\ast_{X}),
    \end{equation}
consists of a morphism $f :X \rightarrow Y\in\m{C}$ together with a morphism $\ast_{X} \rightarrow \rmB F(f)(\ast_{Y}) = \ast_{X}$; the latter is simply a point in
    \begin{equation}
    \alpha \in F(X) \simeq \mathrm{End}_{\rmB F(X)}(\ast_{X}).
    \end{equation}
In particular, we can view a morphism in $\m{C}_F$ as a pair 
    \begin{equation}
    \big(f:X\to Y\in\m{C},\alpha\in F(X)\big).
    \end{equation}
\item Composition in $\m{C}_F$ is given by
    \begin{equation}
    \left(X \xrightarrow{(f,\alpha)} Y \xrightarrow{(g,\beta)} Z \right)\mapsto \left(X\xrightarrow{\big(g\circ f, \alpha+f^{\ast}(\beta)\big)}Z \right);
    \end{equation}
the identity morphism is $(\id_{X},0): X \rightarrow X$.
\end{enumerate}
\end{rem}

\begin{rem}
    The essential image of $\sigma_{F}$ consists of the wide subcategory spanned by morphisms in $\m{C}_{F}$ of the form $(f,0) : X \rightarrow Y$. 
    By abuse of notation, we will write $\m{C}$ for this wide subcategory of $\m{C}_{F}$. 
\end{rem}

The mapping spaces in $\m{C}_{F}$ are simple to compute, and the following lemma justifies the informal description above.

\begin{lem}\label{lem: mapping spaces in C_F}
    For any $X,Y \in \m{C}$, there is a canonical equivalence 
    \begin{equation}
    \Map_{\m{C}_{F}}(X,Y) \xrightarrow{\sim} \Map_{\m{C}}(X,Y) \times F(X).
    \end{equation}
\end{lem}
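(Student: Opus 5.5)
The plan is to compute mapping spaces in the Cartesian unstraightening directly via the standard fiber sequence. For a Cartesian fibration $p:\int^{\rm c}_{\m{C}} G\to\m{C}$ classified by $G:\m{C}^{\op}\to\Cat_\infty$ (here $G=\rmB F$, valued in spaces), the canonical map
\begin{equation}
\Map_{\m{C}_F}\big((X,x),(Y,y)\big)\to\Map_{\m{C}}(X,Y)
\end{equation}
has fiber over $f:X\to Y$ equivalent to $\Map_{G(X)}\big(x,G(f)(y)\big)$. This is \cite[Proposition 2.4.4.2]{lurieHigherToposTheory2009}, applied with a Cartesian lift of $f$ with target $(Y,y)$. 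Since $G$ lands in spaces, $p$ is a right fibration, and this fiber is a path space in $\rmB F(X)$.

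I will take $x=\ast_X$ and $y=\ast_Y$. Since $\rmB F(f)$ is a map of $\einf$-monoids, it is pointed, so $\rmB F(f)(\ast_Y)\simeq\ast_X$. Hence the fiber over $f$ is
\begin{equation}
\Omega\,\rmB F(X)\simeq F(X),
\end{equation}
and this uses that $F(X)$ is grouplike, so $\Omega\rmB F(X)\simeq F(X)$. Thus $\Map_{\m{C}_F}(X,Y)\to\Map_{\m{C}}(X,Y)$ is a map whose fibers are all equivalent to $F(X)$.

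The key step is to trivialize this family of fibers globally, not just fiberwise. The section $\sigma_F:\m{C}\to\m{C}_F$ induces a map $s:\Map_{\m{C}}(X,Y)\to\Map_{\m{C}_F}(X,Y)$ splitting the projection. Separately, $F(X)\simeq\mathrm{End}_{\rmB F(X)}(\ast_X)=\Map_{\m{C}_F}\big((X,\ast_X),(X,\ast_X)\big)\times_{\Map_{\m{C}}(X,X)}\{\id_X\}$ acts on $\Map_{\m{C}_F}(X,Y)$ by precomposition, over $\Map_{\m{C}}(X,Y)$. Combining the two gives a map
\begin{equation}
\Map_{\m{C}}(X,Y)\times F(X)\to\Map_{\m{C}_F}(X,Y),\qquad (f,\alpha)\mapsto\sigma_F(f)\circ(\id_X,\alpha).
\end{equation}
This map lies over $\Map_{\m{C}}(X,Y)$. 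On the fiber over $f$, it is precomposition of the Cartesian lift $\sigma_F(f)$ with loops at $\ast_X$, which is exactly the equivalence identified above.

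I then conclude by the fiberwise criterion for equivalences of spaces over $\Map_{\m{C}}(X,Y)$. The desired map $\Map_{\m{C}_F}(X,Y)\xrightarrow{\sim}\Map_{\m{C}}(X,Y)\times F(X)$ is its inverse, and it is canonical because it is built only from $\sigma_F$ and the composition law.

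The main obstacle is the identification just made: checking that $\sigma_F(f)$ is in fact a $p$-Cartesian lift, so that precomposition realizes the fiber equivalence of \cite[2.4.4.2]{lurieHigherToposTheory2009}. Since $p$ is a right fibration, every morphism is $p$-Cartesian, so this holds automatically. What remains is bookkeeping: the pointing $\rmB F(f)(\ast_Y)\simeq\ast_X$ is coherent in $f$ because $\const(\ast)\to\rmB F$ is a natural transformation. That coherence is exactly what lets $s$ be defined uniformly over all of $\Map_{\m{C}}(X,Y)$ rather than one fiber at a time.
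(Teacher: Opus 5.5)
Your argument is correct and follows the same route as the paper: the fiber sequence from \cite[Proposition 2.4.4.2]{lurieHigherToposTheory2009}, with fiber $\Omega\rmB F(X)\simeq F(X)$, split using the section $\sigma_F$. Your additional step, building the comparison map $(f,\alpha)\mapsto\sigma_F(f)\circ(\id_X,\alpha)$ from the precomposition action of $F(X)$ and checking it fiberwise, makes explicit why the section gives a product decomposition; a section alone would not suffice for an arbitrary fibration, and the paper leaves this point implicit.
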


\begin{proof}
    There is a fiber sequence 
    \begin{equation}
    F(X) \rightarrow \Map_{\m{C}_{F}}(X,Y) \rightarrow \Map_{\m{C}}(X,Y)
    \end{equation}
    via \cite[Proposition 2.4.4.2]{lurieHigherToposTheory2009}. 
    Moreover, there is a section $\m{C} \rightarrow \m{C}_{F}$ of $\pi_{F}$ by the discussion above; this provides a splitting of the map on the right of the aforementioned fiber sequence, whence the claim.
\end{proof}

\begin{lem}\label{lem: C_F admits pullbacks and coproducts}
Let $F$ be a structure functor on $\m{C}$. The \category $\m{C}_{F}$ admits pullbacks and finite coproducts. 
\end{lem}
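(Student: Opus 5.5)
To prove that $\m{C}_F$ admits pullbacks and finite coproducts, I would construct candidate (co)limits explicitly and verify their universal properties using Lemma~\ref{lem: mapping spaces in C_F}. That lemma identifies $\Map_{\m{C}_{F}}(X,Y)\simeq\Map_{\m{C}}(X,Y)\times F(X)$, naturally in the sense that composition sends $((f,\alpha),(g,\beta))$ to $(g\circ f,\alpha+f^*\beta)$. Since the objects of $\m{C}_F$ are those of $\m{C}$, the natural candidates are the (co)limits formed in $\m{C}$, with the structure maps lifted via the section $\sigma_F$, i.e. the maps $(f,0)$.

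\emph{Coproducts.} Take $X\sqcup Y$ in $\m{C}$ together with the inclusions $(i_X,0)$ and $(i_Y,0)$. For any $Z$, precomposition gives
\[
\Map_{\m{C}_F}(X\sqcup Y,Z)\simeq\Map_{\m{C}}(X\sqcup Y,Z)\times F(X\sqcup Y)\to\bigl(\Map_{\m{C}}(X,Z)\times F(X)\bigr)\times\bigl(\Map_{\m{C}}(Y,Z)\times F(Y)\bigr).
\]
On the first factor this is an equivalence by the universal property in $\m{C}$. On the second factor it is $(i_X^*,i_Y^*)\colon F(X\sqcup Y)\to F(X)\times F(Y)$, an equivalence because $F$ is product-preserving on $\m{C}^{\op}$, i.e. it sends coproducts to products. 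The empty coproduct is handled the same way, since $F(\emptyset)\simeq *$.

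\emph{Pullbacks.} Given $(f,\alpha)\colon X\to Z$ and $(g,\beta)\colon Y\to Z$, first reduce to the case $\alpha=\beta=0$. The plan is to show that every morphism $(f,\alpha)$ factors as $(f,0)\circ(\id_X,\alpha)$ with $(\id_X,\alpha)$ an equivalence; its inverse is $(\id_X,-\alpha)$, which exists because $F(X)$ is grouplike. Replacing the cospan by an equivalent one therefore lets us assume it lies in $\m{C}$. Then take $P=X\times_Z Y$ in $\m{C}$ with projections $(p,0)$ and $(q,0)$. For a test object $W$, a cone consists of maps $(a,\gamma)\colon W\to X$ and $(b,\delta)\colon W\to Y$ together with a homotopy $(f\circ a,\gamma)\simeq(g\circ b,\delta)$ in $\Map_{\m{C}}(W,Z)\times F(W)$. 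The space of cones is thus $\bigl(\Map(W,X)\times_{\Map(W,Z)}\Map(W,Y)\bigr)\times\bigl(F(W)\times_{F(W)}F(W)\bigr)$, and the second factor is just $F(W)$ via the diagonal. This matches $\Map_{\m{C}_F}(W,P)\simeq\Map_{\m{C}}(W,P)\times F(W)$, as required.

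The main obstacle is making this mapping-space computation coherent, that is, justifying that the splitting of Lemma~\ref{lem: mapping spaces in C_F} is natural in both variables, so the fiber-product decomposition really comes from the composition maps. A cleaner alternative avoids this: since $\pi_F$ is a Cartesian fibration, use \cite[Corollary 4.3.1.11]{lurieHigherToposTheory2009} on relative limits. The fibers $\rmB F(X)$ have only one object, so the fiberwise limits required exist trivially. It then suffices to check that the pullback functors $\rmB F(f)$ preserve those fiberwise limits, which holds because $\rmB F(f)$ is a map of grouplike $\einf$-monoids. For coproducts, use the dual criterion, which rests on the product-preservation of $F$ as above.
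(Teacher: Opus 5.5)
Your proposal is correct. For coproducts it is exactly the paper's argument: the cocone $(i_X,0),(i_Y,0)$ together with Lemma~\ref{lem: mapping spaces in C_F}. You also make explicit that the $F$-factor is the equivalence $F(X\sqcup Y)\simeq F(X)\times F(Y)$, which the paper leaves as ``easy to see''.

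For pullbacks the paper does no computation. It applies a general fibration result, \cite[Tag 02KT]{kerodon}, to the right fibration $\pi_F$, so pullbacks in $\m{C}_F$ are created from those in $\m{C}$. Your relative-limit alternative is essentially a proof of that fact in this case. Your main route is more hands-on: you normalize the cospan using the automorphisms $(\id_X,\alpha)$ and $(\id_Y,\beta)$, then compare mapping spaces.

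The two approaches buy different things.
\begin{itemize}
\item The citation is brief. At no extra cost it also shows that any functor of right fibrations over $\m{C}$, such as $\m{C}_F\to\m{C}_G$, preserves pullbacks, which the paper needs in Proposition~\ref{prop: C_F symmetric monoidal}.
\item Your computation gives an explicit description of the pullback, including the structure classes on its legs.
\end{itemize}

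The coherence point you flag is harmless. $\Map_{\m{C}_F}(W,Y)$ is the fiber over $*_W$ of the map $\Map_{\m{C}}(W,Y)\to\rmB F(W)$, $f\mapsto f^*(*_Y)$. This map is canonically constant, because the basepoints come from the transformation $\const(*)\to\rmB F$. Hence the splitting is natural along $\sigma_F$, and postcomposition with $(h,0)$ is $(h\circ-)\times\id_{F(W)}$. That is all your pullback computation uses after the reduction to $\alpha=\beta=0$. The same remark covers precomposition with $(i_X,0)$ in your coproduct step.

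Two small corrections to the alternative.
\begin{itemize}
\item $\rmB F(f)$ preserves the fiberwise pullbacks simply because it is a functor between $\infty$-groupoids: every cone over a weakly contractible diagram in an $\infty$-groupoid is a limit cone. The grouplike $\einf$-structure plays no role.
\item There is no fiberwise ``dual criterion'' for coproducts. $\pi_F$ is Cartesian, not coCartesian, and the fibers $\rmB F(X)$ have no binary coproducts unless $F(X)\simeq *$, since the diagonal of $F(X)$ would have to be an equivalence.
\end{itemize}
The coproduct case genuinely rests on $\rmB F$ sending coproducts to products, that is, on the direct computation you already gave.
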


\begin{proof}
    The first claim follows immediately from \cite[\href{https://https://kerodon.net/tag/02KT}{Tag 02KT}]{kerodon}.
    To see that $\m{C}_{F}$ admits finite coproducts, consider the cocone 
    \begin{equation}
    X \xrightarrow{(i_{X},0)} X \sqcup Y \xleftarrow{(i_{Y},0)} Y 
    \end{equation}
    which induces a map
    \begin{equation}
    \Map_{\m{C}_{F}}(X \sqcup Y, Z) \rightarrow \Map_{\m{C}_{F}}(X,Z) \times \Map_{\m{C}_{F}}(Y,Z)
    \end{equation}
    natural in $Z \in \m{C}_{F}$. 
    It is easy to see this map is an equivalence by Lemma~\ref{lem: mapping spaces in C_F}.     
\end{proof}

\begin{rem}\label{rem: C_F no terminal object}
Note, $\m{C}_{F}$ does not necessarily have a terminal object. This is because $\mathrm{End}_{\m{C}_{F}}(\ast_{\m{C}}) \simeq F(\ast_{\m{C}})$, where $\ast_{\m{C}}\in\m{C}$ is terminal.
However, $\m{C}_{F}$ does have an initial object $\varnothing_{\m{C}}$ as any object $X$ receives an essentially unique morphism 
\begin{equation}
\varnothing_{\m{C}} \xrightarrow{(i,0)} X,
\end{equation}
where $0 \in F(\varnothing_{\m{C}}) \simeq \ast$. 
\end{rem}

For $\m{C}$ as above, there is a canonical Cartesian symmetric monoidal structure $\m{C}^{\times} \rightarrow \Fin_\ast$, in the sense of \cite[2.4.1]{lurieHigherAlgebra2017}, 
as well as a canonical coCartesian symmetric monoidal structure $\m{C}^{\op,\sqcup} \rightarrow \Fin_\ast$ in the sense of \cite[2.4.3]{lurieHigherAlgebra2017}. 
Furthermore, by \cite[\href{https://kerodon.net/tag/04D0}{Tag 04D0}]{kerodon}, there is an equivalence between $\m{C}^{\op,\sqcup} \rightarrow \Fin_\ast$ and the dual coCartesian fibration obtained from $\m{C}^{\times} \rightarrow \Fin_\ast$:
\begin{equation}
    \m{C}^{\times,\vee} \rightarrow \Fin_\ast.
\end{equation}
By the discussion in \cite[Subsection 6.1]{keenan-peroux}, the \category of lax symmetric monoidal functors $\m{C}^{\times,\vee} \rightarrow \m{D}^{\otimes}$ is canonically equivalent to the \category of functors $\m{C}^{\op} \rightarrow \CAlg(\m{D})$.
Viewing $\CMongp$ as a symmetric monoidal \category under the Cartesian product, the equivalence $\CAlg(\CMongp) \simeq \CMongp$ implies the following lemma. 

\begin{lem}
    Any structure functor $F: \m{C}^{\op} \rightarrow \CMongp$ admits an essentially unique extension to a lax symmetric monoidal functor 
    \begin{equation}
    F^{\times} : \m{C}^{\times,\vee} \rightarrow (\CMongp)^{\times}. 
    \end{equation}
\end{lem}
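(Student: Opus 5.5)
The plan is to deduce the statement formally from the equivalence recalled just before it. By the discussion following \cite[Subsection 6.1]{keenan-peroux}, restriction along $\m{C}^{\times,\vee}\to\Fin_\ast$ and evaluation at $\langle 1\rangle$ identify the \category of lax symmetric monoidal functors $\m{C}^{\times,\vee}\to(\CMongp)^{\times}$ with $\Fun\big(\m{C}^{\op},\CAlg((\CMongp)^{\times})\big)$. Everything then rests on the identification $\CAlg\big((\CMongp)^{\times}\big)\simeq\CMongp$.

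First, I would establish that the forgetful functor $\CAlg\big((\CMongp)^{\times}\big)\to\CMongp$ is an equivalence. $\CMongp$ is a Cartesian symmetric monoidal \category, and it is semiadditive. For such categories, commutative algebra objects for the Cartesian structure are the same as commutative monoid objects $\CMon(\CMongp)$. By \cite[Proposition 2.4.2.5]{lurieHigherAlgebra2017}, $\CMon(\CMongp)\simeq\CMon(\CMon(\Spaces))^{\rm gp}$, and the Eckmann--Hilton type result (e.g.\ the forgetful map $\CMon(\CMon(\m{E}))\to\CMon(\m{E})$ being an equivalence for semiadditive $\m{E}$, \cite{lurieHigherAlgebra2017}) shows the forgetful functor is an equivalence. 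Let $\theta:\CMongp\xrightarrow{\sim}\CAlg\big((\CMongp)^\times\big)$ denote the inverse.

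Second, given $F$, I form $\theta\circ F:\m{C}^{\op}\to\CAlg\big((\CMongp)^\times\big)$ and transport it to a lax symmetric monoidal functor $F^{\times}$ via the equivalence above. By construction, its underlying functor on $\langle1\rangle$ is $F$. For essential uniqueness, note that the space of lax symmetric monoidal extensions of $F$ is the fiber over $F$ of the composite equivalence
\[
\mathrm{Fun}^{\rm lax}\big(\m{C}^{\times,\vee},(\CMongp)^\times\big)\simeq\Fun\big(\m{C}^{\op},\CAlg((\CMongp)^\times)\big)\simeq\Fun(\m{C}^{\op},\CMongp).
\]
This fiber is contractible. The product-preserving hypothesis on $F$ is not needed for this step; it only guarantees later that $F^{\times}$ is in fact strong monoidal.

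The main obstacle is checking that the composite really is the ``underlying functor'' map, i.e.\ that the Keenan--Péroux equivalence is compatible with evaluation at $\langle1\rangle$ and the forgetful functor $\CAlg\to\CMongp$. I would verify this by unwinding the construction: a lax monoidal functor out of $\m{C}^{\times,\vee}$ sends the coCartesian lifts of inert maps to inert maps and restricts over $\langle1\rangle$ to $\m{C}^{\op}$. This compatibility should be built into the cited construction.
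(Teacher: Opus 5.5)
Your argument is the paper's: combine the Keenan--P\'eroux identification of lax symmetric monoidal functors $\m{C}^{\times,\vee}\to\m{D}^{\otimes}$ with functors $\m{C}^{\op}\to\CAlg(\m{D})$, and use $\CAlg\big((\CMongp)^{\times}\big)\simeq\CMongp$. One correction to your aside: product preservation of $F$ is about coproducts in $\m{C}$, whereas the tensor product on $\m{C}^{\times,\vee}$ is the product of $\m{C}$, so it does \emph{not} make $F^{\times}$ strong monoidal. Indeed, the pairing $F(X)\times F(Y)\to F(X\times Y)$ is generally not an equivalence, e.g.\ for $F=\frakB$.
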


\begin{rem}
    Informally, $F^{\times}$ encodes the pairing 
    \begin{equation}
    F(X) \times F(Y) \rightarrow F(X\times Y)
    \end{equation}
    given by $(\alpha,\beta) \mapsto \pr_{1}^{\ast}\alpha + \pr_{2}^{\ast}\beta$, and we write $\alpha \times \beta$ for this point in $F(X\times Y)$.
\end{rem}

\begin{prop}\label{prop: C_F symmetric monoidal}
    If $F$ is a structure functor on $\m{C}$, then $\m{C}_{F}$ admits a canonical symmetric monoidal structure $\m{C}_{F}^{\otimes} \rightarrow \mathrm{Fin}_{\ast}$ together with a symmetric monoidal lifts of $\pi_{F}$ and $\sigma_{F}$:
    \begin{equation}
    \pi_{F}^{\otimes} : \m{C}^{\otimes}_{F} \rightarrow \m{C}^{\times} \ \ \text{and} \ \ \sigma_{F}^{\otimes} : \m{C}^{\times} \rightarrow \m{C}_{F}^{\otimes}. 
    \end{equation}
    Furthermore, if $\eta : F \rightarrow G$ is a morphism of structure functors on $\m{C}$, there is an induced symmetric monoidal functor 
    \begin{equation}
    \m{C}_{F}^{\otimes} \rightarrow \m{C}^{\otimes}_{G}
    \end{equation}
    whose underlying functor preserves pullbacks and finite coproducts.
    
\end{prop}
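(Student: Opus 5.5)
The plan is to construct $\m{C}_F^{\otimes}$ as the cartesian unstraightening of a lax symmetric monoidal functor, rather than building the operad by hand. Since $F$ extends essentially uniquely to a lax symmetric monoidal $F^{\times} : \m{C}^{\times,\vee}\to(\CMongp)^{\times}$ by the preceding lemma, and delooping $\rmB : \CMongp\to\CMongp$ preserves finite products, we obtain a lax symmetric monoidal functor $\rmB F^{\times}$. Composing with the forgetful functor $\CMongp\to\Spaces$, which is product preserving, gives a lax symmetric monoidal functor $\rmB F^{\times}:\m{C}^{\times,\vee}\to\Spaces^{\times}$. Equivalently, through the dictionary of \cite[Subsection 6.1]{keenan-peroux}, this is a functor $\m{C}^{\op}\to\CAlg(\Spaces^\times)$, or a Cartesian-fibration-level datum over $\m{C}^{\times}$.

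Next we apply the symmetric monoidal Grothendieck construction in its cartesian variant. A lax symmetric monoidal functor out of $\m{C}^{\times,\vee}$, i.e.\ a functor $\m{C}^{\op}\to\CAlg(\Spaces)$, unstraightens to a symmetric monoidal cartesian fibration $\m{C}^{\otimes}_F\to\m{C}^{\times}$. This is the cartesian dual of \cite[Theorem A.?]{}-type results, which one can also extract from Lurie's \cite[2.4.3]{lurieHigherAlgebra2017} applied to the opposite coCartesian fibration. The underlying fibration is $\int^{c}_{\m{C}}\rmB F=\m{C}_F$, and the structure map is by construction the lift $\pi_F^{\otimes}$. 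On objects and morphisms the tensor product is $(X,Y)\mapsto X\times Y$ and $((f,\alpha),(g,\beta))\mapsto(f\times g,\alpha\times\beta)$. The section $\sigma_F^{\otimes}$ arises by functoriality of the construction applied to the essentially unique lax symmetric monoidal transformation $\const(*)\to\rmB F^{\times}$, since $*$ is initial in $\CMongp$ and this persists after passing to $\CAlg$. The unstraightening of $\const(*)$ is $\m{C}^{\times}$ itself.

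Functoriality in $\eta:F\to G$ works the same way. By the uniqueness in the lemma, $\eta$ extends to a lax symmetric monoidal transformation $F^{\times}\to G^{\times}$, hence to one $\rmB F^{\times}\to\rmB G^{\times}$. Unstraightening then yields a symmetric monoidal functor $\m{C}_F^{\otimes}\to\m{C}_G^{\otimes}$ over $\m{C}^{\times}$ that preserves cartesian edges. On mapping spaces, via Lemma~\ref{lem: mapping spaces in C_F}, it is $\id\times\eta_X$.

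For pullbacks, the limits in $\m{C}_F$ are computed as in \cite[Tag 02KT]{kerodon}: they lie over pullbacks in $\m{C}$, and the fiberwise part is a limit in $\rmB F$, which $\rmB\eta$ preserves because the relevant fibers are points and the transport is preserved. For finite coproducts, the coproduct $X\sqcup Y$ with legs $(i,0)$ is sent to the same cocone, and Lemma~\ref{lem: C_F admits pullbacks and coproducts} identifies it as a coproduct in both categories.

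The main obstacle is pinning down a clean reference, or argument, for the cartesian symmetric monoidal unstraightening, and its compatibility with the dualization $\m{C}^{\times,\vee}$. If no such reference is available, I would instead dualize: pass to $(\m{C}_F)^{\op}$ as a coCartesian fibration over $\m{C}^{\op}$ classified by $\rmB F$ composed with $(-)^{\op}$, apply Lurie's coCartesian symmetric monoidal Grothendieck construction there, and take fiberwise opposites again. Checking that the result is independent of these choices is routine but tedious.
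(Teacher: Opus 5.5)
Your proposal is correct and is essentially the paper's proof. The paper carries out precisely your fallback: it coCartesian-unstraightens $\rmB F^{\times}$ (composed with the Cartesian structure $\Spaces^{\times}\to\Spaces$) over $\m{C}^{\times,\vee}\simeq\m{C}^{\op,\sqcup}$, then passes to the dual fibration over $\mathrm{Fin}_{\ast}$ to obtain $\m{C}_{F}^{\otimes}\to\m{C}^{\times}$, and it handles $\sigma_F$, $\eta$, and finite coproducts as you do; your ``cartesian monoidal Grothendieck construction'' is just a repackaging of this.

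One small correction in the pullback step: the fibers $\rmB F(X)$ of $\m{C}_F\to\m{C}$ are connected spaces, not points. The correct reason is that $\m{C}_F\to\m{C}$ and $\m{C}_G\to\m{C}$ are right fibrations, so they create pullbacks (limits of weakly contractible shape), and your functor commutes with the projections to $\m{C}$.
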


\begin{proof}
    First, recall that, since $\Spaces$ is a Cartesian symmetric monoidal \category, there is a Cartesian structure $\Pi :\Spaces^{\times} \rightarrow \Spaces$ in the sense of \cite[Proposition 2.4.1.5]{lurieHigherAlgebra2017}. 
    As $F$ is multiplicative, so too is $\rmB F$, and this yields a functor
    \begin{equation}
    \Pi\circ\rmB F^{\times} : \m{C}^{\times,\vee} \rightarrow \Spaces^{\times} \rightarrow \Spaces. 
    \end{equation}
    By unstraightening, we obtain a commutative diagram of coCartesian fibrations
    \begin{equation}
    \begin{tikzcd}
    {\int_{\m{C}^{\times,\vee}}^{\rm cc} (\Pi\circ \rmB F^{\times}) } \arrow[rr,"{\pi^{\otimes,\vee}_{F}}"]\arrow[dr] & & {\m{C}^{\times,\vee}} \arrow[dl] \\
    & {\Fin_\ast}, &
    \end{tikzcd}
    \end{equation}
    where the underlying functor of $\pi^{\otimes,\vee}_{F}$ is given by 
    \begin{equation}
    \pi_{F}^{\op} : \m{C}_{F}^{\op} \simeq \int_{\m{C}^{\op}}^{\rm cc} \rmB F \rightarrow \m{C}^{\op}. 
    \end{equation}
    Dualizing the two symmetric monoidal \categories and the functor $\pi_{F}^{\otimes,\vee}$ yields a symmetric monoidal functor of 
    \categories
    \begin{equation}
    \m{C}_{F}^{\otimes} \equiv \Bigg(\int^{\rm cc}_{\m{C}^{\times,\vee}} p\circ \rmB F^{\times}\Bigg)^{\vee} \rightarrow \m{C}^{\times}
    \end{equation}
    whose underlying functor is clearly the Cartesian fibration $\pi_{F}$.     
    Given a morphism of multiplicative structure functors, $\eta$, a similar argument yields the desired symmetric monoidal functor $\m{C}_{F}^{\otimes} \rightarrow \m{C}_{G}^{\otimes}$. 
    This also establishes the claim regarding $\sigma_{F}$.

    The functor $\int^{\rm c}_{\m{C}} \rmB\eta : \m{C}_{F} \rightarrow \m{C}_{G}$ preserves pullbacks immediately by \cite[\href{https://https://kerodon.net/tag/02KT}{Tag 02KT}]{kerodon}. 
    Furthermore, because $\eta_{X} : F(X) \rightarrow G(X)$ is a map of grouplike commutative monoids, it is clear that the colimit cocone 
    \begin{equation}
    X \xrightarrow{(i_{X},0)} X \sqcup Y \xleftarrow{(i_{Y},0)} Y 
    \end{equation}
    in $\m{C}_{F}$ is sent to a colimit cocone in $\m{C}_{G}$. 
\end{proof}

\begin{rem}\label{rem: C_F distributive}
    While $\m{C}^{\otimes}_{F}$ and $\pi_{F}$ are symmetric monoidal, we stress that $\m{C}^{\otimes}_{F}$ is \textit{not} the Cartesian symmetric monoidal structure by Remark~\ref{rem: C_F no terminal object}.  
    In spite of this, the monoidal product is harmonious with the coproduct as we now explain. 
    Let $X \in \m{C}_{F}$, $\big\{(f_i,\alpha_{i}): Y_{i} \rightarrow Z_{i}\big\}_{i \in I}$ a finite set of morphisms in $\m{C}_{F}$, and $\alpha = (\alpha_{i})_{i\in I} \in F(\sqcup_{i\in I}Y_{i})$. 
    There is a commutative square
    \begin{equation}\begin{tikzcd}
	{\bigsqcup_{i \in I} (X\otimes Y_{i})} & {X\otimes\left(\bigsqcup_{i\in I} Y_{i}\right)} \\
	{\bigsqcup_{i\in I}(X\otimes Z_{i})} & {X\otimes\left(\bigsqcup_{i\in I} Z_{i}\right)},
	\arrow["\sim", from=1-1, to=1-2]
	\arrow["{\bigsqcup_{i\in I}(\mathrm{id}_{X}\otimes f_{i},\pr^{\ast}_{2}\alpha)}"', from=1-1, to=2-1]
	\arrow["{(\mathrm{id}_{X}\otimes f,\pr^{\ast}_{2}\alpha)}", from=1-2, to=2-2]
	\arrow["\sim"', from=2-1, to=2-2]
    \end{tikzcd}\end{equation}
where the horizontal arrows are the equivalences in $\m{C}_{F}$ determined by the interchange equivalence in $\m{C}$ together with $0 \in F(\bigsqcup_{i \in I} X\times Y_{i})$.
\end{rem}

\subsection{Structured correspondences and 6-functor formalisms}
In this technical subsection, we explain how to construct 6-functor formalisms which arise from twisting by a structure functor.

\begin{convention}\label{convention: additive GS}
    Throughout this subsection, we fix an \category $\m{C}$ and a wide subcategory $E \subset \m{C}$ satisfying the following properties: 
    \begin{itemize}
        \item[(a)] $\m{C}$ admits finite limits, finite coproducts, and products distribute over finite coproducts;
        \item[(b)] and $E \subset \m{C}$ preserves pullbacks and coproducts
    \end{itemize}
    The following are immediate consequences: 
    \begin{enumerate}
        \item $(\m{C},E)$ is a geometric setup (in particular, the pair $(\m{C}^{\simeq},E)$ is a suitable decomposition for $E$),
        \item and $\m{C}$ admits a Cartesian symmetric monoidal structure with $E \subset \m{C}$ a symmetric monoidal subcategory.
    \end{enumerate}
\end{convention}

\begin{example}
    Let $\calP \subset \DSm$ denote the wide subcategory spanned by morphisms $p : X \rightarrow Y\in$ whose realization $\rlz{p}$ is proper.
    A routine verification shows $\calP \subset \DSm$ satisfies Convention~\ref{convention: additive GS}. 
\end{example}

\begin{notation}
Let $F$ be a structure functor on $\m{C}$. 
    We denote by $E_{F} \subset \m{C}_{F}$ the wide subcategory of morphisms $(f,\alpha) : X \rightarrow Y$ such that $f \in E$. Also, recall that, as above, we can view $\m{C}$ as a wide subcategory of $\m{C}_{F}$. 
\end{notation}

\begin{lem}
    Let $F$ be a structure functor on $\m{C}$. 
    \begin{enumerate}
        \item $(\m{C}_{F},E_{F})$ is a geometric setup. 
        \item $(\m{C}^{\simeq}, E_{F})$ is a suitable decomposition for $(\m{C}_{F},E_{F})$.
        \item $\Cor(\m{C}_{F};E_{F})$ is a semiadditive \category. 
    \end{enumerate}
\end{lem}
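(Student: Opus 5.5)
For (1) I will first use Lemma~\ref{lem: C_F admits pullbacks and coproducts}: pullbacks in $\m{C}_{F}$ exist and are computed as in \cite[Tag 02KT]{kerodon}. Concretely, given a cospan $(f,\alpha):X\to Z$ and $(g,\beta):Y\to Z$, its pullback is $X\times_Z Y$. The structure maps are $(\bar g, \mathrm{pr}_X^*\alpha)$-type pairs: one leg carries the pulled-back twist and the other the trivial twist, up to the obvious bookkeeping. Thus the projection of a pullback square in $\m{C}_F$ to $\m{C}$ is a pullback square. $\m{C}_F$ also has a terminal-object-free but pullback-complete structure. The definition of a geometric setup asks for finite limits; for our purposes (the adequate triple and the correspondence category) pullbacks suffice, and I will note that the argument only uses pullbacks. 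Closure of $E_F$ under equivalences, composition and base change is then immediate, because membership in $E_F$ depends only on the image under $\pi_F$, and $\pi_F$ preserves composition and pullbacks while $E$ has these closure properties. For diagonals, the relative diagonal of $(f,\alpha)$ is computed via the pullback $X\times_Y X$ and lies over $\Delta_f\in E$ (which holds since $(\m{C},E)$ is a geometric setup by Convention~\ref{convention: additive GS}), hence lies in $E_F$.

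For (2), the class generated by $I=\m{C}^{\simeq}$ and $P=E_F$ is just $E_F$, which is closed under composition. Left cancellativity of $E_F$ follows by checking on underlying maps: if $(g,\beta)\circ(f,\alpha)$ and $(g,\beta)$ lie in $E_F$, then $g\circ f, g\in E$, so $f\in E$ by left cancellativity of $E$ (part of the suitable decomposition $(\m{C}^{\simeq},E)$). Here I read $\m{C}^{\simeq}$ as the equivalences of $\m{C}_F$, which are $\m{C}^{\simeq}$ twisted by arbitrary $\alpha$; all equivalences are left cancellative and stable under base change. Maps in $I\cap P$ are equivalences, hence $(-2)$-truncated.

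For (3), which I expect to be the main point, I will show that $\Cor(\m{C}_F;E_F)$ has finite biproducts given by $\sqcup$. The initial object $\varnothing$ of $\m{C}_F$ (Remark~\ref{rem: C_F no terminal object}) is a zero object in correspondences. To see this, note that $\Map_{\Cor}(X,\varnothing)$ is the space of spans $X\leftarrow W\to\varnothing$. Since $\m{C}$ is distributive, $W\simeq\varnothing$, and $F(\varnothing)\simeq *$ by product preservation, so this space is contractible; the case $\Map_{\Cor}(\varnothing,X)$ is symmetric. Next, the maps $X\to X\sqcup Y$ given by the inclusion $(i_X,0)$, used backwards or forwards, exhibit $X\sqcup Y$ as both a product and a coproduct. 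The standard argument for $\mathrm{Span}$ categories of extensive categories, as in \cite{barwick} or \cite{haugsengTwovariableFibrationsFactorisation2023}, reduces to two facts. First, $\m{C}_{F/X\sqcup Y}\simeq \m{C}_{F/X}\times\m{C}_{F/Y}$. This follows from extensivity of $\m{C}$ together with $F(W_X\sqcup W_Y)\simeq F(W_X)\times F(W_Y)$ and Lemma~\ref{lem: mapping spaces in C_F}. Second, $E_F$ is compatible with these decompositions, by (b) of Convention~\ref{convention: additive GS}. Then $\Map_{\Cor}(Z,X\sqcup Y)\simeq\Map_{\Cor}(Z,X)\times\Map_{\Cor}(Z,Y)$, and dually, so the canonical map from the coproduct to the product is the identity matrix and hence an equivalence. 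The delicate step is the first fact, namely verifying that the twist data $\alpha\in F(W)$ on spans splits compatibly; I would handle it using product preservation of $F$.
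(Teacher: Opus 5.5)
Your proposal is correct and is essentially the paper's argument with the details filled in. Parts (1) and (2) are checked on underlying morphisms via $\pi_F$, as the paper's appeal to Lemma~\ref{lem: C_F admits pullbacks and coproducts} and Convention~\ref{convention: additive GS} intends, and your direct biproduct computation in (3) is exactly the ``routine argument'' the paper leaves implicit after reducing to the homotopy category via \cite[Proposition 2.3]{gepnerUniversalityMultiplicativeInfinite2015}.

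Three remarks on (1) and (3):
\begin{enumerate}
\item You are right that $\m{C}_F$ has pullbacks but no terminal object, so only the pullback part of ``geometric setup'' holds.
\item In your pullback, the leg to $X$ must carry $\mathrm{pr}_Y^{\ast}\beta$ rather than the trivial twist (cf.\ the square in the proof of Proposition~\ref{prop:twisted3ff}). This is harmless, since you only use that $\pi_F$ preserves pullbacks.
\item The one substantive caveat is that the extensivity of $\m{C}$ you invoke in (3) is not among the hypotheses of Convention~\ref{convention: additive GS}, which only require distributivity, and it cannot be dropped. For the poset $\{0<1\}$ with $E$ all maps and $F$ trivial, all the hypotheses hold, yet the span category has only the objects $0,1$, with $\Map_{\Cor}(1,0)$ a point and $\Map_{\Cor}(1,1)$ two points, so no biproduct $1\oplus 1$ exists. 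You should therefore state extensivity as an additional hypothesis; it does hold for $\DSm$ with $\calP$.
\end{enumerate}
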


\begin{proof}
    (1) follows from Lemma~\ref{lem: C_F admits pullbacks and coproducts} combined with our assumptions in Convention~\ref{convention: additive GS}, and (2) is a routine check of the definitions.     
    
    Semi-additivity proceeds as follows. By (1) combined with \cite[Proposition 2.2.9]{6FF}, $\Cor(\m{C}_{F};E_{F})$ is an \category. 
    By \cite[Proposition 2.3]{gepnerUniversalityMultiplicativeInfinite2015} it suffices to show the homotopy category of $\Cor(\m{C}_{F},E_{F})$ is semiadditive; this is a routine argument.   
\end{proof}

\begin{lem}\label{lem: C_F product presv E_F}
    $E_{F} \subset \m{C}_{F}$ is a symmetric monoidal subcategory and the tensor product $\otimes : \m{C}_{F} \times \m{C}_{F} \rightarrow \m{C}_{F}$ preserves pullbacks along morphisms in $E_{F} \times E_{F}$. 
\end{lem}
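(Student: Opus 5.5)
The plan is to reduce both claims to the explicit descriptions of $\m{C}_F$ and its monoidal structure established above. By Proposition~\ref{prop: C_F symmetric monoidal}, the tensor product on $\m{C}_F$ lies over the Cartesian product on $\m{C}$ via $\pi_F^{\otimes}$. On morphisms it is given by
\begin{equation}
(f,\alpha)\otimes(g,\beta)=(f\times g,\alpha\times\beta),\qquad \alpha\times\beta=\pr_1^*\alpha+\pr_2^*\beta,
\end{equation}
and the unit is the terminal object $\ast_{\m{C}}$.

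For the first claim, I would check that $E_F$ contains the equivalences and the unit, and is closed under $\otimes$. A morphism in $E_F$ is one whose image under $\pi_F$ lies in $E$. The equivalences of $\m{C}_F$ are the pairs $(f,\alpha)$ with $f$ an equivalence, since $F(X)$ is grouplike. The tensor of two morphisms in $E_F$ projects to $f\times g$, which lies in $E$ by Convention~\ref{convention: additive GS}(2). The symmetric monoidal coherence data (associators, symmetries) are of the form $(\text{equivalence},0)$ coming from $\sigma_F^{\otimes}$, so they lie in $E_F$. Since $E_F$ is determined by a condition on $\pi_F$-images, the restricted $\infty$-operad $E_F^{\otimes}\subset\m{C}_F^{\otimes}$ is a symmetric monoidal subcategory. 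Concretely, it is the pullback of $E^{\times}\subset\m{C}^{\times}$ along $\pi_F^{\otimes}$.

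For the second claim, I first identify pullbacks in $\m{C}_F$. Because $\pi_F$ is a Cartesian fibration classified by $\rmB F$, and each $\rmB F(X)$ is a connected groupoid, Kerodon Tag 02KT (as used in Lemma~\ref{lem: C_F admits pullbacks and coproducts}) shows that a square in $\m{C}_F$ is a pullback exactly when its image in $\m{C}$ is a pullback. The relevant condition should follow from Lemma~\ref{lem: mapping spaces in C_F}. Given a cone $W\to X'$, $W\to Y$ over $Y'\to Z$, the structure classes must satisfy the compatibility $\gamma_1+(f')^*\delta=\gamma_2+g^*\epsilon$ in $F(W)$. The fiber of $F(W)$ over this condition is the same as that of the $F(W)$-factor of $\Map(W,X)$, so the comparison map is an equivalence exactly when the underlying square is a pullback.

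It would be cleanest to present this via the splitting $\Map_{\m{C}_F}=\Map_{\m{C}}\times F(-)$. The mapping-space square splits as the square for $\m{C}$ times a square of $F(W)$'s whose maps are translations. A square of $F(W)$'s in which all maps are translations by group elements is always Cartesian, because $F(W)$ is grouplike. With this criterion, given two pullback squares with legs in $E_F$, their tensor projects under $\pi_F^{\otimes}$ to the product of the underlying pullback squares in $\m{C}$. A product of pullback squares is a pullback, since limits commute with limits. Hence the tensored square is a pullback in $\m{C}_F$.

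The main obstacle is making the pullback criterion precise, in particular handling the non-zero structure classes on the non-$E$ legs. I would argue it directly with the translation-square computation above rather than relying on a citation.
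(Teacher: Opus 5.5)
Your proposal is correct and follows the paper's route. The paper simply says both claims follow from $E$ being closed under Cartesian products, and you make explicit the step it leaves implicit: the right fibration $\pi_F$ detects pullbacks, so a tensor of pullback squares, which lies over a product of pullback squares in $\m{C}$, is again a pullback. One imprecision: under Lemma~\ref{lem: mapping spaces in C_F}, postcomposition with $(f,\delta)$ is $(h,\lambda)\mapsto(f\circ h,\lambda+h^{*}\delta)$, so the mapping-space square is a map of fibrations over the $\m{C}$-square with translations on each fiber rather than a literal product, but this still yields your pullback criterion.
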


\begin{proof}
    Both claims follow easily from the fact that $E$ is closed under Cartesian products in $\m{C}$.
\end{proof}

\begin{defin}
    We define 
    \begin{equation}
    \Cor(\m{C}_{F},E_{F})^{\otimes} 
    \end{equation}
    to be the symmetric monoidal \category obtained from  Lemmas~\ref{lem: adtrip sm gives cor sm} and \ref{lem: C_F product presv E_F}.
    By construction, there are symmetric monoidal functors 
    \begin{equation}
    (\m{C}^{\op}_{F})^{\otimes} \rightarrow \Cor(\m{C}_{F},E_{F})^{\otimes} \leftarrow E_{F}^{\otimes}. 
    \end{equation}
\end{defin}

\begin{rem}\label{remark: tensor product on Cor(C_F;E_F)}
    Informally, the tensor product $\otimes : \Cor(\m{C}_{F};E_{F})^{\times 2} \rightarrow \Cor(\m{C}_{F};E_{F})$ sends a pair of objects $(X,Y)$ to $X \times Y$ and a pair of correspondences to 
    \begin{equation}
    \begin{tikzcd}
	& {W_{0}\times W_{1}} & \\
	{X_{0}\times X_{1}} && {Y_{0}\times Y_{1}}.
	\arrow["{(f_{0}\times f_{1},\alpha_{0}\times \alpha_{1})}"', from=1-2, to=2-1]
	\arrow["{(g_{0}\times g_{1},\beta_{0}\times \beta_{1}})", from=1-2, to=2-3]
    \end{tikzcd}
    \end{equation}
    The terminal object of $\m{C}$ is the unit for the monoidal product on $\Cor(\m{C}_{F};E_{F})$. 
    Furthermore, this monoidal product preserves direct sums by Remark~\ref{rem: C_F distributive}.
\end{rem}

\begin{defin}
    We say a 3-functor formalism $D : \Cor(\m{C};E) \rightarrow \bigcat$ is \textit{additive} provided the canonical map 
    \begin{equation}
    D(X \sqcup Y) \rightarrow D(X) \times D(Y)
    \end{equation}
    is an equivalence for all $X,Y \in \m{C}$.
\end{defin}

Let $\Pic(D) : \m{C}^{\op} \rightarrow \CMongp$ denote the functor given by $X \mapsto \Pic\big(D(X)\big)$. 
The following lemma is immediate. 

\begin{lem}
    If $D : \Cor(\m{C};E) \rightarrow \bigcat$ is an additive 3-functor formalism, then $\Pic(D)$ is a structure functor.
\end{lem}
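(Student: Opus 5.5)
We must show that $\Pic(D):\m{C}^{\op}\to\CMongp$, $X\mapsto\Pic\big(D(X)\big)$, is a structure functor. By definition this means it is a well-defined functor into grouplike $\einf$-monoids that preserves finite products, where finite products in $\m{C}^{\op}$ are finite coproducts in $\m{C}$.

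The plan has three steps.

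\textbf{Step 1: functoriality.} Restrict $D$ along the symmetric monoidal inclusion $\m{C}^{\op}\to\Cor(\m{C};E)$. This gives a lax symmetric monoidal functor $\m{C}^{\op}\to\bigcat$ sending $X\mapsto D(X)$ and $f\mapsto f^*$. Lax symmetric monoidal functors out of $\m{C}^{\op}$ with its coCartesian (i.e.\ $\sqcup$) structure correspond to functors $\m{C}^{\op}\to\CAlg(\bigcat)$; this is the same equivalence from \cite[Subsection 6.1]{keenan-peroux} used before Proposition~\ref{prop: C_F symmetric monoidal}. So each $D(X)$ is symmetric monoidal and each $f^*$ is symmetric monoidal. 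Concretely, $f^*$ is strong monoidal because pullback is implemented by the correspondence $Y\leftarrow X= X$, and the internal tensor is built from $\Delta^*$. Then compose with the functor $\Pic:\CAlg(\bigcat)\to\CMongp$. This functor exists because symmetric monoidal functors preserve invertible objects, and $\Pic(\m{D})$ is a grouplike $\einf$-space under $\otimes$. This yields $\Pic(D):\m{C}^{\op}\to\CMongp$.

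\textbf{Step 2: products.} It remains to show that $\Pic(D)$ preserves finite products.

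For the empty product: additivity with $X=Y=\varnothing$ shows that $D(\varnothing)\to D(\varnothing)\times D(\varnothing)$ is the diagonal and is an equivalence. Hence $D(\varnothing)\simeq *$, so $\Pic\big(D(\varnothing)\big)\simeq *$.

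For binary products: the map $D(X\sqcup Y)\to D(X)\times D(Y)$ is $(i_X^*,i_Y^*)$. It is symmetric monoidal because each $i^*$ is, and by additivity it is an equivalence of underlying \categories. A symmetric monoidal functor whose underlying functor is an equivalence is an equivalence in $\CAlg(\bigcat)$. The product in $\CAlg(\bigcat)$ is computed underlying, with the pointwise tensor product. Finally, $\Pic$ preserves products: an object $(A,B)$ is invertible in $D(X)\times D(Y)$ exactly when $A$ and $B$ are both invertible, and the groupoid core commutes with products. Hence
\[
\Pic\big(D(X\sqcup Y)\big)\simeq\Pic\big(D(X)\big)\times\Pic\big(D(Y)\big)
\]
in $\CMongp$, since products there are computed on underlying spaces.

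\textbf{Main obstacle.} The only delicate point is in Step 1: one must make sure the lax structure of $D$ restricted to $\m{C}^{\op}$ actually yields the pointwise symmetric monoidal structures, with the $f^*$ strongly monoidal rather than merely lax. I would handle this by noting that $\m{C}^{\op}\to\Cor(\m{C};E)$ is symmetric monoidal, and that for the coCartesian structure every lax symmetric monoidal functor to $\bigcat$ is pointwise given by a commutative algebra. The strongness of $f^*$ follows since the internal tensor is defined via $\Delta^*$ and the identity $f^*\Delta^*\simeq\Delta^*(f\times f)^*$ holds by functoriality. Everything else is formal.
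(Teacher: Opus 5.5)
Your argument is correct and is the same unwinding the paper has in mind: it calls this lemma immediate and gives no further proof, and you restrict $D$ to $\m{C}^{\op}$ with its coCartesian structure to get a functor to $\CAlg(\bigcat)$, apply $\Pic$, and deduce product preservation from additivity together with conservativity of $\CAlg(\bigcat)\to\bigcat$. One detail in the nullary case is left unstated: the diagonal being an equivalence only forces $D(\varnothing)$ to be empty or contractible, and emptiness is ruled out because $D(\varnothing)$ has a unit object by Step 1.
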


Let $D$ be an additive 3-functor formalism on $\Cor(\m{C};E)$ and $\eta : F \rightarrow \Pic(D)$ a morphism of structure functors. 
From this data, we will construct a 3-functor formalism
\begin{equation}
    D_{F} : \Cor(\m{C}_{F};E_{F}) \rightarrow \bigcat
\end{equation}
which ``twists'' the pullback and pushforward functors of $D$ by invertible objects in the essential image of $\eta: F \rightarrow \Pic(D)$.
We first construct the contravariant portion of this putative 3-functor formalism; this will require some technical results from higher category theory.

\begin{recollection}\label{recollection: CAlg(Mod(C))}
Viewing $\bigcat$ as a Cartesian symmetric monoidal \category, we let $\Mod(\bigcat)$ denote the (very large) \category of pairs $(\m{A},\m{M})$, where $\m{A}$ is a symmetric monoidal \category and $\m{M}$ is a module over $\m{A}$ \cite[Definition 3.3.3.8, Definition 4.5.1.1]{lurieHigherAlgebra2017};
this is itself a symmetric monoidal \category with respect to the product
\begin{equation}
    (\m{A},\m{M})\boxtimes (\m{B},\m{N}) = (\m{A}\times\m{B},\m{M}\times\m{N}).
\end{equation}
By \cite[Corollary 3.4.1.5]{lurieHigherAlgebra2017} we have a canonical identification 
\begin{equation}
\CAlg\big(\Mod(\bigcat)\big) \simeq \CAlg(\bigcat)^{[1]}.
\end{equation}
\end{recollection}

\begin{rem}\label{rem: A to (A,*)}
    Because $\bigcat$ is Cartesian symmetric monoidal, $\CAlg(\bigcat)$ has a terminal object given by $\ast \in \bigcat$.
    This implies there is an essentially unique symmetric monoidal functor  
    \begin{equation}
    \CAlg(\bigcat) \rightarrow\CAlg(\bigcat)^{[1]} 
    \end{equation}
    which sends $\m{A} \mapsto (\m{A} \rightarrow \ast)$; under the identification above, this determines a functor $\m{A} \mapsto (\m{A},\ast)$.
\end{rem}

\begin{construction}
    Given $(\m{A},\m{M}) \in \Mod(\bigcat)$, we can consider its bar construction:
    \begin{equation}
        \m{M}\quot \m{A} = \colim_{\Delta^{\op}} \mathrm{Bar}(\ast,\m{A},\m{M}) \in \bigcat. 
    \end{equation}
    We will show this can be refined to a symmetric monoidal functor 
    \begin{equation}
    -\quot -:\Mod(\bigcat) \rightarrow \bigcat.
    \end{equation}
    By \cite[Theorem 4.4.2.8, Remark 4.6.3.4]{lurieHigherAlgebra2017}, the relative tensor product 
    \begin{equation}
    \Mod_{\rm R,L}(\bigcat) \equiv\RMod(\bigcat)\times_{\Alg(\bigcat)} \LMod(\bigcat) \rightarrow \bigcat
    \end{equation}
    is symmetric monoidal because the Cartesian product on $\bigcat$ is compatible with sifted colimits. 
    Therefore, it will be enough to produce a symmetric monoidal functor 
    \begin{equation}
    \Mod(\bigcat) \rightarrow \Mod_{\rm R,L}(\bigcat)
    \end{equation}
    sending a pair $(\m{A},\m{M})$ to the triple $(\ast,\m{A},\m{M})$. 
    By \cite[Proposition 4.5.1.4]{lurieHigherAlgebra2017} and the theory in \cite[3.2.4]{lurieHigherAlgebra2017}, there is a pullback diagram of symmetric monoidal \categories: 
    \begin{equation}
    \begin{tikzcd}
	{\Mod(\bigcat)}\arrow[dr, phantom, "\lrcorner", very near start] & {\LMod(\bigcat)} \\
	{\CAlg(\bigcat)} & {\Alg(\bigcat)}.
	\arrow[from=1-1, to=1-2]
	\arrow[from=1-1, to=2-1]
	\arrow[from=1-2, to=2-2]
	\arrow[from=2-1, to=2-2]
    \end{tikzcd}
    \end{equation}
    Note that the same holds for $\RMod$ in place of $\LMod$. 
    By Remark~\ref{rem: A to (A,*)}, there is a symmetric monoidal section 
    \begin{equation}
    \CAlg(\bigcat) \rightarrow \Mod(\bigcat)
    \end{equation}
    sending $\m{A} \mapsto (\m{A},\ast)$; pre-composing with the forgetful functor, we obtain a symmetric monoidal functor 
    \begin{equation}
    \Mod(\bigcat) \rightarrow \CAlg(\bigcat) \rightarrow \Mod(\bigcat) \rightarrow \RMod(\bigcat).
    \end{equation}
    By the definition of $\Mod_{\rm R,L}(\bigcat)$, we obtain our desired symmetric monoidal functor which sends $(\m{A},\m{M})$ to $\m{M}\quot\m{A}$.
\end{construction}

\begin{rem}\label{rem: describing M//A}
    If $\m{A}$ is an $\einf$-monoid in $\Spaces$, then $\ast\quot \m{A}$ can be identified with $\rmB\m{A}$, the \category with a single object and space of endomorphisms $\m{A}$.
    Similarly, if $\m{M} \in \Mod_{\m{A}}(\bigcat)$, the essentially unique map $(\m{A},\m{M}) \rightarrow (\m{A},\ast)$ induces a functor 
    \begin{equation}
        \m{M}\quot \m{A} \rightarrow \ast\quot \m{A}.
    \end{equation}
    This is a coCartesian fibration classifying the left action of $\m{A}$ on $\m{M}$.
    Therefore, we may informally view $\m{M}\quot \m{A}$ as the \category where: 
    \begin{enumerate}
        \item the objects are the same as those of $\m{M}$,
        \item and a morphism $M\to N$ consists of an object $\alpha \in \m{A}$ together with a morphism $\theta: \alpha \otimes M \rightarrow N$.
        (Given such a pair $(\alpha,\theta)$, it is a coCartesian lift of $\alpha$ just in the case $\theta$ is an equivalence.)
    \end{enumerate}
    If $\m{A}$ is grouplike, then $\m{M}\quot \m{A} \rightarrow \ast \quot \m{A}$ is also a Cartesian fibration.

\end{rem}

\begin{construction}[Contravariant functoriality]
The map of structure functors $\eta: F \rightarrow \Pic(D)$ induces a natural transformation $F \rightarrow D$ in $\Fun\big(\m{C}^{\op},\CAlg(\bigcat)\big)$. 
By reasoning similar to that in Remark~\ref{rem: A to (A,*)}, there is a morphism in $\CAlg(\bigcat)^{[1]}$, from $F \rightarrow D$ to $F \rightarrow \ast$, given by the square 
\begin{equation}
\begin{tikzcd}
    F \arrow[d,"\mathrm{id}"'] \arrow[r] & D \arrow[d] \\
    F \arrow[r] & \ast. 
\end{tikzcd}
\end{equation}
By Recollection~\ref{recollection: CAlg(Mod(C))}, we obtain an essentially unique lax symmetric monoidal functor 
\begin{equation}
    (F,D)^{\otimes} : \m{C}^{\op,\sqcup} \rightarrow \Mod(\bigcat)^{\otimes}
\end{equation}
and an essentially unique lax symmetric monoidal transformation 
\begin{equation}
    (F,D)^{\otimes} \rightarrow (F,\ast)^{\otimes}.
\end{equation}
Post-composing with the bar construction yields another lax symmetric monoidal natural transformation 
\begin{equation}
    (D\quot F)^{\otimes} \rightarrow (\ast\quot F)^{\otimes}
\end{equation}
of functors from $\m{C}^{\op}$ to $\bigcat$. 
After coCartesian unstraightening, we obtain a functor
\begin{equation}
    \Phi_{F}^{\otimes} : \left(\int^{\rm cc}_{\m{C}^{\op}} D\quot F\right)^{\otimes} \equiv \int_{\m{C}^{\op,\sqcup}}^{\rm cc} (D\quot F)^{\otimes} \rightarrow \int_{\m{C}^{\op,\sqcup}}^{\rm cc} (\ast\quot F)^{\otimes} = \m{C}_{F}^{\otimes,\vee}
\end{equation}
between coCartesian fibrations over $\m{C}^{\op,\sqcup}$ (and thus over $\Fin_\ast$); we denote the underlying functor by
\begin{equation}
    \Phi_{F} : \int^{\rm cc}_{\m{C}^{\op}} D\quot F \rightarrow \m{C}_{F}^{\op}. 
\end{equation}

\end{construction}

\begin{lem}\label{lem: D_F lax sm}
    The functor $\Phi_{F}^{\otimes}$ is a coCartesian fibration of \operads.
\end{lem}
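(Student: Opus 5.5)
We need to show that $\Phi_F^{\otimes}$ is a coCartesian fibration of \operads, i.e. that it is a map of \operads which is moreover a coCartesian fibration. The plan is to reduce this to a statement about natural transformations of functors into $\bigcat$, where everything is fiberwise. Both source and target of $\Phi_F^{\otimes}$ are, by construction, coCartesian fibrations over $\m{C}^{\op,\sqcup}$, obtained by unstraightening the lax symmetric monoidal functors $(D\quot F)^{\otimes}$ and $(\ast\quot F)^{\otimes}$. Composing with the coCartesian fibration of \operads $\m{C}^{\op,\sqcup}\to\Fin_\ast$ (which is the cocartesian symmetric monoidal structure), each total space is an \operad. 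The reason is that the unstraightening of a lax symmetric monoidal functor over an \operad is an \operad whose structure map is a coCartesian fibration of \operads; this is \cite[Proposition 2.4.2.5/Construction 2.4.3]{lurieHigherAlgebra2017}, or equivalently the ``Grothendieck construction for lax monoidal functors''. The map $\Phi_F^{\otimes}$ commutes with the projections and preserves coCartesian edges over $\m{C}^{\op,\sqcup}$, since it is the unstraightening of a natural transformation. In particular it preserves inert morphisms, so it is a map of \operads.

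The remaining point is that $\Phi_F^{\otimes}$ is itself a coCartesian fibration. I would invoke the standard criterion \cite[Proposition 2.4.4.3 / Lemma 2.4.2.8]{lurieHigherToposTheory2009}: a map between coCartesian fibrations over a common base that preserves coCartesian edges is a coCartesian fibration as soon as it is fiberwise a coCartesian fibration. In that case the coCartesian lifts are obtained by composing a lift in the base direction with a fiberwise lift. Over an object $(X_1,\dots,X_n)\in\m{C}^{\op,\sqcup}$, the fiber map is
\begin{equation}
\prod_i D(X_i)\quot F(X_i)\to\prod_i \ast\quot F(X_i).
\end{equation}
A product of coCartesian fibrations is a coCartesian fibration, so it suffices to treat a single factor. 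For this I use Remark~\ref{rem: describing M//A}: $\m{M}\quot\m{A}\to\ast\quot\m{A}$ is the coCartesian fibration classifying the action of $\m{A}=F(X)$ on $\m{M}=D(X)$ via $\eta$. Since $F(X)$ is grouplike, this fibration is also Cartesian, which we record for later use.

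The main obstacle is making the identification in Remark~\ref{rem: describing M//A} rigorous. Concretely, one must show that the geometric realization of $\mathrm{Bar}(\ast,\m{A},\m{M})\to\mathrm{Bar}(\ast,\m{A},\ast)$ is the unstraightening of the functor $\rmB\m{A}\to\bigcat$ determined by the action. My approach would be to compute the colimit over $\Delta^{\op}$ in $\bigcat$ via straightening over $\rmB\m{A}$. Colimits in $\bigcat_{/\rmB\m{A}}$ of left fibrations/coCartesian fibrations are computed by straightening, because colimits in $\Fun(\rmB\m{A},\bigcat)$ are pointwise and unstraightening commutes with base change. One then checks that the bar resolution over $\rmB\m{A}$ is the bar resolution of the free-forgetful monad, which resolves the $\m{A}$-module $\m{M}$. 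A more economical alternative is to verify the coCartesian-lift property directly. On mapping spaces, a morphism $M\to N$ over $\alpha\in\m{A}$ is a map $\alpha\otimes M\to N$, and the edge with $\theta$ an equivalence is then coCartesian by a Yoneda computation using that $\alpha\otimes-$ is an equivalence. Finally, we need that the transition functors of the source fibration over $\m{C}^{\op,\sqcup}$ preserve fiberwise coCartesian edges. This holds because the maps $f^\ast$ are symmetric monoidal and compatible with $\eta$, so they commute with the action.
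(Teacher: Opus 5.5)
Your proposal is correct and follows essentially the same route as the paper. The paper also applies the standard criterion for a map of coCartesian fibrations over $\m{C}^{\op,\sqcup}$ (it cites Ramzi's Lemma 2.10) and checks two things: that each fiber map $\prod_i D(X_i)\quot F(X_i)\to\prod_i\ast\quot F(X_i)$ is a coCartesian fibration, by the remark describing $\m{M}\quot\m{A}$, and that the transition functors $\boxtimes_{f,g}$ over active morphisms preserve fiberwise coCartesian edges.

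Two small corrections. The criterion as you first state it is false without the transition-preservation hypothesis, which you only supply at the end. And for the binary (active) transitions, that check needs the multiplicativity $\eta(f^{\ast}\alpha)\otimes\eta(g^{\ast}\beta)\simeq\eta(f^{\ast}\alpha+g^{\ast}\beta)$ of $\eta$, not only that each $f^{\ast}$ is symmetric monoidal and compatible with $\eta$.
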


\begin{proof} 
    By a well-known lemma regarding coCartesian fibrations (see \cite[Lemma 2.10]{ramziMONOIDALGROTHENDIECKCONSTRUCTION2026}) it is enough to establish the following two claims.
    \begin{enumerate}
        \item For each $\{X_i\}_{i \in I} \in \m{C}^{\op,\sqcup}$, the functor 
        \begin{equation}
        \prod_{i \in I} D(X_i)\quot F(X_i) \rightarrow \prod_{i \in I} \ast \quot F(X_i)
        \end{equation}
        is a coCartesian fibration. 
        \item For each active morphism $(X,Y)\rightarrow Z$ in $\m{C}^{\op,\sqcup}$ (given by a pair of maps $(f,g) : Z \rightarrow X \times Y$), the functor $\boxtimes_{f,g}$ in the diagram below,
    \begin{equation}
\begin{tikzcd}
	{D(X)\quot F(X) \times D(Y)\quot F(Y)} & {D(Z)\quot F(Z)} \\
	{\ast\quot F(X) \times \ast \quot F(Y)} & {\ast\quot F(Z)},
	\arrow["{\boxtimes_{f,g}}", from=1-1, to=1-2]
	\arrow["{(\Phi_{X},\Phi_{Y})}"', from=1-1, to=2-1]
	\arrow["{\Phi_{Z}}", from=1-2, to=2-2]
	\arrow[from=2-1, to=2-2]
\end{tikzcd}
    \end{equation}
    carries $(\Phi_{X},\Phi_{Y})$-coCartesian arrows to $\Phi_{Z}$-coCartesian arrows. 
    \end{enumerate}
    The first claim is immediate from Remark~\ref{rem: describing M//A}. 
    For the second claim, fix a $(\Phi_{X},\Phi_{Y})$-coCartesian morphism
    \begin{equation}
    \big((\alpha,\theta_{X}), (\beta,\theta_{Y})\big) : (\m{F}_{X},\m{F}_{Y}) \rightarrow (\m{G}_{X},\m{G}_{Y})
    \end{equation}
    and note this is sent to the morphism
    \begin{equation}
    (f^{\ast}\alpha + g^{\ast}\beta, f^{\ast}\theta_{X} \boxtimes g^{\ast}\theta_{Y}) : f^{\ast}\m{F}_{X}\boxtimes g^{\ast}\m{F}_{Y} \rightarrow f^{\ast}\m{G}_{X}\boxtimes g^{\ast}\m{G}_{Y},
    \end{equation}
    where $f^{\ast}\theta_{X} \boxtimes g^{\ast}\theta_{Y}$ is the equivalence 
    \begin{equation}
    \big(f^{\ast}\m{F}_{X} \otimes \eta(f^{\ast}\alpha) \big)\boxtimes \big(g^{\ast}\m{F}_{Y} \otimes \eta(g^{\ast}\beta)\big) \simeq f^{\ast}\m{G}_{X}\boxtimes g^{\ast}\m{G}_{Y}. 
    \end{equation}
    However, as we have 
    \begin{equation}
    \big(f^{\ast}\m{F}_{X} \otimes \eta(f^{\ast}\alpha) \big)\boxtimes \big(g^{\ast}\m{F}_{Y} \otimes \eta(g^{\ast}\beta)\big) \simeq \eta(f^{\ast}\alpha+g^{\ast}\beta) \otimes (f^{\ast}\m{F}_{X}\boxtimes g^{\ast}\m{F}_{Y}),
    \end{equation}
    the claim follows. 
    To complete the proof, we observe that $\Phi_{F}^{\otimes}$ satisfies \cite[Proposition 2.1.2.12 (b)]{lurieHigherAlgebra2017} essentially by construction. 
\end{proof}

\begin{defin}
    We define 
    \begin{equation}
    D_{F} : \m{C}_{F}^{\op} \rightarrow \bigcat \ \ \text{and} \ \ D_{F}^{\otimes} : \m{C}_{F}^{\otimes,\vee} \rightarrow \bigcat^{\times}
    \end{equation}
    to be the coCartesian straightenings of the functors $\Phi_{F}$ and $\Phi_{F}^{\otimes}$, the latter of which is lax symmetric monoidal by Lemma~\ref{lem: D_F lax sm}.
\end{defin}

\begin{rem}
    Informally, the lax symmetric monoidal functor $D_{F}$ is given by $D_{F}(X) = D(X)$ and $(f,\alpha)^{\ast} = \eta(\alpha) \otimes f^{\ast}(-) : D(Y) \rightarrow D(X)$. 
    Note that $D_{F}$ really depends on both $F$ and $\eta$.
\end{rem}
Let $\eta : F \rightarrow \Pic(D)$ be a map of structure functors on $\m{C}$. 
As $\Pic\big(D(X)\big)$ acts on $D(X)$, we obtain an action of $F(X)$ on $D(X)$, where $\alpha \in F(X)$ acts on $D(X)$ via 
\begin{equation}
    \eta(\alpha) \otimes (-) : D(X) \rightarrow D(X).
\end{equation}

\begin{prop}\label{prop:twisted3ff}
    There is an essentially unique additive 3-functor formalism 
    \begin{equation}
    D_{F} : \Cor(\m{C}_{F};E_{F}) \rightarrow \bigcat
    \end{equation}
    such that: 
    \begin{enumerate}
        \item $D_{F}(X) = D(X)$;
        \item for any $(f,\alpha) : X \rightarrow Y\in\m{C}_{F}$, we have 
        \begin{equation}
        (f,\alpha)^{\ast} = f^{\ast}(-) \otimes \eta(\alpha) : D(Y) \rightarrow D(X);
        \end{equation}
        \item and for any $(g,\beta) : X \rightarrow Y\in E_{F}$, we have 
        \begin{equation}
        (g,\beta)_{!} = g_{!}\big(-\otimes \eta(-\beta)\big) : D(X) \rightarrow D(Y).
        \end{equation}
    \end{enumerate}
    Furthermore, there is a canonical isomorphism of 3-functor formalisms 
    \begin{equation}
    D_{F} \simeq \eta^{\ast}D_{\Pic},
    \end{equation}
    where $\eta^{\ast}D_{\Pic}$ denotes the pullback of $D_{\Pic}$ along 
    \begin{equation}
    \Cor(\m{C}_{F};E_{F}) \rightarrow \Cor(\m{C}_{\Pic};E_{\Pic}).
    \end{equation}
    Finally, if $D$ is a 6-functor formalism, then so is $D_{F}$. 
\end{prop}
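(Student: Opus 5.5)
The plan is to reduce to the universal case $F=\Pic(D)$, $\eta=\id$. Then I would obtain the general case by pulling back along the functor
\begin{equation}
\Cor(\m{C}_{F};E_{F}) \rightarrow \Cor(\m{C}_{\Pic};E_{\Pic}).
\end{equation}
This functor exists by Proposition~\ref{prop: C_F symmetric monoidal}: the induced functor $\m{C}_F\to\m{C}_{\Pic(D)}$ preserves pullbacks and finite coproducts and sends $E_F$ into $E_{\Pic}$, so it is a morphism of adequate triples. By Lemma~\ref{lem: adtrip sm gives cor sm} and the functoriality of $\Cor$, it is a symmetric monoidal functor of correspondence categories. The identification $D_F\simeq\eta^\ast D_{\Pic}$ then becomes the definition, together with a check that the formulas in (1)--(3) pull back correctly: $\eta(\alpha)$ replaces $\alpha$. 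This check is immediate from the informal description of $D_F$ on $\m{C}_F^{\op}$ in the preceding Remark. One caveat: the lax symmetric monoidal functor on $\m{C}_F^{\op}$ constructed above must agree with the restriction of $\eta^\ast D_{\Pic}$ to $\m{C}_F^{\op}$. I would get this from naturality in $F$ of the bar construction $D\quot F$, which is evident from the construction.

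For the universal case I would build $D_{\Pic}$ as a twisted correspondence functor. I would run the construction of Section 4.3 over $\Cor(\m{C};E)$ instead of $\m{C}^{\op}$. First, observe that $\Cor(\m{C}_{\Pic};E_{\Pic})\to\Cor(\m{C};E)$ is described by Lemma~\ref{lem: mapping spaces in C_F} on mapping spaces. A correspondence $X\xleftarrow{(f,\alpha)}W\xrightarrow{(g,\beta)}Y$ is a correspondence in $\m{C}$ together with a class $\alpha-\beta$ in $F(W)$, up to the evident reparametrization. Next, I would use the suitable decomposition $(\m{C}^{\simeq},E_{\Pic})$ from the Lemma above. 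The extension criterion for suitable decompositions (\cite[Prop.~3.3.3]{6FF} and its lax symmetric monoidal refinement, or the universality of spans in \cite{cnossenUniversalitySpan2categories2026}) then says it suffices to specify two things. The first is the already constructed lax symmetric monoidal functor $D_{\Pic}$ on $\m{C}_{\Pic}^{\op}$. The second is, for each $(g,\beta)\in E_{\Pic}$, the functor $(g,\beta)_!\equiv g_!(-\otimes\beta^{-1})$, together with coherent base change and projection formula equivalences. Since $I=\m{C}^{\simeq}$, the $I$-part of the criterion is vacuous.

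The base change equivalence for an ambigressive square, whose top map $(\overline g,\overline g'^{\ast}\beta)$ is obtained by pulling back $(g,\beta)$ along $(h,\gamma)$, reads
\begin{equation}
(h,\gamma)^\ast g_!(-\otimes\beta^{-1}) \simeq \overline g_!\big(\overline h^\ast(-)\otimes \overline h^{\ast}\beta^{-1}\big)\otimes\gamma .
\end{equation}
It follows from base change for $D$ together with the projection formula $g_!(A\otimes g^\ast L)\simeq g_!A\otimes L$ for invertible $L$, which is used to move $\gamma$ across $\overline g_!$. The projection formula for $D_{\Pic}$ follows in the same way.

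The main obstacle is coherence. A 3-functor formalism that is not a 6-functor formalism supplies no adjoints, so the criterion must be applied in the form that takes the covariant functors $g_!$ as given data rather than as adjoints. The twisting must also be made homotopy coherent in $\beta$. My first attempt would avoid checking coherences by hand. I would straighten $D|_{\Cor(\m{C};E)}$ together with the action of $\Pic(D)$, namely $D$ regarded as a module over its Picard sheaf, into a lax symmetric monoidal functor to $\Mod(\bigcat)$ on a suitable span category. I would then apply the symmetric monoidal bar construction $-\quot-$ exactly as in the contravariant construction and unstraighten. The fiberwise coCartesian check of Lemma~\ref{lem: D_F lax sm} would be replaced by the projection formula computation above.

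Uniqueness follows because the criterion gives an equivalence of spaces of extensions. Additivity holds because $D_{\Pic}(X)=D(X)$ and coproducts in $\m{C}_{\Pic}$ are those of $\m{C}$ (Lemma~\ref{lem: C_F admits pullbacks and coproducts}). Finally, if $D$ is a 6-functor formalism, then $(f,\alpha)^\ast$ and $(g,\beta)_!$ are composites of left adjoints with the autoequivalences $-\otimes\eta(\pm\alpha)$, and the internal tensor product is unchanged. Hence these functors admit right adjoints, namely $f_\ast(-\otimes\eta(-\alpha))$ and $g^!(-)\otimes\eta(\beta)$.
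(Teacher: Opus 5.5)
There is a genuine gap in the step that actually builds $D_{\Pic}$ on spans.

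The criteria you invoke (\cite[Prop.~3.3.3]{6FF}, \cite[Theorem 5.14]{cnossenUniversalitySpan2categories2026}) do not accept $(g,\beta)_!$ together with ``coherent'' base change and projection equivalences as input. For the class $P$ they require two things: that the pullback functor admit a right adjoint, and that the exchange transformations built from units and counits be equivalences. It is exactly this that makes all higher coherences automatic.

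You notice the mismatch yourself, but your fallback does not close it. The $\Pic(D)$-action is only contravariantly functorial, so $D$ with this action does not define a functor $\Cor(\m{C};E)\to\Mod(\bigcat)$. Along a right leg $g$ there is no map $\Pic\big(D(W)\big)\to\Pic\big(D(Y)\big)$: $g_!$ is only $\Pic\big(D(Y)\big)$-linear via $g^\ast$, while the twist $\beta$ lives in $F(W)$. The ``suitable span category'' on which $-\quot-$ should be run is never specified, and constructing it is the whole coherence problem. So, as written, neither existence nor uniqueness of $D_{\Pic}$, hence of $D_F$, is established. The reduction to $F=\Pic(D)$ is harmless but does not address this difficulty.

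The missing observation, which the paper uses, is that on $E$ the exceptional pushforward is right adjoint to pullback, $g_!\simeq g_\ast$. This holds for proper maps in Volpe's formalism, the only case needed later. You are right that without some such input no adjointability criterion applies to a bare 3-functor formalism.

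Given this, for every $(g,\beta)\in E_F$ the twisted pullback $(g,\beta)^\ast=g^\ast(-)\otimes\eta(\beta)$ has right adjoint $g_\ast\big(-\otimes\eta(-\beta)\big)\simeq g_!\big(-\otimes\eta(-\beta)\big)$. So \cite[Theorem 5.14]{cnossenUniversalitySpan2categories2026} with $(I,P)=(\m{C}^{\simeq},E_F)$ applies directly to the lax symmetric monoidal functor $D_F^{\otimes}$ on $\m{C}_F^{\otimes,\vee}$ already built by the bar construction, for arbitrary $F$ and $\eta$. Only two property checks remain:
\begin{enumerate}
\item vertical right adjointability of pullback squares along $E_F$, which is your base change computation via base change and the projection formula for $D$;
\item multiplicative right adjointability for $\boxtimes$, which is your projection formula computation.
\end{enumerate}
Existence, uniqueness and coherence then come from the theorem, no reduction to the universal case is needed, and $D_F\simeq\eta^\ast D_{\Pic}$ follows from the uniqueness. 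Your additivity and 6-functor arguments are fine.
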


\begin{proof}
    By \cite[Theorem 5.14]{cnossenUniversalitySpan2categories2026} we only need to verify that $(\m{C}_{F}^{\otimes},E_{F}^{\otimes})$ and $D_{F}^{\otimes}$ satisfy \cite[Definition 5.13, (1) and (2b)]{cnossenUniversalitySpan2categories2026}. We will repeatedly use the fact that whenever $(f,\alpha) \in E_{F}$, the functor $(f,\alpha)^{\ast}$ admits a right adjoint given by:
        \begin{equation}
        (f,\alpha)_{!} \equiv f_{!}\big(-\otimes\eta(-\alpha)\big).
        \end{equation}
    
    We begin by showing $\m{D}_{F}$ is $(\m{C}^{\simeq},E_{F})$-biadjointable. 
    Given a pullback square 
    \begin{equation}\begin{tikzcd}
	{\overline{X}}\arrow[dr, phantom, "\lrcorner", very near start] & {\overline{Y}} \\
	X & Y
	\arrow["{(\overline{f},\overline{g}^{\ast}\alpha)}", from=1-1, to=1-2]
	\arrow["{(\overline{g},\overline{f}^{\ast}\beta)}"', from=1-1, to=2-1]
	\arrow["{(g,\beta)}", from=1-2, to=2-2]
	\arrow["{(f,\alpha)}"', from=2-1, to=2-2]
    \end{tikzcd}\end{equation}
    in $\m{C}_{F}$ with $(g,\beta) \in E_{F}$, we must show that 
\begin{equation}\begin{tikzcd}
	{D(\overline{X})} && {D(\overline{Y})} \\
	{D(X)} && {D(Y)}
	\arrow["{(\overline{f},\overline{g}^{\ast}\alpha)^{\ast}}", tail reversed, no head, from=1-1, to=1-3]
	\arrow["{(\overline{g},\overline{f}^{\ast}\beta)^{\ast}}"', tail reversed, no head, from=1-1, to=2-1]
	\arrow["{(g,\beta)^{\ast}}", tail reversed, no head, from=1-3, to=2-3]
	\arrow["{(f,\alpha)^{\ast}}"', tail reversed, no head, from=2-1, to=2-3]
\end{tikzcd}\end{equation}
    is vertically right adjointable, i.e., the induced exchange transformation 
    \begin{equation}
    (f,\alpha)^{\ast}(g,\beta)_{!} \rightarrow (\overline{g},\overline{f}^{\ast}\beta)_{!}(\overline{f},\overline{g}^{\ast}\alpha)^{\ast}
    \end{equation}
    is an equivalence. 
    Unraveling the definitions, this amounts to the claim that, for each $\m{F} \in D(\overline{Y})$, the induced map 
    \begin{equation}
    f^{\ast}g_{!}\big(\m{F}\otimes \eta(-\beta)\big) \otimes \eta(\alpha) \rightarrow \overline{g}_{!}\big(\overline{f}^{\ast}\m{F} \otimes \overline{g}^{\ast}\eta(\alpha) \otimes \overline{f}^{\ast} \eta(-\beta)\big)
    \end{equation}
    is an equivalence; this follows from the projection formula and the equivalence $f^{\ast}g_{!} \simeq \overline{g}_{!}\overline{f}^{\ast}$.
    Next, we establish that $D_{F}^{\otimes}$ satisfies a multiplicative right adjointability condition. 
    To this end, let $X \in \m{C}_{F}$ and $(f,\alpha) : Y \rightarrow Y'\in E_{F}$. 
    We claim the following square is vertically right adjointable:
    \begin{equation}\begin{tikzcd}
	{D_{F}(X)\times D_{F}(Y')} & {D_{F}(X\times Y')} \\
	{D_{F}(X)\times D_{F}(Y)} & {D_{F}(X\times Y)};
	\arrow["\boxtimes", from=1-1, to=1-2]
	\arrow["{\mathrm{id}\times(f,\alpha)^{\ast}}"', from=1-1, to=2-1]
	\arrow["{(\mathrm{id}\times f,\mathrm{pr}_{2}^{\ast}\alpha)^{\ast}}", from=1-2, to=2-2]
	\arrow["\boxtimes"', from=2-1, to=2-2]
    \end{tikzcd}\end{equation}
    that is, the induced transformation 
    \begin{equation}
        (-)\boxtimes f_{!}\big(-\otimes \eta(-\alpha)\big) \rightarrow (\mathrm{id}\times f)_{!}\big((-\boxtimes-)\otimes \eta(-\pr_{2}^{\ast}\alpha)\big)
    \end{equation}
    is an equivalence, but this is merely a consequence of the definitions.
    This completes the construction of the desired 3-functor formalism. 
    It is easy to see that $D_{F}$ must be additive and that, if $D$ is a 6-functor formalism, then so is $D_{F}$.  
\end{proof}

\section{Tangentially structured correspondences}\label{sec:correspondences}
In the remainder of the present article we fix a pair $(B,\varphi)$, where $B$ is a grouplike $\bbE_\infty$-monoid and $\varphi:B\to\BO$ is a map of $\bbE_\infty$-monoids.
    
\begin{defin}\label{defin:tangentialstructure}
Let $f:X\to Y\in \DSm$.
\begin{enumerate}
\item Given $v \in\TKO(X)$, a \textit{$\varphi$-structure} on $v$ is choice of lift in $\Spaces$:
    \begin{equation}
    \begin{tikzcd}
	& B \arrow[d,"\varphi"] \\
    \Sing(X)\arrow[r,"v"]\arrow[ur, dashed] & \BO.
    \end{tikzcd}
    \end{equation}
\item A \emph{stable (tangential) $\varphi$-structure} on $f$ is defined to be a $\varphi$-structure on $\tau_f$.
\end{enumerate}
\end{defin}

Recall, the \textit{(stable) J-homomorphism}, denoted $J$, is the composition of maps of $\ee{\infty}$-monoids 
    \begin{equation}
    \bigsqcup_{n} \BO(n) \xrightarrow{S^{(-)}} \Spaces_{\ast} \xrightarrow{\Sigma^{\infty}} \Sp
    \end{equation}
which sends a real vector space $V$ to $\bbS^V\equiv\Sigma^\infty S^V$; note, $\bbS^V$ is an invertible spectrum. Moreover, since the target is group-complete, we obtain a symmetric monoidal functor 
    \begin{equation}
    J:\Omega^{\infty}\ko \rightarrow \Pic(\bbS). 
    \end{equation}
The restriction of $J$ along the inclusion $\BO=\{0\}\times\BO\to\Omega^{\infty}\ko$ factors through $\{0\}\times \BGL_{1}(\sphere)\to\Pic(\sphere)$; this yields another map of $\einf$-monoids,
    \begin{equation}
    \BO\to{\rm BGL}_1(\bbS),
    \end{equation}
which we still denote by $J$.

\begin{defin}
The \textit{Thom spectrum} associated to $\varphi$ is the colimit
    \begin{equation}
    \rmM\varphi\equiv \colim \big(B \rightarrow \BO \rightarrow {\rm BGL}_1(\bbS) \rightarrow \Sp\big);
    \end{equation}
this admits a canonical $\einf$-ring structure by \cite{antolin-camarenaSimpleUniversalProperty2019}.
\end{defin}

\begin{construction}\label{construction: Thom structure transform}
Let $\PH \subset \LCH$ denote the full subcategory of paracompact Hausdorff topological spaces. 
By the discussion in \cite[Section 7]{volpe}, there is a natural transformation of functors from $\PH^\op$ to $\CMongp$:
    \begin{equation}
    \Th:\KO\to \Pic\big(\Shv(-;\bbS)\big).
    \end{equation}
Meanwhile, there is a natural transformation of functors from $\PH^\op$ to the \category of symmetric monoidal \categories:
    \begin{equation}
    \Fun(\Sing(-),\Sp)\rightarrow \Shv(-;\sphere);
    \end{equation}
this converts local systems into their sheaves of sections, and the natural transformation $\Th$ can be factored as
    \begin{equation}
        \KO \rightarrow \Fun\big(\Sing(-),\Pic(\sphere)\big) \rightarrow \Pic\big(\Shv(-;\sphere)\big).\footnote{Note, Volpe crucially uses paracompactness since they appeal to \cite[7.1.4.3]{lurieHigherToposTheory2009}.}
    \end{equation}
By restricting along the inclusion $\TKO \rightarrow \KO$, we obtain a commutative diagram 
\begin{equation}
\begin{tikzcd}
	\TKO & {\Fun\big(\Sing(-),{\rm BGL}_1(\bbS)\big)} \\
	\KO & {\Fun\big(\Sing(-),\Pic(\sphere)\big)}
	\arrow[from=1-1, to=1-2]
	\arrow[from=1-1, to=2-1]
	\arrow[from=1-2, to=2-2]
	\arrow[from=2-1, to=2-2]
\end{tikzcd}
\end{equation}
and, by abuse of notation, we also write $\Th$ for 
\begin{equation}
\TKO \rightarrow \Fun\big(\Sing(-),\rm{BGL}_{1}(\sphere)\big) \rightarrow \Pic\big(\Shv(-;\sphere)\big).
\end{equation}
\end{construction}

\begin{defin}
Given $X\in\DSm$ and $\xi\in\KO(X)$, the \emph{Thom local system} of $\xi$, denoted $\Th_X(\xi)$, is the composition
    \begin{equation}
    \Sing(X) \xrightarrow{\xi} \oko \xrightarrow{J} \Pic(\sphere). 
    \end{equation}
By the previous discussion, this local system determines an invertible sheaf which we call the \emph{Thom sheaf} associated to $\xi$; by an abuse of notation, we denote this sheaf also by $\Th_X(\xi)$.
If $U \subset X$ is an open embedding, we write $\Th_{X}(\xi)|_{U}$ for the restriction of $\Th_{X}(\xi)$ along $\Sing(U) \rightarrow \Sing(X)$, or for the restriction of the Thom sheaf to $U$.
\end{defin}

\begin{defin}
Let $R$ be an $\bbE_\infty$-ring and $X\in\DSm$.   
\begin{enumerate}
\item Let $L:\Sing(X)\to{\rm BGL}_1(\bbS)$. An \emph{$R$-orientation} of $L$ is a choice of null-homotopy of the composite
    \begin{equation}
    \Sing(X)\xrightarrow{L}{\rm BGL}_1(\bbS)\to{\rm BGL}_1(R).
    \end{equation}
Equivalently, this is a choice of equivalence 
    \begin{equation}
    \theta:R\otimes_\bbS L\xrightarrow{\sim}R_X,
    \end{equation}
where $R\otimes_\bbS L$ denotes the local system obtained by base change along $\bbS\to R$ and $R_X$ denotes the $R$-valued constant local system on $\Sing(X)$.
\item Let $\calL\in\Pic\big(\Shv(X;\bbS)\big)$. An \emph{$R$-orientation} of $\calL$ is a choice of equivalence
    \begin{equation}
    \vartheta:R\otimes_\bbS\calL\xrightarrow{\sim}R_X,
    \end{equation}
where $R\otimes_\bbS\calL$ denotes the sheaf of $R$-modules obtained by extension of scalars along $\bbS\to R$ and $R_X$ denotes the constant sheaf of $R$-modules.
\end{enumerate}
\end{defin}

\begin{notation}
Given a morphism $d_X:\Sing(X)\to\bbZ$, we may view $d_{X}$ as a virtual bundle by post-composing with the $\ee{1}$-section $s: \bbZ \rightarrow \Omega^{\infty}\ko$, and this point $d_{X} \in \KO(X)$ determines the similarly named element of $\pi_{0}\KO(X)$. 
Given an $\einf$-ring $R$, we obtain a $\Pic(R)$-valued local system
\begin{equation}
    R_{X}\{d_{X}\} \equiv R\otimes_{\sphere}\Th_{X}(d_{X}) : \Sing(X) \rightarrow \Pic(R).
\end{equation}
If $C \subset X$ denotes a path-component of $X$, then the induced local system on $\Sing(C)$ is equivalent to the constant local system $\Sigma^{d_{X}(C)}R_{X}$, where $d_{X}(C)\in\bbZ$ denotes the unique integer attained by $d_{X}$ after restricting to $\Sing(C)$.
Additionally, if $\calL \in \Pic(\Shv(X;R))$, we write 
    \begin{equation}
    \calL\{d_X\}\equiv \calL\otimes_{R_{X}} R_X\{d_X\}. 
    \end{equation}
\end{notation}

\begin{rem}\label{rem:baneofourexistence}
We would like to emphasize that the equivalence
    \begin{equation}
    R_X\{d_X\}\otimes_RR_X\{d_{X'}\}\simeq R_X\{d_{X'}\}\otimes_RR_X\{d_X\}
    \end{equation}
may carry an implicit sign as it is a path in the space $\Pic\big(\Shv(X;R)\big)$.
\end{rem}

\begin{rem}[Universal $\rmM\varphi$-orientation]\label{remark: universal phi orientation}
Consider $\Th_B(\varphi)$ over $B$. By \cite[Corollary 3.17]{antolin-camarenaSimpleUniversalProperty2019}, there is a canonical $\rmM\varphi$-orientation
    \begin{equation}
    \mathrm{can}_{\varphi} : \Th_{B}(\varphi)\otimes_\bbS\rmM\varphi \xrightarrow{\sim} \rmM\varphi_{B}
    \end{equation}
arising from the identity map $\rmM\varphi \rightarrow \rmM\varphi$.
\end{rem}

The following result is immediate from the previous remark.

\begin{cor}[Thom isomorphism]
Let $X\in\DSm$ and $v\in\TKO(X)$. Suppose $v$ has a $\varphi$-structure $\alpha$, then $\mathrm{can}_{\varphi}$ induces a canonical $\rmM\varphi$-orientation of $\Th_X(v)$:
    \begin{equation}
     \Th_{X}(v)\otimes_\bbS\rmM\varphi \simeq \Th_{X}(\varphi\circ\alpha)\otimes_\bbS\rmM\varphi \xrightarrow{\sim}\rmM\varphi_{X}.
    \end{equation}
In particular, $\mathrm{can}_{\varphi}$ induces an equivalence
    \begin{equation}
    \Gamma\big(X;\Th_{X}(v)\otimes_\bbS\rmM\varphi\big) \xrightarrow{\sim} F\big(\Sigma^\infty_+\Sing(X), \rmM\varphi\big).
    \end{equation}
\end{cor}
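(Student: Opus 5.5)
The argument has two steps: pull the universal orientation back along the classifying map of the $\varphi$-structure, and then take global sections.

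\textbf{Step 1: the orientation.} A $\varphi$-structure on $v$ is a lift $\alpha:\Sing(X)\to B$ together with a homotopy $h:\varphi\circ\alpha\simeq v$ of maps $\Sing(X)\to\BO$. Postcomposing $h$ with $J:\BO\to\BGL_1(\sphere)$ gives an equivalence of local systems $\Th_X(v)\simeq\Th_X(\varphi\circ\alpha)$. Base change along $\sphere\to\rmM\varphi$ turns this into the first equivalence in the statement.

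For the second equivalence, note that Thom local systems are natural under pullback of spaces, so
\begin{equation}
\Th_X(\varphi\circ\alpha)\simeq\alpha^*\Th_B(\varphi).
\end{equation}
Pulling back $\mathrm{can}_{\varphi}$ from Remark~\ref{remark: universal phi orientation} along $\alpha$ then gives
\begin{equation}
\alpha^*\mathrm{can}_{\varphi}:\alpha^*\Th_B(\varphi)\otimes_\sphere\rmM\varphi\xrightarrow{\sim}\alpha^*\rmM\varphi_B\simeq\rmM\varphi_X.
\end{equation}
This uses that base change along $\sphere\to\rmM\varphi$ commutes with pullback of local systems, and that the pullback of a constant local system is constant. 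Composing the two equivalences produces the desired $\rmM\varphi$-orientation. Canonicity comes from the fact that every input, namely $(\alpha,h)$ and $\mathrm{can}_\varphi$, is either given data or universal.

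\textbf{Step 2: sheaves.} Apply the symmetric monoidal functor $\Fun(\Sing(X),\Sp)\to\Shv(X;\sphere)$ from Construction~\ref{construction: Thom structure transform}. This uses that $\rlz{X}$ is paracompact Hausdorff (Assumption~\ref{assu:paracompacthausdorff}). The functor commutes with base change to $\rmM\varphi$-modules, so the orientation from Step 1 becomes an equivalence of sheaves
\begin{equation}
\Th_X(v)\otimes_\sphere\rmM\varphi\simeq\rmM\varphi_X.
\end{equation}
Taking global sections reduces the final claim to identifying $\Gamma(X;\rmM\varphi_X)$ with $F(\Sigma^\infty_+\Sing(X),\rmM\varphi)$.

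\textbf{Main obstacle.} The one non-formal point is this identification of global sections of the constant sheaf. The route is as follows. The sheaf attached to a local system $\mathcal{E}$ has global sections $\lim_{\Sing(X)}\mathcal{E}$ whenever $\rlz{X}$ is paracompact and locally contractible-enough. Here shape theory applies via \cite[7.1.4.3]{lurieHigherToposTheory2009}, since paracompact spaces have the property that the constant-sheaf functor computes $\Sing$-cohomology, at least for spaces locally of the homotopy type of CW complexes. This holds in our setting because $\rlz{X}$ is locally a subspace of a manifold cut out as a finite limit, hence locally an ENR-like space; I would invoke Remark~\ref{rem: point-set topology of DSm} together with Volpe's comparison for this. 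The limit of the constant diagram $\rmM\varphi$ over $\Sing(X)$ is exactly $F(\Sigma^\infty_+\Sing(X),\rmM\varphi)$, which completes the argument.
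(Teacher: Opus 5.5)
Your Step~1 is exactly the paper's argument. The paper records the corollary as immediate from Remark~\ref{remark: universal phi orientation}: transport along the homotopy $\varphi\circ\alpha\simeq v$, then pull back $\mathrm{can}_\varphi$ along $\alpha\colon\Sing(X)\to B$.

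\textbf{The gap.} The problem is your ``main obstacle'' paragraph. The step that fails is the claim that $\rlz{X}$ is locally ENR-like. Remark~\ref{rem: point-set topology of DSm} only says that $\rlz{X}$ is locally homeomorphic to \emph{some} subspace of a manifold, and this cannot be improved. Since $\rlz{-}$ preserves finite limits, $\rlz{*\times_{\bbR^k}(Y\times\bbR^N)}=h^{-1}(0)$, and by Whitney every closed subset of $\bbR^N$ is the zero set of a smooth function.

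\textbf{A counterexample.} Choose a smooth $h\colon\bbR\to\bbR$ with $h^{-1}(0)=K$ the Cantor set. Then $X=*\times_{\bbR}\bbR$ is a compact, hence paracompact Hausdorff, derived smooth manifold with $\rlz{X}=K$, and $\Sing(X)\simeq K$ is discrete. For the constant sheaf, $\pi_0\Gamma(K;\rmM\varphi_K)$ is the group of locally constant functions $K\to\pi_0\rmM\varphi$. By contrast, $\pi_0F(\Sigma^\infty_+\Sing(K),\rmM\varphi)=\prod_K\pi_0\rmM\varphi$ is the group of all functions. The natural comparison map is the inclusion, which is not surjective: the indicator function of a point is not locally constant.

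This is an instance of a general phenomenon. On a paracompact space, sections of a constant sheaf compute maps out of the shape, namely continuous maps into the realization of the coefficients \cite[\S7.1]{lurieHigherToposTheory2009}, i.e.\ \v{C}ech-type cohomology. That agrees with maps out of $\Sing$ only under local contractibility hypotheses. So if $\Gamma$ means sheaf-theoretic sections of the constant sheaf on $\rlz{X}$, the identification you need is false for general $X\in\DSm$, and no shape-theoretic argument can supply it.

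\textbf{The fix.} Do not route the ``in particular'' through sheaves. Read $\Gamma(X;-)$ on the Thom \emph{local system} as the limit over $\Sing(X)$; this is the reading under which the paper can call the corollary immediate. Then
$\Gamma(X;\rmM\varphi_X)=\lim_{\Sing(X)}\rmM\varphi\simeq F(\Sigma^\infty_+\Sing(X),\rmM\varphi)$
is a tautology, and the second claim follows from your Step~1 alone.

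If you want the sheaf-theoretic version, restrict to $X$ whose underlying space is locally contractible, e.g.\ $X\in\Sm$. There your argument goes through. This is also the only case the paper uses later: the remark $\Gamma|_{\Sm^{\op}}\simeq F(\Sigma^\infty_+\Sing(-),\rmM\varphi)$ and the computation of $\Gamma_\varphi(X,d_X)$ for smooth $X$.
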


\begin{rem}
Let $\xi \in \KO(X)$ and $\alpha$ a $\varphi$-structure on $\xi - \rk\xi$. 
In this case, the Thom isomorphism above takes the form
    \begin{equation}
    \rmM\varphi_X \simeq \Th_{X}(\varphi\circ\alpha) \otimes_{\sphere} \rmM\varphi \simeq  \Th_X(\xi)\otimes_{\sphere}\rmM\varphi_{X} \{-\rk\xi\}.
    \end{equation}
\end{rem}

\subsection{Tangentially structured correspondences}\label{subsection:tangentiallystructuredcor}
Recall, by $\TKO$, we mean the functor 
    \begin{equation}
    \TKO(-)\equiv\Map_{\Spaces}\big(\Sing(-),\BO\big):\DSm^\op\to\CMongp.
    \end{equation}
We denote by $\DSm_{\TKO}$ the $\TKO$-structuring of $\DSm$. Meanwhile, we have the functor 
    \begin{equation}
    \frakB(-)\equiv\Map_{\Spaces}\big(\Sing(-),B\big):\DSm^\op\to\CMongp;
    \end{equation}
this is a structure functor. We denote by $\DSm_\frakB$ the $\frakB$-structuring of $\DSm$. Of course, $\varphi$ induces a symmetric monoidal functor 
    \begin{equation}
    \DSm_\frakB\to\DSm_{\TKO}.
    \end{equation}

\begin{rem}
Recall, $\DSm$ determines a wide subcategory of $\DSm_{\TKO}$ spanned by the \emph{trivially structured} morphisms, i.e., morphisms of the form $(f,0):X\to Y$.
\end{rem}

The following result is immediate. 

\begin{lem}
Morphisms in $\DSm_{\TKO}$ of the form $(f,\tau_f):X\to Y$ are closed under composition. 
\end{lem}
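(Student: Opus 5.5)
We must show that morphisms $(f,\tau_f)$ in $\DSm_{\TKO}$ are closed under composition. Composition in $\DSm_{\TKO}$ is described informally as
\begin{equation}
(g,\tau_g)\circ(f,\tau_f)=\big(g\circ f,\ \tau_f+f^{\ast}\tau_g\big).
\end{equation}
It therefore suffices to produce a canonical equivalence $\tau_{g\circ f}\simeq\tau_f+f^{\ast}\tau_g$ in $\TKO(X)$, for composable $f:X\to Y$ and $g:Y\to Z$ in $\DSm$.

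The plan has three steps.

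\emph{Step 1.} Apply Lemma~\ref{lem: virtual tangent bundle properties}(2). This gives $T^{\vir}_{g\circ f}\simeq T^{\vir}_f+f^{\ast}T^{\vir}_g$ in $\KO(X)$. It comes from the cofiber sequence $T_f\to T_{g\circ f}\to f^{\ast}T_g$ in $\Perf_X$, passed through $\K(\Perf)\to\KO$. Naturality of $(-)^{\vir}$ identifies $(f^{\ast}T_g)^{\vir}$ with $f^{\ast}T_g^{\vir}$.

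\emph{Step 2.} Use the rank map $\pi:\oko\to\bbZ$ of Recollection~\ref{recollection:ko}, which is a map of $\einf$-monoids. Hence
\begin{equation}
\rk(g\circ f)=\rk f+f^{\ast}\rk g
\end{equation}
as maps $\Sing(X)\to\bbZ$, and $\rk$ commutes with pullback.

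\emph{Step 3.} Pass to the stable parts. By Recollection~\ref{recollection:ko}, $\id-\rk:\oko\to\BO$ is the second component of the splitting $\KO\simeq\Map(\Sing(-),\bbZ)\times\TKO$, natural in $X$. The stable relative tangent bundle $\tau_f$ is by definition the image of $T^{\vir}_f$ under this projection. Applying the projection to the equivalence of Step 1 gives
\begin{equation}
\tau_{g\circ f}\simeq\tau_f+f^{\ast}\tau_g,
\end{equation}
which is the required equivalence. The same argument is what the remark following the definition of $\tau$ records for the stable relative tangent bundle.

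The only delicate point is Step 3. The splitting in Recollection~\ref{recollection:ko} is only $\ee{1}$, not $\einf$, so $\id-\rk$ is a map of $\ee{1}$-monoids. However, additivity of a single sum only requires compatibility with the binary operation up to homotopy, and an $\ee{1}$-map provides that. Alternatively, one can work in $\pi_0$ if only a homotopy is needed: composition closure is a property, so producing a path $\tau_{g\circ f}\simeq\tau_f+f^{\ast}\tau_g$ in $\TKO(X)$ is enough.
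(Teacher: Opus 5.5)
Your proposal is correct and matches the paper's implicit argument, which the paper calls ``immediate'': the composite in $\DSm_{\TKO}$ is $(g\circ f,\tau_f+f^{\ast}\tau_g)$, and $\tau_{g\circ f}\simeq\tau_f+f^{\ast}\tau_g$ is Lemma~\ref{lem: virtual tangent bundle properties}(2) carried over to stable bundles, as the remark after the definition of $\tau_f$ records. Your Step~3, which uses that $\id-\rk$ is an $\ee{1}$-map and notes that a path in $\TKO(X)$ suffices, spells out the justification the paper leaves implicit.
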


In particular, we may define the wide subcategory $\DSm_\tau\subset\DSm_{\TKO}$ spanned by the \emph{tangentially structured} morphisms. The following two results are immediate consequences of our prior work on the stable relative tangent bundle.

\begin{lem}\label{lem: pb tang str triv str}
    The pullback of a tangentially structured morphism along a trivially structured morphism is tangentially structured. 
\end{lem}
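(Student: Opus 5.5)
The plan is to unwind the definitions and reduce everything to the base-change property of the stable relative tangent bundle. Fix a tangentially structured morphism $(f,\tau_f):X\to Y$ in $\DSm_{\TKO}$ and a trivially structured morphism $(g,0):\overline{Y}\to Y$. First we compute the pullback of these two morphisms in $\DSm_{\TKO}$. By Lemma~\ref{lem: C_F admits pullbacks and coproducts}, and the kerodon reference cited in its proof, pullbacks in the Cartesian fibration $\pi_{\TKO}:\DSm_{\TKO}\to\DSm$ are computed by taking the pullback $\overline{X}=X\times_Y\overline{Y}$ in $\DSm$ and equipping the projections with the pulled-back structures. This is exactly the description used in the proof of Proposition~\ref{prop:twisted3ff}: the square has top arrow $(\overline{f},\overline{g}^{\ast}0)=(\overline{f},0)$ and left arrow $(\overline{g},\overline{f}^{\ast}\tau_f)$, or the transposed labelling.

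To confirm that this square is a pullback, we would use Lemma~\ref{lem: mapping spaces in C_F}, which gives $\Map_{\DSm_{\TKO}}(Z,-)\simeq\Map_{\DSm}(Z,-)\times\TKO(Z)$. The required equivalence then reduces to the fact that $\TKO(Z)\simeq\TKO(Z)\times_{\TKO(Z)}\TKO(Z)$, where the composites are compared through the composition law $(h,\alpha)\circ(k,\beta)=(h\circ k,\beta+k^{\ast}\alpha)$. This step is routine.

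With the square in hand, the base change of $(f,\tau_f)$ along $(g,0)$ is $(\overline{f},\overline{g}^{\ast}\tau_f)$. It therefore suffices to give a canonical identification $\overline{g}^{\ast}\tau_f\simeq\tau_{\overline{f}}$ in $\TKO(\overline{X})$. Lemma~\ref{lem: virtual tangent bundle properties}(3) gives $T^{\vir}_{\overline{f}}\simeq\overline{g}^{\ast}T^{\vir}_f$ in $\KO(\overline{X})$. Since pullback commutes with the splitting of Recollection~\ref{recollection:ko}, being induced by postcomposition with $\id-\rk$ on $\Sing$, we also get $\rk\overline{f}=\rk f\circ\overline{g}$. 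Subtracting ranks then gives $\tau_{\overline{f}}\simeq\overline{g}^{\ast}\tau_f$, as recorded in the remark following the definition of the stable relative tangent bundle.

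The only genuine subtlety is coherence: "tangentially structured" should be read as the structure being identified with $\tau$ via the canonical path, so the statement is really about the essential image of the wide subcategory $\DSm_\tau$. The canonical equivalence above provides that path, and it is natural because it comes from the natural transformation $(-)^{\vir}:\K(\Perf)\to\KO$ applied to the pullback equivalence of relative tangent complexes. I do not expect a real obstacle beyond carefully identifying pullbacks in the Cartesian unstraightening.
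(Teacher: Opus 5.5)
Your proposal is correct and follows the paper's route. The paper calls this an immediate consequence of the base-change formula $T^{\vir}_{\overline{f}}\simeq\overline{g}^{\ast}T^{\vir}_f$ (Lemma~\ref{lem: virtual tangent bundle properties}(3), transferred to $\tau$ by subtracting ranks), exactly as in your third paragraph; your explicit check of pullbacks in $\DSm_{\TKO}$ via Lemma~\ref{lem: mapping spaces in C_F} only spells out what the paper leaves implicit, and there is one small slip in your first paragraph, where the structures are swapped: with your naming, the top arrow is $(\overline{f},\overline{g}^{\ast}\tau_f)$ and the left arrow is $(\overline{g},0)$, which is what you correctly use afterwards.
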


\begin{lem}
    The wide subcategory $\DSm_{\tau} \subset \DSm_{\TKO}$ is a symmetric monoidal subcategory which is also closed under coproducts.  
\end{lem}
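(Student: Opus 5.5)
We must show that $\DSm_{\tau}\subset\DSm_{\TKO}$ is a symmetric monoidal subcategory closed under coproducts. Recall from Proposition~\ref{prop: C_F symmetric monoidal} and Remark~\ref{remark: tensor product on Cor(C_F;E_F)} that on objects the tensor product of $\DSm_{\TKO}$ is the Cartesian product $X\times X'$, and on morphisms it sends $(f,\alpha)$ and $(f',\alpha')$ to $(f\times f',\alpha\times\alpha')$, where $\alpha\times\alpha'=\pr_1^{\ast}\alpha+\pr_2^{\ast}\alpha'$. The unit is the terminal object $\ast$. Since $\DSm_\tau$ is a wide subcategory, it contains all objects, including the unit. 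The structure equivalences (associators, unitors, symmetries) come from $\sigma_{\TKO}$, so they are of the form $(e,0)$ with $e$ an equivalence in $\DSm$. For an equivalence, $T_e\simeq 0$, so $\tau_e\simeq 0$ and these morphisms are tangentially structured. It therefore suffices to check that $\DSm_\tau$ is closed under the tensor product of morphisms, and then, separately, under coproducts.

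For the tensor product we must identify $\tau_{f\times f'}\simeq \pr_1^{\ast}\tau_f+\pr_2^{\ast}\tau_{f'}$ in $\TKO(X\times X')$, naturally. Factor $f\times f'$ as
\begin{equation}
X\times X'\xrightarrow{f\times\id}Y\times X'\xrightarrow{\id\times f'}Y\times Y'.
\end{equation}
Each factor is a pullback of $f$ (resp.\ $f'$) along a projection. By Lemma~\ref{lem: virtual tangent bundle properties}(3), applied to the stable version as in the subsequent remark, $\tau_{f\times\id}\simeq\pr_1^{\ast}\tau_f$ and $\tau_{\id\times f'}\simeq\pr_2^{\ast}\tau_{f'}$. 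By Lemma~\ref{lem: virtual tangent bundle properties}(2), the composite formula then gives
\begin{equation}
\tau_{f\times f'}\simeq\pr_1^{\ast}\tau_f+(f\times\id)^{\ast}\pr_2^{\ast}\tau_{f'}=\pr_1^{\ast}\tau_f+\pr_2^{\ast}\tau_{f'}.
\end{equation}
The rank functions add by the remark on additivity of $\rk$, so the virtual and stable statements agree.

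This identification is exactly the one encoded by the lax structure $F^{\times}$ of the structure functor $\TKO$. To see this without checking by hand, I would observe the following. The assignment $f\mapsto T_f\in\Perf$ comes from a relative right Kan extension, and $T$ preserves finite products, because $T_{X\times X'}\simeq\pr_1^{\ast}T_X\oplus\pr_2^{\ast}T_{X'}$ via the pullback lemma for tangent complexes. Moreover, $(-)^{\vir}$ and $\tau$ are natural transformations of $\CMongp$-valued functors. Hence the assignment $f\mapsto(f,\tau_f)$ defines a section $\DSm\to\DSm_{\TKO}$ that is compatible with products, and its image is $\DSm_\tau$.

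For coproducts, recall from Lemma~\ref{lem: C_F admits pullbacks and coproducts} that the coproduct cocone uses $(i_X,0)$. Given $(f,\tau_f)$ and $(g,\tau_g)$, their coproduct is $(f\sqcup g,(\tau_f,\tau_g))$, using $\TKO(X\sqcup Y)\simeq\TKO(X)\times\TKO(Y)$, which holds because $\TKO$ is product-preserving. Since restriction along an open embedding is a pullback, the pullback property gives $\tau_{f\sqcup g}|_X\simeq\tau_f$, and similarly on $Y$. The inclusions $i_X$ are open embeddings, so $\tau_{i_X}=0$ and they are tangentially structured as well.

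The main obstacle is making these identifications coherent, i.e.\ producing an honest symmetric monoidal subcategory rather than just closure on $\pi_0$. Since $\DSm_\tau$ is a subcategory, closure at the level of morphisms together with the containment of the structure equivalences is enough. The real subtlety is ensuring that the chosen equivalence $\tau_{f\times f'}\simeq\tau_f\times\tau_{f'}$ is the one the section provides; I would address this via the product-preservation of $T$ described above.
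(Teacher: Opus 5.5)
Your proof is correct and is the argument the paper intends: the paper gives no separate proof, calling the lemma an immediate consequence of the pullback and composition formulas for the stable relative tangent bundle (Lemma~\ref{lem: virtual tangent bundle properties} and its stable analogue). Your factorization $f\times f'=(\id\times f')\circ(f\times\id)$ and your restriction to the summands of $X\sqcup Y$ are exactly those formulas applied. The closing worry about coherence can be dropped: membership in a wide subcategory is a property detected in the homotopy category, so the identification $\tau_{f\times f'}\simeq\pr_1^{\ast}\tau_f+\pr_2^{\ast}\tau_{f'}$ on $\pi_0$ already suffices. The appeal to a product-compatible section $f\mapsto(f,\tau_f)$ is therefore not needed, and it would in any case require more than product-preservation of $T$ to make coherent.
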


\begin{defin}
    We define the wide subcategory of \textit{proper, tangentially $\varphi$-structured morphisms} to be
    \begin{equation}
        \calP_{\varphi}\equiv \calP_{\mathfrak{B}}\times_{\calP_{\TKO}}\calP_{\tau} \subset \DSm_{\frakB}.
    \end{equation}
\end{defin}

\begin{prop}\label{prop: P_phi adequate triple}
    The triple $(\DSm_{\frakB};\DSm,\calP_{\varphi})$ determines a commutative monoid in $\mathrm{AdTrip}$.
\end{prop}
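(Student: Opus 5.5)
To prove the statement, I will check the two halves of ``commutative monoid in $\mathrm{AdTrip}$'' separately. First, that $(\DSm_{\frakB};\DSm,\calP_{\varphi})$ is an adequate triple. Second, that the symmetric monoidal structure $\DSm_{\frakB}^{\otimes}$ of Proposition~\ref{prop: C_F symmetric monoidal} is compatible with it, in the sense of Lemma~\ref{lem: adtrip sm gives cor sm}. For the second half, I need that $\DSm$ (trivially structured) and $\calP_{\varphi}$ are symmetric monoidal wide subcategories of $\DSm_{\frakB}$ and that $\otimes$ preserves ambigressive pullbacks. As in the proof of Lemma~\ref{lem: adtrip sm gives cor sm}, these hypotheses assemble into a commutative monoid structure, because the forgetful functor $\mathrm{AdTrip}\to\Fun(\Lambda^2_2,\Cat_\infty)$ is symmetric monoidal and the monoid structure on the underlying cospan $\DSm\to\DSm_{\frakB}\leftarrow\calP_\varphi$ is inherited from $\DSm_{\frakB}^{\otimes}$.

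For the adequate triple, take $(g,\beta):X\to Y$ in $\calP_\varphi$ and a trivially structured $(f,0):Y'\to Y$. The pullback exists in $\DSm_{\frakB}$ by Lemma~\ref{lem: C_F admits pullbacks and coproducts}, and it is computed by the Kerodon criterion as $X'=X\times_Y Y'$ in $\DSm$ with edges $(\overline{g},\overline{f}^*\beta)$ and $(\overline{f},\overline{g}^*0)=(\overline{f},0)$. So $X'\to X$ is trivially structured, and it remains to show that $(\overline{g},\overline{f}^*\beta)\in\calP_\varphi$. There are three points to check.
\begin{enumerate}
\item Properness of $\rlz{\overline{g}}$ follows because $\rlz{-}$ preserves finite limits and proper maps are stable under base change.
\item The image in $\DSm_{\TKO}$ is $(\overline{g},\overline{f}^*\varphi\beta)$. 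Since $\varphi\beta\simeq\tau_g$ (this being the defining identification in the fiber product $\calP_\frakB\times_{\calP_{\TKO}}\calP_\tau$), this equals $(\overline{g},\overline{f}^*\tau_g)\simeq(\overline{g},\tau_{\overline{g}})$. Here I use Lemma~\ref{lem: pb tang str triv str} together with Lemma~\ref{lem: virtual tangent bundle properties}(3) for stable tangent bundles.
\item Any other pullback square is equivalent to this one, so it has the same properties.
\end{enumerate}

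The main subtlety is that $\calP_\varphi$ is defined as a fiber product of \categories, not as a subcategory cut out by a property. A morphism of $\calP_\varphi$ therefore carries the datum of a path $\varphi\beta\simeq\tau_g$, and I must transport this path coherently along pullback. To handle this, I will present the fiber product via the functor of structure functors $\frakB\to\TKO$ (Proposition~\ref{prop: C_F symmetric monoidal}). Since that functor preserves pullbacks, the pulled-back path is canonically supplied by the equivalence $\overline{f}^*\tau_g\simeq\tau_{\overline{g}}$. Once I check that the inclusions $\calP_\tau\subset\DSm_{\TKO}$ and $\calP_\frakB\subset\DSm_\frakB$ are closed under this base change (the former by Lemma~\ref{lem: pb tang str triv str}), closure of the fiber product follows, since fiber products of subcategories closed under a given pullback are again closed under it.

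For the monoidal compatibility, I argue as follows.
\begin{enumerate}
\item $\DSm$ is a symmetric monoidal subcategory via $\sigma_\frakB^{\otimes}$.
\item $\calP_\varphi$ is a symmetric monoidal subcategory because each factor is. Proper maps are closed under products, and $\tau_{g\times g'}\simeq\tau_g\times\tau_{g'}$ gives closure of $\DSm_\tau$ (the lemma stating that $\DSm_\tau$ is symmetric monoidal). Moreover, $\frakB\to\TKO$ is monoidal, and the unit, namely the identity of $*$ with structure $0$, lies in $\calP_\varphi$.
\item $\otimes$ preserves ambigressive pullbacks: by Remark~\ref{remark: tensor product on Cor(C_F;E_F)}, $\otimes$ on $\DSm_\frakB$ lies over the Cartesian product on $\DSm$, which commutes with pullbacks. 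The structures transform as $\alpha\times\beta=\pr_1^*\alpha+\pr_2^*\beta$, which is compatible with pulling back along trivially structured maps, as in Lemma~\ref{lem: C_F product presv E_F}.
\end{enumerate}
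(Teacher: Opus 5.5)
Your proposal is correct and follows the same route as the paper. Adequacy comes from the existence of pullbacks in $\DSm_{\frakB}$ together with Lemma~\ref{lem: pb tang str triv str}, and the monoid structure comes from checking that $\DSm$ and $\calP_{\varphi}$ are symmetric monoidal subcategories and that $\otimes$ preserves ambigressive squares, i.e.\ the hypotheses of Lemma~\ref{lem: adtrip sm gives cor sm}; you simply give more detail than the paper's two-sentence proof, including the explicit pulled-back structures and the transport of $\varphi\beta\simeq\tau_g$, which the paper leaves implicit.
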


\begin{proof}
    The triple $(\DSm_{\frakB};\DSm,\calP_{\varphi})$ is easily seen to be adequate by combining Lemma~\ref{lem: pb tang str triv str} with the fact that $\DSm_{\frakB}$ is closed under pullbacks. 
    It is not difficult to see that $\DSm,\calP_{\varphi} \subset \DSm_{\frakB}$ are symmetric monoidal subcategories, and similar reasoning shows that ambigressive pullback squares are closed under the symmetric monoidal product in $\DSm_{\frakB}$.    
\end{proof}

\begin{defin}
Define 
\begin{equation}
\Cor_{\varphi}(\DSm) \equiv \Cor(\DSm_{\frakB};\DSm,\calP_{\varphi});
\end{equation}
this admits a canonical symmetric monoidal structure, $\Cor_{\varphi}(\DSm)^{\otimes}$, by Proposition~\ref{prop: P_phi adequate triple}.
\end{defin}

\begin{rem}
Informally, the \category $\Cor_{\varphi}(\DSm)$ has objects derived smooth manifolds and morphisms given by structured correspondences 
    \begin{equation}
    X \xleftarrow{(f,0)} W \xrightarrow{(g,\alpha)} Y,
    \end{equation}
where $\alpha$ is a choice of $\varphi$-structure on $\tau_g$.
The monoidal structure is entirely analogous to the one described in Remark~\ref{remark: tensor product on Cor(C_F;E_F)}. 
Furthermore, $\Cor_{\varphi}(\DSm)$ is a semiadditive \category, where the direct sum of two derived smooth manifolds is given by $X \sqcup Y$. 
The direct sum of two $\varphi$-structured correspondences is given by 
\begin{equation}
X_{0}\sqcup X_{1} \xleftarrow{(f_{0}\sqcup f_{1},0)} W_{0} \sqcup W_{1} \xrightarrow{(g_{0}\sqcup g_{1},\alpha)} Y_{0}\sqcup Y_{1},
\end{equation}
where $\alpha \in \frakB(W_{0}\sqcup W_{1})$ is identified with the pair $(\alpha_{0},\alpha_{1}) \in \frakB(W_{0})\times \frakB(W_{1})$; note, $\tau_{f_{0}\sqcup f_{1}}$ corresponds to the tuple $(\tau_{f_{0}},\tau_{f_{1}})$.
\end{rem}

\subsection{Structured correspondences and transfers}

\begin{construction}
    Using the $\einf$-map $\sphere \rightarrow \rmM\varphi$ together with Construction~\ref{construction: Thom structure transform}, we obtain a morphism of structure functors 
\begin{equation}
    \Th_{\varphi} \equiv (\Th \circ \varphi)\otimes_\sphere\rmM\varphi : \frakB \rightarrow \Pic\big(\Shv(-;\rmM\varphi)\big),
\end{equation}
where $\Shv(-;\rmM\varphi) : \Cor(\DSm;\calP) \rightarrow \bigcat$ denotes the 6-functor formalism from Theorem~\ref{theorem: Volpe 6FF}. 
\end{construction}

\begin{rem}
    The reader familiar with orientation theory may have noticed $\Th_{\varphi}$ admits a canonical trivialization. 
    We will return to this point in Section~\ref{sec: coherent PT}.
\end{rem}

\begin{defin}
    We let
    \begin{equation}
        \Shv_{\varphi} : \Cor(\DSm_{\frakB};\calP_{\frakB}) \rightarrow \bigcat
    \end{equation}
    denote the 6-functor formalism obtained by twisting $\Shv(-;\rmM\varphi)$ along $\Th_{\varphi}$. 
    We define 
    \begin{equation}
        \m{D}_{\varphi} : \Cor_{\varphi}(\DSm) \rightarrow \bigcat
    \end{equation}
    to be the restriction of $\Shv_{\varphi}$ along the symmetric monoidal inclusion $\Cor_{\varphi}(\DSm) \rightarrow \Cor(\DSm_{\frakB};\calP_{\frakB})$. 
\end{defin}

\begin{defin}
    Define 
    \begin{equation}
        \TrCor_{\varphi}(\DSm)  \subset \int^{\rm cc}_{\Cor_{\varphi}(\DSm)} \m{D}_{\varphi} \equiv \Un(\m{D}_{\varphi})
    \end{equation}
    to be the full subcategory spanned by objects of the form $(X,d_X) \equiv (X,\rmM\varphi_{X}\{d_X\})$, where $d_X : \Sing(X) \rightarrow \bbZ$.
\end{defin}

\begin{rem}
We would like to \emph{really} emphasize that the symbol $(X,d_X)$ is \emph{simply} a stand in for $(X,\rmM\varphi_{X}\{d_X\})$, i.e., the equivalence 
    \begin{equation}
    (X,d_X+d_{X}')\simeq(X,d_{X}'+d_X)
    \end{equation}
utilizes Remark \ref{rem:baneofourexistence}.
\end{rem}

\begin{rem}
Informally, a morphism $(X,d_X) \rightarrow (Y,d_Y)$ is the data of a $\varphi$-structured correspondence 
    \begin{equation}
    X\xleftarrow{(f,0)} W \xrightarrow{(g,\alpha)}Y
    \end{equation}
together with a morphism in $\Shv\big(Y;\rmM\varphi\big)$:
    \begin{equation}
    \theta :g_{!}\big(\Th_W(\tau_{g})\otimes_\bbS\rmM\varphi_W\{f^*d_X\}\big) \rightarrow\rmM\varphi_{Y}\{d_Y\}.
    \end{equation}
Therefore, a morphism in $\TrCor_{\varphi}(\DSm)$ can be viewed as a $\varphi$-structured correspondence together with a choice of \emph{transfer class} $\theta$ as above. Often, we will denote the data of a morphism $(X,d_X) \rightarrow (Y,d_Y)$ as
    \begin{equation}
    (X,d_X)\xleftarrow{(f,0)} W \xrightarrow{(g,\alpha,\theta)}(Y,d_Y).
    \end{equation}
\end{rem}

\begin{lem}\label{lem: TrCor symmetric monoidal}
    $\TrCor_{\varphi}(\DSm) \subset \Un(\m{D}_{\varphi})$ is a symmetric monoidal subcategory. 
\end{lem}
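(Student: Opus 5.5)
First I make $\Un(\m{D}_{\varphi})$ symmetric monoidal. The functor $\m{D}_{\varphi}$ is lax symmetric monoidal: it is the restriction of the twisted 6-functor formalism $\Shv_{\varphi}$ of Proposition~\ref{prop:twisted3ff} along the symmetric monoidal inclusion $\Cor_{\varphi}(\DSm)^{\otimes}\to\Cor(\DSm_{\frakB};\calP_{\frakB})^{\otimes}$. Unstraightening a lax symmetric monoidal functor to $\bigcat^{\times}$ gives a symmetric monoidal \category $\Un(\m{D}_{\varphi})^{\otimes}$ over $\Cor_{\varphi}(\DSm)^{\otimes}$; this is the monoidal Grothendieck construction already used in Lemma~\ref{lem: D_F lax sm}. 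In it,
\begin{equation}
(X,\m{F})\otimes(Y,\m{G})\simeq(X\times Y,\m{F}\boxtimes\m{G}),
\end{equation}
and the unit is $(\ast,\rmM\varphi)$, where $\rmM\varphi$ is the unit of $\Shv(\ast;\rmM\varphi)\simeq\Mod_{\rmM\varphi}$.

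By \cite[Proposition 2.2.1.1]{lurieHigherAlgebra2017}, a full subcategory of a symmetric monoidal \category inherits a symmetric monoidal structure, with fully faithful symmetric monoidal inclusion, as soon as it contains the unit and is closed under binary tensor products up to equivalence. So only two things need checking.

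\emph{Unit.} Take $d_{\ast}=0$. Then $\rmM\varphi_{\ast}\{0\}=\rmM\varphi\otimes_{\sphere}\Th_{\ast}(0)\simeq\rmM\varphi$, so the unit is $(\ast,0)$ and lies in $\TrCor_{\varphi}(\DSm)$.

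\emph{Tensor products.} Given $(X,d_X)$ and $(Y,d_Y)$, define $d_X\boxplus d_Y\equiv\pr_1^{\ast}d_X+\pr_2^{\ast}d_Y:\Sing(X\times Y)\to\bbZ$. I need an equivalence in $\Shv(X\times Y;\rmM\varphi)$:
\begin{equation}
\rmM\varphi_X\{d_X\}\boxtimes\rmM\varphi_Y\{d_Y\}\simeq\rmM\varphi_{X\times Y}\{d_X\boxplus d_Y\}.
\end{equation}
The argument runs as follows.
\begin{enumerate}
\item Write $\boxtimes=\pr_1^{\ast}(-)\otimes\pr_2^{\ast}(-)$.
\item Pullback is symmetric monoidal and natural in the Thom construction, so $\pr_1^{\ast}\big(R_X\{d_X\}\big)\simeq R_{X\times Y}\{\pr_1^{\ast}d_X\}$, and likewise for $\pr_2$.
\item $\Th\circ s$ is a composite of maps of $\ee{1}$-monoids, because $s$ is an $\ee{1}$-section (Recollection~\ref{recollection:ko}) and $J$ and $\Th$ are symmetric monoidal. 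Hence, pointwise on $\Sing(X\times Y)$, it carries $\pr_1^{\ast}d_X+\pr_2^{\ast}d_Y$ to the tensor product of the two Thom sheaves. This uses only the monoid structure in a fixed order, not commutativity.
\item Combine these with $R\otimes_{\sphere}(-)$.
\end{enumerate}

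The main point of care is that the resulting equivalence is only specified up to the ordering issue of Remark~\ref{rem:baneofourexistence}. This does not obstruct the argument. Closure under $\otimes$ in \cite[Proposition 2.2.1.1]{lurieHigherAlgebra2017} is a property of objects, namely membership in the essential image, and needs no coherent choice of these equivalences. The symmetric monoidal coherence, including any signs, is inherited from $\Un(\m{D}_{\varphi})^{\otimes}$. The object I call "$(X\times Y,d_X\boxplus d_Y)$" is then simply the tensor product object.

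Finally, the same argument applied to the $n$-fold tensor products shows that each fiber over $\Fin_\ast$ is closed. Therefore the full suboperad spanned by the objects $(X,d_X)$ is a symmetric monoidal subcategory, as claimed.
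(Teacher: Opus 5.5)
Your proposal is correct and follows the paper's proof. The paper likewise notes that the unit $(\bbR^0,0)$ lies in $\TrCor_{\varphi}(\DSm)$, and it deduces closure under $\boxtimes$ from the equivalence $\pr_1^{\ast}\rmM\varphi_X\{d_X\}\otimes\pr_2^{\ast}\rmM\varphi_Y\{d_Y\}\simeq\rmM\varphi_{X\times Y}\{\pr_1^{\ast}d_X+\pr_2^{\ast}d_Y\}$. Your extra remarks make explicit what the paper leaves implicit: a full subcategory containing the unit and closed under tensor products is automatically symmetric monoidal, and the ordering ambiguity of Remark~\ref{rem:baneofourexistence} is harmless because closure is a property of objects.
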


\begin{proof}
    Certainly $(\bbR^{0},0) \in \TrCor_{\varphi}(\DSm)$, so it is enough to show $(X,d_{X}) \boxtimes (Y,d_{Y})$ belongs to $\TrCor_{\varphi}(\DSm)$. 
    However, this follows from the equivalence 
    \begin{equation}
    \pr_{1}^{\ast}\rmM\varphi_{X}\{d_X\} \otimes_{\rmM\varphi_{X\times Y}} \pr_{1}^{\ast}\rmM\varphi_{Y}\{d_Y\} \simeq \rmM\varphi_{X\times Y}\{\pr_1^*d_{X}+\pr_2^*d_{Y}\}.
    \end{equation}
\end{proof}

\begin{lem}\label{lem: TrCor semiadditive}
    $\Un(\m{D}_{\varphi})$ is semiadditive; moreover, $\TrCor_{\varphi}(\DSm)$ is a semiadditive subcategory. 
\end{lem}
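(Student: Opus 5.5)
The plan is to show that $\Un(\m{D}_{\varphi})$ has finite direct sums lying over direct sums in $\Cor_{\varphi}(\DSm)$, and then to check that the full subcategory $\TrCor_{\varphi}(\DSm)$ is closed under these biproducts.

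For the first claim, the base $\Cor_{\varphi}(\DSm)$ is semiadditive: this follows from the lemma establishing semiadditivity of $\Cor(\m{C}_F;E_F)$, which adapts verbatim to the adequate triple $(\DSm_{\frakB};\DSm,\calP_{\varphi})$, since coproducts in $\DSm_{\frakB}$ are compatible with $\calP_{\varphi}$. Its zero object is $\varnothing$ and its biproducts are $X\sqcup Y$. Moreover, $\m{D}_{\varphi}$ is additive, because $\Shv_{\varphi}$ is an additive 3-functor formalism by Proposition~\ref{prop:twisted3ff}. Concretely, the restriction maps give
\begin{equation}
\Shv(X\sqcup Y;\rmM\varphi)\simeq \Shv(X;\rmM\varphi)\times\Shv(Y;\rmM\varphi),
\end{equation}
and $\m{D}_{\varphi}(\varnothing)\simeq *$. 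Each fiber $\Shv(X;\rmM\varphi)$ is stable, hence semiadditive.

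I would then invoke the standard fact that the coCartesian unstraightening of an additive functor from a semiadditive \category to semiadditive \categories, with semiadditive transition functors, is again semiadditive. To prove it directly, one identifies $(X,\m{F})\oplus(Y,\m{G})$ with $(X\sqcup Y,\m{F}\sqcup\m{G})$. Mapping spaces in the unstraightening are computed by
\begin{equation}
\Map\big((X,\m{F}),(Y,\m{G})\big)\simeq \colim_{c\in\Map(X,Y)}\Map\big(c_!\m{F},\m{G}\big),
\end{equation}
where $c_!$ denotes the pushforward along the correspondence $c$; more precisely, this is the total space of a fibration over $\Map_{\Cor_{\varphi}}(X,Y)$. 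Combining the product decompositions $\Map_{\Cor}(X\sqcup X',Y)\simeq\Map(X,Y)\times\Map(X',Y)$ and $\Map_{\Cor}(Y,X\sqcup X')\simeq\Map(Y,X)\times\Map(Y,X')$ with additivity of $\m{D}_{\varphi}$ shows that $(X\sqcup X',\m{F}\sqcup\m{F}')$ is simultaneously a product and a coproduct. The object $(\varnothing,0)$ is a zero object. Finally, one checks that the canonical map from the coproduct to the product is the identity matrix; this is where the semiadditive structure of the fibers enters. Alternatively, one can verify semiadditivity on the homotopy category, as in \cite[Proposition 2.3]{gepnerUniversalityMultiplicativeInfinite2015}, exactly as was done for the correspondence category.

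For the second claim, the zero object $(\varnothing,0)$ has the required form $(\varnothing,d_\varnothing)$ for the empty map $d_\varnothing$. For closure under biproducts, note that
\begin{equation}
\rmM\varphi_{X}\{d_X\}\sqcup\rmM\varphi_{Y}\{d_Y\}\simeq \rmM\varphi_{X\sqcup Y}\{d_X\sqcup d_Y\},
\end{equation}
since $\Sing(X\sqcup Y)\simeq\Sing(X)\sqcup\Sing(Y)$ and the Thom construction is compatible with restriction along open embeddings. Hence $(X,d_X)\oplus(Y,d_Y)\simeq (X\sqcup Y,d_X\sqcup d_Y)$ lies in $\TrCor_{\varphi}(\DSm)$. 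A full subcategory of a semiadditive \category containing the zero object and closed under biproducts is semiadditive, with the inclusion preserving biproducts.

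The main obstacle is the first claim. Identifying biproducts in an unstraightening over a span category requires some care with the fiberwise mapping-space formula. If it becomes messy, I would fall back on the homotopy-category criterion: on $h\Un(\m{D}_{\varphi})$, each step is a routine check using the explicit description of morphisms as pairs of a correspondence and a transfer class.
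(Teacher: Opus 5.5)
Your proposal is correct and follows essentially the same route as the paper. The paper obtains finite coproducts in $\Un(\m{D}_{\varphi})$ from \cite[Lemma 10.7]{gepnerLaxColimitsFree2017}, identifies the binary coproduct as $(Y_{0}\sqcup Y_{1},(\m{G}_{0},\m{G}_{1}))$, and shows it is also a product using \cite[Proposition 2.4.4.2]{lurieHigherToposTheory2009}, semiadditivity of $\Cor_{\varphi}(\DSm)$ and additivity of $\m{D}_{\varphi}$, declaring closure of $\TrCor_{\varphi}(\DSm)$ clear; you instead check both universal properties directly from that mapping-space fiber sequence (using exactness of the transition functors on the coproduct side) and make explicit the identity-matrix check that the paper leaves implicit.
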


\begin{proof}
    By \cite[Lemma 10.7]{gepnerLaxColimitsFree2017}, $\Un(\m{D}_{\varphi})$ admits finite coproducts. 
    Given 
    \begin{equation}
    (Y_{0},\m{G}_{0}), (Y_{1},\m{G}_{1}) \in \Un(\m{D}_{\varphi}),
    \end{equation}
    let us describe their binary coproduct; to do so, first set 
        \begin{equation}
        \m{G} \equiv (\m{G}_{0},\m{G}_{1}) \in \m{D}_{\varphi}(Y_{0}) \times \m{D}_{\varphi}(Y_{1}) \simeq \m{D}_{\varphi}(Y_{0}\sqcup Y_{1}).
        \end{equation}
    For $k = 0,1$, the $\varphi$-structured correspondence 
    \begin{equation}
    Y_{0}\sqcup Y_{1} \xleftarrow{(i_{k},0)} Y_{k} \xrightarrow{(\mathrm{id},0)} Y_{k}
    \end{equation}
    induces a morphism 
    \begin{equation}
    (Y_{0}\sqcup Y_{1},\m{G}) \rightarrow (Y_{k},i_{k}^{\ast}\m{G}) \simeq (Y_{k},\m{G}_{k}).
    \end{equation}
    In turn, this induces a natural transformation
    \begin{equation}
    \Map_{\Un(\m{D}_{\varphi})}\big(-,(Y_{0}\sqcup Y_{1},\m{G})\big) \rightarrow \Map_{\Un(\m{D}_{\varphi})}\big(-,(Y_{0},\m{G}_{0})\big)\times \Map_{\Un(\m{D}_{\varphi})}\big(-,(Y_{1},\m{G}_{1})\big)
    \end{equation}
    which is easily seen to be an equivalence by \cite[2.4.4.2]{lurieHigherToposTheory2009} combined with the semiadditivity of $\Cor_{\varphi}(\DSm)$ and the additivity of $\m{D}_{\varphi}$.
    A similar argument shows that the initial object $(\varnothing,0)$ is also terminal.
    The closure of $\TrCor_{\varphi}(\DSm)$ under direct sums is clear. 
\end{proof}

\subsection{Extended transfer yoga}
Unfortunately, but as expected, we see that $\TrCor_{\varphi}(\DSm)$ has too many morphisms; for instance, if we consider 
    \begin{equation}
    (X,d_X)\xleftarrow{(f,0)} W \xrightarrow{(g,\alpha,\theta)}(Y,d_Y)\in\TrCor_\varphi(\DSm),
    \end{equation}
then $\theta$ is allowed, at the moment, to be quite arbitrary. In particular, we do not have a handle on when a map of derived smooth manifolds admits a transfer class. The following two subsections are dedicated to remedying this issue.

\begin{construction}
Let $f:X\to Y\in\DSm$ be an amplitude at most 1 map which is of standard presentation:
    \begin{equation}
    \begin{tikzcd}
    X\arrow[r,"e"]\arrow[d,"\pi_X",swap] & Y\times\bbR^N\arrow[r,"\pi"]\arrow[d,"h"] & Y \\
    * \arrow[r,"i",swap] & \bbR^k. &
    \end{tikzcd}
    \end{equation}
Observe, $i$ and $\pi$ are canonically $\varphi$-structured, and $f$ is of rank $-k+N$. Consider the morphism $\theta$ given by the adjoint of
    \begin{equation}
    \Th_X(\tau_f)\otimes_\bbS\rmM\varphi_X\{\rk f\}\simeq\rmM\varphi_X\{\rk f\}\to f^!\rmM\varphi_Y,
    \end{equation}
where the previous morphism is given by the interchange map:
    \begin{multline}
    \Th_X(\tau_f)\otimes_\bbS\rmM\varphi_X\{-k+N\}\simeq\rmM\varphi_X\{-k+N\}\simeq\pi_X^*\rmM\varphi_*\{-k+N\}\simeq \\
    \pi_X^*i^!\rmM\varphi_{\bbR^k}\{N\}\to e^!h^*\rmM\varphi_{\bbR^k}\{N\}\simeq e^!\rmM\varphi_{Y\times\bbR^N}\{N\}\simeq e^!\pi^!\rmM\varphi_Y\simeq f^!\rmM\varphi_Y;
    \end{multline}
note, the equivalence $\rmM\varphi_{Y\times\bbR^N}\{N\}\simeq\pi^!\rmM\varphi_Y$ utilizes \cite[Proposition 6.21]{volpe}. We refer to $\theta$, constructed as above, as the \emph{standard transfer} associated to the standard presentation of $f$.\footnote{Often, we will denote both a standard transfer and its adjoint by $\theta$; which we consider will be clear from context.}

\end{construction}

\begin{defin}
Let $f:X\to Y\in\DSm$ be of amplitude at most 1. We say 
    \begin{equation}
    \theta:f_!\big(\Th_X(\tau_f)\otimes_\bbS\rmM\varphi_X\{\rk f\}\big)\to \rmM\varphi_Y
    \end{equation}
is an \emph{extended transfer} if, for any $x\in X$, there exists an open neighborhood $U\ni x$ such that $f\vert_U$ is of standard presentation and $\theta\vert_U$ is equivalent to the transfer induced by this standard presentation.
\end{defin}

The following lemmata amount to the ``calculus'' of extended transfers; the proofs utilize the calculus of standard presentations from Subsection \ref{subsec:amplitude}.

\begin{lem}\label{lem:extendedtransfercomp}
Let $f:X\to Y,g:Y\to Z\in\DSm$ be of amplitude at most 1 together with extended transfers given by the adjoints of
    \begin{align}
    \theta:\Th_X(\tau_f)\otimes_\bbS\rmM\varphi_X\{\rk f\}&\to f^!\rmM\varphi_Y, \\
    \theta':\Th_Y(\tau_g)\otimes_\bbS\rmM\varphi_Y\{\rk g\}&\to g^!\rmM\varphi_Z,
    \end{align}
respectively. We have that the adjoint of the composite
    \begin{multline}
    \Th_X(\tau_{g\circ f})\otimes_\bbS\rmM\varphi_X\{\rk(g\circ f)\}\simeq\big(\Th_X(\tau_f)\otimes_\bbS\rmM\varphi_X\{\rk f\}\big)\otimes_{\rmM\varphi} \\
    \big(f^*\Th_Y(\tau_g)\otimes_\bbS\rmM\varphi_X\{f^*\rk g\}\big)\xrightarrow{\theta\otimes f^*\theta'}
    f^!\rmM\varphi_Y\otimes_{\rmM\varphi} f^*g^!\rmM\varphi_Z\to
    (g\circ f)^!\rmM\varphi_Z,
    \end{multline}
where the second morphism is the projection morphism, is an extended transfer.
\end{lem}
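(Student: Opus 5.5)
The claim is local on $X$, so we reduce immediately to the situation where both $f$ and $g$ are (globally) of standard presentation and $\theta,\theta'$ are the standard transfers attached to these presentations. Being an extended transfer is, by definition, a condition checked on a neighborhood of each point. The formation of the composite in the statement is compatible with restriction to opens $U\subset X$ and $V\subset Y$ with $f(U)\subset V$, because restriction along open embeddings commutes with $f^!$, $f^*$, the projection morphism and the equivalences of Lemma~\ref{lem: virtual tangent bundle properties}. Concretely, for $x\in X$ we choose $V\ni f(x)$ on which $g$ has a standard presentation with $\theta'|_V$ standard, and then $U\ni x$ inside $f^{-1}(V)$ on which $f$ has one with $\theta|_U$ standard.

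Next we invoke Lemma~\ref{lem:standardpresentation1}. It produces from the presentations
\begin{equation}
X=*\times_{\bbR^k}(Y\times\bbR^N),\qquad Y=*\times_{\bbR^\ell}(Z\times\bbR^M)
\end{equation}
a standard presentation $X=*\times_{\bbR^{k+\ell}}(Z\times\bbR^{N+M})$ of $g\circ f$ via $(h,h')$. It then suffices to show that the composite transfer in the statement agrees with the standard transfer of this combined presentation. Both are built purely from the six-functor yoga of Theorem~\ref{theorem: Volpe 6FF}: base-change/interchange maps $\pi_X^*i^!\to e^!h^*$, the purity equivalences $\rmM\varphi_{W\times\bbR^n}\{n\}\simeq \pi^!\rmM\varphi_W$ of \cite[Proposition 6.21]{volpe}, and the projection morphism $f^!A\otimes f^*g^!B\to (g\circ f)^!(\ldots)$. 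We would factor the combined picture as a diagram of pullback squares:
\begin{equation}
X\to Y\times\bbR^N\to Z\times\bbR^{N+M}\to Z,
\end{equation}
lying over $*\to\bbR^k$ and $*\to\bbR^\ell$, together with $*\to\bbR^{k+\ell}=\bbR^k\times\bbR^\ell$. The key observation is that the interchange map for the product inclusion $*\to\bbR^k\times\bbR^\ell$ factors as the exterior product of the two interchange maps. This is the compatibility of base change with the Künneth/exterior product in Volpe's formalism, which is exactly the multiplicative adjointability verified in Proposition~\ref{prop:twisted3ff}. Likewise, purity for $\bbR^{N+M}=\bbR^N\times\bbR^M$ is the composite of the two purity equivalences.

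With these in hand, the verification becomes a diagram chase assembling commuting squares. We would proceed as follows. First, pull back the standard transfer for $g$ along $f$, using base change for the square exhibiting $Y\times\bbR^N$ as a pullback of $Z\times\bbR^{N+M}$. Second, tensor it with the standard transfer of $f$ and compare the result via the projection formula. Third, identify the twisting equivalences $\Th_X(\tau_{g\circ f})\{\rk(g\circ f)\}\simeq \Th_X(\tau_f)\{\rk f\}\otimes f^*\Th_Y(\tau_g)\{\rk g\}$ with the trivializations coming from the canonical $\varphi$-structures on $i$ and $\pi$.

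One subtlety is that $h$ and $h'$ must be replaced by their bump-function extensions to $Z\times\bbR^{N+M}$. The transfer depends only on the germ near $X$, by the locality already used, so after shrinking this replacement does not affect the argument.

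The main obstacle is sign and coherence bookkeeping in the third step. The identification $\{-k+N\}+\{-\ell+M\}\simeq\{-(k+\ell)+(N+M)\}$ passes through the symmetry equivalence of Remark~\ref{rem:baneofourexistence}, which may introduce a sign. We must check that the same permutation of suspension coordinates appears on both sides, namely in reordering $\bbR^k\times\bbR^\ell$ against $\bbR^N\times\bbR^M$. The approach would be to choose the coordinate ordering in the combined presentation so that both transfers use the identical shuffle, and then the signs cancel.
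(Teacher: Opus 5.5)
Your reduction to global standard presentations and your overall plan agree with the paper: build the combined presentation of $g\circ f$ from Lemma~\ref{lem:standardpresentation1} and compare interchange maps by pasting. The step that fails is the one you call the key observation.

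The square defining the standard transfer of $g\circ f$ is $X\to Z\times\bbR^{N+M}$ over $*\to\bbR^{k+\ell}$, with vertical map $(h,h')$. It is not an exterior product of the standard squares of $f$ and $g$, for three reasons:
\begin{itemize}
\item $(h,h')$ is not a product of maps;
\item $Z\times\bbR^{N+M}$ is not the product of the ambients $Y\times\bbR^N$ and $Z\times\bbR^M$;
\item its first component is the bump-function extension of $h$, which belongs to neither original presentation.
\end{itemize}
So Künneth compatibility of base change has nothing to act on. Separately, the multiplicative adjointability checked in Proposition~\ref{prop:twisted3ff} concerns $\boxtimes$ versus $(-)_!$, not exchange maps of the form $\pi_X^*i^!\to e^!h^*$. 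As a result, your chase has no bridge from $g$'s square to the combined square, and your second step (comparing via the projection formula) cannot be carried out as stated.

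The missing idea is the one the paper's proof is built around. In the horizontal pasting $X\to Y\times\bbR^N\to Z\times\bbR^{N+M}$ over $*\to\bbR^k\to\bbR^{k+\ell}$, the left square is $f$'s standard square. The right square, however, lies over $\bbR^k\hookrightarrow\bbR^{k+\ell}$ with right vertical map $(h,h')$, whereas $g$'s square lies over $*\to\bbR^\ell$ with vertical map $h'$. The two are related by a pair of vertical pastings that share one outer square (over $*\to\bbR^\ell$, with right vertical map $h'\circ\mathrm{pr}$):
\begin{itemize}
\item stack the right square on the square $\bbR^k\to\bbR^{k+\ell}$ over $*\to\bbR^\ell$, coming from the projection $\bbR^{k+\ell}\to\bbR^\ell$;
\item stack the square $Y\times\bbR^N\to Z\times\bbR^{N+M}$ over $Y\to Z\times\bbR^M$ on $g$'s square.
\end{itemize}
Both auxiliary squares are base changes along projections, so their interchange maps are canonical equivalences. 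Compatibility of interchange maps with pasting then identifies the right square's interchange map with the pullback of $g$'s along $Y\times\bbR^N\to Y$. Only after this can your projection-formula and twisting bookkeeping proceed.

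This is also the actual reason the bump-function extension is harmless: it disappears once you compose with $\bbR^{k+\ell}\to\bbR^\ell$. Your germ argument does not remove it, because the germ of $(h,h')$ along $X$ still depends on how $h$ was extended normal to $Y\times\bbR^N$. In any case independence of the extension is not needed, since the lemma only asks for agreement with some standard presentation.
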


\begin{proof}
As the claim is local, it is enough to prove it when $f$ and $g$ are (globally) of standard presentation. We consider a standard presentation of both $f$ and $g$: 
    \begin{equation}
    \begin{tikzcd}
    X\arrow[r]\arrow[d]\arrow[dr, phantom, "\lrcorner", very near start] & Y\times\bbR^N\arrow[r]\arrow[d] & Y \\
    * \arrow[r] & \bbR^k &
    \end{tikzcd}
    ,\;\;
    \begin{tikzcd}
    Y\arrow[r]\arrow[d]\arrow[dr, phantom, "\lrcorner", very near start] & Z\times\bbR^M\arrow[r]\arrow[d] & Z \\
    * \arrow[r] & \bbR^\ell. &
    \end{tikzcd}
    \end{equation}
Moreover, we recall the diagram producing the induced standard presentation of $g\circ f$ from the proof of Lemma \ref{lem:standardpresentation1}:
    \begin{equation}\label{eqn:pasting}
    \begin{tikzcd}
    X\arrow[d]\arrow[r]\arrow[dr, phantom, "\lrcorner", very near start] & Y\times\bbR^N\arrow[d,"h"]\arrow[r]\arrow[dr, phantom, "\lrcorner", very near start] & Z\times\bbR^{N+M}\arrow[d,"{(h,h')}"]\arrow[r] & Z \\
    *\arrow[r] & \bbR^k\arrow[r] & \bbR^{k+\ell}. &
    \end{tikzcd}
    \end{equation}
Some remarks are in order regarding the previous diagram.
\begin{enumerate}
\item Recall, by using a smooth bump function, we may extend $h$ to a function 
    \begin{equation}
    h:Z\times\bbR^{N+M}\to\bbR^k
    \end{equation}
satisfying $h^{-1}(0)=X$. 
\item Recall, we may extend $h'$ to a function 
    \begin{equation}
    h':Z\times\bbR^{N+M}\to\bbR^\ell
    \end{equation}
satisfying $(h')^{-1}(0)=Y\times\bbR^N$.
\item Every square is a pullback.
\item The outer square is the induced standard presentation of $g\circ f$.
\item The left square is the standard presentation of $f$.
\item Every horizontal morphism, except the last, is an immersion.
\end{enumerate}

Observe, in the previous remarks, we did not say how the right square is related to the standard presentation of $g$; we must remedy this. We will consider the following ``moves'' of pullbacks. First, we may perform the move
    \begin{equation}
    \begin{tikzcd}
    Y\arrow[r]\arrow[d]\arrow[dr, phantom, "\lrcorner", very near start] & Y\times\bbR^M\arrow[d,"{h'}"] \\
    *\arrow[r] & \bbR^{\ell}
    \end{tikzcd}
    \leftrightsquigarrow
    \begin{tikzcd}
    Y\times\bbR^N\arrow[r]\arrow[d]\arrow[dr, phantom, "\lrcorner", very near start] & Y\times\bbR^{N+M}\arrow[d]\\
    Y\arrow[r]\arrow[d]\arrow[dr, phantom, "\lrcorner", very near start] & Y\times\bbR^M\arrow[d,"{h'}"] \\
    *\arrow[r] & \bbR^{\ell}
    \end{tikzcd}
    \end{equation}
by appending the top pullback in the right diagram to the left pullback. Now, the first key thing to observe is that we may immediately relate the interchange map of the left pullback to the pasting of the interchange maps of the right pullback since the interchange map of the top pullback in the right diagram contains essentially no information as it is canonically an equivalence; the relation is given by suspension. Second, we may perform the move
    \begin{equation}
    \begin{tikzcd}
    Y\times\bbR^N\arrow[r]\arrow[d]\arrow[dr, phantom, "\lrcorner", very near start] & Y\times\bbR^{N+M}\arrow[d]\\
    Y\arrow[r]\arrow[d]\arrow[dr, phantom, "\lrcorner", very near start] & Y\times\bbR^M\arrow[d,"{h'}"] \\
    *\arrow[r] & \bbR^{\ell}
    \end{tikzcd}
    \leftrightsquigarrow
    \begin{tikzcd}
    Y\times\bbR^N\arrow[r]\arrow[d]\arrow[dr, phantom, "\lrcorner", very near start] & Y\times\bbR^{N+M}\arrow[d,"{(h,h')}",swap]\arrow[dd,bend left,"{h'}"]\\
    \bbR^k\arrow[r]\arrow[d]\arrow[dr, phantom, "\lrcorner", very near start] & \bbR^{k+\ell}\arrow[d] \\
    *\arrow[r] & \bbR^{\ell}
    \end{tikzcd}
    \end{equation}
since the outer squares of both diagrams are the same pullback. Now, the second key thing to observe is that we may immediately relate the pasting of the interchange maps of the left diagram to the interchange map of the top pullback in the right diagram since the interchange map of the bottom pullback in the right diagram contains essentially no information as it is canonically an equivalence; the relation is given by suspension. Finally, we may perform the move
    \begin{equation}
    \begin{tikzcd}
    Y\times\bbR^N\arrow[r]\arrow[d]\arrow[dr, phantom, "\lrcorner", very near start] & Y\times\bbR^{N+M}\arrow[d,"{(h,h')}",swap]\arrow[dd,bend left,"{h'}"]\\
    \bbR^k\arrow[r]\arrow[d]\arrow[dr, phantom, "\lrcorner", very near start] & \bbR^{k+\ell}\arrow[d] \\
    *\arrow[r] & \bbR^{\ell}
    \end{tikzcd}
    \leftrightsquigarrow
    \begin{tikzcd}
    Y\times\bbR^N\arrow[r]\arrow[d]\arrow[dr, phantom, "\lrcorner", very near start] & Y\times\bbR^{N+M}\arrow[d,"{(h,h')}"]\\
    \bbR^k\arrow[r] & \bbR^{k+\ell}
    \end{tikzcd}
    \end{equation}
by appending the bottom pullback in the left diagram to the right pullback. Now, the final key thing to observe is that the interchange map of the top pullback in the left diagram is precisely the interchange map of the right pullback. In particular, we have now shown how to relate the interchange map of the right pullback of \eqref{eqn:pasting} to the interchange map used to define the standard transfer associated to the standard presentation of $g$. Meanwhile, the interchange map of the left pullback of \eqref{eqn:pasting} is already the interchange map used to define the standard transfer associated to the standard presentation of $f$. 

Since the hard work of relating the interchange maps is now done, the lemma follows by unwinding the definition of $\theta$, $\theta'$, and their composite.
\end{proof}

The proofs of the remaining lemmata are straightforward.

\begin{lem}\label{lem:extendtransferpb}
Consider the following pullback in $\DSm$:
    \begin{equation}
    \begin{tikzcd}
    \overline{X}\arrow[r,"\overline{f}"]\arrow[d]\arrow[dr, phantom, "\lrcorner", very near start] & \overline{Y}\arrow[d] \\
    X\arrow[r,"f",swap] & Y,
    \end{tikzcd}
    \end{equation}
where $f$ is amplitude at most 1, together with an extended transfer given by
    \begin{equation}
    f_!\big(\Th_X(\tau_f)\otimes_\bbS\rmM\varphi_X\{\rk f\}\big)\to \rmM\varphi_Y.
    \end{equation}
We have that the adjoint of the morphism induced by the interchange map,
    \begin{equation}
    \Th_{\overline{X}}(\tau_{\overline{f}})\otimes_\bbS\rmM\varphi_{\overline{X}}\big\{\rk\overline{f}\big\}\to\overline{f}^!\rmM\varphi_{\overline{Y}},
    \end{equation}
is an extended transfer.
\end{lem}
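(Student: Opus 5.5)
The claim is local on $\overline{X}$, so it suffices to treat the case where $f$ is globally of standard presentation and $\theta$ is, on the nose, the standard transfer attached to it. Concretely, fix
\begin{equation}
X=*\times_{\bbR^k}(Y\times\bbR^N)\xrightarrow{e}Y\times\bbR^N\xrightarrow{\pi}Y .
\end{equation}
Denote the vertical map by $q:\overline{Y}\to Y$. The base change of this presentation along $q$ is the standard presentation of $\overline{f}$ from the preceding base-change lemma on standard presentations:
\begin{equation}
\overline{X}=*\times_{\bbR^k}(\overline{Y}\times\bbR^N)\xrightarrow{\overline{e}}\overline{Y}\times\bbR^N\xrightarrow{\overline{\pi}}\overline{Y},
\end{equation}
where the defining function is $\overline{h}=h\circ(q\times\id)$. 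By Lemma~\ref{lem: virtual tangent bundle properties} we have $\tau_{\overline{f}}\simeq\overline{q}^*\tau_f$ and $\rk\overline{f}=-k+N$, so source and target of the two morphisms to be compared agree. The task therefore reduces to one identification: the interchange map for the outer square, applied to $\theta$, should equal the standard transfer of $\overline{f}$.

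To obtain this identification, I would decompose the interchange map according to the factorization $f=\pi\circ e$. The base-change square for $f$ is the pasting of two squares. The first is the base-change square for $e$ along $q\times\id_{\bbR^N}$. The second is the base-change square for $\pi$ along $q$, which is just the product square $\overline{Y}\times\bbR^N\to Y\times\bbR^N$. Since interchange (base-change exchange) maps for $(-)^!$ compose under pasting, the pulled-back transfer splits into two pieces. The $\pi$ piece involves the identification $\pi^!\simeq\pi^*\{N\}$ of \cite[Proposition 6.21]{volpe}, and that identification is natural under base change; hence this piece is carried to the corresponding identification for $\overline{\pi}$. The $e$ piece is, by construction, the interchange map of the square $(e,h;\pi_X,i)$. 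It remains to match the pullback of this map along $q\times\id$ with the interchange map of the square $(\overline{e},\overline{h};\pi_{\overline{X}},i)$. Here I would paste the cube: the square for $\overline{e}$ stacks on top of the base-change square for $e$, and together they give the square with the same bottom edge $*\to\bbR^k$. Functoriality of interchange maps under pasting, which is the same ``moves'' argument as in Lemma~\ref{lem:extendedtransfercomp}, then identifies the two maps. In that argument, squares whose interchange maps are canonically equivalences contribute nothing.

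The final step is to check the compatibility with the Thom and orientation identifications $\Th_X(\tau_f)\otimes\rmM\varphi_X\{\rk f\}\simeq\rmM\varphi_X\{-k+N\}$. These come from the canonical $\varphi$-structures on $i$ and $\pi$, and they are pulled back along $\overline{q}$ by naturality of $\Th_\varphi$ as a map of structure functors. I expect the main obstacle to be the coherence in the middle step. The issue is that the interchange map for the pasted cube agrees with the composite of the face interchange maps only up to a specified homotopy. One also has to keep track of the shift identifications $\{N\}$ and $\{-k\}$, which may introduce signs of the kind described in Remark~\ref{rem:baneofourexistence}. My plan is to handle this by putting all shifts on the $\bbR^k$ and $\bbR^N$ factors, pulled back from the base point, so that they are never permuted. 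Finally, since extended transfers are defined by a local condition, the local equivalences glue to give the global statement.
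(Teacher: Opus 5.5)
Your argument is correct and is essentially the one the paper intends. The paper omits this proof as straightforward, and the implied route is yours: reduce to a global standard presentation, base-change it to obtain the standard presentation of $\overline{f}$ with $\overline{h}=h\circ(q\times\id)$, then identify the interchange maps by pasting. That pasting splits along $f=\pi\circ e$ and stacks the square for $\overline{e}$ over the square $(e,h;\pi_X,i)$, exactly as in the ``moves'' argument of Lemma~\ref{lem:extendedtransfercomp}. One remark: no gluing is needed at the end, since being an extended transfer is by definition a local condition, and the preimages $\overline{q}^{-1}(U)$ already supply the required neighborhoods in $\overline{X}$.
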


\begin{lem}\label{lem:extendedtransfersm}
Let $f:X\to Y,f':X'\to Y'\in\DSm$ be of amplitude at most 1 together with extended transfers
    \begin{align}
    f_!\big(\Th_X(\tau_f)\otimes_\bbS\rmM\varphi_X\{\rk f\}\big)&\to\rmM\varphi_Y, \\
    f'_!\big(\Th_{X'}(\tau_{f'})\otimes_\bbS\rmM\varphi_{X'}\{\rk f'\}\big)&\to\rmM\varphi_{Y'},
    \end{align}
respectively. 
We have that the morphism induced by the external product,
    \begin{equation}
    (f\times f')_!\big(\Th_{X\times X'}(\tau_{f\times f'})\otimes_\bbS\rmM\varphi_{X\times X'}\big\{\rk(f\times f')\big\}\big)\to\rmM\varphi_{Y\times Y'},
    \end{equation}
is an extended transfer.
\end{lem}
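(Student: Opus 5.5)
The plan is to reduce to the case where both $f$ and $f'$ are globally of standard presentation, and there to compare the external product of the two standard transfers with the standard transfer of the product presentation. Being an extended transfer is a local condition on the source, and open sets of the form $U\times U'$ cover $X\times X'$. Restriction to an open subset commutes with the external product: this is a base change along open embeddings, where $(-)^!=(-)^*$. So it suffices to show the following. Suppose $f$ and $f'$ have standard presentations $(e,h,N,k)$ and $(e',h',N',k')$, and $\theta,\theta'$ are the associated standard transfers. Then $\theta\boxtimes\theta'$ agrees with the standard transfer of the product presentation
\begin{equation}
X\times X'=*\times_{\bbR^{k+k'}}\big((Y\times Y')\times\bbR^{N+N'}\big)\to Y\times Y'.
\end{equation}
This product presentation is the one supplied by the lemma on products of standard presentations.

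First I would record the bookkeeping identifications. Lemma~\ref{lem: virtual tangent bundle properties} gives $\tau_{f\times f'}\simeq\tau_f\boxtimes\tau_{f'}$ and $\rk(f\times f')=\pr_1^*\rk f+\pr_2^*\rk f'$. Since $\Th$ is a map of structure functors, the Thom sheaf of this external sum is the exterior tensor product of the Thom sheaves. Lemma~\ref{lem: TrCor symmetric monoidal} identifies $\rmM\varphi_X\{\rk f\}\boxtimes\rmM\varphi_{X'}\{\rk f'\}$ with $\rmM\varphi_{X\times X'}\{-k-k'+N+N'\}$. I would fix the ordering of suspension coordinates once and for all as $\bbR^{k}\times\bbR^{k'}$ and $\bbR^N\times\bbR^{N'}$. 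This keeps the sign ambiguity of Remark~\ref{rem:baneofourexistence} consistent on both sides.

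Next comes the core step. Each standard transfer is a composite of canonical equivalences and one interchange (base-change) map, namely $\pi_X^*i^!\to e^!h^*$. The product square is the product of the two pullback squares. By the symmetric monoidality of Volpe's 6-functor formalism (Theorem~\ref{theorem: Volpe 6FF}), the Künneth maps $f^!A\boxtimes f'^!A'\to(f\times f')^!(A\boxtimes A')$ and $i^!\boxtimes i'^!\to(i\times i')^!$ are compatible with base change. The interchange map of a product square is therefore the exterior product of the two interchange maps. This is exactly the multiplicative adjointability used in the proof of Proposition~\ref{prop:twisted3ff}. For the remaining equivalences, I would note that the identification $\rmM\varphi_{Y\times\bbR^N}\{N\}\simeq\pi^!\rmM\varphi_Y$ of \cite[Proposition 6.21]{volpe} is multiplicative in $N$, and likewise $\pi_X^*i^!$ for $i:*\to\bbR^k$. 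For both I would appeal to the fact that Volpe's purity equivalence for $\bbR^n$ is compatible with products $\bbR^{n}\times\bbR^{m}=\bbR^{n+m}$.

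The main obstacle is exactly these coherence identifications. The Künneth map for $(-)^!$ must be matched with the purity equivalences, with the suspension orderings kept consistent. I would handle this as in Lemma~\ref{lem:extendedtransfercomp}: factor the product square as the composite of $f\times\id_{Y'}$ followed by $\id_{X}\times f'$, whose standard presentations are base changes of those of $f$ and $f'$. Then Lemma~\ref{lem:extendtransferpb} and Lemma~\ref{lem:extendedtransfercomp} reduce the claim to checking that the exterior product $\theta\boxtimes\theta'$ equals the composite of the pulled-back transfers. That identity is the projection/Künneth compatibility of the formalism.
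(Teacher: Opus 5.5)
Your argument is sound at the paper's level of rigor, but the route you finally commit to differs from the one the paper has in mind. The paper gives no proof beyond calling the lemma straightforward and pointing to the calculus of standard presentations. The intended argument is the one you sketch first: the product of the two defining pullback squares is the product standard presentation, so its standard transfer is the exterior product of the two interchange maps and purity equivalences.

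Your final route is different. You write $f\times f'$ as a composite of base changes of $f$ and $f'$ along projections and invoke Lemmas~\ref{lem:extendtransferpb} and~\ref{lem:extendedtransfercomp}. What remains is only the formal identity that the K\"unneth map for $(-)^!$ is the composite of base change maps followed by the projection morphism. This buys two things.
\begin{itemize}
\item You no longer need the multiplicativity of Volpe's purity equivalence or the compatibility of interchange maps with products, which the direct route only asserts. Note also that the adjointability in Proposition~\ref{prop:twisted3ff} concerns $(-)_!$, so the direct route would really need its mate.
\item Your preliminary reduction to global standard presentations becomes superfluous, since both lemmas apply globally.
\end{itemize}
The price is that the geometric content is delegated to Lemmas~\ref{lem:extendtransferpb} and~\ref{lem:extendedtransfercomp}, and the paper leaves the first of these unproved.

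One point of care: the order of composition matters. In Lemma~\ref{lem:extendedtransfercomp} the factor belonging to the first map sits on the left. With your factorization $(f\times\id_{Y'})\circ(\id_X\times f')$, the factors therefore come out in the order $(f',f)$. Matching the result with $\theta\boxtimes\theta'$ then passes through the symmetry of $\rmM\varphi\{\rk f\}\otimes\rmM\varphi\{\rk f'\}$, which by Remark~\ref{rem:baneofourexistence} can contribute a sign $(-1)^{\rk f\cdot\rk f'}$. Factor instead as $(\id_Y\times f')\circ(f\times\id_{X'})$. Then the orders agree, and the residual identity is literally the K\"unneth/projection compatibility.
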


\begin{lem}\label{lem:extendedtransfersa}
Let $f:X\to Y,f':X'\to Y'\in\DSm$ be of amplitude at most 1 together with extended transfers
    \begin{align}
    f_!\big(\Th_X(\tau_f)\otimes_\bbS\rmM\varphi_X\{\rk f\}\big)&\to\rmM\varphi_Y, \\
    f'_!\big(\Th_{X'}(\tau_{f'})\otimes_\bbS\rmM\varphi_{X'}\{\rk f'\}\big)&\to\rmM\varphi_{Y'},
    \end{align}
respectively. We have the the morphism induced by the direct sum, 
    \begin{equation}
    (f\sqcup f')_!\big(\Th_{X\sqcup X'}(\tau_{f\sqcup f'})\otimes_\bbS\rmM\varphi_{X\sqcup X'}\big\{\rk(f\sqcup f')\big\}\big)\to\rmM\varphi_{Y\sqcup Y'},
    \end{equation}
is an extended transfer.
\end{lem}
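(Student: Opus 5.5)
The statement to prove is Lemma~\ref{lem:extendedtransfersa}: direct sums of extended transfers are extended transfers.

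The claim is local on the source, and the source $X\sqcup X'$ is covered by the two open pieces $X$ and $X'$. So the plan is to reduce to the situation on each summand separately and then invoke the definition of extended transfer there.

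First, I would make the direct sum transfer explicit. By the additivity of the twisted formalism (Proposition~\ref{prop:twisted3ff}) we have $\Shv(Y\sqcup Y';\rmM\varphi)\simeq\Shv(Y;\rmM\varphi)\times\Shv(Y';\rmM\varphi)$. Under this equivalence:
\begin{itemize}
\item $(f\sqcup f')_!$ is identified with $f_!\times f'_!$;
\item $\tau_{f\sqcup f'}$ corresponds to $(\tau_f,\tau_{f'})$;
\item $\rk(f\sqcup f')$ restricts to $\rk f$ and $\rk f'$;
\item the Thom sheaf and the twist $\{-\}$ are compatible with restriction to open pieces;
\item $\rmM\varphi_{Y\sqcup Y'}$ corresponds to $(\rmM\varphi_Y,\rmM\varphi_{Y'})$.
\end{itemize}
The direct sum morphism is, by definition, the pair $(\theta,\theta')$. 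Passing to adjoints, its adjoint $\Th_{X\sqcup X'}(\tau_{f\sqcup f'})\otimes_\bbS\rmM\varphi\{\rk(f\sqcup f')\}\to(f\sqcup f')^!\rmM\varphi_{Y\sqcup Y'}$ restricts on $X$ to the adjoint of $\theta$. This uses that for the open embedding $j:X\to X\sqcup X'$ we have $j^*=j^!$, together with the base change $j^*(f\sqcup f')^!\simeq f^!\iota_Y^*$ coming from the pullback square of $X\hookrightarrow X\sqcup X'$ over $Y\hookrightarrow Y\sqcup Y'$.

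Next, take a point $x\in X\sqcup X'$; say $x\in X$, the other case being symmetric. Since $\theta$ is an extended transfer, there is an open $U\ni x$ in $X$ such that $f|_U$ has a standard presentation and $\theta|_U$ is equivalent to the associated standard transfer. Now $U$ is also open in $X\sqcup X'$. Moreover $(f\sqcup f')|_U=\iota_Y\circ f|_U$, where $\iota_Y:Y\to Y\sqcup Y'$ is the summand inclusion.

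The main obstacle is that this composite lands in $Y\sqcup Y'$, not $Y$. To handle it, I would use the standard presentation of $f|_U$ through $Y\times\bbR^N$ and compose with the open embedding $Y\times\bbR^N\hookrightarrow(Y\sqcup Y')\times\bbR^N$. Choose the extension of $h$ by zero on $Y'\times\bbR^N$, or better, shrink $U$ and use a bump function so that $h^{-1}(0)$ stays exactly $U$ (for instance $h$ nonzero constant on the $Y'$ part). This gives a standard presentation of $(f\sqcup f')|_U$ whose diagram is the original one followed by open embeddings.

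Finally, I would check that the standard transfer of this new presentation agrees with the original one transported along the open embedding. The interchange map for a pullback square is compatible with restriction to opens. The identification $\rmM\varphi\{N\}\simeq\pi^!\rmM\varphi$ from \cite[Proposition 6.21]{volpe} is local. And $\iota_Y^!=\iota_Y^*$ commutes with everything in sight. Hence $(\theta,\theta')|_U\simeq\theta|_U$ is the standard transfer, which proves the lemma.
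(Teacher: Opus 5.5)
Your route is the natural one, and it is what the paper means when it calls this lemma straightforward; the paper gives no further proof. It consists of locality on the source, identifying the direct-sum morphism with the pair $(\theta,\theta')$ via additivity, and using $j^!=j^*$ together with $j^*(f\sqcup f')^!\simeq f^!\iota_Y^*$ for the summand inclusions. Those steps are fine.

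The one step that fails as written is building a presentation of $(f\sqcup f')|_U$ over $Y\sqcup Y'$.
\begin{itemize}
\item Extending $h$ by zero on $Y'\times\bbR^N$ is wrong, since the derived zero locus would then contain all of $Y'\times\bbR^N$.
\item Your alternative, a nonzero constant on the $Y'$-component, requires $k\geq 1$. Suppose the given local presentation of $f$ has $k=0$, so $U=Y\times\bbR^N$ and $f|_U$ is the projection, as happens for submersions. Then $\bbR^0$ has no nonzero value. Worse, no presentation of $(f\sqcup f')|_U$ with $k=0$ exists when $Y'\neq\varnothing$, because $*\times_{\bbR^0}\big((Y\sqcup Y')\times\bbR^N\big)$ contains $Y'\times\bbR^N$.
\item The bump function is unnecessary: $(Y\sqcup Y')\times\bbR^N$ is a disjoint union, so the new function can be defined componentwise.
\end{itemize}

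Either of two repairs works.

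\emph{Stabilize first.} Replace $(N,k,h)$ by $(N+1,k+1,(h,t))$ on $Y\times\bbR^{N+1}$, and take the constant $(0,\dots,0,1)$ on $Y'\times\bbR^{N+1}$. The zero locus is still $U$, because $(h,t)=h\times\mathrm{id}_{\bbR}$. You then owe the check that the stabilized presentation induces the same standard transfer as the original. The added square is the base change of $0\to\bbR$ along the last coordinate, so its interchange map is canonically invertible. The extra twists $\{+1\}$ and $\{-1\}$ cancel provided the new coordinate sits in the same position on both sides. This is a move of the same kind as those in the proof of Lemma~\ref{lem:extendedtransfercomp}, and it is where the sign issue of Remark~\ref{rem:baneofourexistence} must be watched.

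\emph{Use locality on the target.} More simply, the paper's calculus already treats standard presentations as local on the target. The reductions in the lemma that amplitude at most $1$ maps are locally of standard presentation, and in Lemma~\ref{lem:standardpresentation1}, shrink the target to an open $V$. So the given presentation of $f|_U$ is already a presentation of $(f\sqcup f')|_U$ over the open summand $Y\subset Y\sqcup Y'$. Your identities $\iota_Y^!=\iota_Y^*$ and $j^*(f\sqcup f')^!\simeq f^!\iota_Y^*$ then match the transfers directly, and the ``main obstacle'' disappears.
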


Of course, when considering maps of smooth manifolds, the situation is significantly simpler.

\begin{construction}\label{construction:classicaltransfers}
Let $f:X\to Y\in\Sm$. Since $f$ is a map of smooth manifolds, we may factor it as an embedding followed by an obvious projection: 
    \begin{equation}
    X\xhookrightarrow{e} Y\times\bbR^N\xrightarrow{\pi} Y.
    \end{equation}
Utilizing \cite[Proposition 6.21 \& Corollary 7.12]{volpe}, we see 
    \begin{equation}
    f^!\rmM\varphi_Y\simeq e^!\pi^!\rmM\varphi_Y\simeq e^!\rmM\varphi_{Y\times\bbR^N}[N]\simeq\Th_X(\tau_f)\otimes_\bbS\rmM\varphi_X\{\rk f\}.\footnote{This computation \emph{crucially} relies on choosing a tubular neighborhood of $e$; note, derived smooth manifolds \emph{do not}, in general, admit tubular neighborhoods in any reasonable sense.}
    \end{equation}
Now, we may take $\theta$ to be the adjoint of the previous expression:
    \begin{equation}
    \theta:f_!\big(\Th_X(\tau_f)\otimes_\bbS\rmM\varphi_X\{\rk f\}\big)\to\rmM\varphi_Y.
    \end{equation}
It is straightforward to see $\theta$ is independent of the factorization of $f$ into $e$ and $\pi$ as any two smooth embeddings will become isotopic after stabilization. We refer to $\theta$, constructed as above, as the \emph{classical transfer} associated to $f$. Moreover, it is immediate that $\theta$ is indeed an extended transfer.
\end{construction}

In fact, the only extended transfers associated to maps of smooth manifolds are the classical transfers.

\begin{lem}\label{lem:extendedtoclassical}
Let $f:X\to Y\in\Sm$. Suppose 
    \begin{equation}
    \theta:f_!\big(\Th_X(\tau_f)\otimes_\bbS\rmM\varphi_X\{\rk f\}\big)\to\rmM\varphi_Y
    \end{equation}
is an extended transfer, then in fact $\theta$ is the classical transfer associated to $f$.
\end{lem}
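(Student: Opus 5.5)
The plan is to reduce to a local comparison and then use that, for honest smooth manifolds, the relevant mapping spaces are discrete enough for local agreement to force global agreement. Both $\theta$ and the classical transfer $\theta_{\rm cl}$ of Construction~\ref{construction:classicaltransfers} are adjoint to maps
\begin{equation}
\Th_X(\tau_f)\otimes_\bbS\rmM\varphi_X\{\rk f\}\to f^!\rmM\varphi_Y .
\end{equation}
By the computation in Construction~\ref{construction:classicaltransfers}, the target is itself an invertible sheaf of $\rmM\varphi_X$-modules equivalent to the source, via a canonical equivalence $c$ built from a factorization $X\hookrightarrow Y\times\bbR^N\to Y$ and a tubular neighborhood. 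Hence the space of such maps is
\begin{equation}
\Gamma\big(X;\rmM\varphi_X\big)\simeq F\big(\Sigma^\infty_+\Sing(X),\rmM\varphi\big),
\end{equation}
and $\theta_{\rm cl}$ corresponds to the unit section. Comparing $\theta$ to $\theta_{\rm cl}$ therefore amounts to showing that $c^{-1}\circ\theta$ is the unit of the sheaf of $\einf$-rings $\underline{\mathrm{End}}$, which is locally $\rmM\varphi_X$.

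First, I would show that the agreement holds locally. By the definition of an extended transfer, every $x\in X$ has a neighborhood $U$ on which $f|_U$ is of standard presentation and $\theta|_U$ is the associated standard transfer. Since $X$, $Y$ are smooth, the standard presentation $U=*\times_{\bbR^k}(Y\times\bbR^N)$ is a transverse pullback: after shrinking $U$ and changing coordinates on $\bbR^N$, it is the zero locus of a submersion $h$. The standard transfer is built from the interchange map of this square, and that map is an equivalence for transverse squares (base change along the submersion $h$, via \cite[Proposition 6.21]{volpe}). It agrees with the classical Thom/tubular identification of $e^!$ in \cite[Corollary 7.12]{volpe}, because the normal bundle of $e|_U$ is trivialized by $dh$. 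Thus $\theta|_U\simeq\theta_{\rm cl}|_U$, using the independence of the classical transfer from the chosen factorization.

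Second, I would globalize. Both $\theta$ and $\theta_{\rm cl}$ are sections of a sheaf of spaces on $X$, namely $U\mapsto$ the maps over $U$, which is a torsor-like sheaf $\underline{\Map}\simeq\Omega^\infty\rmM\varphi_X$. I need not only local equivalences but coherent ones. The key observation is that the locus of sections equal to the unit is determined componentwise. Concretely, I would consider the sheaf of paths between $\theta$ and $\theta_{\rm cl}$. Its stalks are nonempty, and I must show it has a global section, which requires the local homotopies to be compatible on overlaps up to higher coherence.

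This globalization is the expected obstacle, because $\Omega^\infty\rmM\varphi$ is not discrete, so local agreement does not formally imply global agreement. My approach is to strengthen the local step: I would show that on each standard-presentation neighborhood the identification $\theta|_U\simeq\theta_{\rm cl}|_U$ is canonical, produced functorially from the standard presentation. I would then show that any two standard presentations on an overlap are connected by the moves in the proof of Lemma~\ref{lem:extendedtransfercomp}, namely stabilizing by $\bbR^N$ and adding coordinates with canonically trivial interchange maps, under which the identifications are compatible. Over a contractible space of choices this yields a descent datum, and the sheaf condition then glues it to a global equivalence $\theta\simeq\theta_{\rm cl}$. If this coherence proves too delicate, the fallback is to prove the statement only up to homotopy, at the level of $\pi_0$. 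That version is plausibly all that is used later, where transfers enter through homotopy classes.
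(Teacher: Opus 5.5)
\textbf{There is a genuine gap in the globalization step, and it cannot be closed.} Your local step is reasonable: smoothness of $U$ forces $h$ to be a submersion along its zero set, so the standard transfer agrees with the classical one on $U$. You are also right that local agreement does not formally give global agreement. The paper's own proof consists only of this local observation, so it has exactly the same gap.

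Your globalization cannot be carried out from the hypothesis. Being an extended transfer is a \emph{property}: near each point, $\theta|_U$ lies in the path component of some standard transfer, and no homotopy is supplied. You could make $\theta_{\rm st}(P_U)\simeq\theta_{\rm cl}|_U$ functorial in the presentation $P_U$. But the other half, $\theta|_U\simeq\theta_{\rm st}(P_U)$, is an unconstrained choice (a torsor under $\pi_1$ of the local mapping space, i.e.\ $\rmM\varphi^{-1}(U)$). So there is no descent datum to glue. Your fallback is not weaker either: ``$\theta$ is the classical transfer'' is already a statement about path components, and $\pi_0$ of the sections of a sheaf of spaces is not a sheaf.

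In fact, with the definition of extended transfer as written, the statement fails. Take $\varphi=\id_{\BO}$, so $\rmM\varphi=\mathrm{MO}$, and $f\colon S^2\to *$.
\begin{itemize}
\item Since $\theta_{\rm cl}$ is an equivalence, $u\mapsto\theta_{\rm cl}\circ u$ identifies the relevant mapping space with $\Omega^\infty F(\Sigma^\infty_+S^2,\mathrm{MO})$.
\item Pick $0\neq\epsilon\in\widetilde{\mathrm{MO}}{}^0(S^2)\cong\pi_2\mathrm{MO}\cong\bbZ/2$ and set $\theta\equiv\theta_{\rm cl}\circ(1+\epsilon)$.
\item For an open disc $U\subset S^2$ we have $\epsilon|_U=0$, since $U$ is contractible and $\epsilon$ is reduced. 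Hence $\theta|_U\simeq\theta_{\rm cl}|_U$ is an equivalence.
\item Since $\mathrm{MO}^0(U)\cong\pi_0\mathrm{MO}\cong\bbZ/2$ has $1$ as its only unit, any two equivalences over $U$ are homotopic. So $\theta|_U$ is equivalent to the standard transfer of the presentation $U\cong\bbR^2$ (with $k=0$, $N=2$), which is an equivalence.
\end{itemize}
Thus $\theta$ is an extended transfer, yet $\theta\not\simeq\theta_{\rm cl}$ because $1+\epsilon\neq 1$ in $\mathrm{MO}^0(S^2)$.

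To get a true lemma, the definition must change so that an extended transfer carries coherent local identifications with standard transfers. For instance, define the extended-transfer subsheaf as the image of the sheafification, as a sheaf of spaces, of the presheaf of standard presentations with their transfers. With such a definition, your strategy becomes the right one: a comparison $\theta_{\rm st}(P)\simeq\theta_{\rm cl}|_U$ natural in $(U,P)$, followed by descent.
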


\begin{proof}
Given any open subset $U\subset X$, $\theta\vert_U$ agrees with the classical transfer associated to $f$ essentially by definition, whence the claim.
\end{proof}

\subsection{Extended $\varphi$-structured correspondences}\label{subsec:extendedtransfers}
\begin{defin}\label{defin:extended}
Let
    \begin{equation}
    (X,d_X)\xleftarrow{(f,0)} W \xrightarrow{(g,\alpha,\theta)}(Y,d_Y)\in\TrCor_\varphi(\DSm),
    \end{equation}
where $g:W\to Y\in\DSm$ is amplitude at most 1 and $Y\in\Sm$ (in particular, $W\in\DSm$ is necessarily amplitude at most 1). We say the aforementioned correspondence is an \emph{extended (tangentially) $\varphi$-structured correspondence} if:
\begin{itemize}
\item $\rk g = f^{\ast}d_{X} - g^{\ast}d_{Y}$; and
\item $\theta$, which is \emph{a priori} a map
    \begin{equation}
    g_{!}\big(\Th_W(\tau_{g})\otimes_\bbS\rmM\varphi_W\{f^*d_X\}\big) \rightarrow\rmM\varphi_{Y}\{d_Y\},
    \end{equation}
is an extended transfer after twisting by $-d_Y$.
\end{itemize}
\end{defin}

\begin{notation}
In particular, whenever we consider 
    \begin{equation}
    (X,d_X)\xleftarrow{(f,0)} W \xrightarrow{(g,\alpha,\theta)}(Y,d_Y)\in\TrCor^{\rm ex}_\varphi(\Sm),
    \end{equation}
such that $g:W\to Y\in\Sm$, $\theta$ will always denote the classical transfer associated to $g$ by virtue of Lemma \ref{lem:extendedtoclassical}.
\end{notation}

By Lemmas \ref{lem:extendedtransfercomp} and \ref{lem:extendtransferpb}, extended $\varphi$-structured correspondences are closed under composition in $\TrCor_{\varphi}(\DSm)$. 

\begin{defin}
We denote by 
    \begin{equation}
    \TrCor_\varphi^{\rm ex}(\Sm)\subset\TrCor_\varphi(\DSm)
    \end{equation}
the subcategory whose objects consist only of objects in $\Sm$ and whose morphisms consist only of extended $\varphi$-structured correspondences.
\end{defin}

\begin{lem}\label{lem: TrCor^ex sym mon and sa}
$\TrCor_\varphi^{\rm ex}(\Sm)$ is a symmetric monoidal and semiadditive subcategory of $\TrCor_{\varphi}(\DSm)$. 
\end{lem}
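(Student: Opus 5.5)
We must show two closure properties: for the symmetric monoidal structure, that the unit $(\bbR^0,0)$ lies in $\TrCor_\varphi^{\rm ex}(\Sm)$ and that the tensor product of two extended $\varphi$-structured correspondences (with the corresponding objects) is again extended; for semiadditivity, that the zero object and direct sums (together with the structure maps exhibiting them) lie in the subcategory. Since $\TrCor_\varphi(\DSm)$ is already symmetric monoidal and semiadditive by Lemmas~\ref{lem: TrCor symmetric monoidal} and \ref{lem: TrCor semiadditive}, the plan is to check these closure conditions and then deduce that the inherited structures restrict. For a subcategory that is not full, the coherence data are inherited once the objects are closed under $\otimes$ and $\oplus$, the relevant morphisms are closed under $\otimes$ and $\oplus$, and the structure equivalences (associators, symmetry, unitors, inclusions and projections) lie in it.

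\textbf{Objects.} Smooth manifolds are closed under finite products and finite coproducts, and $\varnothing,\bbR^0\in\Sm$. Moreover $d_X\boxtimes d_Y$ and $(d_X,d_Y)$ are again integer-valued functions on $\Sing$, using the equivalence in the proof of Lemma~\ref{lem: TrCor symmetric monoidal}.

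\textbf{Morphisms under $\otimes$.} Given extended correspondences $(f,g,\alpha,\theta)$ and $(f',g',\alpha',\theta')$, their tensor product has right leg $g\times g'$. This map is amplitude at most $1$, and its target $Y\times Y'$ lies in $\Sm$. The rank condition follows from additivity of $\rk$ and
\[
\rk(g\times g')=\pr_1^*\rk g+\pr_2^*\rk g'.
\]
The transfer is the external product $\theta\boxtimes\theta'$, suitably twisted. After untwisting by $-(d_Y\boxtimes d_{Y'})$ it becomes, up to the sign equivalence of Remark~\ref{rem:baneofourexistence}, the morphism of Lemma~\ref{lem:extendedtransfersm}, and so it is an extended transfer.

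\textbf{Morphisms under $\oplus$.} Direct sums of morphisms are handled by Lemma~\ref{lem:extendedtransfersa} together with the pair description of $\rk$ on disjoint unions. The structure maps (inclusions and projections for $\oplus$, and the associativity, symmetry and unit equivalences for $\otimes$) are correspondences with both legs equivalences or open-and-closed inclusions. These are smooth maps of rank $0$, so by Lemma~\ref{lem:extendedtoclassical} their transfer is the classical one; one checks that this matches the canonical identification $\theta$ appearing in the structure morphism. The unit and zero object are handled the same way, with $\bbR^0$ and $\varnothing$.

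\textbf{Main obstacle.} The delicate step is the sign and twist bookkeeping in the tensor product. The shifts $\{d\}$ do not commute strictly (Remark~\ref{rem:baneofourexistence}), so one must verify that the twisted external product $\theta\boxtimes\theta'$, after untwisting by $-(d_Y\boxtimes d_{Y'})$, agrees with the local standard transfer of the product standard presentation, and not with its negative. I would handle this locally: choose standard presentations of $g$ and $g'$, take their product presentation, and compare the interchange maps square by square, as in the proof of Lemma~\ref{lem:extendedtransfercomp}. Because the twists are reordered consistently on both sides, the sign-carrying symmetry equivalence appears on both sides and cancels.
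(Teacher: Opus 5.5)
Your route is the paper's: its proof is just the appeal to Lemmas~\ref{lem:extendedtransfersm} and \ref{lem:extendedtransfersa}. However, the step you single out as the main obstacle does not go through, because the signs do not cancel.

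Write an extended transfer as $\theta=\theta^u\otimes\{d_Y\}$ with $\theta^u$ untwisted, and similarly $\theta'=\theta'^u\otimes\{d_{Y'}\}$. By naturality of the braiding, $\theta\boxtimes\theta'$ is $(\theta^u\boxtimes\theta'^u)\otimes\{d_Y\}\{d_{Y'}\}$ conjugated by the braidings that move $\{d_Y\}$ past the target and past the source of $\theta'^u$.

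The two sides see different reorderings:
\begin{itemize}
\item The target of $\theta'^u$ is $\rmM\varphi_{Y'}$, so moving $\{d_Y\}$ past it contributes nothing.
\item The source of $\theta'^u$ carries the twist $\{\rk g'\}$; its Thom factor is trivialized by $\alpha'$ and contributes nothing.
\item The braiding $\{a\}\{b\}\to\{b\}\{a\}$ differs from the identification through $\{a+b\}$ by $(-1)^{ab}$.
\end{itemize}
Hence, under the canonical identifications of sources and targets,
\[
\theta\boxtimes\theta'=(-1)^{d_Y\cdot\rk g'}\,(\theta^u\boxtimes\theta'^u)\otimes\{d_Y\}\{d_{Y'}\}.
\]
For example, let $c'\colon(\bbR^0,0)\to(\bbR^0,-1)$ be the extended morphism of a stably framed circle. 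Then $\id_{(\bbR^0,1)}\otimes c'$ is minus an extended morphism. If you untwist on the other side instead, the failure simply moves to $c'\otimes\id_{(\bbR^0,1)}$.

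Your ``one checks'' for the structure maps fails for the same reason, and here no convention can help. Consider the symmetry of $(\bbR^0,1)\otimes(\bbR^0,1)$ in $\TrCor_\varphi(\DSm)$:
\begin{itemize}
\item It lies over the identity correspondence of $\bbR^0$.
\item Its transfer class is the braiding of $\Sigma\rmM\varphi\otimes_{\rmM\varphi}\Sigma\rmM\varphi$, namely $-1$, whereas the identity of the same object has $+1$.
\item This is a scalar endomorphism of a single object, so no choice of identification $\{1\}\boxtimes\{1\}\simeq\{2\}$ changes it.
\item By Lemma~\ref{lem:extendedtoclassical}, only one transfer class over a smooth map is extended.
\end{itemize}
So whenever $2\neq0$ in $\pi_0\rmM\varphi$ (e.g.\ oriented, complex or framed structures), $\TrCor^{\rm ex}_\varphi(\Sm)$ does not contain the symmetry constraints of $\TrCor_\varphi(\DSm)$. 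It therefore cannot be a symmetric monoidal subcategory in the inherited sense.

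The paper's one-line proof is silent on this point. The gap cannot be closed by citing it, nor by a finer local comparison of interchange maps. The definitions themselves have to change, for instance by building the Koszul sign into the tensor product and symmetry of $\TrCor^{\rm ex}_\varphi(\Sm)$ rather than inheriting them. The semiadditivity half of your argument is fine, since disjoint unions never interleave twists.
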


\begin{proof}
Follows from Lemmas \ref{lem:extendedtransfersm} and \ref{lem:extendedtransfersa}.
\end{proof}

\begin{defin}
    Let
    \begin{equation}
    \Cor_{\varphi}(\Sm) \subset \Cor_{\varphi}(\DSm)
    \end{equation}
be the subcategory whose objects consist only of objects in $\Sm$ and whose morphisms consist only of $\varphi$-structured correspondences 
\begin{equation}
    X \xleftarrow{(f,0)} W \xrightarrow{(g,\alpha)} Y,
\end{equation}
where $f$ and $g$ are of amplitude at most $1$. 
\end{defin}

By construction, there is a forgetful functor 
\begin{equation}
\TrCor_{\varphi}^{\ex}(\Sm) \rightarrow \Cor_{\varphi}(\Sm)
\end{equation}
which forgets the extended transfer class. And, fortunately, the mapping spaces of $\TrCor_{\varphi}^{\ex}(\Sm)$ are closely related to those of $\Cor_{\varphi}(\Sm)$. 

\begin{lem}\label{lem: mapping spaces}
    Let 
    \begin{equation}
    (f,g,\alpha) \equiv X \xleftarrow{(f,0)} W \xrightarrow{(g,\alpha)} Y\in\Cor_{\varphi}(\Sm).
    \end{equation}
    The fiber of 
    \begin{equation}
    \Map_{\TrCor_{\varphi}(\Sm)}\big((X,d_{X}),(Y,d_{Y})\big) \rightarrow \Map_{\Cor_{\varphi}(\Sm)}(X,Y)
    \end{equation} 
    over $(f,g,\alpha)$ can be identified with
    \begin{equation}
    \Map_{\m{D}_{\varphi}(W)}\big(\Th_{W}(\tau_g)\otimes_\sphere\rmM\varphi_W\{\rk g\},g^!\rmM\varphi_Y\big).
    \end{equation}
    Furthermore, the fiber of 
    \begin{equation}
    \Map_{\TrCor^{\ex}_{\varphi}(\Sm)}\big((X,d_{X}),(Y,d_{Y})\big) \rightarrow \Map_{\Cor_{\varphi}(\Sm)}(X,Y)
    \end{equation} 
    over $(f,g,\alpha)$ can be identified with the subspace of 
    \begin{equation}
    \Map_{\m{D}_{\varphi}(W)}\big(\Th_{W}(\tau_g)\otimes_\sphere\rmM\varphi_W\{f^*d_X-g^*d_Y\},g^!\rmM\varphi_Y\big).
    \end{equation}
    spanned by the extended transfer classes.
\end{lem}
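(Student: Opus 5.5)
The plan is to compute the fiber directly from the unstraightening description of $\Un(\m{D}_{\varphi})$, and then restrict to the subspace cut out by the definition of $\TrCor^{\ex}_{\varphi}(\Sm)$. By \cite[Proposition 2.4.4.2]{lurieHigherToposTheory2009}, for the coCartesian fibration $\Un(\m{D}_{\varphi})\to\Cor_{\varphi}(\DSm)$ there is a fiber sequence
\begin{equation}
\Map_{\m{D}_{\varphi}(Y)}\big((f,g,\alpha)_{\ast}\rmM\varphi_X\{d_X\},\rmM\varphi_Y\{d_Y\}\big)\to\Map_{\Un(\m{D}_{\varphi})}\big((X,d_X),(Y,d_Y)\big)\to\Map_{\Cor_{\varphi}(\DSm)}(X,Y),
\end{equation}
where $(f,g,\alpha)_{\ast}$ denotes the functor $\m{D}_{\varphi}(X)\to\m{D}_{\varphi}(Y)$ attached to the correspondence. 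Since $\TrCor_{\varphi}(\DSm)$ is a full subcategory, the same fiber computes the fiber over $(f,g,\alpha)$ after restricting the base to $\Cor_{\varphi}(\Sm)$, which is a subcategory of $\Cor_{\varphi}(\DSm)$ and so contributes no extra fiber.

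Next I will unwind the functor using Proposition~\ref{prop:twisted3ff}. The correspondence acts as $(g,\alpha)_{!}\circ(f,0)^{\ast}$, and $(g,\alpha)_{!}=g_{!}\big(-\otimes\Th_{\varphi}(-\alpha)\big)$. Since $\alpha$ is a $\varphi$-structure on $\tau_g$, we have $\Th_{\varphi}(\alpha)\simeq\Th_W(\tau_g)\otimes_\sphere\rmM\varphi_W$, up to the sign convention implicit in the definition of $(g,\alpha)_!$. This matches the informal description of morphisms given after the definition of $\TrCor_{\varphi}(\DSm)$. Applying the adjunction $g_{!}\dashv g^{!}$ identifies the fiber with
\begin{equation}
\Map_{\m{D}_{\varphi}(W)}\big(\Th_W(\tau_g)\otimes_\sphere\rmM\varphi_W\{f^{\ast}d_X\},\,g^{!}\rmM\varphi_Y\{d_Y\}\big).
\end{equation}
I then move the twist $\{d_Y\}$ across, using that $g^{!}$ commutes with tensoring by pulled-back invertible objects. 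This is a projection-formula argument for $g^!$ against $g^{\ast}$ of an invertible sheaf, valid because the object is invertible. The result replaces $\{f^{\ast}d_X\}$ with $\{f^{\ast}d_X-g^{\ast}d_Y\}$ and the target with $g^{!}\rmM\varphi_Y$.

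For the first statement, I will use the constraint $\rk g=f^{\ast}d_X-g^{\ast}d_Y$ to rewrite this twist as $\{\rk g\}$. Strictly, this constraint only holds on the components where such morphisms live, so I will state the identification on that locus. The sign ambiguity of Remark~\ref{rem:baneofourexistence} is harmless here, because it only reparametrizes the mapping space by an automorphism.

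For the second statement, the mapping space of $\TrCor^{\ex}_{\varphi}(\Sm)$ is by definition the union of those components of the above mapping space lying over amplitude-at-most-one correspondences whose $\theta$, after twisting by $-d_Y$, is an extended transfer. Under the identification just constructed, this is exactly the claimed subspace.

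The main obstacle is bookkeeping rather than mathematics. One must check that the adjunction and twist-shifting equivalences are compatible with the twist conventions in Proposition~\ref{prop:twisted3ff}, so that "extended transfer after twisting by $-d_Y$" literally corresponds to the subspace of extended transfer classes. I would verify this by checking on a local standard presentation.
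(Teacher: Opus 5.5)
Your proposal is correct and follows the paper's own argument. Both apply \cite[Proposition 2.4.4.2]{lurieHigherToposTheory2009} to the coCartesian fibration $\Un(\m{D}_{\varphi})$ pulled back over $\Cor_{\varphi}(\Sm)$, use that $\TrCor_{\varphi}(\Sm)$ is full in it and that $\TrCor^{\ex}_{\varphi}(\Sm)$ is wide. Your explicit $g_!\dashv g^!$ and twist-shifting computation just spells out what the paper leaves implicit; the sign you flag between $\Th_{\varphi}(-\alpha)$ and $\Th_W(\tau_g)$ is harmless, since both are trivialized via $\mathrm{can}_{\varphi}$.

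Your caveat on the first claim is warranted, but phrase it precisely rather than in terms of ``components where such morphisms live.'' $\TrCor_{\varphi}(\Sm)$ imposes no rank constraint, so its fiber is in general $\Map_{\m{D}_{\varphi}(W)}\big(\Th_W(\tau_g)\otimes_\sphere\rmM\varphi_W\{f^*d_X-g^*d_Y\},g^!\rmM\varphi_Y\big)$. This reduces to the stated $\{\rk g\}$ form when $\rk g=f^*d_X-g^*d_Y$, but not in general: for $X=Y=W=\bbR^0$, $f=g=\id$, $d_X=1$, $d_Y=0$, the two spaces differ by a shift.
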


\begin{proof}
    Note, $\TrCor_{\varphi}(\Sm)$ is a full subcategory of $\Cor_{\varphi}(\Sm)\times_{\Cor_{\varphi}(\DSm)} \Un(\m{D}_{\varphi})$, i.e., they have equivalent mapping spaces. 
    By \cite[Proposition 2.4.4.2]{lurieHigherToposTheory2009}, the description of the claimed fiber sequence holds for $\Cor_{\varphi}(\Sm)\times_{\Cor_{\varphi}(\DSm)} \Un(\m{D}_{\varphi})$, as it is a coCartesian fibration over $\Cor_{\varphi}(\Sm)$, and therefore it also holds for $\TrCor_{\varphi}(\Sm)$.
    Because $\TrCor_{\varphi}^{\ex}(\Sm)$ is a wide subcategory of $\TrCor_{\varphi}(\Sm)$, its mapping spaces can be identified with subspaces of the mapping spaces of $\TrCor_{\varphi}(\Sm)$; these subspaces are exactly those spanned by the extended transfer classes. 
\end{proof}

\begin{prop}
$\Cor_{\varphi}(\Sm)$ is a symmetric monoidal and semiadditive subcategory. Moreover, the forgetful functor
\begin{equation}
    \TrCor_{\varphi}^{\ex}(\Sm) \rightarrow \Cor_{\varphi}(\Sm)
\end{equation}
is symmetric monoidal, semiadditive, and admits coCartesian lifts over 
\begin{equation}
    X \xleftarrow{(f,0)} W \xrightarrow{(g,0)} Y,
\end{equation}
where $g:W \rightarrow Y$ is an equivalence in $\DSm$.
\end{prop}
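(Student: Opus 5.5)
The proof splits into three parts: the structure of $\Cor_{\varphi}(\Sm)$ itself, the compatibility of the forgetful functor with that structure, and the existence of coCartesian lifts.

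\emph{Structure of $\Cor_{\varphi}(\Sm)$.} We first check that $\Cor_{\varphi}(\Sm)$ is a subcategory, i.e.\ that its correspondences are closed under composition. Composition in $\Cor_{\varphi}(\DSm)$ is computed by forming a pullback in $\DSm_{\frakB}$. Amplitude at most $1$ maps are stable under composition and pullback, and if $X,Y,Z\in\Sm$ then the composite legs again have amplitude at most $1$ (since $\Sm$ is exactly the amplitude-$0$ objects). Hence the composite correspondence again lies in $\Cor_{\varphi}(\Sm)$.

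Symmetric monoidality and semiadditivity reduce to two facts. First, products and disjoint unions of amplitude at most $1$ maps have amplitude at most $1$, which is the standard-presentation lemmata for $f\times f'$ and $f\sqcup f'$. Second, $\Sm$ is closed under $\times$ and $\sqcup$ inside $\DSm$. Then the semiadditive structure of $\Cor_{\varphi}(\DSm)$, with biproduct $X\sqcup Y$, restricts to $\Cor_{\varphi}(\Sm)$. This works because the inclusion and projection correspondences, for instance $X\sqcup Y\xleftarrow{(i,0)}X\xrightarrow{(\id,0)}X$, have legs that are open embeddings or identities, hence amplitude $0$. The unit $*$ lies in $\Sm$.

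\emph{The forgetful functor.} The forgetful functor $\TrCor^{\ex}_{\varphi}(\Sm)\to\Cor_{\varphi}(\Sm)$ is the restriction of the projection $\Un(\m{D}_{\varphi})\to\Cor_{\varphi}(\DSm)$. That projection is symmetric monoidal, since it is the unstraightening of the lax symmetric monoidal functor $\m{D}_{\varphi}$. It preserves biproducts by the explicit description of coproducts in the proof of Lemma~\ref{lem: TrCor semiadditive}. By Lemma~\ref{lem: TrCor^ex sym mon and sa}, both symmetric monoidal and semiadditive structure restrict to the subcategories, so the restricted functor inherits both properties.

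\emph{CoCartesian lifts.} Fix $(X,d_X)$ and a correspondence $X\xleftarrow{(f,0)}W\xrightarrow{(g,0)}Y$ with $g$ an equivalence. Identifying $W\simeq Y$, we have $\tau_g\simeq 0$ and $\rk g=0$, and the only choice of target is $d_Y\equiv (g^{-1})^{\ast}f^{\ast}d_X$. The candidate lift is this correspondence together with the transfer $\theta$ given by the canonical equivalence $g_!(\rmM\varphi_W\{f^{\ast}d_X\})\simeq\rmM\varphi_Y\{d_Y\}$. We must check that $\theta$ is an extended transfer. Locally, $g$ has the trivial standard presentation with $N=k=0$, and under it the standard transfer is the identity interchange map; so $\theta$ agrees with it.

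In $\Un(\m{D}_{\varphi})$ this is a coCartesian edge, since $\theta$ is an equivalence. It remains to show it stays coCartesian in the subcategory $\TrCor^{\ex}_{\varphi}(\Sm)$, which is not full. Using Lemma~\ref{lem: mapping spaces}, this amounts to: for any $(Y,d_Y)\to(Z,d_Z)$ over a correspondence $\sigma$, precomposition with the lift induces an equivalence of fibers, and it identifies the subspace of extended transfers over $\sigma$ with the subspace of extended transfers over $\sigma\circ(f,g)$. The composition formula of Lemma~\ref{lem:extendedtransfercomp} and Lemma~\ref{lem:extendtransferpb} shows that extended transfers are sent to extended transfers. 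For the converse, composing with the inverse correspondence (also of this special form, with transfer $\theta^{-1}$, again an extended transfer) shows that extended transfers are reflected.

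I expect this last step to be the main obstacle: checking that the rank condition and the extended-transfer condition are exactly preserved and reflected. Locality of the definition of extended transfer, together with the fact that standard presentations pulled back along equivalences remain standard presentations, should settle it.
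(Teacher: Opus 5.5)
Your overall route is the paper's. You close $\Cor_{\varphi}(\Sm)$ under $\times$, $\sqcup$ and composition using the amplitude lemmata, and you let the forgetful functor inherit its structure from $\Un(\m{D}_{\varphi})\to\Cor_{\varphi}(\DSm)$. You then check coCartesianness of the lift with identity transfer fiberwise on mapping spaces via Lemma~\ref{lem: mapping spaces}, which is exactly the paper's appeal to \cite[Proposition 2.4.4.3]{lurieHigherToposTheory2009}. The one step that fails as written is the reflection argument. The correspondence $\rho=\big(X\xleftarrow{(f,0)}W\xrightarrow{(g,0)}Y\big)$ is not invertible unless $f$ is also an equivalence; up to $g$ it is just the pullback $(fg^{-1})^{\op}$. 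So there is no ``inverse correspondence of this special form,'' and you cannot recover $\psi$ as $(\psi\circ\tilde\rho)\circ\tilde\rho^{-1}$.

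The repair needs no inverse of $\rho$. For $\sigma=\big(Y\xleftarrow{h}V\xrightarrow{k}Z\big)$ with transfer $\vartheta$, the composite $\sigma\circ\rho$ is $X\xleftarrow{f\bar h}W\times_Y V\xrightarrow{k\bar g}Z$, where $\bar g\colon W\times_YV\to V$ is an equivalence. Its transfer is $\vartheta\circ\sigma_!(\theta)$, which under $\bar g$ and $\theta=\mathrm{id}$ is just $\vartheta$. Being extended only constrains the right leg, the transfer, and the rank condition. The rank conditions match: $g\bar h=h\bar g$ and $g^{*}d_Y=f^{*}d_X$ give $(f\bar h)^{*}d_X=\bar g^{*}h^{*}d_Y$, so the condition for $\sigma\circ\rho$ is $\bar g^{*}$ of $\rk k=h^{*}d_Y-k^{*}d_Z$. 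Hence the two conditions coincide. If you prefer to phrase this through the calculus, apply Lemma~\ref{lem:extendedtransfercomp} to the composable maps $\bar g^{-1}\colon V\to W\times_YV$ (identity transfer) and $k\bar g$. That is, invert the equivalence leg in $\DSm$, not the correspondence. With this replacement your proof is complete.
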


\begin{proof}
    It is clear that $\Cor_{\varphi}(\Sm)$ is closed under the monoidal product and direct sums, and that the forgetful functor preserves these.
    For the claim regarding coCartesian lifts, let $(X,d_{X}) \in \TrCor_{\varphi}^{\ex}(\Sm)$ and consider the morphism 
    \begin{equation}
    X \xleftarrow{(f,0)} W \xrightarrow{(g,0,\mathrm{id})} Y
    \end{equation} 
    from $(X,d_{X})$ to $(Y,d_{Y})$, where $d_{Y}\equiv f^*d_X : \Sing(Y)\simeq \Sing(W) \to \bbZ$. 
    Using the description of the mapping spaces from Lemma~\ref{lem: mapping spaces} combined with \cite[Proposition 2.4.4.3]{lurieHigherToposTheory2009}, it follows the morphism constructed above is coCartesian, whence the claim. 
\end{proof}

\section{Tangentially structured Gysin sheaves}\label{sec:gysin}
\subsection{Gysin (pre)sheaves}

\begin{defin}
Let $\m{E}$ be an $\infty$-category. A \textit{tangentially $\varphi$-structured Gysin presheaf} (or simply, \emph{$\varphi$-Gysin presheaf}), valued in $\m{E}$, is a functor 
    \begin{equation}
    \m{F} : \TrCor_{\varphi}^{\ex}(\Sm) \rightarrow \m{E}.
    \end{equation}
    The \category of $\m{E}$-valued $\varphi$-Gysin presheaves is denoted by 
    \begin{equation}
    \mathrm{GysP}_{\varphi}(\Sm;\m{E}) \equiv \Fun\big(\TrCor_{\varphi}^{\ex}(\Sm),\m{E}\big). 
    \end{equation}
\end{defin}

\begin{rem}
When $\m{E} = \Spaces$, we simply write $\GysP_{\varphi}(\Sm)\equiv\mathrm{GysP}_{\varphi}(\Sm;\Spaces)$ and refer to these objects as spatial $\varphi$-Gysin presheaves. Additionally, we refer to objects in $\GysP_{\varphi}(\Sm;\Sp)$ as spectral $\varphi$-Gysin presheaves.
\end{rem}

\begin{example}
    The most important spatial $\varphi$-Gysin presheaf is the one represented by the unit:
    \begin{equation}
    \Tr_{\varphi} \equiv \Map_{\TrCor_{\varphi}^{\ex}(\Sm)}\big((\bbR^{0},0),-\big) : \TrCor_{\varphi}^{\ex}(\Sm) \rightarrow \Spaces.
    \end{equation}
    Informally, $\Tr_{\varphi}(X,d_X)$ is the space of proper $\varphi$-structured maps 
    \begin{equation}
    (f,\alpha): W \rightarrow X
    \end{equation}
    and extended transfers
    \begin{equation}
    \theta:\Th_{W}(\tau_{f})\otimes_{\sphere} \rmM\varphi_W\{-f^*d_X\}\to f^{!}\rmM\varphi_{X}. 
    \end{equation}   
    By the semiadditivity of $\TrCor_{\varphi}^{\ex}(\Sm)$, the spatial $\varphi$-Gysin presheaf $\Tr_{\varphi}$ is canonically $\CMon(\Spaces)$-valued with basepoint given by the proper $\varphi$-structured map $\varnothing \rightarrow X$; we will return to this observation in the sequel. 
\end{example}

\begin{rem}
    If $\m{E} \in \PrL$ (resp. $\PrSt$), then  
    $\GysP_{\varphi}(\Sm;\m{E}) \in \PrL$ (resp. $\PrSt$). 
    This follows, for instance, from the equivalence 
    \begin{equation}
    \GysP_{\varphi}(\Sm) \otimes\m{E} \xrightarrow{\sim} \GysP_{\varphi}(\Sm;\m{E}),
    \end{equation}   
    where $\otimes$ denotes the Lurie tensor product in $\PrL$ (resp. $\PrSt$).
\end{rem}

\begin{lem}
    Let $\m{E}$ be a presentably symmetric monoidal \category. 
    The Day convolution product exhibits $\GysP_{\varphi}(\Sm;\m{E})$ as presentably symmetric monoidal with unit given by the composition 
    \begin{equation}
    \unit_{\m{E}}[\Tr_{\varphi}]: \TrCor_{\varphi}^{\ex}(\Sm) \xrightarrow{\Tr_{\varphi}} \Spaces \xrightarrow{\unit_{\m{E}}[-]} \m{E};
    \end{equation}
    here, $\unit_{\m{E}}[-]$ denotes the essentially unique symmetric monoidal functor given by sending $\ast \in \Spaces$ to $\unit_{\m{E}}\in\m{E}$. 
\end{lem}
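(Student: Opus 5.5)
The plan is to obtain the monoidal structure from the general theory of Day convolution and then identify the unit via the Yoneda embedding. First, Lemma \ref{lem: TrCor^ex sym mon and sa} tells us $\TrCor_{\varphi}^{\ex}(\Sm)$ is a small symmetric monoidal \category; its unit is $(\bbR^{0},0)$, by Lemma \ref{lem: TrCor symmetric monoidal} and the fact that the terminal object of $\DSm$ is the monoidal unit of $\Cor_{\varphi}(\DSm)$. For any presentably symmetric monoidal $\m{E}$, Day convolution (\cite[Section 2.2.6]{lurieHigherAlgebra2017}, or Glasman's treatment) makes $\Fun(\m{C},\m{E})$ presentably symmetric monoidal when $\m{C}$ is small symmetric monoidal: the tensor product is the left Kan extension of $\otimes_{\m{E}}\circ(\m{F}\times\m{G})$ along $\otimes_{\m{C}}$. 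This exists because $\m{E}$ is cocomplete, and it commutes with colimits separately in each variable because $\otimes_{\m{E}}$ does. Presentability of $\GysP_{\varphi}(\Sm;\m{E})$ was already recorded in the preceding remark.

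It remains to identify the unit. In Day convolution, the unit is the left Kan extension of $\unit_{\m{E}}:\ast\to\m{E}$ along $\ast\xrightarrow{(\bbR^0,0)}\TrCor_{\varphi}^{\ex}(\Sm)$. Pointwise this is $\colim_{\Map((\bbR^0,0),Y)}\unit_{\m{E}}\simeq\unit_{\m{E}}[\Map((\bbR^0,0),Y)]=\unit_{\m{E}}[\Tr_{\varphi}(Y)]$, since $\unit_{\m{E}}[-]:\Spaces\to\m{E}$ is the unique colimit-preserving functor sending $\ast$ to $\unit_{\m{E}}$ and so computes colimits of constant diagrams. Naturality in $Y$ comes directly from the left Kan extension formula, so the unit is $\unit_{\m{E}}[-]\circ\Tr_{\varphi}$. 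Equivalently, one can use that $\GysP_{\varphi}(\Sm;\m{E})\simeq\GysP_{\varphi}(\Sm)\otimes\m{E}$ symmetric monoidally. On the spatial side, the Yoneda embedding $\TrCor^{\ex,\op}_{\varphi}\to\GysP_{\varphi}(\Sm)$ is symmetric monoidal for Day convolution, so it sends the unit to the corepresentable $\Tr_\varphi$. Tensoring with $\m{E}$ then applies $\unit_{\m{E}}[-]$ levelwise.

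The main obstacle is a variance issue. Day convolution for covariant functors $\m{C}\to\m{E}$ is usually stated for presheaves on $\m{C}^{\op}$, so one must confirm that corepresentables play the role of representables. They do, since the opposite of a symmetric monoidal \category is symmetric monoidal. I would handle this by applying the standard statement to $\m{C}=(\TrCor_{\varphi}^{\ex}(\Sm))^{\op}$. Size issues are harmless because $\TrCor_{\varphi}^{\ex}(\Sm)$ is essentially small: smooth manifolds are second countable.
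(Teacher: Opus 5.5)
Your proposal is correct and is the same argument as the paper's, which simply invokes the formal properties of Day convolution \cite[Corollary 3.29]{linskensGlobalHomotopyTheory2025}; your pointwise left Kan extension computation $\colim_{\Tr_{\varphi}(Y)}\unit_{\m{E}}\simeq\unit_{\m{E}}[\Tr_{\varphi}(Y)]$ is exactly the unit identification those references package. The one justification to tighten is smallness: the middle terms $W$ of morphisms are derived, not smooth, so second countability of objects of $\Sm$ alone does not settle it. What does settle it is properness of $W\to Y$ over a second countable $Y$, which makes $\rlz{W}$ $\sigma$-compact and locally second countable.
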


\begin{proof}
    This follows from formal properties of the Day convolution product; cf. for example \cite[Corollary 3.29]{linskensGlobalHomotopyTheory2025} or \cite[Proposition 3.12]{keenan-peroux}.
\end{proof}

To consider $\varphi$-Gysin sheaves, we will endow $\TrCor_{\varphi}^{\ex}(\Sm)^{\op}$ with a natural Grothendieck topology. 
To accomplish this, we will make heavy use of the results in Appendix~\ref{appendix: sites}. 

\begin{construction}
Let $F : \DSm^{\op} \rightarrow \CMongp$ be a structure functor which is also a sheaf on $\DSm$; this yields a right fibration $\pi_{F} : \DSm_{F} \rightarrow \DSm$ and a canonical section $\sigma_{F} : \DSm \rightarrow \DSm_{F}$. 
By the results of \cite[Section 2.8]{pardon-DSm}, the composite 
\begin{equation}
    \DSm_{F} \xrightarrow{\pi_{F}} \DSm \xrightarrow{\rlz-}\Top
\end{equation}
exhibits $\DSm_{F}$ as a topological site such that both $\pi_{F}$ and $\sigma_{F}$ are strict topological functors.
By the general yoga of topological sites, we obtain Grothendieck sites such that $\pi_{F}$ and $\sigma_{F}$ are continuous functors. 
\end{construction}

\begin{defin}\label{defin: Cech vs sieve descent}
    Let $S$ be a covering sieve of $X \in \DSm$ generated by an open covering $\calU$; we denote by $C(\calU) \subset S$ the full subcategory spanned by open embeddings $U \rightarrow X$, where $U$ is a nonempty finite intersection of opens in $\calU$. 
\end{defin}

Observe, by \cite[Lemma 2.2.20]{pardon-DSm}, the inclusion $C(\calU) \rightarrow S$ is cofinal. We now summarize some of the basic features of the Grothendieck topology on $\DSm_{F}$. 

\begin{lem}\label{lem: DSm_F site properties}
    The Grothendieck site $\DSm_{F}$ has the following properties.
    \begin{enumerate}
        \item A sieve $S$ on $X\in\DSm_F$ is covering if and only if it is generated by an open cover $\mathcal{U} = \{U_{i} \rightarrow X\in\DSm_F\}_{i \in I}$, where, for each $i \in I$, the morphism $U_{i} \rightarrow X$ belongs to the essential image of $\sigma_{F}$. 
        \item $\DSm_{F}$ is fine in the sense of Definition~\ref{defin: fine site}.
        \item A presheaf $\m{F} \in \PShv(\DSm_{F})$ is a sheaf if and only if $\sigma^{\ast}_{F}\m{F}$ is a sheaf.
        \item It is subcanonical. 
    \end{enumerate}
\end{lem}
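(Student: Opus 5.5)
The plan is to deduce all four properties from two facts. First, $\pi_F:\DSm_F\to\DSm$ is a right fibration with fibers the spaces $\rmB F(X)$, each with an essentially unique object. Second, by Lemma~\ref{lem: mapping spaces in C_F}, $\Map_{\DSm_F}(U,X)\simeq\Map_{\DSm}(U,X)\times F(U)$. So a morphism $(j,\beta):U\to X$ is a pair consisting of a morphism of $\DSm$ and a point of $F(U)$. Since the realization of $\DSm_F$ is $\rlz{\pi_F(-)}$, the open embeddings of $\DSm_F$ are exactly the pairs $(j,\beta)$ with $j$ an open embedding in $\DSm$ and $\beta$ arbitrary. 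Because $F(U)$ is grouplike, each such morphism factors as $(j,0)\circ(\id_U,\beta)$, where $(\id_U,\beta)$ is an automorphism of $U$ in $\DSm_F$.

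\textbf{(1).} Covering sieves are by definition generated by open covers. The factorization above shows that the sieve generated by $\{(j_i,\beta_i)\}$ equals the sieve generated by $\{(j_i,0)\}$, since the two families differ only by precomposition with automorphisms. The latter family lies in the essential image of $\sigma_F$.

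\textbf{(4).} We must check that $\Map_{\DSm_F}(-,Y)\simeq\Map_{\DSm}(\pi_F(-),Y)\times F(-)$ is a sheaf. Both factors are sheaves: the first because $\DSm$ is subcanonical, the second because $F$ is assumed to be a sheaf. The first is also a sheaf after pulling back along $\pi_F$, because by (1) covers in $\DSm_F$ map to covers in $\DSm$.

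\textbf{(3).} By (1), the sheaf condition for $\m{F}$ at $X$ only involves Čech diagrams $C(\calU)$ built from trivially structured open embeddings. By the cofinality remark after Definition~\ref{defin: Cech vs sieve descent}, it suffices to test descent on $C(\calU)$. Such a diagram lies entirely in $\sigma_F(\DSm)$, and $\sigma_F$ preserves pullbacks of open embeddings, since $\pi_F$ is a right fibration and these pullbacks are computed in $\DSm$. Hence the sheaf condition for $\m{F}$ on the cover $\calU$ of $X$ is literally the sheaf condition for $\sigma_F^{\ast}\m{F}$ on the corresponding cover of $X$ in $\DSm$. Every object of $\DSm_F$ is of the form $\sigma_F(X)$, so this gives the equivalence.

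\textbf{(2).} Fineness must be checked against Definition~\ref{defin: fine site} in the appendix, which I cannot see. I expect it to require some form of locality or hypercompleteness: that covers are generated by objects with enough points, or that $\DSm_F$ is a topological site in Pardon's sense with finite intersections of opens. The plan is to transport the corresponding property of $\DSm$, which Pardon establishes in \cite[Section 2.8]{pardon-DSm} for topological sites, along the strict topological functors $\pi_F$ and $\sigma_F$, again using (1) to reduce to trivially structured covers.

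The main obstacle is (2). Its exact content is unknown to me here, and it may need the perfectness of $\DSm_F$, i.e.\ the lifting of open-embedding diagrams against covering sieves. That lifting follows from perfectness of $\DSm$ together with the fact that $\pi_F$ is a right fibration, because the structure can be restricted uniquely along open embeddings.
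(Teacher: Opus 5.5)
Your arguments for (1), (3) and (4) are the paper's. You factor $(f_i,\alpha_i)=(f_i,0)\circ(\id,\alpha_i)$, observe that the \v{C}ech diagram of a trivially structured cover lies in the image of $\sigma_F$, and combine the splitting of mapping spaces with subcanonicity of $\DSm$ and the sheaf property of $F$. One caveat in (4): the splitting $\Map_{\DSm_F}(Z,Y)\simeq\Map_{\DSm}(Z,Y)\times F(Z)$ is natural only along trivially structured maps, since precomposing with $(h,\gamma)$ shifts the $F$-coordinate by $\gamma$. So it is not a product of presheaves on $\DSm_F$. The correct statement is $\sigma_F^{\ast}\Map_{\DSm_F}(-,Y)\simeq\Map_{\DSm}(-,Y)\times F$, after which (3) applies; this is effectively what your argument uses.

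The genuine gap is (2), and it cannot be closed along the lines you suggest.

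\textbf{What fineness means.} In the sense of Definition~\ref{defin: fine site}, fineness is not hypercompleteness, enough points, or perfectness. It asks that for every morphism $f:X\to Y$ and every covering family $\calU$ of $X$ there be a covering family $\calV$ of $Y$ with $f^{\ast}\calV$ refining $\calU$.

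\textbf{Your reduction works.} The pullback of a trivially structured open embedding $(j,0):V\to Y$ along $(f,\alpha)$ is the trivially structured open embedding $X\times_Y V\to X$. So the condition concerns only the continuous map $\rlz{f}$. The paper settles this with a one-line appeal to paracompact Hausdorffness.

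\textbf{The condition fails for general $f$.} Take $f:\bbR\to\bbR^0$ and $\calU=\{(-\infty,1),(0,\infty)\}$. Every covering family of $\bbR^0$ contains a morphism Cartesian over $\id_{\rlz{\bbR^0}}$, hence an equivalence. Its pullback is an equivalence onto $\bbR$, which factors through neither member of $\calU$. Non-injective proper maps fail in the same way, e.g.\ $S^1\to\bbR^0$ with $S^1$ covered by two arcs. Closed embeddings do satisfy the condition: extend the opens to $\rlz{Y}$ and add the complement of the image.

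So neither your transport argument nor the paracompactness remark can yield (2) as stated. A correct version must restrict the class of morphisms. You would then need to check whether that restricted version still suffices where fineness is used, namely the appendix's lemma on topologies on correspondence categories. That lemma applies fineness to legs of correspondences, and hence to maps such as $S^1\to\bbR^0$.
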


\begin{proof}
    By definition, a sieve $S \subset (\DSm_{F})_{/X}$ is covering if and only if it is generated by a family $\big\{(f_{i},\alpha_{i}): U_{i} \rightarrow X\big\}_{i\in I}$, where each $(f_{i},\alpha_{i})$ is an open embedding. 
    However, note that $(f_{i},\alpha_{i}) :U_{i} \rightarrow X$ can be factored as 
    \begin{equation}
    U_{i}\xrightarrow{(\id,\alpha_{i})}U_{i} \xrightarrow{(f_{i},0)} X
    \end{equation}
    which proves (1). 

    The fineness of $\DSm_{F}$ can be deduced from our assumptions about the point-set topology of derived smooth manifolds, specifically, we have assumed they are all paracompact Hausdorff; this proves (2).  

    For (3), one direction is clear, so assume $\sigma_{F}^{\ast}\m{F}$ is a sheaf, and let $S$ be a covering sieve of $X \in \DSm_{F}$. 
    By (1), $S$ is generated by an open cover $\mathcal{U}$ in the essential image of $\sigma_{F}$. 
    It is enough to show the map 
    \begin{equation}
        \m{F}(X) \rightarrow \lim_{U \in C(\calU)^{\op}} \m{F}(U)
    \end{equation}
    is an equivalence. 
    However, note that the composition 
    \begin{equation}
    C(\calU) \rightarrow S \rightarrow \DSm_{F}
    \end{equation}
    factors through $\sigma_{F}$, whence the claim.

    To establish (4), first, recall the equivalence 
    \begin{equation}
    \Map_{\DSm_{F}}(-,X) \simeq \Map_{\DSm}(-,X) \times F(-)
    \end{equation}
    from Lemma~\ref{lem: mapping spaces in C_F}. 
    Because $F$ is a sheaf on $\DSm$ and $\DSm$ is subcanonical, the claim follows from the fact that $C(\calU)$ is cosifted.
\end{proof}

By Lemma~\ref{lem: C_F admits pullbacks and coproducts}, $\DSm_{F}$ admits pullbacks. 
Subsequently, for each $(f,\alpha) : X \rightarrow Y\in\DSm_F$, the functor given by post-composition,
\begin{equation}
    (\DSm_{F})_{/X} \rightarrow (\DSm_{F})_{/Y},
\end{equation}
admits a right adjoint $(-)\times_{Y} X$; this implies the coCartesian fibration which evaluates at the source,
\begin{equation}
    \ev_{s}: \DSm_{F}^{[1]} \rightarrow \DSm_{F},
\end{equation}
is also a Cartesian fibration which is classified by a functor 
\begin{equation}
    (\DSm_{F})_{/(-)} : \DSm_{F}^{\op} \rightarrow \Cat_{\infty}
\end{equation}
which sends $(f,\alpha) : X \rightarrow Y$ to $(-)\times_{Y} X: (\DSm_{F})_{/Y} \rightarrow (\DSm_{F})_{/X}$. 
By abuse of notation, we will write 
\begin{equation}
    (\DSm_{F})_{/(-)} : \DSm^{\op} \rightarrow \Cat_{\infty}
\end{equation}
for the restriction to $\DSm^{\op}$.
The next proposition is crucial. 

\begin{prop}\label{prop: DSm_F descent}
    The functor 
    \begin{equation}
    (\DSm_{F})_{/(-)} : \DSm^{\op} \rightarrow \Cat_{\infty}
    \end{equation}
    is a sheaf. 
    Furthermore, if $(\DSm_{F}^{\calP})_{/(-)}$ denotes the (full) subfunctor spanned by proper morphisms, this too is a sheaf. 
\end{prop}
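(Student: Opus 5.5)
We first reduce the statement about $\DSm_F$ to descent for slice categories of $\DSm$ itself. An object of $(\DSm_F)_{/X}$ is a pair consisting of $Z\to X$ in $\DSm$ together with a structure $\alpha\in F(Z)$. By Lemma~\ref{lem: mapping spaces in C_F} and the description of pullbacks in $\DSm_F$ (Lemma~\ref{lem: C_F admits pullbacks and coproducts}), base change along a trivially structured morphism $(f,0)$ acts on the first component by ordinary pullback and on the structure by restriction. It follows that there is an equivalence of functors $\DSm^{\op}\to\Cat_\infty$,
\begin{equation}
(\DSm_F)_{/X}\simeq \int^{\rm c}_{(\DSm)_{/X}} \rmB F\big|_{(\DSm)_{/X}},
\end{equation}
that is, the unstraightening of $\rmB F$ (precomposed with the source functor) over $\DSm_{/X}$, natural in $X$ under base change.

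Given this, the proof splits into two descent statements. First, we show that $X\mapsto \DSm_{/X}$ is a sheaf on $\DSm$. This is the standard fact that in a perfect subcanonical topological site, objects and morphisms over a cover can be glued. Concretely, let $\calU$ be an open cover of $X$, and let $(Z_U\to U)$ be a compatible family over $C(\calU)$. Perfectness of $\DSm$ (its opens glue) allows us to glue the $Z_U$ to $Z=\colim Z_U$, formed along open embeddings, with a map to $X$; this gives essential surjectivity. Full faithfulness follows from subcanonicity: for $Z,Z'$ over $X$, the mapping space $\Map_{/X}(Z,Z')$ is a limit of $\Map_{/U}(Z_U,Z'_U)$, since $Z$ is covered by the $Z_U=Z\times_X U$ and $\Map_{\DSm}(-,Z')$ is a sheaf. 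Here we would cite Pardon's gluing results for perfect topological sites in \cite[Section 2.8]{pardon-DSm}.

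Second, we upgrade from $\DSm_{/X}$ to the twisted version. Cartesian unstraightening identifies $\int^{\rm c}_{\DSm_{/X}}\rmB F$ with the full subcategory of $\Fun(\DSm_{/X}^{\op},\Spaces)_{/\rmB F}$ (or with a lax limit), and this identification is natural in $X$. The fiberwise data are then the spaces $\rmB F(Z)$ for $Z\to X$. We need to show that the compatible family over $C(\calU)$ glues to an object $(Z,\ast)$ together with the correct mapping spaces $\Map_{\DSm}(Z,Z')\times F(Z)$ (Lemma~\ref{lem: mapping spaces in C_F}). Because $F$ is a sheaf, $F(Z)\simeq\lim F(Z_U)$ with $Z_U=Z\times_X U$, and the $Z_U$ cover $Z$. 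The mapping spaces are therefore limits of the local ones, and objects are unique up to equivalence since $\rmB F(Z)$ is connected. One small point is that a compatible family of $\rmB F$-objects is automatically the unique object. This shows $X\mapsto(\DSm_F)_{/X}$ is a sheaf.

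For the proper subfunctor, $(\DSm_F^{\calP})_{/X}$ is a full subcategory, and Lemma~\ref{lem: gluing complexes} style reasoning applies: a full subfunctor of a sheaf is a sheaf as soon as membership is local on the base. Properness of $\rlz{Z}\to\rlz{X}$ is local on $\rlz{X}$ for an open cover, because compact subsets can be covered by finitely many compacts contained in members of the cover. Hence an object lies in the subcategory iff its restrictions do. The main obstacle we expect is the first descent step, namely the gluing of derived smooth manifolds along open embeddings in an $\infty$-categorical (coherent) fashion. We would handle it by invoking perfectness of $\DSm$ directly, via the lifting property for covering sieves $S^{\cocone}\to\Top^{\opemb}$, applied to the diagram $U\mapsto Z_U$ indexed by $C(\calU)$, which is cofinal in the sieve.
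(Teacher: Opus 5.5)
\paragraph{The gap: your step (1) is false.} Your steps (2)--(3) rest on this identification, so the argument does not go through as written.

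In the honest slice $(\DSm_F)_{/X}$, two objects $(f,\alpha)$ and $(f,\alpha')$ with the same underlying $f\colon Z\to X$ are identified by the equivalence $(\id_Z,\alpha-\alpha')$. More precisely, by Lemma~\ref{lem: mapping spaces in C_F}, the space $\Map_{(\DSm_F)_{/X}}\big((f,\alpha),(f',\alpha')\big)$ is the fiber over $(f,\alpha)$ of the map
\[
\Map_{\DSm}(Z,Z')\times F(Z)\to\Map_{\DSm}(Z,X)\times F(Z),\qquad (g,\beta)\mapsto(f'\circ g,\beta+g^*\alpha').
\]
The shear map $(g,\beta)\mapsto(g,\beta+g^*\alpha')$ is an equivalence, because $F(Z)$ is grouplike. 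Hence this fiber is just $\Map_{\DSm_{/X}}(f,f')$, with no factor of $F(Z)$.

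So $(\DSm_F)_{/X}\to\DSm_{/X}$ has contractible fibers, not fibers $\rmB F(Z)$. This is the general fact that $\m{E}_{/e}\to\m{C}_{/p(e)}$ is a trivial fibration for a right fibration $p$, here $\pi_F$. A quick check: $(\id_X,0)$ is terminal in $(\DSm_F)_{/X}$, so it has contractible endomorphisms. The object of $\int^{\rm c}_{\DSm_{/X}}\rmB F$ lying over $\id_X$, by contrast, has endomorphism space $F(X)$.

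Consequently, your second step proves descent for a different functor: the pullback of $\pi_F$ along the forgetful functor $\DSm_{/X}\to\DSm$. The sentence ``this shows $X\mapsto(\DSm_F)_{/X}$ is a sheaf'' does not follow.

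\paragraph{The repair, and comparison with the paper.} The fix actually shortens your argument. The map $\pi_F$ is a right fibration that preserves pullbacks (cf.\ Lemma~\ref{lem: C_F admits pullbacks and coproducts}), and base change along $(j,0)$ is sent to ordinary base change. Therefore $\pi_F$ induces an equivalence $(\DSm_F)_{/(-)}\simeq\DSm_{/(-)}$ of functors on $\DSm^{\op}$, and likewise for the proper subfunctors.

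After this reduction only your first descent step is needed, and the sheaf condition on $F$ is never used. That step is the real content. In its essential surjectivity part you still must check that the glued $Z$ satisfies $Z\times_X U\simeq Z_U$ for each $U$; otherwise you have not produced a preimage of the given family.

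The paper takes a different route and never reduces to $\DSm_{/(-)}$.
\begin{itemize}
\item It embeds $(\DSm_F)_{/X}$ fully faithfully into $\Shv(\DSm_F)_{/X}$, using subcanonicity (which is where the sheaf condition on $F$ enters).
\item There descent holds automatically by \cite[Theorem 6.1.3.9]{lurieHigherToposTheory2009}, so the problem reduces to essential surjectivity.
\item It forms $W=\colim W_U$ in sheaves and uses that the transition maps are trivially structured to write $W\simeq(\sigma_F)_!(\colim W_U)$.
\item Representability of $W$ comes from locality of representability in a perfect site \cite[2.8.47]{pardon-DSm}, rather than from the perfectness lifting property directly.
\item Finally, $W\times_X U\simeq W_U$ follows from universality of colimits.
\end{itemize}
Your treatment of the proper subfunctor agrees with the paper's.
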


\begin{proof}
    Let $S$ be a covering sieve on $X\in\DSm$ generated by a covering family $\calU$ and consider the commutative square 
    \begin{equation}
    \begin{tikzcd}
	{(\DSm_{F})_{/X}} & {\lim_{U \in C(\calU)^{\op}} (\DSm_{F})_{/U}} \\
	{\Shv(\DSm_{F})_{/X}} & {\lim_{U \in C(\calU)^{\op}} \Shv(\DSm_{F})_{/U}}.
	\arrow[from=1-1, to=1-2]
	\arrow[hook', from=1-1, to=2-1]
	\arrow[hook', from=1-2, to=2-2]
	\arrow[from=2-1, to=2-2]
    \end{tikzcd}
    \end{equation}
    Because $\Shv(\DSm_{F})$ is a topos, \cite[Theorem 6.1.3.9]{lurieHigherToposTheory2009} implies the bottom horizontal functor is an equivalence. 
    Therefore, it is enough to show the top horizontal map is essentially surjective.
    Let $\Theta \in (\DSm_{F})_{/U}$; we may view this as a Cartesian section
    \begin{equation}
    \Theta: C(\calU) \rightarrow \int^{\rm c}_{U\in C(\calU)} (\DSm_{F})_{/U} \simeq C(\calU)\times_{\DSm_{F}} \DSm_{F}^{[1]}.
    \end{equation} 
    Let $\Theta(U)=(f_{U},\alpha_{U}): W_{U} \rightarrow U$. The Cartesianness of $\Theta$ implies that, for every $V \subset U$ in $S$, the commutative square 
    \begin{equation}
    \begin{tikzcd}\label{equation: Theta Cartesian}
	{W_{V}}\arrow[dr, phantom, "\lrcorner", very near start] & {W_{U}} \\
	V & U
	\arrow[from=1-1, to=1-2]
	\arrow["{(f_{V},\alpha_{V})}"', from=1-1, to=2-1]
	\arrow["{(f_{U},\alpha_{U})}", from=1-2, to=2-2]
	\arrow[hook, from=2-1, to=2-2]
    \end{tikzcd}
    \end{equation}
    is a pullback in $\DSm_{F}$. 
    Post-composing with the Yoneda embedding, we obtain a Cartesian section 
    \begin{equation}
        \Theta^{\Shv} : C(\calU) \rightarrow \int^{\rm c}_{U\in C(\calU)} \Shv(\DSm_{F})_{/U} \simeq C(\calU)\times_{\DSm_{F}} \Shv(\DSm_{F})^{[1]}
    \end{equation}
    which induces a morphism of sheaves 
    \begin{equation}
        W\equiv \colim_{U \in C(\calU)} W_{U} \rightarrow \colim_{U \in C(\calU)} U \simeq X. 
    \end{equation}
    To complete the proof of the first claim, we will show that $W$ is representable and that the pullback of $W \rightarrow X$ along $U \rightarrow X$ is equivalent to 
    \begin{equation}
        (f_{U},\alpha_{U}) :W_{U} \simeq U\times_{X}W \rightarrow U
    \end{equation}
    for all $U \in C(\calU)$. 
    By \eqref{equation: Theta Cartesian}, the functor 
    \begin{equation}
    \ev_{s} \circ \Theta : C(\calU) \rightarrow \DSm_{F}
    \end{equation}
    factors through $\sigma_{F}:\DSm \rightarrow \DSm_{F}$. 
    Because $\sigma_{F}$ is a continuous functor between subcanonical sites, it follows that 
    \begin{equation}
    W =\colim_{U \in C(\calU)} W_{U} \simeq \colim_{U\in C(\calU)}(\sigma_{F})_{!}W_{U} \simeq (\sigma_{F})_{!}\left(\colim_{U \in C(\calU)} W_{U}\right),
    \end{equation}
    where the last colimit is formed in $\Shv(\DSm)$. 
    As $\DSm$ is a perfect topological site, representability is a local property (cf. \cite[2.8.47]{pardon-DSm}), so $\colim_{U \in C(\calU)} W_{U} \in \Shv(\DSm)$ is representable, and therefore so is  $(\sigma_{F})_{!}\left(\colim_{U \in C(\calU)} W_{U}\right)$; we may now identify $W \rightarrow X$ with a morphism $(f,\alpha) : W \rightarrow X\in\DSm_{F}$. 
    By the universality of colimits in $\Shv(\DSm_{F})$, the pullback of $(f,\alpha)$ along $U \rightarrow X$ is given by     
    \begin{equation}
    \begin{tikzcd}
	{\colim_{V\in C(\calU)}\big((U\times_{X}V)\times_{V}W_{V}\big)}\arrow[dr, phantom, "\lrcorner", very near start] & {\colim_{V\in C(\calU)}W_{V} \simeq W} \\
	{U\simeq \colim_{V\in C(\calU)}(U\times_{X}V)} & {\colim_{V\in C(\calU)}V\simeq X}
	\arrow[from=1-1, to=1-2]
	\arrow[from=1-1, to=2-1]
	\arrow[from=1-2, to=2-2]
	\arrow[from=2-1, to=2-2]
    \end{tikzcd}
    \end{equation}
    in $\Shv(\DSm_{F})$. 
    However, as $U\times_{X}V \rightarrow V$ is an open embedding for all $V \in C(\calU)$, we have an identification 
    \begin{equation}
    \colim_{V\in C(\calU)}\big((U\times_{X}V)\times_{V}W_{V}\big) \simeq \colim_{V \in C(\calU)}(W_{U\times_{X}V}) \simeq \colim_{V \in V}(W_{U}\times_{X} V) \simeq W_{U},
    \end{equation}
    which completes the proof of the first claim. Finally, properness is local on the target, so $(\DSm_{F}^{\calP})_{/(-)}$ is also a sheaf. 
\end{proof}

Given $X \in \DSm_{\TKO}$, let $(\DSm^{\calP}_{\tau})_{/X} \subset (\DSm^{\calP}_{\TKO})_{/X}$ denote the full subcategory spanned by objects of the form $(f,\tau_{f}) :W \rightarrow X$; this determines a subfunctor 
    \begin{equation}
    (\DSm^{\calP}_{\tau})_{/(-)} \subset (\DSm^{\calP}_{\TKO})_{/(-)}.
    \end{equation}

\begin{lem}\label{lem: DSm_t sheaf}
    The functor 
    \begin{equation}
    (\DSm^{\calP}_{\tau})_{/(-)} : \DSm^{\op} \rightarrow \Cat_{\infty}
    \end{equation}
    is a sheaf. 
\end{lem}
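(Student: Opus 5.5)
Since $(\DSm^{\calP}_{\tau})_{/(-)}$ is a full subfunctor of $(\DSm^{\calP}_{\TKO})_{/(-)}$, which is a sheaf by Proposition~\ref{prop: DSm_F descent} applied to $F=\TKO$ (a structure functor which is a sheaf on $\DSm$), I would reduce the claim to a locality statement. A full subfunctor $\m{G}\subset\m{F}$ of a $\Cat_\infty$-valued sheaf is itself a sheaf as soon as membership in $\m{G}$ is local: an object of $\m{F}(X)$ lies in $\m{G}(X)$ if and only if its restrictions to the members of some open cover do. To see this, take a cover $\calU$ and the square comparing $\m{G}(X)\to\lim_{C(\calU)^{\op}}\m{G}(U)$ with $\m{F}(X)\to\lim_{C(\calU)^{\op}}\m{F}(U)$. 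The vertical maps are fully faithful, since limits of fully faithful functors are fully faithful. The bottom map is an equivalence, so the top map is fully faithful. Essential surjectivity of the top map is then exactly the locality statement.

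It therefore remains to check two things. The first is that the subfunctor is well defined, i.e. restriction along an open embedding $j:U\to X$ preserves tangentially structured morphisms. The pullback of $(f,\tau_f):W\to X$ along $(j,0)$ is $(f_U,\bar{j}^{\ast}\tau_f):W_U\to U$. By Lemma~\ref{lem: virtual tangent bundle properties}(3), stated for stable bundles in the subsequent remark, we have $\bar j^{\ast}\tau_f\simeq\tau_{f_U}$; this is also the content of Lemma~\ref{lem: pb tang str triv str}.

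The second is locality. Suppose $(f,\alpha):W\to X$ is proper and, for each $U$ in a cover $\calU$, the restriction $(f_U,\alpha|_{W_U})$ is equivalent to $(f_U,\tau_{f_U})$. We need an identification $\alpha\simeq\tau_f$ in $\TKO(W)$. The cover $\{W_U\}$ is an open cover of $W$, and $\TKO$ is a sheaf (restriction of $\KO$, via the splitting). So it suffices to produce compatible identifications $\alpha|_{W_U}\simeq\tau_f|_{W_U}$ over the Čech nerve $C(\calU)$.

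This compatibility is the main obstacle. Being tangentially structured is a condition only up to the specified equivalences, and what we are really given is a point of $\lim_{C(\calU)^{\op}}$ of the full subcategories. That limit already contains the coherent data of the equivalences $(f_U,\alpha_U)\simeq(f_U,\tau_{f_U})$ together with their compatibilities under restriction, because they are morphisms in the limit category. Concretely, I would argue as follows. The full subfunctor $(\DSm^{\calP}_{\tau})_{/(-)}$ is the essential image of the natural transformation $(\DSm^{\calP})_{/(-)}\to(\DSm^{\calP}_{\TKO})_{/(-)}$, $f\mapsto(f,\tau_f)$; this transformation is natural by the pullback compatibility above. The source is a sheaf by the $F=0$ case of Proposition~\ref{prop: DSm_F descent}. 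Given a descent datum in the images, first glue the underlying proper maps $f_U$ in $(\DSm^{\calP})_{/(-)}$ to some $f':W'\to X$. Then $(f',\tau_{f'})$ and the glued object $(f,\alpha)$ have equivalent restrictions compatibly over $C(\calU)$. Since $(\DSm^{\calP}_{\TKO})_{/(-)}$ is a sheaf, they are equivalent globally, so $(f,\alpha)$ lies in the essential image, as required.
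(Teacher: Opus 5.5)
Your opening reduction matches the paper's. In the square comparing $(\DSm^{\calP}_{\tau})_{/X}\to\lim_{C(\calU)^{\op}}(\DSm^{\calP}_{\tau})_{/U}$ with the corresponding map for $\TKO$, the vertical maps are fully faithful and the bottom map is an equivalence, so only essential surjectivity (locality of membership) is at stake. There is a gap, however, in how you then prove locality.

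First, the hypothesis says that $(f_U,\alpha|_{W_U})$ is equivalent to $(f_U,\tau_{f_U})$ \emph{in the slice over $U$}. This does not give an identification $\alpha|_{W_U}\simeq\tau_{f_U}$ in $\TKO(W_U)$, because an equivalence in the slice may have the form $(\id,\gamma)$ with $\gamma\neq 0$. So the target ``$\alpha\simeq\tau_f$ in $\TKO(W)$'' cannot be reached from the hypothesis, and as shown below it is false in general.

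Second, a point of $\lim_{C(\calU)^{\op}}(\DSm^{\calP}_{\tau})_{/U}$ does not contain equivalences $(f_U,\alpha_U)\simeq(f_U,\tau_{f_U})$. Membership in a full subcategory is a property, and the only morphisms in the datum are the transition equivalences. So your sentence ``they have equivalent restrictions compatibly over $C(\calU)$'' is unsupported. Write $t\colon f\mapsto(f,\tau_f)$ and $\pi$ for the forgetful map. Objectwise you do get $(f_U,\alpha_U)\simeq t\pi(f_U,\alpha_U)$. But gluing needs a natural equivalence $\iota\simeq t\circ\pi\circ\iota$, and $\pi\circ t=\id$ does not provide one: on mapping spaces $t\pi$ is merely idempotent unless $t$ is fully faithful.

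The missing input is that $\pi_{\TKO}$ is a right fibration, so $\pi\colon(\DSm^{\calP}_{\TKO})_{/U}\to(\DSm^{\calP})_{/U}$ is an equivalence for every $U$. Concretely, by Lemma~\ref{lem: mapping spaces in C_F}, a morphism $(f_1,\alpha_1)\to(f_2,\alpha_2)$ over $U$ lying over a given $h$ consists additionally of a $\beta\in\TKO(W_1)$ with $\beta+h^{\ast}\alpha_2\simeq\alpha_1$. This is a contractible choice. Hence $t$ is inverse to $\pi$, which supplies the natural equivalence your last paragraph needs.

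More directly, $(\id_W,\tau_f-\alpha)$ is an equivalence in the slice from $(f,\tau_f)$ to $(f,\alpha)$, since $(f,\alpha)\circ(\id_W,\tau_f-\alpha)=(f,\tau_f)$. So every object of $(\DSm^{\calP}_{\TKO})_{/X}$ already lies in $(\DSm^{\calP}_{\tau})_{/X}$, and the subfunctor is the whole functor. The lemma is then just Proposition~\ref{prop: DSm_F descent} for $F=\TKO$. This also shows the $\TKO$-level target fails: take the one-element cover and any $\alpha$ with $\alpha-\tau_f\neq 0$ in $\pi_0\TKO(W)$.

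The paper's own proof follows your first route: it glues $(\tau_{f_U})_U$ in $\lim\TKO(W_U)$ using that $\TKO$ is a sheaf. It does not address the compatibility you flagged. The slice-level observation above is what actually makes the argument work.
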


\begin{proof}
    By Proposition~\ref{prop: DSm_F descent}, we have a commutative square 
    \begin{equation}
    \begin{tikzcd}
	{(\DSm^{\calP}_{\tau})_{/X}} & {\lim_{U\in C(\calU)^{\op}}(\DSm^{\calP}_{\tau})_{/U}} \\
	{(\DSm^{\calP}_{\TKO})_{/X}} & {\lim_{U \in C(\calU)^{\op}}(\DSm^{\calP}_{\TKO})_{/U}}.
	\arrow[from=1-1, to=1-2]
	\arrow[hook, from=1-1, to=2-1]
	\arrow[hook, from=1-2, to=2-2]
	\arrow["\sim"', from=2-1, to=2-2]
    \end{tikzcd}
    \end{equation}
    Therefore, it is enough to show the top horizontal map is essentially surjective. 
    Tracing through the proof of Proposition~\ref{prop: DSm_F descent}, an object in $\lim_{U \in C(\calU)^{\op}}(\DSm^{\calP}_{\tau})_{/U}$ determines a morphism $(f,v) : W \rightarrow X$, where $v \in \TKO(W)$ corresponds to $(\tau_{f_{U}})_{U \in C(\calU)^{\op}} \in \lim_{U \in C(\calU)^{\op}}\TKO(W_{U})$. 
    However, as $\TKO$ is a sheaf, it follows that $v \simeq \tau_{f}$; the lemma follows.
\end{proof}

\begin{prop}\label{prop: Cor_phi(Sm) topology}
    There is a Grothendieck topology on $\Cor_{\varphi}(\DSm)^{\op}$ in which a sieve $S$ on $X$ is covering if and only if it is generated by a family
    \begin{equation}
        \mathcal{U} = \{U_{i}\xleftarrow{\id}U_{i} \hookrightarrow X\}_{i \in I},
    \end{equation}
    where $\{U_{i}\}_{i \in I}$ is an open cover of $X$.
    This site has the following properties.
    \begin{enumerate}
    \item The inclusion $\DSm \hookrightarrow \Cor_{\varphi}(\DSm)^{\op}$ is continuous. 
        \item A presheaf $\m{F}\in\PShv\big(\Cor_{\varphi}(\DSm);\m{E}\big)$, for $\m{E}\in\PrL$, is a sheaf if and only if $\m{F}|_{\DSm^{\op}}$ is a sheaf. 
        \item It is subcanonical.
    \end{enumerate}
    Furthermore, there is an induced Grothendieck topology on the subcategory 
    \begin{equation}
    \Cor_{\varphi}(\Sm)^{\op} \subset \Cor_{\varphi}(\DSm)^{\op}
    \end{equation}
    satisfying the analogues of (1)-(3) above. 
    
\end{prop}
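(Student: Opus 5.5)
To prove the proposition, I would transport the topology along the inclusion $\DSm \hookrightarrow \Cor_{\varphi}(\DSm)^{\op}$, using the general machinery of Appendix~\ref{appendix: sites}, which I take to cover sites built on correspondence categories. The plan is to define the candidate topology on $\Cor_{\varphi}(\DSm)^{\op}$ as the one generated by the images of open covers. I would then verify the Grothendieck axioms: stability under pullback is the only nontrivial one.

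\textbf{Pullback stability.} A morphism $Y \to X$ in $\Cor_{\varphi}(\DSm)^{\op}$ is a correspondence $X \xleftarrow{(f,0)} W \xrightarrow{(g,\alpha)} Y$ with $g$ proper and $\varphi$-structured. I would pull the cover $\{U_i \hookrightarrow X\}$ back in two steps.
\begin{enumerate}
\item Form $f^{-1}(U_i) \subset W$. This is still an open cover, since $\DSm$ is a topological site and open embeddings are stable under pullback.
\item Pass from $W$ to $Y$ using $g$. Because $g$ is proper, for each $y \in Y$ there is a neighborhood $V_y$ such that $g^{-1}(V_y)$ is covered by finitely many of the $f^{-1}(U_i)$.
\end{enumerate}
Properness alone is not enough here: the composite $g^{-1}(V) \to W \to X$ need not factor through any single $U_i$. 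So the cleanest route is not to demand that the sieve pulled back to $Y$ be generated by open covers of $Y$ in this naive way. Instead, I would take the Grothendieck topology generated by the families $\mathcal{U}$, that is, the smallest topology containing them, and then prove the stated characterization of covering sieves separately.

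I expect that characterization to be the main obstacle. For it I would use the composition formula in correspondences: the composite of $U_i \xleftarrow{\id} U_i \hookrightarrow X$ with a correspondence out of $X$ is computed by pulling back open embeddings. Precomposition with open-embedding correspondences therefore behaves exactly like restriction in $\DSm$, which should let one check that sieves generated by open covers are already closed under the required pullbacks. If this fails, I would fall back on the fineness and cofinality results of the appendix together with Lemma~\ref{lem: DSm_F site properties}.

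\textbf{Property (1).} Continuity of the inclusion is then immediate: covers are sent to covers, and the inclusion preserves pullbacks of open embeddings, since ambigressive squares include these.

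\textbf{Property (2).} For the sheaf criterion, let $S$ be a covering sieve generated by $\mathcal{U}$. By \cite[Lemma 2.2.20]{pardon-DSm} the Čech subcategory $C(\calU)$ is cofinal in $S$. Moreover $C(\calU)$ lies in the image of $\DSm^{\op}$, because intersections of opens and the inclusions between them are trivially structured correspondences. The sheaf condition for $\m{F}$ on $S$ is therefore the same limit as the sheaf condition for $\m{F}|_{\DSm^{\op}}$, mirroring Lemma~\ref{lem: DSm_F site properties}(3).

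\textbf{Property (3).} For subcanonicity, I would show by (2) that $\Map_{\Cor_{\varphi}(\DSm)}(X,-)$ restricted to $\DSm^{\op}$ is a sheaf. This mapping space is the groupoid of proper $\varphi$-structured spans, and it can be expressed through the slice functors $(\DSm^{\calP}_{\frakB})_{/(-)}$ and $(\DSm^{\calP}_{\tau})_{/(-)}$ via the fiber product defining $\calP_{\varphi}$. The required descent then follows from Proposition~\ref{prop: DSm_F descent} and Lemma~\ref{lem: DSm_t sheaf}, together with the fact that limits commute with fiber products and with taking groupoid cores; the left leg $f$ only adds a map to the sheaf $\Map_{\DSm}(-,X)$.

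\textbf{Restriction to $\Sm$.} Finally, $\Cor_{\varphi}(\Sm)^{\op}$ inherits the topology because open subsets of manifolds are manifolds, and amplitude at most $1$ is a local condition preserved under restriction to opens. The same three arguments then go through verbatim, using that amplitude at most $1$ is local on the source, so gluing preserves it.
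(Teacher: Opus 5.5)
The gap is the existence of the topology, and it cannot be closed. The obstruction you found in the pullback-stability step is a genuine counterexample, so neither ``proving the characterization separately'' nor showing that open-cover sieves are ``already closed under the required pullbacks'' can succeed. Set up the example as follows.
Take $X=\bbR$, $Y=\bbR^{0}$ and $W=\{w_0,w_1\}=\bbR^{0}\sqcup\bbR^{0}$, with $f(w_0)=0$ and $f(w_1)=1$. Let $g\colon W\to Y$ be the fold map with the trivial $\varphi$-structure (here $\tau_g\simeq 0$). Let $S$ be the sieve on $X$ generated by $U_1=(-\infty,2/3)$ and $U_2=(1/3,\infty)$, and let $h\colon Y\to X$ be the morphism $X\xleftarrow{f}W\xrightarrow{g}Y$.

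A morphism $k\colon Z\to Y$, i.e.\ a span $Y\leftarrow W'\to Z$, lies in $h^{*}S$ iff the left leg $W\times W'\to W\xrightarrow{f}X$ of the composite factors through some $U_i$. That happens iff $W'=\varnothing$. So $h^{*}S$ consists only of zero morphisms, whereas every sieve on $\bbR^{0}$ generated by an open cover is maximal. Hence no Grothendieck topology has the stated covering sieves.

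The topology \emph{generated} by these families does not help, because it is degenerate. Since $\varnothing$ is a zero object, $h^{*}S$ is the representable presheaf of $\varnothing$ sitting inside that of $Y$. Descent therefore forces $\m{F}(\bbR^{0})\simeq\m{F}(\varnothing)\simeq *$. Running the same argument with $\bbR\xleftarrow{f\circ\pr_1}W\times X\xrightarrow{\pr_2}X$ gives $\m{F}(X)\simeq *$ for every $X$. So every sheaf is terminal, and (2) and (3) fail for this topology: by your own argument for (3), representables restrict to non-terminal sheaves on $\DSm^{\op}$.

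Your fallback does not rescue this either. The appendix route, Lemma~\ref{lem: Cor Grothendieck topology}, is exactly how the paper proves the proposition. Its proof needs an open cover of $Y$ whose pullback along the proper leg $W\to Y$ refines the pullback of the cover of $X$; this is the fineness of Definition~\ref{defin: fine site}. Fineness fails for this very $g$: the only open cover of $\bbR^{0}$ pulls back to $\{W\}$, which does not refine $\{f^{-1}U_1,f^{-1}U_2\}=\{\{w_0\},\{w_1\}\}$. More generally it fails for any proper leg having a fiber whose image under the other leg is not contained in a single member of the cover.

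What your proposal does establish, in agreement with the paper, is the content of (2) and (3) relative to the open-cover sieves themselves. Your cofinality argument shows descent along these sieves is equivalent to the sheaf condition on $\DSm^{\op}$. Representables satisfy it by Proposition~\ref{prop: DSm_F descent}, Lemma~\ref{lem: DSm_t sheaf}, and the fact that cores commute with limits; the same holds for $\Cor_{\varphi}(\Sm)$.

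A salvageable version must therefore take (2) as the \emph{definition} of sheaves, as with Voevodsky's sheaves with transfers, and drop the claim that these sieves form a Grothendieck topology. Later steps that use a genuine site structure, such as computing sheafification by the $(-)^{\dagger}$ construction and commuting it with restriction, then need independent justification.
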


\begin{proof}
    It is routine to verify that 
    \begin{equation}
    \Cor_{\varphi}(\DSm)^{\op} \simeq \Cor(\DSm_{\frakB};\calP_{\varphi},\DSm)
    \end{equation}
    satisfies the hypotheses of Lemma~\ref{lem: Cor Grothendieck topology}. Corollary~\ref{cor: sheaves on Cor(C;L,R)} applied to this situation implies (1) and (2) above.
    To prove (3), note that we have a canonical equivalence
    \begin{equation}
    \Map_{\Cor_{\varphi}(\DSm)^{\op}}(-,X) \simeq \left(\DSm_{/X} \times_{\DSm_{\frakB}} (\DSm_{\varphi}^{\calP})_{/(-)}\right)^{\simeq}, 
    \end{equation}
    where 
    \begin{equation}
    \DSm_{\varphi}^{\calP} = \DSm^{\calP}_{\tau} \times_{\DSm^{\calP}_{\TKO}} \DSm^{\calP}_{\frakB}.  
    \end{equation}
    Using the fact that $(-)^{\simeq}$ preserves small limits, we can conclude by combining Proposition~\ref{prop: DSm_F descent} and Lemma~\ref{lem: DSm_t sheaf}; this proves the first part.

    For the second part, it is not difficult to see that the subcategory $\Cor_{\varphi}(\Sm)^{\op}$ inherits the claimed Grothendieck topology satisfying (1) and (2). 
    To establish the analogue of (3), let $\DSm^{\leq 1}$ denote the subcategory of $\DSm$ spanned by amplitude at most $1$ derived smooth manifolds and amplitude at most $1$ morphisms between them. 
    As above, we have a canonical equivalence of functors 
    \begin{equation}
    \Map_{\Cor_{\varphi}(\Sm)^{\op}}(-,X) \simeq \left(\DSm^{\leq 1}_{/X} \times_{\DSm^{\leq 1}_{\frakB}} (\DSm^{\leq 1,\calP}_{\varphi})_{/(-)} \right)^{\simeq}.
    \end{equation}
    All the arguments above can be adapted \textit{mutatis mutandis} to show that this presheaf is in fact a sheaf; the key point is that the amplitude of a map is local on the target.     
\end{proof}

\begin{prop}
    There is a Grothendieck topology on $\TrCor^{\ex}_{\varphi}(\Sm)^{\op}$ in which a sieve $S$ on $(X,d_{X})$ is covering if and only if it is generated by a family of the form 
    \begin{equation}
        \left\{(U_i,d_{X}|_{U_i})\xleftarrow{\id} (U_i,d_{X}|_{U_i}) \hookrightarrow (X,d_{X}) \right\}_{i \in I},
    \end{equation}
    where $\{U_{i} \rightarrow X\}_{i \in I}$ is an open cover in $\Sm$. 
    We call this the open cover topology on $\TrCor_{\varphi}^{\ex}(\Sm)^{\op}$.
\end{prop}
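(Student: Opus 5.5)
The plan is to transport the topology from $\Cor_{\varphi}(\Sm)^{\op}$, constructed in Proposition~\ref{prop: Cor_phi(Sm) topology}, along the forgetful functor
\begin{equation}
p:\TrCor^{\ex}_{\varphi}(\Sm)\to\Cor_{\varphi}(\Sm).
\end{equation}
A sieve on $(X,d_X)$ will be declared covering if it contains a family of the displayed form. The morphisms $(U_i,d_X|_{U_i})\xleftarrow{\id}(U_i,d_X|_{U_i})\hookrightarrow(X,d_X)$ make sense as morphisms of $\TrCor^{\ex}_{\varphi}(\Sm)$: an open embedding $j:U\hookrightarrow X$ has $\tau_j\simeq 0$ and $\rk j=0$, so it carries the trivial $\varphi$-structure. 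Its transfer is the classical one, which is essentially the identification $j^!\simeq j^*$; by Lemma~\ref{lem:extendedtoclassical} this is forced. Consequently, for each open cover there is an essentially unique such family.

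With these morphisms in hand, I would verify the Grothendieck topology axioms directly, in the form of a coverage generated by these families, following \cite[6.2.2]{lurieHigherToposTheory2009} or Appendix~\ref{appendix: sites}. The maximal sieve is covering (take the identity cover), and local character and transitivity follow because open covers of open covers are open covers, so these axioms reduce to the same statements for open covers in $\Sm$.

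The key axiom is stability under pullback. Given a covering sieve on $(X,d_X)$ generated by $\{(U_i,d_X|_{U_i})\to(X,d_X)\}$ and an arbitrary morphism $(Y,d_Y)\to(X,d_X)$ in $\TrCor^{\ex}_{\varphi}(\Sm)^{\op}$, i.e.\ a correspondence $(X,d_X)\xleftarrow{(f,0)}W\xrightarrow{(g,\alpha,\theta)}(Y,d_Y)$, I must show the pulled-back sieve on $(Y,d_Y)$ contains a cover. In the opposite category, "pulling back the sieve" means composing the correspondence with the restriction $U_i\hookrightarrow X$ viewed contravariantly. That composite is the correspondence $U_i\leftarrow f^{-1}(U_i)\to Y$, with the restricted transfer.

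The problem is that $g(f^{-1}(U_i))$ need not be open in $Y$, so one does not obtain an open cover of $Y$ directly. I expect this to be the main obstacle, and I would handle it as in Lemma~\ref{lem: Cor Grothendieck topology} and its Corollary~\ref{cor: sheaves on Cor(C;L,R)}, which the paper already used for $\Cor_{\varphi}(\DSm)^{\op}$.

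The strategy there is to cover $Y$ by the single trivial cover, and then to check that the correspondence itself factors, locally on $Y$, through the sieve. Since $g$ is proper, for $y\in Y$ I would choose an open neighborhood $V\ni y$ so that $g^{-1}(V)$ is a disjoint union of pieces each mapping into some $U_i$. This is not generally possible, so instead I would use semiadditivity (Lemma~\ref{lem: TrCor^ex sym mon and sa}) together with a partition of $W$ into pieces subordinate to $f^{-1}(\mathcal U)$, which can be done up to the sieve-closure properties encoded in the appendix lemma.

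So, concretely, I would verify that the hypotheses of Lemma~\ref{lem: Cor Grothendieck topology} are inherited by $\TrCor^{\ex}_{\varphi}(\Sm)^{\op}$ via $p$. This requires two inputs. First, $p$ admits coCartesian lifts over the relevant correspondences, including open restrictions, by the preceding Proposition. Second, mapping fibers are described by Lemma~\ref{lem: mapping spaces}, and extended transfers restrict along opens by Lemma~\ref{lem:extendtransferpb}.

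Under these two inputs, covering sieves on $(X,d_X)$ correspond to covering sieves on $X$ in $\Cor_{\varphi}(\Sm)^{\op}$. The topology is then the one induced from Proposition~\ref{prop: Cor_phi(Sm) topology}, and the claimed description of its covering sieves follows.
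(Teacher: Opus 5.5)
Your skeleton matches the paper's. You induce the topology from $\Cor_{\varphi}(\Sm)^{\op}$ along the forgetful functor $p$, using that $p$ has coCartesian lifts over the open restrictions $X\hookleftarrow U\xrightarrow{\id}U$. The paper's proof is one line: Proposition~\ref{prop: Cor_phi(Sm) topology} together with Lemma~\ref{lem: Cartesian lift of site}.

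You cite the wrong transfer lemma. Lemma~\ref{lem: Cor Grothendieck topology} concerns span categories of adequate triples, which $\TrCor^{\ex}_{\varphi}(\Sm)$ is not. Lemma~\ref{lem: Cartesian lift of site} only asks that $p^{\op}$ admit Cartesian lifts of the covering morphisms, which is exactly the coCartesian-lift proposition. Lemmas~\ref{lem: mapping spaces} and~\ref{lem:extendtransferpb} play no role.

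Your treatment of pullback stability does not work, and the obstacle you found is genuine. A sieve is closed under precomposition but not under sums. So writing a correspondence as $A_1+A_2$, with each $A_k$ factoring through some restriction, does not put it in the sieve.

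In fact pullback stability fails outright. Let $U_1=(-\infty,1)$ and $U_2=(0,\infty)$ cover $(\bbR,0)$. Let $A\colon(\bbR^0,0)\to(\bbR,0)$ in $\TrCor^{\ex}_{\varphi}(\Sm)^{\op}$ be given by the correspondence $(\bbR,0)\hookleftarrow\{0,2\}\to(\bbR^0,0)$, with trivial $\varphi$-structure and classical transfer; the rank condition $0=0-0$ holds. Since $\Sm\to\DSm$ is fully faithful and $\Sm$ is a $1$-category, an equivalence of such spans is a diffeomorphism of apexes strictly over $\bbR$. Hence a factorization of $A$ through the restriction to $U_i$ would force $\{0,2\}\subset U_i$. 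So $A$ is not in the sieve $S$ generated by the cover, and $\id\notin A^*S$. But every covering sieve on $(\bbR^0,0)$ is maximal, because any open cover of a point contains the point itself.

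The same example breaks $\Cor_{\varphi}(\Sm)^{\op}$, so the base topology does not exist either. In the paper this is hidden in the fineness hypothesis of Lemma~\ref{lem: Cor Grothendieck topology} (Definition~\ref{defin: fine site}). That hypothesis fails for open covers: the cover $\{\{0\},\{2\}\}$ of $\{0,2\}$ is not refined by the pullback of any open cover of a point.

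Lemma~\ref{lem: Cartesian lift of site} can only transport a topology that already exists on the base. So neither your argument nor the cited one proves the statement as written. What survives is the descent condition for these families. It defines a reflective subcategory of presheaves, but not a Grothendieck topology.
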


\begin{proof}
    This follows directly from Proposition~\ref{prop: Cor_phi(Sm) topology} and Lemma~\ref{lem: Cartesian lift of site}. 
\end{proof}

\begin{defin}
Let $\m{E}$ be an $\infty$-category. We say $\m{F} \in \GysP_{\varphi}(\Sm;\m{E})$ is a \textit{tangentially $\varphi$-structured Gysin sheaf} (or simply, \emph{$\varphi$-Gysin sheaf}), valued in $\m{E}$, provided $\m{F}$ satisfies descent for the open cover topology. The $\infty$-category of $\m{E}$-valued $\varphi$-Gysin sheaves is denoted by
    \begin{equation}
        \Gys_{\varphi}(\Sm;\m{E}) \subset \GysP_{\varphi}(\Sm;\m{E}).
    \end{equation}
\end{defin}

Observe, the inclusion of $\varphi$-Gysin sheaves into $\varphi$-Gysin presheaves admits a left adjoint given by sheafification:
    \begin{equation}
        L_{\sh} : \GysP_{\varphi}(\Sm;\m{E}) \rightarrow \Gys_{\varphi}(\Sm;\m{E}). 
    \end{equation}

\begin{rem}\label{rem: Gysin Day conv Lurie tensor}
    By \cite[Remark 1.3.1.6]{lurieSAG}, for any $\m{E} \in \PrL$, we have 
    \begin{equation}
    \Gys_{\varphi}(\Sm) \otimes \m{E} \xrightarrow{\sim} \Gys_{\varphi}(\Sm;\m{E}) 
    \end{equation}
    as well as a commutative diagram 
    \begin{equation}
    \begin{tikzcd}
	{\GysP_{\varphi}(\Sm)\otimes \m{E}} & {\GysP_{\varphi}(\Sm;\m{E})} \\
	{\Gys_{\varphi}(\Sm)\otimes \m{E}} & {\Gys_{\varphi}(\Sm;\m{E})}.
	\arrow["\sim", from=1-1, to=1-2]
	\arrow["{L_{\sh}\otimes\m{E}}"', from=1-1, to=2-1]
	\arrow["{L_{\sh}}", from=1-2, to=2-2]
	\arrow["\sim"', from=2-1, to=2-2]
    \end{tikzcd}
    \end{equation}
    It follows that $\Gys_{\varphi}(\Sm;\m{E})$ is presentable; if $\m{E}$ is also stable, then $\Gys_{\varphi}(\Sm;\m{E})$ is stable.
    Finally, note that the top horizontal map in the diagram above is an equivalence of presentably symmetric monoidal \categories.
\end{rem}

\begin{prop}\label{prop: Gysin sheaves monoidal}
    Let $\m{E}$ be a presentably symmetric monoidal \category. 
    The functor $L_{\sh} : \GysP_{\varphi}(\Sm;\m{E}) \rightarrow \Gys_{\varphi}(\Sm;\m{E})$ is compatible with the Day convolution product in the sense of \cite[2.2.1.6]{lurieHigherAlgebra2017}: that is, for all $\m{F},\m{G} \in \GysP_{\varphi}(\Sm;\m{E})$, the induced map 
    \begin{equation}
        \m{F} \otimes \m{G} \rightarrow (L_{\sh}\m{F})\otimes \m{G}
    \end{equation}
    is an $L_{\sh}$-local equivalence. Consequently, $\Gys_{\varphi}(\Sm;\m{E})$ inherits the structure of a symmetric monoidal \category with respect to which $L_{\sh}$ is a symmetric monoidal functor.    
\end{prop}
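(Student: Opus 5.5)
The plan is to reduce to the standard criterion, Lurie's \cite[Proposition 2.2.1.9]{lurieHigherAlgebra2017}: a localization of a presentably symmetric monoidal \category is compatible with the symmetric monoidal structure as soon as the class of local equivalences is closed under tensoring with an arbitrary object. The class of $L_{\sh}$-local equivalences is the strongly saturated class generated by the covering sieve inclusions. We may assume these come from the generating covering families of the open cover topology, so they are the maps
\begin{equation}
\m{E}_{S}\equiv\colim_{U\in C(\calU)}\unit_{\m{E}}[h_{(U,d_X|_U)}]\to\unit_{\m{E}}[h_{(X,d_X)}],
\end{equation}
tensored with arbitrary objects $E\in\m{E}$. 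Here $h$ denotes the corepresentable spatial presheaf on $\TrCor^{\ex}_{\varphi}(\Sm)$, which is a presheaf on the opposite site, and we use Remark~\ref{rem: Gysin Day conv Lurie tensor} to pass between $\Spaces$ and $\m{E}$ coefficients.

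Day convolution preserves colimits in each variable. Every presheaf is a colimit of objects of the form $E\otimes\unit_{\m{E}}[h_{(Y,d_Y)}]$, and the strongly saturated class is stable under colimits. It therefore suffices to check one thing: for each generating covering $\{U_i\hookrightarrow X\}$ and each $(Y,d_Y)$, the product of the covering map with $h_{(Y,d_Y)}$ is again a local equivalence.

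Day convolution of corepresentables is corepresented by the tensor product, so $h_{(U,d)}\otimes h_{(Y,d_Y)}\simeq h_{(U\times Y,\,d\boxplus d_Y)}$. Here we use the explicit description of the monoidal product in Remark~\ref{remark: tensor product on Cor(C_F;E_F)} and Lemma~\ref{lem: TrCor symmetric monoidal}, together with the identification $\rmM\varphi_{X\times Y}\{\pr_1^*d_X+\pr_2^*d_Y\}$. The tensored map thus becomes
\begin{equation}
\colim_{U\in C(\calU)} h_{(U\times Y,\ldots)}\to h_{(X\times Y,\ldots)}.
\end{equation}
Next I would note that $\{U_i\times Y\to X\times Y\}$ is an open cover in $\Sm$, that finite intersections commute with $-\times Y$, and that the index category becomes $C(\calU\times Y)$. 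The map above is then precisely the generating local equivalence for this cover, as stated in Definition~\ref{defin: Cech vs sieve descent}, using cofinality of $C(\calU)$ in the sieve. Hence it is a local equivalence.

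The main obstacle is a compatibility: the open embeddings $U\times Y\hookrightarrow X\times Y$ produced by the tensor product must be exactly the trivially structured correspondences (identity left leg, open embedding with zero structure and the canonical transfer) that generate the topology. The transfer class of the product is produced by Lemma~\ref{lem:extendedtransfersm}. For an open embedding $U\hookrightarrow X$ with trivial structure and $\id_Y$, the product of the classical transfers is the classical transfer of $U\times Y\hookrightarrow X\times Y$ by Lemma~\ref{lem:extendedtoclassical}, so the identification holds. Having identified the local equivalences as closed under $\otimes$, the conclusion follows, namely that $\Gys_{\varphi}(\Sm;\m{E})$ inherits a symmetric monoidal structure and $L_{\sh}$ is symmetric monoidal.
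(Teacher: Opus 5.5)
Your argument is correct and is the paper's argument seen through the tensor--Hom adjunction. The paper reduces to $\m{G}=\Tr_\varphi^{(Y,d_Y)}$ and checks that $\Hom(\Tr_\varphi^{(Y,d_Y)},\m{H})\simeq\m{H}\big(-\boxtimes(Y,d_Y)\big)$ is a sheaf because $\{U_i\times Y\to X\times Y\}$ is an open cover, which is exactly your check that tensoring a generating covering map with the corepresentable of $(Y,d_Y)$ gives the covering map of the product cover.

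One correction concerns your ``main obstacle'', which is misplaced. In $\TrCor^{\ex}_\varphi(\Sm)$ the covering morphism is $X\hookleftarrow U\xrightarrow{\id}U$. The open embedding is therefore the unstructured backward leg; open embeddings are generally not proper, so it never carries a transfer. The structured leg is $\id_U$, with zero structure and identity transfer. Its tensor with $\id_{(Y,d_Y)}$ is visibly the covering morphism $X\times Y\hookleftarrow U\times Y\xrightarrow{\id}U\times Y$, so no appeal to classical transfers is needed.
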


\begin{proof}
    By Remark~\ref{rem: Gysin Day conv Lurie tensor} we can reduce to the universal case $\m{E} =\Spaces$.
    Because the Day convolution product is compatible with small colimits and $L_{\sh}$ is a left adjoint, we may assume $\m{G}$ is representable:
    \begin{equation}
    \m{G}=\Map_{\TrCor^{\rm{ex}}_\varphi(\Sm)}\big((Y,d_Y),-)\big)\equiv\Tr_{\varphi}^{(Y,d_{Y})}.
    \end{equation}
    By the Yoneda lemma, it will be enough to show that the induced map 
    \begin{equation}
    \Map_{\GysP_{\varphi}(\Sm)}(L_{\sh}\m{F} \otimes \Tr_{\varphi}^{(Y,d_{Y})},\m{H}) \rightarrow \Map_{\GysP_{\varphi}(\Sm)}(\m{F} \otimes \Tr_{\varphi}^{(Y,d_{Y})},\m{H})
    \end{equation}
    is an equivalence for all $\m{H} \in \Gys_{\varphi}(\Sm)$. 
    As $\GysP_{\varphi}(\Sm)$ is presentably symmetric monoidal, it admits an internal Hom which we denote by $\Hom(-,-)$. 
    By rewriting the map above using the tensor-Hom adjunction in $\GysP_{\varphi}(\Sm)$, the desired result follows provided  
    \begin{equation}
    \Hom(\Tr_{\varphi}^{(Y,d_{Y})},\m{H}) \simeq \m{H}\big(-\boxtimes(Y,d_{Y})\big)
    \end{equation}
    is a $\varphi$-Gysin sheaf; observe, the aforementioned equivalence follows from the description of the internal Hom for the Day convolution product \cite[Proposition 3.11]{nikolausStable$infty$OperadsMultiplicative2016}.
    Let $\mathcal{U}$ be an open covering of $(X,d_{X})$ determined by an open covering $\{U_{i} \rightarrow X\}_{i\in I}$ and consider the natural map 
    \begin{equation}
    \m{H}\big((X,d_{X})\boxtimes(Y,d_{Y})\big) \rightarrow \lim_{U \in C(\calU)^{\op}} \m{H}\big((U,d_{X}|_{U})\boxtimes(Y,d_{Y})\big);
    \end{equation}
    this is clearly an equivalence as the family $\{U_{i}\times Y \rightarrow X \times Y\}_{i \in I}$ is an open covering.  
\end{proof}

The following lemmata provide a useful connection between $\varphi$-Gysin sheaves and sheaves on smooth manifolds. 
For each integer $d \in \bbZ$, let 
\begin{equation}
    i_{d} : \Sm^{\op} \rightarrow \TrCor_{\varphi}^{\ex}(\Sm) 
\end{equation}
denote the functor given by $X\mapsto (X,d)$. 

\begin{lem}\label{lem: sheaf detection}
    If $\m{F} \in \GysP_{\varphi}(\Sm;\m{E})$, then $\m{F}$ is a sheaf (with respect to the open cover topology) if and only if the following two conditions are satisfied.
    \begin{enumerate}
        \item For every countable set of objects $\big\{(X_{j},d_{j})\big\}_{j \in J}$ in $\TrCor_{\varphi}^{\ex}(\Sm)$, with $d_{j} \in \bbZ$ for all $j$, the canonical map 
        \begin{equation}
            \m{F}(X,d_{X}) \rightarrow \prod_{j\in J}\m{F}(X_{j},d_{j})
        \end{equation}
        is an equivalence, where $X\equiv\sqcup_{j \in J}X_{j}$ and $d_{X} : \Sing(X) \rightarrow \bbZ$ is the induced function.
        \item For every $d \in \bbZ$, the presheaf $i_{d}^{\ast}\m{F} = \m{F}(-,d)$ is actually a sheaf. 
    \end{enumerate}
\end{lem}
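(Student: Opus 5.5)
The plan is to reduce, via Proposition~\ref{prop: Cor_phi(Sm) topology}(2) and Lemma~\ref{lem: Cartesian lift of site}, the sheaf condition on $\TrCor_{\varphi}^{\ex}(\Sm)^{\op}$ to a condition on the restriction of $\m{F}$ along the ``contravariant part'' of $\TrCor_{\varphi}^{\ex}(\Sm)$. Covering sieves on $(X,d_X)$ are generated by correspondences $(U_i,d_X|_{U_i})\xleftarrow{\id}(U_i,d_X|_{U_i})\hookrightarrow(X,d_X)$. These live in the image of the functor $\Sm^{\op}_{\bbZ}\to\TrCor_{\varphi}^{\ex}(\Sm)$ sending a pair $(X,d_X)$, with $d_X:\Sing(X)\to\bbZ$ locally constant, to itself, and sending $f:X\to Y$ to the correspondence $Y\xleftarrow{f}X=X$. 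So I will first check that $\m{F}$ is a sheaf if and only if its restriction to this wide subcategory (objects $(X,d_X)$, morphisms pullbacks with $d_X=f^*d_Y$) satisfies descent for open covers. This uses that \v{C}ech nerves $C(\calU)$ of such covers, and the diagrams computing the sheaf condition, factor through this subcategory, exactly as in Lemma~\ref{lem: DSm_F site properties}(3).

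For the forward direction, condition (2) is then immediate, since $i_d$ lands in that subcategory and covers in $\Sm$ go to covering families. For (1), take $X=\sqcup_j X_j$ with the open cover $\{X_j\to X\}$. Since the $X_j$ are pairwise disjoint, $C(\calU)$ consists only of the $X_j$ themselves: nonempty finite intersections of distinct members are empty and are excluded by Definition~\ref{defin: Cech vs sieve descent}. The descent limit is therefore the product $\prod_j\m{F}(X_j,d_j)$. The cover is countable because $X$ is second countable.

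For the converse, let $(X,d_X)$ be arbitrary with an open cover $\calU$. Since $d_X$ is locally constant, $X=\sqcup_{n\in\bbZ}X_n$ with $X_n\equiv d_X^{-1}(n)$ open and closed, and each $X_n$ is a countable disjoint union of connected components. Condition (1) gives $\m{F}(X,d_X)\simeq\prod_n\m{F}(X_n,n)$, and the same holds for every $U\in C(\calU)$, using $U=\sqcup_n(U\cap X_n)$. Here I group components with the same value of $d$; I will need to make sure (1) is applied with constant $d_j=n$ on each $X_n$, which is allowed because $X_n$ is itself a manifold with constant degree. Limits commute with products, so the descent map for $(X,d_X)$ becomes the product over $n$ of the descent maps for the covers $\calU\cap X_n$ of $X_n$ with respect to $i_n^*\m{F}$. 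Each of these is an equivalence by (2).

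The main obstacle is the first reduction. One must justify that descent in the open cover topology is detected on \v{C}ech diagrams built from open embeddings with trivial structure, identity transfers and restricted degree functions. In particular, restriction along an open embedding $(U,d_X|_U)\hookrightarrow(X,d_X)$ must carry the canonical (identity) extended transfer, so that the diagram really factors through $i_d$ levelwise. I would argue this via the coCartesian lifts over correspondences with an equivalence as right leg, from the final proposition of Subsection~\ref{subsec:extendedtransfers}, together with cofinality of $C(\calU)\subset S$ from \cite[Lemma 2.2.20]{pardon-DSm}.
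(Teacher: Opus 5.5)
Your proposal is correct and follows essentially the same route as the paper: split $X$ into clopen pieces on which $d_X$ is constant, use (1) to turn $\m{F}(X,d_X)$ and the \v{C}ech limit into products, and conclude with (2) on each piece. The differences are cosmetic. You group by level sets $d_X^{-1}(n)$ rather than by path components. You apply (1) to every $U\in C(\calU)$, which makes explicit the step the paper handles via the refinement $\mathcal{V}$. You also spell out the \v{C}ech-versus-sieve reduction that the paper uses tacitly.
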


\begin{proof}
    The only if direction is clear, so assume $\m{F}$ is a $\varphi$-Gysin presheaf satisfying (1) and (2) above. 
    Let $\calU\equiv\big\{(U_{i},d_{U_i}) \rightarrow (X,d_{X})\big\}_{i \in I}$ be an open cover. 
    As $X \in \Sm$, it is either path-connected or a countable disjoint union of path components, each of which is an open submanifold of $X$. 
    If $X$ is path-connected, there is nothing to prove as $d_{X} : \Sing(X) \rightarrow \bbZ$ is constant and so (2) implies $\m{F}$ satisfies descent for the given cover. 
    If $X = \sqcup_{j \in J} X_{j}$, then the restriction of $d_{X}$ to $\Sing(X_{j})$ is constant, i.e., given by a fixed $d_{j} \in \bbZ$. 
    Furthermore, for each $j \in J$, we obtain a covering $\calU_{j}$ of $X_{j}$ by pulling back $\calU$ along the inclusion.
    However, as $\{X_{j}\}_{j\in J}$ is itself an open covering of $X$, the collection of covering families $\calU_{j}$ determines another covering family of $(X,d_{X})$ given by 
    \begin{equation}
        \mathcal{V}\equiv\big\{(U_{i}\cap X_{j},d_{j}) \rightarrow (X,d_{X})\big\}_{i,j};
    \end{equation}
    observe, $\mathcal{V}$ is a refinement of $\mathcal{U}$. 
    Furthermore, as the $X_{j}$'s are all disjoint, we find that there is an equivalence of \v Cech nerves
    \begin{equation}
        \bigsqcup_{j \in J} C(\calU_{j}) \simeq  C(\calV). 
    \end{equation}
    Subsequently, we obtain the following commutative diagram 
    \begin{equation}
    \begin{tikzcd}
	{\m{F}(X,d_{X})} & {\lim_{U\in C(\calU)}\m{F}(U,d_{U})} \\
	{\prod_{j\in J}\m{F}(X_{j},d_{j})} & {\prod_{j \in J}\lim_{U_{j}\in C(\calU_{j})}\m{F}(U_{j},d_{j})},
	\arrow[from=1-1, to=1-2]
	\arrow[from=1-1, to=2-1]
	\arrow["\sim", from=1-2, to=2-2]
	\arrow[from=2-1, to=2-2]
    \end{tikzcd}
    \end{equation}
    where the right vertical arrow is an equivalence since $\calV$ refines $\calU$. 
    Therefore, $\m{F}$ will be a $\varphi$-Gysin sheaf as soon as the left vertical and bottom horizontal arrows are equivalences, but these are by assumptions (1) and (2) above.     
\end{proof}

The proof of the preceding lemma easily implies the following useful corollary. 

\begin{cor}\label{cor: i_d conservative family}
    The collection of functors 
    \begin{equation}
        \big\{i_{d}^{\ast} : \Gys_{\varphi}(\Sm;\m{E}) \rightarrow \Shv(\Sm;\m{E})\big\}_{d \in \bbZ}
    \end{equation}
    is a jointly conservative family. 
\end{cor}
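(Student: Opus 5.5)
We must show: a map $\eta:\m{F}\to\m{G}$ in $\Gys_{\varphi}(\Sm;\m{E})$ is an equivalence as soon as $i_d^{\ast}\eta$ is an equivalence in $\Shv(\Sm;\m{E})$ for every $d\in\bbZ$. The plan is to re-use the diagram from the proof of Lemma~\ref{lem: sheaf detection}. Equivalences in a functor category are detected objectwise, so it suffices to show that $\eta_{(X,d_X)}$ is an equivalence for each object $(X,d_X)\in\TrCor^{\ex}_{\varphi}(\Sm)$. (Here $\Gys_{\varphi}(\Sm;\m{E})$ is a full subcategory of $\Fun\big(\TrCor^{\ex}_{\varphi}(\Sm),\m{E}\big)$.)

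First, fix $(X,d_X)$ and write $X=\bigsqcup_{j\in J}X_j$ as the countable disjoint union of its path components. Each $X_j$ is an open submanifold, and $d_X$ restricts on $\Sing(X_j)$ to a constant $d_j\in\bbZ$.

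Second, apply condition (1) of Lemma~\ref{lem: sheaf detection}. This holds for both $\m{F}$ and $\m{G}$, since the only-if direction is immediate: the cover $\{(X_j,d_j)\to(X,d_X)\}$ is an open cover with pairwise disjoint members. We obtain a commutative square in $\m{E}$ whose vertical maps are the equivalences
\begin{equation}
\m{F}(X,d_X)\xrightarrow{\sim}\prod_{j\in J}\m{F}(X_j,d_j),\qquad \m{G}(X,d_X)\xrightarrow{\sim}\prod_{j\in J}\m{G}(X_j,d_j),
\end{equation}
and whose horizontal maps are $\eta_{(X,d_X)}$ and $\prod_j\eta_{(X_j,d_j)}$. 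This square commutes by naturality of $\eta$ with respect to the restriction morphisms $(X_j,d_j)\hookrightarrow(X,d_X)$.

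Third, observe that $\eta_{(X_j,d_j)}=(i_{d_j}^{\ast}\eta)_{X_j}$, which is an equivalence by hypothesis. A product of equivalences is an equivalence, so by two-out-of-three $\eta_{(X,d_X)}$ is an equivalence.

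There is no real obstacle here. The only point needing care is that condition (1) genuinely holds for every Gysin sheaf, which requires that disjoint open covers yield products (and that the empty intersections contribute $\m{F}(\varnothing)\simeq\ast$). This is standard sheaf descent, and it is exactly the content of the only-if direction of Lemma~\ref{lem: sheaf detection}.
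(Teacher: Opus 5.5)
Your proof is correct and is the argument the paper intends when it says the corollary follows from the proof of Lemma~\ref{lem: sheaf detection}. Decompose $X$ into path components, use the sheaf condition to get $\m{F}(X,d_X)\simeq\prod_j\m{F}(X_j,d_j)$, and note that $\m{F}(X_j,d_j)=(i_{d_j}^{\ast}\m{F})(X_j)$; the objectwise check then reduces to the family $\{i_d^{\ast}\}_{d\in\bbZ}$.
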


\begin{lem}\label{lem: sheafification i_d}
    For all $\m{F} \in \GysP_{\varphi}(\Sm;\m{E})$, the canonical map
    \begin{equation}\label{eqn:sheafification i_d}
        L_{\sh}i_{d}^{\ast}\m{F} \rightarrow i_{d}^{\ast}L_{\sh}\m{F},\;\;d\in\bbZ
    \end{equation}
    is an equivalence of sheaves on $\Sm$.     
\end{lem}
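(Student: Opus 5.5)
The plan is to show that the presheaf-level restriction $i_{d}^{\ast}$ is compatible with the transfinite dagger construction, and then to pass to sheafification. Write $\m{F}^{\dagger}$ for the plus-construction on $\GysP_{\varphi}(\Sm;\m{E})$ with respect to the open cover topology:
\begin{equation}
\m{F}^{\dagger}(X,d_X) \simeq \colim_{\calU} \lim_{U\in C(\calU)^{\op}} \m{F}(U,d_X|_U),
\end{equation}
where $\calU$ ranges over open covers of $X$ ordered by refinement. Since $L_{\sh}$ is a transfinite iteration of $(-)^{\dagger}$ and $i_{d}^{\ast}$ commutes with colimits and limits computed objectwise, it suffices to produce a natural equivalence
\begin{equation}
(i_d^{\ast}\m{F})^{\dagger}\simeq i_d^{\ast}(\m{F}^{\dagger}),
\end{equation}
compatible with the canonical maps from $i_d^{\ast}\m{F}$.

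For the single-step comparison, fix $X\in\Sm$ and evaluate at $(X,d)$ with constant $d$. The description of covering sieves on $(X,d)$ (generated by families $(U_i,d)\leftarrow (U_i,d)\hookrightarrow (X,d)$) and Definition~\ref{defin: Cech vs sieve descent} together show that the poset of covering sieves on $(X,d)$ in $\TrCor^{\ex}_{\varphi}(\Sm)^{\op}$ is identified with the poset of covering sieves on $X$ in $\Sm$. The \v{C}ech categories $C(\calU)$ are then literally the same, with every object decorated by the constant function $d$. Since $d_X|_U = d$ for every $U\subset X$, the two colimit-of-limit formulas agree term by term, and so do the structure maps.

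This gives $i_d^{\ast}(\m{F}^{\dagger})\simeq (i_d^{\ast}\m{F})^{\dagger}$ naturally. Iterating transfinitely, and using that $i_d^{\ast}$ preserves the filtered colimits used at limit ordinals (these are computed pointwise), yields $i_d^{\ast}L_{\sh}\m{F}\simeq L_{\sh}i_d^{\ast}\m{F}$. A final check is needed that this identification coincides with the canonical map in \eqref{eqn:sheafification i_d}. That map is adjoint to $i_d^{\ast}\m{F}\to i_d^{\ast}L_{\sh}\m{F}$, using that $i_d^{\ast}$ preserves sheaves by Lemma~\ref{lem: sheaf detection}(2), and this adjoint is exactly the map produced by the construction.

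The main obstacle is justifying that the dagger construction on $\TrCor^{\ex}_{\varphi}(\Sm)^{\op}$ only involves covers whose pieces are decorated by restrictions of $d_X$. This uses the preceding Proposition and Lemma~\ref{lem: Cartesian lift of site}. In particular, one must verify that no extra morphisms in the sieve (for example, correspondences with nontrivial transfer data) affect the limit. For this I would invoke the cofinality of $C(\calU)\subset S$ from \cite[Lemma 2.2.20]{pardon-DSm}, transported along the Cartesian lift.

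If this cofinality is awkward in the correspondence setting, there is an alternative route. Using Lemma~\ref{lem: sheaf detection}, check instead that the right adjoint of $i_d^{\ast}$ on presheaves (right Kan extension) preserves sheaves. Then $i_d^{\ast}$ commutes with $L_{\sh}$ by the usual adjunction argument.
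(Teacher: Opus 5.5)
Your proposal is correct and follows essentially the same route as the paper. You reduce, via the transfinite dagger construction, to comparing $(i_d^{\ast}\m{F})^{\dagger}$ with $i_d^{\ast}\m{F}^{\dagger}$, identify the covering families of $(X,d)$ with the open covers of $X$, and use the equivalence of sieve and \v{C}ech descent (your cofinality of $C(\calU)\subset S$) to match the colimit-of-limit formulas term by term; your extra care about limit ordinals and compatibility with the canonical map spells out points the paper leaves implicit.
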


\begin{proof}
    By the construction of the sheafification functor on a site \cite[6.2.2.9]{lurieHigherToposTheory2009}, it will be enough to show the canonical map 
    \begin{equation}
        (i^{\ast}_{d}\m{F})^\dagger \rightarrow i_{d}^{\ast}\m{F}^{\dagger}
    \end{equation}
    is an equivalence of presheaves on $\Sm$. 
    Given $X \in \Sm$, the value of \eqref{eqn:sheafification i_d} is given by
    \begin{equation}\label{eqn:auxpsheaf}
        \colim_{S\in \Cov(X)} \lim_{U \in C(\calU)^{\op}} \m{F}(U,d) \rightarrow \colim_{T \in \Cov(X,d)} \lim_{V \in T^{\op}} \m{F}(V,d). 
    \end{equation}
    By the definition of the Grothendieck topology on $\TrCor_{\varphi}^{\ex}(\Sm)^{\op}$, there is an isomorphism between the poset of covering families on $X \in \Sm$ and the poset of covering families on $(X,d)\in\TrCor_{\varphi}^{\ex}(\Sm)^{\op}$. 
    Finally, by the equivalence of sieve and \v Cech descent, \eqref{eqn:auxpsheaf} must be an equivalence.
\end{proof}

\begin{prop}
The $\varphi$-Gysin presheaf 
\begin{equation}
\Tr_{\varphi} : \TrCor_{\varphi}^{\ex}(\Sm) \rightarrow \Spaces
\end{equation}
is in fact a $\varphi$-Gysin sheaf. 
\end{prop}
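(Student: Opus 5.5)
We check the two conditions of Lemma~\ref{lem: sheaf detection}, with $\m{F}=\Tr_{\varphi}$ and $\m{E}=\Spaces$.

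\emph{Step 1 (countable disjoint unions).} Write $X=\bigsqcup_{j\in J}X_j$ with induced $d_X$. A point of $\Tr_\varphi(X,d_X)$ is an extended $\varphi$-structured correspondence $\bbR^0\leftarrow W\xrightarrow{(g,\alpha,\theta)}(X,d_X)$. Pulling back along the open inclusions $X_j\hookrightarrow X$ splits $W=\bigsqcup_j W_j$, with $W_j=g^{-1}(X_j)$. By Lemma~\ref{lem: mapping spaces}, the fiber over the underlying correspondence is the space of extended transfers in $\m{D}_\varphi(W)$. This space is computed in $\Shv(W;\rmM\varphi)$, which is a sheaf in $W$, and being an extended transfer is a local condition. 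So it splits as a product over $j$. Properness of $g$ is also local on the target, so the underlying correspondences split as well. This gives $\Tr_\varphi(X,d_X)\simeq\prod_j\Tr_\varphi(X_j,d_j)$. Formally, this is the case of Step~2 below applied to the cover $\{X_j\}$, using that the \v{C}ech nerve of a disjoint cover is discrete.

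\emph{Step 2 (descent for $i_d^\ast\Tr_\varphi$).} Fix $d\in\bbZ$ and an open cover $\calU$ of $X\in\Sm$. We must show
\begin{equation}
\Tr_\varphi(X,d)\to\lim_{U\in C(\calU)^{\op}}\Tr_\varphi(U,d)
\end{equation}
is an equivalence. Use the forgetful map to $\Map_{\Cor_\varphi(\Sm)}(\bbR^0,-)$ restricted along $\Sm^{\op}$.

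\begin{itemize}
\item \emph{Base.} By the analogue of (3) in Proposition~\ref{prop: Cor_phi(Sm) topology}, the base $\Map_{\Cor_{\varphi}(\Sm)}(\bbR^0,-)$ is a sheaf. Concretely it is $\big((\DSm^{\leq1,\calP}_\varphi)_{/(-)}\big)^{\simeq}$, a sheaf by Proposition~\ref{prop: DSm_F descent} and Lemma~\ref{lem: DSm_t sheaf}.
\item \emph{Fibers.} Since limits commute with taking fibers, it suffices to show the fibers form a sheaf. Over a point $W\to X$ with restrictions $W_U=W\times_X U$, the fiber is the subspace of extended transfers in
\begin{equation}
\Map_{\Shv(W;\rmM\varphi)}\big(\Th_W(\tau_g)\otimes\rmM\varphi_W\{d-g^*d\},\,g^!\rmM\varphi_X\big).
\end{equation}
The ambient mapping space satisfies descent along $\{W_U\}$. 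This uses descent for $\Shv(-;\rmM\varphi)$ (Volpe), the compatibility of $g^!$ with restriction to opens (open base change, $j^\ast g^!\simeq g_U^!j^\ast$), and the compatibility of Thom sheaves with restriction. The extended-transfer condition is defined locally on $W$, so it is closed under gluing and restriction. Hence the subspaces also satisfy descent. Being a union of path components, this is a sub-sheaf condition.
\end{itemize}

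The main obstacle is making the fiberwise identification natural in $U$. One must see that the straightening of $\m{D}_\varphi$ restricted along open embeddings is compatible with the identifications of Lemma~\ref{lem: mapping spaces}; that is, that restriction in $\TrCor$ acts on transfer classes by open base change of the interchange map. I would handle this through the coCartesian description of $\Un(\m{D}_\varphi)$ via \cite[Proposition 2.4.4.2]{lurieHigherToposTheory2009}: a limit of fiber sequences of spaces over a sheaf base, with fibers given by the sheaf $W\supset V\mapsto\Map(\ldots|_V,\ldots|_V)$.
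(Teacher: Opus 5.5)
Your proposal is correct and follows the paper's argument. The paper also maps $\Tr_\varphi$ to the sheaf $\Map_{\Cor_\varphi(\Sm)}(\bbR^0,-)$ and compares fibers over each basepoint. Via Lemma~\ref{lem: mapping spaces}, the $\alpha$-trivialization of the Thom sheaf and $j^*g^!\simeq g_U^!j^*$, those fibers become (extended-transfer subspaces of) global sections of the sheaf $g^!(\rmM\varphi_X\{d_X\})$, and the extended-transfer condition is local.

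The only difference is that the paper runs this directly for an arbitrary cover of $(X,d_X)$, with no constancy assumption on $d_X$. Your reduction through Lemma~\ref{lem: sheaf detection}, and with it Step~1, is therefore unnecessary, as your own remark in Step~1 already suggests. Also, the twist in your fiber should be $\{-g^*d\}$ (source $(\bbR^0,0)$), not $\{d-g^*d\}$; this does not affect the descent argument.
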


\begin{proof}
    
    To see that $\Tr_{\varphi}$ is a sheaf, let $\calU$ be a covering family of $(X,d_{X})$ and consider the following commutative square:
    \begin{equation}
\begin{tikzcd}
	{\Tr_{\varphi}(X,d_{X})} & {\lim_{U \in C(\calU)^{\op}}\Tr_{\varphi}(U,d_{X}|_{U})} \\
	{\Map_{\Cor_{\varphi}(\Sm)}(\bbR^{0},X)} & {\lim_{U \in C(\calU)^{\op}}\Map_{\Cor_{\varphi}(\Sm)}(\bbR^{0},U)}.
	\arrow[from=1-1, to=1-2]
	\arrow[from=1-1, to=2-1]
	\arrow[from=1-2, to=2-2]
	\arrow["\sim"', from=2-1, to=2-2]
\end{tikzcd}
    \end{equation}
    The claim will follow as soon as we can show the aforementioned diagram is actually a pullback in $\Spaces$ for every choice of basepoint $(f,\alpha) \in \Map_{\Cor_{\varphi}(\Sm)}(\bbR^{0},X)$. 
    To this end, fix such a $\varphi$-structured map $(f,\alpha) : W \rightarrow X$ whose image under the bottom equivalence in the aforementioned diagram is given by the system $\big\{(f_{U},\alpha_{U}) : W_{U} \rightarrow U\big\}_{U \in C(\calU)^{\op}}$. 
    By Lemma~\ref{lem: mapping spaces}, the induced map between the vertical fibers,
    \begin{equation}
    \mathrm{fib}_{(f,\alpha)} \rightarrow \lim_{U \in C(\calU)^{\op}} \mathrm{fib}_{(f_{U},\alpha_{U})},
    \end{equation}
    also fits into a commutative square:
    \begin{equation} 
    \begin{tikzcd}[center picture]
	{\mathrm{fib}_{(f,\alpha)}} & {\lim_{U\in C(\calU)^{\op}}\mathrm{fib}_{(f_{U},\alpha_{U})}} \\
	{\Map(\Th_{W}(\tau_f)\otimes_\bbS\rmM\varphi_W\{-f^*d_{X}\},f^{!}\rmM\varphi)} & {\lim_{U\in C(\calU)^{\op}}\Map\big(\Th_{W_{U}}(\tau_{f_U})\otimes_\bbS\rmM\varphi_W\{-f_U^*d_{X}|_{U}\},f_{U}^{!}\rmM\varphi\big)},
	\arrow[from=1-1, to=1-2]
	\arrow[hook, from=1-1, to=2-1]
	\arrow[hook, from=1-2, to=2-2]
	\arrow[from=2-1, to=2-2]
    \end{tikzcd}
    \end{equation}
    where, by Lemma~\ref{lem: mapping spaces}, the vertical arrows are inclusions of subspaces given by the extended transfer classes .
    Using the trivializations $\Th_{W}(\tau_{f_U}) \simeq \rmM\varphi_{W}$ induced by $\alpha_U$'s and the equivalences $f^{!}_{U}j_{U}^{\ast} \simeq j_{W_{U}}^{\ast}f_{U}^{!}$, the bottom horizontal map of the previous diagram can be identified with
    \begin{equation}
    \Omega^{\infty}\Gamma\big(W;f^{!}(\rmM\varphi_{X}\{d_{X}\})\big)\rightarrow \lim_{U\in C(\calU)^{\op}} \Omega^{\infty}\Gamma\big(W_{U};j_{W_{U}}^{\ast}f^{!}(\rmM\varphi_{X}\{d_{X}\})\big),
    \end{equation}
    where the latter is an equivalence since $f^{!}(\rmM\varphi_{X}\{d_{X}\})$ is a sheaf. 
    However, as being an extended transfer class is a property local on the target, this equivalence restricts to the top horizontal map, whence the claim. 
\end{proof}

\subsection{Homotopy invariant Gysin (pre)sheaves}
\begin{defin}
Let $\m{F}\in\PShv(\Sm;\m{E})$. We say $\m{F}$ is \emph{homotopy invariant} (or \emph{$\bbR$-invariant}) if, for every $X\in\Sm$, the map 
    \begin{equation}
    \m{F}(X)\to\m{F}(X\times\bbR)
    \end{equation}
induced by the obvious projection $X\times\bbR\to X$ is an equivalence. We denote by 
    \begin{equation}
    \PShv_\bbR(\Sm;\m{E})\subset\PShv(\Sm;\m{E})
    \end{equation}
the full subcategory of such objects. Analogously, we say $\m{F}\in\Shv(\Sm;\m{E})$ is \emph{homotopy invariant} if its underlying presheaf is; we denote by 
    \begin{equation}
    \Shv_\bbR(\Sm;\m{E})\subset\Shv(\Sm;\m{E})
    \end{equation}
the full subcategory of such objects.
\end{defin}

Recall, we have the constant sheaf/global sections adjunction: 
    \begin{equation}
    \Gamma^*:\m{E}\rightleftarrows\Shv(\Sm;\m{E}):\Gamma_*.
    \end{equation}
The following result of Dugger \cite{dugger} shows that, once one requires homotopy invariance, differential cohomology theories are determined by homotopy theory.

\begin{thm}[\cite{dugger}]
For any presentable $\infty$-category $\m{E}$, 
    \begin{equation}
    \Gamma_*:\Shv_\bbR(\Sm;\m{E})\to\m{E}
    \end{equation}
is an equivalence with inverse $\Gamma^*$.\footnote{In particular, a constant sheaf is homotopy invariant; this is a key step in the proof.} If $\m{E}$ is presentably symmetric monoidal, then the aforementioned equivalence is in fact one of presentably symmetric monoidal $\infty$-categories. Moreover, the aforementioned equivalence is natural in $\m{E}$.
\end{thm}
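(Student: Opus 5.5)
The plan is to reduce everything to the universal case $\m{E}=\Spaces$ and prove the equivalence via the concordance (homotopy-invariantization) functor. Concretely, let $\Delta^\bullet_{\rm ext}$ denote the cosimplicial smooth manifold of extended simplices, $\Delta^n_{\rm ext}=\{t\in\bbR^{n+1}:\sum t_i=1\}\cong\bbR^n$. For $\m{F}\in\Shv(\Sm;\m{E})$ define
\begin{equation}
\m{F}^{\bbR}(X)\equiv\colim_{[n]\in\Delta^{\op}}\m{F}(X\times\Delta^n_{\rm ext}).
\end{equation}
Step one: show $\m{F}\mapsto\m{F}^{\bbR}$ (followed by sheafification, if needed) is left adjoint to the inclusion $\Shv_\bbR(\Sm;\m{E})\subset\Shv(\Sm;\m{E})$. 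This uses the standard argument that the two maps $\m{F}^{\bbR}(X\times\bbR)\rightrightarrows$ induced by $0,1$ are simplicially homotopic, via the affine homotopy $X\times\bbR\times\bbR\to X\times\bbR$, $(x,s,t)\mapsto(x,st)$, decomposed into simplices of $\Delta^\bullet_{\rm ext}$. Step two: show that the constant presheaf $\Gamma^*E$ (a product-valued, locally constant sheaf) is homotopy invariant. Since $\Gamma^*E$ is the sheafification of the constant presheaf, its value on $X$ is $E^{\Sing(X)}$ (limit over $\Sing(X)$, by the hypercompleteness of $\Shv$ on manifolds and the fact that a manifold's Čech nerves of good covers compute $\Sing(X)$), and $\Sing(X\times\bbR)\simeq\Sing(X)$.

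Step three: show the unit $\m{F}\to\Gamma^*\Gamma_*\m{F}$ is an equivalence for homotopy invariant sheaves. Fix $X$ and choose a good open cover $\calU$ of $X$ (all finite intersections diffeomorphic to $\bbR^m$). By homotopy invariance, $\m{F}(\bbR^m)\simeq\m{F}(\ast)=\Gamma_*\m{F}$, so $\m{F}(U)\simeq\Gamma_*\m{F}$ for every $U\in C(\calU)$. Then
\begin{equation}
\m{F}(X)\simeq\lim_{U\in C(\calU)^{\op}}\m{F}(U)\simeq\lim_{C(\calU)^{\op}}\Gamma_*\m{F}\simeq(\Gamma_*\m{F})^{\lvert C(\calU)\rvert}\simeq(\Gamma_*\m{F})^{\Sing(X)},
\end{equation}
using the nerve theorem, which gives $\lvert C(\calU)\rvert\simeq\Sing(X)$. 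The identification must be checked to be the unit map, and the identifications must be made functorially in $U$, not just objectwise. The main obstacle is exactly this naturality: constancy of $U\mapsto\m{F}(U)$ on $C(\calU)$ must be shown as a diagram, not just objectwise. I would obtain it by comparing with $\Gamma^*\Gamma_*\m{F}$, which is homotopy invariant by step two, and checking that the map $\Gamma^*\Gamma_*\m{F}\to\m{F}$ (counit, via $\Gamma^*\dashv\Gamma_*$) is an equivalence on every $\bbR^m$. There both sides are identified with the value at a point through $\bbR^m\to\ast$, since $\ast\to\bbR^m$ is a section. Equivalence on all Euclidean spaces then propagates to all $X$ by descent along good covers. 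Conversely, the counit $\Gamma_*\Gamma^*E\to E$ is an equivalence since $\Gamma_*$ is evaluation at a point. Hence $\Gamma^*$ and $\Gamma_*$ are inverse equivalences.

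Finally, for the monoidal statement, $\Gamma^*$ is symmetric monoidal: it is a left adjoint and the sheafification of the constant presheaf, and both constant presheaf and sheafification are symmetric monoidal. Moreover, $\Shv_\bbR$ is closed under the localized tensor product, since by the above it consists of constant sheaves. An equivalence with a symmetric monoidal inverse is then an equivalence of presentably symmetric monoidal categories. Naturality in $\m{E}$ follows from reducing to $\Spaces$ via $\Shv_\bbR(\Sm)\otimes\m{E}\simeq\Shv_\bbR(\Sm;\m{E})$, as in Remark~\ref{rem: Gysin Day conv Lurie tensor}, since everything above is compatible with left adjoint functors $\m{E}\to\m{E}'$.
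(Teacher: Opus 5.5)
The paper does not prove this statement. It quotes it from \cite{dugger}, and the footnote only singles out the $\bbR$-invariance of constant sheaves as the key step. Your argument is the standard direct proof built around exactly that step, and its overall structure is sound. Once $\Gamma^*$ lands in $\Shv_\bbR(\Sm;\m{E})$, the chain runs as follows:
\begin{itemize}
\item The map $E\to\Gamma_*\Gamma^*E$ is an equivalence. The only covering sieve of $\bbR^0$ is the maximal one, so sheafification does not change the value at a point.
\item By a triangle identity, $\Gamma^*\Gamma_*\m{F}\to\m{F}$ is then an equivalence at $\bbR^0$.
\item By naturality along $\bbR^m\to\bbR^0$, it is an equivalence on every $\bbR^m$.
\item By descent along good covers, it is an equivalence everywhere.
\end{itemize}
The monoidal and naturality claims then follow as you say.

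Two small points. First, Step one is never used and can be deleted. As stated, ``followed by sheafification, if needed'' would also hide the nontrivial fact that sheafification preserves $\bbR$-invariance. Second, in Step three you have interchanged the names: for $\Gamma^*\dashv\Gamma_*$ the unit is $E\to\Gamma_*\Gamma^*E$ and the counit is $\Gamma^*\Gamma_*\m{F}\to\m{F}$.

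The real gap is in Step two, where all of the content lies. Your parenthetical is a pointer, not an argument. If it is read as ``compute $\Gamma^*E(X)$ by descent along a good cover,'' it presupposes $\Gamma^*E(U)\simeq E$ for contractible $U$. That is, it assumes sheafification does not alter the constant presheaf on contractible opens, which is exactly what must be shown. Hypercompleteness alone does not give this.

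Here is one clean way to close it.
\begin{itemize}
\item Put $P(X)\equiv E^{\Sing(X)}$. This is a sheaf because $\Sing$ carries \emph{arbitrary} open covers to colimits, $\Sing(X)\simeq\colim_{U\in C(\calU)}\Sing(U)$, by Seifert--van Kampen \cite[Theorem A.3.1]{lurieHigherAlgebra2017}. The nerve theorem for good covers is not enough here.
\item The evident map from the constant presheaf to $P$ is an equivalence on every nonempty contractible open.
\item Every open cover is refined by a good one. So a plus-construction argument shows the map is a local equivalence. For this, first normalize the constant presheaf to take the value $\ast$ on $\varnothing$; this does not change its sheafification.
\end{itemize}
Hence $\Gamma^*E\simeq P$ naturally in $X$. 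This is visibly $\bbR$-invariant, and the rest of your proof then goes through.
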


\begin{defin}
    Let $\m{F} \in \GysP_{\varphi}(\Sm)$. We say that $\m{F}$ is \textit{homotopy invariant} (or \textit{$\bbR$-invariant}) if, for every $(X,d_{X}) \in \TrCor_{\varphi}^{\ex}(\Sm)$ the map 
    \begin{equation}
    \m{F}(X,d_{X})\to\m{F}(X\times\bbR,\pr_{1}^{\ast}d_{X})
    \end{equation}
    induced by the obvious projection $(X,d_{X})\boxtimes(\bbR,0) \simeq (X\times\bbR,\pr_{1}^{\ast}d_{X}) \to (X,d_{X})$ is an equivalence. We denote by 
    \begin{equation}
    \GysP_{\varphi}^{\bbR}(\Sm;\m{E})\subset \GysP_{\varphi}(\Sm;\m{E})
    \end{equation}
    the full subcategory of such objects. Analogously, we say $\m{F}\in\Gys_{\varphi}(\Sm;\m{E})$ is \emph{homotopy invariant} if its underlying $\varphi$-Gysin presheaf is; we denote by 
    \begin{equation}
    \Gys_{\varphi}^{\bbR}(\Sm;\m{E})\subset\Gys_{\varphi}(\Sm;\m{E})
    \end{equation}
    the full subcategory of such objects.
\end{defin}

\begin{rem}
    It is clear that restriction along $\Sm^{\op} \rightarrow \TrCor_{\varphi}^{\ex}(\Sm)$ sends homotopy invariant (pre)sheaves to homotopy invariant (pre)sheaves. 
    In particular, we have a functor
    \begin{equation}
        \Gys_{\varphi}^{\bbR}(\Sm;\m{E}) \rightarrow \Shv_{\bbR}(\Sm;\m{E}). 
    \end{equation}
\end{rem}

\begin{construction}[Morel-Suslin-Voevodsky]\label{construction: MSV construction}
    Given $n \geq 0$, we let $\Deltasm^{n}$ denote the hyperplane in $\bbR^{n+1}$ defined by 
    \begin{equation}
    \Deltasm^{n} =  \Bigg\{(x_{0},\dots,x_{n}) \in \bbR^{n+1} \mid \sum_{i=0}^{n} x_{i} = 1\Bigg\}. 
    \end{equation}
    By the usual argument, this determines a cosimplicial smooth manifold $\Deltasm^{\bullet} : \Delta \rightarrow \Sm$, and thus a simplicial object
    \begin{equation}
    (\Deltasm^{\bullet},0) : \Delta^{\op} \rightarrow\Sm^{\op} \rightarrow \TrCor_{\varphi}^{\ex}(\Sm). 
    \end{equation}
    Given $\m{F} \in \GysP_{\varphi}(\Sm;\m{E})$, we define $\rmH\m{F} \in \GysP_{\varphi}(\Sm)$ to be the geometric realization of the simplicial spatial$\varphi$-Gysin presheaf  
    \begin{equation}
    \m{F}\big(-\boxtimes(\Deltasm^{\bullet},0)\big) : \Delta^{\op} \rightarrow \GysP_{\varphi}(\Sm). 
    \end{equation}
    On objects, this is given by the formula 
    \begin{equation}
    \rmH\m{F}(X,d_{X}) \simeq\big\lvert \m{F}\big((X,d_{X})\boxtimes(\Deltasm^{\bullet},0)\big)\big\rvert \simeq \big\lvert\m{F}(X\times\Deltasm^{\bullet},\pr_{1}^{\ast}d_{X})\big\rvert. 
    \end{equation}
    Because $\bbR^0$ is terminal in $\Sm$, there is a natural transformation $\Delta \times [1] \rightarrow \Sm$ from $\Deltasm^{\bullet}$ to the constant cosimplicial object on $\bbR^{0}$. 
    Therefore, we have a functor
    \begin{equation}
    \rmH : \GysP_{\varphi}(\Sm;\m{E}) \rightarrow \GysP_{\varphi}(\Sm;\m{E})
    \end{equation}
    together with a natural transformation
    \begin{equation}
    \eta:\mathrm{id}_{\GysP_{\varphi}} \rightarrow \rmH. 
    \end{equation}
\end{construction}

The following lemma describes the basic features of $\rmH$ and $\eta$ constructed above. 

\begin{lem}\label{lemma: properties of MSV construction}
    Let $\m{E}$ be a presentable \category. 
    \begin{enumerate}
        \item For all $\m{F} \in \GysP_{\varphi}(\Sm;\m{E})$, $\rmH\m{F}$ is a homotopy invariant $\varphi$-Gysin presheaf. 
        \item If $E$ is a homotopy invariant $\varphi$-Gysin presheaf, then $\eta_{\m{F}} : \m{F} \rightarrow \rmH\m{F}$ is an equivalence. 
        \item For any $\m{F} \in \GysP_{\varphi}(\Sm;\m{E})$, the map $\rmH\eta_{\m{F}} : \rmH\m{F} \rightarrow \rmH(\rmH\m{F})$ is an equivalence. 
    \end{enumerate}
    Hence, $\rmH$ determines a Bousfield localization of $\GysP_{\varphi}(\Sm;\m{E})$ which we denote by 
    \begin{equation}
    L_{\bbR} : \GysP_{\varphi}(\Sm;\m{E}) \rightarrow \GysP_{\varphi}^{\bbR}(\Sm;\m{E}). 
    \end{equation}
\end{lem}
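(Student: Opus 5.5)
The plan is to copy the standard Morel--Voevodsky / Suslin argument for the singular construction, transported to $\TrCor_{\varphi}^{\ex}(\Sm)$. The only inputs needed are two facts. First, the monoidal product satisfies $(X,d_X)\boxtimes(\Deltasm^{n},0)\simeq(X\times\Deltasm^{n},\pr_1^*d_X)$, and this is functorial in $(X,d_X)$ because $\boxtimes$ is a functor on $\TrCor_{\varphi}^{\ex}(\Sm)$ (Lemma~\ref{lem: TrCor^ex sym mon and sa}). Second, the cosimplicial manifold $\Deltasm^{\bullet}$ carries the usual ``extra structure'' coming from affine geometry. Throughout, write $\m{F}_{\bullet}\equiv\m{F}\big(-\boxtimes(\Deltasm^{\bullet},0)\big)$.

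\emph{Item (2).} Assume $\m{F}$ is homotopy invariant. Since $\Deltasm^{n}\cong\bbR^{n}$ and the projection $(X\times\bbR^n,\pr_1^*d_X)\to(X,d_X)$ is a composite of $n$ projections of the form appearing in the definition of homotopy invariance, every degeneracy and face map of the simplicial object $\m{F}_{\bullet}(X,d_X)$ is an equivalence. The simplicial object is therefore equivalent to the constant one. Because $\Delta^{\op}$ is weakly contractible, its realization is $\m{F}(X,d_X)$, and the map $\eta_{\m{F}}$, which is the inclusion of the $0$-simplices, is an equivalence.

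\emph{Item (1).} Here I would reproduce the algebraic homotopy argument. We need that $\pr^*:\rmH\m{F}(X,d_X)\to\rmH\m{F}(X\times\bbR,\pr_1^*d_X)$ is an equivalence. The section $s_0\colon X\to X\times\bbR$, $x\mapsto(x,0)$, satisfies $\pr\circ s_0=\id$, so it suffices to show that $s_0^*\circ\pr^*$ is homotopic to the identity on $\rmH\m{F}(X\times\bbR,\dots)$. The multiplication map $\mu\colon\bbR\times\bbR\to\bbR$ gives a homotopy from $s_0\circ\pr$ to $\id$, parametrized by $\bbR$.

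It remains to show that an $\bbR$-parametrized homotopy $h\colon Y\times\bbR\to Z$ becomes a simplicial homotopy after applying $\m{F}_{\bullet}$. I would use the standard maps $\theta_\sigma\colon\Deltasm^{n}\to\Deltasm^{1}\cong\bbR$ indexed by $\sigma\colon[n]\to[1]$. These maps are affine, hence smooth. They give maps $Y\times\Deltasm^{n}\to Z\times\Deltasm^{n}$, and therefore operators
\[
\m{F}(Z\times\Deltasm^{n})\to\m{F}(Y\times\Deltasm^{n}),
\]
which satisfy the simplicial homotopy identities. All the maps involved are trivially structured morphisms of $\Sm$, viewed in $\TrCor_{\varphi}^{\ex}(\Sm)$ via $\Sm^{\op}\to\TrCor^{\ex}_{\varphi}(\Sm)$, so no transfer data enters this step. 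Simplicially homotopic maps agree after geometric realization in any presentable $\m{E}$. Since $\m{E}$-valued realizations are computed as colimits, I would check the homotopy identities once, at the level of cosimplicial smooth manifolds, and then apply $\m{F}$.

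\emph{Item (3).} Unwinding, $\rmH\rmH\m{F}(X,d_X)$ is the realization of the bisimplicial object $\m{F}(X\times\Deltasm^{p}\times\Deltasm^{q})$. There are two maps $\rmH\m{F}\rightrightarrows\rmH\rmH\m{F}$, namely $\rmH\eta_{\m{F}}$ and $\eta_{\rmH\m{F}}$. By (1) and (2), $\eta_{\rmH\m{F}}$ is an equivalence. The two maps are interchanged by the swap of the factors $\Deltasm^{p}$ and $\Deltasm^{q}$, which is an automorphism of the bisimplicial object. Hence $\rmH\eta_{\m{F}}$ is an equivalence as well.

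The three properties are exactly the hypotheses of \cite[Proposition 5.2.7.4]{lurieHigherToposTheory2009}, so $\rmH$ is a localization functor. Its essential image is $\GysP_{\varphi}^{\bbR}$, because by (1) and (2) an object lies in the image precisely when it is homotopy invariant.

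The main obstacle is the coherence in (1). The simplicial homotopy must be made $\infty$-categorically coherent inside the functor category $\GysP_{\varphi}(\Sm;\m{E})$, and not merely objectwise, because naturality in transfer morphisms is needed. I would handle this by constructing the homotopy once as a map of cosimplicial objects in $\Sm$ and transporting it through the functor $-\boxtimes(\cdot,0)$. This transport gives naturality in $(X,d_X)$ for free.
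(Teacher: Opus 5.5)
Your proposal is correct and follows essentially the paper's route. The paper simply defers to the standard Morel--Voevodsky-style argument of \cite[I.5]{diff-coh-book}, which is what you reproduce: levelwise equivalences for (2), affine $\bbR$-homotopies becoming simplicial homotopies for (1), the swap of the two simplicial directions for (3), and then \cite[Proposition 5.2.7.4]{lurieHigherToposTheory2009}.

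Two small remarks. First, in (1) the map that must be homotopic to the identity on $\rmH\m{F}(X\times\bbR,\pr_1^*d_X)$ is $\pr^*\circ s_0^*=(s_0\circ\pr)^*$, not $s_0^*\circ\pr^*$. Second, the coherence you flag as the main obstacle is milder than you suggest: homotopy invariance and equivalences in $\GysP_{\varphi}(\Sm;\m{E})$ are detected objectwise, so no naturality in transfer morphisms is needed. Each item therefore reduces, at a fixed $(X,d_X)$, to the known statement for the presheaf $\m{F}\big((X,d_X)\boxtimes(-,0)\big)$ on $\Sm$.
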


\begin{proof}
    The proof is essentially identical to the arguments in \cite[I.5]{diff-coh-book}. 
\end{proof}

\begin{prop}\label{prop: homotopy invar gysin sym mon}
    Let $\m{E}$ be a presentably symmetric monoidal \category. 
    The functor $L_{\bbR} : \GysP_{\varphi}(\Sm;\m{E}) \rightarrow \GysP^{\bbR}_{\varphi}(\Sm;\m{E})$ is compatible with the Day convolution product in the sense of \cite[2.2.1.6]{lurieHigherAlgebra2017}: that is, for all $\m{F},\m{G} \in \GysP_{\varphi}(\Sm;\m{E})$, the induced map 
    \begin{equation}
        \m{F} \otimes \m{G} \rightarrow (\rmH\m{F})\otimes \m{G}
    \end{equation}
    is an $H$-local equivalence. Consequently, $\GysP_{\varphi}^{\bbR}(\Sm;\m{E})$ inherits the structure of a symmetric monoidal \category with respect to which $L_{\bbR}$ is a symmetric monoidal functor.    
\end{prop}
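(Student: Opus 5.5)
The plan mirrors the proof of Proposition~\ref{prop: Gysin sheaves monoidal}, with sheafification replaced by the Morel--Suslin--Voevodsky localization of Construction~\ref{construction: MSV construction}. First, by Remark~\ref{rem: Gysin Day conv Lurie tensor} and the analogous identification $\GysP_{\varphi}(\Sm)\otimes\m{E}\simeq\GysP_{\varphi}(\Sm;\m{E})$, I will reduce to the universal case $\m{E}=\Spaces$. For this I need that $\rmH$ is computed levelwise by a geometric realization, so it commutes with $-\otimes\m{E}$. Next I will use that the Day convolution product preserves colimits separately in each variable. Since $L_{\bbR}$-local equivalences are closed under colimits, this lets me assume $\m{G}=\Tr^{(Y,d_Y)}_{\varphi}$ is representable.

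It then suffices to show that, for every homotopy invariant $\m{H}$, the map
\begin{equation}
\Map(\rmH\m{F}\otimes\Tr^{(Y,d_Y)}_{\varphi},\m{H})\to\Map(\m{F}\otimes\Tr^{(Y,d_Y)}_{\varphi},\m{H})
\end{equation}
is an equivalence. By the tensor--Hom adjunction, this reduces to the map $\Map(\rmH\m{F},\Hom(\Tr^{(Y,d_Y)}_{\varphi},\m{H}))\to\Map(\m{F},\Hom(\Tr^{(Y,d_Y)}_{\varphi},\m{H}))$. That map is an equivalence once $\Hom(\Tr^{(Y,d_Y)}_{\varphi},\m{H})$ is homotopy invariant, because $\m{F}\to\rmH\m{F}$ is an $L_{\bbR}$-equivalence by Lemma~\ref{lemma: properties of MSV construction}. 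By \cite[Proposition 3.11]{nikolausStable$infty$OperadsMultiplicative2016},
\begin{equation}
\Hom(\Tr^{(Y,d_Y)}_{\varphi},\m{H})\simeq\m{H}\big(-\boxtimes(Y,d_Y)\big).
\end{equation}
Homotopy invariance of this presheaf comes down to the projection $(X\times\bbR\times Y,\ldots)\to(X\times Y,\ldots)$ inducing an equivalence on $\m{H}$. That follows because $\big((X,d_X)\boxtimes(\bbR,0)\big)\boxtimes(Y,d_Y)\simeq\big((X,d_X)\boxtimes(Y,d_Y)\big)\boxtimes(\bbR,0)$ via the symmetric monoidal structure of $\TrCor^{\ex}_{\varphi}(\Sm)$ (Lemma~\ref{lem: TrCor^ex sym mon and sa}), compatibly with the projections, together with homotopy invariance of $\m{H}$.

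The step I expect to be the main obstacle is checking that this symmetry equivalence really intertwines the two projections as morphisms in $\TrCor^{\ex}_{\varphi}(\Sm)$, including their $\varphi$-structures and transfer data. Both projections are trivially structured pullback morphisms, i.e., images of $\Sm^{\op}$, so the check should reduce to the symmetric monoidal functor $\Sm^{\op}\to\TrCor^{\ex}_{\varphi}(\Sm)$. The remaining concern is the twist by $\{d\}$ from Remark~\ref{rem:baneofourexistence}. This twist is harmless here because the added factor $\bbR$ has $d=0$, so no sign enters.

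The final claim, that $\GysP^{\bbR}_{\varphi}(\Sm;\m{E})$ inherits a symmetric monoidal structure making $L_{\bbR}$ symmetric monoidal, then follows directly from \cite[Proposition 2.2.1.9]{lurieHigherAlgebra2017}.
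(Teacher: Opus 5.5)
Your proposal is correct and follows essentially the same route as the paper. Both reduce to $\m{E}=\Spaces$ and to representable $\m{G}=\Tr^{(Y,d_Y)}_{\varphi}$, then use the tensor--Hom adjunction and the identification $\Hom(\Tr^{(Y,d_Y)}_{\varphi},\m{H})\simeq\m{H}\big(-\boxtimes(Y,d_Y)\big)$, whose homotopy invariance is inherited from $\m{H}$. The compatibility you flag as the main obstacle is automatic: apply naturality of the symmetric monoidal constraints of $\TrCor^{\ex}_{\varphi}(\Sm)$ to $\id\boxtimes p\boxtimes\id$, where $p$ is the image of the projection $\bbR\to\bbR^0$. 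The paper treats this point as obvious.
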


\begin{proof}
    As in the proof of Proposition~\ref{prop: Gysin sheaves monoidal}, we may reduce to the universal case $\m{E} = \Spaces$, and, as $L_{\bbR}$ preserves small colimits, we may assume $\m{G} = \Tr_{\varphi}^{(Y,d_{Y})}$. 
    It is enough to see that
    \begin{equation}
        \Map_{\GysP_{\varphi}}(\m{F}\otimes\Tr_{\varphi}^{(Y,d_{Y})},\m{G}) \rightarrow \Map_{\GysP_{\varphi}}(\rmH\m{F}\otimes\Tr_{\varphi}^{(Y,d_{Y})},\m{G}) 
    \end{equation}
    is an equivalence for all $\m{G} \in \GysP_{\varphi}^{\bbR}(\Sm)$. 
    Using the tensor-Hom adjunction, it is enough to check 
    \begin{equation}
        \Hom(\Tr_{\varphi}^{(Y,d_{Y})},\m{G}) \simeq \m{G}\big(-\boxtimes(Y,d_{Y})\big)
    \end{equation}
    is homotopy invariant. 
    However, this is obvious as we assumed $\m{G}$ was homotopy invariant. 
\end{proof}

We now prove a useful lemma for recognizing when a $\varphi$-Gysin sheaf is homotopy invariant. 

\begin{lem}\label{lem: homotopy invar detection}
    Let $\m{E}$ be an $\infty$-category and $\m{F} \in \Gys_{\varphi}(\Sm;\m{E})$. 
    We have that $\m{F}$ is homotopy invariant if and only if, for each $d \in \bbZ$, the sheaf $i_{d}^{\ast}\m{F} \in \Shv(\Sm;\m{E})$ is homotopy invariant. 
\end{lem}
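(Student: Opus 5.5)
The plan is to unwind the definition of homotopy invariance for $\varphi$-Gysin presheaves and reduce it to the individual values $\m{F}(X,d_X)$ on connected components, using the sheaf condition. The ``only if'' direction is immediate. For $X\in\Sm$ and $d\in\bbZ$, the projection $(X,d)\boxtimes(\bbR,0)\simeq(X\times\bbR,d)\to(X,d)$ is exactly $i_d$ applied to the projection $X\times\bbR\to X$ in $\Sm$, because $\pr_1^*d$ is again the constant function $d$. Therefore the map $i_d^*\m{F}(X)\to i_d^*\m{F}(X\times\bbR)$ is precisely the map required to be an equivalence by homotopy invariance of $\m{F}$ at $(X,d)$.

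For the ``if'' direction, fix $(X,d_X)\in\TrCor_\varphi^{\ex}(\Sm)$. As in the proof of Lemma~\ref{lem: sheaf detection}, write $X=\bigsqcup_{j\in J}X_j$ as the countable disjoint union of its path components, with $d_X|_{X_j}$ constant equal to $d_j$. Then $X\times\bbR=\bigsqcup_j X_j\times\bbR$ has path components $X_j\times\bbR$, and $\pr_1^*d_X$ is constant equal to $d_j$ on $X_j\times\bbR$. Since $\m{F}$ is a $\varphi$-Gysin sheaf, condition (1) of Lemma~\ref{lem: sheaf detection} gives product decompositions
\begin{equation}
\m{F}(X,d_X)\simeq\prod_{j\in J}\m{F}(X_j,d_j),\qquad \m{F}(X\times\bbR,\pr_1^*d_X)\simeq\prod_{j\in J}\m{F}(X_j\times\bbR,d_j).
\end{equation}
The projection morphism $(X\times\bbR,\pr_1^*d_X)\to(X,d_X)$ is the disjoint union of the projections $(X_j\times\bbR,d_j)\to(X_j,d_j)$. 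Consequently, under these equivalences the induced map is the product over $j$ of the maps $i_{d_j}^*\m{F}(X_j)\to i_{d_j}^*\m{F}(X_j\times\bbR)$. Each of these is an equivalence by hypothesis, so the original map is an equivalence.

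The only step needing care is the compatibility of the product decompositions with the projection map. Concretely, one must check that restriction along the inclusions $(X_j,d_j)\to(X,d_X)$ (given by the trivially structured correspondences $X\leftarrow X_j = X_j$) commutes with the projections. This follows from functoriality in $\Sm^{\op}\subset\TrCor^{\ex}_\varphi(\Sm)$, because the square formed by the inclusions of $X_j\times\bbR$ and $X_j$ and the projections commutes in $\Sm$. Alternatively, one can avoid the decomposition by noting that $\{X_j\}$ is an open cover and applying the sheaf condition directly, as in Lemma~\ref{lem: sheaf detection}.
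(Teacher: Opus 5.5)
Your proposal is correct and follows the paper's proof: decompose $X$ into its path components, use the sheaf condition to identify $\m{F}(X,d_X)\to\m{F}(X\times\bbR,\pr_1^*d_X)$ with the product over $j$ of the maps $\m{F}(X_j,d_j)\to\m{F}(X_j\times\bbR,d_j)$, and conclude that one is an equivalence if and only if the other is. Your separate treatment of the ``only if'' direction, and your remark that the decomposition is compatible with the projections by functoriality along $\Sm^{\op}\to\TrCor^{\ex}_\varphi(\Sm)$, make explicit what the paper leaves to its commutative square.
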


\begin{proof}
    Let $(X,d_{X}) \in \TrCor_{\varphi}^{\ex}(\Sm)$ and decompose $X$ into path components: $X = \sqcup_{j \in J}, X_{j}$; of course, this implies $d_{X}|_{X_j} = d_{j} \in \bbZ$.
    Because $\m{F}$ is a $\varphi$-Gysin sheaf, we obtain a commutative square
    \begin{equation}
    \begin{tikzcd}
	{\m{F}(X,d_{X})} & {\m{F}(X\times\bbR,\pr_{1}^{\ast}d_{X})} \\
	{\prod_{j\in J}\m{F}(X_{j},d_{j})} & {\prod_{j \in J}\m{F}(X_{j}\times\bbR,d_{j})};
	\arrow[from=1-1, to=1-2]
	\arrow["\sim"', from=1-1, to=2-1]
	\arrow["\sim", from=1-2, to=2-2]
	\arrow[from=2-1, to=2-2]
    \end{tikzcd}
    \end{equation}
    in particular, the top horizontal map is an equivalence if and only if the bottom horizontal map is, proving the claim. 
\end{proof}

\begin{cor}
    If $\m{F}\in\GysP^\bbR_\varphi(\Sm;\m{E})$, then $L_{\sh}\m{F}$ is homotopy invariant. 
\end{cor}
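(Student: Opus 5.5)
The plan is to reduce the claim, via the detection lemmas above, to a statement about ordinary sheaves on $\Sm$, and then to use the fact that sheafification of homotopy invariant presheaves on $\Sm$ stays homotopy invariant.

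First, let $\m{F}\in\GysP^\bbR_\varphi(\Sm;\m{E})$. Since $L_{\sh}\m{F}$ is a $\varphi$-Gysin sheaf, Lemma~\ref{lem: homotopy invar detection} says it suffices to show that $i_d^*L_{\sh}\m{F}\in\Shv(\Sm;\m{E})$ is homotopy invariant for every $d\in\bbZ$. By Lemma~\ref{lem: sheafification i_d}, we have $i_d^*L_{\sh}\m{F}\simeq L_{\sh}i_d^*\m{F}$, where the right-hand $L_{\sh}$ is sheafification on $\Sm$.

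Second, $i_d^*\m{F}$ is a homotopy invariant presheaf on $\Sm$. Indeed, homotopy invariance of $\m{F}$ at the object $(X,d)$ is exactly the statement that $\m{F}(X,d)\to\m{F}(X\times\bbR,d)$ is an equivalence, since $\pr_1^*d=d$ for a constant $d$.

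Third, and this is the main step, one must show that sheafification on $\Sm$ preserves homotopy invariant presheaves. I would argue via Dugger's theorem together with the Morel–Suslin–Voevodsky construction on $\PShv(\Sm;\m{E})$, following \cite[I.5]{diff-coh-book}. For a presheaf $G$ on $\Sm$, write $\rmH G=\lvert G(-\times\Deltasm^\bullet)\rvert$. The key point is that the $\bbR$-localization on presheaves, $L_\bbR$, and sheafification commute up to equivalence on $\PShv(\Sm;\m{E})$. Concretely, the map $G\to L_{\sh}G$ is an $L_\bbR$-equivalence after applying $\rmH$, because $\rmH$ sends covering-local equivalences to local equivalences: for each $n$, $X\mapsto X\times\Deltasm^n$ preserves open covers, so $G(-\times\Deltasm^n)\to (L_{\sh}G)(-\times\Deltasm^n)$ is a local equivalence, and geometric realization preserves these. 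Hence $L_{\sh}\rmH G\simeq L_{\sh}\rmH L_{\sh}G$.

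The cleaner route I would actually try first is different. Homotopy invariant sheaves on $\Sm$ are constant by Dugger. For homotopy invariant $G$, the natural map $G\to\Gamma^*\Gamma_*$-type comparison should be checked directly: every $X\in\Sm$ is locally contractible, so $G$ is locally constant on a basis of contractible opens (on convex opens $U\cong\bbR^n$ one has $G(U)\simeq G(\bbR^0)$ by iterated homotopy invariance). The sheafification of $G$ therefore agrees with the sheafification of the constant presheaf with value $G(\bbR^0)$, since the comparison map is an equivalence on the basis of convex opens, which generates the topology. That sheafification is the constant sheaf $\Gamma^*G(\bbR^0)$, which is homotopy invariant by the footnote to Dugger's theorem.

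The main obstacle is justifying the assertion that a map of presheaves which is an equivalence on a basis of open sets (here, opens diffeomorphic to $\bbR^n$ within each manifold) induces an equivalence after sheafification on the big site $\Sm$. I would handle this by checking on each small site $\Open(X)$, using the comparison lemma for bases. Sheaves on $\Sm$ are detected by their restrictions to the small sites $\Open(X)$, and the sheafification of the restriction to $\Open(X)$ is the restriction of the big-site sheafification, since the covers on $\Sm$ are exactly the open covers. Combining these steps gives that $L_{\sh}i_d^*\m{F}$ is homotopy invariant for each $d$, and hence that $L_{\sh}\m{F}$ is homotopy invariant.
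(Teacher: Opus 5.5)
Your proposal is correct and is the paper's proof: the paper also combines Lemma~\ref{lem: homotopy invar detection} with Lemma~\ref{lem: sheafification i_d}, and then just cites \cite[4.4.11]{diff-coh-book} for the remaining input, namely that sheafification on $\Sm$ preserves homotopy invariant presheaves; your second route proves that input directly, by comparing $G$ with the constant presheaf on $G(\bbR^0)$ over opens diffeomorphic to some $\bbR^n$, applying the comparison lemma, and using that $\Gamma^*G(\bbR^0)$ is homotopy invariant. Drop the first sketch via $\rmH$, though: it never reaches the conclusion, and its claim that $G(-\times\Deltasm^n)\to(L_{\sh}G)(-\times\Deltasm^n)$ is a local equivalence does not follow from $-\times\Deltasm^n$ preserving open covers, since that only shows restriction along it preserves sheaves, and it does not preserve local equivalences because $\Deltasm^n$ is noncompact.
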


\begin{proof}
    The result follows by combining Lemmas~\ref{lem: homotopy invar detection}, \ref{lem: sheafification i_d}, and \cite[4.4.11]{diff-coh-book}.
\end{proof}

\begin{lem}
    The inclusion $\Gys_{\varphi}^{\bbR}(\Sm;\m{E}) \subset \Gys_{\varphi}(\Sm;\m{E})$ admits a left adjoint
    \begin{equation}
        L_{\rm hi} : \Gys_{\varphi}(\Sm;\m{E}) \rightarrow \Gys_{\varphi}^{\bbR}(\Sm;\m{E})
    \end{equation}
    which is equivalent to the composition
    \begin{equation}
        \Gys_{\varphi}(\Sm;\m{E})  \subset \GysP_{\varphi}(\Sm;\m{E}) \xrightarrow{L_{\bbR}} \GysP_{\varphi}^{\bbR}(\Sm;\m{E}) \xrightarrow{L_{\sh}}\Gys_{\varphi}^{\bbR}(\Sm;\m{E}).
    \end{equation} 
\end{lem}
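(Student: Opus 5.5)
The plan is to show that the composite $L_{\sh}\circ L_{\bbR}$, restricted to $\Gys_{\varphi}(\Sm;\m{E})$, lands in $\Gys^{\bbR}_{\varphi}(\Sm;\m{E})$ and has the universal property of a left adjoint to the inclusion. That is, for every $\m{F}\in\Gys_{\varphi}(\Sm;\m{E})$ we produce a unit map $\m{F}\to L_{\sh}L_{\bbR}\m{F}$ and check that precomposition with it induces an equivalence
\begin{equation}
\Map\big(L_{\sh}L_{\bbR}\m{F},\m{G}\big)\xrightarrow{\sim}\Map(\m{F},\m{G})
\end{equation}
for all $\m{G}\in\Gys^{\bbR}_{\varphi}(\Sm;\m{E})$.

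\textbf{Landing in the right place.} By Lemma~\ref{lemma: properties of MSV construction}(1), $L_{\bbR}\m{F}=\rmH\m{F}$ is a homotopy invariant $\varphi$-Gysin presheaf. By the corollary immediately preceding the statement, $L_{\sh}$ carries homotopy invariant presheaves to homotopy invariant sheaves. Hence $L_{\sh}L_{\bbR}\m{F}\in\Gys^{\bbR}_{\varphi}(\Sm;\m{E})$. The unit is the composite
\begin{equation}
\m{F}\xrightarrow{\eta_{\m{F}}}\rmH\m{F}\to L_{\sh}\rmH\m{F},
\end{equation}
where the second map is the sheafification unit.

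\textbf{Universal property.} Fix $\m{G}\in\Gys^{\bbR}_{\varphi}(\Sm;\m{E})$. Since $\m{G}$ is a sheaf, the $L_{\sh}\dashv$ inclusion adjunction on $\GysP_{\varphi}(\Sm;\m{E})$ gives
\begin{equation}
\Map(L_{\sh}\rmH\m{F},\m{G})\simeq\Map(\rmH\m{F},\m{G}).
\end{equation}
Since $\m{G}$ is in particular a homotopy invariant presheaf, the Bousfield localization of Lemma~\ref{lemma: properties of MSV construction} gives
\begin{equation}
\Map(\rmH\m{F},\m{G})\simeq\Map(\m{F},\m{G}),
\end{equation}
induced by $\eta_{\m{F}}$. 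Composing these two equivalences yields the required one, and it is induced by the unit above. Naturality in $\m{F}$ follows because all constructions are functorial. The adjoint functor therefore exists, and it agrees with the stated composite.

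\textbf{Main obstacle.} The only nontrivial input is that sheafification preserves homotopy invariance. That is precisely the preceding corollary, which reduces, via Lemmas~\ref{lem: homotopy invar detection} and \ref{lem: sheafification i_d}, to the corresponding statement for sheaves on $\Sm$. With that result assumed, the remaining steps are formal manipulations of the two localizations.
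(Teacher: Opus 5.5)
Your argument is correct, but it is organized differently from the paper's proof.

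\textbf{The paper's route.} The paper gets existence of $L_{\rm hi}$ abstractly from the adjoint functor theorem: the inclusion preserves small limits and is accessible. It then notes that the unit factors as $\m{F}\to L_{\sh}L_{\bbR}\m{F}\to L_{\rm hi}\m{F}$. It shows the second map is an equivalence by testing on the jointly conservative family $i_d^*$ (Corollary~\ref{cor: i_d conservative family}), commuting $i_d^*$ past sheafification (Lemma~\ref{lem: sheafification i_d}), and importing the analogous statement for sheaves on $\Sm$ from \cite[4.4.12]{diff-coh-book}.

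\textbf{Your route.} You build the adjoint explicitly. Your only geometric input is the preceding corollary, which is \cite[4.4.11]{diff-coh-book} transported along $i_d^*$. After that, the universal property follows formally from the sheafification adjunction and the Bousfield localization $\eta:\id\to\rmH$.

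\textbf{Comparison.} Your route needs no adjoint functor theorem. It also sidesteps a step the paper's write-up leaves implicit: to reduce to $L_{\sh}L_{\bbR}i_d^*\m{F}\to L_{\rm hi}i_d^*\m{F}$, one must identify $i_d^*L_{\rm hi}\m{F}$ with $L_{\rm hi}i_d^*\m{F}$. That identification is not proved there and is not formal, since it is unclear that $(i_d)_*$ preserves homotopy invariant sheaves. As a bonus, your route shows that $L_{\sh}L_{\bbR}$ is left adjoint to $\Gys^{\bbR}_{\varphi}(\Sm;\m{E})\subset\GysP_{\varphi}(\Sm;\m{E})$ on all presheaves, not only on sheaves. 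The paper's approach has the appeal of reducing directly to a packaged result on $\Sm$, but it needs that extra compatibility to be fully rigorous.
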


\begin{proof}
    It is not hard to see that the inclusion $\Gys_{\varphi}^{\bbR}(\Sm;\m{E}) \subset \Gys_{\varphi}(\Sm;\m{E})$ preserves small limits and is accessible, so $L_{\rm hi}$ exists by the adjoint functor theorem.
    Now, the unit for this adjunction canonically factors as 
    \begin{equation}
    \m{F} \rightarrow L_{\sh}L_{\bbR}\m{F} \rightarrow L_{\rm hi}\m{F}. 
    \end{equation}
    By Corollary~\ref{cor: i_d conservative family} and Lemma~\ref{lem: sheafification i_d}, the second map is an equivalence as soon as 
    \begin{equation}
    L_{\sh}L_{\bbR}i_{d}^{\ast}\m{F} \rightarrow L_{\rm hi}i_{d}^{\ast}\m{F}
    \end{equation}
    is an equivalence of sheaves on $\Sm$ for all $d \in \bbZ$; the latter claim follows from from \cite[4.4.12]{diff-coh-book}. 
\end{proof}

\begin{prop}
    Let $\m{E}$ be a presentably symmetric monoidal \category. 
    The functor $L_{\rm hi} : \Gys_{\varphi}(\Sm;\m{E}) \rightarrow \Gys^{\bbR}_{\varphi}(\Sm;\m{E})$ is compatible with Day convolution product in the sense of \cite[2.2.1.6]{lurieHigherAlgebra2017}: that is, for all $\m{F},\m{G} \in \Gys_{\varphi}(\Sm;\m{E})$, the induced map 
    \begin{equation}
        \m{F} \otimes \m{G} \rightarrow (L_{\rm hi}\m{F})\otimes \m{G}
    \end{equation}
    is an $L_{\rm hi}$-local equivalence. 
    Consequently, $\Gys^{\bbR}_{\varphi}(\Sm;\m{E})$ inherits the structure of a symmetric monoidal \category with respect to which $L_{\rm hi}$ is a symmetric monoidal functor.    
\end{prop}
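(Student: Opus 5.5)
Following the proofs of Propositions~\ref{prop: Gysin sheaves monoidal} and \ref{prop: homotopy invar gysin sym mon}, we first reduce to the universal case $\m{E}=\Spaces$. The tensor product on $\Gys_{\varphi}(\Sm;\m{E})$ is obtained from the Day convolution on $\GysP_{\varphi}(\Sm;\m{E})$ by sheafifying, and by Remark~\ref{rem: Gysin Day conv Lurie tensor} it is compatible with the Lurie tensor product. Since $L_{\rm hi}$ is a left adjoint and the tensor product on $\Gys_{\varphi}(\Sm)$ preserves colimits in each variable, the class of $\m{G}$ for which the claim holds is closed under colimits. Every $\varphi$-Gysin sheaf is a colimit of sheafified representables $L_{\sh}\Tr_{\varphi}^{(Y,d_Y)}$, so it suffices to treat $\m{G}=L_{\sh}\Tr_{\varphi}^{(Y,d_Y)}$.

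By adjunction, we must show that for every $\m{H}\in\Gys^{\bbR}_{\varphi}(\Sm)$ the map
\begin{equation}
\Map_{\Gys_{\varphi}}\big(L_{\rm hi}\m{F}\otimes \m{G},\m{H}\big)\to \Map_{\Gys_{\varphi}}\big(\m{F}\otimes\m{G},\m{H}\big)
\end{equation}
is an equivalence. Using the internal Hom of $\Gys_{\varphi}(\Sm)$, this reduces to the claim that $\Hom(\m{G},\m{H})$ is homotopy invariant. Since $\m{H}$ is a sheaf, the internal Hom computed in presheaves already lands in sheaves; this is the computation in the proof of Proposition~\ref{prop: Gysin sheaves monoidal}. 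Hence
\begin{equation}
\Hom\big(L_{\sh}\Tr_{\varphi}^{(Y,d_Y)},\m{H}\big)\simeq \Hom\big(\Tr_{\varphi}^{(Y,d_Y)},\m{H}\big)\simeq \m{H}\big(-\boxtimes(Y,d_Y)\big).
\end{equation}
This presheaf is a sheaf because $\{U_i\times Y\}$ covers $X\times Y$, and it is homotopy invariant because $(X\times\bbR)\times Y\cong (X\times Y)\times\bbR$ compatibly with the projections. Therefore $\Hom(\m{G},\m{H})$ is homotopy invariant, as required.

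The formal conclusion, that $\Gys^{\bbR}_{\varphi}(\Sm;\m{E})$ inherits a symmetric monoidal structure for which $L_{\rm hi}$ is symmetric monoidal, then follows from \cite[Proposition 2.2.1.9]{lurieHigherAlgebra2017}. An alternative route is to write $L_{\rm hi}\simeq L_{\sh}L_{\bbR}$ using the preceding lemma and compose the two compatibilities already established; we avoid this because it requires checking that $L_{\bbR}$ preserves the class of $L_{\sh}$-equivalences. The main point needing care is the reduction to representables: we must verify that the internal Hom $\Hom(\m{G},\m{H})$ in sheaves agrees with the presheaf internal Hom when $\m{H}$ is a sheaf. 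This holds because $L_{\sh}$ is compatible with Day convolution, by Proposition~\ref{prop: Gysin sheaves monoidal}.
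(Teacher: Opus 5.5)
Your proposal is correct and is essentially the paper's argument. As there, you reduce to $\m{E}=\Spaces$ and to (sheafified) representables $\m{G}$, then use the tensor--Hom adjunction to reduce to the fact that $\m{H}\big(-\boxtimes(Y,d_Y)\big)$ is a homotopy invariant $\varphi$-Gysin sheaf. The only difference is bookkeeping: you work with the internal Hom of $\Gys_{\varphi}(\Sm)$ and the universal property of $L_{\rm hi}$ directly, identifying the sheaf internal Hom with the presheaf one via Proposition~\ref{prop: Gysin sheaves monoidal}, whereas the paper passes to presheaf mapping spaces using the factorization $L_{\rm hi}\simeq L_{\sh}L_{\bbR}$, which your version does not need.
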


\begin{proof}
    As in the proof of Proposition~\ref{prop: Gysin sheaves monoidal}, we may reduce to the universal case $\m{E} = \Spaces$, and, as both $L_{\rm hi}$ and $\otimes$ preserve small colimits of sheaves, we may assume $\m{G} = \Tr_{\varphi}^{(Y,d_{Y})}$. 
    It is enough to see that
    \begin{equation}
        \Map_{\Gys_{\varphi}}(\m{F}\otimes\Tr_{\varphi}^{(Y,d_{Y})},\m{G}) \rightarrow \Map_{\Gys_{\varphi}}(L_{\rm hi}\m{F}\otimes\Tr_{\varphi}^{(Y,d_{Y})},\m{G}) 
    \end{equation}
    is an equivalence for all $\m{G} \in \Gys_{\varphi}^{\bbR}(\Sm)$, but as $\m{G}$ is a $\varphi$-Gysin sheaf, this is equivalent to showing
    \begin{equation}
        \Map_{\GysP_{\varphi}}(\m{F}\otimes\Tr_{\varphi}^{(Y,d_{Y})},\m{G}) \rightarrow \Map_{\GysP_{\varphi}}(L_{\sh}L_{\bbR}\m{F}\otimes\Tr_{\varphi}^{(Y,d_{Y})},\m{G}) 
    \end{equation}
    is an equivalence. 
    By the tensor-Hom adjunction, we only need to check that 
    \begin{equation}
        \Hom(\Tr_{\varphi}^{(Y,d_{Y})},\m{G}) \simeq \m{G}\big(-\boxtimes (Y,d_{Y})\big)
    \end{equation}
    is a homotopy invariant $\varphi$-Gysin sheaf. 
    However, this follows by our work in the proofs of Propositions~\ref{prop: Gysin sheaves monoidal} and \ref{prop: homotopy invar gysin sym mon}, whence the claim.
\end{proof}

\begin{notation}
    Let $\m{E}$ be a presentably symmetric monoidal \category. 
    We write 
    \begin{equation}
    \unit^{\m{E}}_{\varphi} \equiv L_{\hi}\Tr_{\varphi}
    \end{equation}
    for the unit of $\Gys_{\varphi}^{\bbR}(\Sm;\m{E})$. 
\end{notation}

\begin{prop}\label{prop: eval at R^0 lax sm}
    Let $\m{E}$ be a presentably symmetric monoidal \category. 
    The functor 
    \begin{equation}
    i_{0}^{\ast} : \Gys_{\varphi}^{\bbR}(\Sm;\m{E}) \rightarrow \Shv_{\bbR}(\Sm;\m{E}) \simeq \m{E}
    \end{equation}
    is canonically lax symmetric monoidal. 
\end{prop}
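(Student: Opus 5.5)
The plan is to realize $i_0^{\ast}$ as restriction along a symmetric monoidal functor and then pass through the localizations. First, $i_0:\Sm^{\op}\to\TrCor_{\varphi}^{\ex}(\Sm)$, $X\mapsto(X,0)$, is symmetric monoidal: it factors as $\Sm^{\op}\to\DSm^{\op}\to\Cor_{\varphi}(\DSm)$ (the symmetric monoidal inclusion $\m{C}_L^{\op}\to\Cor$ of Lemma~\ref{lem: adtrip sm gives cor sm}), lifted to $\TrCor^{\ex}_{\varphi}(\Sm)$ by choosing the canonical (coCartesian) transfer classes $\theta=\id$ for trivially structured correspondences with identity right leg. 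Concretely, $i_0$ lands in the subcategory of correspondences $X\xleftarrow{(f,0)}W\xrightarrow{(\id,0,\id)}W$, and since $(X,0)\boxtimes(Y,0)\simeq(X\times Y,0)$ by Lemma~\ref{lem: TrCor symmetric monoidal} (no sign ambiguity from Remark~\ref{rem:baneofourexistence} because all twists are $0$), this subcategory is closed under $\boxtimes$ and contains the unit $(\bbR^0,0)$. This yields a symmetric monoidal structure on $i_0$.

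Second, restriction along a symmetric monoidal functor $i_0:\m{A}\to\m{B}$ between small symmetric monoidal \categories gives a lax symmetric monoidal functor $i_0^{\ast}:\Fun(\m{B},\m{E})\to\Fun(\m{A},\m{E})$ for Day convolution. This holds because the left adjoint $(i_0)_!$ (left Kan extension) is symmetric monoidal, which we check on representables and extend by colimits; the right adjoint of a symmetric monoidal functor is then lax symmetric monoidal \cite[Corollary 7.3.2.7]{lurieHigherAlgebra2017}. So $i_0^{\ast}:\GysP_{\varphi}(\Sm;\m{E})\to\PShv(\Sm;\m{E})$ is lax symmetric monoidal.

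Third, we pass to the localized categories. The symmetric monoidal structures on $\Gys_{\varphi}^{\bbR}(\Sm;\m{E})$ and $\Shv_{\bbR}(\Sm;\m{E})$ are those making $L_{\rm hi}$ symmetric monoidal, so the inclusions into presheaves are lax symmetric monoidal, and the localizations $L$ on $\Sm$ are symmetric monoidal. By the corresponding remark, $i_0^{\ast}$ carries homotopy invariant $\varphi$-Gysin sheaves into $\Shv_{\bbR}(\Sm;\m{E})$. The composite
\[
\Gys_{\varphi}^{\bbR}\hookrightarrow\GysP_{\varphi}\xrightarrow{i_0^{\ast}}\PShv(\Sm;\m{E})\xrightarrow{L}\Shv_{\bbR}(\Sm;\m{E})
\]
is therefore lax symmetric monoidal. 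It agrees with $i_0^{\ast}$, since $L$ is the identity on local objects. This is the standard fact that a lax functor preserving local objects descends (cf. \cite[Proposition 2.2.1.9]{lurieHigherAlgebra2017}). Finally, compose with Dugger's symmetric monoidal equivalence $\Gamma_{\ast}:\Shv_{\bbR}(\Sm;\m{E})\simeq\m{E}$.

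The main obstacle is the first step: making $i_0$ genuinely symmetric monoidal on the nose, including coherences for the transfer data and the $\{d\}$-twists. I would handle this by identifying the image of $i_0$ with the full subcategory of $\Un(\m{D}_{\varphi})$ restricted over $\DSm^{\op}$ and fibered at the unit objects $\rmM\varphi_X$. The Cartesian section $X\mapsto\rmM\varphi_X$ is symmetric monoidal because the unit section of the lax symmetric monoidal $\m{D}_{\varphi}|_{\DSm^{\op}}$ is.
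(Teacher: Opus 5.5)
Your proposal is correct and follows the same route as the paper. The paper's one-line proof asserts that $i_0^{\ast}:\GysP_{\varphi}(\Sm;\m{E})\to\PShv(\Sm;\m{E})$ is lax symmetric monoidal and then passes to the localized categories using the compatibility of $i_0^{\ast}$ with $L_{\bbR}$ and $L_{\sh}$; you supply the details it omits, namely the symmetric monoidality of $i_0$ via the unit section, the Day convolution/adjoint argument, and the observation that it suffices for $i_0^{\ast}$ to preserve local objects rather than to commute with the localizations.
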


\begin{proof}
    This follows from the fact that $i_{0}^{\ast} : \GysP_{\varphi}(\Sm;\m{E}) \rightarrow \PShv(\Sm;\m{E})$ is canonically lax symmetric monoidal together with compatibility of $i_{0}^{\ast}$, $L_{\bbR}$, and $L_{\sh}$.
\end{proof}

\subsection{Thom-Gysin transfers}\label{subsec:shiftoperators}
In this subsection, we will treat $\TrCor^{\rm ex}_\varphi(\Sm)$ as if we had defined it to contain objects which are allowed to be smooth manifolds with boundary. This is a harmless assumption since (1) any smooth manifold with boundary is an example of a derived smooth manifold and (2) any smooth manifold with boundary is smoothly homotopic to a smooth manifold with empty boundary, namely, its interior. In particular, the value of a homotopy invariant sheaf on a smooth manifold with boundary is completely determined by its value on the interior of that smooth manifold with boundary.

The following construction is essentially due to Dold, cf. \cite[Example 3.11]{doldGeometricCobordismFixed1978}.

\begin{construction}[Thom-Gysin transfer]
Let $\pi:E\to X\in\Sm$ be a smooth vector bundle of rank $d\in\bbZ_{\geq0}$; we will consider the disk resp. sphere bundle of $E$, denoted $D(E)$ resp. $S(E)$. Of course, $D(E)$ is a smooth manifold with boundary $S(E)$. Suppose $[E]\in\TKO(X)$ admits a $\varphi$-structure $\alpha$, then the zero section embedding,
    \begin{equation}
    i:X\hookrightarrow D(E),
    \end{equation}
admits the $\varphi$-structure $-\alpha$ since
    \begin{equation}
    \tau_i\simeq\tau_X-i^*\tau_{D(E)}\simeq\tau_X-\big([E]+\tau_X)\simeq-[E].
    \end{equation}
In particular, we obtain the morphism
    \begin{equation}
    (X,0) \xleftarrow{\rm id} X \xrightarrow{(i,-\alpha,\theta)} \big(D(E),d\big)\in\TrCor^{\rm ex}_\varphi(\Sm)
    \end{equation}
which we denote by $i$ for brevity. In fact, if we consider the morphism $\big(S(D),d\big)\to\big(D(E),d\big)\in\TrCor^{\rm ex}_\varphi(\Sm)$ induced by the inclusion $S(D)\hookrightarrow D(E)$, then we obtain the following pullback square in $\TrCor^{\rm ex}_\varphi(\Sm)$:
    \begin{equation}
    \begin{tikzcd}
    (\varnothing,0)\arrow[r]\arrow[d]\arrow[dr, phantom, "\lrcorner", very near start] & \big(S(E),d\big)\arrow[d] \\
    (X,0)\arrow[r,"i"] &\big(D(E),d\big).
    \end{tikzcd}
    \end{equation}
Finally, given $\m{F}\in\Gys^\bbR_\varphi(\Sm;\Sp)$, we obtain a commutative diagram
    \begin{equation}
    \begin{tikzcd}
    \m{F}(X,0)\arrow[r,"\sim"]\arrow[d,"\exists!"] & \m{F}(X,0)\arrow[r]\arrow[d,"\m{F}i"] & \m{F}(\varnothing,0)\arrow[d] \\
    \m{F}\big(D(E),S(E),d\big)\arrow[r] & \m{F}\big(D(E),d\big)\arrow[r] & \m{F}\big(S(E),d\big),
    \end{tikzcd}
    \end{equation}
where the rows are fiber sequences. We refer to 
    \begin{equation}
    \iota_!:\m{F}(X,0)\to\m{F}\big(D(E),S(E),d\big)
    \end{equation}
as the \emph{Thom-Gysin transfer}.
\end{construction}

\begin{lem}
Let $\pi:E\to X\in\Sm$ be a smooth vector bundle of rank $d$ such that $[E]\in\TKO(X)$ admits a $\varphi$-structure $\alpha$, then, for any $\m{F}\in\Gys^\bbR_\varphi(\Sm;\Sp)$, the associated Thom-Gysin transfer is a homotopy equivalence.
\end{lem}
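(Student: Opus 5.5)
The plan is to reduce to trivial bundles by descent, then to rank one by multiplicativity, and to handle rank one by a direct "moving point" argument.

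\emph{Step 1: reduce to a trivial bundle with trivial structure.} Both sides of the Thom-Gysin transfer are contravariant in opens $U\subset X$. Indeed, $U\mapsto \m{F}(U,0)$ is a sheaf, and
\[
U\mapsto \m{F}\big(D(E|_U),S(E|_U),d\big)
\]
is the fiber of a map of sheaves, because $\{D(E|_{U_i})\}$ and $\{S(E|_{U_i})\}$ are open covers whenever $\{U_i\}$ covers $X$. The Thom-Gysin transfer is natural in $U$, since the correspondences $i$ are compatible with pullback along the open embeddings $D(E|_U)\hookrightarrow D(E)$ (Lemma~\ref{lem:extendtransferpb}). Hence it suffices to prove the claim for $U$ ranging over a good cover, which we take to be contractible charts with $E|_U\cong U\times\bbR^d$.

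On such a $U$ we still have to remove the $\varphi$-structure $\alpha$. Since $[E|_U]=0$ in $\TKO(U)$, the structure $\alpha$ is a point of $\frakB(U)$ lying over $0$. The pair $(\id,\alpha)$ gives an invertible morphism in $\TrCor^{\ex}_\varphi(\Sm)$, with the classical transfer, and composing $i$ with it identifies the transfer for $\alpha$ with the transfer for the trivial structure. So we may assume $E=X\times\bbR^d$ and $\alpha=0$.

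\emph{Step 2: reduce to rank one.} For the trivial bundle, $D(\bbR^d)$ is diffeomorphic, as a manifold with corners, to $D(\bbR)^d$, relative to the boundary. The zero section correspondence is the external product of $d$ copies of the rank-one zero section, and extended transfers are multiplicative by Lemma~\ref{lem:extendedtransfersm}. For fixed $(Y,e)$, the assignment $\m{F}\big(-\boxtimes(Y,e)\big)$ is again a homotopy invariant $\varphi$-Gysin sheaf; this was shown in the proofs of Propositions~\ref{prop: Gysin sheaves monoidal} and~\ref{prop: homotopy invar gysin sym mon}. We can therefore induct: we apply the rank-one case to this twisted sheaf and use the relative version of the pair (product of pairs) to peel off one coordinate at a time.

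\emph{Step 3: the rank-one case.} Here $D=X\times[-1,1]$ and $S=X\times\{\pm1\}$. By homotopy invariance, $\m{F}(D,1)\simeq\m{F}(X,1)$. The restriction $\m{F}(D,1)\to\m{F}(S,1)\simeq\m{F}(X,1)^{2}$ is then the diagonal, so
\[
\m{F}(D,S,1)\simeq\Omega\,\m{F}(X,1).
\]
Under this identification, $\iota_!$ should be the map $\m{F}(X,0)\to\Omega\m{F}(X,1)$ constructed as follows. There are two transfers $i_a,i_b:\m{F}(X,0)\to\m{F}(X\times\bbR,1)\simeq\m{F}(X,1)$, along the sections at two points $a,b$. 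They agree via the isotopy moving $a$ to $b$, which is a correspondence over $X\times\bbR\times[0,1]$ combined with homotopy invariance. They are also separately null on the complement, by the pullback square against $S$. Comparing these two nullhomotopies gives a loop.

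To prove this map is an equivalence, I would construct an inverse from the transfer $(X\times\bbR,1)\to(X\times\bbR^2,\,\cdots)$ together with the projection correspondence. Equivalently, I would show that the composite of two such shifts is the $2$-fold shift, identified via an explicit rotation isotopy in $\bbR^2$; this is an Eilenberg--Swindle style argument in the spirit of Dold. This is the main obstacle. It requires carefully matching the standard transfers (through the interchange maps and the Thom isomorphism $\rmM\varphi_{X\times\bbR^N}\{N\}\simeq\pi^!\rmM\varphi_X$) with the geometric isotopies, and keeping track of the sign ambiguity flagged in Remark~\ref{rem:baneofourexistence}. I would first attempt it in the universal case $\m{F}=\unit^{\Sp}_\varphi$ and then deduce the general case by using that $\m{F}$ is a module over the unit for the Day convolution product.
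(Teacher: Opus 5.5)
\textbf{Verdict: there is a genuine gap.} Step~3, the rank-one case, carries all the content of the lemma, and you leave it open.

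Steps 1 and 2 are reasonable reductions, up to a small repair. A ball and a cube are not diffeomorphic as manifolds with corners. Use the open pair $(\bbR^d,\bbR^d\setminus0)$ instead: the cover
\[
\bbR^d\setminus0=\big((\bbR\setminus0)\times\bbR^{d-1}\big)\cup\big(\bbR\times(\bbR^{d-1}\setminus0)\big)
\]
exhibits the relative term as an iterated one.

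\textbf{What fails in Step~3.}
\begin{itemize}
\item The inverse you suggest, built ``from the transfer $(X\times\bbR,1)\to(X\times\bbR^2,\cdots)$ together with the projection correspondence,'' does not exist. The projection $X\times\bbR\to X$ is not proper, so there is no Gysin morphism along it in $\TrCor^{\ex}_\varphi(\Sm)$, and you give no proper substitute. Without a left inverse, the rotation isotopy has nothing to act on.
\item The fallback does not work either. Proving the claim for $\unit^{\Sp}_\varphi$ and passing to $\m{F}$ ``as a module over the unit'' is vacuous, since every object is such a module. Transporting the statement would need the unit to generate $\Gys^{\bbR}_\varphi(\Sm;\Sp)$, i.e.\ Theorem~\ref{thm: gysin sheaves monadic}. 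Its conservativity step uses Corollary~\ref{cor: shifting = suspension}, which is deduced from this lemma, so the argument is circular.
\end{itemize}

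\textbf{Comparison with the paper.} The paper takes as inverse the Gysin map along the disk-bundle projection $\pi\colon D(E)\to X$, precomposed with $\m{F}(D(E),S(E),d)\to\m{F}(D(E),d)$. It then uses $\pi\circ i=\id$ and $i\circ\pi\simeq\id$. Your own Step~3 computation shows this cannot work as written. In rank one, $\m{F}(D,S,1)\to\m{F}(D,1)$ is the fiber of the split monomorphism $\Delta$, hence null. So $\pi_!\iota_!=\m{F}(\pi)\m{F}(i)$ would be null rather than the identity. Relatedly, a projection from a manifold with boundary has no transfer class restricting to the standard one on the interior.

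\textbf{A possible repair.} A left inverse has to see the relative term, which a compact model achieves.
\begin{itemize}
\item Mayer--Vietoris and homotopy invariance give $\m{F}(X\times S^1,1)\simeq\m{F}(X,1)\oplus\Omega\m{F}(X,1)$.
\item Excision identifies $\iota_!$, followed by the inclusion of the $\Omega$-summand, with the Gysin map along a section $s$ of the proper projection $p\colon X\times S^1\to X$.
\item Give $p$ a $\varphi$-structure from a framing of $S^1$. The Gysin map along $p$ then yields a map $\psi$ with $\psi\iota_!$ an automorphism, which you can normalize to the identity.
\end{itemize}
Both $\iota$ and $\psi$ are natural transformations between $\id$ and the endofunctor
\[
T\m{F}\equiv\mathrm{fib}\big(\m{F}(-\boxtimes(\bbR,1))\to\m{F}(-\boxtimes(\bbR\setminus0,1))\big)
\]
of $\Gys^\bbR_\varphi(\Sm;\Sp)$.

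Only now does your rotation idea close the argument. The transformations $T\iota$ and $\iota T$ differ by the coordinate swap on $\bbR^2$. This swap is a rotation isotopic to the identity, composed with a one-coordinate reflection. The reflection exchanges the two half-lines and so negates $\Omega$. Hence the swap acts by a sign, up to the conventions of Remark~\ref{rem:baneofourexistence}. Naturality of $\iota$ applied to $\psi$ then gives
\[
\iota_!\psi=T(\psi)\,\iota_{T\m{F}}=\pm T(\psi\iota_!)=\pm\id,
\]
so $\iota_!$ is an equivalence.
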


\begin{proof}
We may produce an explicit inverse as follows. Observe, $\pi$ induces a morphism 
    \begin{equation}
    \big(D(E),d\big)\xleftarrow{\id} D(E)\xrightarrow{(\pi,\alpha,\theta)}(X,0)\in\TrCor^{\rm ex}_\varphi(\Sm);
    \end{equation}
hence, we obtain a morphism
    \begin{equation}
    \pi_!:\m{F}\big(D(E),S(E),d\big)\to\m{F}(X,0).
    \end{equation}
Now, the composition 
    \begin{equation}
    \begin{tikzcd}
    & X\arrow[dl,"\id",swap]\arrow[dr,"i"] & & D(E)\arrow[dl,"\id",swap]\arrow[dr,"\pi"] & \\
    (X,0) & & \big(D(E),d\big) & & (X,0)
    \end{tikzcd}
    \end{equation}
is clearly the identity, therefore,  $\pi_!\iota_! \simeq \id$. Meanwhile, since $i\circ\pi$ is (smoothly) homotopic to the identity (recall, $X$ is a deformation retract of $D(E)$), the composition 
    \begin{equation}
    \begin{tikzcd}
    & D(E)\arrow[dl,"\id",swap]\arrow[dr,"\pi"] & & X\arrow[dl,"\id",swap]\arrow[dr,"i"] & \\
    \big(D(E),d\big) & & (X,0) & & \big(D(E),d\big)
    \end{tikzcd}
    \end{equation}
is equivalent to the identity, therefore, $\iota_!\pi_! \simeq\id$.
\end{proof}

\begin{notation}
Given $\m{F}\in\Gys^\bbR_\varphi(\Sm;\Sp)$ and $d\in\bbZ$, we denote by 
    \begin{equation}
    \m{F}^d\in\Gys^\bbR_\varphi(\Sm;\Sp)
    \end{equation}
the homotopy invariant $\varphi$-Gysin sheaf given by 
    \begin{equation}
    \m{F}^d(-)\equiv\m{F}\left(-\boxtimes(\bbR^{0},d)\right).
    \end{equation}
Explicitly, this sends $(X,d_{X})$ to $\m{F}(X,d_{X}+d)$ with the obvious effect on morphisms.
\end{notation}

\begin{cor}\label{cor: shifting = suspension}
For any $\m{F}\in\Gys^\bbR_\varphi(\Sm;\Sp)$, there is a canonical homotopy equivalence which is natural in $X\in\Sm$:
    \begin{equation}
    \m{F}(X,0) \xrightarrow{\sim}\Sigma^{-1}\m{F}(X,1)\in\Sp,
    \end{equation}
i.e., there is a canonical equivalence of underlying sheaves on $\Sm$:
    \begin{equation}
    \Sigma\m{F}\xrightarrow{\sim}\m{F}^1\in\Shv_\bbR(\Sm;\Sp).
    \end{equation}
Consequently, for any $d\in\bbZ$, it follows that there is a canonical equivalence of underlying sheaves on $\Sm$:
    \begin{equation}
    \Sigma^d\m{F} \xrightarrow{\sim} \m{F}^{d}\in\Shv_\bbR(\Sm;\Sp).
    \end{equation}
\end{cor}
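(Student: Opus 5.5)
The plan is to apply the Thom-Gysin transfer lemma to the trivial rank-one bundle $E=X\times\bbR\to X$, and then compare the relative term with a suspension using homotopy invariance and the sheaf condition.

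The trivial bundle has $[E]=0\in\TKO(X)$, so it carries the trivial $\varphi$-structure $\alpha=0$. The preceding lemma then gives an equivalence
\begin{equation}
\iota_!:\m{F}(X,0)\xrightarrow{\sim}\m{F}\big(D(E),S(E),1\big)\equiv\mathrm{fib}\Big(\m{F}\big(X\times[-1,1],1\big)\to\m{F}\big(X\times\{\pm1\},1\big)\Big).
\end{equation}
By homotopy invariance, interpreted through the interiors as explained at the start of this subsection, the source of this restriction map is $\m{F}(X,1)$. The target is $\m{F}(X,1)\oplus\m{F}(X,1)$, either by the sheaf condition for disjoint unions (Lemma~\ref{lem: sheaf detection}(1)) or by semiadditivity. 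The restriction map is the diagonal. Its fiber is therefore $\Omega\m{F}(X,1)=\Sigma^{-1}\m{F}(X,1)$, which yields the claimed equivalence $\m{F}(X,0)\simeq\Sigma^{-1}\m{F}(X,1)$.

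Naturality in $X\in\Sm$ comes from checking that every morphism used is natural for trivially structured maps $g:X'\to X$. The zero section, the projection and the inclusion $S(E)\hookrightarrow D(E)$ all pull back along $g\times\mathrm{id}$ via transverse pullback squares in $\Sm$. The classical transfer is stable under pullback (Lemma~\ref{lem:extendtransferpb} and Lemma~\ref{lem:extendedtoclassical}). Composition of correspondences then shows that $\iota_!$ commutes with $g^*$. This produces an equivalence of underlying sheaves $\Sigma\m{F}\simeq\m{F}^1$ on $\Sm$.

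For general $d$, apply the $d=1$ statement to the shifted sheaves $\m{F}^{k}$, using $(\m{F}^{k})^1=\m{F}^{k+1}$, and induct. Negative $d$ follows by inverting: $\Sigma^{-1}\m{F}^{k+1}\simeq\m{F}^{k}$.

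The main obstacle is the identification of the fiber. It must be shown canonically that the map $\m{F}(D(E),1)\to\m{F}(S(E),1)$ is the diagonal under the homotopy-invariance equivalences, so that the fiber really is a loop object and not something twisted by an automorphism. A second point to track is that the identifications $(X,d+1)\simeq(X,1+d)$ may carry the signs of Remark~\ref{rem:baneofourexistence}; this affects only which canonical equivalence is chosen, not its existence.
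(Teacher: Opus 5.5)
Your proposal is correct and follows the paper's proof essentially verbatim. Both apply the Thom--Gysin lemma to the trivial rank-one bundle $X\times\bbR\to X$, identify $\m{F}(D(E),1)\simeq\m{F}(X,1)$ by homotopy invariance and $\m{F}(S(E),1)\simeq\m{F}(X,1)\oplus\m{F}(X,1)$ by the sheaf condition, and then read off the relative term as the fiber of the diagonal, $\Sigma^{-1}\m{F}(X,1)$. The point you flag as the main obstacle is settled in the paper in one line: each component inclusion $X\hookrightarrow S(E)\hookrightarrow D(E)$ followed by $\pi$ is $\id_X$, so under the equivalence $\pi^*$ the restriction map is canonically the diagonal. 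Your more explicit treatment of naturality and of general $d$ fills in what the paper simply asserts as clear.
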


\begin{proof}
Let 
    \begin{equation}
    \pi:\underline{\bbR}\equiv X\times\bbR\to X\in\Sm
    \end{equation}
be the trivial bundle of rank 1. In this case, we have that 
    \begin{equation}
    D(\underline{\bbR})=X\times[0,1]\;\;\textrm{and}\;\; S(\underline{\bbR})=X\times S^0=X\sqcup X.
    \end{equation}
In particular, since the inclusion of a component of $S(\underline{\bbR})$ into $D(\underline{\bbR})$ followed by $\pi$ is clearly the identity on $X$, we have the fiber sequence
    \begin{equation}
    \m{F}\big(D(\underline{\bbR}),S(\underline{\bbR}),1\big)\to\m{F}\big(D(\underline{\bbR}),1\big)\simeq\m{F}(X,1)\xrightarrow{\Delta}\m{F}(X,1)\oplus\m{F}(X,1)\simeq\m{F}\big(S(\underline{\bbR}),1\big).\footnote{Recall, $\m{F}$ sends sums to products since it is a sheaf.}
    \end{equation}
But this implies 
    \begin{equation}
    \m{F}(X,0)\simeq\m{F}\big(D(\underline{\bbR}),S(\underline{\bbR}),1\big)\simeq\Sigma^{-1}\m{F}(X,1).
    \end{equation}
Clearly, this argument can be made to be functorial in $X\in\Sm$, whence the claim.
\end{proof}

\subsection{Homotopy invariant Gysin sheaves and cobordism}
The purpose of this subsection is to elucidate the connection between $\varphi$-structured cobordism and homotopy invariant spatial/spectral $\varphi$-Gysin sheaves.

\begin{lem}
    For each $\m{F} \in \Gys_{\varphi}(\Sm;\Spaces)$, the counit map $\m{F} \rightarrow L_{\hi}\m{F}$ induces an equivalence of $\varphi$-Gysin sheaves
    \begin{equation}
    \rmH\m{F} \xrightarrow{\sim} L_{\hi}\m{F}. 
    \end{equation}
    Consequently, 
        \begin{equation}
        L_{\bbR} : \GysP_{\varphi}(\Sm;\Spaces) \rightarrow \GysP_{\varphi}^{\bbR}(\Sm;\Spaces)
        \end{equation}
    preserves sheaves and $L_{\bbR} \simeq L_{\hi}$. 
    The same result is true for $\varphi$-Gysin sheaves valued in $\Spaces_\ast$, $\CMon$, or $\CMongp$. 
\end{lem}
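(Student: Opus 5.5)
The key claim is that if $\m{F}$ is a $\varphi$-Gysin sheaf, then $\rmH\m{F}$ is again a $\varphi$-Gysin sheaf. Granting this, the rest is formal. By Lemma~\ref{lemma: properties of MSV construction}, $\rmH\m{F}$ is homotopy invariant, and $\eta_{\m{F}}:\m{F}\to\rmH\m{F}$ is an $L_{\bbR}$-local equivalence. It is therefore also an $L_{\hi}$-local equivalence: any homotopy invariant sheaf is in particular a homotopy invariant presheaf. So if $\rmH\m{F}$ is a homotopy invariant sheaf, the canonical map $\rmH\m{F}\to L_{\hi}\m{F}$ is a map between $L_{\hi}$-local objects that is an $L_{\hi}$-equivalence, hence an equivalence. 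Equivalently, this is the factorization $\m{F}\to L_{\sh}L_{\bbR}\m{F}\simeq L_{\sh}\rmH\m{F}\simeq \rmH\m{F}$ from the preceding lemma identifying $L_{\hi}$. The statement that $L_{\bbR}$ preserves sheaves and agrees with $L_{\hi}$ on them is then a restatement of the same fact.

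To show $\rmH\m{F}$ is a sheaf, I will use Lemma~\ref{lem: sheaf detection}, which reduces the claim to two checks.

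First, $\rmH\m{F}$ must send countable disjoint unions $\bigsqcup_j (X_j,d_j)$ to products. Levelwise this holds, since $X\times\Deltasm^n=\bigsqcup_j X_j\times\Deltasm^n$ and $\m{F}$ is a sheaf. So we need geometric realization to commute with countable products. In $\Spaces$ this holds for products of simplicial spaces, since geometric realization of simplicial spaces commutes with finite products (siftedness) and with arbitrary products in $\Spaces$ (realization is computed by the diagonal of bisimplicial sets, and products of Kan complexes are levelwise). In $\Spaces_\ast$, $\CMon$ and $\CMongp$ the forgetful functors to $\Spaces$ preserve sifted colimits and products, so the same argument applies.

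Second, for each $d$, the presheaf $i_d^{\ast}\rmH\m{F}$ must be a sheaf on $\Sm$. We have $i_d^{\ast}\rmH\m{F}=\rmH(i_d^{\ast}\m{F})$, the usual singular construction $|\m{G}(-\times\Deltasm^\bullet)|$ applied to a sheaf $\m{G}$ of spaces on $\Sm$. That this is again a sheaf is the known fact from \cite[I.5, 4.4.11--4.4.12]{diff-coh-book}, building on Dugger's theorem: for a sheaf $\m{G}$ on $\Sm$, $\rmH\m{G}$ is the homotopy invariant sheafification, equivalently the constant sheaf on $\m{G}(\bbR^0)$-type data, and hence a sheaf. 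I would cite this directly.

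The main obstacle is exactly this descent statement, because geometric realization does not commute with the limits over \v{C}ech nerves in general. The resolution is that $\rmH\m{G}$ is homotopy invariant, and on $\Sm$ a homotopy invariant presheaf of spaces satisfies descent once it satisfies it for good covers, by the argument of \emph{loc.\ cit.} I will take that result as given, noting that it is specific to unpointed/pointed spaces and grouplike or ordinary $\einf$-monoids, which is why the statement restricts to these targets rather than, say, $\Sp$.
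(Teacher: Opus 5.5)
Your overall architecture is the paper's. You reduce via Lemma~\ref{lem: sheaf detection} to condition (1), on countable disjoint unions, and condition (2), that $i_d^{\ast}\rmH\m{F}=\rmH(i_d^{\ast}\m{F})$ is a sheaf. You import (2) from the literature and then conclude formally. Your formal deduction of $\rmH\m{F}\simeq L_{\hi}\m{F}$, once $\rmH\m{F}$ is known to be a homotopy invariant sheaf, is fine.

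\textbf{The gap is in condition (1).} You invoke the principle that geometric realization in $\Spaces$ commutes with arbitrary products of simplicial spaces, and this is false. Siftedness of $\Delta^{\op}$ only handles finite products. The countable case is exactly what Lemma~\ref{lem: sheaf detection} requires, since a manifold may have countably many components. Your justification breaks when you pass to diagonals: the diagonal of a levelwise Kan bisimplicial set need not be Kan, and a strict product of non-Kan simplicial sets need not model the homotopy product. Concretely, let $K_n$ be the spine of $\Delta^n$ (a chain of $n$ edges), viewed as a levelwise discrete simplicial space. Then $\prod_n\colim_{\Delta^{\op}}K_n\simeq\prod_n\rlz{K_n}\simeq\ast$. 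On the other hand, $\colim_{\Delta^{\op}}\prod_n K_n\simeq\rlz{\prod_n K_n}$ is not even connected. Each edge of $\prod_n K_n$ moves every coordinate at most one step along its chain, so no finite zigzag of edges joins the vertex $(0,0,0,\dots)$ to $(1,2,3,\dots)$.

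So the interchange map in (1) must be proved using the specific form of the simplicial spaces $\m{F}(X_j\times\Deltasm^{\bullet},d_j)$. These are values on extended simplices of the sheaves $M\mapsto\m{F}(X_j\times M,d_j)$ on $\Sm$. The relevant feature is a Kan-type horn-filling property of such simplicial spaces, which is exactly what excludes examples like the one above. This is what the paper imports, \emph{mutatis mutandis}, from \cite[Lemma 5.13]{berwick-evansClassifyingSpacesInfinitysheaves2024}.

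Condition (2) cannot substitute for this argument. Grouping the $X_j$ by the value of $d_j$ disposes of each fixed $d$ via the sheaf property of $i_d^{\ast}\rmH\m{F}$. It still leaves a countable product over the values of $d$ that must be interchanged with realization.

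Relatedly, the input for (2) is \cite[Theorem 1.2]{berwick-evansClassifyingSpacesInfinitysheaves2024} together with Lemma~\ref{lem: sheafification i_d}, as in the paper. The results the paper draws from \cite{diff-coh-book} are that sheafification preserves $\bbR$-invariance and that $L_{\hi}\simeq L_{\sh}L_{\bbR}$; these do not show that $\rmH$ preserves sheaves. Your proposed reduction to good covers only restates the claim $\rmH\m{G}(X)\simeq\Map_{\Spaces}\big(\Sing(X),\rmH\m{G}(\bbR^0)\big)$. It does not by itself avoid commuting a realization past a limit.
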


\begin{proof}
    By Lemma~\ref{lem: sheaf detection} and Corollary~\ref{cor: i_d conservative family}, it is enough to show that $\rmH\m{F}$ is a $\varphi$-Gysin sheaf and $i_{d}^{\ast}\rmH \m{F} \simeq i_{d}^{\ast}L_{\hi}\m{F}$ for all $d \in \bbZ$. 
    By \cite[Theorem 1.2]{berwick-evansClassifyingSpacesInfinitysheaves2024} combined with Lemma~\ref{lem: sheafification i_d}, we can conclude that $i_{d}^{\ast}\rmH\m{F}$ is a sheaf and that $i_{d}^{\ast}\rmH\m{F} \simeq i_{d}^{\ast}L_{\hi}\m{F}$ for all $d \in \bbZ$. 
    Therefore, it will be enough to verify condition (1) from Lemma~\ref{lem: sheaf detection}. 
    To this end, let $\big\{(X_{j},d_{j})\big\}_{j\in J}$ be a countable collection with $d_{j} \in \bbZ$ for each $j \in J$. 
    Because $\m{F}$ is a $\varphi$-Gysin sheaf, the claim follows as soon as we can verify that the limit interchange map 
    \begin{equation}
    \colim_{\Delta^{\op}}\Bigg(\prod_{j \in J}\m{F}(X_{j}\times\Deltasm^{\bullet},d_j)\Bigg) \rightarrow \prod_{j\in J} \Bigg(\colim_{\Delta^{\op}}\m{F}(X_{j}\times\Deltasm^{\bullet},d_{j})\Bigg)
    \end{equation}
    is an equivalence of spaces.  
    However, this follows \textit{mutatis mutandis} from the elegant proof of \cite[Lemma 5.13]{berwick-evansClassifyingSpacesInfinitysheaves2024}. 
\end{proof}

\begin{rem}\label{rem: cobordism relation}
Given $(X,d_{X}) \in \TrCor_{\varphi}^{\ex}(\Sm)$, we have an equivalence 
\begin{equation}
\unit^{\Spaces}_{\varphi}(X,d_{X}) \simeq \colim_{\Delta^{\op}} \Tr_{\varphi}(X\times\Deltasm^{\bullet},d_{X}). 
\end{equation}
Therefore, we can identify $\pi_{0}\unit^{\Spaces}_{\varphi}(X,d_{X})$ with the set of proper extended $\varphi$-structured maps 
\begin{equation}
(g,\alpha,\theta): W \rightarrow (Y,d_{X})
\end{equation} 
up to the relation of $\varphi$-cobordism; by the usual argument, this is indeed a group, cf. Proposition \ref{prop:spatialgysin1}.
\end{rem}

\begin{recollection}\label{recollection: semiadditive Day convolution}
Recall, by \cite[Theorem 5.1]{gepnerUniversalityMultiplicativeInfinite2015}, the symmetric monoidal functor $\Sigma_{+}^{\infty} : \Spaces \rightarrow \Sp$ admits a symmetric monoidal factorization into
\begin{equation}\label{eqn: free sm functors}
\Spaces \xrightarrow{(-)_+} \Spaces_{\ast} \rightarrow \CMon \xrightarrow{(-)^{\rm gp}} \CMongp \xrightarrow{\rmB^{\infty}} \Sp. 
\end{equation}
Here, we stress the presentably symmetric monoidal structure on $\CMon$ is the one uniquely determined by the requirement that $\Free_{\einf} : \Spaces \rightarrow \CMon$ is symmetric monoidal. 
Furthermore, by the results of \cite[Section 6]{nikolausStable$infty$OperadsMultiplicative2016}, if $\m{C}$ is a semiadditive symmetric monoidal \category, the forgetful functor $\CMon \rightarrow \Spaces$ induces a symmetric monoidal equivalence of \categories 
\begin{equation}
\Fun^{\times}(\m{C},\CMon) \xrightarrow{\sim} \Fun^{\times}(\m{C},\Spaces),
\end{equation}
where the source and target both carry the Day convolution product and $\Fun^{\times}$ denotes the \category of product-preserving functors. 
Furthermore, if $\unit_{\m{C}}$ is the unit of $\m{C}$, then the unit of $\Fun^{\times}(\m{C},\CMon)$ is given by 
\begin{equation}
\Map_{\m{C}}(\unit_{\m{C}},-) : \m{C} \rightarrow \CMon,
\end{equation}
where the semiadditivity of $\m{C}$ provides a lift of the mapping spaces through $\CMon$. 
Finally, we note that there is a canonical equivalence of symmetric monoidal \categories 
\begin{equation}
\Fun^{\times}(\m{C},\CMon) \simeq \CMon\big(\Fun^{\times}(\m{C},\Spaces)\big). 
\end{equation} 
\end{recollection}

\begin{prop}\label{prop: gysin sheaves are group-complete}
The symmetric monoidal functors in \eqref{eqn: free sm functors} induce equivalences of symmetric monoidal \categories 
\begin{equation}
\Gys_{\varphi}(\Sm;\Spaces) \xrightarrow{\sim}\Gys_{\varphi}(\Sm;\CMon) 
\end{equation}
and
\begin{equation}
\Gys_{\varphi}^{\bbR}(\Sm;\Spaces)\xrightarrow{\sim}\Gys^{\bbR}_{\varphi}(\Sm;\CMongp).
\end{equation}
As a consequence, the $\einf$-algebra $\unit_{\varphi}^{\Spaces}=\rmH\Tr_{\varphi}\in \Gys_{\varphi}^{\bbR}(\Sm;\Spaces)$ admits a canonical lift to $\Gys^{\bbR}_{\varphi}(\Sm;\CMongp)$ such that $\rmH\Tr_{\varphi} \simeq \unit_{\varphi}^{\CMongp}$.
\end{prop}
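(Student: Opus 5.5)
The plan has two parts. First, identify $\varphi$-Gysin sheaves with product-preserving functors so that Recollection~\ref{recollection: semiadditive Day convolution} applies. Second, show that homotopy invariance forces group completeness.

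\textbf{Step 1: sheaves are product-preserving.} By Lemma~\ref{lem: TrCor^ex sym mon and sa}, $\TrCor^{\ex}_\varphi(\Sm)$ is semiadditive, and its direct sum is $(X\sqcup Y, d_X\sqcup d_Y)$. The sheaf condition for the cover $\{X,Y\}$ of $X\sqcup Y$ gives $\m{F}(X\sqcup Y)\simeq \m{F}(X)\times\m{F}(Y)$. Applied to the empty cover, it gives $\m{F}(\varnothing,0)\simeq *$. So $\Gys_\varphi(\Sm;\m{E})\subset\Fun^{\times}(\TrCor^{\ex}_\varphi(\Sm),\m{E})$ for $\m{E}=\Spaces,\CMon$.

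The forgetful functor $\CMon\to\Spaces$ preserves limits, so it carries sheaves to sheaves. The Nikolaus equivalence
\[\Fun^{\times}(\TrCor^{\ex}_\varphi(\Sm),\CMon)\simeq \Fun^{\times}(\TrCor^{\ex}_\varphi(\Sm),\Spaces)\]
therefore restricts to an equivalence on the full subcategories of sheaves, since a $\CMon$-valued functor is a sheaf iff its underlying space-valued functor is. The equivalence is symmetric monoidal for Day convolution on $\Fun^{\times}$. Localization to sheaves is compatible with both tensor structures (Proposition~\ref{prop: Gysin sheaves monoidal} and Remark~\ref{rem: Gysin Day conv Lurie tensor}), and it commutes with the equivalence. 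Hence we get the first symmetric monoidal equivalence $\Gys_\varphi(\Sm;\Spaces)\simeq\Gys_\varphi(\Sm;\CMon)$. Its inverse is the forgetful functor, which matches the free functor in \eqref{eqn: free sm functors} after localization.

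\textbf{Step 2: homotopy invariance gives group completeness.} Restrict the equivalence of Step 1 to homotopy invariant objects; homotopy invariance is detected on underlying spaces. It remains to show that every $\m{F}\in\Gys^{\bbR}_\varphi(\Sm;\CMon)$ is levelwise grouplike, so that it lies in $\CMongp$ and $(-)^{\rm gp}$ acts as the identity on it. This is the main point.

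It suffices to show that $\pi_0\m{F}(X,d_X)$ is a group. By Lemma~\ref{lem: sheaf detection}, we may take $d_X=d$ constant.

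The idea is the standard cobordism inverse via the ``reflection''. The monoid structure on $\m{F}(X,d)$ comes from the semiadditive structure: addition is induced by the fold correspondence
\[ X\sqcup X\xleftarrow{\id} X\sqcup X\xrightarrow{\nabla} X. \]
To invert, I would produce an endomorphism $\sigma$ of $(X,d)$ in $\TrCor^{\ex}_\varphi(\Sm)$ with $\id+\sigma\simeq 0$ after applying $\m{F}$. The most robust route is a formal one. In a semiadditive category $\m{C}$, a product-preserving $\m{F}$ is grouplike iff the shear map $(\pr_1,\nabla)$ on $\m{F}(X\oplus X)$ is an equivalence. The shear map is induced by a morphism in $\m{C}$, so it suffices to check that the image of $\id_{(X,d)}$ in $\pi_0\Map((X,d),(X,d))$ has an additive inverse, after applying the localization $\rmH$.

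For that inverse, use $W=X\times[0,1]$ (a manifold with boundary, allowed by the convention of \S\ref{subsec:shiftoperators}). The ``U-turn'' cobordism $X\times\{0,1\}\subset X\times[0,1]$, pulled back over $\Deltasm^1$ via a bump function, exhibits a null-cobordism of $\id\sqcup\bar\id$, where $\bar\id$ is $\id$ with its $\varphi$-structure twisted by the reflection of the collar. Concretely, that twist is the inverse in the grouplike monoid $\frakB(X)$ of the trivial stable normal direction, as in the Thom–Gysin construction. This gives a $1$-simplex in $\Tr_\varphi(X\times\Deltasm^1,d)$ connecting $\id+\bar\id$ to $\varnothing$. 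Hence, in $\rmH$ of the representable, $[\id]$ is invertible.

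\textbf{Expected obstacle.} The step I expect to be hardest is checking that this U-turn correspondence is an honest morphism of $\TrCor^{\ex}_\varphi(\Sm)$. Two things must be verified. First, it must carry an extended transfer; here I would use Lemma~\ref{lem:extendedtoclassical}, since all maps involved are smooth, so the transfer is the classical one. Second, the $\varphi$-structure along the boundary must restrict correctly, which amounts to tracking the sign in $\TKO$ from the reflection.

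\textbf{Final assertion.} Once the equivalences are in hand, it follows formally. The functor $\Gys^{\bbR}_\varphi(\Sm;\Spaces)\to\Gys^{\bbR}_\varphi(\Sm;\CMongp)$ is a symmetric monoidal equivalence, so it sends the unit $\rmH\Tr_\varphi$ to the unit $\unit^{\CMongp}_\varphi$.
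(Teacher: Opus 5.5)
Your Step 1 is essentially the paper's argument. Sheaves lie in $\Fun^{\times}(\TrCor^{\ex}_{\varphi}(\Sm),-)$. The Nikolaus equivalence restricts to sheaves, and then to homotopy invariant objects, because the forgetful functor $\CMon\to\Spaces$ preserves and reflects limits.

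You genuinely differ on group-completeness. The paper never argues object by object. It uses that group completion on the presentably symmetric monoidal semiadditive $\infty$-category $\Gys^{\bbR}_{\varphi}(\Sm;\CMon)$ is a smashing localization, $\m{F}^{\rm gp}\simeq\m{F}\otimes\unit^{\rm gp}$. So it suffices that the unit $\rmH\Tr_{\varphi}$ is grouplike, which the paper reads off from the description of $\pi_0\rmH\Tr_{\varphi}(X,d_X)$ as $\varphi$-cobordism classes (Remark~\ref{rem: cobordism relation}). You instead combine Yoneda with an explicit U-turn cobordism that inverts $[\id_{(X,d)}]$ in $\pi_0\rmH\Tr^{(X,d)}_{\varphi}(X,d)$ for every $(X,d)$.

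What each route buys:
\begin{itemize}
\item Yours needs neither the smashing property nor the monoidal structure. It actually shows that every homotopy invariant product-preserving $\varphi$-Gysin presheaf is grouplike.
\item The paper's is shorter and needs the cobordism inverse only for the unit.
\end{itemize}
The two are closer than they look. Your U-turn over $X\times\Deltasm^{1}$ is $\id_{(X,d)}\boxtimes$ the U-turn for the point, so tensoring would let you reduce to $(\bbR^0,0)$; that is the smashing argument in disguise.

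A few points to tighten:
\begin{itemize}
\item The remark that the shear map is induced by a morphism in $\m{C}$ does not by itself justify passing to $\rmH$, since the shear map is not invertible in $\TrCor^{\ex}_{\varphi}(\Sm)$. The step you need is that, for homotopy invariant $\m{F}$, the additive action map $\Tr^{(X,d)}_{\varphi}\to\Map_{\CMon}(\m{F}(X,d),\m{F}(-))$ factors through $\rmH\Tr^{(X,d)}_{\varphi}$. Equivalently, $\m{F}(X,d)\simeq\Map(\rmH\Tr^{(X,d)}_{\varphi},\m{F})$, which gives $\id+\m{F}(\bar\id)\simeq 0$.
\item For the U-turn itself, avoid boundaries. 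Take the proper rank-$0$ smooth map $X\times\bbR\to X\times\Deltasm^{1}$, $(x,s)\mapsto(x,\tfrac12-s^2)$, with its classical transfer. Its fiber over one vertex is $X\sqcup X$ and over the other is empty.
\item The resulting structure on $\bar\id$ is not an inverse in $\frakB(X)$. It is $0\in\frakB(X)$ with the identification $\varphi(0)\simeq\tau_{\id}$ changed by the reflection loop in $\pi_1\BO$. This reflects the opposite signs of $\partial_s(\tfrac12-s^2)$ on the two sheets.
\item The relevant $1$-simplex lives in $\Tr^{(X,d)}_{\varphi}(X\times\Deltasm^{1},d)$, not in $\Tr_{\varphi}$.
\end{itemize}
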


\begin{proof}
    First, we show $\Gys_{\varphi}(\Sm;\Spaces)$ is semiadditive. 
    Due to the sheaf condition, it is clear $\Gys_{\varphi}(\Sm;\Spaces)$ is a full subcategory of 
    \begin{equation}
    \Fun^{\times}(\TrCor_{\varphi}^{\ex}(\Sm),\Spaces),
    \end{equation}
    which, by \cite[Corollary 2.4]{gepnerUniversalityMultiplicativeInfinite2015}, is a semiadditive \category. 
    It is easy to see that the inclusion
    \begin{equation}
    \Gys_{\varphi}(\Sm;\Spaces) \subset \Fun^{\times}(\TrCor_{\varphi}^{\ex}(\Sm),\Spaces)
    \end{equation}
    preserves products so that $\Gys_{\varphi}(\Sm;\Spaces)$ is semiadditive. 
    Following \cite[Proposition 6.5]{barwick} and \cite[Lemmas 3.5, 3.7]{barwick-glasman-shahii}, $\Fun^{\times}(\TrCor_{\varphi}^{\ex}(\Sm),\Spaces)$ is a symmetric monoidal localization of $\GysP_{\varphi}(\Sm;\Spaces)$, and one can show that $\Gys_{\varphi}(\Sm;\Spaces)$ is a further symmetric monoidal localization of $\Fun^{\times}(\TrCor_{\varphi}^{\ex}(\Sm),\Spaces)$. 
    Therefore, by Recollection~\ref{recollection: semiadditive Day convolution}, we obtain an equivalence of semiadditive symmetric monoidal \categories 
    \begin{equation}
    \Fun^{\times}\big(\TrCor_{\varphi}^{\ex}(\Sm),\Spaces\big) \simeq \Fun^{\times}\big(\TrCor_{\varphi}^{\ex}(\Sm),\CMon\big).
    \end{equation}
    This identification induces the desired symmetric monoidal equivalence between \categories of $\varphi$-Gysin sheaves. 
    Furthermore, the unit of $\Gys_{\varphi}(\Sm;\CMon)$ is given by $\Tr_{\varphi}$. 

    Because $\Gys_{\varphi}^{\bbR}(\Sm;\Spaces)$ is a symmetric monoidal localization of $\Gys_{\varphi}(\Sm;\Spaces)$, the free commutative monoid functor 
    \begin{equation}
    \Gys_{\varphi}^{\bbR}(\Sm;\Spaces) \rightarrow \Gys_{\varphi}^{\bbR}(\Sm;\CMon)
    \end{equation}
    induces an equivalence of symmetric monoidal \categories.  
    To complete the proof, it will be enough to show that $\rmH\Tr_{\varphi}(X,d_{X})$ is group-complete for all $(X,d_{X})$ as group-completion is a smashing localization.
    However, this was demonstrated in Remark~\ref{rem: cobordism relation}, which completes the proof. 
\end{proof}

\begin{rem}
    The argument above also shows that $\unit^{\Spaces_{\ast}}_{\varphi} \simeq \rmH\Tr_{\varphi}$. 
\end{rem}

\begin{cor}\label{cor:(de)loop}
    The canonical maps
    \begin{equation}
    \unit^{\Sp}_{\varphi} \rightarrow \Sigma_{+}^{\infty}\unit^{\Spaces}_{\varphi}\;\;{\rm and}\;\;
    \unit^{\Spaces}_{\varphi} \rightarrow \Omega^{\infty}\unit^{\Sp}_{\varphi}
    \end{equation}
    are equivalences of $\einf$-algebras. 
\end{cor}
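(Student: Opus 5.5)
The plan is to compare the three versions of the unit through the commuting square of left adjoints induced by the symmetric monoidal factorization \eqref{eqn: free sm functors}. Each functor $\Spaces \to \Spaces_\ast \to \CMon \to \CMongp \to \Sp$ is a symmetric monoidal left adjoint. Postcomposition with it therefore induces a symmetric monoidal left adjoint between the corresponding categories of $\varphi$-Gysin presheaves with Day convolution. This functor is compatible with $L_{\sh}$ and $L_{\bbR}$, since both localizations are generated by the same maps of representables tensored with $\m{E}$ (Remark~\ref{rem: Gysin Day conv Lurie tensor}). It consequently descends to symmetric monoidal left adjoints
\begin{equation}
\Gys^{\bbR}_{\varphi}(\Sm;\Spaces)\to\Gys^{\bbR}_{\varphi}(\Sm;\Spaces_\ast)\to\Gys^{\bbR}_{\varphi}(\Sm;\CMon)\to\Gys^{\bbR}_{\varphi}(\Sm;\CMongp)\to\Gys^{\bbR}_{\varphi}(\Sm;\Sp),
\end{equation}
whose right adjoints are the levelwise forgetful and $\Omega^\infty$ functors. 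A symmetric monoidal functor sends unit to unit as $\einf$-algebras. The composite $\Sigma^\infty_+$ thus carries $\unit^{\Spaces}_\varphi$ to $\unit^{\Sp}_\varphi$, which gives the first map as a canonical equivalence of $\einf$-algebras. It is the comparison map written in the statement, up to its direction.

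For the second map, I would factor $\Sigma^\infty_+$ on Gysin sheaves as $\rmB^\infty\circ(-)^{\rm gp}\circ\Free$. By Proposition~\ref{prop: gysin sheaves are group-complete}, the first two stages carry $\unit^{\Spaces}_\varphi$ to $\unit^{\CMongp}_\varphi$, and the underlying space of the latter is $\rmH\Tr_\varphi\simeq\unit^{\Spaces}_\varphi$. It therefore remains to show that $\rmB^\infty:\Gys^{\bbR}_\varphi(\Sm;\CMongp)\to\Gys^{\bbR}_\varphi(\Sm;\Sp)$ is postcomposition with $\rmB^\infty$ levelwise, with no further sheafification or $\bbR$-localization needed. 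Granting this, its right adjoint $\Omega^\infty$ satisfies $\Omega^\infty\rmB^\infty\simeq\id$ levelwise on grouplike objects, because $\rmB^\infty:\CMongp\to\Sp_{\geq0}$ is an equivalence. Hence $\unit^{\Spaces}_\varphi\simeq\unit^{\CMongp}_\varphi\to\Omega^\infty\rmB^\infty\unit^{\CMongp}_\varphi\simeq\Omega^\infty\unit^{\Sp}_\varphi$ is an equivalence. It is a map of $\einf$-algebras because $\Omega^\infty$ is lax symmetric monoidal, being right adjoint to a symmetric monoidal functor, and the map in question is the unit of that adjunction evaluated on the unit object.

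The main obstacle is exactly this last point: that levelwise $\rmB^\infty$ of a homotopy invariant grouplike $\varphi$-Gysin sheaf is again a homotopy invariant sheaf. Homotopy invariance is immediate, since $\rmB^\infty$ is levelwise an equivalence onto connective spectra. For the sheaf condition I would use Lemma~\ref{lem: sheaf detection}. Condition (1), countable products, passes through $\rmB^\infty$ because $\CMongp\simeq\Sp_{\geq0}$ and the inclusion $\Sp_{\geq0}\subset\Sp$ preserves products. Condition (2), that $i_d^\ast$ is a sheaf on $\Sm$, is harder, because \v{C}ech limits of connective spectra need not stay connective. Here I would invoke Dugger's theorem. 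A homotopy invariant sheaf on $\Sm$ is constant, namely $\Gamma^\ast$ of its value at $\bbR^0$. The levelwise connective-spectrum-valued functor therefore agrees with the constant homotopy invariant spectral sheaf on $\Sm$ on values, since $\Omega^\infty$ of a constant spectral sheaf is the constant space-valued sheaf. This comparison should be checked via Corollary~\ref{cor: shifting = suspension}, or directly via \cite[Theorem 1.2]{berwick-evansClassifyingSpacesInfinitysheaves2024}, which gives $\rmH$ on spaces and on spectra the same colimit formula. It would conclude the argument.
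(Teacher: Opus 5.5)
Your first half is the paper's ``purely formal'' argument. You also correctly isolate the point that the paper's one-line proof glosses over: it simply asserts that the induced functors compose to the equivalence of Proposition~\ref{prop: gysin sheaves are group-complete}. The step you then propose, however, is false. It is not true that levelwise $\rmB^\infty$ of a grouplike homotopy invariant $\varphi$-Gysin sheaf is already a homotopy invariant sheaf, and the Dugger argument for condition (2) is where this breaks. If $i_d^\ast\m{F}\simeq\Map(\Sing(-),A)$ with $A\in\CMongp$, then levelwise $\rmB^\infty$ is $X\mapsto\rmB^\infty\Map(\Sing X,A)\simeq\tau_{\geq0}F(\Sigma^\infty_+\Sing X,\rmB^\infty A)$. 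This is only the connective cover of the values of the constant sheaf $\Gamma^\ast\rmB^\infty A$, and agreeing after $\Omega^\infty$ says nothing about negative homotopy. In fact no nonzero homotopy invariant spectral sheaf on $\Sm$ has connective values. By Dugger such a sheaf is $\Gamma^\ast E$, and $\pi_{-1}\Gamma^\ast E(S^{k+1})\cong\pi_{-1}E\oplus\pi_kE$, so connective values force $E\simeq 0$. Now $\unit^{\CMongp}_\varphi(\bbR^0,0)$ has nonzero $\pi_0$ (it contains the class of a point). Hence $\rmB^\infty\circ\unit^{\CMongp}_\varphi$ is not a sheaf; descent already fails for $S^1$ covered by two arcs, where the \v{C}ech limit acquires $\pi_{-1}\cong\pi_0A\neq0$. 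So the left adjoint $\Gys^{\bbR}_\varphi(\Sm;\CMongp)\to\Gys^{\bbR}_\varphi(\Sm;\Sp)$ really is levelwise $\rmB^\infty$ followed by $L_{\hi}$, and $L_{\hi}$ does change the values.

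The statement survives because $L_{\hi}$ only alters negative homotopy. Since $\rmB^\infty\circ\m{F}$ is homotopy invariant, $L_{\hi}$ acts on it as $L_{\sh}$. By Lemma~\ref{lem: sheafification i_d}, $i_d^\ast L_{\sh}(\rmB^\infty\circ\m{F})\simeq L_{\sh}\bigl(\tau_{\geq0}\Gamma^\ast\rmB^\infty A_d\bigr)$, where $A_d=\m{F}(\bbR^0,d)$ and $\tau_{\geq0}$ is the levelwise connective cover. This sheaf is homotopy invariant, because sheafification preserves homotopy invariance on $\Sm$, as the paper uses. Its value at $\bbR^0$ is $\rmB^\infty A_d$, because a point has only trivial covers. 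By Dugger it is therefore $\Gamma^\ast\rmB^\infty A_d$, whose $\Omega^\infty$ is $\Map(\Sing(-),A_d)\simeq i_d^\ast\m{F}$. Corollary~\ref{cor: i_d conservative family} then shows that the unit $\m{F}\to\Omega^\infty L_{\hi}(\rmB^\infty\circ\m{F})$ is an equivalence. Your $\einf$-argument (lax monoidal right adjoint, unit of the adjunction evaluated on the unit object) then applies unchanged.
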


\begin{proof}
    The first equivalence is purely formal. 
    For the second equivalence, note that the composition 
    \begin{equation}
    \CMongp \xrightarrow{\rmB^{\infty}} \Sp \xrightarrow{\Omega^{\infty}} \Spaces
    \end{equation}
    is equivalent to the forgetful functor and that $\rmB^{\infty}$ is symmetric monoidal and colimit-preserving. 
    These induce functors 
    \begin{equation}
    \Gys_{\varphi}^{\bbR}(\Sm;\CMongp) \rightarrow \Gys_{\varphi}^{\bbR}(\Sm;\Sp) \rightarrow \Gys_{\varphi}^{\bbR}(\Sm;\Spaces)
    \end{equation}
    whose composite, by Proposition~\ref{prop: gysin sheaves are group-complete}, is an equivalence of symmetric monoidal \categories. 
    Therefore, we have canonical equivalences 
    \begin{equation}
    \unit^{\Spaces}_{\varphi} \xrightarrow{\sim}\Omega^{\infty}\rmB^{\infty}\rmH\Tr_{\varphi} \simeq \Omega^{\infty}\unit^{\Sp}_{\varphi}
    \end{equation}
    as claimed.
\end{proof}

\begin{defin}\label{defin:kuranishitosmooth}
We say 
    \begin{equation}
    (\bbR^0,0)\xleftarrow{(f,0)} W \xrightarrow{(g,\alpha,\theta)}(Y,d_Y)\in\TrCor^{\rm ex}_\varphi(\Sm)
    \end{equation}
is \emph{$\varphi$-cobordant} to 
    \begin{equation}
    (\bbR^0,0)\xleftarrow{(f',0)} W' \xrightarrow{(g',\alpha',\theta')}(Y,d_Y)\in\TrCor^{\rm ex}_\varphi(\Sm)
    \end{equation}
if there exists an extended correspondence of the form 
    \begin{equation}
    (\bbR^0,0)\xleftarrow{(f,0)} W_{\rm bord} \xrightarrow{(\widetilde{g},\widetilde{\alpha},\widetilde{\theta})}(Y\times\bbR,d_Y)\in\TrCor^{\rm ex}_\varphi(\Sm)
    \end{equation}
together with the following property. There exists a commutative diagram in $\DSm$ of the form
    \begin{equation}
    \begin{tikzcd}
    W\arrow[r]\arrow[d,"g"]\arrow[dr, phantom, "\lrcorner", very near start] & W_{\rm bord}\arrow[d,"\widetilde{g}"] & W'\arrow[l]\arrow[d,"{g'}"]\arrow[dl, phantom, "\llcorner", very near start] \\
    Y\times\bbR^0\arrow[r] & Y\times\bbR & Y\times\bbR^0\arrow[l],
    \end{tikzcd}
    \end{equation}
where each square is a pullback and the bottom horizontal morphisms are the inclusion over 0 and 1, such that $\alpha$ resp. $\alpha'$ and $\theta$ resp. $\theta'$ are obtained via base change from $\widetilde{\alpha}$ resp. $\widetilde{\theta}$.
\end{defin}

The following result is the well-known fact that, in the language of the present article, (non-orbifold) global Kuranishi cobordism is equivalent to smooth cobordism.

\begin{lem}\label{lem:kuranishitosmooth}
The extended correspondence 
    \begin{equation}
    (\bbR^0,0)\xleftarrow{(f,0)} W \xrightarrow{(g,\alpha,\theta)}(Y,d_Y)\in\TrCor^{\rm ex}_\varphi(\Sm),
    \end{equation}
is $\varphi$-cobordant to some 
    \begin{equation}
    (\bbR^0,0)\xleftarrow{(f,0)} W' \xrightarrow{(g',\alpha',\theta')}(Y,d_Y)\in\TrCor^{\rm ex}_\varphi(\Sm)
    \end{equation}
such that $g':W'\to Y\in\Sm$.
\end{lem}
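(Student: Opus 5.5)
The plan is to perturb the defining section of the derived manifold $W$ to a transverse one, using a one-parameter family of sections so that the perturbation itself gives a $\varphi$-cobordism in the sense of Definition~\ref{defin:kuranishitosmooth}.

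First, we reduce to a global standard presentation. Since $g$ is amplitude at most $1$ and proper with $Y\in\Sm$, we would like a single presentation
\begin{equation}
W\simeq *\times_{\bbR^k}(Y\times\bbR^N),\qquad h:Y\times\bbR^N\to\bbR^k,
\end{equation}
with $\theta$ equal to the associated standard transfer. Locally this is exactly the content of the definition of an extended transfer. To globalize, we cover $\rlz{W}$ by finitely many, or by local finiteness countably many, charts. We then assemble them using the composition and gluing calculus of Lemma~\ref{lem:standardpresentation1}: embed the various $\bbR^{N_i}$ into one large Euclidean space, and take the direct sum of the obstruction maps, extended by bump functions. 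Properness of $g$ gives local finiteness over $Y$. The compatibility of $\theta$ with the glued presentation would follow from Lemma~\ref{lem:extendedtransfercomp} and the fact that being an extended transfer is local. This globalization is the step I expect to be the main obstacle. Producing one presentation whose standard transfer agrees with the given $\theta$ on the nose, rather than merely locally, needs care. If it fails, I would instead run the perturbation chart by chart, using a relative version of the argument below inductively over a locally finite cover.

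Second, we perturb. Consider $H:Y\times\bbR^N\times\bbR\to\bbR^k$ given by $H(y,v,t)=h(y,v)-t\,c(y,v)$, where $c$ is a compactly supported (over $Y$) small smooth function. By the parametric transversality theorem, generic $c$ makes $h-c$ transverse to $0$ near the zero locus. We choose $c$ so small that the zero set of $H$ stays proper over $Y\times\bbR$; this is possible because $\rlz{W}=h^{-1}(0)$ is proper over $Y$. Set $W_{\rm bord}\equiv *\times_{\bbR^k}(Y\times\bbR^N\times\bbR)$ using $H$. Its fibers over $t=0$ and $t=1$ are $W$ and $W'\equiv(h-c)^{-1}(0)$, and $W'$ is a smooth manifold by Pardon's result that $\Sm\to\DSm$ preserves transverse pullbacks. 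The map $\widetilde g$ is proper and of standard presentation, so it carries its standard transfer $\widetilde\theta$. Lemma~\ref{lem:extendtransferpb} shows that $\widetilde\theta$ restricts to $\theta$ and to $\theta'$ at the two ends, and by Lemma~\ref{lem:extendedtoclassical} $\theta'$ is classical.

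Third, we transport the structure. The $\varphi$-structure $\alpha$ lives on $\tau_g$. Since $\rlz{W}\hookrightarrow\rlz{W_{\rm bord}}$ is a homotopy equivalence onto a deformation retract of a neighborhood (the $t$-direction), $\tau_{\widetilde g}$ restricts to $\tau_g$ by Lemma~\ref{lem: virtual tangent bundle properties}. Lifts along $\varphi$ therefore extend uniquely up to homotopy, which gives $\widetilde\alpha$, and we set $\alpha'\equiv\widetilde\alpha|_{W'}$. The rank condition $\rk\widetilde g=-d_Y$ holds because the rank is locally constant and matches at $t=0$. Assembling these pieces gives the required extended correspondence and pullback diagram.
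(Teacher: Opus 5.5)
Your overall architecture matches the paper's: globalize to a single chart, perturb the obstruction section transversally in a one-parameter family, and restrict the structure and transfer at $t=0,1$. However, step three rests on a false claim. The inclusion $\rlz{W}\hookrightarrow\rlz{W_{\rm bord}}$ is not a homotopy equivalence in general. $W_{\rm bord}$ is a bordism, and its fibres change topology; that is the whole point of the perturbation.

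For instance, take $Y=\bbR^0$, $N=2$, $k=1$ and $h(v)=(|v|^2-1)^2$, so $\rlz{W}=S^1$. Let $c$ be a bump function equal to a small $\epsilon>0$ on $\{|v|\le 2\}$ and positive exactly on $\{|v|<3\}$. Then $\rlz{W_{\rm bord}}=\{h=tc\}$ is the graph of $h/c$ over the open disc $\{|v|<3\}$, hence contractible. Meanwhile $W'$ is the pair of transverse circles $|v|^2=1\pm\sqrt{\epsilon}$. Now take $\varphi\colon *\to\BO$ (framings). A framing of $W$ that differs from the one induced by $\bbR^2$ by the generator of $\pi_1(\rmO)$ around the circle does not extend over a disc, so your $\widetilde\alpha$ does not exist for this $c$. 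Reading your sentence as ``$\rlz W$ is a deformation retract of a neighbourhood in $\rlz{W_{\rm bord}}$'' does not help either: that only extends $\alpha$ near $t=0$, not out to $W'$.

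The missing idea, which the paper uses, is that $\tau_{\widetilde g}$ is pulled back along $W_{\rm bord}\to\calT\times\bbR$ from a class on the ambient chart. In the paper's notation this class is $\tau_\pi-[\calE]$; in your trivial presentation it is even canonically trivial. So you should proceed as follows:
\begin{enumerate}
\item Extend $\alpha$ to a $\varphi$-structure on that ambient class over a neighbourhood $\calT$ of $\rlz W$, for example one that deformation retracts onto $\rlz W$.
\item Choose the perturbation supported in $\calT$.
\item Define $\widetilde\alpha$ by pullback, and set $\alpha'\equiv\widetilde\alpha|_{t=1}$; then $\widetilde\alpha|_{t=0}=\alpha$ holds by construction.
\end{enumerate}

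Second, the globalization mechanism you propose does not produce $W$. Putting the chart obstruction maps $h_i$ side by side on $Y\times\bbR^{\sum N_i}$ cuts out the fibre product $U_1\times_Y U_2\times_Y\cdots$ of the charts, not their union. Cutting the $h_i$ off with bump functions only leaves the extra $\bbR^{N_i}$-directions unconstrained away from $U_i$. Also, Lemma~\ref{lem:standardpresentation1} is about composable maps $g\circ f$, not about gluing presentations of one map on overlapping opens.

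The paper's target is weaker, and it is what a partition of unity actually gives. It consists of a smooth $\pi\colon\calT\to Y$, a possibly nontrivial vector bundle $\calE\to\calT$, and a section $s$ with $W\simeq\calT\times_{s,\calE,0}\calT$. The paper identifies $\theta$ with the interchange map of $e\colon W\to\calT$ followed by the classical transfer of $\pi$, asserting this from locality of extended transfers; so your worry there is shared rather than resolved. You do not need a trivial $\bbR^k$ over all of $Y\times\bbR^N$. Your step two runs verbatim for $s-t\sigma$, with $\sigma$ a section of $\calE$ supported in $\calT$ with support proper over $Y$.
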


\begin{proof}
Recall, $g$ is locally of standard presentation. In particular, since all of our derived smooth manifolds are paracompact Hausdorff, we may use a standard partition of unity argument to show that $g$ is of the following form: 
    \begin{equation}
    \begin{tikzcd}
    W\arrow[r,"e"]\arrow[d]\arrow[dr, phantom, "\lrcorner", very near start] & \calT\arrow[r,"\pi"]\arrow[d,"s"] & Y \\
    \calT\arrow[r] & \calE, &
    \end{tikzcd}
    \end{equation}
where $\pi:\calT\to Y\in\Sm$, $\calE\to\calT$ is a smooth vector bundle, $s$ is a smooth section, and the bottom horizontal morphism is the 0-section embedding. Now, by perturbing $s$ to be transverse to the 0-section, we have constructed the desired commutative diagram in $\DSm$: 
    \begin{equation}
    \begin{tikzcd}
    W\arrow[r]\arrow[d,"g"]\arrow[dr, phantom, "\lrcorner", very near start] & W_{\rm bord}\arrow[d,"\widetilde{g}"] & W'\arrow[l]\arrow[d,"{g'}"]\arrow[dl, phantom, "\llcorner", very near start]\\
    Y\times\bbR^0\arrow[r] & Y\times\bbR & Y\times\bbR^0\arrow[l].
    \end{tikzcd}
    \end{equation}

It remains to show the claims about the $\varphi$-structure and transfer class; this is straightforward. First, $e$ is $\varphi$-structured by assumption, and this induces a $\varphi$-structure on $\pi$. In particular, when perturbing $s$, we induce a $\varphi$-structure on $W_{\rm bord}\to Y\times\bbR$ with the desired property. Moreover, since $\theta$ is assumed to be an extended transfer class, we see that $\theta$ is in fact equivalent to the adjoint of the composite given by the interchange map associated to $e$ followed by (the pullback of the adjoint of the) classical transfer associated to $\pi$. In particular, again when perturbing $s$, we induce an extended transfer class on $W_{\rm bord}\to Y\times\bbR$ with the desired property. This completes the proof.
\end{proof}

Recall, 
    \begin{equation}
    \Tr_\varphi(X\times\Delta^0_{\rm sm},d_X)=\TrCor_\varphi^\ex\big((\bbR^0,0),(X\times\Delta^0_{\rm sm},d_X)\big)
    \end{equation}
is naturally a pointed space with basepoint
    \begin{equation}
    (\bbR^0,0)\leftarrow\varnothing\rightarrow (X\times\Delta^0_{\rm sm},d_X);
    \end{equation}
in fact, every space we will encounter in the present subsection will naturally be a pointed space with basepoint the empty correspondence, so we will elide this fact from now on. Moreover, recall that we have shown $\unit^{\Spaces}_\varphi$ is the unit of a presentably symmetric monoidal structure on $\Gys^\bbR_\varphi(\Sm;\Spaces)$; in particular, $\pi_*\unit^{\Spaces}_\varphi(\bbR^0,0)$ is a graded commutative ring.

\begin{prop}\label{prop:spatialgysin1}
We have the following isomorphism of graded commutative rings: 
    \begin{equation}
    \pi_*\unit^{\Spaces}_\varphi(\bbR^0,0)\cong\Omega^{-*}_\varphi(*).
    \end{equation}
Moreover, let $X\in\Sm$ such that $X=\sqcup_i C_i$, where $C_i$ are the path components of $X$. We have the following isomorphism of abelian groups:
    \begin{equation}
    \pi_n\unit^{\Spaces}_\varphi(X,d_X)\cong\prod_\alpha\Omega^{-n+d_{C_{i}}}_\varphi(C_{i}),\;\;d_{C_{i}}\equiv d_X\big(\Sing(C_{i})\big)\in\bbZ,\;\;n\geq0.
    \end{equation}
In fact, the previous ismorphism is in fact an isomorphism of graded modules.\footnote{I.e., the action of $\pi_*\unit^{\Spaces}_\varphi(\bbR^0,0)\cong\Omega^{-\ast}_\varphi(*)$ on the graded abelian group $\pi_*\unit^{\Spaces}_\varphi(X,d_X)\cong \prod_{i}\Omega^{-*+d_{C_{i}}}_\varphi(C_{i})$, where $*$ must be at least 0, is identical; one can think of the latter graded abelian group as the ``truncated'' $\varphi$-structured geometric cobordism group of $C_{i}$.}
\end{prop}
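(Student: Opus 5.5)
We compute the homotopy groups of the geometric realization
\begin{equation}
\unit^{\Spaces}_\varphi(X,d_X)\simeq\big\lvert\Tr_\varphi(X\times\Deltasm^\bullet,d_X)\big\rvert
\end{equation}
from Remark~\ref{rem: cobordism relation}. First we reduce to path-connected $X$. Since $\unit^{\Spaces}_\varphi$ is a $\varphi$-Gysin sheaf, condition (1) of Lemma~\ref{lem: sheaf detection} identifies $\unit^{\Spaces}_\varphi(X,d_X)$ with $\prod_i\unit^{\Spaces}_\varphi(C_i,d_{C_i})$. This yields the product decomposition, so we may assume $X$ is connected with constant $d_X=d$. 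Next, Corollary~\ref{cor: shifting = suspension} together with Corollary~\ref{cor:(de)loop} (passing to the spectral unit) gives $\Omega^{\infty}\Sigma^{d}\unit^{\Sp}_\varphi(X,0)\simeq\unit^{\Spaces}_\varphi(X,d)$. Hence $\pi_n\unit^{\Spaces}_\varphi(X,d)\cong\pi_{n-d}\unit^{\Sp}_\varphi(X,0)$, and it suffices to compute $\pi_m\unit^{\Sp}_\varphi(X,0)$ for all $m\in\bbZ$ and identify it with $\Omega^{-m}_\varphi(X)$. By Proposition~\ref{prop: gysin sheaves are group-complete} the simplicial space is levelwise grouplike, so the degree $m=n-d\le 0$ case can be reached through the shift $d$ while keeping $n\ge 0$.

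The core step is the case $n=0$. Here $\pi_0$ of the realization is the coequalizer of $\pi_0$ of the $1$-simplices and $0$-simplices. A point of $\Tr_\varphi(X,d)$ is a proper $\varphi$-structured amplitude $\le 1$ map $g:W\to X$ with $\rk g=-d$, together with an extended transfer. Extended transfers are contractible choices: by Lemma~\ref{lem:extendedtoclassical}, when $g$ is smooth the transfer is forced to be classical, and in general it is determined locally by standard presentations. A $1$-simplex over $X\times\Deltasm^1\cong X\times\bbR$ is precisely a cobordism in the sense of Definition~\ref{defin:kuranishitosmooth}. By Lemma~\ref{lem:kuranishitosmooth}, every class is represented by a smooth proper $\varphi$-structured map. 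Applying the same perturbation argument relative to the ends (perturbing the section $s$ to be transverse while fixing it near $0$ and $1$) shows that two smooth representatives that are derived-cobordant are smoothly $\varphi$-cobordant. This gives a bijection $\pi_0\cong\Omega^{-d}_\varphi(X)$, which is additive because disjoint union is the monoid structure coming from semiadditivity. For $n>0$ we use loops. The group $\pi_n\unit^{\Spaces}_\varphi(X,d)\cong\pi_0\unit^{\Spaces}_\varphi(X,d-n)$ by the suspension equivalence, which matches $\Omega^{-n+d}_\varphi(X)$. I expect the main obstacle to be the \emph{relative} version of Lemma~\ref{lem:kuranishitosmooth}, together with checking that extended transfers and $\varphi$-structures extend over the perturbation. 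The paper only states the absolute version, so I would redo its proof with a partition of unity subordinate to a collar.

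Finally, for the ring and module structures, the product on $\pi_*\unit^{\Spaces}_\varphi(\bbR^0,0)$ is induced by the Day convolution unit. Concretely, it comes from the monoidal product on $\TrCor^\ex_\varphi(\Sm)$ (Remark~\ref{remark: tensor product on Cor(C_F;E_F)} and Lemma~\ref{lem:extendedtransfersm}), which on representing correspondences is the Cartesian product $W\times W'$ with product structures. Via the identifications above this is the Cartesian product of cobordism classes. The graded commutativity signs are those from Remark~\ref{rem:baneofourexistence}, matching the Koszul sign convention. The same computation with $W\times W'\to X$ gives the module compatibility.
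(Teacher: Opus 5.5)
Your proposal is correct in outline but takes a different route from the paper, and it needs two repairs. The paper never uses the shift in this proof. It computes every $\pi_n$ of the realization directly, using Lurie's formula for the homotopy groups of a Kan simplicial space. A class is an extended correspondence to $X\times\Deltasm^n$ whose fibers are empty over $X\times\partial\Deltasm^n$, modulo $(n+1)$-simplices that are empty over a horn. The paper then reads off disjoint union, Cartesian product and the module action directly on these relative cycles. You instead compute only $\pi_0$, as a coequalizer, which needs neither the Kan condition nor boundary or horn conditions. You then reach $\pi_n$ through $\pi_n\unit^{\Spaces}_\varphi(X,d)\cong\pi_0\unit^{\Spaces}_\varphi(X,d-n)$, the Thom--Gysin shift of Corollary~\ref{cor: shifting = suspension}; the paper uses exactly this shift for negative degrees in the next proposition. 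Your route needs less simplicial bookkeeping: only a collared, relative form of Lemma~\ref{lem:kuranishitosmooth} over $X\times\Deltasm^1$. The paper's route makes the multiplicative structure nearly tautological, since products of relative cycles are Cartesian products.

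The first repair is ordering. Corollary~\ref{cor:(de)loop} rests on Proposition~\ref{prop: gysin sheaves are group-complete}. The paper justifies the group-completeness in that proposition through Remark~\ref{rem: cobordism relation}, which cites the very proposition you are proving. So you must first prove the $n=0$ statement for every $d$. Then note that it is a monoid isomorphism onto the group $\Omega^{d}_\varphi(X)$, so $\pi_0$ is a group. Only after that may you deloop and shift. Alternatively, run the fiber-of-the-diagonal argument of Corollary~\ref{cor: shifting = suspension} directly on the pointed-space-valued sheaf $\rmH\Tr_\varphi$; this gives $\rmH\Tr_\varphi(X,d-1)\simeq\Omega\,\rmH\Tr_\varphi(X,d)$ with no group-completeness input.

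The second repair concerns the ring and module structures. In your route, the product $\pi_n\times\pi_m\to\pi_{n+m}$ of the $\einf$-algebra matches $W\times W'$ on $\pi_0\unit^{\Spaces}_\varphi(\bbR^0,-n)\times\pi_0\unit^{\Spaces}_\varphi(\bbR^0,-m)\to\pi_0\unit^{\Spaces}_\varphi(\bbR^0,-n-m)$ only if the shift equivalences are multiplicative. That means they must be compatible with the multiplication induced by $(\bbR^0,-n)\boxtimes(\bbR^0,-m)\simeq(\bbR^0,-n-m)$. Concretely, the Thom--Gysin class of a sum must be the external product of the Thom--Gysin classes, with the usual sign from reordering suspension coordinates. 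You assert this without argument, and the paper does not supply it.

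There are also some smaller slips:
\begin{itemize}
\item With $\rk g=-d$, your $n=0$ step must produce $\Omega^{d}_\varphi(X)$, not $\Omega^{-d}_\varphi(X)$. Otherwise your $n>0$ step lands in $\Omega^{n-d}_\varphi(X)$ rather than $\Omega^{-n+d}_\varphi(X)$.
\item The simplicial space $\Tr_\varphi(X\times\Deltasm^\bullet,d)$ is not levelwise grouplike; only its realization is.
\item Contractibility of the space of extended transfers is neither established nor needed, since Lemma~\ref{lem:kuranishitosmooth} handles an arbitrary extended transfer.
\end{itemize}
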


\begin{proof}
It suffices to consider the case where $X$ is connected; the general case is a straightforward extension. Recall, we have the pointed simplicial space 
    \begin{equation}
    \mathcal{S}^\varphi_\bullet\equiv\Tr_\varphi(X\times\Delta^0_{\rm sm},d_X)\substack{\leftarrow \\ \leftarrow}\Tr(X\times\Delta^1_{\rm sm},d_X)\substack{\leftarrow \\[-1em] \leftarrow \\[-1em]\leftarrow}\cdots
    \end{equation}
which satisfies $\rlz{\mathcal{S}^\varphi_\bullet}=\unit^{\Spaces}_\varphi(X,d_X)$. We will compute $\pi_n\rlz{\mathcal{S}^\varphi_\bullet}$ via \cite[Proposition 8]{lurienotes}.\footnote{Technically, to apply \cite[Proposition 8]{lurienotes}, we must check that $\mathcal{S}^\varphi_\bullet$ satisfies the Kan condition; this is relatively straightforward as the Kan condition, in this case, amounts to the fact that cobordisms can be (1) reversed and (2) glued.} As $\mathcal{S}^\varphi_\bullet$ is a (pointed) simplicial space, it determines a functor from ${\rm sSet}$ to $\Spaces$ via $\mathcal{S}^\varphi_\bullet(K)\equiv\Map_{{\rm s}\Spaces}(K,X)$, where $K$ is given the discrete topology levelwise.

We see that $\mathcal{S}^\varphi_\bullet(\partial\Delta^n)=\Tr_\varphi(X\times\partial\Delta^n_{\rm sm},d_X)$, where $\partial\Delta^n_{\rm sm}$ denotes the smoothing of $\bbR^n$ minus the standard $n$-simplex. Let $\mathcal{S}^\varphi_{n,\partial}$ be the homotopy fiber of the map $\mathcal{S}^\varphi_n\to\mathcal{S}^\varphi_\bullet(\partial\Delta^n)$ lying over the empty correspondence; this space is the subspace of $\Tr_\varphi(X\times\Delta^n_{\rm sm},d_X)$ consisting of correspondences 
    \begin{equation}
    (\bbR^0,0)\xleftarrow{(f,0)}W\xrightarrow{(g,\alpha,\theta)}(X\times\Delta^n_{\rm sm},d_X)
    \end{equation}
such that $g$ is of constant rank $-g^*d_X$ and the fibers of $g$ are empty over $X\times\partial\Delta^n_{\rm sm}$.

Meanwhile, let $\Lambda\subset\partial\Delta^{n+1}$ be the subset obtained by removing the interiors of two faces. We see that $\mathcal{S}^\varphi_\bullet(\Lambda)=\Tr_\varphi(X\times\Lambda_{\rm sm},d_X)$, where $\Lambda_{\rm sm}$ denotes the smoothing of $\bbR^n$ minus $\Lambda$. Let $\mathcal{S}^\varphi_{\partial,\Lambda}$ be the homotopy fiber of the map $\mathcal{S}^\varphi_\bullet(\partial\Delta^{n+1})\to\mathcal{S}^\varphi_\bullet(\Lambda)$ lying over the empty correspondence; this space is the subspace of $\Tr_\varphi(X\times\partial\Delta^n_{\rm sm},d_X)$ consisting of correspondences 
    \begin{equation}
    (\bbR^0,0)\xleftarrow{(f,0)}W\xrightarrow{(g,\alpha,\theta)}(X\times\partial\Delta^n_{\rm sm},d_X)
    \end{equation}
such that $g$ is of constant rank $-g^*d_X$ and the fibers of $g$ are empty over $X\times\Lambda_{\rm sm}$. 

Observe, we have a canonical equivalence:
    \begin{equation}
    \mathcal{S}^\varphi_{\partial,\Lambda}\xrightarrow{\sim}\mathcal{S}^\varphi_{n,\partial}\times\mathcal{S}^\varphi_{n,\partial}.
    \end{equation}
Now, \cite[Proposition 8]{lurienotes} says $\pi_n\rlz{\mathcal{S}^\varphi_\bullet}$ is precisely $\pi_0\mathcal{S}^\varphi_{n,\partial}$ modulo the following equivalence relation: two elements $\omega,\omega'\in\pi_0\mathcal{S}^\varphi_{n,\partial}$ are identified if the element $\overline{\omega}\in\pi_0\mathcal{S}^\varphi_{\partial,\Lambda}$, corresponding to $(\omega,\omega')$, can be lifted to $\pi_0\mathcal{S}^\varphi_{n+1,\Lambda}$, where the latter space is the homotopy fiber of the map $\mathcal{S}^\varphi_{n+1}\to\mathcal{S}^\varphi_\bullet(\Lambda)$ lying over the empty correspondence. More concretely, $\pi_n\rlz{\mathcal{S}^\varphi_\bullet}$ consists of correspondences 
    \begin{equation}
    (\bbR^0,0)\xleftarrow{(f,0)}W\xrightarrow{(g,\alpha,\theta)}(X\times\Delta^n_{\rm sm},d_X),
    \end{equation}
such that $g$ is of constant rank $-g^*d_X$ and the fibers of $g$ are empty over $X\times\partial\Delta^n_{\rm sm}$, and we identify two such correspondences, denoted $W$ and $W'$, if there exists a correspondence 
    \begin{equation}
    (\bbR^0,0)\xleftarrow{(f,0)}\overline{W}\xrightarrow{(\overline{f},\overline{\alpha},\overline{\theta})}(X\times\Delta^{n+1}_{\rm sm},d_X),
    \end{equation}
such that the fibers of $g$ are empty over $X\times\Lambda_{\rm sm}$, which restricts to $W$ and $W'$ over the appropriate subset of $X\times\partial\Delta^{n+1}_{\rm sm}$. Utilizing an appropriate modification of Definition \ref{defin:kuranishitosmooth} and the subsequent Lemma \ref{lem:kuranishitosmooth}, we have shown that we have a bijection of sets
    \begin{equation}
    \pi_n\unit^{\Spaces}_\varphi(X,d_X)\cong\Omega^{-n+d_X}_\varphi(X).
    \end{equation}

It remains to identify all of the claimed algebraic structure. We begin with the additive structure. Now, the group structure on $\pi_n\unit^{\Spaces}_\varphi(X,d_X)$ is explicitly induced by the semiadditive structure on $\TrCor^{\rm ex}_\varphi(\Sm)$. Consider two
    \begin{align}
    (\bbR^0,0)\xleftarrow{(f,0)}W\xrightarrow{(g,\alpha,\theta)}(X\times\Delta^n_{\rm sm},d_X),&\\
    (\bbR^0,0)\xleftarrow{(f,0)}W'\xrightarrow{(g',\alpha',\theta')}(X\times\Delta^n_{\rm sm},d_X),&
    \end{align}
then their sum is the correspondence obtained via the composition
    \begin{equation}
    \begin{tikzcd}[column sep=tiny]
    W\sqcup W'\arrow[d,"{(f\sqcup f',0)}",swap]\arrow[dr,"{(g\sqcup g',\alpha\sqcup\alpha',\theta\sqcup\theta')}"] & & \big((X\times\Delta^n_{\rm sm})\sqcup(X\times\Delta^n_{\rm sm}),d_X\big)\arrow[dl,equals]\arrow[d] \\
    (\bbR^0,0) & \big((X\times\Delta^n_{\rm sm})\sqcup(X\times\Delta^n_{\rm sm}),d_X\big) & (X\times\Delta^n_{\rm sm},d_X),
    \end{tikzcd}
    \end{equation}
where the map $(X\times\Delta^n_{\rm sm})\sqcup(X\times\Delta^n_{\rm sm})\to X\times\Delta^n_{\rm sm}$ is the obvious "fold" map. But this is precisely the group structure on $\Omega^{-n+d_X}_\varphi(X)$ given by disjoint union.

Next, we continue with the ring structure in the case $(X,d_X)=(\bbR^0,0)$. Now, the ring structure on $\pi_*\unit^{\Spaces}_\varphi(\bbR^0,0)$ is explicitly induced by the symmetric monoidal structure on $\TrCor^{\rm ex}_\varphi(\Sm)$. Consider two
    \begin{align}
    (\bbR^0,0)\xleftarrow{(f,0)}W\xrightarrow{(g,\alpha,\theta)}(\bbR^0\times\Delta^n_{\rm sm},),&\\
    (\bbR^0,0)\xleftarrow{(f,0)}W'\xrightarrow{(g',\alpha',\theta')}(\bbR^0\times\Delta^n_{\rm sm},0),&
    \end{align}
then their product is the correspondence obtained via the composition
    \begin{equation}
    \begin{tikzcd}
    W\times W'\arrow[d,"{(f\times f',0)}",swap]\arrow[dr,"{(g\times g',\alpha\times\alpha',\theta\times\theta')}"] & & (\bbR^0\times\Delta^{n+m}_{\rm sm},0)\arrow[dl,"\Delta\times\id",swap]\arrow[d,equals] \\
    (\bbR^0,0) & \big((\bbR^0\times\Delta^n_{\rm sm})\times(\bbR^0\times\Delta^m_{\rm sm}),0\big) & (\bbR^0\times\Delta^{n+m}_{\rm sm},0).
    \end{tikzcd}
    \end{equation}
But this is precisely the ring structure on $\Omega^{-*}_\varphi(*)$ given by Cartesian product.

Finally, we end with the module structure. Consider two
    \begin{align}
    (\bbR^0,0)\xleftarrow{(f,0)}W\xrightarrow{(g,\alpha,\theta)}(\bbR^0\times\Delta^n_{\rm sm},0),& \\(\bbR^0,0)\xleftarrow{(f',0)}W'\xrightarrow{(g',\alpha',\theta')}(X\times\Delta^m_{\rm sm},d_X);&
    \end{align}
we abuse notation and denote by 
    \begin{equation}
    (\bbR^0,0)\xleftarrow{(f,0)}W\xrightarrow{(g,\alpha,\theta)}(X\times\Delta^n_{\rm sm},0)
    \end{equation}
the correspondence obtained via the composition 
    \begin{equation}
    \begin{tikzcd}[column sep=large]
    W\arrow[d,"{(f,0)}",swap]\arrow[dr,"{(g,\alpha,\theta)}"] & & (X\times\Delta^n_{\rm sm},0)\arrow[dl,"*\times\id",swap]\arrow[d,equals] \\
    (\bbR^0,0) & (\bbR^0\times\Delta^n_{\rm sm},0) & (X\times\Delta^n_{\rm sm},0).
    \end{tikzcd}
    \end{equation}
The module action is the correspondence obtained via the composition
    \begin{equation}
    \begin{tikzcd}
    W\times W'\arrow[d,"{(f\times f',0)}",swap]\arrow[dr,"{(g\times g',\alpha\times\alpha',\theta\times\theta')}"] & & (X\times\Delta^{n+m}_{\rm sm},d_X)\arrow[dl,"\Delta\times\id",swap]\arrow[d,equals] \\
    (\bbR^0,0) & \big((X\times\Delta^n_{\rm sm})\times(X\times\Delta^m_{\rm sm}),d_X\big) & (X\times\Delta^{n+m}_{\rm sm},d_X).
    \end{tikzcd}
    \end{equation}
But this is precisely the action of $\Omega^{-\ast}_\varphi(\bbR^{0})$ on $\Omega^{-*+d_X}_\varphi(X)$, $*$ at least 0, given by Cartesian product. It is straightforward to check this behaves well with the additive structure; the proposition follows.
\end{proof}

Fortunately, although $\unit^{\Spaces}_\varphi(X,d)$, for an arbitrary $d\in\bbZ$, does not see all of the $\varphi$-structured geometric cobordism groups of $X$, we may make a careful choice of shift so that we can.

\begin{cor}
Let $X\in\Sm$ be path connected. We have the following isomorphism of graded modules:
    \begin{equation}
    \pi_*\unit^{\Spaces}_\varphi(X,\dim X)\cong\Omega^{-*+\dim X}_\varphi(X).
    \end{equation}
\end{cor}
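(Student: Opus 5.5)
The plan is to specialize Proposition~\ref{prop:spatialgysin1} to the shift $d_X = \dim X$ and check that the truncation there becomes harmless. Since $X$ is path connected, it has a single component $C = X$, and $d_X$ is the constant function $\dim X$. Proposition~\ref{prop:spatialgysin1} then gives, for every $n \geq 0$, an isomorphism
\begin{equation}
\pi_n \unit^{\Spaces}_\varphi(X,\dim X) \cong \Omega^{-n+\dim X}_\varphi(X).
\end{equation}
This isomorphism is compatible with the $\pi_*\unit^{\Spaces}_\varphi(\bbR^0,0)\cong\Omega^{-*}_\varphi(*)$-module structures, since it is the module statement of that proposition read off degreewise. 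So the displayed isomorphism of graded modules holds in all degrees where the left side is defined, namely $* \geq 0$.

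What remains is to justify that nothing is lost by restricting to $* \geq 0$, i.e.\ that the target has no classes in the missing degrees. Negative values of $*$ correspond to cobordism degrees $-*+\dim X > \dim X$. A class in $\Omega^{k}_\varphi(X)$ is a proper $\varphi$-structured map $W \to X$ of relative dimension $-k$. After applying Lemma~\ref{lem:kuranishitosmooth}, we may take $W$ to be an honest smooth manifold, so $\dim W = \dim X - k$. For $k > \dim X$ this dimension is negative, forcing $W = \varnothing$, so these groups vanish. Hence the truncated graded module of Proposition~\ref{prop:spatialgysin1} already equals the full $\Omega^{-*+\dim X}_\varphi(X)$ (with $* \geq 0$ exhausting all nonzero degrees), which is the claim.

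The main point to double check is the rank bookkeeping. The condition $\rk g = f^*d_X - g^*d_Y$ in Definition~\ref{defin:extended} must indeed translate into the relative dimension $-n - \dim X$ (for $W \to X\times\Delta^n_{\rm sm}$) being exactly the one prescribed in degree $n$. I would also verify that the sign and indexing conventions in Proposition~\ref{prop:spatialgysin1} are consistent with this reading.

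Beyond that, the argument is formal: it only uses the cobordism invariance of $W$ from Lemma~\ref{lem:kuranishitosmooth}, together with the fact that manifolds of negative dimension are empty.
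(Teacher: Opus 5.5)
Your argument is correct and is exactly the reasoning the paper leaves implicit. It specializes Proposition~\ref{prop:spatialgysin1} to the constant shift $d_X=\dim X$ and then uses the dimension count $\Omega^{k}_\varphi(X)=0$ for $k>\dim X$ to see that the truncation to $*\geq 0$ loses nothing. (Lemma~\ref{lem:kuranishitosmooth} is not needed for that vanishing, since geometric cobordism classes are represented by smooth manifolds by definition.)

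One small slip in your bookkeeping paragraph: for $g\colon W\to X\times\Delta^n_{\rm sm}$ the rank condition gives $\rk g=-\dim X$, so $W$ has (virtual) dimension $n$, not relative dimension $-n-\dim X$. This is consistent with $\pi_n$ corresponding to $\Omega^{\dim X-n}_\varphi(X)$.
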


Meanwhile, with $\unit^{\Sp}_\varphi$, we can see more.

\begin{prop}
Let $X\in\Sm$ be connected. We have the following isomorphism of graded modules: 
    \begin{equation}
    \pi_*\unit^{\Sp}_\varphi(X,d)\cong\Omega^{-*+d}_\varphi(X),\;\;d\in\bbZ.
    \end{equation}
In fact, if $(X,d)=(\bbR^0,0)$, then we have the following isomorphism of graded commutative rings:
    \begin{equation}
    \pi_*\unit^{\Sp}_\varphi(\bbR^0,0)\cong\Omega^{-*}_\varphi(*).
    \end{equation}
\end{prop}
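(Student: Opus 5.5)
The plan is to reduce everything to the spatial computation of Proposition~\ref{prop:spatialgysin1} and its corollary, using the shifting equivalence of Corollary~\ref{cor: shifting = suspension} to reach negative degrees. By Corollary~\ref{cor:(de)loop}, $\Omega^{\infty}\unit^{\Sp}_\varphi\simeq\unit^{\Spaces}_\varphi$ as $\einf$-algebras. Hence for $n\geq0$ we get $\pi_n\unit^{\Sp}_\varphi(X,d)\cong\pi_n\unit^{\Spaces}_\varphi(X,d)\cong\Omega^{-n+d}_\varphi(X)$ by Proposition~\ref{prop:spatialgysin1}. This identification respects the module structure over $\pi_*\unit^{\Sp}_\varphi(\bbR^0,0)$ in nonnegative degrees, because $\Omega^\infty$ is lax symmetric monoidal and evaluation $i_0^*$ is lax symmetric monoidal (Proposition~\ref{prop: eval at R^0 lax sm}).

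To reach the remaining degrees, I would apply Corollary~\ref{cor: shifting = suspension} to $\m{F}=\unit^{\Sp}_\varphi$. For any $k\geq0$ it gives $\Sigma^{k}\unit^{\Sp}_\varphi(X,d)\simeq\unit^{\Sp}_\varphi(X,d+k)$, naturally in $X$. Therefore
\[
\pi_{n}\unit^{\Sp}_\varphi(X,d)\cong\pi_{n+k}\unit^{\Sp}_\varphi(X,d+k)\cong\Omega^{-(n+k)+d+k}_\varphi(X)=\Omega^{-n+d}_\varphi(X)
\]
for any $k$ with $n+k\geq0$; this covers negative $n$. Two things must then be checked. First, the answer must be independent of $k$: increasing $k$ by one composes with another Thom--Gysin transfer for the trivial line bundle. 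Second, on geometric representatives this suspension isomorphism must agree with the standard isomorphism $\Omega^{m}_\varphi(X)\cong\Omega^{m+1}_\varphi(X\times(\bbR,\partial))$, given by crossing with the interval and using relative classes on $D(\underline{\bbR})$ rel $S(\underline{\bbR})$. I would verify this compatibility directly from the Thom--Gysin construction. The transfer $\iota_!$ is induced by the zero-section correspondence $X\to D(\underline{\bbR})$, which on representatives $W\to X\times\Delta^n_{\rm sm}$ sends $W$ to $W\times\{0\}\subset X\times[0,1]\times\Delta^n_{\rm sm}$. The fiber-sequence identification in the proof of Corollary~\ref{cor: shifting = suspension} then matches this with the usual suspension in geometric cobordism. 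Additivity follows because all of these maps are maps of spectra.

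For the ring statement at $(\bbR^0,0)$, the product on $\pi_*\unit^{\Sp}_\varphi(\bbR^0,0)$ comes from the $\einf$-structure. In nonnegative degrees it agrees with Proposition~\ref{prop:spatialgysin1}, where it is the Cartesian product. The claim $\Omega^{-*}_\varphi(*)$ is concentrated in $*\le0$ is exactly the range covered by nonnegative homotopy, so I would also check that the negative homotopy groups of $\unit^{\Sp}_\varphi(\bbR^0,0)$ vanish, i.e.\ $\Omega^{m}_\varphi(*)=0$ for $m>0$. This holds because a negative-dimensional manifold is empty. Thus the ring isomorphism for $(\bbR^0,0)$ follows directly from the spatial one.

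The main obstacle I expect is the sign and coherence bookkeeping in the module structure in negative degrees. The suspension equivalences $\Sigma^k\m{F}\simeq\m{F}^k$ have to be compatible with multiplication by $\pi_*\unit^{\Sp}_\varphi(\bbR^0,0)$. This involves the interchange of $\{d\}$-twists, which Remark~\ref{rem:baneofourexistence} warns can carry signs. I would handle it by checking that the Thom--Gysin transfer is $\unit^{\Sp}_\varphi(\bbR^0,0)$-linear: it is induced by a morphism in $\TrCor^{\ex}_\varphi(\Sm)$ tensored with identities. Any residual sign then matches the standard graded-commutativity sign in geometric cobordism.
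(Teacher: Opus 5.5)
Your proposal is correct and follows the paper's proof. Nonnegative degrees come from $\unit^{\Spaces}_\varphi\simeq\Omega^\infty\unit^{\Sp}_\varphi$ together with Proposition~\ref{prop:spatialgysin1}; negative degrees come from the shift equivalence of Corollary~\ref{cor: shifting = suspension}; and the ring statement at $(\bbR^0,0)$ is read off from the spatial case. The extra coherence checks you propose (independence of $k$, compatibility with geometric suspension, $\unit^{\Sp}_\varphi(\bbR^0,0)$-linearity of the Thom--Gysin transfer) are not carried out in the paper. They are not needed for the bare isomorphism of groups, but they are the right things to verify for the graded-module claim in negative degrees.
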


\begin{proof}
First, we have the following commutative diagram: 
    \begin{equation}
    \begin{tikzcd}[row sep=small,column sep=small]
    \Gys^\bbR_\varphi(\Sm;\Spaces)\arrow[d]\arrow[r,"\sim"] & \Gys^\bbR_\varphi(\Sm;\CMongp)\arrow[d]\arrow[r] & \Gys^\bbR_\varphi(\Sm;\Sp^{\geq0})\arrow[d]\arrow[r] & \Gys^\bbR_\varphi(\Sm;\Sp)\arrow[d] \\
    \Shv_\bbR(\Sm;\Spaces)\arrow[d,"\sim"]\arrow[r] & \Shv_\bbR(\Sm;\CMongp)\arrow[d,"\sim"]\arrow[r] & \Shv_\bbR(\Sm;\Sp^{\geq0})\arrow[d,"\sim"]\arrow[r] & \Shv_\bbR(\Sm;\Sp)\arrow[d,"\sim"] \\
    \Spaces\arrow[r] & \CMongp\arrow[r] & \Sp^{\geq0}\arrow[r] & \Sp,
    \end{tikzcd}
    \end{equation}
where the equivalence between the second row and the third row is Dugger's theorem. By Proposition \ref{prop: gysin sheaves are group-complete}, for any $d\in\bbZ$, we have that the delooping of
    \begin{equation}
    \unit^{\Spaces}_\varphi(X,d)\simeq\Map_{\Spaces}\big(\Sing(X),\unit^{\Spaces}_\varphi(\bbR^0,0)\big)
    \end{equation}
is given by 
    \begin{equation}
    \unit^{\Sp}_\varphi(X,d)\simeq F\big(\Sigma^\infty_+\Sing(X),\unit^{\Sp}_\varphi(\bbR^0,0)\big);
    \end{equation}
i.e., 
    \begin{equation}
    \unit^{\Spaces}_\varphi(X,d)\simeq\Omega^\infty\unit^{\Sp}_\varphi(X,d).
    \end{equation}
Now, we have that
    \begin{equation}
    \pi_n\unit^{\Sp}_\varphi(X,d)\cong\pi_n\unit^{\Spaces}_\varphi(X,d),\;\;n\geq0;
    \end{equation}
meanwhile, using Subsection \ref{subsec:shiftoperators}, we obtain the negative homotopy groups: 
    \begin{equation}
    \pi_{-n}\unit^{\Sp}_\varphi(X,d)\cong\pi_0\unit^{\Sp}_\varphi(X,d+n)\cong\pi_0\unit^{\Spaces}_\varphi(X,d+n),\;\;n\geq0.
    \end{equation}
The result follows by Proposition \ref{prop:spatialgysin1}.
\end{proof}

\section{Homotopy coherent Pontryagin-Thom}\label{sec: coherent PT}
\subsection{Quillen cohomology theories}
The following definition essentially appears in \cite{quillenElementaryProofsResults1971}; hence, we find it appropriate that such objects carry Quillen's namesake. We denote by ${\rm GrAb}$ the category of graded abelian groups.

\begin{defin}\label{defin:qct}
Let $h^*:\Sm^\op\to{\rm GrAb}$ be a functor and, given $f:X\to Y\in\Sm$, denote by $f^*:h^*(Y)\to h^*(X)\in{\rm GrAb}$ the induced map. A \emph{$\varphi$-structured Quillen cohomology theory} is a functor $h^*$ of the aforementioned form together with, for any proper $\varphi$-structured map $g:W\to Y\in\Sm$, a map
    \begin{equation}
    g_*:h^{*+\rk g}(W)\to h^*(Y),
    \end{equation}
referred to as a \emph{Gysin} map, satisfying the following properties. 
\begin{enumerate}
\item If $f_0,f_1:X\to Y$ are homotopic, then $f_0^*=f_1^*$.
\item If $g:W\to W'$ and $g':W'\to Y$ are both proper $\varphi$-structured maps and if $g'\circ g$ has the induced $\varphi$-structure, then $(g'\circ g)_*=g'_*\circ g_*$.
\item If 
    \begin{equation}
    \begin{tikzcd}
    \overline{W}\equiv W\times_Y\overline{Y}\arrow[d,"\overline{f}",swap]\arrow[r,"\overline{g}"]\arrow[dr, phantom, "\lrcorner", very near start] & \overline{Y}\arrow[d,"f"] \\
    W\arrow[r,"g",swap] & Y
    \end{tikzcd}
    \end{equation}
is a transversal pullback in $\Sm$, where $g$ is a proper $\varphi$-structured map such that $\overline{g}$ has the induced $\varphi$-structure, then $g^*f_*=\overline{f}_*\overline{g}^*$.
\end{enumerate}
\end{defin}

\begin{rem}
We may define a morphism of $\varphi$-structured Quillen cohomology theories in the obvious way: it is a natural transformation of the underlying functors compatible with the Gysin maps.
\end{rem}

\begin{example}
The following are quick examples of $\varphi$-structured Quillen cohomology theories.
\begin{enumerate}
\item The functor $\Omega^*_\varphi:\Sm^\op\to{\rm GrAb}$ which sends a smooth manifold to its usual $\varphi$-structured geometric cobordism ring.
\item The functor $\Omega^{*,{\rm ho}}_\varphi\equiv\pi_{-*}F\big(\Sigma^\infty_+\Sing(-),\rmM\varphi\big):\Sm^\op\to{\rm GrAb}$ given by the usual $\varphi$-structured homotopical cobordism ring.
\end{enumerate}
\end{example}

\begin{rem}
As already mentioned, the usual Pontryagin-Thom isomorphism says we have the following equivalence of Quillen cohomology theories: 
    \begin{equation}
    \Omega^*_\varphi\xrightarrow{\sim}\Omega^{*,{\rm ho}}_\varphi;
    \end{equation}
this equivalence is given by the celebrated Pontryagin-Thom construction.
\end{rem}

The following result shows $\varphi$-structured homotopical cobordism is initial among $\varphi$-structured Quillen cohomology theories.

\begin{thm}[Proposition 1.10 in \cite{quillenElementaryProofsResults1971}]
Let $h^*$ be a $\varphi$-structured Quillen cohomology theory and choose $a\in h^0(*)$. There is a unique map of $\varphi$-structured Quillen cohomology theories $\theta:\Omega^{*,{\rm ho}}_\varphi\to h^*$ such that $\theta(1)=a$, where $1\in\Omega^{0,{\rm ho}}_\varphi(*)$ is the cobordism class of the identity map $*\to *$ with the $\varphi$-structure corresponding to the positive orientation.
\end{thm}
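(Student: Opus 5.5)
The plan is Quillen's universal-property argument, run in $\varphi$-structured form: every cobordism class factors as an embedding followed by a projection from a Thom space, and the Gysin maps of $h^*$ are pinned down on exactly these two pieces. Set $\theta$ on the unit by $\theta(1)=a$. Multiplicativity is not assumed, so we do not take $\theta(x)=x\cdot a$. Instead, every element of $\Omega^{n,{\rm ho}}_\varphi(X)$ is written as a pushforward of a unit, and $\theta$ is forced to be the corresponding pushforward of $a$.

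\textbf{Uniqueness.} By the classical Pontryagin--Thom isomorphism, every class $x\in\Omega^{n,{\rm ho}}_\varphi(X)$ is of the form $g_*(1_W)$ for a proper $\varphi$-structured map $g:W\to X$ of rank $-n$, where $1_W=p_W^*1$ and $p_W:W\to *$. This is the geometric description of homotopical cobordism: the Gysin map of $\Omega^{*,{\rm ho}}_\varphi$ is given by the Thom collapse followed by the Thom class furnished by $\mathrm{can}_\varphi$. A morphism of Quillen cohomology theories commutes with pullbacks and Gysin maps, so any $\theta$ satisfies
\begin{equation}
\theta(x)=g_*\big(\theta(p_W^*1)\big)=g_*p_W^*a.
\end{equation}
Hence $\theta$ is determined by $a$.

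\textbf{Existence.} Define $\theta(x)\equiv g_*p_W^*a$, and check four things.
\begin{enumerate}
\item Well-definedness. If $(W,g)$ and $(W',g')$ represent the same class, choose a $\varphi$-structured cobordism $\widetilde g:V\to X\times\bbR$. The inclusions $i_0,i_1:X\to X\times\bbR$ are transverse to $\widetilde g$. Base change (property (3)) gives $g_*p_W^*a=i_0^*\widetilde g_*p_V^*a$, and similarly for $g'$ with $i_1$. Homotopy invariance (property (1)) gives $i_0^*=i_1^*$.
\item Naturality in pullbacks. Given $f:Y\to X$, first make $f$ transverse to $g$ by a homotopy, using property (1) again. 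Then property (3) gives $f^*g_*=\overline g_*\overline f^*$.
\item Compatibility with Gysin maps. This is property (2) applied to the composite $g'\circ g$.
\item Additivity. This follows because Gysin maps send disjoint unions to sums. To see that, split a disjoint union $W\sqcup W'\to X$ through the two inclusions and apply properties (2) and (3).
\end{enumerate}
The case $\theta(1)=a$ is the identity $*\to *$ with its positive $\varphi$-structure.

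\textbf{Main obstacle.} The hard step is the representation claim $x=g_*(1_W)$, together with the requirement that this $g_*$ agree with the abstract Gysin map making $\Omega^{*,{\rm ho}}_\varphi$ a Quillen theory. I would first prove directly that the homotopical Gysin map (collapse followed by the Thom class) satisfies properties (1)--(3). I would then derive surjectivity of $(g,W)\mapsto g_*1_W$ from transversality applied to a map $X_+\wedge S^k\to\Th(\varphi_m)$, where $\varphi_m$ is the finite-stage bundle pulled back from a finite approximation $B_m$ of $B$, so that such maps approximate $\rmM\varphi$. This is exactly the classical Pontryagin--Thom theorem, and I would cite it rather than reprove it.
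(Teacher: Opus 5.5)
Your route is the one the paper intends. The paper gives no proof beyond citing Quillen's Proposition~1.10 and noting that it rests on the classical Pontryagin--Thom isomorphism. Your uniqueness argument, the formula $\theta(x)=g_*p_W^*a$, and existence steps (1)--(3) are exactly that argument and are fine.

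The genuine gap is step (4): additivity does \emph{not} follow from properties (2) and (3). Writing $g=(g\sqcup g')\circ i$ and $p_W^*=i^*p_{W\sqcup W'}^*$ (and likewise for $W'$), properties (2) and (3) only give
\[
g_*p_W^*a+g'_*p_{W'}^*a=(g\sqcup g')_*\big(i_*i^*+i'_*i'^*\big)\,p_{W\sqcup W'}^*a .
\]
So you need $i_*i^*+i'_*i'^*=\mathrm{id}$ on $h^*(W\sqcup W')$. Base change along the transversal squares $W\times_{W\sqcup W'}W=W$ and $W\times_{W\sqcup W'}W'=\varnothing$ does two things: it identifies $i^*i_*$ with the Gysin map of $\mathrm{id}_W$, and it shows $i'^*i_*$ factors through $h^*(\varnothing)$. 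To conclude, you would still need three further facts, none of which is an axiom of Definition~\ref{defin:qct}:
\begin{enumerate}
\item $h^*(\varnothing)=0$;
\item $(i^*,i'^*)\colon h^*(W\sqcup W')\to h^*(W)\oplus h^*(W')$ is injective;
\item identities have identity Gysin maps.
\end{enumerate}

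These facts cannot be derived from (1)--(3). Take $h^n(X)\equiv\bbZ$ for all $n$ and all $X$, with every pullback and every Gysin map the identity. Then (1)--(3) hold. For $a=1$, your own uniqueness argument forces $\theta(x)=g_*p_W^*1=1$ for every $x$, which is not a homomorphism; for instance $\theta(0)=1$. So step (4) cannot be proved as stated.

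The fix is to add the axiom that $h^*$ takes finite disjoint unions, including $\varnothing$, to direct sums via restriction. With it, your computation above closes step (4). This costs nothing in the paper's application: both $\Omega^{*,\mathrm{ho}}_\varphi$ and every $\mathscr{Q}^*_{\m{F}}$ satisfy it, since a $\varphi$-Gysin sheaf converts disjoint unions into products (Lemma~\ref{lem: sheaf detection}).

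Your last line has a similar implicit hypothesis. It computes $\theta(1)$ as $\epsilon(a)$, where $\epsilon$ is the Gysin map of the identity of the point with its unit $\varphi$-structure. Property (2) only makes $\epsilon$ idempotent. For example, the constant functor $\bbZ$ with identity pullbacks and zero Gysin maps satisfies (1)--(3) but forces $\theta(1)=0\neq a$. So you should also impose, explicitly, that identities with the unit structure have identity Gysin maps.
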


\begin{rem}
Of course, \cite[Proposition 1.10]{quillenElementaryProofsResults1971} is only written to pertain to complex cobordism, but the proof straightforwardly generalizes.
\end{rem}

We will now explain how, given a spectral Gysin sheaf, we may construct a Quillen cohomology theory.

\begin{construction}\label{construction:preqct}
Let $\m{F}\in\Gys^\bbR_\varphi(\Sm;\Sp)$; we may construct a Quillen cohomology theory, denoted $\mathscr{Q}^*_{\m{F}}$, as follows. First, given $X\in\Sm$, we define
    \begin{equation}
    \mathscr{Q}^*_{\m{F}}(X)\equiv\pi_{-*}\m{F}(X,0).
    \end{equation}
Second, given $f:X\to Y\in\Sm$, we define 
    \begin{equation}
    f^*:\mathscr{Q}^*_{\m{F}}(Y)\to\mathscr{Q}^*_{\m{F}}(X)
    \end{equation}
via
    \begin{equation}
    \pi_{-*}\m{F}\big((Y,0)\xleftarrow{(f,0)}X\xrightarrow{\id}(X,0)\big):\pi_{-*}\m{F}(Y,0)\to\pi_{-*}\m{F}(X,0).
    \end{equation}
Finally, given $g:X\to Y\in\Sm$ with $\varphi$-structure $\alpha$, we define 
    \begin{equation}
    g_{!}:\mathscr{Q}^{*+\rk g}_{\m{F}}(X)\to\mathscr{Q}^*_{\m{F}}(Y)
    \end{equation}
via 
     \begin{multline}
    \pi_{-*}\m{F}\big((X,-\rk g)\xleftarrow{\id}X\xrightarrow{(g,\alpha,\theta_\alpha)}(Y,0)\big):\pi_{-*+\rk g}\m{F}(X,0)\cong \\
    \pi_{-*}\m{F}(X,-\rk g)\to\pi_{-*}\m{F}(Y,0);
    \end{multline}
here, we use Subsection \ref{subsec:shiftoperators}. Moreover, it is straightforward to see that 
    \begin{equation}
    \theta:\m{F}\to\m{G}\in\Gys^\bbR_\varphi(\Sm;\Sp)\rightsquigarrow\mathscr{Q}^*_\theta:\mathscr{Q}^*_\m{F}\to\mathscr{Q}^*_\m{G}.
    \end{equation}
\end{construction}

\subsection{Global sections and the categorified Thom isomorphism}
By definition, the realization functor $\DSm \rightarrow \Top$ necessarily factors through $\LCH \subset \Top$. 
This induces a functor $\Cor(\DSm;\calP) \rightarrow \Cor(\LCH;\calP)$ and therefore a 6-functor formalism 
\begin{equation}
\Shv(-;R) : \Cor(\DSm;\calP) \rightarrow \bigcat, \ \ \ X \mapsto \Shv(X;R).
\end{equation}
Unstraightening, we obtain a symmetric monoidal functor 
\begin{equation}
\Un\big(\Shv(-;R)\big) \equiv \int^{\rm cc}_{\Cor(\DSm;\calP)} \Shv(-;R) \rightarrow \Cor(\DSm;\calP). 
\end{equation}
For each $X \in \DSm$, the formation of global sections determines a functor $\Gamma(X;-) :\Shv(X;R) \rightarrow \Mod_{R}$ which is lax symmetric monoidal.
Given $f: X \rightarrow Y\in\DSm$, there is a natural transformation 
\begin{equation}
\Gamma(Y;-) \rightarrow \Gamma\big(X;f^{\ast}(-)\big).
\end{equation}
In the case where $f$ is proper, there is an additional natural equivalence 
\begin{equation}
\Gamma(X;-) \simeq \Gamma\big(Y;f_{\ast}(-)\big) \simeq \Gamma\big(Y;f_{!}(-)\big). 
\end{equation}
As we will see, these two kinds of natural transformations can be packaged into a coherent global sections functor. 

\begin{prop}\label{prop: global sections (unstructured)}
    Let $R$ be an $\einf$-ring spectrum. 
    There is a lax symmetric monoidal functor 
    \begin{equation}
    \Gamma : \Un\big(\Shv(-;R)\big) \rightarrow \Mod_{R}
    \end{equation}
    given by sending a pair $(X,\m{F})$ to $\Gamma(X;\m{F})$. 
\end{prop}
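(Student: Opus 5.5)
The plan is to build $\Gamma$ as a lax symmetric monoidal functor out of a lax symmetric monoidal natural transformation from $\Shv(-;R)$ to a constant functor. Lax symmetric monoidal functors $\Un(D) \rightarrow \m{E}$, with $D : \Cor(\DSm;\calP) \rightarrow \bigcat$ lax symmetric monoidal, correspond to lax symmetric monoidal transformations $D \Rightarrow \const(\m{E})$, where $\const(\m{E})$ is regarded as a lax symmetric monoidal functor via the tensor product of $\m{E}$. This follows from the universal property of the (symmetric monoidal) coCartesian unstraightening as a lax colimit, together with the monoidal Grothendieck construction (cf. \cite{ramziMONOIDALGROTHENDIECKCONSTRUCTION2026}). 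Taking $\m{E} = \Mod_R$, the problem becomes producing lax natural functors $\Gamma(X;-) : \Shv(X;R) \rightarrow \Mod_R$ compatible with both $f^*$ and $g_!$ for $g$ proper.

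The transformation itself should come from the unit of the 6-functor formalism. Each $\Shv(X;R)$ is a module over $\Mod_R$ via $p_X^*$, where $p_X : X \rightarrow *$. I would show that $\Shv(-;R)$ lifts to a functor valued in $\Mod_{\Mod_R}(\PrL)$ and that its value on the point is $\Mod_R$. Then $\Gamma(X;-) \simeq \Hom_{\Shv(X;R)}(R_X,-)$ is the right adjoint of the $\Mod_R$-linear functor $p_X^*$, which equips it with a lax $\Mod_R$-linear, lax symmetric monoidal structure.

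Concretely, I would define the transformation as the composite
\begin{equation}
\Shv(-;R) \rightarrow \Fun^{\rm lax}\big(\Shv(*;R), \Shv(-;R)\big) \rightarrow \Mod_R,
\end{equation}
that is, as the mate of the unit $R_{(-)} = p^*_{(-)}R$, viewed as a lax symmetric monoidal transformation $\const(\Mod_R) \Rightarrow \Shv(-;R)$ given by $p_X^*$. The components $\Gamma(Y;-) \rightarrow \Gamma(X;f^*-)$ arise as mates of $p_Y^* \simeq f^*p_Y^*$. For proper $g$, the components $\Gamma(W;-) \simeq \Gamma(Y;g_!-)$ come from $g_! \simeq g_*$ in Theorem~\ref{theorem: Volpe 6FF}, together with the fact that $p_{Y*}g_* \simeq p_{W*}$.

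The main obstacle is coherence. Passing to right adjoints changes contravariant data on $\Cor$ into a lax transformation, and the morphisms in $\Cor$ mix $f^*$ and $g_!$. My first attempt would avoid adjoints entirely by corepresenting. Using the Yoneda-type construction $\Gamma(X;\m{F}) \simeq \Map_{\Shv(X;R)}(R_X,\m{F})$ enriched in $\Mod_R$, I would write $\Gamma$ as the composite
\begin{equation}
\Un(\Shv(-;R)) \rightarrow \Un(\Shv(-;R))_{(*,R)/} \rightarrow \Mod_R.
\end{equation}
Morphisms $(*,R) \rightarrow (X,\m{F})$ in $\Un$ over the correspondence $* \leftarrow X \xrightarrow{\id} X$ are exactly maps $R_X \rightarrow \m{F}$. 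The delicate step is checking that, for proper $g$, the restriction to correspondences of the form $* \leftarrow W \rightarrow Y$ recovers the identification $\Gamma(W;-) \simeq \Gamma(Y;g_!-)$ rather than only a map. I would verify this using base change and the projection formula, and take the symmetric monoidal structure from the monoidality of $(*,R)$ as unit.
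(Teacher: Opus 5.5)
Your setup matches the paper's. You reduce to lax symmetric monoidal lax transformations $\Shv(-;R)\Rightarrow\const(\Mod_R)$, build the transformation on $\DSm^{\op}$ as the fiberwise right adjoint of $p_X^*:\const(\Mod_R)\to\Shv(-;R)$, and state the compatibility for proper $g:W\to Y$, namely $\Gamma(W;-)\simeq\Gamma(Y;g_*-)\simeq\Gamma(Y;g_!-)$. These are exactly the ingredients the paper uses.

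The gap is the step you yourself call the main obstacle: extending this lax transformation from $\DSm^{\op}$ to a coherent one on $\Cor(\DSm;\calP)$, where morphisms mix $f^*$ and $g_!$. The paper handles this with the universality of span $(\infty,2)$-categories, \cite[Theorem 6.13]{cnossenUniversalitySpan2categories2026}. That result says a lax symmetric monoidal lax transformation on $\DSm^{\op}$ extends to a lax symmetric monoidal lax transformation on $\bCor(\DSm;\calP)$, provided its naturality squares at proper maps are horizontally right adjointable. That adjointability condition is precisely your identification $\Gamma(W;-)\simeq\Gamma(Y;g_!-)$. The paper then restricts to invertible $2$-cells and unstraightens. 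Your proposal has no substitute for this extension theorem.

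The corepresentability workaround does not fill the gap, because it computes the wrong functor.
In $\Un(\Shv(-;R))$, the space $\Map\big((\ast,R),(X,\m{F})\big)$ lies over all of $\Map_{\Cor}(\ast,X)$, which is the groupoid of proper maps $g:W\to X$. Its fiber over $g$ is $\Map_{\Shv(X;R)}(g_!R_W,\m{F})\simeq\Map(R_W,g^!\m{F})$.
\begin{itemize}
\item Only the component over $g=\id_X$ is $\Omega^\infty\Gamma(X;\m{F})$.
\item The fiber over the fold map $X\sqcup X\to X$ is already $\Omega^\infty\Gamma(X;\m{F})^{2}$.
\item This corepresentable is an unstructured analogue of the paper's $\Tr_\varphi$, i.e.\ the geometric side, not $\Gamma$. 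Identifying the two tautologically would amount to a presheaf-level Pontryagin--Thom isomorphism, which is false.
\end{itemize}
Nor can you simply restrict to the component over $\ast\leftarrow X=X$. Composing with a morphism over $X\leftarrow W\xrightarrow{g}Y$ lands over $\ast\leftarrow W\to Y$. Getting back to $\ast\leftarrow Y=Y$ requires precomposing with the unit $R_Y\to g_*g^*R_Y\simeq g_!R_W$, which is exactly the adjointability datum whose coherence was the problem. Two smaller issues: the corepresentable is only $\Spaces$-valued, not $\Mod_R$-valued, and there is no functor $\Un\to\Un_{(\ast,R)/}$, since the coslice maps to $\Un$, not conversely.
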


\begin{proof}
    It suffices to produce a lax symmetric monoidal functor 
    \begin{equation}
    \Un(\Shv(-;R)) = \int^{\rm cc}_{\Cor(\DSm;\calP)} \Shv(-;R) \rightarrow \Mod_{R}, 
    \end{equation}
    which is tantamount to producing a lax symmetric monoidal functor 
    \begin{equation}
    \Un(\Shv(-;R)) \rightarrow \Cor(\DSm;\calP) \times \Mod_{R},
    \end{equation}
    where the target is the coCartesian unstraightening of the constant functor on $\Cor(\DSm;\calP)$ with value $\Mod_{R}$; such a functor is the same as a lax natural transformation
    \begin{equation}
    \Shv(-;R) \rightarrow \const(\Mod_{R})
    \end{equation}
    of functors $\Cor(\DSm;\calP) \rightarrow \bigcat$ which is additionally lax symmetric monoidal. 
    To produce such a lax natural transformation we appeal to \cite[Theorem 6.13]{cnossenUniversalitySpan2categories2026}; we now explain how this result applies to our situation. 
    Following the notation of \textit{loc cit.}, we have $\m{C} = \DSm$, $I = \mathrm{iso}$, $P = \calP$, and $E = \calP$; because $I = \mathrm{iso}$ it is clear these choices satisfy \cite[Convention 3.1]{cnossenUniversalitySpan2categories2026}.
    By \cite[Proposition 2.32]{volpe}, the functor
    \begin{equation}
    \Shv(-;R) : \DSm^{\op} \rightarrow \PrSt
    \end{equation}
    is symmetric monoidal, and, as $\DSm^{\op}$ has an initial object, there is a canonical symmetric monoidal natural transformation of functors $\DSm^{\op} \rightarrow \PrSt$,
    \begin{equation}
    \const(\Mod_{R}) \rightarrow \Shv(-;R),
    \end{equation}
    given by sending an $R$-module $M$ to the constant sheaf $p_{X}^{\ast}M \in \Shv(X;R)$. 
    By forming the fiberwise right adjoint of the transformation above,
    we obtain a lax natural transformation of functors from $\DSm^{\op}$ to $\bbCat$,
    \begin{equation}
    \Gamma: \Shv(-;R) \rightarrow \const(\Mod_{R}),
    \end{equation}
    which is lax symmetric monoidal. 
    We are now in the situation of \cite[Theorem 6.13]{cnossenUniversalitySpan2categories2026}. 
    Because $I = \mathrm{iso}$, we only need to check that, for any proper map $p : X \rightarrow Y$, the square 
    \begin{equation}\begin{tikzcd}
	{\Shv(Y;R)} & {\Shv(X;R)} \\
	{\Mod_{R}} & {\Mod_{R}}
	\arrow["{p^{\ast}}", from=1-1, to=1-2]
	\arrow["{\Gamma(Y;-)}"', from=1-1, to=2-1]
	\arrow["{\Gamma(X;-)}", from=1-2, to=2-2]
	\arrow[Rightarrow, from=2-1, to=1-2]
	\arrow["{\mathrm{id}}"', from=2-1, to=2-2]
    \end{tikzcd}\end{equation}
    is horizontally right adjointable; this follows from the equivalence 
    \begin{equation}
    \Gamma(X;-) \simeq \Gamma\big(Y;p_{\ast}(-)\big) \simeq \Gamma\big(Y;p_{!}(-)\big)
    \end{equation}
    above. 
    Therefore, $\Gamma$ extends to a lax symmetric monoidal lax natural transformation of $(\infty,2)$-categories from $\bCor(\DSm;\calP)$ to $\bbCat$:
    \begin{equation}
    \Shv(-;R) \rightarrow \const(\Mod_{R}).
    \end{equation}
    Restricting to the invertible $2$-cells in $\bCor(\DSm;\calP)$, we obtain a lax symmetric monoidal lax natural transformation of functors from $\Cor(\DSm;\calP) \rightarrow \bigcat$. 
    Unstraightening this transformation, we obtain the desired lax symmetric monoidal functor 
    \begin{equation}
    \Un\big(\Shv(-;R)\big) \rightarrow \Cor(\DSm;\calP) \times \Mod_{R}. 
    \end{equation} 
\end{proof}

\begin{rem}
Note that $\Gamma|_{\Sm^{\op}} \simeq F\big(\Sigma_{+}^{\infty}\Sing(-),\rmM\varphi\big)$.
\end{rem}

We use global sections to produce a lax symmetric monoidal functor 
\begin{equation}
\Gamma_{\varphi}: \TrCor_{\varphi}^{\ex}(\Sm) \rightarrow \Sp.
\end{equation}
To accomplish this, we categorify the Thom isomorphism.
\begin{construction}\label{construction: categorified Thom isomorphism}
The $\einf$-$\rmM\varphi$-trivialization from Remark~\ref{remark: universal phi orientation} induces the following canonical commutative diagram in $\CAlg\big(\Fun(\DSm^{\op},\bigcat)\big)$:
\begin{equation}
\begin{tikzcd}
	\frakB & {\Shv(-;\rmM\varphi)} \\
	\ast & {\Shv(-;\rmM\varphi)},
	\arrow["{\Th_{\varphi}}", from=1-1, to=1-2]
	\arrow[from=1-1, to=2-1]
	\arrow["{\mathrm{id}}", from=1-2, to=2-2]
	\arrow["{\mathrm{can}_{\varphi}}", between={0.1}{0.9}, Rightarrow, 2tail reversed, from=2-1, to=1-2]
	\arrow[from=2-1, to=2-2]
\end{tikzcd}
\end{equation}
where we have written $\ast$ for the trivial structure functor. 
This induces a commutative diagram in $\Fun\big(\DSm^{\op},\CAlg\big(\Mod(\bigcat)\big)\big)$:
\begin{equation}
\begin{tikzcd}
	{(\frakB,\Shv(-;\rmM\varphi))} & {(\ast,\Shv(-;\rmM\varphi))} \\
	{(\frakB,\ast)} & {(\ast,\ast)}.
	\arrow[from=1-1, to=1-2]
	\arrow[from=1-1, to=2-1]
	\arrow[from=1-2, to=2-2]
	\arrow[from=2-1, to=2-2]
\end{tikzcd}
\end{equation}
After applying $-\quot -$ and forming the coCartesian unstraightening, this yields a commutative square of symmetric monoidal \categories:
\begin{equation}\label{equation: Thom iso}
\begin{tikzcd}
    \int^{\rm cc}_{\DSm^{\op}} \Shv(-;\rmM\varphi)\quot \frakB \arrow[r] \arrow[d] & \int^{\rm cc}_{\DSm^{\op}} \Shv(-;\rmM\varphi) \arrow[d] \\
    \DSm^{\op}_{\frakB} \arrow[r,"\pi_{\frakB}"'] & \DSm^{\op}.  
\end{tikzcd}
\end{equation}
Passing to the fiber over $X \in \DSm^{\op}$, we have a commutative square
\begin{equation}
\begin{tikzcd}
	{\Shv(X;\rmM\varphi)\quot\frakB(X)} & {\Shv(X;\rmM\varphi)} \\
	{\ast\quot\frakB(X)} & {\{X\}}.
	\arrow[from=1-1, to=1-2]
	\arrow[from=1-1, to=2-1]
	\arrow[from=1-2, to=2-2]
	\arrow[from=2-1, to=2-2]
\end{tikzcd}
\end{equation}
As $\frakB(X)$ acts trivially on $\Shv(X;\rmM\varphi)$, the induced map 
\begin{equation}
    \Shv(X;\rmM\varphi)\quot\frakB(X) \rightarrow \ast\quot\frakB(X)\times \Shv(X;\rmM\varphi)
\end{equation}
is an equivalence, which, combined with \cite[Corollary 2.4.4.4]{lurieHigherToposTheory2009}, shows that \eqref{equation: Thom iso} is a pullback square. 
Therefore, by straightening, we have an equivalence of lax symmetric monoidal functors 
\begin{equation}
    \mathrm{can}_{\varphi}: \Shv(-;\rmM\varphi)_{\frakB} \simeq \pi_{\frakB}^{\ast}\Shv(-;\rmM\varphi).
\end{equation}
\end{construction}

\begin{prop}[Categorified Thom isomorphism]
    Let $\pi_{\varphi} : \Cor_{\varphi}(\DSm) \rightarrow \Cor(\DSm;\calP)$ denote the forgetful functor. 
    There is a canonical equivalence of 6-functor formalisms 
    \begin{equation}
    \m{D}_{\varphi} \xrightarrow{\sim} \pi_{\varphi}^{\ast} \Shv(-;\rmM\varphi);
    \end{equation}
    equivalently, there is a pullback square of symmetric monoidal \categories:
    \begin{equation}
    \begin{tikzcd}
	{\int^{\rm cc}_{\Cor_{\varphi}(\DSm)} \m{D}_{\varphi}}\arrow[dr, phantom, "\lrcorner", very near start] & {\int_{\Cor(\DSm;\calP)}^{\rm cc}\Shv(-;\rmM\varphi)} \\
	{\Cor_{\varphi}(\DSm)} & {\Cor(\DSm;\calP)}.
	\arrow[from=1-1, to=1-2]
	\arrow[from=1-1, to=2-1]
	\arrow[from=1-2, to=2-2]
	\arrow["{\pi_{\varphi}}"', from=2-1, to=2-2]
    \end{tikzcd}
    \end{equation}
\end{prop}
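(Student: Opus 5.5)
We propose to promote the contravariant equivalence of Construction~\ref{construction: categorified Thom isomorphism} to an equivalence of 3-functor formalisms on $\Cor_{\varphi}(\DSm)$, using the uniqueness half of the universal property of correspondences invoked in Proposition~\ref{prop:twisted3ff}.

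\textbf{Reduction to the untruncated setting.} By construction, $\m{D}_{\varphi}$ is the restriction of $\Shv_{\varphi}$ along $\Cor_{\varphi}(\DSm)\to\Cor(\DSm_{\frakB};\calP_{\frakB})$. Moreover, $\pi_{\varphi}$ factors as this inclusion followed by $\Cor(\pi_{\frakB}) : \Cor(\DSm_{\frakB};\calP_{\frakB})\to\Cor(\DSm;\calP)$. It therefore suffices to produce an equivalence
\begin{equation}
\Shv_{\varphi}\simeq \Cor(\pi_{\frakB})^{\ast}\Shv(-;\rmM\varphi)
\end{equation}
of lax symmetric monoidal functors on $\Cor(\DSm_{\frakB};\calP_{\frakB})$, and then restrict.

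\textbf{Identifying the right-hand side.} Both sides are 3-functor formalisms in the sense of \cite[Theorem 5.14]{cnossenUniversalitySpan2categories2026}. The right-hand side is one because it is pulled back along a morphism of geometric setups, and $\pi_{\frakB}$ preserves pullbacks by Proposition~\ref{prop: C_F symmetric monoidal}. By that theorem, a 3-functor formalism on $\Cor(\m{C}_F;E_F)$ is determined, as an object of an $\infty$-category, by its restriction to $(\m{C}_{F}^{\op})^{\otimes}$ as a lax symmetric monoidal functor satisfying the biadjointability conditions. More precisely, restriction is fully faithful onto such functors, at least on the core. Hence it is enough to exhibit an equivalence of lax symmetric monoidal functors on $\DSm_{\frakB}^{\op}$ between $\Shv(-;\rmM\varphi)_{\frakB}$ and $\pi_{\frakB}^{\ast}\Shv(-;\rmM\varphi)$. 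This is exactly $\mathrm{can}_{\varphi}$ from Construction~\ref{construction: categorified Thom isomorphism}.

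\textbf{The main obstacle.} The delicate point is that this equivalence of contravariant functors must also be compatible with the exceptional pushforwards. Concretely, under $\mathrm{can}_{\varphi}$, the twisted pushforward $(g,\beta)_! = g_!\bigl(-\otimes \Th_{\varphi}(-\beta)\bigr)$ must correspond to the untwisted $g_!$. I would handle this as follows:
\begin{itemize}
\item Since $(g,\beta)_!$ is left adjoint to $(g,\beta)^{\ast}$ in the twisted setting, and $g_!$ is left adjoint to $g^{\ast}$ in the untwisted one, an equivalence of the pullback functors induces an equivalence of their left adjoints by passing to mates.
\item The Beck--Chevalley and projection transformations on both sides are built as mates from the same data, so they are matched as well.
\end{itemize}
This is precisely why the uniqueness statement in \cite[Theorem 5.14]{cnossenUniversalitySpan2categories2026} applies. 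Still, I expect to have to check one thing explicitly: the trivialization $\mathrm{can}_{\varphi}$ is an $\einf$, i.e.\ symmetric monoidal, natural transformation. Only then is the multiplicative adjointability square of Proposition~\ref{prop:twisted3ff} carried to the corresponding square for $\Shv(-;\rmM\varphi)$.

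\textbf{The pullback formulation.} The equivalent reformulation follows from straightening. The unstraightening of $\pi_{\varphi}^{\ast}\Shv(-;\rmM\varphi)$ is the pullback of the coCartesian fibration $\int^{\rm cc}_{\Cor(\DSm;\calP)}\Shv(-;\rmM\varphi)$ along $\pi_{\varphi}$, compatibly with symmetric monoidal structures. Therefore an equivalence of lax symmetric monoidal functors is the same as the stated pullback square of symmetric monoidal $\infty$-categories.
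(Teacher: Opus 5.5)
Your proposal is the paper's argument, spelled out in more detail. The paper likewise prolongs $\mathrm{can}_{\varphi}$ to an equivalence $\Shv_{\varphi}\simeq(\pi^{\rm cor}_{\frakB})^{\ast}\Shv(-;\rmM\varphi)$ on $\Cor(\DSm_{\frakB};\calP_{\frakB})$ via the universal property behind Proposition~\ref{prop:twisted3ff}, then restricts along $\Cor_{\varphi}(\DSm)\subset\Cor(\DSm_{\frakB};\calP_{\frakB})$. One small slip: for proper maps $g_!\simeq g_*$ is the \emph{right} adjoint of $g^{\ast}$, not the left, and likewise $(g,\beta)_!$ is right adjoint to $(g,\beta)^{\ast}$, as the proof of Proposition~\ref{prop:twisted3ff} uses; your mate argument goes through unchanged with right adjoints.
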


\begin{proof}
    By the construction of the 6-functor formalism in Proposition~\ref{prop:twisted3ff}, the equivalence 
    \begin{equation}
    \mathrm{can}_{\varphi}: \Shv(-;\rmM\varphi)_{\frakB} \simeq \pi_{\frakB}^{\ast}\Shv(-;\rmM\varphi)
    \end{equation}
    prolongs to an equivalence of 6-functor formalisms 
    \begin{equation}
    \Shv_{\varphi} \simeq (\pi^{\rm cor}_{\frakB})^{\ast}\Shv(-;\rmM\varphi),
    \end{equation}
    where $\pi_{\frakB}^{\rm cor} : \Cor(\DSm_{\frakB};\calP_{\frakB}) \rightarrow \Cor(\DSm;\calP)$. 
    Restricting this equivalence along $\Cor_{\varphi}(\DSm) \subset \Cor(\DSm_{\frakB};\calP_{\frakB})$ yields the claim.
\end{proof}

\begin{defin}
    We define \textit{$\varphi$-structured homotopical cobordism} to be the lax symmetric monoidal functor 
    \begin{equation}
    \Gamma_{\varphi}: \TrCor_{\varphi}^{\rm ex}(\Sm) \subset \Un(\m{D}_{\varphi}) \rightarrow \Un\big(\Shv(-;\rmM\varphi)\big) \xrightarrow{\Gamma} \Sp.
    \end{equation}
\end{defin}

\begin{rem}
    By construction, for each $(X,d_{X})\in\TrCor^{\rm ex}_\varphi(\Sm)$, we have an equivalence 
    \begin{equation}
    \Gamma_{\varphi}(X,d_{X}) = \Gamma(X;\rmM\varphi_{X}\{d_{X}\}) \simeq \prod_{i \in I} F\big(\Sigma_{+}^{\infty}\Sing(X_{i}),\Sigma^{d_{i}}\rmM\varphi\big),
    \end{equation}  
    where $\sqcup_{i} X_{i} = X$ is the decomposition of $X$ into its path components.
\end{rem}

\begin{thm}
    $\varphi$-structured homotopical cobordism admits the structure of an $\einf$-algebra in the category of spectral $\varphi$-Gysin sheaves. 
    Therefore, there is an essentially unique map of $\einf$-algebras:
    \begin{equation}
    \mathrm{PT}_{\varphi} : \unit_{\varphi} \rightarrow \Gamma_{\varphi}\in\Gys_{\varphi}^{\bbR}(\Sm;\Sp).
    \end{equation}
\end{thm}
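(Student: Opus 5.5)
The plan is to use the identification of $\einf$-algebras in a Day convolution category with lax symmetric monoidal functors. By definition, $\Gamma_{\varphi}$ is the composite
\[
\TrCor_{\varphi}^{\ex}(\Sm)\subset\Un(\m{D}_{\varphi})\to\Un\big(\Shv(-;\rmM\varphi)\big)\xrightarrow{\Gamma}\Sp .
\]
The first two functors are symmetric monoidal: the first by Lemma~\ref{lem: TrCor symmetric monoidal}, the second by the categorified Thom isomorphism. The last is lax symmetric monoidal by Proposition~\ref{prop: global sections (unstructured)}. Hence $\Gamma_{\varphi}$ is lax symmetric monoidal, and by the standard universal property of Day convolution (\cite[Section 2.2.6]{lurieHigherAlgebra2017}) it is an $\einf$-algebra in $\GysP_{\varphi}(\Sm;\Sp)$. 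The substance of the theorem is then to show that this algebra lies in the full subcategory $\Gys_{\varphi}^{\bbR}(\Sm;\Sp)$.

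Since $\Gys^{\bbR}_{\varphi}(\Sm;\Sp)$ is a symmetric monoidal localization of $\GysP_{\varphi}(\Sm;\Sp)$ (by the compatibility results for $L_{\sh}$, $L_{\bbR}$ and $L_{\rm hi}$ proved above), the inclusion of local objects is lax symmetric monoidal. An $\einf$-algebra whose underlying object is local is therefore an $\einf$-algebra in the localized category. So it suffices to check two properties of the underlying presheaf.

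\textbf{Sheaf condition.} I would use Lemma~\ref{lem: sheaf detection}. For condition (1), the remark following the definition of $\Gamma_{\varphi}$ gives $\Gamma_{\varphi}(X,d_X)\simeq\prod_i F\big(\Sigma^\infty_+\Sing(X_i),\Sigma^{d_i}\rmM\varphi\big)$, which sends countable disjoint unions to products. For condition (2), $i_d^{\ast}\Gamma_{\varphi}\simeq\Gamma(-;\rmM\varphi\{d\})\simeq\Sigma^d F\big(\Sigma^\infty_+\Sing(-),\rmM\varphi\big)$. This is a sheaf on $\Sm$ because $\Shv(-;\rmM\varphi)$ satisfies descent and global sections of a sheaf form a sheaf; equivalently, $\Sing$ carries open covers to colimits.

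\textbf{Homotopy invariance.} By Lemma~\ref{lem: homotopy invar detection} it is enough that each $i_d^{\ast}\Gamma_{\varphi}$ is homotopy invariant, and this is immediate from $\Sing(X\times\bbR)\simeq\Sing(X)$.

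With both properties in hand, $\Gamma_{\varphi}\in\CAlg\big(\Gys^{\bbR}_{\varphi}(\Sm;\Sp)\big)$. Since $\unit_{\varphi}=\unit^{\Sp}_{\varphi}$ is the unit, it is initial in $\CAlg\big(\Gys^{\bbR}_{\varphi}(\Sm;\Sp)\big)$ (\cite[Corollary 3.2.1.9]{lurieHigherAlgebra2017}). This yields the essentially unique map $\mathrm{PT}_{\varphi}$.

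The main obstacle I expect is the bookkeeping in identifying $i_d^{\ast}\Gamma_{\varphi}$ with the expected sheaf functorially. The restriction along $i_d$ uses only trivially structured correspondences, so the Thom twists contribute only $\rmM\varphi_X\{d\}$, but this has to be checked against the definition of $\Gamma$ given through Cnossen's theorem. I would first verify that $\Gamma$ restricted to $\Sm^{\op}$ is the ordinary pullback-then-global-sections functor, as noted in the remark after Proposition~\ref{prop: global sections (unstructured)}, and then twist by the invertible local system $\rmM\varphi\{d\}$, which is constant on each component.
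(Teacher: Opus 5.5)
Your proposal is correct and follows the same route as the paper. In both, $\Gamma_{\varphi}$ is lax symmetric monoidal, so it is an $\einf$-algebra in $\GysP_{\varphi}(\Sm;\Sp)$ under Day convolution; being a homotopy invariant sheaf, it is then an $\einf$-algebra in the symmetric monoidal localization $\Gys^{\bbR}_{\varphi}(\Sm;\Sp)$, and initiality of the unit in $\CAlg$ gives $\mathrm{PT}_{\varphi}$. The one difference is that you verify the sheaf and homotopy-invariance conditions explicitly via Lemmas~\ref{lem: sheaf detection} and \ref{lem: homotopy invar detection}, whereas the paper only says they hold ``by construction'' and by the componentwise formula for $\Gamma_{\varphi}(X,d_X)$.
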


\begin{proof}
    The functor $\Gamma_{\varphi}$ is a sheaf essentially by construction, and the fact it is homotopy invariant follows from the equivalence in the preceding remark. 
    As the localization 
    \begin{equation}
    L_{\rm hi}L_{\bbR} : \GysP_{\varphi}(\Sm;\Sp) \rightarrow \Gys_{\varphi}^{\bbR}(\Sm;\Sp)  
    \end{equation}
    is lax symmetric monoidal, $\Gamma_{\varphi}$ admits the structure of an $\einf$-algebra in $\Gys_{\varphi}^{\bbR}(\Sm;\Sp)$; this determines the essentially unique map $\mathrm{PT}_{\varphi}$, as claimed. 
\end{proof}

\subsection{Identifying universal Thom spectra}

We are now ready to prove Theorem \ref{thm:main}; it essentially amounts to a proof of the following result.

\begin{thm}\label{thm: PT is an equivalence}
    The map of $\einf$-algebras,
    \begin{equation}
        \mathrm{PT}_{\varphi} : \unit_{\varphi} \rightarrow \Gamma_{\varphi},
    \end{equation}
    is an equivalence of $\varphi$-Gysin sheaves.
\end{thm}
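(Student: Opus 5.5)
Since both $\unit_{\varphi}$ and $\Gamma_{\varphi}$ lie in $\Gys^{\bbR}_{\varphi}(\Sm;\Sp)$, Corollary~\ref{cor: i_d conservative family} lets us check that $\mathrm{PT}_{\varphi}$ is an equivalence after applying $i_{d}^{\ast}$ for each $d\in\bbZ$. By Corollary~\ref{cor: shifting = suspension}, $i_{d}^{\ast}\m{F}\simeq\Sigma^{d}i_{0}^{\ast}\m{F}$ naturally in $\m{F}$. Naturality of the Thom--Gysin transfer in $\m{F}$ (it is built from the structure maps of $\m{F}$ along fixed morphisms of $\TrCor^{\ex}_{\varphi}(\Sm)$) gives compatibility of these equivalences with $\mathrm{PT}_{\varphi}$, so it suffices to treat $d=0$. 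Now $i_{0}^{\ast}\unit_{\varphi}$ and $i_{0}^{\ast}\Gamma_{\varphi}$ lie in $\Shv_{\bbR}(\Sm;\Sp)$, and Dugger's theorem reduces us further to showing that the single map of spectra $\unit^{\Sp}_{\varphi}(\bbR^{0},0)\to\Gamma_{\varphi}(\bbR^{0},0)\simeq\rmM\varphi$ is an equivalence. It is enough to check this on homotopy groups. To have more than one object in play, I would instead check that $\pi_{-\ast}$ of $\mathrm{PT}_{\varphi}(X,0)$ is an isomorphism for every connected $X\in\Sm$.

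For the comparison, I would apply Construction~\ref{construction:preqct} to the morphism $\mathrm{PT}_{\varphi}$. This gives a morphism of $\varphi$-structured Quillen cohomology theories $\mathscr{Q}^{\ast}_{\mathrm{PT}}:\mathscr{Q}^{\ast}_{\unit}\to\mathscr{Q}^{\ast}_{\Gamma_{\varphi}}$. The preceding proposition identifies the source with $\Omega^{\ast}_{\varphi}$, compatibly with pullbacks and Gysin maps: pullbacks correspond to transverse pullback of cycles, and Gysin maps to postcomposition with $(g,\alpha)$ together with the classical transfer of Construction~\ref{construction:classicaltransfers}. For the target, the remark after the definition of $\Gamma_{\varphi}$ gives $\mathscr{Q}^{\ast}_{\Gamma_{\varphi}}(X)\cong\pi_{-\ast}F(\Sigma^{\infty}_{+}\Sing(X),\rmM\varphi)=\Omega^{\ast,{\rm ho}}_{\varphi}(X)$. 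I would then check that its Gysin maps are the homotopical Umkehr maps, i.e.\ the classical transfer followed by the Thom isomorphism $\mathrm{can}_{\varphi}$, via the categorified Thom isomorphism. Since $\mathrm{PT}_{\varphi}$ is a map of $\einf$-algebras, it sends the unit $1=[\ast\to\ast]$ to $1\in\pi_{0}\rmM\varphi$.

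Next, I would invoke the generalization of \cite[Proposition 1.10]{quillenElementaryProofsResults1971}: $\Omega^{\ast,{\rm ho}}_{\varphi}$ is initial among $\varphi$-structured Quillen cohomology theories with a chosen unit. The classical Pontryagin--Thom isomorphism $\Omega^{\ast}_{\varphi}\to\Omega^{\ast,{\rm ho}}_{\varphi}$ is a morphism of Quillen theories sending $1\mapsto1$, and so is $\mathscr{Q}^{\ast}_{\mathrm{PT}}$. Let $\theta:\Omega^{\ast,{\rm ho}}_{\varphi}\to\Omega^{\ast}_{\varphi}$ be the unique map with $\theta(1)=1$. Then $\mathscr{Q}^{\ast}_{\mathrm{PT}}\circ\theta$ is an endomorphism of the initial theory fixing $1$, hence the identity. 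Likewise, $\mathrm{PT}^{\rm classical}\circ\theta=\mathrm{id}$, and since $\mathrm{PT}^{\rm classical}$ is an isomorphism, $\theta=(\mathrm{PT}^{\rm classical})^{-1}$. It follows that $\mathscr{Q}^{\ast}_{\mathrm{PT}}=\mathrm{PT}^{\rm classical}$, which is an isomorphism in all degrees. This yields the equivalence, and also the agreement with Pontryagin--Thom on homotopy groups.

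I expect the main obstacle to be the identification of the Gysin maps on the target. One must show that the transfer encoded in $\Gamma_{\varphi}$, which is global sections applied to the extended transfer $\theta$ and twisted by $\mathrm{can}_{\varphi}$, agrees with the classical homotopical Gysin map, and that the shift identifications of Subsection~\ref{subsec:shiftoperators} match the suspension isomorphisms used by Quillen. I would first try to reduce to the two building blocks of a smooth map, a closed embedding with tubular neighbourhood and a projection $Y\times\bbR^{N}\to Y$. For these, Volpe's computation $f^{!}\simeq$ Thom twist identifies both transfers with the standard collapse maps, and Lemma~\ref{lem:extendedtransfercomp} then handles composites.
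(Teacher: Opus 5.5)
Your proposal is correct and follows the paper's proof. You reduce to the value at $(\bbR^0,0)$, apply Construction~\ref{construction:preqct} to $\mathrm{PT}_{\varphi}$, identify the resulting source and target with $\Omega^{\ast}_{\varphi}$ and $\Omega^{\ast,{\rm ho}}_{\varphi}$, and use the uniqueness in \cite[Proposition 1.10]{quillenElementaryProofsResults1971} to recognize $\mathscr{Q}^{\ast}_{\mathrm{PT}_{\varphi}}$ as the classical Pontryagin--Thom isomorphism, making explicit steps the paper leaves implicit (the reduction via $i_d^{\ast}$, Corollary~\ref{cor: shifting = suspension} and Dugger's theorem, the initiality argument, and matching the Gysin maps of $\Gamma_{\varphi}$ with the homotopical Umkehr maps).
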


\begin{proof}
It suffices to show the induced map of $\bbE_\infty$-rings
    \begin{equation}
        \mathrm{PT}_{\varphi}(\bbR^0,0) : \unit_{\varphi}(\bbR^0,0) \rightarrow \Gamma_{\varphi}(\bbR^0,0)=\rmM\varphi
    \end{equation}
is an equivalence, i.e., is an isomorphism on homotopy groups. Let us consider the induced map on $\varphi$-structured Quillen cohomology theories induced by $\mathrm{PT}_{\varphi}$ from Construction \ref{construction:preqct}:
    \begin{equation}\label{eqn:identifyingaux1}
    \mathscr{Q}^*_{\mathrm{PT}_{\varphi}}:\mathscr{Q}^*_{\unit^\Sp_\varphi}\to\mathscr{Q}^*_{\Gamma_\varphi}=\Omega^{*,{\rm ho}}_\varphi.
    \end{equation}
Now, Proposition \ref{prop:spatialgysin1} implies
    \begin{equation}
    \mathscr{Q}^*_{\unit^\Sp_\varphi}=\Omega^*_\varphi.
    \end{equation}
In particular, \cite[Proposition 1.10]{quillenElementaryProofsResults1971} shows $\mathscr{Q}^*_{\mathrm{PT}_{\varphi}}$ is precisely the Pontryagin-Thom construction via the uniqueness statement. We conclude by observing 
    \begin{equation}
    \mathscr{Q}^*_{\mathrm{PT}_{\varphi}}=\mathrm{PT}_{\varphi}(\bbR^0,0)_*.
    \end{equation}
\end{proof}

Finally, we prove Theorem \ref{thm:main2} and Corollary \ref{cor:main}; we restate them for convenience. 

\begin{thm}\label{thm: gysin sheaves monadic}
The lax symmetric monoidal functor 
    \begin{equation}
    \Hom(\unit^{\Sp}_{\varphi},-):\Gys_{\varphi}^{\bbR}(\Sm;\Sp)\to\Sp
    \end{equation}
induces an equivalence of symmetric monoidal $\infty$-categories
    \begin{equation}
    \Hom(\unit_{\varphi},-):\Gys_{\varphi}^{\bbR}(\Sm;\Sp)\to\Mod_{\operatorname{End}(\unit_{\varphi})}(\Sp). 
    \end{equation}
Consequently, $\mathrm{PT}_{\varphi}$ induces a symmetric monoidal equivalence 
    \begin{equation}
    \Gys_{\varphi}^{\bbR}(\Sm;\Sp)\simeq\Mod_{\rmM\varphi}(\Sp). 
    \end{equation}
\end{thm}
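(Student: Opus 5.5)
We will deduce the statement from the Schwede--Shipley/Lurie recognition principle for module categories, \cite[Theorem 7.1.2.1]{lurieHigherAlgebra2017}: a presentable stable \category $\m{C}$ with a compact generator $G$ is equivalent to $\Mod_{\operatorname{End}(G)}$ via $\Hom(G,-)$. If $\m{C}$ is presentably symmetric monoidal with unit $G=\unit$, this equivalence is symmetric monoidal, and $\operatorname{End}(\unit)$ carries its $\einf$-structure. So we take $\m{C}=\Gys^{\bbR}_{\varphi}(\Sm;\Sp)$, which is presentably symmetric monoidal and stable by Remark~\ref{rem: Gysin Day conv Lurie tensor} and the results on $L_{\rm hi}$. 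Two facts then remain: that $\unit^{\Sp}_{\varphi}$ is compact, and that it generates.

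\emph{Generation.} The \category $\Gys^{\bbR}_{\varphi}(\Sm;\Sp)$ is generated under colimits by the objects $L_{\rm hi}\Sigma^\infty_+\Tr_\varphi^{(X,d)}$. Mapping out of these computes $\m{F}(X,d)$. By Lemma~\ref{lem: sheaf detection} we may restrict to connected $X$, and by Corollary~\ref{cor: shifting = suspension} we have $\m{F}(X,d)\simeq\Sigma^{d}\m{F}(X,0)$. Write $i_0^*\m{F}$ for the restriction of $\m{F}$ to degree $0$. Since $i_0^*\m{F}$ is a homotopy invariant sheaf on $\Sm$, Dugger's theorem gives
\begin{equation}
\m{F}(X,0)\simeq F\big(\Sigma^\infty_+\Sing(X),\m{F}(\bbR^0,0)\big).
\end{equation}
Hence if $\Hom(\unit,\m{F})=\m{F}(\bbR^0,0)\simeq 0$, then $\m{F}(X,d)\simeq 0$ for all $(X,d)$, and so $\m{F}\simeq0$ by Corollary~\ref{cor: i_d conservative family}. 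Thus $\unit$ generates.

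\emph{Compactness.} We must show that evaluation $\m{F}\mapsto \m{F}(\bbR^0,0)$ commutes with filtered colimits. Finite limits in $\Sp$ commute with filtered colimits, so the sheaf condition is stable under filtered colimits on finite covers. The genuine issue is countable products: infinite covers, and the product condition in Lemma~\ref{lem: sheaf detection}(1). The two localizations $L_\bbR$ and $L_{\sh}$ must therefore be controlled. We would argue via Dugger's theorem as follows. On each degree $d$, the functor $i_d^*$ lands in $\Shv_\bbR(\Sm;\Sp)\simeq\Sp$, and in that \category colimits are computed by the constant sheaf: the colimit of a diagram of homotopy invariant sheaves is $\Gamma^*$ of the colimit of the global sections at a point. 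Combining this with Corollary~\ref{cor: i_d conservative family} and Lemma~\ref{lem: sheafification i_d}, colimits in $\Gys^\bbR_\varphi$ are computed degreewise by evaluation at $(\bbR^0,d)$. Consequently evaluation at $(\bbR^0,0)$ preserves all colimits, and $\unit$ is compact. We expect this verification to be the main obstacle, since one must check that the colimit computed degreewise still carries the transfer functoriality, that is, that the forgetful functor to $\prod_d\Shv_\bbR(\Sm;\Sp)$ creates colimits. We would obtain this from the fact that it is a conservative right adjoint which also preserves colimits, because each $i_d^*$ commutes with $L_{\sh}$ and $L_\bbR$.

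With compactness and generation in hand, the recognition theorem gives
\begin{equation}
\Gys^\bbR_\varphi(\Sm;\Sp)\simeq\Mod_{\operatorname{End}(\unit_\varphi)}
\end{equation}
symmetric monoidally. Finally, by Theorem~\ref{thm: PT is an equivalence}, $\mathrm{PT}_\varphi$ is an $\einf$-equivalence $\unit_\varphi\simeq\Gamma_\varphi$. This yields an equivalence of $\einf$-rings $\operatorname{End}(\unit_\varphi)\simeq\Gamma_\varphi(\bbR^0,0)=\rmM\varphi$. Base change along this equivalence identifies $\Mod_{\operatorname{End}(\unit_\varphi)}$ with $\Mod_{\rmM\varphi}$, giving the second claim.
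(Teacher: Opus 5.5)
Your proposal is correct and follows essentially the same route as the paper. Both arguments reduce, via the monoidal recognition result \cite[Proposition 7.1.2.7]{lurieHigherAlgebra2017}, to compactness and conservativity of $\Hom(\unit_\varphi,-)\simeq(\m{F}\mapsto\m{F}(\bbR^0,0))$. Conservativity comes from splitting $X$ into path components, Corollary~\ref{cor: shifting = suspension}, and Dugger's theorem, and the final step transports along $\mathrm{PT}_\varphi$. Your compactness argument is a more explicit justification of the paper's one-line assertion that restriction to $\Sm^{\op}$ preserves colimits: each $i_d^*$ commutes with $L_{\sh}$ and $L_\bbR$, so evaluation at $(\bbR^0,0)$ factors as the colimit-preserving $i_0^*$ followed by the equivalence $\Gamma_*$.
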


\begin{proof}
    For the first claim, by \cite[Proposition 7.1.2.7]{lurieHigherAlgebra2017}, it is enough to show $\unit_{\varphi}$ is compact and the functor $\Hom(\unit_{\varphi},-)$ is conservative. 
    To begin, note there is an equivalence  
    \begin{equation}
    \Hom(\unit_{\varphi},\m{F}) \simeq \m{F}(\bbR^0,0)
    \end{equation}
    which is natural in $\m{F} \in \Gys_{\varphi}^{\bbR}(\Sm;\Sp)$. 
    Therefore, by Dugger's theorem, the functor in question may be identified with restriction along $\Sm^{\op} \rightarrow \TrCor_{\varphi}^{\ex}(\Sm)$:
    \begin{equation}
    (-)|_{\Sm^{\op}} : \Gys_{\varphi}^{\bbR}(\Sm;\Sp) \rightarrow \Shv_{\bbR}(\Sm;\Sp). 
    \end{equation}
    As $(-)|_{\Sm^{\op}}$ preserves small limits and colimits, it follows that $\unit_{\varphi}$ is compact. 
    Now, let $\m{F}$ be a homotopy invariant $\varphi$-Gysin sheaf and assume $\m{F}(\bbR^{0},0) \simeq 0$; we aim to show $\m{F} \simeq 0$. 
    To proceed, let $(X,d_{X}) \in \TrCor_{\varphi}^{\ex}(\Sm)$ and consider the decomposition of $X$ into path components: $X = \sqcup_{i\in I} X_{i}$; note, $d_X\vert_{X_i}\equiv d_i\in\bbZ$.
    As the family of maps
    \begin{equation}
        \big\{(X_{i},d_{i}) \rightarrow (X,d_{X})\big\}_{i\in I}
    \end{equation}
    determines a covering family in $\TrCor_{\varphi}^{\ex}(\Sm)$ and $\m{F}$ is a sheaf, the canonical map 
    \begin{equation}
    \m{F}(X,d_{X}) \rightarrow \prod_{i\in I}\m{F}(X_{i},d_{i})
    \end{equation}
    is an equivalence. 
    Therefore, by Corollary~\ref{cor: shifting = suspension}, we have an equivalence
    \begin{equation}
        \m{F}(X,d_{X}) \simeq \prod_{i \in I}\Sigma^{d_{i}}\m{F}(X_{i},0), 
    \end{equation}
    and the conservativity of $\Hom(\unit_{\varphi},-)$ follows by another application of Dugger's theorem.    
    The equivalence with $\rmM\varphi$-modules is now immediate from Theorem~\ref{thm: PT is an equivalence}.
\end{proof}

\appendix
\section{Grothendieck topologies, new from old}\label{appendix: sites}
This appendix addresses a few elementary results which allow us to construct Grothendieck topologies in some cases of interest.
As a point of order, by \cite[Remark 6.2.2.3]{lurieHigherToposTheory2009}, a Grothendieck topology on an \category $\m{C}$ reduces to the notion of a Grothendieck topology on its homotopy category $\mathrm{h}\m{C}$; we will make repeated use of this observation in our arguments below.
For more details, we direct the reader to \cite[Chapter III]{maclaneSheavesGeometryLogic1994} and \cite[Chapter 6.2]{lurieHigherToposTheory2009}.

\begin{defin}\label{defin: basis for a topology}
    Let $\m{C}$ be an \category. 
    A \textit{basis for a Grothendieck topology} on $\m{C}$ is a function $\m{B}$ which assigns to each object $X \in \m{C}$ a collection of subsets of $\mathrm{Ob}(\mathrm{h}\m{C}_{/X})$ such that the following holds.
    \begin{enumerate}
        \item If $f :X \rightarrow X'$ is an equivalence, then $\{f:X' \rightarrow X\} \in \m{B}(X)$.
        \item If $\{V_{j} \rightarrow Y\}_{j\in J} \in \m{B}(Y)$, then, for any morphism $f:X \rightarrow Y$, there exists a covering family $\{U_{i} \rightarrow X\}_{i\in I} \in \m{B}(X)$ with the property that the composite $U_{i} \rightarrow X \rightarrow Y$ factors through some $Y_{j} \rightarrow Y$. 
        \item If $\{U_{i} \rightarrow X\}_{i\in I} \in \m{B}(X)$ and if for each $i \in I$ we have $\{U_{ij} \rightarrow U_{i}\}_{j\in J_{i}} \in K(U_{i})$, then $\{U_{ij} \rightarrow X\}_{(i,j) \in \cup_{i \in I}J_{i}} \in \m{B}(X)$.
    \end{enumerate}
    The basis $\m{B}$ induces a topology by declaring a sieve $S \subset \m{C}_{/X}$ to be covering if and only if it is generated by a covering family $\mathcal{U} \in \m{B}(X)$.     
\end{defin}

\begin{rem}
    Notice that $\{X\xrightarrow{\sim}Y\}$ in $\m{C}$ generates the sieve $\m{C}_{/X}$, as does the family $\{\id_{X} :X \rightarrow X\}$. 
    Therefore, condition (1) can be replaced with the condition that $\{\id_{X}:X \rightarrow X\} \in \m{B}(X)$.
\end{rem}

\begin{rem}
    Any topological site has a Grothendieck topology determined by open embeddings.
\end{rem}

\begin{defin}
    Let $p : \m{D} \rightarrow \m{C}$ be a functor of \categories where $(\m{C},\tau)$ is a site with basis $\m{B}$. 
    We say that $p$ is \textit{Cartesian over $\m{B}$} provided that: for every $f:U \rightarrow X$ belonging to a covering family $\mathcal{U} \in \m{B}(X)$ and every object $\widetilde{X}$ with $p(\widetilde{X})\simeq X$, there exists a $p$-Cartesian morphism $\tilde{f} : \widetilde{U} \rightarrow \widetilde{X}$ lifting $f$. 
\end{defin}

\begin{lem}\label{lem: Cartesian lift of site}
    Let $(\m{C},\tau)$ be a Grothendieck site with basis $\m{B}$ and let $p:\m{D} \rightarrow \m{C}$ be a functor which is Cartesian over $\m{B}$. 
    Given $D \in \m{D}$ and a covering family $\m{U}$ of $X = p(D)$, let $\mathcal{U}_{D}$ denote collection of $p$-Cartesian lifts of morphisms in $\m{U}$ with target $D$. 
    For each $D \in \m{D}$, let $p^{\ast}\m{B}(D)$ denote the set of families $\mathcal{U}_{D}$, where $\mathcal{U} \in \m{B}\big(p(D)\big)$. 
    The function $D \mapsto p^{\ast}\m{B}(D)$ is a basis for a Grothendieck topology, and the functor $p: \m{D} \rightarrow \m{C}$ is continuous. 
\end{lem}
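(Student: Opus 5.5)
We verify the three axioms of Definition~\ref{defin: basis for a topology} for $p^{\ast}\m{B}$ directly, and then deduce continuity of $p$ from the way the covers are defined. Everything happens in the homotopy category $\mathrm{h}\m{D}$, by \cite[Remark 6.2.2.3]{lurieHigherToposTheory2009}. The one fact we use repeatedly is that $p$-Cartesian lifts with a prescribed target are essentially unique, and that $p$-Cartesian morphisms are closed under composition and base change.

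\textbf{Axiom (1).} Since $\{\id_{p(D)}\}\in\m{B}(p(D))$ and $\id_D$ is a $p$-Cartesian lift of $\id_{p(D)}$, the family $\{\id_D\}$ lies in $p^{\ast}\m{B}(D)$. This is enough by the remark following the definition.

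\textbf{Axiom (3), transitivity.} Take $\mathcal{U}_D=\{\tilde f_i:\widetilde U_i\to D\}$ lifting $\mathcal{U}=\{f_i:U_i\to X\}$. For each $i$, take a family $\{\tilde g_{ij}:\widetilde U_{ij}\to\widetilde U_i\}$ lifting $\{g_{ij}:U_{ij}\to U_i\}\in\m{B}(U_i)$. The composites $\tilde f_i\circ\tilde g_{ij}$ are $p$-Cartesian by closure under composition, and they lift $f_i\circ g_{ij}$. The family $\{f_i\circ g_{ij}\}$ lies in $\m{B}(X)$ by axiom (3) for $\m{B}$. One must also check that the \emph{collection} of all Cartesian lifts of this composite family, as defined in the statement, agrees with the given composites. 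This holds up to equivalence because the lifts are essentially unique, so the resulting family is $(f_i\circ g_{ij})_D\in p^{\ast}\m{B}(D)$.

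\textbf{Axiom (2), stability.} This is the step needing the most care. Let $\tilde h:D'\to D$ be a morphism and $\mathcal{V}_D=\{\tilde g_j:\widetilde V_j\to D\}$ a lift of $\mathcal{V}\in\m{B}(p(D))$. Apply axiom (2) for $\m{B}$ to $p(\tilde h)$ to get $\mathcal{U}=\{u_i:U_i\to p(D')\}\in\m{B}(p(D'))$ such that each $p(\tilde h)\circ u_i$ factors as $g_j\circ k_i$. Since $p$ is Cartesian over $\m{B}$, lift $\mathcal{U}$ to $\mathcal{U}_{D'}=\{\tilde u_i:\widetilde U_i\to D'\}$. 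Because $\tilde g_j$ is $p$-Cartesian, the map $\tilde h\circ\tilde u_i$ together with the factorization $p(\tilde h\circ\tilde u_i)=g_j\circ k_i$ determines a lift $\tilde k_i:\widetilde U_i\to\widetilde V_j$ with $\tilde g_j\circ\tilde k_i\simeq\tilde h\circ\tilde u_i$; this is the universal property of Cartesian morphisms. So $\mathcal{U}_{D'}$ witnesses axiom (2). Note that $\tilde h$ itself need not be Cartesian.

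\textbf{Continuity.} By construction, $p$ sends each covering family $\mathcal{U}_D$ to a family in $\m{B}(p(D))$, so covering sieves map to covering sieves. Moreover, $p$ preserves the relevant pullbacks: the base change of a Cartesian lift along any map of $\m{D}$ is computed by a Cartesian lift of the base change, because pullbacks of the basic covers exist in $\m{C}$ and the lifts are Cartesian. Hence covers pulled back and then mapped agree with covers mapped and then pulled back, which is continuity in the sense of \cite[Chapter III]{maclaneSheavesGeometryLogic1994}. In the paper's applications, continuity amounts to the restriction $p^{\ast}$ preserving sheaves, and Cartesianness of the lifts gives $\widetilde U_i\times_{D}\widetilde U_k$ lying over $U_i\times_X U_k$. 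The main obstacle we expect is making precise that existence of these fiber products over Cartesian lifts holds with only the hypothesis ``Cartesian over $\m{B}$''. We would handle it by lifting the pullback square through the Cartesian edge, using that the pullback $U_i\times_X U_k\to U_k$ is again in a basic cover by axiom (2).
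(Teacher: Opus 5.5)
Your verification of the three basis axioms is correct and follows the paper's argument, with more detail. For (2) you lift the factorization $p(\tilde h)\circ u_i\simeq g_j\circ k_i$ through the $p$-Cartesian arrow $\tilde g_j$. For (3) you use closure of $p$-Cartesian arrows under composition together with axiom (3) for $\m{B}$. The paper uses equivalences rather than identities for (1), which makes no difference.

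For continuity the paper only says it is immediate. The first sentence of your continuity paragraph is exactly that argument: $p$ carries each generating family $\mathcal{U}_D\in p^{\ast}\m{B}(D)$ to $\mathcal{U}\in\m{B}(p(D))$, so it preserves covering families. That is evidently the intended sense of ``continuous'' (compare Corollary~\ref{cor: sheaves on Cor(C;L,R)}, where continuity is ``clear from the definitions'' and sheaf detection is proved separately).

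The rest of that paragraph should be cut, because it relies on things the hypotheses do not provide.
\begin{itemize}
\item Neither $\m{C}$ nor $\m{D}$ is assumed to have pullbacks.
\item Axiom (2) of Definition~\ref{defin: basis for a topology} is only the refinement form of stability. It produces some basic cover of the source each of whose members factors through a member of the given cover. It does not say that a base change such as $U_i\times_X U_k\to U_k$ (even if it exists) belongs to a basic cover. So ``Cartesian over $\m{B}$'' gives you no Cartesian lift of that base change, and your proposed way around the ``main obstacle'' does not work.
\item $p$-Cartesian morphisms are not in general stable under pullback in $\m{D}$ unless $p$ preserves that pullback, so the ``closed under base change'' fact you list at the outset is not available either.
\end{itemize}
If you additionally assume that $\m{C}$ has pullbacks and that basic covers are stable under base change, your computation goes through. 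Comparing mapping spaces shows that a Cartesian lift of the base change of $f$ along $p(\tilde h)$ is a pullback of $\tilde f$ along $\tilde h$ in $\m{D}$. But that proves a stronger statement than the lemma asserts, and it is not needed here.
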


\begin{proof}
    The continuity claim follows immediately once we verify $p^{\ast}\m{B}$ is a basis for a Grothendieck topology. 
    Furthermore, we may work with $\mathrm{h}\m{D}$ throughout in place of $\m{D}$ while checking the axioms for a basis above. 
    \begin{enumerate}
        \item Equivalences in $\m{D}$ are $p$-Cartesian lifts of equivalences in $\m{C}$, so any equivalence $D' \rightarrow D$ determines a covering family in $p^{\ast}\m{B}(D)$.
        \item Let $\phi: D \rightarrow D'$ in $\m{D}$ with $f = p(\phi):X \rightarrow X'$ and let $\mathcal{U}_{D'} \in p^{\ast}\m{B}(D')$.
        As $\m{B}$ is a basis, we may choose a covering family $f^{\ast}\m{U} \in \m{B}(X)$, from which we may form $(f^{\ast}\mathcal{U})_{D}$.
        The Cartesian lifting property easily implies $(f^{\ast}\mathcal{U})_{D}$ satisfies (2) from Definition~\ref{defin: basis for a topology}. 
        \item Let $\{D_{i} \rightarrow D\}_{i \in I} \in p^{\ast}\m{B}(D)$ and, for each $i \in I$, let $\{D_{ij} \rightarrow D_{i}\}_{j\in J_{i}} \in p^{\ast}\m{B}(D_{i})$. 
        The fact that $p$-Cartesian arrows are closed under composition implies $\{D_{ij} \rightarrow D\}_{(i,j)\in \cup_{i\in I}J_{i}} \in p^{\ast}\m{B}(D)$. 
    \end{enumerate}

\end{proof}

    

\begin{rem}
    For any structure functor $F$ on $\DSm$, the right fibration $\pi_{F} : \DSm_{F} \rightarrow \DSm$ determines a Grothendieck topology on $\DSm_{F}$. 
    In fact, $\DSm_{F}$ is a topological site by \cite[2.8.3.14]{pardon-DSm}.
\end{rem}

\begin{defin}\label{defin: fine site}
    Let $\m{C}$ be a site generated by a basis $\m{B}$. 
    We say the basis $\m{B}$ is \textit{fine} provided that, for any morphism $f : X \rightarrow Y$ in $\m{C}$ and for any covering family $\mathcal{U}$ of $X$, there exists a covering family $\mathcal{V}$ of $Y$ together with a refinement $f^{\ast}\mathcal{V} \rightarrow \mathcal{U}$.   
    By abuse of notation, we say the Grothendieck site $\m{C}$ is fine. 
\end{defin}

\begin{example}
    $\Sm$ and $\DSm$ are fine Grothendieck sites.
    In fact, any topological $\infty$-site determines a fine Grothendieck site.
    Both the Nisnevich and \'{e}tale topologies are fine sites by \cite[Lemma 6.16]{motivic-coh-lectures}. 
\end{example}

\begin{lem}\label{lem: Cor Grothendieck topology}
    Let $\m{C}$ be an \category which admits pullbacks, $(\m{C};\m{C}_{L},\m{C}_{R})$ an adequate triple, and assume $\m{C}_{R}$ is a fine Grothendieck site with basis $\m{B}$, such that, for each covering family $\{U_{i} \rightarrow X\}_{i \in I} \in \m{B}(X)$ and for each morphism $W \rightarrow X$ in either $\m{C}_{L}$ or $\m{C}_{R}$, we have $\{U_{i}\times_{X}W \rightarrow W\}_{i \in I} \in \m{B}(W)$. 
    For each $\mathcal{U} \in \m{B}(X)$, let 
    \begin{equation}
        \mathcal{U}_{\Cor} \equiv \{U_{i} \xleftarrow{\id} U_{i} \rightarrow X\}_{i \in I}
    \end{equation}
    and let $\m{B}_{\Cor}(X)$ denote the collection of families of the form above. 
    We have that $\m{B}_{\Cor}$ is a basis for a Grothendieck topology on $\Cor(\m{C};\m{C}_{L},\m{C}_{R})$, where a sieve is covering if and only if it is generated by a family of the form $\mathcal{U}_{\Cor}$. 
\end{lem}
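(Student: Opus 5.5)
The plan is to verify directly the three axioms of Definition~\ref{defin: basis for a topology} for $\m{B}_{\Cor}$, working in the homotopy category of $\Cor(\m{C};\m{C}_{L},\m{C}_{R})$ as permitted by \cite[Remark 6.2.2.3]{lurieHigherToposTheory2009}. Throughout we use two facts. First, a morphism of $\m{C}_{R}$ embeds in $\Cor$ as $U \xleftarrow{\id} U \to X$. Second, composition in $\Cor$ is computed by ambigressive pullbacks.

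Axiom (1) is immediate: $\{\id_X\} \in \m{B}(X)$, so $\{X \xleftarrow{\id} X \xrightarrow{\id} X\} \in \m{B}_{\Cor}(X)$. Axiom (3) is also direct. The composite of $U_{ij} \xleftarrow{\id} U_{ij} \to U_i$ with $U_i \xleftarrow{\id} U_i \to X$ is computed by a pullback along an identity, so it is $U_{ij} \xleftarrow{\id} U_{ij} \to X$. Since $\m{B}$ is a basis on $\m{C}_{R}$, the composite family lies in $\m{B}(X)$, and hence the family lies in $\m{B}_{\Cor}(X)$.

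The substantive step is axiom (2), stability under base change along an arbitrary correspondence $X \xleftarrow{a} W \xrightarrow{b} Y$ with $a \in \m{C}_{L}$ and $b \in \m{C}_{R}$. Fix $\mathcal{V} = \{V_j \to Y\} \in \m{B}(Y)$. By the hypothesis for maps in $\m{C}_{R}$, the family $\{V_j \times_Y W \to W\}$ lies in $\m{B}(W)$. Now use fineness of the basis $\m{B}$ along $a$. This would give a covering $\mathcal{U}$ of $X$ whose pullback $a^{\ast}\mathcal{U} = \{U_i \times_X W \to W\}$ refines $\{V_j \times_Y W\}$. The pullback lies in $\m{B}(W)$ by the hypothesis for maps in $\m{C}_{L}$.

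Fineness is stated for morphisms of $\m{C}_{R}$, so applying it to $a \in \m{C}_{L}$ is the main obstacle. If the intended reading is fineness on $\m{C}$, this is fine. Otherwise I would first try to use that covering maps are pulled back along both classes, so refinement can be transported.

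Next, compose $U_i \xleftarrow{\id} U_i \to X$ with the correspondence. Since $U_i \to X$ is in $\m{C}_{R}$, its pullback along $a$ exists and is ambigressive, so the composite is
\begin{equation}
U_i \leftarrow U_i \times_X W \to W \to Y.
\end{equation}
The refinement gives a factorization $U_i \times_X W \to V_j \times_Y W$ over $W$. We then need to show that this composite factors through $V_j \xleftarrow{\id} V_j \to Y$ in $\Cor$. Any correspondence $U_i \leftarrow P \to Y$ whose right leg factors as $P \to V_j \to Y$ with $P \to V_j$ in $\m{C}_{R}$ equals $(V_j \xleftarrow{\id} V_j \to Y)\circ(U_i \leftarrow P \to V_j)$. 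So it suffices that $P = U_i \times_X W \to V_j$ lies in $\m{C}_{R}$.

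To check this, write the map as $U_i \times_X W \to V_j \times_Y W \to V_j$. The second arrow is a base change of $b$ and hence lies in $\m{C}_{R}$. The first arrow lies in $\m{C}_{R}$ by left cancellativity of $\m{C}_{R}$, since both $U_i \times_X W \to W$ and $V_j \times_Y W \to W$ lie in $\m{C}_{R}$. If left cancellativity is not available, I would instead choose the refinement inside $\m{C}_{R}$, using that refinements of open covers are open embeddings.

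Finally, the covering sieves are by definition those generated by families in $\m{B}_{\Cor}$, which completes the argument.
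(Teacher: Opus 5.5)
Your argument is the paper's. Axioms (1) and (3) are immediate because the left legs of $\mathcal{U}_{\Cor}$ are identities. For (2) you pull $\mathcal{V}$ back along the right leg $b$ and use fineness along the left leg $a$ to get $\mathcal{U}\in\m{B}(X)$ with $a^{\ast}\mathcal{U}$ refining $b^{\ast}\mathcal{V}$. The paper does exactly this, applying fineness to the $\m{C}_{L}$-leg $W\to X$ without comment, so your reading ``fineness for morphisms of $\m{C}$'' is the one the proof actually needs.

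Drop the fallback, though. Refinement along $\m{C}_{L}$-maps cannot be extracted from fineness of $\m{C}_{R}$ plus pullback-stability. Take $\m{C}=\Top$, $\m{C}_{R}$ the clopen embeddings with clopen covers, and $\m{C}_{L}$ all maps. Consider the morphism $\ast\xleftarrow{\mathrm{fold}}\ast\sqcup\ast\xrightarrow{\id}\ast\sqcup\ast$ and the cover $\{\ast_{1},\ast_{2}\}$ of $\ast\sqcup\ast$. Every hypothesis holds when fineness is read on $\m{C}_{R}$ only, yet axiom (2) fails.

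In the $\m{C}_{R}$-membership step, which the paper skips, left cancellativity alone suffices: the composite $U_i\times_{X}W\to V_j\to Y$ equals $U_i\times_{X}W\to W\xrightarrow{b}Y$, which lies in $\m{C}_{R}$. Your ``base change of $b$'' step is a base change along $V_j\to Y$, which need not lie in $\m{C}_{L}$, so adequacy does not provide it. Left cancellativity is itself an extra hypothesis not in the statement. It is harmless in the application, where $\m{C}_{R}=\DSm\subset\DSm_{\frakB}$ consists of the trivially structured maps.
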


\begin{proof}
    We check the conditions from \ref{defin: basis for a topology}. 
    \begin{enumerate}
    \item It is clear that, for any $X\in\m{C}$, the identity correspondence is of the form $\mathcal{U}_{\Cor}$.
    \item Let $\mathcal{V} \in \m{B}(Y)$ and denote by $A$ the correspondence
    \begin{equation}
        Y\xleftarrow{f} W \xrightarrow{g} X.
    \end{equation}
    It will be enough to prove that there exists $\mathcal{U} \in \m{B}(X)$ with the property that, for each $U_{i} \rightarrow X \in \mathcal{U}$, there exists a commutative diagram 
    \begin{equation}
\begin{tikzcd}
	{U_{i}} & {W\times_{Y}U_{i}} & {V_{j}} \\
	X & W & Y
	\arrow[from=1-1, to=2-1]
	\arrow[from=1-2, to=1-1]
	\arrow[from=1-2, to=1-3]
	\arrow[from=1-3, to=2-3]
	\arrow[from=2-2, to=2-1]
	\arrow[from=2-2, to=2-3]
\end{tikzcd}
    \end{equation}
    with $V_{j} \rightarrow Y \in \mathcal{V}$. 
    By pulling back $\mathcal{V}$ along $f$, the fineness of $\m{B}$ allows us to refine $g^{\ast}\mathcal{V}$ by a covering $f^{\ast}\mathcal{U}$ for some $\mathcal{U} \in \m{B}(X)$. 
    \item This claim is clear from the definitions, as the left legs in $\m{U}_{\Cor}$ are identity morphisms.     
    \end{enumerate}
    Therefore, $\m{B}_{\Cor}$ is a basis for a Grothendieck topology whose covering sieves are described as claimed.
\end{proof}

\begin{cor}\label{cor: sheaves on Cor(C;L,R)}
Let $(\m{C};\m{C}_{L},\m{C}_{R})$ be an adequate triple, $\m{B}$ a basis for a Grothendieck topology on $\m{C}_{R}$ satisfying the hypotheses of Lemma~\ref{lem: Cor Grothendieck topology}, and $\m{E}$ be a presentable \category. 
    \begin{enumerate}
        \item The functor $i_{R} : \m{C}_{R} \rightarrow \Cor(\m{C};\m{C}_{L},\m{C}_{R})$ is continuous. 
        \item An $\m{E}$-valued presheaf $\m{F} : \Cor(\m{C};\m{C}_{L},\m{C}_{R})^{\op} \rightarrow \m{E}$ is a sheaf if and only if $i_{R}^{\ast}\m{F}$ is a sheaf. 
    \end{enumerate}
\end{cor}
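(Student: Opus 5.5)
The plan is to deduce both claims from Lemma~\ref{lem: Cor Grothendieck topology} together with the standard criterion for continuity of a functor between sites defined by bases: a functor $u$ that sends covering families in the basis of the source to covering families in the basis of the target, and that preserves the relevant pullbacks (fiber products along covering maps), is continuous. This means restriction $u^{\ast}$ carries sheaves to sheaves.

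For (1), take $\{U_i\to X\}\in\m{B}(X)$ in $\m{C}_R$. Its image under $i_R$ is exactly the family $\mathcal{U}_{\Cor}$, so $i_R$ maps basis families to basis families. For pullbacks, I would compute the fiber product of $U_i\xleftarrow{\id}U_i\to X$ with $U_j\xleftarrow{\id} U_j\to X$ in $\Cor$. The corresponding \v{C}ech nerve in $\Cor$ should be represented by the correspondence $U_i\times_X U_j$ with identity left legs. This reduces to the fact that the pullback in $\m{C}$ of two maps in $\m{C}_R$ lies in $\m{C}_R$ and is ambigressive, which holds by the adequacy hypotheses. Consequently, for any presheaf $\m{F}$ on $\Cor$, the sheaf condition for $\mathcal{U}_{\Cor}$ is literally the sheaf condition for $\mathcal{U}$ applied to $i_R^{\ast}\m{F}$. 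This gives continuity.

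For (2), the ``only if'' direction follows from (1). For the ``if'' direction, note that every covering sieve on $X$ in $\Cor^{\op}$ is generated by some $\mathcal{U}_{\Cor}$. By the usual reduction to generating families in a basis-defined topology (sieve descent is equivalent to \v{C}ech descent along a generating family, as in \cite[Lemma 2.2.20]{pardon-DSm} and \cite[6.2.2]{lurieHigherToposTheory2009}), it suffices to check that
\[
\m{F}(X)\to\lim_{U\in C(\mathcal{U})^{\op}}\m{F}(U)
\]
is an equivalence. The diagram $C(\mathcal{U})\to\Cor$ factors through $i_R$ because all the maps involved are open inclusions in $\m{C}_R$ with identity left legs. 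Hence this limit agrees with the one computed from $i_R^{\ast}\m{F}$, and it is an equivalence by hypothesis.

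The main obstacle is justifying that it suffices to test the \v{C}ech diagram $C(\mathcal{U})$ rather than the full sieve in $\Cor$. The sieve generated by $\mathcal{U}_{\Cor}$ contains many correspondences with nontrivial left legs, namely every correspondence that factors through some $U_i$. I would handle this by showing that $C(\mathcal{U})\to S$ is cofinal. Concretely, I would show that every correspondence $X\leftarrow W\to \ldots$ factoring through $U_i$ lands in a contractible comma category over $C(\mathcal{U})$. The argument would pull the covering back along the left leg, using the hypothesis of Lemma~\ref{lem: Cor Grothendieck topology} that bases are stable under pullback along $\m{C}_L$ and $\m{C}_R$ together with the fineness of the basis.
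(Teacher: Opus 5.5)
Your overall route is the paper's. Both claims come from comparing the sieve condition on $\Cor(\m{C};\m{C}_L,\m{C}_R)$ with the diagram $C(\mathcal{U})$, which lies in the image of $i_R$, via cofinality; the paper's proof says only that (1) is clear from the definitions and (2) follows from a cofinality argument.

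\textbf{The gap is at the step you flag as the crux.} Objects of the sieve $S\subset\Cor(\m{C};\m{C}_L,\m{C}_R)_{/X}$ are correspondences $s=(T\xleftarrow{f}W\xrightarrow{g}X)$, not $X\leftarrow W\to\cdots$. The mechanism you propose does not apply. The cover lives on $X$ while the left leg $f$ lands in $T$, so there is nothing to pull back along it. Fineness is only used in Lemma~\ref{lem: Cor Grothendieck topology} to verify axiom (2) of a basis.

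\textbf{The missing observation.} Composing $T\xleftarrow{f'}W'\xrightarrow{h}U_I$ with $U_I\xleftarrow{\id}U_I\hookrightarrow X$ only pulls back along an identity, so the composite is $T\xleftarrow{f'}W'\to X$. Hence a factorization of $s$ through $U_I$ in $\Cor$ is exactly a factorization of the right leg $g$ through $U_I\hookrightarrow X$, with $f$ untouched.
\begin{itemize}
\item Since $U_I\hookrightarrow X$ is a monomorphism, postcomposition with it is fully faithful on $\infty$-categories of spans. So $\Map_{\Cor}(T,U_I)\to\Map_{\Cor}(T,X)$ is an inclusion of components, and such factorizations are unique when they exist.
\item Therefore $C(\mathcal{U})\times_S S_{s/}$ is the poset of those $U_I$ through which $g$ factors. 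It is nonempty because $s\in S$, and it is closed under finite intersections, hence weakly contractible.
\end{itemize}
That is the whole cofinality argument, and the left legs never enter. It does use that the covering maps are monomorphisms and that the factorization $W\to U_I$ lies in $\m{C}_R$. Both are implicit in the use of $C(\mathcal{U})$, and both hold in the application, where $\DSm\subset\DSm_{\frakB}$ is left cancellative. Together with the same cofinality for the sieve generated by $\mathcal{U}$ in $\m{C}_R$, this gives $\lim_{S^{\op}}\m{F}\simeq\lim_{C(\mathcal{U})^{\op}}i_R^{\ast}\m{F}$, which yields (1) and (2) at once.

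\textbf{Your justification in (1) is also misattributed.} Adequacy only concerns pullbacks of $\m{C}_R$-maps along $\m{C}_L$-maps, and ambigressivity is irrelevant to fiber products of two forward maps.
\begin{itemize}
\item That $U_i\times_XU_j\to U_j$ lies in $\m{C}_R$ comes from the pullback-stability hypothesis on $\m{B}$.
\item Identifying, in $\PShv(\Cor)$, the fiber product of the representables of $U_i$ and $U_j$ over that of $X$ with the representable of $U_i\times_XU_j$ is the same right-leg/monomorphism computation as above.
\end{itemize}
Done correctly, this computation already settles your ``main obstacle''. The sieve generated by $\mathcal{U}_{\Cor}$ is the geometric realization in $\PShv(\Cor)$ of the \v{C}ech nerve of the map from the coproduct of the representables of the $U_i$ to the representable of $X$. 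So sieve descent on $\Cor$ is literally \v{C}ech descent along $\mathcal{U}$ for $i_R^{\ast}\m{F}$, and correspondences with nontrivial left legs need no separate treatment.
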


\begin{proof}
    (1) is clear from the definitions. 
    The claim (2) follows from a cofinality argument.
\end{proof}



\bibliography{References}{}

@article{CJS95,
	author = {Cohen, Ralph and Jones, John D.S. and Segal, Graeme B.},
	journal = {The Floer Memorial Volume, Progress in Mathematics},
	pages = {297-325},
	title = {{Floer's infinite dimensional Morse theory and homotopy theory}},
	volume = {133},
	year = {1995}}

@article{Tho52,
	author = {Thom, Ren{\'e}},
	journal = {Annales scientifiques de l'\'Ecole Normale Sup\'erieure},
    number = {3},
	pages = {109-182},
	title = {{Espaces fibr\'es en sph\`eres et carr\'es de Steenrod}},
    volume = {69}, 
	year = {1952}}

@article{quillenElementaryProofsResults1971,
  title = {Elementary Proofs of Some Results of Cobordism Theory Using {{Steenrod}} Operations},
  author = {Quillen, Daniel},
  year = {1971},
  journal = {Advances in Mathematics},
  volume = {7},
  number = {1},
  pages = {29--56},
  issn = {0001-8708},
  doi = {10.1016/0001-8708(71)90041-7},
  urldate = {2025-08-29}
}

@unpublished{diff-coh-book,
  title = {Differential {{cohomology}}: {{categories}}, {{characteristic classes}}, and {{connections}}},
  author = {Amabel, Araminta and Debray, Arun and Haine, Peter J.},
  year = {2023},
  note = {To appear in Cambridge Studies in Advanced Mathematics, currently available at \url{https://peterjhaine.github.io/files/diffcoh.pdf}}
}

@unpublished{AB24,
	author = {Abouzaid, Mohammed and Blumberg, Andrew J.},
	note = {arXiv:2404.03193},
	title = {{Foundation of Floer homotopy theory I: flow categories}},
	year = {2024}}

@unpublished{BP26,
	author = {Blakey, Kenneth and Porcelli, Noah},
	note = {arXiv:2605.06620},
	title = {{Bulk-deformations, Floer complex bordism, and Grothendieck-Riemann-Roch}},
	year = {2026}}

@unpublished{HO26,
	author = {Hedenlund, Alice and Oldervoll, Trygve P.},
	note = {arXiv:2603.29576},
	title = {{Structured flow categories and twisted presheaves}},
	year = {2026}}

@article{barwick-glasman-shahii,
    AUTHOR = {Barwick, Clark and Glasman, Saul and Shah, Jay},
     TITLE = {Spectral {M}ackey functors and equivariant algebraic
              {$K$}-theory, {II}},
   JOURNAL = {Tunisian Journal of Mathematics},
  FJOURNAL = {Tunisian Journal of Mathematics},
    VOLUME = {2},
      YEAR = {2020},
    NUMBER = {1},
     PAGES = {97--146},
      ISSN = {2576-7658,2576-7666},
   MRCLASS = {19D99 (55P91)},
  MRNUMBER = {3933393},
MRREVIEWER = {Marco\ Varisco},
       DOI = {10.2140/tunis.2020.2.97},
       URL = {https://doi-org.revproxy.brown.edu/10.2140/tunis.2020.2.97},
}

@book{lurieHigherToposTheory2009,
    AUTHOR = {Lurie, Jacob},
     TITLE = {Higher topos theory},
    SERIES = {Annals of Mathematics Studies},
    VOLUME = {170},
 PUBLISHER = {Princeton University Press, Princeton, NJ},
      YEAR = {2009},
     PAGES = {xviii+925},
      ISBN = {978-0-691-14049-0; 0-691-14049-9},
   MRCLASS = {18-02 (18B25 18E35 18G30 18G55 55U40)},
  MRNUMBER = {2522659},
MRREVIEWER = {Mark\ Hovey},
       DOI = {10.1515/9781400830558},
       URL = {https://doi-org.revproxy.brown.edu/10.1515/9781400830558},
}

@unpublished{lurieHigherAlgebra2017,
    author = {Lurie, Jacob},
    title = {Higher {{algebra}}},
    note = {Unpublished manuscript, available at \url{https://www.math.ias.edu/~lurie/}},
    year = {2017}
}

@unpublished{lurienotes,
  title = {Lecture 7: Simplicial Spaces},
  author = {Lurie, Jacob},
  year = {2014},
  note = {Notes from Math 281: Manifold Topology and Algebraic K-theory, Harvard University Fall 2014, available at \url{https://www.math.ias.edu/~lurie/}},
}

@article{barwick,
	author = {Barwick, Clark},
	journal = {Advances in Mathematics},
    pages = {646-727},
	title = {{Spectral Mackey functors and equivariant algebraic $K$-theory (I)}},
	volume = {304},
	year = {2017}}

@article{volpe,
	author = {Volpe, Marco},
	journal = {Journal of Topology},
    pages = {e70050},
	title = {{The six operations in topology}},
	volume = {18},
    number ={4},
	year = {2025}}

@unpublished{6FF,
	author = {Heyer, Claudius and Mann, Lucas},
	note = {arXiv:2410.13038},
	title = {{6-functor formalisms and smooth representations}},
	year = {2024}}

@article{dugger,
    AUTHOR = {Dugger, Daniel},
     TITLE = {Universal homotopy theories},
   JOURNAL = {Advances in Mathematics},
  FJOURNAL = {Advances in Mathematics},
    VOLUME = {164},
      YEAR = {2001},
    NUMBER = {1},
     PAGES = {144--176},
      ISSN = {0001-8708,1090-2082},
   MRCLASS = {18G55 (18E20 55U10)},
  MRNUMBER = {1870515},
MRREVIEWER = {Manuel\ Bullejos Lorenzo},
       DOI = {10.1006/aima.2001.2014},
       URL = {https://doi-org.revproxy.brown.edu/10.1006/aima.2001.2014},
}

@unpublished{scholze6FF,
      title={Six-Functor Formalisms}, 
      author={Peter Scholze},
      year={2026},
      note ={arXiv:2510.26269} 
}

@book{motivic-coh-lectures,
    AUTHOR = {Mazza, Carlo and Voevodsky, Vladimir and Weibel, Charles},
     TITLE = {Lecture notes on motivic cohomology},
    SERIES = {Clay Mathematics Monographs},
    VOLUME = {2},
 PUBLISHER = {American Mathematical Society, Providence, RI; Clay
              Mathematics Institute, Cambridge, MA},
      YEAR = {2006},
     PAGES = {xiv+216},
      ISBN = {978-0-8218-3847-1; 0-8218-3847-4},
   MRCLASS = {14F42 (19E15)},
  MRNUMBER = {2242284},
MRREVIEWER = {Thomas\ Geisser},
}

@unpublished{pardon-DSm,
      title={Derived moduli spaces of pseudo-holmorphic curves}, 
      author={John Pardon},
      year={2025},
      note={Unpublished manuscript, available at \url{https://johnpardon.com/holomorphiccurves-2025-04.pdf}}
}

@unpublished{keenan-peroux,
	author = {Keenan, Liam and P\'eroux, Maximilien},
	note = {arXiv:2510.18961},
	title = {{On products of skeleta}},
	year = {2025}
}

@unpublished{saunier-weight-heart,
      title={{A theorem of the heart for K-theory of endomorphisms}}, 
      author={Victor Saunier},
      year={2023},
      note={arXiv:2311.13836}
}

@unpublished{lurieSAG,
  title = {Spectral Algebraic Geometry},
  author = {Lurie, Jacob},
  year = {2018},
  note = {Unpublished manuscript, available at \url{https://www.math.ias.edu/~lurie/}}
}

@article{blumbergUniversalCharacterizationHigher2013,
  title = {A Universal Characterization of Higher Algebraic {{K-theory}}},
  author = {Blumberg, Andrew J. and Gepner, David and Tabuada, Gon{\c c}alo},
  year = 2013,
  journal = {Geometry \& Topology},
  volume = {17},
  number = {2},
  pages = {733--838},
  publisher = {Mathematical Sciences Publishers}
}

@article{antieau-gepner-brauer,
    AUTHOR = {Antieau, Benjamin and Gepner, David},
     TITLE = {Brauer groups and \'etale cohomology in derived algebraic
              geometry},
   JOURNAL = {Geometry \& Topology},
  FJOURNAL = {Geometry \& Topology},
    VOLUME = {18},
      YEAR = {2014},
    NUMBER = {2},
     PAGES = {1149--1244},
      ISSN = {1465-3060,1364-0380},
   MRCLASS = {14F22 (13D09 14D20 18G55 55P43)},
  MRNUMBER = {3190610},
MRREVIEWER = {Timothy\ J.\ Ford},
       DOI = {10.2140/gt.2014.18.1149},
       URL = {https://doi-org.brown.idm.oclc.org/10.2140/gt.2014.18.1149},
}

@unpublished{cnossenUniversalitySpan2categories2026,
  title = {{Universality of span 2-categories and the construction of 6-functor formalisms}},
  author = {Cnossen, Bastiaan and Lenz, Tobias and Linskens, Sil},
  year = {2026},
  note = {arXiv:2505.19191}
}

@article{antolin-camarenaSimpleUniversalProperty2019,
  title = {A Simple Universal Property of {{Thom}} Ring Spectra},
  author = {{Antol{\'i}n-Camarena}, Omar and Barthel, Tobias},
  year = 2019,
  journal = {Journal of Topology},
  volume = {12},
  number = {1},
  pages = {56--78},
  issn = {1753-8424},
  doi = {10.1112/topo.12084},
  urldate = {2026-04-02},
  copyright = {\copyright{} 2018 London Mathematical Society},
  langid = {english}
}

@article{gepnerUniversalityMultiplicativeInfinite2015,
  title = {Universality of Multiplicative Infinite Loop Space Machines},
  author = {Gepner, David and Groth, Moritz and Nikolaus, Thomas},
  year = 2015,
  journal = {Algebraic \& Geometric Topology},
  volume = {15},
  number = {6},
  pages = {3107--3153},
  publisher = {MSP},
  issn = {1472-2747, 1472-2739},
  doi = {10.2140/agt.2015.15.3107},
  urldate = {2026-04-16},
  langid = {english}
}

@article{berwick-evansClassifyingSpacesInfinitysheaves2024,
  title = {Classifying Spaces of Infinity-Sheaves},
  author = {{Berwick-Evans}, Daniel and {Boavida de Brito}, Pedro and Pavlov, Dmitri},
  year = 2024,
  journal = {Algebraic \& Geometric Topology},
  volume = {24},
  number = {9},
  pages = {4891--4937},
  publisher = {Mathematical Sciences Publishers},
  issn = {1472-2739},
  doi = {10.2140/agt.2024.24.4891},
  urldate = {2026-04-15},
  langid = {english}
}

@inproceedings{doldGeometricCobordismFixed1978,
  title = {Geometric Cobordism and the Fixed Point Transfer},
  booktitle = {Algebraic {{Topology}}},
  author = {Dold, Albrecht},
  editor = {Hoffman, Peter and Piccinini, Renzo A. and Sjerve, Denis},
  year = 1978,
  pages = {32--87},
  publisher = {Springer},
  address = {Berlin, Heidelberg},
  doi = {10.1007/BFb0064688},
  isbn = {978-3-540-35737-7},
  langid = {english}
}

@misc{kerodon,
    title        = {Kerodon},
    author       = {Jacob Lurie},
    howpublished = {\url{https://kerodon.net}},
    year         = {2026},
  }

@inproceedings{quinn,
  title = {Assembly maps in bordism-type theories},
  booktitle = {Novikov conjectures, index theorems and rigidity, Volume 1},
  author = {Quinn, Frank},
  editor = {},
  pages = {201-271},
  year = 1995,
  publisher = {Cambridge University Press},
}

@phdthesis{quinn-thesis,
    author = {Quinn, Frank},
    title = {A geometric formulation of surgery},
    school = {Princeton University},
    year = {1970}
}

@article{thomQuelquesProprietesGlobales1954,
  title = {{Quelques propri\'et\'es globales des vari\'et\'es diff\'erentiables}},
  author = {Thom, Ren{\'e}},
  year = 1954,
  journal = {Commentarii Mathematici Helvetici},
  volume = {28},
  number = {1},
  pages = {17--86},
  issn = {1420-8946},
  doi = {10.1007/BF02566923},
  urldate = {2025-11-17},
  langid = {french}
}

@article{quillenFormalGroupLaws1969,
  title = {On the Formal Group Laws of Unoriented and Complex Cobordism Theory},
  author = {Quillen, Daniel},
  year = 1969,
  journal = {Bulletin of the American Mathematical Society},
  volume = {75},
  number = {6},
  pages = {1293--1298},
  publisher = {American Mathematical Society},
  issn = {0002-9904, 1936-881X},
  urldate = {2025-08-29}
}

@article{novikov-Thom,
    AUTHOR = {Novikov, Sergei P.},
     TITLE = {Homotopy properties of {T}hom complexes},
   JOURNAL = {Matematicheski\u i\ Sbornik. Novaya Seriya},
  FJOURNAL = {Matematicheski\u i\ Sbornik. Novaya Seriya},
    VOLUME = {57(99)},
      YEAR = {1962},
     PAGES = {407--442},
      ISSN = {0368-8666},
   MRCLASS = {55.50 (55.60)},
  MRNUMBER = {157381},
MRREVIEWER = {J.\ W.\ Jaworowski},
}

@article{novikov-cobordism,
    AUTHOR = {Novikov, Sergei P.},
     TITLE = {Methods of algebraic topology from the point of view of
              cobordism theory},
   JOURNAL = {Izvestiya Akademii Nauk SSSR. Seriya Matematicheskaya},
  FJOURNAL = {Izvestiya Akademii Nauk SSSR. Seriya Matematicheskaya},
    VOLUME = {31},
      YEAR = {1967},
     PAGES = {855--951},
      ISSN = {0373-2436},
   MRCLASS = {55.52 (57.00)},
  MRNUMBER = {221509},
MRREVIEWER = {A.\ Liulevicius},
}

@article{ABGHR,
    AUTHOR = {Ando, Matthew and Blumberg, Andrew J. and Gepner, David and
              Hopkins, Michael J. and Rezk, Charles},
     TITLE = {An {$\infty$}-categorical approach to {$R$}-line bundles, {$R$}-module {T}hom spectra, and twisted {$R$}-homology},
   JOURNAL = {Journal of Topology},
  FJOURNAL = {Journal of Topology},
    VOLUME = {7},
      YEAR = {2014},
    NUMBER = {3},
     PAGES = {869--893},
      ISSN = {1753-8416,1753-8424},
   MRCLASS = {55P43 (55N20 55U40)},
  MRNUMBER = {3252967},
MRREVIEWER = {Tyler\ D.\ Lawson},
       DOI = {10.1112/jtopol/jtt035},
       URL = {https://doi-org.brown.idm.oclc.org/10.1112/jtopol/jtt035},
}

@article{haugsengTwovariableFibrationsFactorisation2023,
  title = {Two-Variable Fibrations, Factorisation Systems and $\infty$-Categories of Spans},
  author = {Haugseng, Rune and Hebestreit, Fabian and Linskens, Sil and Nuiten, Joost},
  year = 2023,
  journal = {Forum of Mathematics, Sigma},
  volume = {11},
  pages = {e111},
  issn = {2050-5094},
  doi = {10.1017/fms.2023.107},
  urldate = {2026-03-15},
  langid = {english}
}

@article{gepnerLaxColimitsFree2017,
  title = {Lax Colimits and Free Fibrations in {$\infty$}-Categories},
  author = {Gepner, David and Haugseng, Rune and Nikolaus, Thomas},
  year = 2017,
  journal = {Documenta Mathematica},
  volume = {22},
  pages = {1255--1266}
}

@book{maclaneSheavesGeometryLogic1994,
  title = {Sheaves in {{geometry}} and {{logic}}: {{a first introduction}} to {{topos theory}}},
  shorttitle = {Sheaves in {{geometry}} and {{logic}}},
  author = {Mac Lane, Saunders and Moerdijk, Ieke},
  year = 1994,
  series = {Universitext},
  publisher = {Springer},
  address = {New York, NY},
  doi = {10.1007/978-1-4612-0927-0},
  urldate = {2025-06-25},
  copyright = {https://www.springernature.com/gp/researchers/text-and-data-mining},
  isbn = {978-0-387-97710-2 978-1-4612-0927-0},
  langid = {english}
}

@article{linskensGlobalHomotopyTheory2025,
  title = {Global Homotopy Theory via Partially Lax Limits},
  author = {Linskens, Sil and Nardin, Denis and Pol, Luca},
  year = 2025,
  journal = {Geometry \& Topology},
  volume = {29},
  number = {3},
  pages = {1345--1440},
  publisher = {Mathematical Sciences Publishers},
  issn = {1364-0380},
  doi = {10.2140/gt.2025.29.1345},
  urldate = {2025-08-07},
  langid = {english}
}

@unpublished{nikolausStable$infty$OperadsMultiplicative2016,
    author = {Nikolaus, Thomas},
    note = {arXiv:1608.02901},    
    title = {{Stable $\infty$-operads and the multiplicative Yoneda lemma}},
    year = {2016}
}

@book{Bredon-sheaf-theory,
    AUTHOR = {Bredon, Glen E.},
     TITLE = {Sheaf theory},
    SERIES = {Graduate Texts in Mathematics},
    VOLUME = {170},
   EDITION = {Second},
 PUBLISHER = {Springer-Verlag, New York},
      YEAR = {1997},
     PAGES = {xii+502},
      ISBN = {0-387-94905-4},
   MRCLASS = {55N30 (18F20 54B40 55-02)},
  MRNUMBER = {1481706},
       DOI = {10.1007/978-1-4612-0647-7},
       URL = {https://doi-org.brown.idm.oclc.org/10.1007/978-1-4612-0647-7},
}

@book{Kashiwara-Schapira,
    AUTHOR = {Kashiwara, Masaki and Schapira, Pierre},
     TITLE = {Sheaves on manifolds},
    SERIES = {Grundlehren der mathematischen Wissenschaften [Fundamental
              Principles of Mathematical Sciences]},
    VOLUME = {292},
      NOTE = {With a chapter in French by Christian Houzel},
 PUBLISHER = {Springer-Verlag, Berlin},
      YEAR = {1990},
     PAGES = {x+512},
      ISBN = {3-540-51861-4},
   MRCLASS = {58G07 (18F20 32C38 35A27)},
  MRNUMBER = {1074006},
MRREVIEWER = {Michael\ M.\ Kapranov},
       DOI = {10.1007/978-3-662-02661-8},
       URL = {https://doi.org/10.1007/978-3-662-02661-8},
}

@book{weibel-k-book,
    AUTHOR = {Weibel, Charles A.},
     TITLE = {The {$K$}-book},
    SERIES = {Graduate Studies in Mathematics},
    VOLUME = {145},
      NOTE = {An introduction to algebraic $K$-theory},
 PUBLISHER = {American Mathematical Society, Providence, RI},
      YEAR = {2013},
     PAGES = {xii+618},
      ISBN = {978-0-8218-9132-2},
   MRCLASS = {19-01},
  MRNUMBER = {3076731},
MRREVIEWER = {L.\ N.\ Vaserstein},
       DOI = {10.1090/gsm/145},
       URL = {https://doi.org/10.1090/gsm/145},
}

@article{laures-mcclure-comm,
    AUTHOR = {Laures, Gerd and McClure, James E.},
     TITLE = {Commutativity properties of {Q}uinn spectra},
   JOURNAL = {Proceedings of the Royal Society of Edinburgh. Section A.
              Mathematics},
  FJOURNAL = {Proceedings of the Royal Society of Edinburgh. Section A.
              Mathematics},
    VOLUME = {154},
      YEAR = {2024},
    NUMBER = {6},
     PAGES = {1848--1936},
      ISSN = {0308-2105,1473-7124},
   MRCLASS = {55P43},
  MRNUMBER = {4883993},
MRREVIEWER = {Daniel\ A.\ Ramras},
       DOI = {10.1017/prm.2024.119},
       URL = {https://doi-org.brown.idm.oclc.org/10.1017/prm.2024.119},
}

@article{laures-mcclure-mult,
    AUTHOR = {Laures, Gerd and McClure, James E.},
     TITLE = {Multiplicative properties of {Q}uinn spectra},
   JOURNAL = {Forum Mathematicum},
  FJOURNAL = {Forum Mathematicum},
    VOLUME = {26},
      YEAR = {2014},
    NUMBER = {4},
     PAGES = {1117--1185},
      ISSN = {0933-7741,1435-5337},
   MRCLASS = {55P43 (57R67 57R90)},
  MRNUMBER = {3228927},
MRREVIEWER = {Laurence\ R.\ Taylor},
       DOI = {10.1515/forum-2011-0086},
       URL = {https://doi-org.brown.idm.oclc.org/10.1515/forum-2011-0086},
}

@article{GMTW,
    AUTHOR = {Galatius, S{\o}ren and Tillmann, Ulrike and Madsen, Ib and
              Weiss, Michael},
     TITLE = {The homotopy type of the cobordism category},
   JOURNAL = {Acta Mathematica},
  FJOURNAL = {Acta Mathematica},
    VOLUME = {202},
      YEAR = {2009},
    NUMBER = {2},
     PAGES = {195--239},
      ISSN = {0001-5962,1871-2509},
   MRCLASS = {55P47 (55P15 55P42 57R75)},
  MRNUMBER = {2506750},
MRREVIEWER = {Richard\ John\ Steiner},
       DOI = {10.1007/s11511-009-0036-9},
       URL = {https://doi-org.brown.idm.oclc.org/10.1007/s11511-009-0036-9},
}

@article{ramziMONOIDALGROTHENDIECKCONSTRUCTION2026,
  title = {A {{monoidal Grothendieck construction for}} {$\infty$}-{{categories}}},
  author = {Ramzi, Maxime},
  year = 2026,
  journal = {Nagoya Mathematical Journal},
  volume = {261},
  pages = {e8},
  issn = {0027-7630, 2152-6842},
  doi = {10.1017/nmj.2025.10086},
  urldate = {2026-04-28},
  langid = {english}
}

@unpublished{lurie2009classificationtopologicalfieldtheories,
      title={On the Classification of Topological Field Theories}, 
      author={Jacob Lurie},
      note={arXiv:0905.0465},
      year={2009}
}
\bibliographystyle{alpha.bst}
\end{document}